\newtheorem{thm}{Theorem}[section]
\newtheorem{lemma}[thm]{Lemma}
\newtheorem{example}[thm]{Example}
\newtheorem{prop}[thm]{Proposition}
\newtheorem{cor}[thm]{Corollary}
\newtheorem{question}[thm]{Question}
\newtheorem{definition}[thm]{Definition}
\newtheorem{remark}[thm]{Remark}
\newtheorem{ex}[thm]{Example}
\theoremstyle{remark}
\numberwithin{equation}{section}
\newcommand{\D}{\mathbb{D}}
\newcommand{\N}{\mathbb{N}}
\newcommand{\Z}{\mathbb{Z}}
\newcommand{\R}{\mathbb{R}}
\newcommand{\C}{\mathbb{C}}
\newcommand{\SL}{\mathcal{L}}
\newcommand{\A}{\mathcal{A}}
\newcommand{\La}{\Lambda}
\newcommand{\la}{\lambda}
\renewcommand{\a}{\alpha}
\newcommand{\w}{\mathtt{w}}
\newcommand{\dd}{\partial}
\newcommand{\pfo}{\noindent Proof of }
\newcommand{\sse}{\subseteq}
\newcommand{\lr}{\longrightarrow}
\newcommand{\GL}{\operatorname{GL}}
\newcommand{\PGL}{\operatorname{PGL}}
\newcommand{\sh}{\operatorname{sh}}
\newcommand{\Mod}{\operatorname{mod}}
\newcommand{\Fun}{\operatorname{Fun}}
\newcommand{\Perf}{\operatorname{Perf}}
\newcommand{\Loc}{\operatorname{Loc}}
\newcommand{\Sh}{\operatorname{Sh}}
\newcommand{\ol}{\overline}
\newcommand{\bC}{\mathbb{C}}
\newcommand{\bD}{\mathbb{D}}
\newcommand{\bG}{\mathbb{G}}
\newcommand{\bR}{\mathbb{R}}
\newcommand{\bZ}{\mathbb{Z}}
\newcommand{\cA}{\mathcal{A}}
\newcommand{\cB}{\mathcal{B}}
\newcommand{\cC}{\mathcal{C}}
\newcommand{\cD}{\mathcal{D}}
\newcommand{\cF}{\mathcal{F}}
\newcommand{\cG}{\mathcal{G}}
\newcommand{\cI}{\mathcal{I}}
\newcommand{\cL}{\mathcal{L}}
\newcommand{\cM}{\mathcal{M}}
\newcommand{\cR}{\mathcal{R}}
\newcommand{\cX}{\mathcal{X}}
\begin{document}

	\title[Conjugate Fillings and Legendrian Weaves]{Conjugate Fillings and Legendrian Weaves\\\vspace{0.5cm} {\bf\footnotesize{-- A comparative study on Lagrangian fillings --}}}
\date{}
\author{Roger Casals}
\address{Univ.~ of California Davis, Dept.~of Mathematics, Shields Avenue, Davis, CA 95616, USA}
\email{casals@math.ucdavis.edu}
\author{Wenyuan Li}
\address{Northwestern University, Dept.~of Mathematics, Sheridan Road, Evanston, IL 60202, USA}
\email{wenyuanli2023@u.northwestern.edu}
\maketitle

\begin{abstract}
First, we show that conjugate Lagrangian fillings, associated to plabic graphs, and Lagrangian fillings obtained as Reeb pinching sequences are both Hamiltonian isotopic to Lagrangian projections of Legendrian weaves. In general, we establish a series of new Reidemeister moves for hybrid Lagrangian surfaces. These allow for explicit combinatorial isotopies between the different types of Lagrangian fillings and we use them to show that Legendrian weaves indeed generalize these previously known combinatorial methods to construct Lagrangian fillings. This generalization is strict, as weaves are typically able to produce infinitely many distinct Hamiltonian isotopy classes of Lagrangian fillings, whereas conjugate surfaces and Reeb pinching sequences produce finitely many fillings.\\

Second, we compare the sheaf quantizations associated to each such types of Lagrangian fillings and show that the cluster structures in the corresponding moduli of pseudo-perfect objects coincide. In particular, this shows that the cluster variables in Bott-Samelson cells, given as generalized minors, are geometric microlocal holonomies associated to sheaf quantizations. Similar results are presented for the Fock-Goncharov cluster variables in the moduli spaces of framed local systems. In the course of the article and its appendices, we also establish several technical results needed for a rigorous comparison between the different Lagrangian fillings and their microlocal sheaf invariants.

%We show that certain conjugate Lagrangians of alternating Legendrians, introduced by Shende-Treumann-Williams-Zaslow, are Hamiltonian isotopic to the Lagrangian projection of certain Legendrian weaves, introduced by Zaslow and the first author. Our results include plabic fences of positive braid closures and  $GL_n$-graphs of ideal triangulations on surfaces with marked points. In particular we get a one-to-one correspondence between conjugate Lagrangian fillings and Legendrian weaves for Legendrian $(2, k)$-torus links. For positive braid closures, we also show that the Legendrian weave is Hamiltonian isotopic to the pinching sequence of Reeb chords from left to right.

 %   We can thus provide geometric interpretations for the cluster $\cA$ and $\cX$-coordinates on the moduli of microlocal sheaves with singular support on certain Legendrians, including certain Bott-Samelson cells and the moduli of framed local systems, as microlocal merodromies and monodromies of sheaves. In the course of the proof, we also identify the cluster $\cX$-coordinates on the moduli of framed local systems arising from bipartite graphs and non-abelianization maps.
\end{abstract}
\setcounter{tocdepth}{1}
\tableofcontents

\section{Introduction}\label{sec:intro}

The object of this article will be to show that the Lagrangian fillings obtained via the combinatorics of plabic graphs are Hamiltonian isotopic to the Lagrangian fillings obtained from Legendrian weaves. The main geometric contribution of the article is the comparison of such Hamiltonian isotopy classes through the study of hybrid Lagrangian surfaces, including new results and applications from a series of Reidemeister moves that we establish for such Lagrangian surfaces. The main algebraic contribution is the comparison of Legendrian invariants from the microlocal theory of sheaves where we show that, under the above Hamiltonian isotopies, the independently defined cluster coordinates, through conjugate surfaces and through weaves, match. These cluster coordinates are associated to the ring of functions of the moduli derived stack of pseudo-perfect objects in the dg-subcategory of compact objects within the dg-category of constructibles sheaves with singular support along the corresponding Legendrians.\\

\scriptsize
\begin{table}[h]
\centering
\begin{tikzpicture}
\node (a) at (0,0){

\begin{tabular}{ |c| } 
\hline
{\bf Plabic combinatorics}\\
\hline
Alternating strand diagrams \cite{Pos06AltGraph},\\
Goncharov-Kenyon conjugate surfaces \cite{GonKen13,STWZ19}, \\
$n$-triangulations \cite{FockGon06a} and ideal webs \cite{Gon17Web}, \\ 
plabic fences in $\R$-morsifications \cite{Casals20Sk,FPST17MorseMut},\\ generalized minors and Pl\"ucker coordinates \cite{FZ02ClusterI,BFZ05}.\\
\hline
\end{tabular}

};
\node[xshift=5cm] (b) at (a.east) 
{
\begin{tabular}{ |c| } 
\hline
{\bf Legendrian weaves}\\
\hline
Legendrian (-1)-closures of positive braids \cite{Ng01Satellite,CasalsNg21},\\
Exact Lagrangian spectral curves \cite{CasalsZas20,GMN13,TWZ19Kasteleyn},\\
Legendrian weaves for triangulations \cite{CasalsZas20,CGGS20AlgWeave,CGGLSS22}, \\

Lagrangian A'Campo-Gusein-Zade skeleta \cite{Casals20Sk},\\
weaves for GP-graphs and cluster coordinates \cite{CasalsWeng22,GaoShenWeng20}.\\
\hline
\end{tabular}
};
\draw[->](a)-- node [text width=2.5cm,midway,above,align=center] {This} node [text width=2.5cm,midway,below,align=center] {paper} (b);
\end{tikzpicture}
\caption{Schematics of contributions in the present manuscript, connecting the study of plabic combinatorics (left) with the symplectic topology of Legendrian weaves (right). The former was initiated by A.~Postnikov \cite{Pos06AltGraph} and is rooted in the study of cluster algebras \cite{FZ02ClusterI,BFZ05,FPST17MorseMut} and their topological incarnations by A.~Goncharov and V.~Fock \cite{FockGon06a,FockGon06b,GonKen13}. The latter has been a central ingredient in many of the proofs of recent results in the study of Lagrangian fillings, e.g.~ \cite{AnBaeLee21ADE,AnBaeLee21DEtilde,CasalsGao20,CGGS20AlgWeave,CGGLSS22,CasalsWeng22,Hughes21A,Hughes21D}.}\label{fig:TableIntro}
\end{table}
\normalsize

\noindent The manuscript also shows that Lagrangian fillings obtained via Reeb pinching sequences are compactly supported Hamiltonian isotopic to Lagrangian projections of Legendrian weaves. In general, a recurring theme of our results is that Legendrian weaves strictly generalize all standard constructions of Lagrangian fillings. Explicit diagrammatic methods to transform such fillings into Legendrian weaves are also provided. Schematics of a few implications from this paper are depicted in Table \ref{fig:TableIntro}. Finally, in addition to the above results, the article and its appendices establish rigorous comparisons between the different objects and techniques employed in different sources in the literature.

\subsection{Scientific context} The combinatorics of plabic graphs \cite{Pos06AltGraph} have found several applications to the study of cluster algebras \cite{FWZ,GLSS1,GLSS2,Scott1,Scott2,serhiyenko2019cluster} and the birational geometry of moduli of local systems \cite{FockGon06a,GMN13,GMN14,GonKen13,Gon17Web,GonKon21}. Independently, recent advances \cite{CasalsWeng22,CGGLSS22,GaoShenWeng20} have been able to establish new connections between contact and symplectic topology and the study of cluster algebras. For instance, Legendrian weaves \cite{CasalsZas20,CGGS20AlgWeave} have been used to construct cluster algebras on braid varieties, in particular resolving Leclerc's conjecture \cite{Leclerc}, and, conversely, cluster algebras have been used to provide the first instances of Legendrian links with infinitely many fillings \cite{CasalsGao20,GaoShenWeng20}, up to compactly supported Hamiltonian isotopy.\\

\noindent In Type A, the common denominator of these two areas of research is anchored in the study of Lagrangian fillings of Legendrian knots, in the language of the latter, and conjugate surfaces bounding alternating strand diagrams, in the language of the former. There are two layers of study: the geometry of the surfaces and their boundaries, as smooth or Lagrangian surfaces, and the algebraic invariants that can be associated to them. The first layer is entirely geometric, with a focus on the isotopy classes of the surfaces and links at hand, may they be smooth or Hamiltonian isotopies. The second layer leads to the construction of cluster algebras, cluster seeds and categorifications thereof. This manuscript is structured in the same manner, with Sections \ref{sec:prelim}, \ref{sec:localmove} and \ref{sec:main_proofs} focusing on the geometry, and Sections \ref{sec:quantize} and \ref{sec:cluster} studying the algebraic invariants.\\

From the perspective of low-dimensional symplectic topology, our results serve to both add and connect recent developments in the study of Lagrangian fillings, including the obstruction methods in \cites{EHK16,STWZ19,Pan17Toruslink,CasalsGao20,CasalsZas20,GaoShenWeng20Filling,CasalsNg21} and the constructions in  \cites{EHK16,STWZ19,TreuZas16,CasalsGao20,CasalsZas20,GaoShenWeng20Filling,CasalsNg21,AnBaeLee21ADE,AnBaeLee21DEtilde,Hughes21D}. In brief, there are currently three combinatorial methods to construct Lagrangian fillings for Legendrian $(-1)$-closures of positive braids:
\begin{enumerate}
  \item conjugate Lagrangian fillings \cite{STWZ19}.
  \item pinching sequences of Reeb chords \cite{EHK16},
    \item free Legendrian weaves \cite{CasalsZas20}.
    \end{enumerate}
In a nutshell, this article will establish that Legendrian weaves, Method (3), generalizes the prior two methods whenever they can be compared, i.e.~conjugate Lagrangian fillings and Lagrangian fillings obtained via pinching sequences are Lagrangian projections of Legendrian weaves. In addition, both Methods (1) and (2) can only yield finitely many Lagrangian fillings, whereas Method (3) is known to produce infinitely many Lagrangian fillings \cite{CasalsGao20,CasalsZas20} in many cases.\\

From the perspective of cluster algebras, our results show that the cluster variables constructed from plabic graphs, in the form of Pl\"ucker coordinates and their generalizations, actually coincide with the microlocal holonomies studied in \cite{CasalsWeng22}. Note that the former, in the shape of (factors of) generalized minors, are key in the constructions of \cite{GLSS1,GLSS2}, whereas weaves and microlocal holonomies (both monodromies and merodromies) are a core ingredient in \cite{CGGLSS22,CasalsWeng22}. These algebraic comparisons are guided by the Hamiltonian isotopies that we construct between conjugate Lagrangian surfaces and Lagrangian projections of Legendrian weaves.

\subsection{Main results} Let $\bG\sse\Sigma$ be a plabic graph in a smooth surface $\Sigma$. In \cite[Section 1.1]{GonKen13}, an embedded smooth surface $C(\bG)\sse T^*\Sigma$ is constructed; it is called the conjugate surface. The alternating strand diagram of $\bG$ \cite[Section 14]{Pos06AltGraph} is a cooriented immersed curve in $\Sigma$ and thus naturally lifts to a Legendrian link $\La(\bG)\sse(T^{*,\infty}\Sigma,\ker \la_{st})$ in the ideal contact boundary of the standard cotangent bundle $(T^*\Sigma,\la_{st})$. In \cite[Section 4.2]{STWZ19} it is shown that there exists an embedded exact Lagrangian $L(\bG)\sse(T^*\Sigma,\la_{st})$ which is smoothly isotopic to $C(\bG)$. We show in Proposition \ref{prop:conj-unique} that the Hamiltonian isotopy class of $L(\bG)$ is unique, in that it is independent of the choices used in its construction, and it is thus well-defined to speak of the conjugate Lagrangian filling of $\La(\bG)$ given by $L(\bG)$, up to Hamiltonian isotopy.\\

Let us consider the set $\mathcal{C}(\Sigma)$ of plabic graphs on $\Sigma$ associated to either of the following combinatorial objects: an $n$-triangulation, a grid plabic fence and the reduced plabic graphs for $\mbox{Gr}(2,m)$. These are three of the most common general constructions of plabic graphs. For the first type, $\Sigma$ is an arbitrary (marked) smooth surface, whereas $\Sigma=\D^2$ is the 2-disk, possibly with marked points on the boundary, for the second and third types. Now, the plabic graphs for each of these objects were respectively introduced in \cite{FockGon06a} (see also ideal webs \cite{Gon17Web}) in the study of higher Teichm\"uller theory and moduli spaces of local systems, in \cite{FPST17MorseMut} in the study of real Morsifications of isolated plane curve singularities (see also \cite{CasalsWeng22}), and in \cite{FZ02ClusterI,Scott2} in the study of cluster algebras of finite type $A_{m-3}$, which corresponds to the coordinate ring of functions on $\mbox{Gr}(2,m)$, $m\geq3$. Note that the latter type can be understood as a special case of an ideal triangulation of a disk with boundary marked points; it is emphasized as a separate case because all cluster seeds are actually described by plabic graphs. The Legendrian weaves in $(J^1\Sigma,\ker(dz-\la_{st}))$ associated to each of these combinatorial objects were respectively introduced in \cite[Section 3.1]{CasalsZas20} for $n$-triangulations, in \cite[Section 3]{CasalsWeng22} for (grid) plabic graphs, and the weave associated to a reduced plabic graph for $\mbox{Gr}(2,m)$ is defined to be the 2-weave dual to the triangulation, see \cite{CasalsZas20,TreuZas16}. We refer to them as standard weaves and denote them by $\mathfrak{w}(\bG)$; we denote by $\mathfrak{w}(\bG)$ both the planar weave itself and the Legendrian surface associate to it, as this distinction is always clear by context.\\

{\bf \textcolor{blue}{First}}, the main symplectic geometric result shows that conjugate Lagrangian surfaces are Hamiltonian isotopic to Legendrian weaves:

\begin{thm}\label{thm:main1}
Let $\Sigma$ be a smooth surface, $\bG\in\mathcal{C}(\Sigma)$ and $L(\bG)\sse(T^{*}\Sigma,\la_{st})$ its conjugate Lagrangian surface. Then there exists an embedded Lagrangian $w(\bG)\sse(T^{*}\Sigma,\la_{st})$ Hamiltonian isotopic to $w(\bG)$ which is the Lagrangian projection of the standard weave $\mathfrak{w}(\bG)$.
\end{thm}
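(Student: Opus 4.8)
The plan is to transform $L(\bG)$ into $w(\bG)$ through a finite sequence of Hamiltonian isotopies of $(T^*\Sigma,\la_{st})$, each realizing one of the Reidemeister moves for hybrid Lagrangian surfaces established in Section \ref{sec:localmove}. Both $L(\bG)$ and $w(\bG)$ are assembled from standard local pieces over the cell decomposition of $\Sigma$ determined by $\bG$: for $L(\bG)$ these are the Goncharov--Kenyon conjugate-surface models over the faces, edges, and trivalent and bivalent vertices of $\bG$, in the Lagrangian form of \cite{STWZ19}, and for $w(\bG)$ these are the local models for the Lagrangian projection of $\mathfrak{w}(\bG)$ near its weave lines, trivalent $Y$-vertices, and hexagonal points. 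By Proposition \ref{prop:conj-unique} we are free to choose any representative of the Hamiltonian isotopy class of $L(\bG)$, and I would fix one in which each local piece sits inside a Weinstein neighborhood over a cell of the decomposition, so that the moves below can be performed with compactly supported Hamiltonians.

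First I would make precise the family of hybrid Lagrangian surfaces interpolating between the two constructions: such a surface is of weave type over an open region $U\sse\Sigma$ and of conjugate type over $\Sigma\sm U$, glued along $\dd U$ by a prescribed matching of fronts. The Reidemeister moves of Section \ref{sec:localmove} are local statements, each supported in a small ball of $T^*\Sigma$ and realized by an explicit compactly supported Hamiltonian isotopy, that enlarge $U$ across a single cell of $\bG$ while fixing the surface outside that ball. Running from $U=\es$ (the pure conjugate surface $L(\bG)$) to $U=\Sigma$ (the pure weave surface $w(\bG)$), one cell at a time, produces the desired isotopy. The essential input is the base case over a neighborhood of a single white, respectively black, vertex of $\bG$: there the conjugate-surface model and the corresponding piece of the weave front differ by the elementary trivalent and hexagonal weave moves, which I would check by direct comparison of fronts.

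Second I would run this argument for each of the three families comprising $\mathcal{C}(\Sigma)$. For an $n$-triangulation on a general marked surface $\Sigma$, the combinatorics reduce to the weave for triangulations of \cite{CasalsZas20}: over each triangle the plabic graph is the standard tripod with standard conjugate-surface local model, the move sequence there yields exactly the $n$-weave, and the identifications along the interior edges of the triangulation match after applying the edge move, so the local isotopies glue over all of $\Sigma$. The reduced plabic graphs for $\mbox{Gr}(2,m)$ fall out of this case, since such a graph corresponds to a triangulation of the $m$-gon and the associated $2$-weave is by definition dual to that triangulation \cite{CasalsZas20,TreuZas16}. For a grid plabic fence in $\D^2$, I would match the weave of \cite[Section 3]{CasalsWeng22} column by column: each elementary column (a cup, a cap, or a crossing) has both a conjugate-surface local model and a weave local model, the two are related by the moves of Section \ref{sec:localmove}, and concatenating along the vertical gluing lines yields $\mathfrak{w}(\bG)$.

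The main obstacle is twofold. On the geometric side, one must verify that the concatenation of local Hamiltonian isotopies is a genuine global Hamiltonian isotopy compatible along all cell overlaps and that no self-intersections are created at intermediate stages, so that embeddedness of $w(\bG)$ is proved along with the statement; this forces care in the choice of ordering of the moves, of the supports of the cutoff Hamiltonians, and, at the ideal contact boundary, in matching the Legendrian lift of the alternating strand diagram $\La(\bG)$ on both ends. On the combinatorial side, the grid plabic fence case carries the heaviest bookkeeping: one has to track how the weave lines braid across successive columns and confirm that the glued local models reproduce the global weave $\mathfrak{w}(\bG)$ of \cite{CasalsWeng22} itself, and not merely a weave-equivalent diagram --- the flexibility of weave equivalences, which induce Hamiltonian isotopies of the Lagrangian projection, being precisely what absorbs the discrepancy.
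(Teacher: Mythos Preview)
Your proposal is correct and matches the paper's approach: the proof proceeds by applying the hybrid Reidemeister moves of Section~\ref{sec:localmove} case by case across the three families in $\mathcal{C}(\Sigma)$, working locally (triangle by triangle for $n$-triangulations via induction on $n$, column by column for plabic fences into Type~1/2/3 pieces, and vertex by vertex for the $\mbox{Gr}(2,m)$ polygon) and gluing. The paper treats the $\mbox{Gr}(2,m)$ case independently rather than deriving it from the $n$-triangulation argument, and the fence decomposition is by horizontal strands, crossings, and lollipops rather than cups/caps/crossings, but these are minor differences in bookkeeping.
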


\noindent The Hamiltonian isotopy in Theorem \ref{thm:main1} is {\it not} a compactly supported isotopy. In fact, a non-trivial Legendrian isotopy needs to be applied to even compare the Legendrian links of $\dd \mathfrak{w}(\bG)$ and the alternating strand diagram of $\bG$. Theorem \ref{thm:main1} is proven by first developing a series of new Reidemeister moves for hybrid Lagrangian surfaces, which allow us to interpolate between conjugate surfaces and weaves. These moves are shown in Table \ref{fig:TableMovesIntro} and proven in Theorem \ref{thm:hybridReidemeister}. These moves are of independent interest as well. In addition, they are likely to be also necessary when comparing the two recent resolutions \cite{CGGLSS22} and \cite{GLSS1,GLSS2} of Leclerc's conjecture on cluster algebras for Richardson varieties, as the former uses weaves and the latter use conjugate surfaces.

\begin{figure}[h!]
  \centering
  \includegraphics[width=1.0\textwidth]{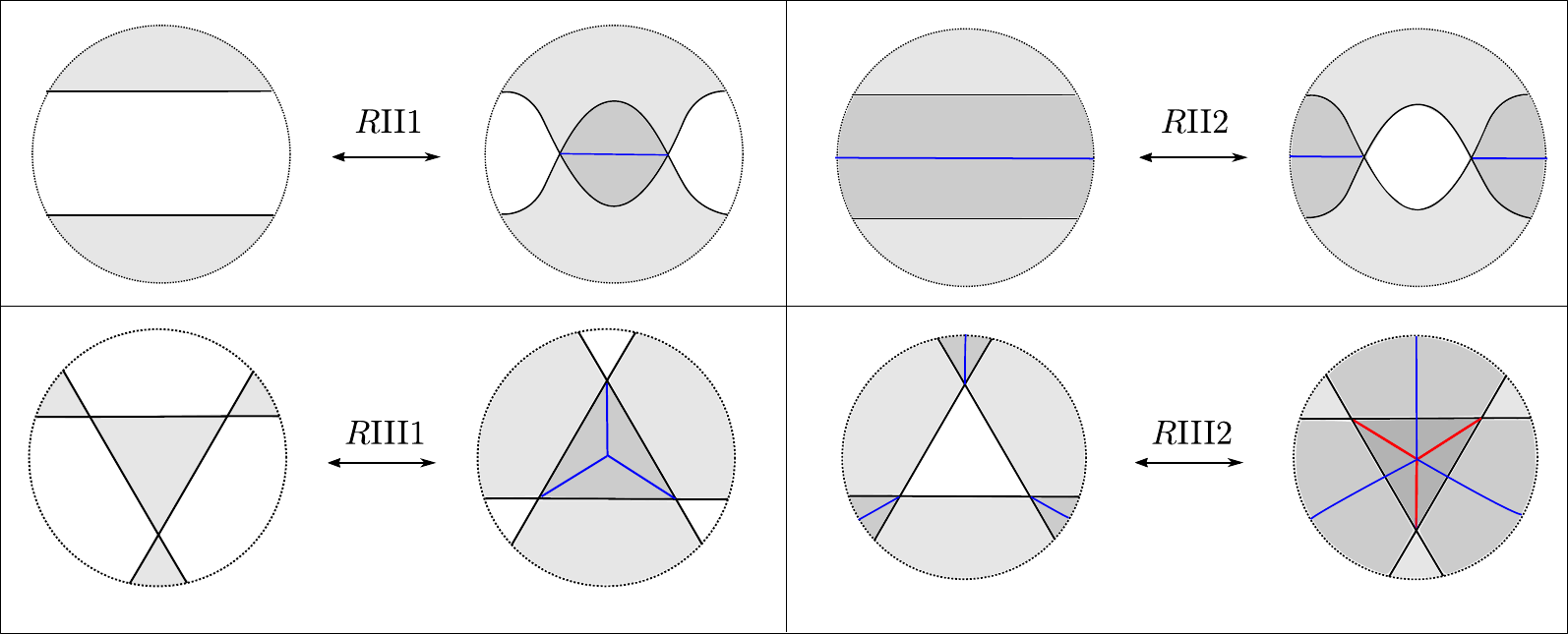}\\
  \caption{Reidemeister-type moves used in order to transition from a conjugate surface towards a weave. These are proven in Section \ref{sec:localmove}. In Section \ref{sec:main_proofs} these moves are used to prove Theorem \ref{thm:main1}.}\label{fig:TableMovesIntro}
\end{figure}

In the comparison in Theorem \ref{thm:main1} several subtleties appear. These include the behaviour at infinity of conjugate Lagrangian surfaces versus that of weaves, and the uniqueness of the Hamiltonian isotopy class of the Lagrangian conjugate surface $L(\bG)$. The necessary technical results to address these differences are established in Section \ref{sec:prelim}. In addition, both Section \ref{sec:prelim} and Section \ref{sec:main_proofs} discuss Lagrangian fillings obtained via Reeb pinching sequences. In particular, it is established in Section \ref{ssec:proof_pinching_to_weave} that such Lagrangian fillings are also Lagrangian projections of Legendrian weaves.\\

{\bf \textcolor{blue}{Second}}, Section \ref{sec:quantize} studies the sheaf quantizations of these Lagrangian fillings. In particular, we review the alternating sheaf quantization of \cite{STWZ19} and set up the sheaf quantization for Legendrian weaves following \cite{CasalsZas20}. The technical details needed for the comparison of these sheaf quantizations are provided (see also Appendices \ref{appen:cat-sheaf} and \ref{appen:sheaf}). Sheaf quantizations at hand, we can then compare the a priori different cluster algebra structures on the moduli derived stack of pseudo-perfect objects of the dg-category of wrapped sheaves. Namely, the combinatorial cluster structures arising from the theory of plabic graphs (with Pl\"ucker coordinates and generalized minors, e.g.~see \cite{GaoShenWeng20}) and that recently constructed using microlocal sheaf theory \cite{CasalsWeng22}. Note that the latter enjoys Hamiltonian functoriality, whereas the former is only known to be invariant under plabic graph moves. Section \ref{sec:cluster} shows that the cluster structures coincide, as follows.\\

Let $\beta\in\mbox{Br}^+$ be a positive braid word, $\La_{\beta}\sse T^{*,\infty}\R^2$ its associated Legendrian cylindrical closure and $\Lambda_{\beta}^\prec\sse(\R^3,\xi_{st})$ its rainbow closure. Denote by $\bG_\beta \in\mathcal{C}(\D^2)$ the plabic fence associated to $\beta$, see \cite[Section 2.5]{CasalsWeng22} or \cite{FPST17MorseMut,STWZ19}. Section \ref{sec:cluster} compares different moduli spaces: the moduli spaces $\cM_1^\textit{fr}(\bD^2, \Lambda_{\beta\Delta^2})_0$, resp.~$\cM_1^\textit{fr}(\bD^2, \Lambda_{\beta}^\prec)_0$, of microlocal rank-one framed sheaves on $\La_{\beta\Delta^2}$, resp.~ $\Lambda_{\beta}^\prec$, as introduced in Appendix \ref{appen:sheaf-moduli}, and the flag configuration space $\mathrm{Conf}_{\beta}^e(\cC)$, as introduced in \cite[Definition 4.3]{GaoShenWeng20}, where $e=\mbox{id}$ is the empty braid. Following the geometry in Theorem \ref{thm:main1}, and once is established in Section \ref{sec:cluster} that these moduli spaces are all isomorphic, it is natural to study the two known cluster algebras:\\

\begin{itemize}
    \item[(i)] The {\it generalized minor cluster variables} on flag configuration space $\C[\mathrm{Conf}_{\beta}^e(\cC)]$ defined in \cite[Section 4]{GaoShenWeng20}. This is a cluster algebra whose cluster variables in the initial seed are given by generalized minors dictated by the plabic fence $\bG_\beta$. In particular, they generalize the initial seeds associated to double Bruhat cells and reduced plabic graphs for the open positroid cells.\\
    
    \item[(ii)] The {\it microlocal cluster variables} on $\C[\cM_1^\textit{fr}(\bD^2, \Lambda_{\beta}^\prec)_0]$ defined in \cite[Section 4]{CasalsWeng22}. This is a cluster structure whose cluster variables in the initial seed are given by microlocal merodromies of a sheaf quantization of the Legendrian weave $\mathfrak{w}(\bG)$.\footnote{The article \cite{CasalsWeng22} constructs cluster structures in the more general setting of grid plabic graphs, which include the cases of plabic fences. Nevertheless, we must restrict to plabic fences to compare to \cite{GaoShenWeng20} as the latter is only defined in that setting.} Since the microlocal cluster $\mathcal{A}$-variables are equally defined for $\cM_1^\textit{fr}(\bD^2, \Lambda_{\beta\Delta^2})_0$ and $\cM_1^\textit{fr}(\bD^2, \Lambda_{\beta}^\prec)_0$, and these moduli are isomorphic, we focus on the latter.\\
\end{itemize}

\noindent Note that both structures above give cluster algebras \cite{FZ02ClusterI,FWZ}, not just upper cluster algebras \cite{BFZ05}, or merely cluster $\mathcal{X}$-structures \cite{FockGon06a} (or the partial structures defined in \cite{STWZ19}). Namely, the comparison of cluster $\mathcal{A}$-variables for cluster algebras is the strongest setting possible. In the second part of the article, we show that these cluster algebras coincide:

\begin{thm}\label{thm:main_algebra1}
Let $\beta$ be a positive braid and $\bG(\beta)$ its associated plabic fence. Then the coordinate ring $\C[\mathrm{Conf}_{\beta}^e(\cC)]$, endowed with the minor cluster algebra structure, and the coordinate ring $\C[\cM_1^\textit{fr}(\bD^2, \Lambda_{\beta}^\prec)_0]$ endowed with the microlocal cluster algebra structure, are isomorphic cluster algebras.\\

Furthermore, there exists an isomorphism that sends the initial seed in $\C[\mathrm{Conf}_{\beta}^e(\cC)]$, given by the toric chart associated to the conjugate Lagrangian surface $L(\bG_\beta)$, to the initial seed in $\C[\cM_1^\textit{fr}(\bD^2, \Lambda_{\beta}^\prec)_0]$ given by the toric chart associated to $\mathfrak{w}(\bG_\beta)$.
\end{thm}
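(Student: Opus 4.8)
The plan is to prove Theorem \ref{thm:main_algebra1} in three stages: first identify the moduli spaces, then match the two cluster seeds combinatorially, and finally upgrade the seed identification to an isomorphism of cluster algebras. For the first stage, I would invoke the isomorphisms of moduli spaces established in Section \ref{sec:cluster} (referenced in the paragraph preceding the theorem): namely that $\cM_1^\textit{fr}(\bD^2,\Lambda_{\beta\Delta^2})_0\cong\cM_1^\textit{fr}(\bD^2,\Lambda_\beta^\prec)_0\cong\mathrm{Conf}_\beta^e(\cC)$. The content here is that sheaf quantizations with singular support on $\La_{\beta\Delta^2}$, restricted to microlocal rank one and equipped with framings, are the same data as flag configurations of the type appearing in \cite{GaoShenWeng20}; this should already be recorded in Appendix \ref{appen:sheaf-moduli} and Section \ref{sec:cluster}, so at this step I only need to cite it and fix the identification of coordinate rings $\C[\mathrm{Conf}_\beta^e(\cC)]\cong\C[\cM_1^\textit{fr}(\bD^2,\Lambda_\beta^\prec)_0]$ as algebras (before imposing any cluster structure).

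The second stage is the heart of the argument: showing the initial seeds agree. Here I would use Theorem \ref{thm:main1} applied to the plabic fence $\bG_\beta\in\mathcal{C}(\D^2)$, which gives a Hamiltonian isotopy between the conjugate Lagrangian surface $L(\bG_\beta)$ and the Lagrangian projection $w(\bG_\beta)$ of the standard weave $\mathfrak{w}(\bG_\beta)$. Under this Hamiltonian isotopy, and using the functoriality of sheaf quantization under Hamiltonian isotopy (set up in Section \ref{sec:quantize} and Appendix \ref{appen:sheaf}), the toric chart on the moduli space associated to $L(\bG_\beta)$ is carried to the toric chart associated to $\mathfrak{w}(\bG_\beta)$. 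I then need to check that the coordinate functions match: on the conjugate side these are the generalized minors / Pl\"ucker coordinates of \cite[Section 4]{GaoShenWeng20}, while on the weave side they are microlocal merodromies as in \cite[Section 4]{CasalsWeng22}. The comparison reduces to a local, face-by-face check: for each bounded face of the plabic fence (equivalently, each first homology cycle of the weave surface, equivalently each mutable vertex of the quiver), one verifies that the microlocal merodromy around the corresponding cycle in $\mathfrak{w}(\bG_\beta)$ equals the generalized minor attached to that face of $\bG_\beta$. The Hamiltonian isotopy from Theorem \ref{thm:main1}, made explicit via the Reidemeister moves of Theorem \ref{thm:hybridReidemeister}, transports microlocal holonomies along the homology classes, and one tracks how each elementary move acts on the microlocal stalks to confirm the merodromy computation reproduces the minor. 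One also checks that the exchange quivers coincide: the quiver from the plabic fence $\bG_\beta$ (vertices = bounded faces, arrows from the plabic structure) matches the intersection/weave quiver of $\mathfrak{w}(\bG_\beta)$, which is standard given the duality between plabic fences and their weaves in \cite{CasalsWeng22}.

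For the third stage, once the initial seeds (quiver plus cluster variables, including frozen ones) are identified under the algebra isomorphism of stage one, the equality of the full cluster algebras is formal: cluster algebras are generated by iterated mutation from any seed, and mutation is determined by the seed data, so an isomorphism of coordinate rings matching one seed matches all seeds and hence the cluster algebras $\C[\mathrm{Conf}_\beta^e(\cC)]$ and $\C[\cM_1^\textit{fr}(\bD^2,\Lambda_\beta^\prec)_0]$ coincide as cluster algebras. Here I would be careful to note that both sides are genuine cluster algebras, not merely upper cluster algebras (as emphasized in the remark before the theorem), so the generation-by-mutation statement is literally what is needed; I would cite \cite{FZ02ClusterI,FWZ} for this.

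The main obstacle I expect is stage two, and specifically the precise matching of coordinate functions rather than just the matching of toric charts as varieties. Producing a Hamiltonian isotopy between $L(\bG_\beta)$ and $w(\bG_\beta)$ gives an isomorphism of moduli spaces carrying one toric chart to another, but it does not automatically identify the distinguished coordinates (minors versus merodromies) on the nose — one must check the two a priori different recipes compute the same regular functions. This requires a careful bookkeeping of how the microlocal monodromy/merodromy local systems transform under each Reidemeister move in Table \ref{fig:TableMovesIntro}, and a comparison with the inductive definition of generalized minors along the plabic fence in \cite{GaoShenWeng20}. I anticipate this to be where the bulk of the technical work, and the appeal to the detailed sheaf-theoretic machinery of Section \ref{sec:quantize} and Appendix \ref{appen:sheaf-moduli}, is concentrated; the potential sign and normalization discrepancies (Maslov data, orientation of cycles, choice of microlocal rank-one generators) are exactly the kind of subtlety the theorem's ``Furthermore'' clause is asserting can be resolved.
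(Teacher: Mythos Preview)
Your three-stage outline is correct and matches the paper's architecture: identify the moduli spaces (done via Theorem \ref{thm:ShenWeng-sheaf}, Corollary \ref{cor:frame-braid-mod}, Proposition \ref{prop:sheaf-X-rain=sate}, Corollary \ref{cor:sheaf-rain=sate}), match initial seeds, then conclude by mutation. The difference lies in how Stage~2 is executed.

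You propose to match coordinates by transporting microlocal holonomies through the Hamiltonian isotopy of Theorem \ref{thm:main1}, tracking each Reidemeister move from Theorem \ref{thm:hybridReidemeister}. The paper does not do this. Instead, it works entirely on the weave side and performs a direct linear-algebra computation: for each string $a$ on level $i$ crossing the $j$-th diagonal, it defines an explicit relative $1$-cycle $\eta_{a,j}=\sum_{r=1}^i \xi_{a,j,r}$ on a vertical slice of $\mathfrak{w}_\beta$ (Figure \ref{fig:braid-rel-cycle}), shows via Lemma \ref{lem:H1cycle-cluster} that these form a basis of $H_1(\mathfrak{w}_\beta^*,\Lambda_{\beta\Delta^2}^*;\bZ)$ dual to the $\gamma_a$'s, and then computes the merodromy along $\eta_{a,j}$ by assembling the strand-wise parallel transports into an upper-triangular matrix whose determinant is $v_{j1}\wedge\dots\wedge v_{ji}\wedge e_{i+1}\wedge\dots\wedge e_n=\Delta_i(x_j)$. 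The decoration-matching Lemmas \ref{lem:decorate-flag}, \ref{lem:frame-decorate}, \ref{lem:decorate-top} set up the trivializations so that this wedge computation literally yields the principal minor. The Hamiltonian isotopy and Corollary \ref{cor:1cycle-braid} enter only to identify which weave cycles correspond to which plabic faces, not to transport the coordinate functions themselves.

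Your approach would work in principle but is more laborious, and one point needs sharpening: you speak of ``microlocal merodromy around the corresponding cycle'' where the cycle comes from a bounded face, but faces give absolute cycles $\gamma_a\in H_1(\mathfrak{w}_\beta;\bZ)$, whose holonomies are $\cX$-variables. The $\cA$-variables are merodromies along the \emph{dual relative} cycles $\eta_a\in H_1(\mathfrak{w}_\beta^*,\Lambda^*;\bZ)$, and this duality (Lemma \ref{lem:H1cycle-cluster}) is an essential ingredient you would need to supply. The paper's determinant computation sidesteps Reidemeister-move bookkeeping entirely, which is what makes the proof short.
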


\noindent It follows from Theorem \ref{thm:main_algebra1} and the Hamiltonian functoriality in \cite{CasalsWeng22} that the minor cluster variables can, a posteriori, be defined intrinsically in terms of symplectic topological techniques and are invariant under Legendrian Reidemeister moves applied to the relative Lagrangian skeleton. Note also that the generalized minor cluster algebra is used crucially in the cluster algebra construction for braid varieties \cite{CGGLSS22} and the resolution of Leclerc's conjecture. Theorem \ref{thm:main1} and Theorem \ref{thm:main_algebra1} together imply that the cluster structure in \cite{CGGLSS22}, which uses a hybrid of minor coordinates and weaves, coincides with either \cite{CasalsWeng22} or \cite{GaoShenWeng20} in the case of double Bott-Samelson cells, even at the level of surfaces and cycles. Finally, we note that Theorem \ref{thm:main_algebra1} implies the analogous result for cluster $\mathcal{X}$-variables, which can defined in terms of $\mathcal{A}$-variables and symplectically correspond to microlocal monodromies, along absolute cycles, instead of microlocal merodromies, along relative cycles; see Section \ref{sec:cluster}.\\

Theorem \ref{thm:main_algebra1} covers the plabic graphs $\bG\in\mathcal{C}(\D^2)$ given by plabic fences and $\mbox{Gr}(2,m)$. In the remaining case of ideal $n$-triangulations on smooth surfaces $\Sigma$, the comparison must restrict to the cluster Poisson structure, as only cluster $\mathcal{X}$-variables are defined both in the ideal triangulation setting of conjugate Lagrangian surfaces and Legendrian weaves.     Consider the moduli space $\cX_{\GL_n}(\Sigma)$ of $\GL_n$-framed local systems on a punctured surface $\Sigma$, as defined in \cite{FockGon06a,Gon17Web}. Ibid shows that it is a cluster $\cX$-variety. The techniques developed in this article allow us to compare the Fock-Goncharov coordinates \cite{FockGon06a}, the Gaiotto-Moore-Neitzke non-abelianization coordinates \cite{GMN14} and the coordinates defined by microlocal monodromies from sheaf quantizations \cite{CasalsZas20,STWZ19,TreuZas16}. In particular, we provide a symplectic geometric explanation of the Fock-Goncharov coordinates by identifying them with microlocal monodromies. In precise terms, given an $n$-triangulation on $\Sigma$ and $\bG_n^*$ its associated bipartite graph, see Sections \ref{sec:prelim} and \ref{sec:cluster}, we have the following maps:\\

\begin{itemize}
    \item[(i)] The embedding $\iota_{\mu mon}:H^1(L(\bG_n^*); \Bbbk^\times) \hookrightarrow \cX_{\GL_n}(\Sigma)$ via microlocal monodromy on conjugate Lagrangians, as constructed in \cite{STWZ19};\\
    
    \item[(ii)] The embedding $\iota_{FG}:H^1(L(\bG_n^*); \Bbbk^\times) \hookrightarrow \cX_{\GL_n}(\Sigma)$ obtained via the Fock-Goncharov bipartite graph construction in \cite{FockGon06a};\\
    
    \item[(iii)] The embedding $\iota_{\mu\mathfrak{w}}:H^1(\mathfrak{w}(\bG_n^*); \Bbbk^\times) \hookrightarrow \cX_{\GL_n}(\Sigma)$ via microlocal monodromy on Legendrian weaves, as constructed in \cite{CasalsZas20};\\
    
    \item[(iv)] The embedding $\iota_{SN}:H^1(\mathfrak{w}(\bG_n^*); \Bbbk^\times) \hookrightarrow \cX_{\GL_n}(\Sigma)$ obtained via the Gaiotto-Moore-Neitzke non-abelianization maps on spectral networks, constructed in \cite{GMN13}.\\
\end{itemize}

Section \ref{sec:cluster} compares these maps and, among others, we prove the following result:

\begin{thm}\label{cor:FG=GMN}
    Let $\Sigma$ be a closed surface, $\bG_n^*$ the $\GL_n$-bipartite graph, equiv.~ the $A_n^*$-ideal web, associated to an ideal $n$-triangulation, and $\Bbbk$ a field. Then the following holds:\\
    
    \begin{enumerate}
        \item[(1)] The algebraic morphisms coincide $\iota_{\mu mon}=\iota_{FG}$ and $\iota_{\mu\mathfrak{w}}=\iota_{SN}$.\\
        
        \item[(2)] There exists a Hamiltonian isotopy that brings the conjugate Lagrangian surface $L(\bG)$ to the Lagrangian projection of the weave $\mathfrak{w}(\bG_n^*)$ and whose induced isomorphism $\phi:H^1(L(\bG_n^*); \Bbbk^\times)\lr H^1(\mathfrak{w}(\bG_n^*); \Bbbk^\times)$ is such that:\\
        \begin{itemize}
            \item[(i)] $\phi$ intertwines microlocal and Fock-Goncharov charts, i.e.~ $\iota_{FG}=\iota_{\mu\mathfrak{w}}\circ \phi$;\\
            \item[(ii)] $\phi$ identifies the cluster $\mathcal{X}$-variables in these two toric charts of $\cX_{\GL_n}(\Sigma)$.
        \end{itemize}
    \end{enumerate}
\end{thm}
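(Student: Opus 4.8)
The plan is to deduce Theorem \ref{cor:FG=GMN} by combining the geometric comparison of Theorem \ref{thm:main1} with the identification of the relevant sheaf-theoretic, combinatorial and spectral-network coordinate systems as all being computed by the same microlocal holonomy data. I would organize the argument in three stages: (a) reduce both equalities in part (1) to local computations over a single ideal triangle (or a pair of adjacent triangles along a flip), (b) produce the Hamiltonian isotopy of part (2) from Theorem \ref{thm:main1} together with the uniqueness statement of Proposition \ref{prop:conj-unique}, and (c) track the induced map on first cohomology and check it intertwines the charts. For stage (a), the key point is that the Fock-Goncharov embedding $\iota_{FG}$, the non-abelianization embedding $\iota_{SN}$, the alternating-sheaf microlocal monodromy $\iota_{\mu mon}$, and the weave microlocal monodromy $\iota_{\mu\mathfrak{w}}$ are each \emph{local} in the $n$-triangulation: they are assembled from contributions attached to triangles and to edges, glued by the combinatorics of the ideal web $\bG_n^*$. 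So it suffices to match the building blocks. On each triangle, the $A_n^*$-ideal web is a honeycomb-type bipartite graph whose conjugate surface is, by the constructions of \cite{STWZ19,GonKen13}, exactly the spectral-curve Lagrangian over the triangle; the $2$-weave dual to the triangulation restricts on each triangle to the standard weave of the honeycomb of \cite{CasalsZas20,TreuZas16}. The equality $\iota_{\mu mon}=\iota_{FG}$ is then the statement — already essentially present in \cite{STWZ19} — that microlocal parallel transport of the alternating sheaf around a flag-decorated triangle reproduces the Fock-Goncharov triple-ratio/edge coordinates; I would cite or reprove this via the explicit sheaf model in Section \ref{sec:quantize}. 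The equality $\iota_{\mu\mathfrak{w}}=\iota_{SN}$ is the analogous statement matching weave microlocal monodromy with the GMN non-abelianization map, which on the nose compares the weave sheaf quantization of \cite{CasalsZas20} to the spectral-network parallel transport of \cite{GMN13}; the content here is that the weave's sheet-permutation data along $1$-cells encodes precisely the $\mathcal{S}$-wall jumps.

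For stage (b), I would invoke Theorem \ref{thm:main1} with $\Sigma$ the given closed surface and $\bG=\bG_n^*\in\mathcal{C}(\Sigma)$ (the $n$-triangulation case), which furnishes an embedded Lagrangian Hamiltonian isotopic to $L(\bG_n^*)$ realized as the Lagrangian projection of $\mathfrak{w}(\bG_n^*)$. Concatenating the Hamiltonian isotopy class uniqueness from Proposition \ref{prop:conj-unique}, this produces a genuine Hamiltonian isotopy $\Phi_t$ from $L(\bG_n^*)$ to the Lagrangian projection of the weave. A Hamiltonian isotopy is, in particular, a smooth isotopy of the underlying surfaces, so it induces an isomorphism $\phi=(\Phi_1)^*\colon H^1(L(\bG_n^*);\Bbbk^\times)\to H^1(\mathfrak{w}(\bG_n^*);\Bbbk^\times)$ on the character groups of $H_1$; this is the map in the statement. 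I would be careful to note that since the isotopy in Theorem \ref{thm:main1} is not compactly supported, it moves the Legendrian boundary by a Legendrian isotopy, and I must check (using the moduli comparisons of Section \ref{sec:cluster} and Appendix \ref{appen:sheaf-moduli}) that the induced identification of the ambient target $\cX_{\GL_n}(\Sigma)$ is the identity, so that it makes sense to compare $\iota_{FG}$ with $\iota_{\mu\mathfrak{w}}\circ\phi$ inside one and the same moduli space.

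For stage (c), the equality $\iota_{FG}=\iota_{\mu\mathfrak{w}}\circ\phi$ follows by assembling (a) and (b): by part (1) we have $\iota_{FG}=\iota_{\mu mon}$, and microlocal monodromy is a Hamiltonian (more precisely, Legendrian-isotopy) invariant of the sheaf quantization by the functoriality recalled in Section \ref{sec:quantize}; hence transporting along $\Phi_t$ carries $\iota_{\mu mon}$ on $L(\bG_n^*)$ to $\iota_{\mu\mathfrak{w}}$ on $\mathfrak{w}(\bG_n^*)$, which is exactly $\iota_{\mu mon}=\iota_{\mu\mathfrak{w}}\circ\phi$. Part (2)(ii), that $\phi$ matches cluster $\mathcal{X}$-variables, is then immediate: the Fock-Goncharov cluster chart on $\cX_{\GL_n}(\Sigma)$ is by definition the chart $H^1(L(\bG_n^*);\Bbbk^\times)\hookrightarrow \cX_{\GL_n}(\Sigma)$ with its monomial coordinates dual to the natural basis of $H_1(L(\bG_n^*))$ indexed by the faces of $\bG_n^*$, and under $\phi$ this basis goes to the corresponding weave-cycle basis of $H_1(\mathfrak{w}(\bG_n^*))$ whose dual monomials are, by \cite{CasalsZas20}, the microlocal-monodromy cluster $\mathcal{X}$-coordinates. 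The main obstacle I anticipate is stage (b)–(c): controlling the behavior at infinity, i.e.\ verifying that the non-compactly-supported isotopy of Theorem \ref{thm:main1} induces the \emph{identity} on the target moduli space $\cX_{\GL_n}(\Sigma)$ and does not permute the decoration/framing data — this is precisely where the technical results on the Legendrian boundary comparison (the Legendrian isotopy between $\partial\mathfrak{w}(\bG_n^*)$ and the alternating strand diagram, and its effect on framed local systems) from Sections \ref{sec:prelim} and \ref{sec:cluster} must be brought to bear, and it is the step most likely to require care with orientations, coorientations and the identification of $H_1$-bases on the nose rather than merely abstractly.
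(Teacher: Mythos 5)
Your proposal is correct and follows essentially the same route as the paper: the paper likewise identifies $\iota_{\mu\mathfrak{w}}$ with $\iota_{SN}$ via the diffeomorphism between spectral covers and weaves together with the sheaf quantization of Theorem \ref{thm:JinTreumann}, identifies the Fock--Goncharov coordinates with microlocal monodromies through the explicit cross-ratio and triple-ratio formulas of Section \ref{sec:quan-weave-mon}, and obtains part (2) from Theorem \ref{thm:main1} combined with the Hamiltonian invariance of Proposition \ref{prop:JinTreu-inv} and the explicit tracking of $1$-cycles in Corollary \ref{cor:1cycle-triangle} (packaged as Corollaries \ref{cor:weave-framedloc} and \ref{cor:dimer-framedloc}). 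The boundary-at-infinity issue you flag is exactly the point the paper resolves by proving Hamiltonian invariance with moving Legendrian boundary conditions.
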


\noindent Similar to Theorem \ref{cor:FG=GMN}, we also identify the corresponding cluster $\cX$-coordinates on the moduli of framed $\PGL_n$-local systems $\cX_{\PGL_n}(\Sigma)$. This follows by forgetting the frozen variables, as explained in Section \ref{sec:framed-locsys}.

\subsection*{Acknowledgements} We are grateful to Honghao Gao, Xin Jin, Linhui Shen, David Treumann, Daping Weng, and Eric Zaslow for useful discussions. R.~Casals is supported by the NSF CAREER DMS-1942363 and a Sloan Research Fellowship of the Alfred P. Sloan Foundation.

\section{Constructions of Lagrangian fillings}\label{sec:prelim}

This section studies three ways of constructing Lagrangian fillings for Legendrian links:

\begin{itemize}
    \item[-] Conjugate Lagrangian fillings of alternating Legendrians, in Subsection \ref{sec:conjugate},
    \item[-] Free Legendrian weaves, in Subsection \ref{sec:weave},
    \item[-] Pinching sequences of Reeb chords, in Section \ref{sec:pinching}.
\end{itemize}

\noindent These three constructions are respectively introduced in \cite[Section 4.2]{STWZ19}, \cite[Section 7.1]{CasalsZas20} and \cite[Section 6.5]{EHK16}. See also \cite[Section 3.3]{ArnoldSing} for the pinching saddle cobordism. In this section we present descriptions that allow us to compare these constructions. For instance, according to the original constructions, the Legendrian boundary of a conjugate Lagrangian and that of a free Legendrian weave live in different contact manifolds. This issue is resolved in Subsection \ref{sec:weave-at-infty}. This section also contains some new results and necessary details not formerly available in the literature, such as the uniqueness of the Hamiltonian isotopy class of the conjugate Lagrangian surface in \cite[Proposition 4.9]{STWZ19}, thus showing independence of the choices and local models that exist in the construction.\\

Let $\Sigma$ be a smooth surface. Consider the 4-dimensional exact symplectic manifold $(T^*\Sigma,d\lambda_\text{st})$, equipped with the exact symplectic form whose primitive is the Liouville 1-form $\lambda_\text{st}$, and its 3-dimensional ideal contact boundary at infinity $(T^{*,\infty}\Sigma,\xi_\text{st})$ with the contact structure $\xi_\text{st} := \ker(\lambda_\text{st}|_{T^{*,\infty}M})$. Following \cite{Giroux17Ideal} we denote by $\ol{T^*\Sigma} := T^*\Sigma \cup T^{*,\infty}\Sigma$ the associated ideal Liouville domain with boundary. In a local coordinate chart $(x_1, x_2, \xi_1, \xi_2)$ of $T^*\Sigma$, $(x_1,x_2)\in\Sigma$ and $(\xi_1,\xi_2)\in T^*_{(x_1,x_2)}\Sigma$, the Liouville form reads
    $$\lambda_\text{st} = \xi_1dx_1 + \xi_2dx_2.$$
We also consider the front projection $\pi: T^{*,\infty}\Sigma \rightarrow \Sigma$, whose fibers are Legendrian circles in $(T^{*,\infty}\Sigma,\xi_\text{st})$. For a Legendrian submanifold $\Lambda \subset T^{*,\infty}\Sigma$, its image $\pi(\Lambda)$ is generically a curve with only finitely many transverse double points and simple cusp singularities. By virtue of $\Lambda$ being Legendrian, it suffices to specify the curve $\pi(\Lambda)$ and a conormal direction of $\pi(\Lambda)$: a co-oriented front $\pi(\Lambda)$ recovers $\Lambda$. In Figure \ref{fig:Reidemeister}, we use the hair on the strands of $\pi(\Lambda)$ to represent the specified conormal direction. In the following sections, the co-orientation is omitted in the diagram if there is no ambiguity. A Legendrian isotopy induces a front homotopy; there are two front homotopies that we often use, referred to as Reidemeister moves II and III. They are illustrated in Figure \ref{fig:Reidemeister}.

\begin{figure}[h!]
  \centering
  \includegraphics[width=1.0\textwidth]{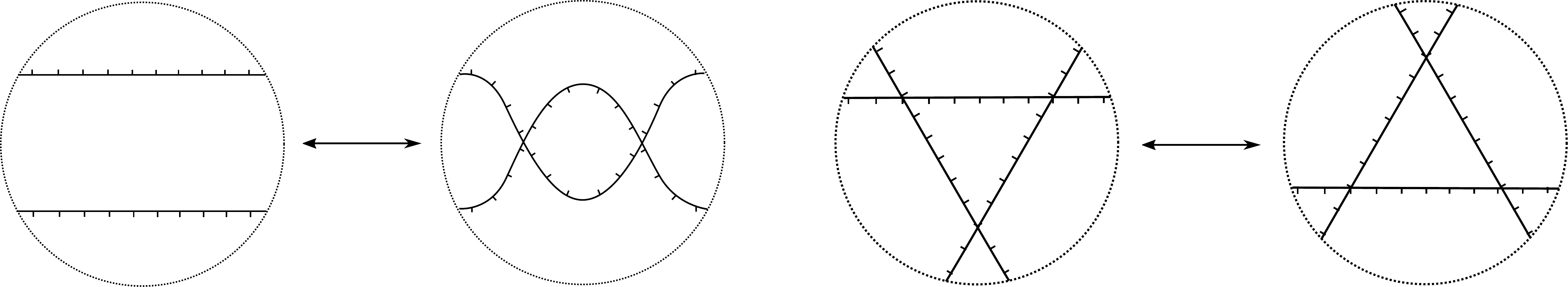}\\
  \caption{The two Legendrian tangles on the left, represented by the co-oriented fronts, are Legendrian isotopic via a Reidemeister~II move, while the Legendrian tangles on the right are Legendrian isotopic via a Reidemeister~III move. The hair on the strands indicates the conormal direction that determines the Legendrian tangles.}\label{fig:Reidemeister}
\end{figure}

\subsection{Conjugate Lagrangians}\label{sec:conjugate}
Let us review the construction of alternating Legendrian links and their conjugate Lagrangian fillings, following \cite{STWZ19}; similar constructions in the cotangent bundles of tori has also appeared in \cite{Hicks20,Matessi21}. We also introduce some new terminologies and technical lemmas not present in \cite{STWZ19} that are necessary to manipulate conjugate Lagrangian fillings and compare them to Legendrian weaves.

\begin{definition}[\cite{STWZ19}*{Definition 4.1}]\label{def:alternating}
    Let $\Lambda \subset T^{*,\infty}\Sigma$ be a Legendrian link whose front projection $\pi|_\Lambda: \Lambda \rightarrow \Sigma$ is an immersion with only transverse double points. By definition, an alternating coloring is a coloring on the connected components of $\Sigma \backslash \pi(\Lambda)$ by black, white or null, such that
\begin{enumerate}
  \item the conormal direction on the boundary of a black (resp.~white) component points inwards (resp.~outwards);
  \item the conormal direction on the boundary of a null component alternates between inward and outward at a crossing.
  \item no black and white components, nor two null components, share an edge.
\end{enumerate}
By definition, a Legendrian $\Lambda$ equipped with an alternating coloring of $\Sigma \backslash \pi(\Lambda)$ is said to be an alternating Legendrian.
\end{definition}
\noindent A null component in an alternating coloring is also called a face in the literature \cite{Gon17Web,CasalsWeng22}. From an alternating coloring we can construct a bi-colored graph, by connecting the black and white vertices (resp.~in the black and white regions) when they share a common crossing.  By Definition \ref{def:alternating}, no edge will connect two vertices of the same color and thus we indeed obtain a bipartite graph. Conversely, given a bipartite graph, we can consider its alternating strand diagram \cite{Pos06AltGraph,GonKen13}, which has a unique alternating Legendrian lift \cite{STWZ19}*{Proposition 4.6}. In that sense, alternating Legendrians are a contact geometric incarnation of the combinatorics of bipartite graphs and their alternating strand diagrams.

\begin{definition}[\cite{STWZ19}*{Definition 4.8}]\label{def:graphical-conj}
    Let $\Lambda \subset T^{*,\infty}\Sigma$ be an alternating Legendrian link. By definition, an exact Lagrangian $L\subset T^*\Sigma$ is said to be a conjugate Lagrangian filling of $\Lambda$ if it satisfies the following properties:
\begin{enumerate}
  \item $\ol{\pi(L)}$ is the closure of all white and black regions, so that $L$ is diffeomorphic to the real blowup of $\ol{\pi(L)}$ at all the crossings.
  \item The intersection of the closure $\ol{L} \subset \ol{T^*\Sigma}$ and the contact boundary is $\ol{L} \cap T^{*,\infty}\Sigma = \Lambda$.
  \item For any neighbourhood $U$ of $\Lambda \subset \ol{T^*\Sigma}$, there is a Hamiltonian isotopy $\varphi_t$ supported on $U$, so that for $r$ sufficiently large, $\varphi_1(L) \cap \{(x, \xi) | |\xi| > r\} = \Lambda \times (r,+\infty)$.
\end{enumerate}
\end{definition}

\noindent It is shown in \noindent \cite[Proposition 4.9]{STWZ19} that an alternating Legendrian link admits a conjugate Lagrangian filling. In order to show its uniqueness, we need to first review the construction:

\begin{prop}[\cite{STWZ19}]\label{prop:conj-exist}
Let $\Lambda \subset T^{*,\infty}\Sigma$ be an alternating Legendrian. Then there exists a conjugate Lagrangian filling for $\Lambda$.
\end{prop}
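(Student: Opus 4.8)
The plan is to construct the conjugate Lagrangian $L$ of an alternating Legendrian $\Lambda \subset T^{*,\infty}\Sigma$ explicitly, working locally over the front projection. The key geometric input is that $\overline{\pi(L)}$ is the union of the closed black and white regions, and $L$ should be the real blowup of this region at the crossings of $\pi(\Lambda)$. So I would first fix, for each black region $B$ and each white region $W$, a local Lagrangian lift into $T^*\Sigma$: over a white region (conormal pointing outward) we take a section given by $df$ for a function $f$ that is increasing toward the boundary, and over a black region ($df$ decreasing toward the boundary, i.e., conormal inward) we take $df$ for a function going to $-\infty$ at the boundary — concretely, near a boundary edge we use a model like the graph of $d(\pm\tfrac{1}{t})$ or $d(\pm\log t)$ where $t$ is the distance to the edge, so that the lift escapes to $|\xi| \to \infty$ in exactly the conormal (co)direction prescribed by the alternating coloring. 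The exactness is automatic since each piece is a graph of an exact 1-form.

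**Gluing at crossings.**

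The main point is the behavior near a transverse double point of $\pi(\Lambda)$. Locally the four regions around a crossing are colored, by the alternating condition, so that opposite regions have the same color and adjacent ones alternate; the two strands of $\pi(\Lambda)$ through the crossing carry the two conormal directions that bound these regions. Over a neighborhood of the crossing, the black lift and the white lift each run off to infinity along one of the two strands, and the real blowup of the plane at the crossing point is an annulus whose two boundary arcs are exactly the two Legendrian strands. So I would write down a single local model — essentially the Lagrangian $\{(x_1,x_2,\xi_1,\xi_2) : \xi_i = \partial_i g\}$ for a suitable $g$ on the blowup, or more robustly the standard "conjugate surface near a crossing" model from \cite{GonKen13,STWZ19} — and check it is embedded, exact, has the right front projection (the closure of the two colored sectors), and near infinity limits onto $\Lambda$ restricted to this chart. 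Then patch these local models over the crossings with the region-lifts over the interiors of the regions using a partition of unity on the primitives; since we are deforming graphs of exact 1-forms, the patched object is still an embedded exact Lagrangian, and the patching can be arranged to be compactly supported away from the crossings and from infinity.

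**Verifying the three axioms of Definition \ref{def:graphical-conj}.**

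Axiom (1) holds by construction: $\overline{\pi(L)}$ is the union of closed black and white regions and $L$ is the real blowup at the crossings (the local models realize the blowup, the region-lifts are diffeomorphic to the region interiors). Axiom (2), that $\overline{L} \cap T^{*,\infty}\Sigma = \Lambda$, follows because the only place $L$ reaches infinity is along boundary edges of black/white regions, where by design the lift escapes precisely in the cooriented conormal direction determined by the coloring — and that cooriented conormal lift of $\pi(\Lambda)$ is exactly $\Lambda$ (using \cite{STWZ19}*{Proposition 4.6} that the alternating lift is the unique Legendrian lift compatible with the coloring). Axiom (3), the cylindrical-at-infinity condition up to a Hamiltonian isotopy supported near $\Lambda$, is where I expect the real work to be: the naive model $d(\pm 1/t)$ does not literally become the cylinder $\Lambda \times (r,\infty)$ for large $|\xi|$, so one must produce a Hamiltonian that, supported in an arbitrarily small neighborhood $U$ of $\Lambda$ in $\overline{T^*\Sigma}$, straightens the end of $L$ to the product form. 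The standard way to do this is to choose the local models so that, after a further function-reparametrization of the escaping coordinate, the 1-jet of the defining primitive already agrees with the cylindrical model to infinite order at the boundary edge, reducing the remaining correction to a $C^\infty$-small compactly-supported-in-$U$ Lagrangian isotopy which is then upgraded to a Hamiltonian isotopy by the exactness (Weinstein/Moser argument in the exact category). The main obstacle is thus bookkeeping: doing this consistently over all edges and all crossings at once, so the Hamiltonian is globally well-defined on $U$ and does not destroy the work done at neighboring crossings. This is precisely the point where care is needed beyond \cite{STWZ19}, and it is what makes the subsequent uniqueness statement (Proposition \ref{prop:conj-unique}) meaningful.
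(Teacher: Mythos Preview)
Your proposal is correct and follows essentially the same architecture as the paper's proof: graphs of $df_B = d\ln(m_B)$ and $df_W = -d\ln(m_W)$ over the colored regions, an explicit local crossing model (the paper writes it as the embedding $(s,t)\mapsto(st,s(1-t),\sqrt{(1-t)/t},\sqrt{t/(1-t)})$ with primitive $2\,\mathrm{sgn}(x)\sqrt{xy}$), and gluing via a partition of unity on the primitives.

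The one place where the paper is cleaner is axiom~(3), and you have slightly overcomplicated it. You propose to arrange infinite-order agreement of the primitive with the cylindrical model along each edge and then invoke a Moser argument, worrying about global consistency of the resulting Hamiltonian across neighboring crossings. The paper avoids this entirely: once $L$ is built, one observes that for $r_0\gg 0$ the end $L\cap\{|\xi|>r_0\}$ lies in a Weinstein tubular neighborhood of the cylinder $\Lambda\times(r_0,\infty)$ as the graph $L_{dF}$ of a single function $F:\Lambda\times(r_0,\infty)\to\bR$. Then a single global cutoff $\beta(r)$ with $\beta'(r)$ small replaces $F$ by $\beta F$, and $L_{d(\beta F)}$ is literally the cylinder for $r$ large. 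No edge-by-edge or crossing-by-crossing bookkeeping is needed; the ``global'' nature of this step is exactly what makes the subsequent uniqueness argument (Proposition~\ref{prop:conj-unique}) work as well.
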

\begin{proof}
    For each black, resp.~white, region $B$, resp.~$W$, consider a function $m_{B}: \ol{B} \rightarrow [0,+\infty)$ such that $m_{B}^{-1}(0) = \partial B$, resp.~$m_{W}: \ol{W} \rightarrow [0,+\infty)$ such that $m_{W}^{-1}(0) = \partial W$, each $m_B$ and $m_W$ is smooth, and 0 is a regular value. Define the functions
    $$f_B(x, y) := \ln(m_B(x, y)), \,\,\, f_W(x, y) := -\ln(m_W(x, y)).$$
    Away from each crossing, the graphs $L_{df_{B}},L_{df_{W}}\sse T^*\Sigma$ of the differentials of $f_B,f_W$ give an exact embedded Lagrangian submanifolds of $(T^*\Sigma,d\la_{st})$. Near the crossing, suppose that the front $\pi(\Lambda)$ coincides with the coordinate axes, $B$ is the first quadrant and $W$ is the third quadrant. Consider the exact Lagrangian embedding
    $$i_\times: \mathbb{R}\times (0, 1) \rightarrow T^*\mathbb{R}^2; \,\,\,\, (s, t) \mapsto \left(st, s(1-t), \sqrt{\frac{1-t}{t}}, \sqrt{\frac{t}{1-t}}\right),$$
    whose the primitive is $f_\times(s, t) = 2s\sqrt{t(1 - t)}$. By changing variables, let
    $$f_\times(x, y) = 2\mathrm{sgn}(x)\sqrt{xy}.$$
    Then $i_\times$ can be viewed as a smooth realization of the Lagrangian defined by the graph of the differential $L_{df_\times}$. Now we can glue the functions $f_B, f_W$ and $f_\times$ together, by using a partition of unity on the closure of all black and white regions, so that we obtain a smooth function  $f$ such that the graph of the differential $L_{df}$ defines an exact Lagrangian embedding $L\sse(T^*\Sigma,\lambda_{st})$ satisfying conditions (1) \& (2).\\

    \noindent We still need to check the condition~(3). Because of the defining functions we have chosen, for sufficiently large $r_0\in\R_+$, the intersection $L \cap \{(x, \xi)\in T^*\Sigma: |\xi| > r_0\}$ is contained in a Weinstein tubular neighbourhood $U$ of the cylinder $\Lambda \times (r_0,\infty)$ as a the differential of a function $F: \Lambda \times (r_0, \infty) \rightarrow \bR$. Thus locally in $U$, the Lagrangian $L$ coincides with
    $$L_{dF} \subset U \cong \{(y, \eta) \in T^*(\Lambda \times (r_0,\infty)) \,:\, |\eta| < \epsilon\},$$
    for small enough $\epsilon\in\R_+$. Choose a cut-off function $\beta: \mathbb{R}_+ \rightarrow [0, 1]$ such that $\beta(r) = 1$ when $r \leq r_0$, $\beta(r) = 1$ when $r$ is sufficiently large, and $\beta'(r)$ is sufficiently small. Then $L_{dF}$ is Hamiltonian isotopic to $L_{d(\beta F)}$ and, for some $r \gg r_0$,
    $$L_{d(\beta F)} \cap \{(x, \xi)\in T^*\Sigma: |\xi| > r\} = \Lambda \times (r, +\infty).$$
    Hence the Lagrangian $L$ satisfies condition~(3) and the proof is completed.
\end{proof}

Let us now show that for a given alternating Legendrian link, the conjugate Lagrangian filling of the alternating Legendrian is unique up to Hamiltonian isotopy, which is implicitly assumed but not explained or proved in \cite{STWZ19}. In fact, the following stronger statement holds:
    
\begin{prop}\label{prop:conj-unique}
    Let $\Lambda \subset T^{*,\infty}\Sigma$ be an alternating Legendrian. Then the space of conjugate Lagrangian fillings of $\Lambda$ is weakly contractible.
\end{prop}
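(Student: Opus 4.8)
The plan is to show that the space $\mathcal{F}(\Lambda)$ of conjugate Lagrangian fillings of $\Lambda$, suitably topologized as a subspace of the space of exact Lagrangian embeddings satisfying conditions (1)--(3) of Definition \ref{def:graphical-conj}, is nonempty (which is Proposition \ref{prop:conj-exist}) and that every sphere $S^k\to\mathcal{F}(\Lambda)$ is null-homotopic. The natural strategy is to filter the conditions defining a conjugate filling and contract along the construction itself. First I would observe that condition (1) pins down the diffeomorphism type and, more importantly, the \emph{projection} $\overline{\pi(L)}$: any conjugate filling is, away from the crossings, a Lagrangian section $L_{df}$ over the interior of the black and white regions for some function $f$ that tends to $+\infty$ on the boundary of black regions and $-\infty$ on the boundary of white regions (the sign forced by the coorientation condition in Definition \ref{def:alternating}(1) together with condition (2)), and near each crossing is modelled on the fixed local model $i_\times$ up to the relevant symplectomorphism. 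So the core claim is that the space of such ``defining data'' (the functions $f_B$, $f_W$, the partition of unity, and the cut-off $\beta$ near infinity from the proof of Proposition \ref{prop:conj-exist}) is weakly contractible, and that passing from this data to the Lagrangian is a fibration or at least a weak homotopy equivalence onto $\mathcal{F}(\Lambda)$.

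The key steps, in order. (i) Identify $\mathcal{F}(\Lambda)$ with a space of pairs: a smooth function $f$ on the real blowup $\widehat{\Sigma}_0$ of the closure of the black/white regions at the crossings, with prescribed logarithmic asymptotics along the preimages of $\partial B$ and $\partial W$, and prescribed behaviour near the crossing points matching $f_\times(x,y)=2\operatorname{sgn}(x)\sqrt{xy}$ up to adding a smooth function vanishing to high order; plus the auxiliary data needed to realize condition (3) near $T^{*,\infty}\Sigma$. Two remarks make this identification legitimate: an exact Lagrangian that is a graph is determined by its primitive up to a constant, and the constant can be fixed, e.g.\ by a normalization, so the space of such Lagrangians is literally a space of functions. (ii) Show this function space is convex, or at least star-shaped: given two admissible $f_0,f_1$ with the same asymptotics and the same local crossing models, the linear interpolation $f_t=(1-t)f_0+tf_1$ is again admissible --- the logarithmic singularities are identical so they cancel in the difference, the difference is a genuine smooth function on $\widehat{\Sigma}_0$, and $L_{df_t}$ remains an embedding because near infinity and near the crossings all $f_t$ agree with the fixed models, while on the compact part the graph is automatically embedded. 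Convexity gives contractibility of the function space, hence weak contractibility of $\mathcal{F}(\Lambda)$. (iii) Deal with the freedom in the ``model'' choices (the scale $r_0$, the cut-off $\beta$, the local coordinates near crossings, the Weinstein neighbourhood identifications): each of these ranges over a contractible space, so one can either incorporate them into the function-space description or run a straightforward fibration argument with contractible fibre and contractible base.

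The main obstacle I expect is step (i)--(ii): making rigorous the claim that an arbitrary conjugate filling --- which is only \emph{smoothly} prescribed by condition (1) and only required to be Hamiltonian-isotopic near $\Lambda$ to the cylindrical model by condition (3), not equal to it --- can be normalized to lie in the convex function space. Condition (3) only furnishes a Hamiltonian isotopy supported near infinity making $L$ cylindrical, so strictly speaking $\mathcal{F}(\Lambda)$ is a space of Lagrangians each of which is equivalent-near-infinity to a cylinder, not one on the nose; one must promote this to a statement about the whole space, i.e.\ choose the normalizing isotopies continuously in the parameter. I would handle this by first retracting $\mathcal{F}(\Lambda)$ onto the subspace of fillings that are genuinely cylindrical outside a compact set (using a parametrized version of the cut-off argument in the proof of Proposition \ref{prop:conj-exist}, together with the contractibility of the space of compactly-supported Hamiltonian isotopies realizing condition (3) --- or a Moser-type argument), and then running the convexity argument on that subspace where the behaviour near infinity is rigid. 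A secondary technical point is the transversality/embeddedness of $L_{df_t}$ near the crossings under interpolation, but since all admissible data agree with the single fixed local model $i_\times$ there (up to high-order vanishing terms, which can themselves be interpolated without destroying the embedding), this is routine. Once the reduction to the rigid-at-infinity, fixed-at-crossings subspace is done, weak contractibility follows from convexity of the remaining function space, and weak contractibility in particular implies uniqueness up to Hamiltonian isotopy via the $h$-principle / Weinstein neighbourhood theorem identifying path components with Hamiltonian isotopy classes.
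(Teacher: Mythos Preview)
Your approach away from the crossings is essentially the paper's: there a conjugate filling is a graph $L_{df}$ over the interior of each black/white region, and the space of such primitives with the prescribed logarithmic divergence is convex, hence contractible. The paper likewise deals with the behaviour at infinity by a parametrized cut-off argument, much as you outline.

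The genuine gap is at the crossings. Definition~\ref{def:graphical-conj}(1) only demands that $\overline{\pi(L)}$ be the closure of the black and white regions and that $L$ be diffeomorphic to the real blowup there; it does \emph{not} say that $L$ agrees with the model $i_\times$ up to a high-order correction, nor even that $L$ is the graph of anything in a neighbourhood of a crossing (indeed it cannot be, since the projection is $2$-to-$1$ along an arc over the origin). So your identification of $\mathcal{F}(\Lambda)$ with a convex function space fails precisely in those neighbourhoods, and the claim that interpolation preserves embeddedness there is unjustified. What actually needs to be shown is that the space of exact Lagrangian strips in $T^*\bD^2$ with the prescribed boundary behaviour is contractible, and this is not a convexity statement. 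The paper handles this by recasting the problem: after smoothing the corners of $\partial_\infty(T^*U)$ for a small ball $U$ around a crossing, the boundary of $L\cap T^*U$ is a standard Legendrian unknot in $(S^3,\xi_\text{std})$, and the contractibility of the space of Lagrangian disk fillings of that unknot is exactly Eliashberg--Polterovich \cite{EliPol96}. Your proposal treats this step as ``routine'', but it is the substantive input that makes the argument work.
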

\begin{proof}
By conditions (1) \& (2) in Definition \ref{def:graphical-conj}, conjugate Lagrangian fillings $L \subset T^*\Sigma$ of $\La$ are graphs of exact 1-forms away from the crossings of the front $\pi(\Lambda) \subset \Sigma$. Fix a standard conjugate Lagrangian filling $L_\text{std} \subset T^*\Sigma$ as constructed in Proposition \ref{prop:conj-exist}, and let $U \subset \Sigma$ be a disjoint union of small open balls around the crossings in $\pi(\Lambda) \subset \Sigma$. Consider a smooth family of conjugate Lagrangians $L_t \subset T^*\Sigma,$ $t \in \partial \bD^k$. We need to build an extension of this family to a family of conjugate Lagrangians  conjugate Lagrangians $L_t \subset T^*\Sigma,$ $t \in \bD^k$. We first consider such contractibility of the space of the conjugate Lagrangian fillings away from crossings, and then consider contractibility near crossings.
    
\noindent First, we claim that the $\partial \bD^k$-family of conjugate Lagrangians $L_t \cap T^*(\Sigma \backslash U)$ extends to a smooth $\bD^k$-family of conjugate Lagrangians in $T^*(\Sigma \backslash U)$ such that for $0 \in \bD^k$
    $$L_0 \cap T^*(\Sigma \backslash U) = L_\text{std} \cap T^*(\Sigma \backslash U).$$
    Indeed, this is because $L_t \cap T^*(\Sigma \backslash U)$ are graphs of exact 1-forms in the black (resp.~white regions) and the space of smooth functions on the black (resp.~white) regions that converge to infinity near the boundary is contractible. In particular, this implies that the $\partial \bD^k$-family $L_t \cap T^*\Sigma$, $t\in\partial \bD^k$, extends to a family with parameter $t\in\partial \bD^k \times [0, 1]$ such that
    $$L_t \cap T^*(\Sigma \backslash U) = L_\text{std} \cap T^*(\Sigma \backslash U), \;\; \forall\, t \in \partial \bD^k \times \{0\}.$$
    \noindent This addresses the situation in $\Sigma\backslash U$, away from the crossings of $\pi(\La)$.\\
    
    Second, consider a $\partial \bD^k$-family of conjugate Lagrangians $L_t'$, $t\in\partial \bD^k$, and assume that $L_t' \cap T^*(\Sigma \backslash U) = L_\text{std} \cap T^*(\Sigma \backslash U)$. We now need to show that the $\partial \bD^k$-family of conjugate Lagrangians $L_t' \cap T^*U$ extends to a $\bD^k$-family of conjugate Lagrangians in $T^*U$ such that for $0 \in \bD^k$
    $$L_0' \cap T^*U = L_\text{std} \cap T^*U.$$
    For that, note that $T^*U$ is a Liouville domain with piecewise smooth contact boundary 
    $$\partial_\infty(T^*U) = T^*U|_{\partial U} \cup_{T^*(\partial U)} T^{*,\infty}U,$$
    and $L_t' \cap \partial_\infty(T^*U)$ is a piecewise Legendrian knot where
    \[\begin{split}
    L_t' \cap T^*\bD^2|_{\partial \bD^2} =& \,\{(\theta, \sqrt{\tan\theta}, 1/\sqrt{\tan\theta}) | \theta \in (0, \pi/2) \cup (\pi, 3\pi/2)\}, \\
    L_t' \cap T^{*,\infty}\bD^2 &= \{(x, 0, 0, \eta) | \eta > 0\} \cup \{(0, y, \xi, 0) | \xi > 0\}.
    \end{split}\]
    By smoothing the corners of $\partial_\infty(T^*U)$, we get a standard contact sphere $(S^3,\xi_\text{std})$ and the intersection $L_t' \cap S^3$ is the standard Legendrian unknot with maximal Thurston-Bennequin number, since it has a Lagrangian disk filling. By \cite[Theorem 1.1.B]{EliPol96}, the space of Lagrangian disks that fill the Legendrian unknot is contractible, which completes the proof.
\end{proof}

%\noindent We denote the positive half twist by $\Delta := \mathtt{w}_{0,n} = (s_{1})(s_{2}s_{1})(s_{3}s_{2}s_{1})\dots \dots (s_{n-1}s_{n-2} \dots s_{1})$. By \cite{STWZ19}*{Theorem 5.4}, a large class of Legendrian links have alternating representatives:

%\begin{thm}[\cite{STWZ19}*{Theorem 5.4}]
%    Let $\Sigma$ be a closed smooth surface, $\{x_1, ..., x_m\}$ a non-empty collection of marked points, $\Sigma_\text{op} := \Sigma \backslash \{x_1, ..., x_m\}$, and $\beta_i\in\mbox{Br}^+$ a choice of positive braid around each marked point $x_i$, $i\in[1,m]$. If $\Sigma$ has genus zero and $m = 1$, assume in addition that $\beta_1 = \beta_0\Delta^2$, $\beta_0\in\mbox{Br}^+$, where $\Delta$ denotes a half-twist.\\
    
%    \noindent Then the associated Legendrian $\Lambda_{\beta} = \sqcup_{i=1}^m\Lambda_{\beta_i}$ in $T^{*,\infty}\Sigma_\text{op}$ has an alternating Legendrian representative.{\RC Have these Legendrians $\La_\beta$ been defined?}
%\end{thm}

\subsubsection{Conjugate Fillings from $n$-triangulations and positive braids}\label{ssec:triangle-positive} By \cite{STWZ19}*{Theorem 5.4}, a large class of Legendrian links have alternating representatives. Among them, two specific classes of conjugate Lagrangians that have appeared in the literature are those associated to $n$-triangulations \cite{FockGon06a,Gon17Web} and those associated to positive braids \cite{Pos06AltGraph,FPST17MorseMut}. Let us discuss them in detail.\\

{\bf $n$-triangulations and $A_n^*$-graphs}. Consider a smooth surface $\Sigma$ equipped with marked points $\{x_1, ..., x_m\}$. An ideal triangulation of the marked surface $(\Sigma,\{x_1, ..., x_m\})$ is a triangulation of $\Sigma$ with all the vertices in $\{x_1, ..., x_m\}$. Fix such an ideal triangulation and, for each triangle, consider a refined $n$-triangulation that divides the triangle into $n^2$ smaller triangles, see \cite[Section 1.15]{FockGon06a}. We label the small triangles of the resulting $n$-triangulation as white or black so that every triangle on the boundary of the original triangle is white, and no triangles of the same color share an edge. Then we label the $n$ edges in each boundary of the triangle as black. The dual bipartite graph $\bG_n^*\subset\Sigma$ of the refined $n$-triangulation is constructed by connecting the black and white vertex when the edge is contained in the triangle. This graph is called the $A_n^*$-graph \cite{Gon17Web}*{Section 3.2.2} and it is depicted in Figure \ref{fig:triangle-conj} left.\\

\noindent The alternating strand diagram of the plabic graph $\bG_n^*$ defines an alternating Legendrian in the unit cotangent bundle of the punctured surface $T^{*,\infty}\Sigma_\text{op} = T^{*,\infty}(\Sigma \backslash \{x_1, ..., x_m\})$. Let $\bigcirc_n(x_i)$ be the Legendrian $n$-satellite of an inward unit conormal of a circle centered at $x_i$, whose front projection is $n$ concentric circles centered at $x_i$. Then the alternating Legendrian link $\Lambda(\bG_n^*)$ associated to $\bG_n^*$ is Legendrian isotopic to $\bigcirc_n = \bigcup_{i=1}^m\bigcirc_n(x_i)$. A piece of this is depicted in Figures \ref{fig:triangle-conj} center and right. The conjugate Lagrangian filling $L(\bG_n^*)$ associated to this plabic graph lives in $T^{*}\Sigma_\text{op}$.\\

\begin{figure}[h!]
  \centering
  \includegraphics[width=0.95\textwidth]{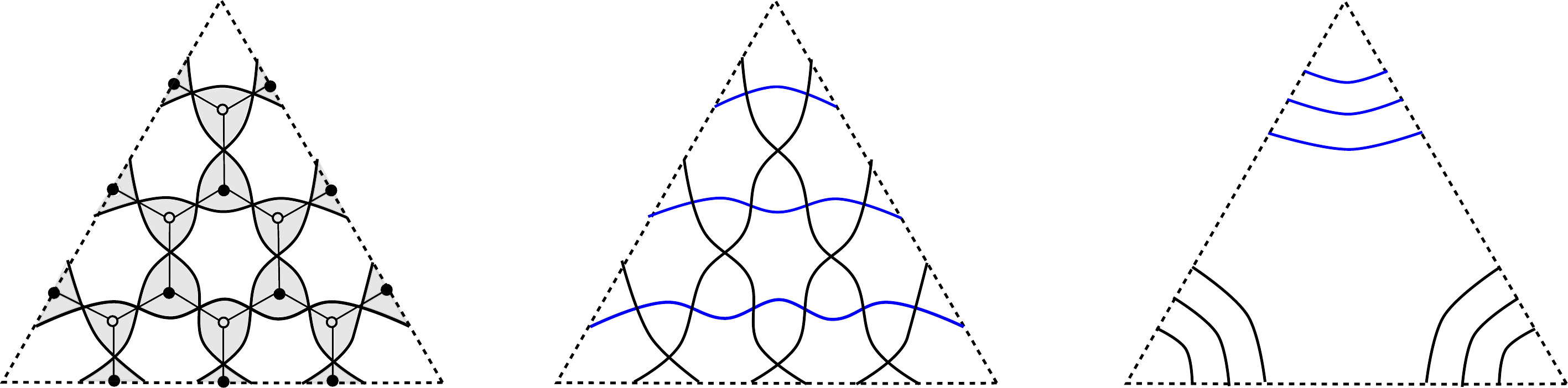}\\
  \caption{The $A_n^*$-bipartite graph $\bG_n^*$ in a single triangle and its corresponding alternating Legendrian and conjugate Lagrangian on the left. The alternating Legendrian $\Lambda(\bG_n^*)$ in the middle is Legendrian isotopic to the link $\bigcirc_n$ on the right, which is the Legendrian $n$-satellite of inward unit conormal of circles centered at the vertices $\{x_1, ..., x_m\}$. Here $n=3$.}\label{fig:triangle-conj}
\end{figure}

There is a variation of the above, in terms of $A_n$-graphs. In this case, we still consider a smooth surface $\Sigma$ equipped with marked points $\{x_1, ..., x_m\}$ and an associated ideal triangulation with the refined $n$-triangulation. Nevertheless, we now label the small triangles as black or null so that every triangle on the boundary of the original large one is black, and no triangles of the same color share an edge, and label the $(n+1)(n+2)/2$ vertices of the triangles as white. The plabic graph $\bG_n$ of the refined $n$-triangulation, also called the $A_n$-graph \cite{Gon17Web}*{Section 3.2.1}, is defined by connecting the black and white vertex when the edge is contained in the triangle. See Figure \ref{fig:triangle-conj-part} left.\\ 
    
\noindent In this case, the alternating strand diagram of $\bG_n$ defines an alternating Legendrian $\Lambda(\bG_n) \subset T^{*,\infty}\Sigma$, Legendrian isotopic to $\bigcirc_{n-1} = \bigcup_{i=1}^m\bigcirc_{n-1}(x_i)$. Pieces of these fronts are depicted in Figure \ref{fig:triangle-conj-part} center and right. Note that, in contrast with the case of the $A_n^*$-bipartite graph $\bG_n^*$, the conjugate Lagrangian filling $L(\bG_n)$ no longer lives in $T^*\Sigma_\text{op}$, but instead only lives in the cotangent bundle of the closed surface $T^*\Sigma$. Indeed, the vertices $\{x_1, ..., x_m\}$ of the ideal triangulations are in null regions for the conjugate surface associated to $\Gamma_n^*$, but the vertices $\{x_1, ..., x_m\}$ belong to black regions for the conjugate surface associated to $\bG_n$.

\begin{figure}[h!]
    \centering
    \includegraphics[width=0.9\textwidth]{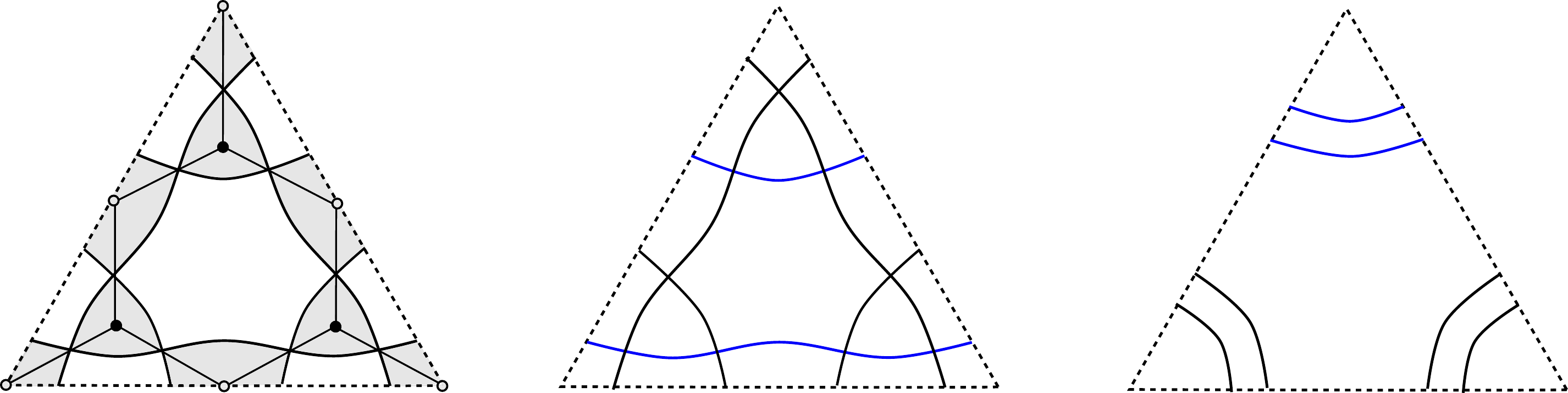}\\
    \caption{The $A_n$-bipartite graph $\bG_n$ in a single triangle and its corresponding alternating Legendrian and conjugate Lagrangian on the left. The alternating Legendrian $\Lambda(\bG_n)$ in the middle is Legendrian isotopic to the link $\bigcirc_{n-1}$ on the right. Here $n=3$.}
    \label{fig:triangle-conj-part}
\end{figure}

{\bf Positive braid closures}. Consider a positive braid of $n$-strands of the form $\beta \Delta^2$, where $\beta \in\mbox{Br}_n^+$ belongs to the positive braid monoid in $n$-strands and 
$\Delta := \mathtt{w}_{0,n} = (s_{1})(s_{2}s_{1})(s_{3}s_{2}s_{1})\dots \dots (s_{n-1}s_{n-2} \dots s_{1})$
is the positive half twist. This positive braid $\beta\Delta^2$ defines an annular front in $S^1\times\R$ and thus a Legendrian link $\La(\beta\Delta^2)\subset(J^1S^1,\xi_{st})$, see \cite[Section 2.2]{CasalsNg21}. By the Legendrian neighborhood theorem, $(J^1S^1,\xi_{st})$ can be identified with a neighborhood of any Legendrian knot in  $T^{*,\infty}\bR^2$ such that the zero-section in $(J^1S^1,\xi_{st})$ is identified with that Legendrian. In particular we can choose either of the following two Legendrian knots:

\begin{itemize}
    \item[(i)] The Legendrian cotangent fiber of $T^{*,\infty}\bR^2$. This Legendrian embedding is Legendrian isotopic to the embedding $S^1 \hookrightarrow T^{*,\infty}\bR^2$ as the outward unit conormal bundle of a disk, see Figure \ref{fig:braidclosure} (left). The Legendrian link $\La(\beta\Delta^2)\subset(J^1S^1,\xi_{st})$ is then satellited to a Legendrian link $\La_{\beta\Delta^2} \subset T^{*,\infty}\bR^2$. A front in $\R^2$ for $\Lambda_{\beta\Delta^2}$ is depicted in Figure \ref{fig:braidclosure} (right).\\
    
    \item[(ii)] The standard Legendrian unknot $S^1 \hookrightarrow (\bR^3,\xi_{st}) \hookrightarrow T^{*,\infty}\bR^2$.\footnote{Note that the outward unit conormal bundle of a disk and the standard unknot are Hamiltonian isotopic after compactifying the contact manifold $T^{*,\infty}\bR^2$ to $S^3$, and thus so are $\Lambda_{\beta\Delta^2}$ and $\Lambda_{\beta\Delta^2}'$.} Figure \ref{fig:NgBraidclosure} (left) depicts a front for the standard Legendrian unknot. The Legendrian link $\La(\beta\Delta^2)\subset(J^1S^1,\xi_{st})$ is satellited to a Legendrian link $\La'_{\beta\Delta^2} \subset T^{*,\infty}\bR^2$. A front for $\La'_{\beta\Delta^2}$ is depicted in Figure \ref{fig:NgBraidclosure} (center).
\end{itemize}

\noindent Note that $\La'_{\beta\Delta^2}$ is null-homologous in $T^{*,\infty}\bR^2$ whereas $\La_{\beta\Delta^2}$ is not. It follows from \cite{Ng01Satellite} that the Legendrian link $\Lambda_{\beta\Delta^2}' \subset T^{*,\infty}\bR^2$ is Legendrian isotopic to the rainbow braid closure $\Lambda_{\beta}^\prec$ of the positive braid $\beta$ in $\bR^3 \subset T^{*,\infty}\bR^2$; see Figure \ref{fig:NgBraidclosure} (right).\\

 In addition to the satellites $\La_{\beta\Delta^2}$ and $\La'_{\beta\Delta^2}$, there is a third construction of a Legendrian link from a positive braid $\beta$. Indeed, a braid word $\beta = s_{i_1}\dots s_{i_k}$ determines an alternating Legendrian representative of $\Lambda_\beta$ as follows. Begin with a bicolored graph $\bG_{\beta}$ in the plane consisting of $k$ horizontal line segments running from $(0, i) \in \mathbb{R}^2$ to $(k + 1, i) \in \mathbb{R}^2$, $1 \leq i \leq n$, with white vertices at both ends. For $1 \leq j \leq k$, insert a vertical segment along the line $y = j$ connecting the line $x = i_j$ to the line $x = 1 + i_j$, with a black vertex at its top and a white vertex at its bottom. We can add extra white and black vertices in each horizontal strand to obtain a bipartite graph $\bG_{\beta}$. These plabic graphs are called plabic fences \cite{FPST17MorseMut} and the associated alternating Legendrian $\Lambda(\bG_{\beta})$ is depicted in Figure \ref{fig:braid-conj} (left). From the resulting alternating Legendrian link $\Lambda(\bG_{\beta})$, one obtains the front projection of $\Lambda_{\beta\Delta^2}$ in $T^{*,\infty}\bR^2$ by sliding all upward co-oriented strands (the blue strands) to the top via Reidemeister moves (Figure \ref{fig:braid-conj} right).\footnote{In Figure \ref{fig:braid-conj}, for the plabic fence, we are labeling the strands from top to bottom ($s_1$ is the top crossing while $s_{n-1}$ is the bottom one), while for the Legendrian cylindrical or rainbow braid closure, we are labeling the strans from outside to inside ($s_1$ is the outermost crossing while $s_{n-1}$ is the innermost one).} The advantage of this construction is that the alternating strand diagram automatically comes equipped with the conjugate Lagrangian filling. Thus, presenting a Legendrian knot $\La_{\beta\Delta^2}$ through a plabic fence endows it with an embedded exact Lagrangian filling. In Section \ref{sec:main_proofs} we discuss how to obtain these same Lagrangian fillings through weaves and, independently, through pinching sequences.

\begin{figure}[h!]
  \centering
  \includegraphics[width=0.5\textwidth]{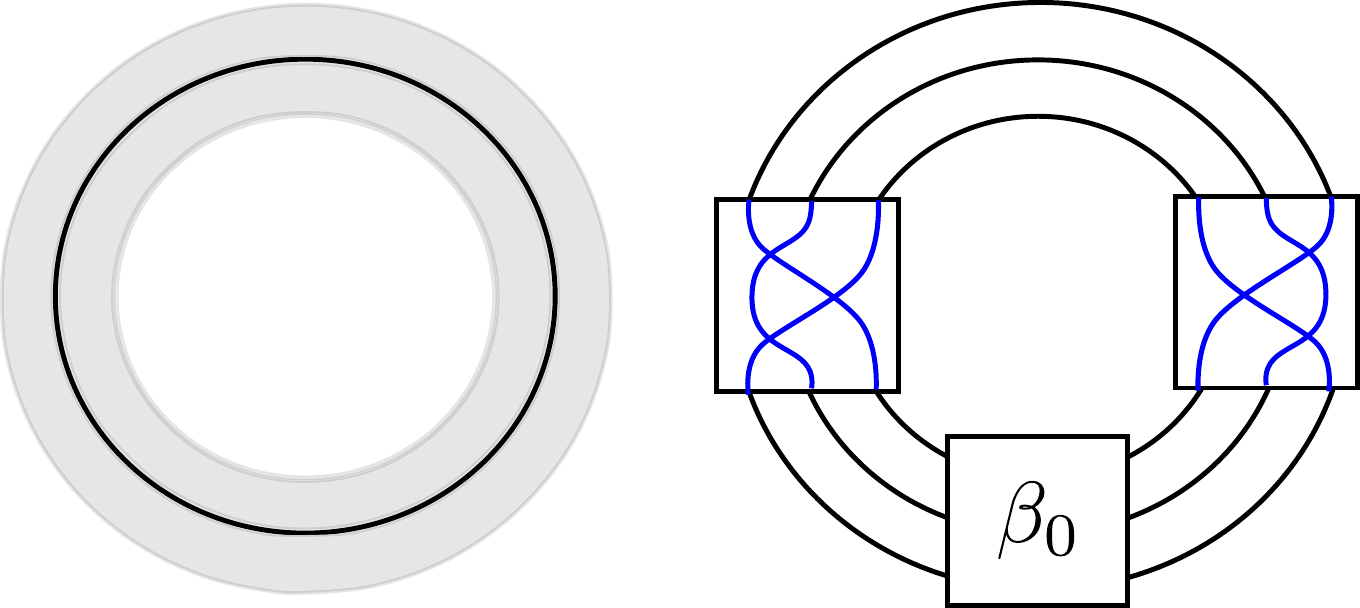}\\
  \caption{The grey region shown on the left is the image of $J^1(S^1)$ under the front projection, where $S^1$ is the outward unit conormal bundle of a disk. The Legendrian link on the right is $\Lambda_{\beta\Delta^2} \subset J^1(S^1) \subset T^{*,\infty}\bR^2$, where each collection of the blue strands is a copy of $\Delta$.}\label{fig:braidclosure}
\end{figure}
\begin{figure}[h!]
  \centering
  \includegraphics[width=0.85\textwidth]{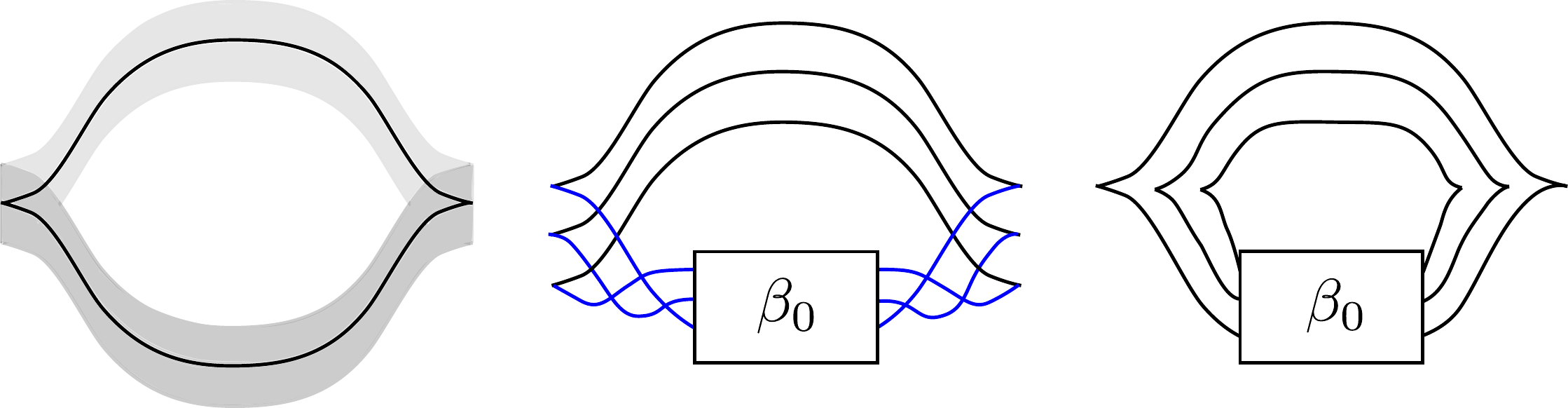}\\
  \caption{The grey region shown on the left is the image of $J^1(S^1)$ under the front projection, where $S^1$ is the Legendrian unknot $S^1 \hookrightarrow \bR^3 \hookrightarrow T^{*,\infty}\bR^2$. The picture in the middle is $\Lambda_{\beta\Delta^2}' \subset J^1(S^1) \subset T^{*,\infty}\bR^2$ in the neighbourhood of the unknot, where the blue strands on the left/right each gives a copy of $\Delta$. After doing Reidemeister~II moves, the Legendrian link in the middle is Legendrian isotopic to the link on the right, which is the Legendrian rainbow closure $\Lambda_{\beta}^\prec$ of $\beta$ in $\bR^3 \subset T^{*,\infty}\bR^2$.}\label{fig:NgBraidclosure}
\end{figure}
\begin{figure}[h!]
  \centering
  \includegraphics[width=0.95\textwidth]{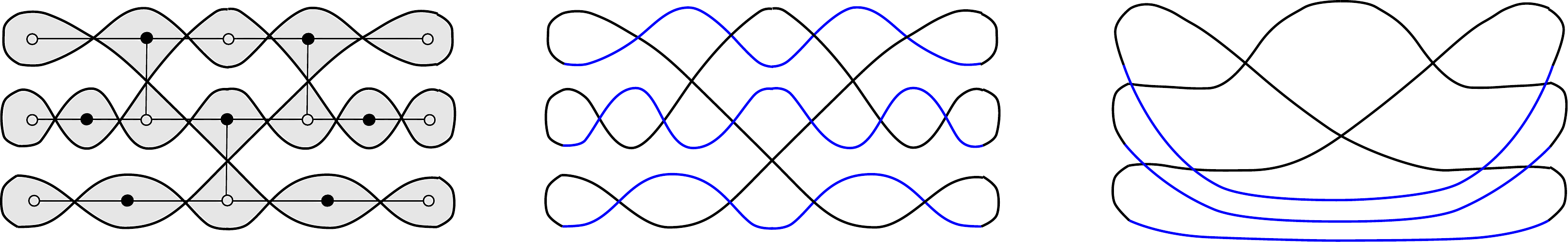}\\
  \caption{The bipartite graph $\bG_{\beta}$ associated to the braid $\beta$, its corresponding alternating Legendrian link $\Lambda(\bG_{\beta})$ and conjugate Lagrangian on the left. The alternating Legendrian link $\Lambda(\bG_{\beta})$ in the middle is Legendrian isotopic to $\Lambda_{\beta\Delta^2}$ on the right by pulling all blue strands to the bottom.}\label{fig:braid-conj}
\end{figure}

\begin{ex}[Grid plabic graphs]\label{ex:gridplabic}
    In \cite{CasalsWeng22}*{Section 2.1} plabic fences are generalized to grid plabic graphs by allowing internal lollipops (one-valent vertices). The alternating strand diagrams of grid plabic graphs similarly define Legendrian links equipped with conjugate Lagrangian fillings. Note that there are many Legendrian knots which are alternating strand diagrams of grid plabic graphs but not of plabic fences, see \cite{CasalsWeng22}*{Section 2.5}.\footnote{We remark that the convention of plabic fences in \cite{CasalsWeng22} is different from ours due to the black vertices on the right of each strand \cite{CasalsWeng22}*{Section 2.1}. The alternating Legendrians they get are Legendrian isotopic to $\Lambda_{\beta\Delta^2}' \subset T^{*,\infty}\bR^2$ instead of $\Lambda_{\beta\Delta^2} \subset T^{*,\infty}\bR^2$.}\hfill$\Box$
\end{ex}

\subsubsection{Basis of $H_1(L,\Z)$ for conjugate Lagrangians}\label{sec:H1cycle-conj}
    Given any alternating Legendrian representative and its conjugate Lagrangian filling, there is a combinatoric way to define 1-cycles in the conjugate Lagrangian, as follows.

\begin{definition}\label{def:H1cycle-conj}
    Let $\La\subset T^{*,\infty}\Sigma$ be an alternating Legendrian link, $\Sigma$ a closed oriented marked surface and $\Sigma_\text{op} = \Sigma \backslash \{x_1, ..., x_m\}$ the complement of its marked points. A null component $F$ of the alternating coloring of $\Sigma$ is called an internal null component if it is contained in $\Sigma_\text{op}$.
    
    For a null region $F$ of the alternating coloring of $\Sigma$, we denote by $\gamma_{F,\pi} \in H^1(\pi(L(\bG)); \bZ)$ the 1-cycle in the projection of $L(\bG)$ given by the oriented boundary of $F$. By definition, the 1-cycle $\gamma_F \in H_1(L(\bG); \bZ)$ is the lift of $\gamma_{F,\pi}$ to the conjugate Lagrangian filling $L(\bG)$.
\end{definition}

\begin{lemma}[\cite{Gon17Web}*{Section 2.5}]\label{lem:H1cycle-conj}
    Let $\Sigma_\text{op} = \Sigma \backslash \{x_1, ..., x_m\}$ be a surface with punctures where $\Sigma$ is a closed oriented surface. Then there are exact sequences
    \begin{gather*}
        0 \lr H_2(\Sigma; \bZ) \lr \bZ[\{\gamma_F |F\,\text{null components}\}] \lr H_1(L(\bG); \bZ) \lr H_1(\Sigma; \bZ) \lr 0.\\
        0 \lr \bZ[\{\gamma_F |F\,\text{internal null components}\}] \lr H_1(L(\bG); \bZ) \lr H_1(\Sigma_\text{op}; \bZ) \lr 0.
    \end{gather*}
    In particular, $\sum_{F\,\text{null components}}\gamma_F = 0.$
\end{lemma}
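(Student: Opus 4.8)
The statement to prove is Lemma~\ref{lem:H1cycle-conj}, describing the homology of the conjugate Lagrangian via the null-region cycles.

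\medskip

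\textbf{Plan.} The conjugate Lagrangian $L(\bG)$ is, by Definition~\ref{def:graphical-conj}(1), the real blowup of $\ol{\pi(L(\bG))}$ — the union of the closures of the black and white regions — at the crossings of $\pi(\La)$. My strategy is to compute $H_*(L(\bG))$ by relating this blown-up surface-with-boundary to the ambient surface $\Sigma$ through a Mayer--Vietoris / pair argument, using the null regions as the "complementary" pieces. First I would decompose $\Sigma$ as the union of $N := \overline{\bigcup_F F}$ (the closure of all null regions) and $P := \ol{\pi(L(\bG))}$ (the closure of black $\cup$ white regions), glued along a graph $G_0$ which is essentially the alternating strand diagram $\pi(\La)$; note $P$ deformation retracts onto the bipartite graph $\bG$ and $L(\bG)$ deformation retracts onto it too away from crossings, while the blowup replaces each crossing (a $4$-valent point) by a small interval, i.e.\ $L(\bG) \simeq P$ as the relevant homotopy type is unchanged. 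The key observation is that each null region $F$ is a disk (or more precisely contributes a single relative $2$-cell) whose attaching map to $L(\bG)$ is exactly the lift $\gamma_F$ of $\partial F$, so that collapsing all null regions realizes $\Sigma$ (or $\Sigma_\text{op}$) as a relative CW complex built from $L(\bG)$ by attaching $2$-cells along the $\gamma_F$.

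\medskip

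\textbf{Key steps, in order.} (1) Identify $L(\bG)$ up to homotopy with $\ol{\pi(L(\bG))}$, the union of closed black and white regions, via the deformation retraction that undoes the real blowup at each crossing. (2) Build a CW structure on $\Sigma$ in which $L(\bG)$ is a subcomplex and the remaining cells are: one $2$-cell for each null region $F$ (attached along $\gamma_{F,\pi}$, hence along $\gamma_F$ after the identification in (1)), together with the lower cells already on $\partial F$ which lie in $L(\bG)$. (3) Write the cellular chain complex of the pair $(\Sigma, L(\bG))$: it has $C_2(\Sigma,L(\bG)) = \bZ[\{\gamma_F\}]$ and is zero in other degrees, so the long exact sequence of the pair reads
\[
0 \to H_2(L(\bG)) \to H_2(\Sigma) \xrightarrow{\partial} \bZ[\{\gamma_F\}] \xrightarrow{\p} H_1(L(\bG)) \to H_1(\Sigma) \to H_0(L(\bG))\to H_0(\Sigma)\to 0,
\]
where $\p(\gamma_F) = \gamma_F$. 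Since $L(\bG)$ is an open orientable surface (it has boundary at infinity), $H_2(L(\bG)) = 0$, and since $L(\bG)\to\Sigma$ is surjective on $\pi_0$ the last map $H_0(L(\bG))\to H_0(\Sigma)$ is an isomorphism; this yields the first exact sequence, with the relation $\sum_F \gamma_F = 0$ coming from the image of the fundamental class $[\Sigma] \in H_2(\Sigma;\bZ)$ under $\partial$ (the boundary of the top cell of $\Sigma$ is the sum of all null $2$-cells, each appearing once with its coherent orientation). (4) For the second sequence, repeat the argument with $\Sigma_\text{op} = \Sigma \smallsetminus \{x_1,\dots,x_m\}$ in place of $\Sigma$: now $L(\bG) \subset T^*\Sigma_\text{op}$ (as discussed in Section~\ref{ssec:triangle-positive}, after possibly the $A_n^*$ normalization placing punctures in null regions), the only relative $2$-cells are the \emph{internal} null regions, $H_2(\Sigma_\text{op}) = 0$, and the long exact sequence of $(\Sigma_\text{op}, L(\bG))$ collapses to the claimed short exact sequence after checking that $\p$ is injective on $\bZ[\{\gamma_F : F \text{ internal}\}]$ — which holds because the only relation among null cycles came from $[\Sigma]$, now absent.

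\medskip

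\textbf{Main obstacle.} The delicate point is step (2): producing a genuine CW (or handle) decomposition of $\Sigma$ in which $L(\bG)$, or rather its deformation retract $\ol{\pi(L(\bG))}$, appears as a subcomplex with the null regions contributing exactly the $2$-cells whose attaching circles are the $\gamma_F$ — in particular verifying that distinct null regions give $\bZ$-independent relative $2$-cells and that the cellular boundary map $C_2(\Sigma) \to C_2(\Sigma, L(\bG))$ sends $[\Sigma]$ to $\sum_F \gamma_F$ with all signs $+1$ (this is where orientability of $\Sigma$ and the compatibility of the alternating coloring with orientations enters). A clean way to sidestep fussy cell-counting is to instead collapse each closed null region $\ol F$ to a point: the quotient $\Sigma / {\sim}$ is a wedge-like space whose homology is computable, and the cofiber sequence $L(\bG) \hookrightarrow \Sigma \to \Sigma/L(\bG) \simeq \bigvee_F S^2_F / (\text{relation})$ gives the same long exact sequence; one then only needs the single fact that $H_*(\Sigma/L(\bG))$ is free of rank $\#\{\text{null regions}\} - 1$ in degree $2$ (resp.\ $= \#\{\text{internal null regions}\}$ in the punctured case, with no degree-$2$ relation), which follows from $\Sigma$ being a closed (resp.\ punctured) orientable surface tiled by the null regions and $\ol{\pi(L(\bG))}$. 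This is exactly the content attributed to \cite{Gon17Web}*{Section 2.5}, so citing that computation for the combinatorial bookkeeping and supplying the topological translation above should suffice.
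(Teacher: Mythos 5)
The paper does not actually prove this lemma --- it is stated with a citation to \cite{Gon17Web}*{Section 2.5} and no argument is given --- so there is no in-text proof to compare against; your long-exact-sequence argument is the standard one and it is correct. The core of your proof is sound: $L(\bG)$ is homotopy equivalent to $P=\ol{\pi(L(\bG))}$ (undoing the real blowup collapses contractible intervals), $\Sigma$ is obtained from $P$ by attaching one $2$-cell per null region along $\gamma_{F,\pi}$, and the exact sequence of the pair $(\Sigma,P)$ with $H_2(P)=0$, $H_2(\Sigma,P)=\bZ[\{F\}]$, $H_1(\Sigma,P)=H_0(\Sigma,P)=0$ gives the first sequence, the relation $\sum_F\gamma_F=0$ being the image of $[\Sigma]$; the punctured case follows the same way from $H_2(\Sigma_{\text{op}})=0$ after retracting the punctured null disks onto their boundaries. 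Three small points to tidy. First, your displayed sequence is not literally the long exact sequence of the pair: the terms $H_1(\Sigma,L(\bG))$ and $H_0(\Sigma,L(\bG))$ have been silently deleted, and the arrow $H_1(\Sigma)\to H_0(L(\bG))$ is not a map that exists; you should either insert those (vanishing) terms or state separately that $H_1(\Sigma,P)=0$ forces surjectivity of $H_1(L(\bG))\to H_1(\Sigma)$. Second, the "sidestep" paragraph misstates the rank: $H_2(\Sigma/L(\bG))\cong H_2(\Sigma,L(\bG))=\bZ[\{F\}]$ has rank equal to the number of null regions, not that number minus one (the $-1$ is the rank of the image of the connecting map $\partial$, whose kernel is the rank-one image of $H_2(\Sigma)$), and $\Sigma/L(\bG)$ is exactly $\bigvee_F S^2$ with no relation imposed at the space level. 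Third, the whole argument uses that every null region is an open disk whose closure attaches as a $2$-cell; this holds for the graphs in $\mathcal{C}(\Sigma)$ but should be stated as a hypothesis, since for a general bipartite graph a non-disk face would contribute a nonzero $H_1(\Sigma,P)$ and break exactness at $H_1(\Sigma)$.
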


The lemma implies that $\{\gamma_F|F\,\text{null components}\}$ are not necessarily linearly independent. However, taking the quotient by the rank 1 relation in Lemma \ref{lem:H1cycle-conj}, we get a collection of linearly independent cycles. When $H_1(\Sigma; \bZ) = 0$, this collection is a basis of $H_1(L(\bG); \bZ)$.
\noindent In particular, when $\Sigma = S^2$ with a single marked point $x_1 = \infty$, i.e.~$\bG$ is a bipartite graph on a disk, $\{\gamma_F|F\,\text{internal null components}\}$ forms a basis of the conjugate Lagrangian filling $L(\bG)$.

\subsubsection{Vertex reduction}\label{sec:vertreduction}
A vertex reduction defines a Legendrian isotopy between 2 alternating Legendrians $\Lambda_0$ and $\Lambda_1$ that differ by a Reidemeister~II move, as shown in Figure \ref{fig:RIImove0}. In the corresponding bipartite graph, a degree 2 white (or black) vertex is removed. Let us show that the corresponding conjugate Lagrangians are Hamiltonian isotopic.

\begin{figure}[h!]
  \centering
  \includegraphics[width=0.7\textwidth]{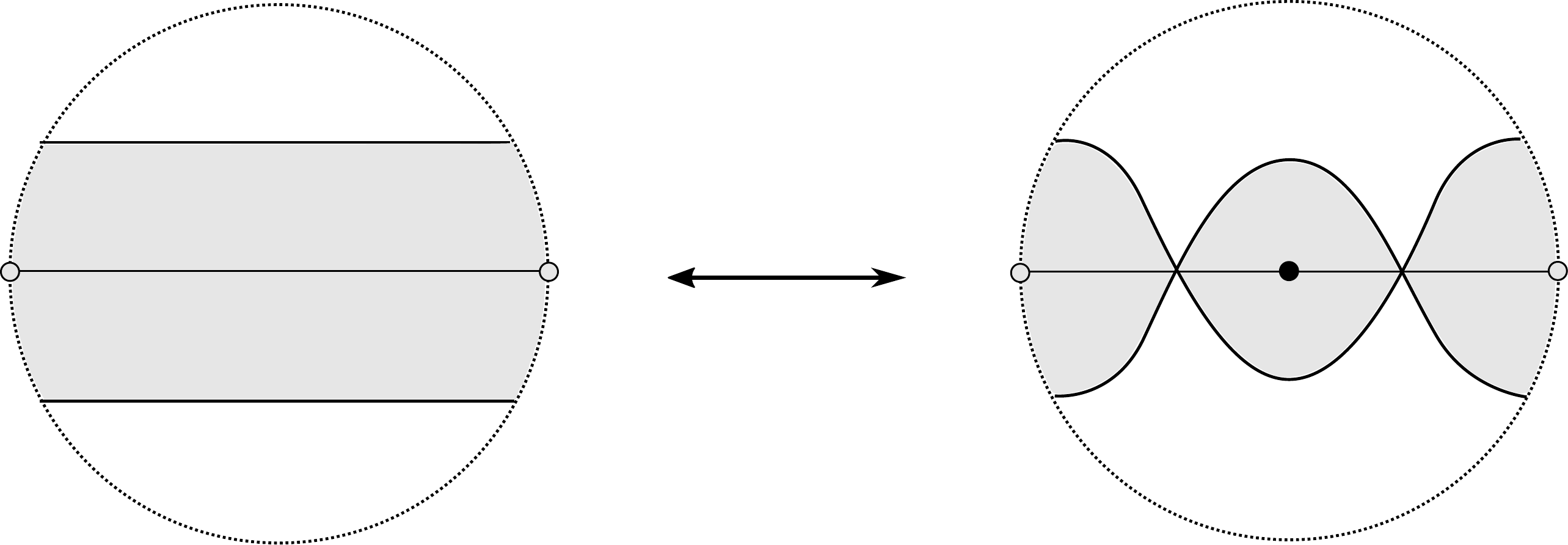}\\
  \caption{The conjugate Lagrangians $L_0, L_1$ that differ by a vertex reduction move.}\label{fig:RIImove0}
\end{figure}

\begin{prop}\label{prop:RII-move0}
    Let $L_0, L_1$ be conjugate Lagrangian fillings of the alternating Legendrians $\Lambda_0, \Lambda_1$ differing by a vertex reduction (Figure \ref{fig:RIImove0}). Then $L_0, L_1$ are Hamiltonian isotopic.
\end{prop}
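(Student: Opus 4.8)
The plan is to reduce the statement to a purely local model in $T^*\bD^2$, since the vertex reduction move is supported in a small disk $U\subset\Sigma$ and, by Proposition \ref{prop:conj-unique}, the conjugate Lagrangian fillings are determined up to Hamiltonian isotopy by their germ near the alternating Legendrian together with the constraint that they be graphical over the black and white regions. First I would fix a small ball $U\subset\Sigma$ containing the two extra crossings that are deleted by the move (the degree-$2$ white or black vertex and its adjacent crossings), chosen so that outside $U$ the fronts $\pi(\Lambda_0)$ and $\pi(\Lambda_1)$ agree, and so that the Reidemeister~II homotopy of Figure \ref{fig:RIImove0} is supported in $U$. Outside $U$ the two conjugate fillings can be taken to literally coincide, using the contractibility of the space of defining functions on each black/white region that blow up at the boundary (exactly as in the proof of Proposition \ref{prop:conj-unique}): one isotopes $L_0\cap T^*(\Sigma\setminus U)$ and $L_1\cap T^*(\Sigma\setminus U)$ to the same standard model through conjugate Lagrangians, and this isotopy extends by the relative version of the same argument. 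Thus it suffices to produce a Hamiltonian isotopy in $T^*U$, relative to the boundary $\partial_\infty(T^*U)$ and matching the fixed exterior, between the two local models for $L_0\cap T^*U$ and $L_1\cap T^*U$.

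Next I would set up the local model precisely. In $T^*\bD^2$ the relevant piece of $L_1$ is built from two functions $f_B=\ln m_B$, $f_W=-\ln m_W$ on the two coloured regions glued across a single crossing by the model Lagrangian $i_\times$ of Proposition \ref{prop:conj-exist}, while $L_0$ has the same picture but with one extra degree-$2$ vertex inserted in a coloured strip, i.e.\ an extra pair of crossings connected by a thin lune of the opposite colour. The key point is that the real blowup of the region with the extra degree-$2$ vertex is diffeomorphic to the real blowup without it — concretely the thin lune and its two crossings contribute a trivial tube — so both local pieces are abstractly the same surface (a disk or annulus), and both are graphical outside a neighbourhood of finitely many crossings. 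I would then observe that $L_0\cap T^*U$ and $L_1\cap T^*U$ have the same piecewise-Legendrian boundary on $\partial_\infty(T^*U)$ after the Reidemeister~II front homotopy (which is realized by a contact isotopy of $\partial_\infty(T^*U)$), and, exactly as in the last paragraph of the proof of Proposition \ref{prop:conj-unique}, after smoothing corners of $\partial_\infty(T^*U)$ one gets a standard contact sphere $(S^3,\xi_{\mathrm{std}})$ in which both $L_0\cap U$ and $L_1\cap U$ are Lagrangian disk fillings of the standard Legendrian unknot of maximal $\tb$. By Eliashberg--Polterovich \cite[Theorem 1.1.B]{EliPol96} the space of such fillings is contractible; in particular $L_0\cap T^*U$ and $L_1\cap T^*U$ are Hamiltonian isotopic rel boundary, and this isotopy can be taken supported in $T^*U$ so it glues with the identity outside.

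I expect the main obstacle to be bookkeeping the boundary and the behaviour at infinity: one must check that the Reidemeister~II homotopy of the alternating strand diagram is covered by a compactly supported contact isotopy of the ideal boundary $T^{*,\infty}U$ which, together with the piecewise-smooth corner along $T^*(\partial U)$, assembles into an honest contact isotopy of $\partial_\infty(T^*U)$ fixing the part glued to the exterior, and that after corner-smoothing the resulting Legendrian is genuinely the $\tb$-maximal unknot rather than some stabilization — this is where the alternating/co-orientation conditions of Definition \ref{def:alternating} are used, since they force the local front to be exactly the one bounding a Lagrangian disk. A secondary technical point is to ensure the cut-off/Weinstein-neighbourhood argument of condition (3) in Definition \ref{def:graphical-conj} is compatible with the localization to $U$, so that the produced isotopy genuinely preserves the class of conjugate Lagrangian fillings throughout; this is routine given Proposition \ref{prop:conj-exist} but needs to be stated. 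Once these are in place, the contractibility input from \cite{EliPol96} finishes the argument, just as in Proposition \ref{prop:conj-unique}.
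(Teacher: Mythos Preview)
Your approach is correct and in the spirit of Proposition~\ref{prop:conj-unique}, but it differs substantially from the paper's proof. The paper gives a direct, explicit construction: it writes down a concrete parametrization of the local conjugate Lagrangian $L_1$ as a strip $\bR\times(0,1)\to T^*\bR^2$, and then a specific Hamiltonian $H=H(\eta)$ depending only on the vertical fiber coordinate, chosen so that $H'(\eta)$ is odd with $H'(\eta)\to\pm 2$ as $\eta\to\pm\infty$. The time-$1$ flow $\varphi_H^1(x,y,\xi,\eta)=(x,y+H'(\eta),\xi,\eta)$ simply pushes the two sheets apart in the $y$-direction and visibly carries $L_1$ to a conjugate Lagrangian $L_0$ of $\Lambda_0$. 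No appeal to Eliashberg--Polterovich is made.

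Your route---localize to a disk $U$, match the fillings on the complement via the contractibility argument of Proposition~\ref{prop:conj-unique}, realize the Reidemeister~II by a contact isotopy of $\partial_\infty(T^*U)$ supported away from $T^*U|_{\partial U}$, and then invoke \cite{EliPol96} for the resulting pair of Lagrangian disk fillings of the max-$\tb$ unknot---is a legitimate alternative and indeed recycles the machinery already built for Proposition~\ref{prop:conj-unique}. What the paper's explicit flow buys is brevity and concreteness: the whole proof fits in a few lines and one sees the geometry directly. Your argument buys uniformity (the same EP-style reduction handles this move and the uniqueness statement by the same mechanism), at the cost of the boundary bookkeeping you correctly flag: corner-smoothing, extending the contact isotopy while fixing the part of $\partial_\infty(T^*U)$ glued to the exterior, and checking that the EP isotopy can be taken supported in $T^*U$. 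None of these is a genuine obstacle, but together they make your version longer to write out carefully than the paper's one-line Hamiltonian.
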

\begin{proof}
    Let $\Lambda_1$ be the Legendrian whose front projection is the curve $\{(x, y)\in\R^2 | y = \pm(x^2 - 1)\}$, modeling Figure \ref{fig:RIImove0} (right), and $\Lambda_0$ be the Legendrian whose front projection is the curve $\{(x, y)\in\R^2 | y = \pm(x^2 + 1)\}$, modeling Figure \ref{fig:RIImove0} (left). We consider the local model of $L_1$ given by the equation
    $$L_1 = \left\{\Big(s, (2t-1)(s^2-1), -\frac{2s}{\sqrt{t(1-t)}}, \frac{2t-1}{\sqrt{t(1-t)}} \Big)\Big|s \in \bR, t\in (0, 1)\right\}.$$
    The primitive of the exact Lagrangian $L_1$ is $f_{L_1} = -2(s^2-1)\sqrt{t(1-t)}$.\\ 
    
\noindent Consider a Hamiltonian $H:T^*\R^2\lr\R$, $H(x, y, \xi, \eta) = H(\eta)$ such that $H'(\eta) > 0$ when $\eta > 0$, $H'(\eta) < 0$ when $\eta < 0$, and $\lim_{\eta \rightarrow \pm\infty}H'(\eta) = \pm 2$. Then the Hamiltonian flow is defined by
    $$\varphi_H^1(x, y, \xi, \eta) = (x, y + H'(\eta), \xi, \eta).$$
    On the level sets of $H(\eta)$ where $\eta$ is a constant, the parameter $t$ is a constant and hence the front projection $\pi(L_0 \cap H^{-1}(c))$ are two parabolas which are symmetric along the $x$-axis. The front projection $\pi(\varphi_H^1(L_0) \cap H^{-1}(c))$ consists of two parabolas and by construction the level curves do not intersect. Therefore, under the Hamiltonian isotopy, $\varphi_H^1(L_1) = L_0$, where $L_0$ is a graphical Lagrangian filling of the Legendrian $\Lambda_0$.
\end{proof}
\noindent 
We have presented the reduction of black vertices, an identical argument proves the reduction for white vertices.

\begin{figure}[h!]
  \centering
  \includegraphics[width=1.0\textwidth]{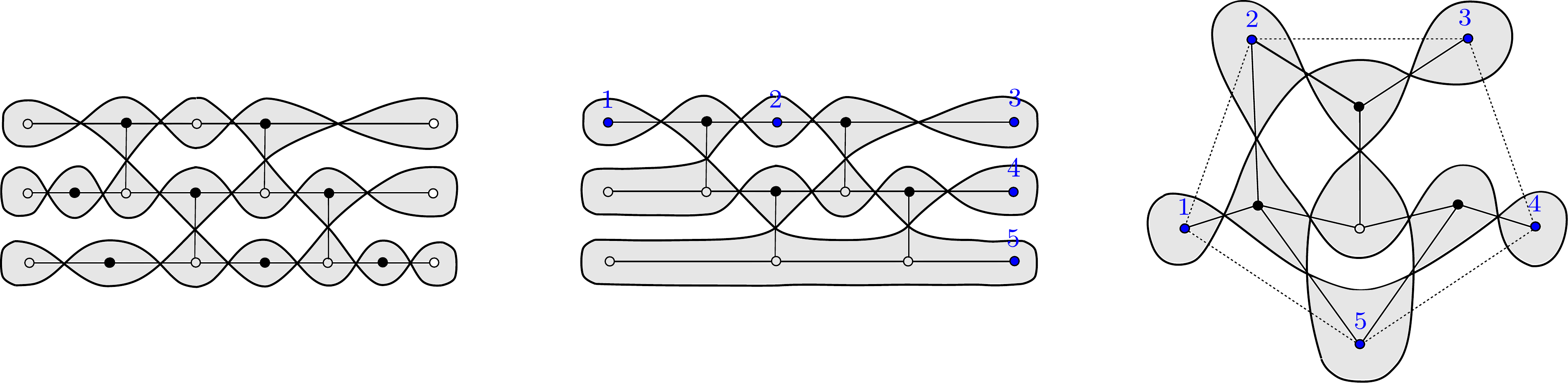}\\
  \caption{The alternating Legendrian corresponding to the alternating strand diagram for the Legendrian $(3,2)$-torus knot and its corresponding vertex reduction. The white vertices on the top row and the rightmost column form the vertices of an pentagon (the white vertices on the bottom row are identified as one single vertex on the bottom right). In general the plabic graph for the Legendrian $(n, k)$-torus link can be viewed as a plabic graph on a $(n+k)$-gon.}\label{fig:braidgraphpolygon}
\end{figure}

\begin{ex}[Legendrian torus links]\label{ex:braid-conj-red}
Consider the positive braid $\beta = (s_1s_2...s_{n-1})^k\in\mbox{Br}_n^+$ and its associated plabic fence, see Figure \ref{fig:braidgraphpolygon} (left). The Legendrian link $\Lambda_\beta$ represents the Legendrian positive $(n, k)$-torus link when considered in $(S^3,\xi_{st})$, once the Legendrian fiber of $T^{*,\infty}\R^2$ is satellited to the standard Legendrian unknot. Now, by removing the black vertices in the upmost strand and rightmost interval of all strands of the plabic fence, we are performing Reidemeister~II moves to the corresponding alternating Legendrians (Figure \ref{fig:braidgraphpolygon}), and the resulting graph is no longer bipartite. One can make it a bipartite graph by replacing all black and white vertices in a common region by a single black and white vertex. At this stage, we can view the white vertices on the bottom row, the $(n + 1)$ white vertices on the top row, and finally the remaining $(n - 2)$ white vertices on the rightmost columns as the vertices of an $(n + k)$-gon. This is illustrated at the center and right of Figure \ref{fig:braidgraphpolygon}.\\

\noindent    For the alternating Legendrian associated to the plabic graph, the strand starting from a white vertex $v$ on the boundary will go to a vertex $\pi(v)$, and $\pi$ determines a permutation of the set $\{1, 2, ..., k+n\}$ called the strand permutation \cite{Pos06AltGraph}*{Section 13}. For the plabic graph whose associated alternating Legendrian link is the $(n, k)$-torus link, one can show that the strand permutation is $\pi = (k+1,...,k+n, 1, 2, ..., k)$.\\

\begin{figure}[h!]
  \centering
  \includegraphics[width=0.95\textwidth]{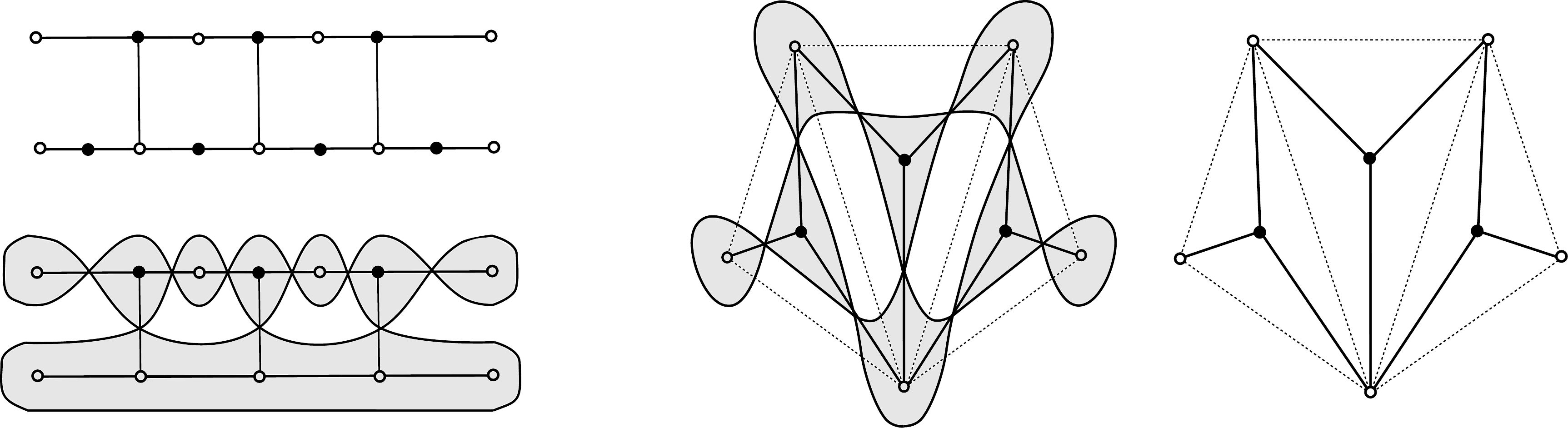}\\
  \caption{The alternating Legendrian corresponding to the alternating strand diagram for the Legendrian $(2, 3)$-torus knot (the trefoil knot) and its corresponding vertex reduction. The remaining 5 white vertices (the ones on the bottom row are viewed as a single white vertex) determines a pentagon, and the plabic graph determines a triangulation of the pentagon.}\label{fig:graphtriangulation}
\end{figure}

Finally, note that when $n = 2$ and $\beta = s_1^k$, $\Lambda_{\beta\Delta^2}$ represents the positive Legendrian $(2, k)$-torus link. By removing the black vertices in the upmost strand of the plabic graph, we are performing Reidemeister~II moves to the corresponding alternating Legendrians. Again viewing the white vertices as vertices of an $(k + 2)$-gon, we can connect the $(k + 2)$ white vertices and define a triangulation of an $(k + 2)$-gon, as shown in Figure \ref{fig:graphtriangulation}. Different triangulations of the $(k + 2)$-gon yield different alternating Legendrian representatives of $\Lambda_\beta$ and, when compared, different Lagrangian fillings.\hfill$\Box$

\end{ex}

\subsubsection{Square moves}\label{sec:squaremove}
    The square move is the Legendrian isotopy between alternating Legendrians $\Lambda_0$ and $\Lambda_1$ shown in Figure \ref{fig:SqMove}. Their associated bipartite graphs correspond to two different triangulations of a square, where white vertices correspond to vertices of the square and black vertices correspond to faces in the triangulation. The corresponding conjugate Lagrangians associated to $\Lambda_0$ and $\Lambda_1$ differ by a Lagrangian disk surgery \cite{Pol91Surgery}, see also \cite{LalondeSikorav91,Yau13Surg,Haug15AntiSurg}. In fact, \cite{STWZ19}*{Proposition 5.15} show that the two conjugate Lagrangian fillings of the alternating Legendrians $\Lambda_0, \Lambda_1$ that differ by a square move are related by a Lagrangian surgery whose vanishing cycle is $\gamma_F$ where $F$ is the null region at the center of Figure \ref{fig:SqMove}.

\begin{figure}[h!]
  \centering
  \includegraphics[width=0.7\textwidth]{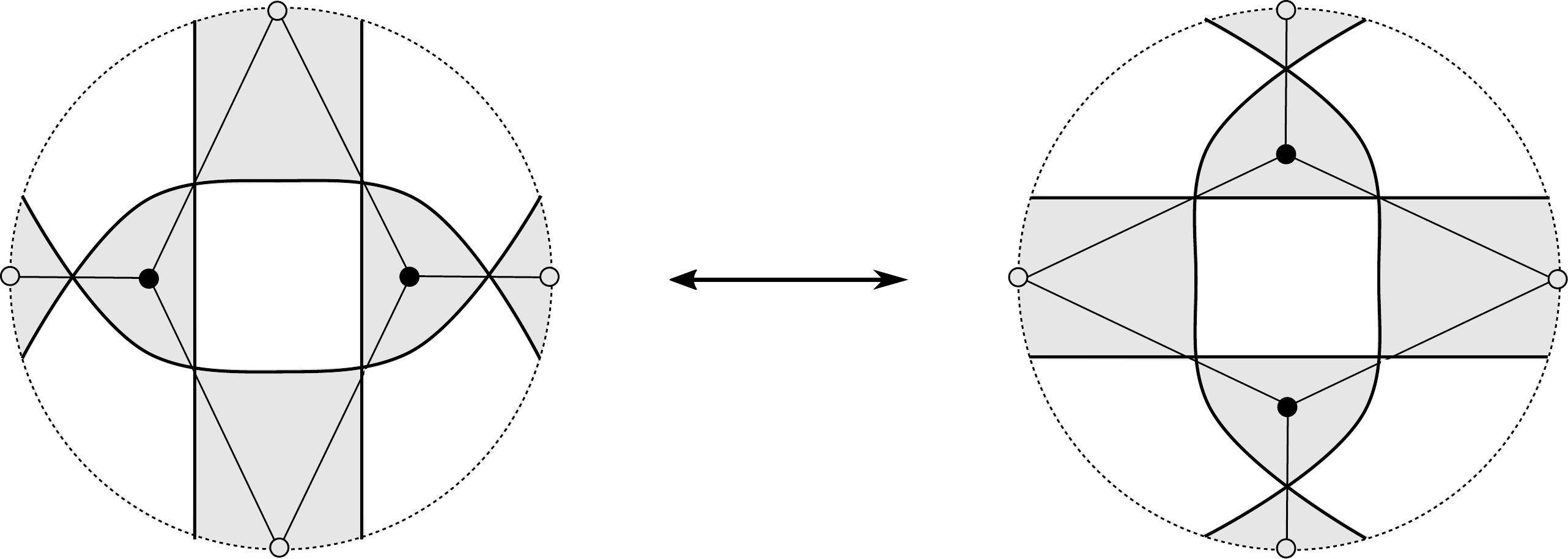}\\
  \caption{The square move of alternating Legendrian links and their conjugate Lagrangian fillings.}\label{fig:SqMove}
\end{figure}

%\begin{prop}[\cite{STWZ19}*{Proposition 5.15}]\label{prop:sqmove}
%    Let $L_0, L_1$ be conjugate Lagrangian fillings of the alternating Legendrians $\Lambda_0, \Lambda_1$ that differ by a square move. Then $L_0, L_1$ are related by a Lagrangian surgery whose vanishing cycle is $\gamma_F$ where $F$ is the null region at the center of Figure \ref{fig:SqMove}.
%\end{prop}
Technically, the local model constructed in the proof of \cite{STWZ19}*{Proposition 5.15} is different from the local model considered in Proposition \ref{prop:conj-exist} or \cite{STWZ19}*{Proposition 4.9}. A priori it is not clear whether the two different models are Hamiltonian isotopic. However, Proposition \ref{prop:conj-unique} implies that this is the case, and hence makes the proof complete.

%\begin{proof}
%    Following the proof of \cite{STWZ19}, we construct the local model for $L_0, L_1$ and show that they are Hamiltonian isotopic to the standard local model of a Lagrangian surgery of an immersed Lagrangian at a transverse intersection point. The local model that is written down for $L_0$ is
%    $$L_0 = \left\{(2t^{-1/2}\cos\theta, 2(1-t)^{-1/2}\sin\theta, 2(1-t)^{-1/2}\cos\theta, -2t^{-1/2}\sin\theta)\big| t \in (0, 1) \right\},$$
%    and respectively for $L_1$ it is
%    $$L_1 = \left\{(2(1-t)^{-1/2}\cos\theta, 2t^{-1/2}\sin\theta, -2t^{-1/2}\cos\theta, 2(1-t)^{-1/2}\sin\theta)\big| t \in (0, 1) \right\}.$$
%    It suffices for us to show that the local model not only defines a conjugate Lagrangian, but indeed a graphical conjugate Lagrangian.

%    Without loss of generality, we only consider a neighbourhood of the crossing $T^*_{(1,1)}\bR^2$ for $L_0$. At the fiber $T^*_{(1,1)}\bR^2$, we have
%    $$L_0 \cap T^*_{(1,1)}\bR^2 = \left\{\big(0, 0, t^{1/2}(1-t)^{-1/2}, -(1-t)^{1/2}t^{-1/2}\big) \big| t \in (0, 1)\right\}.$$
%    Thus $L_0 \cap T^*_{(1,1)}\bR^2$ coincides with the standard model $L_\times \cap T^*_{(0,0)}\bR^2$ in Definition \ref{def:graphical-conj}. Consider a neighbourhood $U$ of $T^*_{(1,1)}\bR^2$, and a neighbourhood $U'$ of $T^*_{(0,0)}\bR^2$ in the standard model. Then $L_0 \cap U$ and $L_\times \cap U'$ are $C^1$-close to each other, and hence they are Hamiltonian isotopic, and $L_0$ is indeed a graphical conjugate Lagrangian.
%\end{proof}

\begin{ex}[Legendrian torus links]\label{ex:2,nlink-conj}
    Following Example \ref{ex:braid-conj-red}, for a positive $n$-braid $\beta = (s_1s_2\dots s_{n-1})^k$, the corresponding Legendrian $(n, k)$-torus link $\Lambda_{\beta\Delta^2}$ has alternating Legendrian representatives and conjugate Lagrangian fillings coming from plabic graphs on an $(n+k)$-gon, having $(n+k)$ white vertices on the boundary, with strand permutatio $(n+1,...,n+k, 1, 2, ..., k)$. Such a plabic graph of the $(n+k)$-gon determines a weakly separable collection of $n$ elements in $\{1, 2..., n+k\}$ as introduced in \cite{OPS15WeakSep}. By considering different weakly separable collections of $\{1, 2..., n+k\}$, we obtain different alternating Legendrian representatives of $\Lambda_{\beta\Delta^2}$, and thus (potentially) different conjugate Lagrangian fillings of $\Lambda_{\beta\Delta^2}$. In fact, any different pair of plabic graphs of the same strand permutation type are related by a sequence of vertex reductions and square moves \cite{Pos06AltGraph}*{Theorem 13.4}.\\

    \noindent For a positive 2-braid $\beta = s_1^k$, which yields the Legendrian $(2, k)$-torus link, \cite{FZ02ClusterI} shows that the number of possible reduced plabic graphs, i.e.~the number of weakly separable collections, is the same as the number of triangulations of an $(k+2)$-gon, as shown in Figure \ref{fig:braidgraphpolygon}. Therefore, by performing square moves, we obtain $C_k = \frac{1}{k+1}\binom{2k}{k}$, the $k$-th Catalan number, different alternating Legendrian representatives; their conjugate Lagrangian fillings can be shown to yield different Hamiltonian isotopy classes of Lagrangian fillings, see \cite{STWZ19}*{Proposition 6.2} which is base on \cite{STWZ19}*{Theorem 5.17} or \cite{GaoShenWeng20,CasalsWeng22}.\footnote{The argument in \cite{STWZ19}*{Section 6} is a little ambigous in that it is unclear whether they used $\cX$-variables or $\cA$-variables, but  using $\cA$-variables, one can give a sheaf-theoretic proof that these conjugate Lagrangian fillings are distinct for all weakly separated collections.}
\end{ex}

\subsection{Legendrian weaves}\label{sec:weave} Legendrian weaves were introduced in \cite[Section 2]{CasalsZas20}, see also \cite{TreuZas16} and \cite[Section 3]{CasalsWeng22}. They have found several applications to contact topology \cite{CasalsZas20,Hughes21A,Hughes21D,AnBaeLee21ADE,AnBaeLee21DEtilde} and the study of cluster algebras \cite{CGGS20AlgWeave,CGGLSS22,CasalsWeng22}. Let $\Sigma$ be a smooth surface and $(J^1\Sigma,\xi_{st})$ its 1-jet space, with the standard contact form $\alpha_\text{st} = dz - \lambda_\text{st}$, where $z$ is the coordinate on $\bR$ and $\lambda_\text{st}$ is the standard Liouville form on $T^*\Sigma$.\\

\noindent According to \cite{CasalsZas20}*{Definition 2.2}, an $n$-graph $\mathfrak{w}$ on a smooth surface $\Sigma$ is a set of $n-1$ embedded graphs $\{G_i\}_{1\leq i\leq n-1}$ with only trivalent vertices (Figure \ref{fig:weave-vertex} left) such that $G_i$ and $G_{i+1}$ intersect only at hexagonal points (Figure \ref{fig:weave-vertex} right). The convention throughout the paper is that when the color \textcolor{blue}{blue} is associated to $G_i$, then the color \textcolor{red}{red} will be associated to $G_{i+1}$, and the color \textcolor{Green}{green} to $G_{i+2}$.

\begin{figure}[h!]
  \centering
  \includegraphics[width=0.4\textwidth]{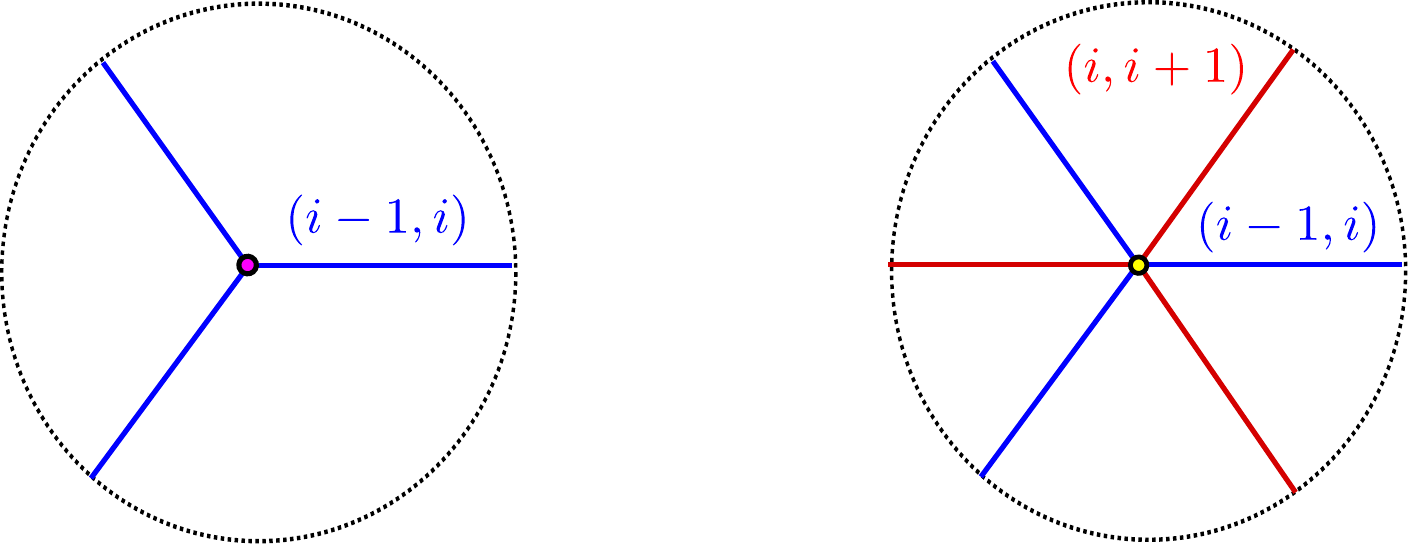}\\
  \caption{A trivalent vertex (left) and a hexagonal vertex (right).}\label{fig:weave-vertex}
\end{figure}

\noindent In order to study Legendrian surfaces in $(J^1\Sigma,\xi_{st})$, it suffices to study their front projections in $\Sigma \times \bR$. For a generic Legendrian surface, front projections are immersed surfaces in $\Sigma \times \bR$ with certain singularities \cite[Chapter 3]{ArnoldSing}. Given an $n$-graph $\mathfrak{w} \subset\Sigma$, we can consider a singular surface in $\Sigma \times \bR$ whose transverse self intersection set is $\mathfrak{w}$ and its projection onto $\Sigma$ is an $n$-fold cover branched over the trivalent vertices of $G$. In this case, only the three singularities of the front projection are depicted in Figure \ref{fig:weave-wavefront}.

\begin{figure}[h!]
  \centering
  \includegraphics[width=0.8\textwidth]{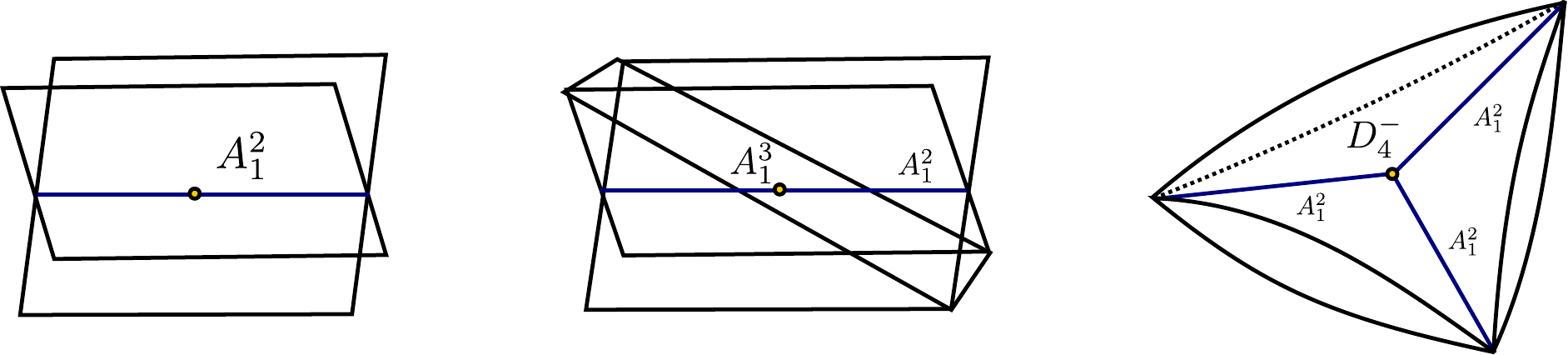}\\
  \caption{Three singularities of a Legendrian front projection: $A_1^2$ (left), $A_1^3$ (middle), and $D_4^-$ (right). Both $A_1^2$ and $A_1^3$ are generic but $D_4^-$ is not generic and it perturbs to a front with three swallowtail singularities.}\label{fig:weave-wavefront}
\end{figure}

\noindent Following \cite{CasalsZas20}*{Definition 2.7}, the precise definitions of this front and its associated Legendrian surface read as follows:

\begin{definition}\label{def:weave}
    Let $\mathfrak{w}$ be an $n$-graph on a surface $\Sigma$. By definition, the Legendrian weave $\widetilde{L}(\mathfrak{w}) \subset (J^1\Sigma,\xi_{st})$ is the Legendrian surface whose front projection onto $\Sigma \times \mathbb{R}$ is locally characterized by the following forms (Figure \ref{fig:weave-def}):
\begin{enumerate}
  \item In a chart $D\subset\Sigma$ such that $D \cap \mathfrak{w} = \emptyset$, the front projection is
  $$\bigcup_{1\leq j\leq n} D \times \{j\} \subset D \times \mathbb{R};$$
  \item in a chart $D\subset\Sigma$ such that $D \cap \mathfrak{w}$ is a single line segment in $G_i\,(1\leq i\leq n-1)$, the front projection is
  $$\bigcup_{1\leq j\leq n,\,j \neq i, i+1} D \times \{j\} \cup \Pi(A_1^2) \subset D \times \mathbb{R}$$
  where $\Pi(A_1^2)$ is isotopic to $\{(x_1, x_2, z) | x_1^2 - z^2 = 0\}$;\\
  \item in a chart $D\subset\Sigma$ such that $D \cap \mathfrak{w}$ is a neighbourhood of a hexagonal intersection point of $G_i$ and $G_{i+1}\,(1\leq i\leq n-2)$, the front projection is
  $$\bigcup_{1\leq j\leq n,\,j \neq i, i+1,i+2} D \times \{j\} \cup \Pi(A_1^3) \subset D \times \mathbb{R}$$
  where $\Pi(A_1^2)$ is isotopic to $\{(x_1, x_2, z) | (x_1 - z)(x_1 + z)(z - x_2) = 0\}$;\\
  \item in a chart $D\subset\Sigma$ such that $D \cap \mathfrak{w}$ is a neighbourhood of a trivalent vertex in $G_i\,(1\leq i\leq n-1)$, the front projection is
  $$\bigcup_{1\leq j\leq n,\,j \neq i, i+1} D \times \{j\} \cup \Pi(D_4^-) \subset D \times \mathbb{R}$$
  where $\Pi(D_4^-)$ is isotopic to $\{(x_1, x_2, z) | x_1+\sqrt{-1}x_2 = w^2, z = \mathrm{Re}(w^3), w \in \bC\}$.\footnote{We are label the graphs from bottom to top: $G_1$ is on the bottom while $G_{n-1}$ is on the top. This is compatible with the convention that the crossings of the braid closure are labelled from outside to inside.}
\end{enumerate}
\end{definition}

\begin{figure}[h!]
  \centering
  \includegraphics[width=0.8\textwidth]{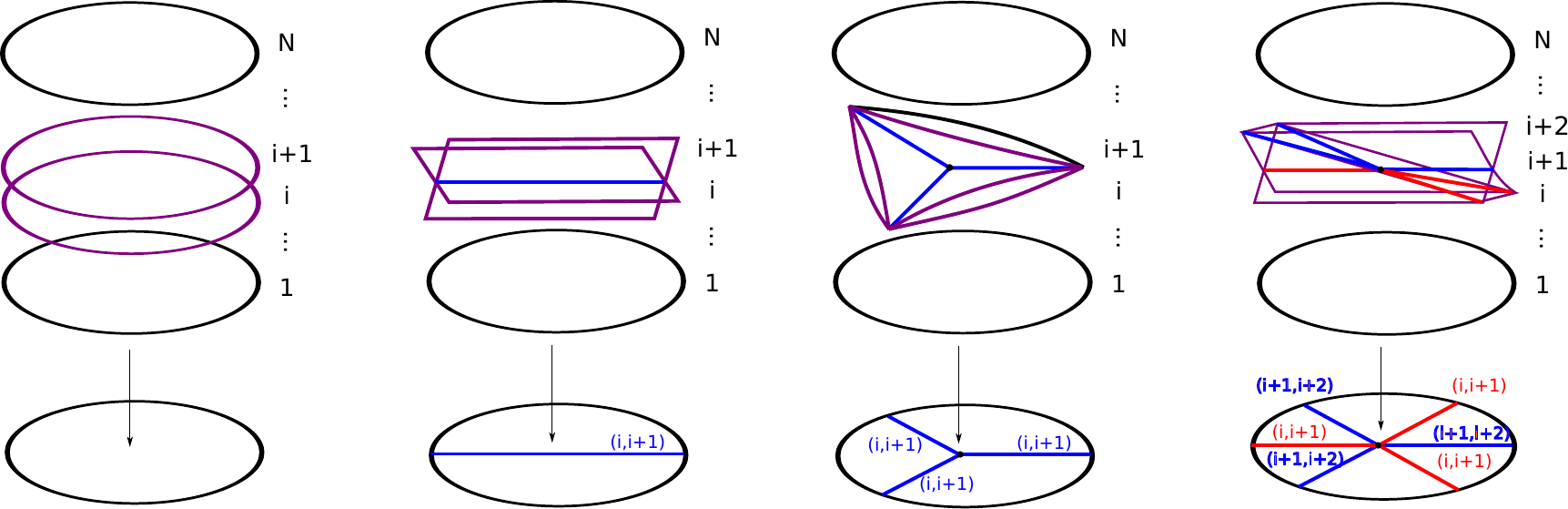}\\
  \caption{The front projections of Legendrian weaves in the 4 cases of Definition \ref{def:weave}.}\label{fig:weave-def}
\end{figure}

\subsubsection{Basis of $H_1(\widetilde{L}(\mathfrak{w});\Z)$ for Legendrian weaves}\label{sec:H1cycle-weave}
    For a Legendrian weave $\widetilde{L}(\mathfrak{w})$ associated to an $n$-graph $\mathfrak{w}$, there is currently no full description of all cycles in $H_1(\widetilde{L}(\mathfrak{w}); \bZ)$ in a general case. Nevertheless, we can always consider $\sf Y$-cycles and in many relevant cases, e.g.~Demazure weaves \cite{CGGS20AlgWeave,CGGLSS22} and weaves from grid plabic graphs \cite{CasalsWeng22}, these are known to span $H_1(\widetilde{L}(\mathfrak{w}); \bZ)$. For an introduction to $\sf Y$-cycles see \cite[Section 2.4]{CasalsZas20}, or also \cite[Section 3.2]{CasalsWeng22} and \cite[Section 4]{CGGLSS22}; the latter for a more combinatorial account. We use the standard notation of short and long $\sf I$-cycles from those articles as well.

\subsubsection{Free Legendrian weaves}\label{ssec:ex-free-weave}
    Let $\Sigma$ be a smooth surface with marked points $\{x_1, ..., x_m\}$, $\Sigma_\text{op} := \Sigma \backslash \{x_1, ..., x_m\}$, $\{B_\epsilon(x_1), ..., B_\epsilon(x_m)\}$ a disjoint collection of open balls around the marked points, and $\Sigma_\text{cl} := \Sigma \backslash (\bigcup_{1\leq i\leq m}B_\epsilon(x_i))$. Consider a Legendrian link in the 1-jet space $(J^1(\partial \Sigma_\text{cl}),\xi_{st})$. We can study Legendrian weaves in $(J^1\Sigma_\text{cl},\xi_{st})$ whose boundary is the Legendrian link and near the boundary the weave is conical; see \cite{CasalsZas20}*{Section 7.1} or also \cite{PanRuther19Funct}*{Section 4.3}.\\
    
    \noindent Projecting a Legendrian weave in $J^1\Sigma_\text{cl}$ onto $T^*\Sigma_\text{cl}$, by quotienting the Reeb direction, we obtain an immersed exact Lagrangian filling of the Legendrian link in $J^1(\partial \Sigma_\text{op})$.\footnote{Conversely, any immersed exact Lagrangian filling lifts to a Legendrian surface in $J^1\Sigma_\text{cl}$.} It is thus possible to study (a priori some) Lagrangian fillings of links in $T^*\Sigma_\text{op}$, by studying Legendrian weaves in $J^1\Sigma_\text{cl}$. Since immersed points of such exact Lagrangian fillings correspond to Reeb chords in its Legendrian weave lifting, Legendrian weaves without Reeb chords yield embedded Lagrangian fillings. According to \cite{CasalsZas20}*{Definition 7.2}], a Legendrian weave in $J^1\Sigma_\text{op}$ is said to be free if it can be isotoped to a weave with no Reeb chords.\\
    
    Parallel to Subsection \ref{ssec:triangle-positive} for conjugate Lagrangian fillings, we now present the constructions of Legendrian weaves from $n$-triangulations and positive braids.\\

{\bf $n$-triangulations and Legendrian weaves} Let $\Sigma$ be a smooth surface with marked points $\{x_1, ..., x_m\}$, and $\bigcirc_n \subset J^1\Sigma_\text{cl}$ be the union of trivial $n$-braids, which is the Legendrian $n$-satellite of the zero section in $J^1(\partial \Sigma_\text{cl})$. Given an ideal $n$-triangulation of $(\Sigma,\{x_1, ..., x_m\})$,  \cite{CasalsZas20}*{Section 3.1} constructs a free Legendrian weave $\widetilde{L}(\mathfrak{w}_n^*)$ associated to this ideal triangulation, as shown in Figure \ref{fig:triangle-weave}. The corresponding $n$-graph is defined as follows. Consider the subdivision of the triangle into $n^2$ small triangles. For each of the triangle, insert a trivalent vertex in $G_1$ dual to the triangle (i.e.~the vertex is the center and the edges orthogonally intersect the edges of the triangle). Then whenever three of these edges in $G_1$ collide, insert a $(G_1, G_2)$-hexagonal vertex, and inductively whenever three of the edges in $G_i$ collide, insert a $(G_i, G_{i+1})$-hexagonal vertex. The resulting $n$-graph defines a free Legendrian weave $\widetilde{L}(\mathfrak{w}_n^*)$ with boundary $\bigcirc_n$.\footnote{In this notation, the superscript is added to make the notation consistent with the ones in the conjugate Lagrangian filling part.}

\begin{figure}[h!]
  \centering
  \includegraphics[width=0.9\textwidth]{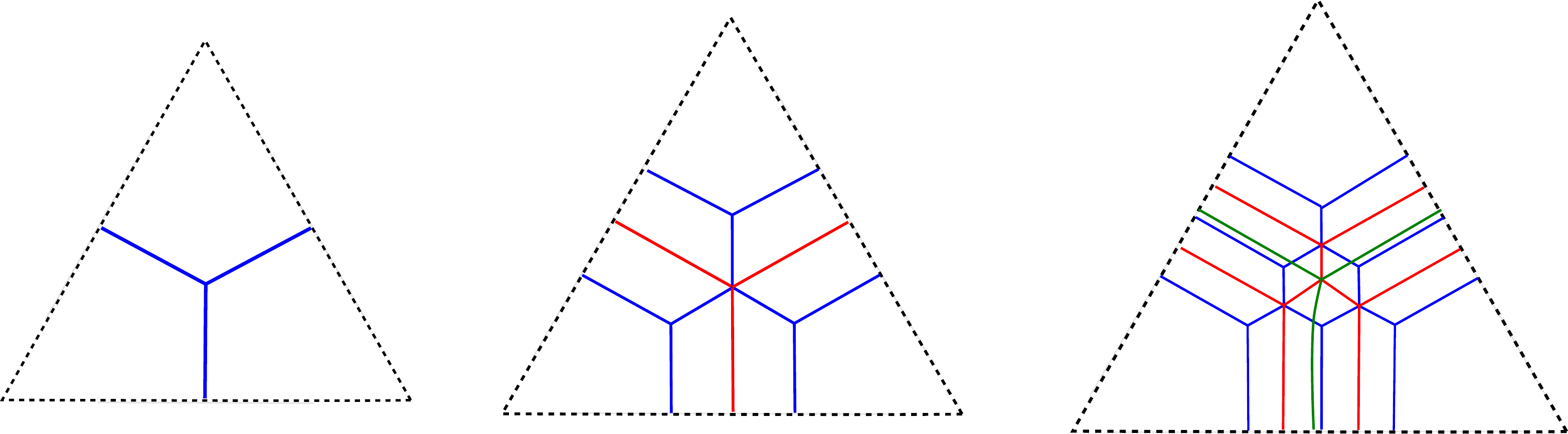}\\
  \caption{The free Legendrian weaves $\widetilde{L}(\mathfrak{w}_n^*)$ associated to an ideal $n$-triangulation on punctured surfaces for $n = 2, 3$ and $4$. The vertices are the marked points of the surface.}\label{fig:triangle-weave}
\end{figure}

{\bf Positive braids and Legendrian weaves}.
%The plane $\bR^2$ as a punctured sphere $S^2 \backslash \{\infty\}$. Then Legendrian weaves in $J^1(\bD^2)$ will give us Lagrangian fillings of Legendrian links in $J^1(S^1) \subset T^{*,\infty}\bR^2$.
In line with Subsection \ref{ssec:triangle-positive}, consider the Legendrian positive braid closure $\Lambda(\beta\Delta^2) \subset J^1S^1$. Viewing $J^1S^1$ as the contact neighbourhood of the standard Legendrian unknot $S^1 \subset S^3$, $\Lambda_{\beta\Delta^2} \subset S^3$ is Legendrian isotopic to the (Legendrian lift of the) rainbow closure $\Lambda_{\beta}^\prec$ of the braid $\beta$ in $\bR^3 \subset T^{*,\infty}\bR^2$ after compactifying $T^{*,\infty}\bR^2$ to $S^3$ by sending the Legendrian fiber to the standard Legendrian unknot. In \cite{CasalsWeng22}*{Section 3.3.2}, we constructed a free Legendrian weave which is the lift of an embedded exact Lagrangian filling of $\La_{\beta\Delta^2}$, as follows.\\

\begin{figure}[h!]
    \centering
    \includegraphics[width=0.7\textwidth]{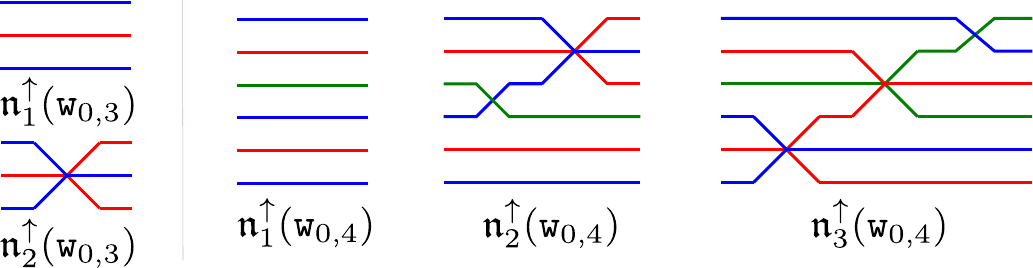}
    \caption{The Legendrian weave $\mathfrak{n}_i^\uparrow(\mathtt{w}_{0,n})$ for $n = 3$ (on the left) and $n = 4$ (on the right).}
    \label{fig:braid-weave-cross}
\end{figure}

\noindent Let $\mathfrak{n}_i^\uparrow(\mathtt{w}_{0,n})$ be the horizontal weave consisting of only hexagonal vertices, which on the left equals the half twist $\Delta = \mathtt{w}_{0,n}$, and brings the $i$-th $s_1$-strand to the top, and $\mathfrak{n}_i^\uparrow(\mathtt{w}_{0,n})^\text{op}$ be the weave defined by reflecting $\mathfrak{n}_i$ along the vertical axis (Figure \ref{fig:braid-weave-cross}). For the transposition $\sigma_i\,(1 \leq i \leq n-1)$, we consider the weave $\mathfrak{c}_i^\uparrow(\mathtt{w}_{0,n})$ defined by the horizontal concatenation of $\mathfrak{n}_i^\uparrow(\mathtt{w}_{0,n})$, $\mathfrak{c}_i^\uparrow$ and $\mathfrak{n}_i^\uparrow(\mathtt{w}_{0,n})^\text{op}$, where $\mathfrak{c}_i^\uparrow$ is given by the horizontal weave on the right of $\mathfrak{n}_i^\uparrow(\mathtt{w}_{0,n})$ and an extra trivalent vertex on the top strand with a vertical edge going to the bottom (Figure \ref{fig:braid-weave-cross2}). 
    For the positive braid $\beta = s_{i_1} s_{i_2} \dots s_{i_k}$, we define the weave $\mathfrak{w}_{\beta\Delta^2} = \mathfrak{w}(\bG_\beta)$ (the latter is the notation in \cite{CasalsWeng22}) to be the horizontal concatenation of $\mathfrak{c}_{i_1}^\uparrow(\mathtt{w}_{0,n}), \dots, \mathfrak{c}_{i_k}^\uparrow(\mathtt{w}_{0,n})$, with Legendrian boundary $\Lambda_{\beta\Delta^2}$.\footnote{We shall see that this Lagrangian filling is Hamiltonian isotopic to the one obtained by the sequence of pinching all the crossings of $\Lambda_{\beta}^\prec$ from right to left \cite{EHK16}.}

\begin{figure}[h!]
    \centering
    \includegraphics[width=1.0\textwidth]{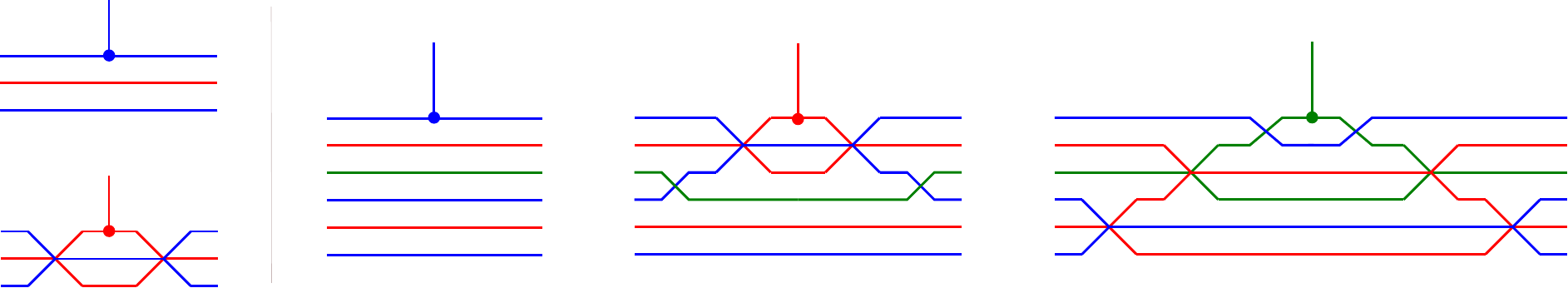}
    \caption{The Legendrian weave $\mathfrak{c}_i^\uparrow(\mathtt{w}_{0,n})$ which is the concatenation of $\mathfrak{n}_i^\uparrow(\mathtt{w}_{0,n})$, $\mathfrak{c}_i$ and $\mathfrak{n}_i^\uparrow(\mathtt{w}_{0,n})^\text{op}$ for $n = 3$ (on the left) and $n = 4$ (on the right).}
    \label{fig:braid-weave-cross2}
\end{figure}

\begin{ex}[Weaves for grid plabic graphs]\label{ex:lolipop-weave} In parallel with Example \ref{ex:gridplabic}, free Legendrian weave fillings associated to the alternating Legendrians of grid plabic graphs are constructed in \cite[Section 3]{CasalsWeng22}. Section \ref{sec:main_proofs} shows that the corresponding conjugate Lagrangian fillings are particular instances of the Lagrangian fillings associated to such free weaves.\hfill$\Box$

%The weave is built from three local models. For Type~I columns of $n$ horizontal strands, the weave is  $\mathfrak{n}(\mathtt{w}_{0,n})$; for Type~II columns of $n$ horizontal strands and a single vertical strand connecting the $i$ and $(i+1)$-th row, the weave is $\mathfrak{c}_i^{\uparrow}(\mathtt{w}_{0,n})$ and $\mathfrak{c}_i^{\uparrow}(\mathtt{w}_{0,n})$ (depending on whether black vertices and on the top or white vertices are on the top). They are described in the previous example.

 %   For Type~III columns consisting of $(n-1)$ horizontal strands and a single univalent vertex in the $i$-th row called a lolipop, the weave is the lolipop $\mathfrak{l}^b_i$ and $\mathfrak{l}^w_i$ (depending on whether the univalent vertex is black or white). They are illustrated in Figure \ref{fig:lolipop-weave} (see \cite{CasalsWeng22}*{Section 3.3.3} for a detailed description).

%    For a grid plabic graph, the associated Legendrian weave is thus the horizontal concatenations of all the local weaves in each column. It turns out that the boundary $\Lambda \subset J^1(S^1)$ is exactly the alternating link defined by the grid plabic graph in $T^{*,\infty}\bD^2 \cong \bR^2 \times S^1$.
\end{ex}

\subsubsection{Weave Reidemeister moves} Figure \ref{fig:ReidemeisterWeave} illustrates the weave equivalences we will use. Exchanging any of these two local models, depicted left and right in each of the entries of , results in Legendrian isotopic Legendrian weave. Since the local models are free, implementing these equivalences for free Legendrian weaves yields (compactly supported) Hamiltonian isotopies between their associated embedded exact Lagrangian fillings. The necessary proofs and details are provided in \cite{CasalsZas20}*{Theorem 4.2}.
    
    \begin{figure}[h!]
  \centering
  \includegraphics[width=1.0\textwidth]{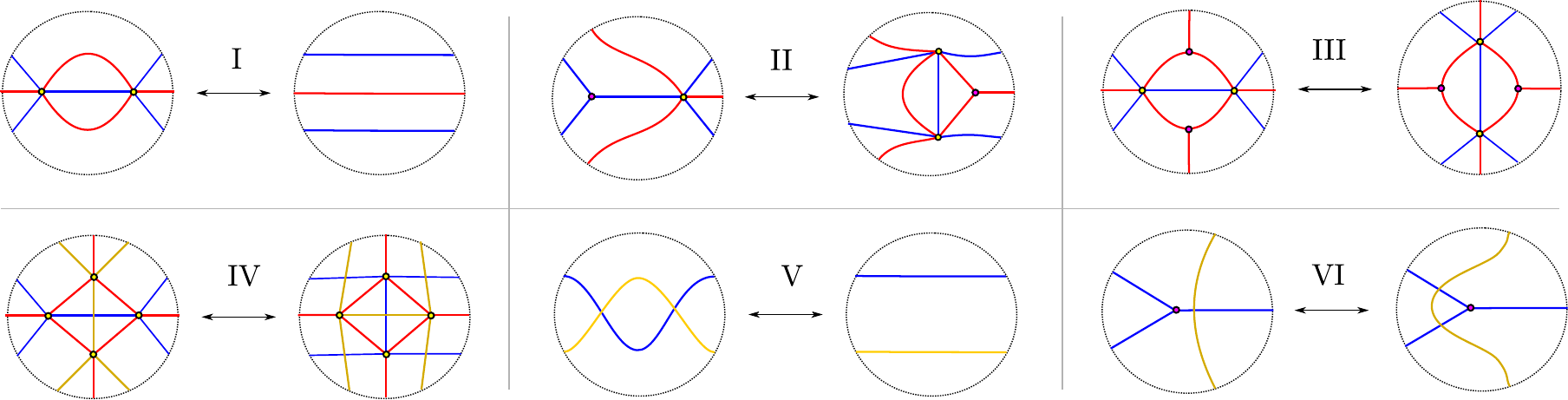}\\
  \caption{Table of some surface Legendrian Reidemeister moves for Legendrian weaves.}\label{fig:ReidemeisterWeave}
\end{figure}

\subsubsection{Lagrangian mutations in weaves} Figure \ref{fig:mutation-weave} depicts two operations on Legendrian weaves introduced in \cite[Section 4.8]{CasalsZas20}. The result of applying such operations yield a Legendrian surface which is smoothly isotopic and the associated Lagrangian projections are Lagrangian isotopic. Nevertheless, in contrast with the equivalences in Figure \ref{fig:ReidemeisterWeave}, the local operations in Figure \ref{fig:mutation-weave} typically change the Legendrian isotopic type of the Legendrian surface and also change the Hamiltonian isotopy class of the associated Lagrangian projections. It is proven in \cite{CasalsZas20}*{Theorem 4.21} that these induce Lagrangian disks surgeries.

\begin{figure}[h!]
  \centering
  \includegraphics[width=1.0\textwidth]{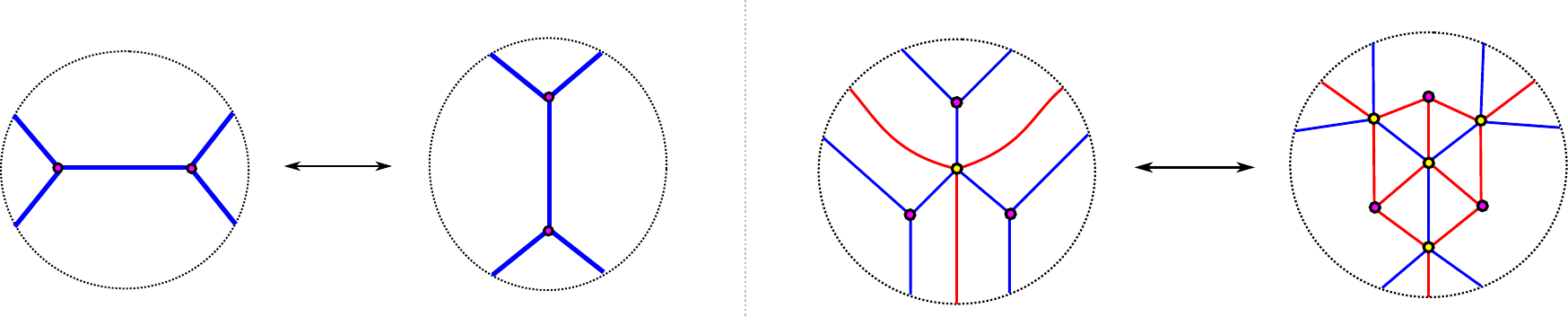}\\
  \caption{The Legendrian mutations along a type~$\sf I$ or type~$\sf Y$-cycle.}\label{fig:mutation-weave}
\end{figure}

%\begin{cor}[Casals-Zaslow \cite{CasalsZas20}*{Corollary 4.22}]\label{cor:weave-mutate}
%    Let $\gamma \in H_1(\widetilde{L}; \bZ)$ be represented by a long edge in an $n$-graph, as shown in the first row of Figure \ref{fig:mutation-weavelong}. Then the Legendrian mutation $\mu_\gamma(\widetilde{L})$ is the Legendrian weave associated to the $n$-graph as depicted in the second row of Figure \ref{fig:mutation-weavelong}.
%\end{cor}
%\begin{figure}[h!]
%  \centering
%  \includegraphics[width=1.0\textwidth]{}\\
%  \caption{The Legendrian mutation along a long~$\sf I$-cycle.}\label{fig:mutation-weavelong}
%\end{figure}

\begin{ex}[Legendrian $(2, k)$-torus links]\label{ex:2,nlink-weave}
Consider the max-tb Legendrian $(2, k)$-torus links $\Lambda_{2, k}$. For the braid word $\beta\Delta^2 = s_1^{k+2}$ in $J^1S^1$, consider an $(k+2)$-gon whose vertices are the midpoints of the crossings in $\beta\Delta^2$. For each triangulation of the $(k+2)$-gon, we can define a graph $G$ on $\bD^2$ by taking the dual graph of the triangulation, that is, to associate each triangle with a vertex, and associate each common edge between 2 triangles an edge between the corresponding vertices; see Figure \ref{fig:2,nlink-weave}. This results in a free Legendrian weave in $J^1\bD^2$.\\

\noindent     For two adjacent triangles in a triangulation, we can perform a flip:  remove the common diagonal edge in the quadrilateral and connect the other two opposite vertices. This results in a new triangulation such that the corresponding two Legendrian weaves are related by a mutation.\hfill$\Box$
\end{ex}
\begin{figure}[h!]
  \centering
  \includegraphics[width=0.6\textwidth]{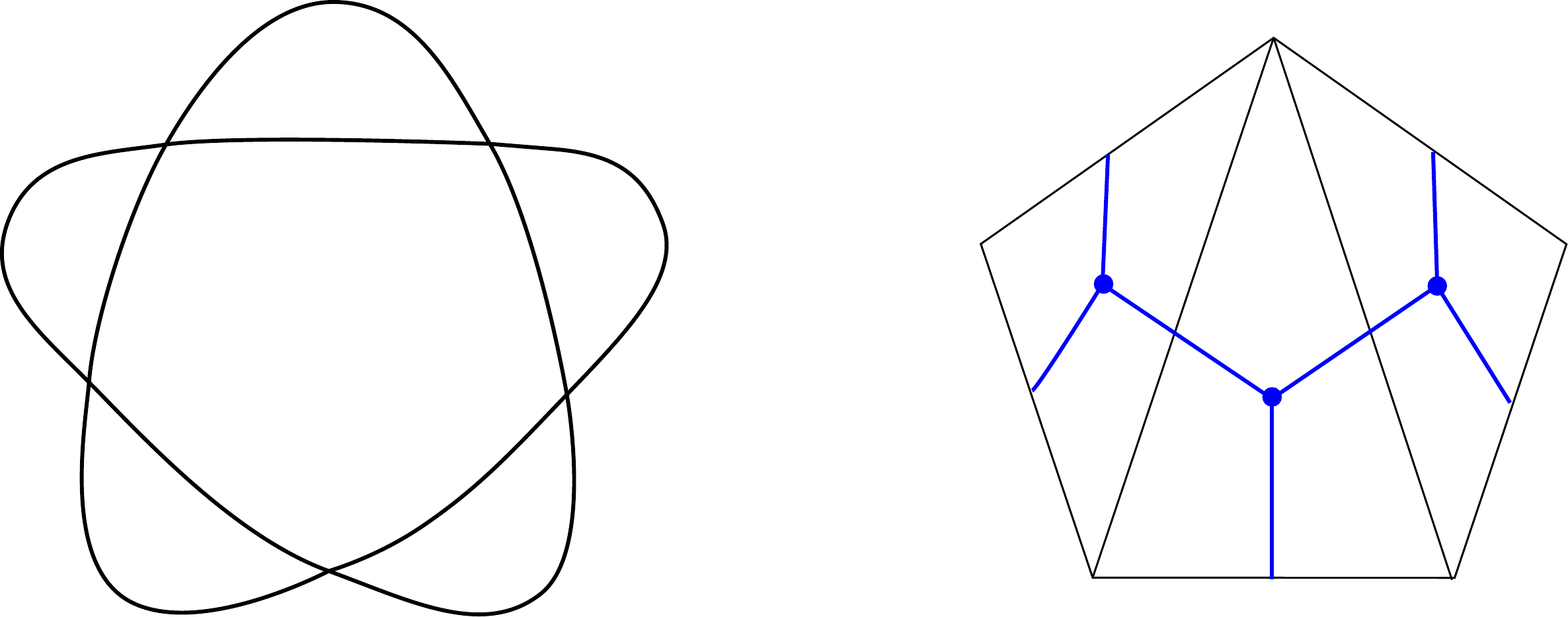}\\
  \caption{The Legendrian weave associated to a triangulation of the $(k+2)$-gon whose Lagrangian projection defines a Lagrangian filling a Legendrian $(2, k)$-torus link. Here $k=3$.}\label{fig:2,nlink-weave}
\end{figure}

\subsection{Weaves with boundary at infinity}\label{sec:weave-at-infty} An aim of this manuscript is to compare conjugate Lagrangian fillings with Legendrian weaves and show that the former are particular instances of the later. Now, free Legendrian weaves $\widetilde{L} \subset J^1\Sigma_\text{cl}$ with boundary $\Lambda \subset J^1(\partial \Sigma_\text{cl})$ define exact Lagrangian fillings of $\Lambda$ inside $T^*\Sigma_\text{cl}$, where $J^1(\partial \Sigma_\text{cl})$ is identified with a contact neighbourhood of $\partial \Sigma_\text{cl}$. In this section we now discuss how Legendrian weaves can be viewed as Lagrangian fillings of Legendrian links in $T^{*,\infty}\Sigma$, so that they can be compared with conjugate Lagrangian fillings.

    Following the notations of $\Sigma, \Sigma_\text{op}, \Sigma_\text{cl}$ in the previous sections, let $\bigcirc_1 \subset T^{*,\infty}\Sigma_\text{op}$ be the union of small inward unit conormal bundles around all the punctures $\{x_1, ..., x_m\} \subset \Sigma$, and $\Lambda \subset T^{*,\infty}\Sigma_\text{op}$ be a Legendrian satellite of $\bigcirc_1$, i.e.~we may assume that $\Lambda$ is contained in a contact neighbourhood of $\bigcirc_1 \subset T^{*,\infty}\Sigma_\text{op}$, which is contactomorphic to $J^1(\partial \Sigma_\text{cl})$; we thus fix a contact embedding $J^1(\partial \Sigma_\text{cl}) \hookrightarrow T^{*,\infty}\Sigma_\text{op}$.\\
    
    \noindent Compactifying $T^{*}\Sigma_\text{op}$ to $\ol{T^{*}\Sigma}_\text{op}$ by adding the ideal contact boundary, there is a graphical exact Lagrangian embedding $\ol{L}_{df_{\Sigma_\text{cl}}} \subset \ol{T^{*}\Sigma}_\text{op}$ of the Legendrian $\bigcirc \subset T^{*,\infty}\Sigma_\text{op}$ that is diffeomorphic to $\Sigma_\text{cl}$, where $f_{\Sigma_\text{cl}}(x) \rightarrow +\infty$ when $x \rightarrow \partial \Sigma_\text{cl}$. Hence we have an exact symplectic embedding 
    $\ol{T^*\Sigma}_\text{cl} \hookrightarrow \ol{T^*\Sigma}_\text{op}$, of Liouville domains with contact boundary, whose restriction to the boundary $T^{*,\infty}\Sigma_\text{op}$ is the fixed contact embedding $J^1(\partial \Sigma_\text{cl}) \hookrightarrow T^{*,\infty}\Sigma_\text{op}$. For any free Legendrian weave $\widetilde{L} \subset J^1\Sigma_\text{cl}$ with boundary $\Lambda \subset J^1(\partial \Sigma_\text{cl})$, the Lagrangian projection in $T^*\Sigma_\text{cl}$ can thus be viewed as a Lagrangian filling of $\Lambda$ via the above symplectic embedding.

\begin{figure}[h!]
  \centering
  \includegraphics[width=1.0\textwidth]{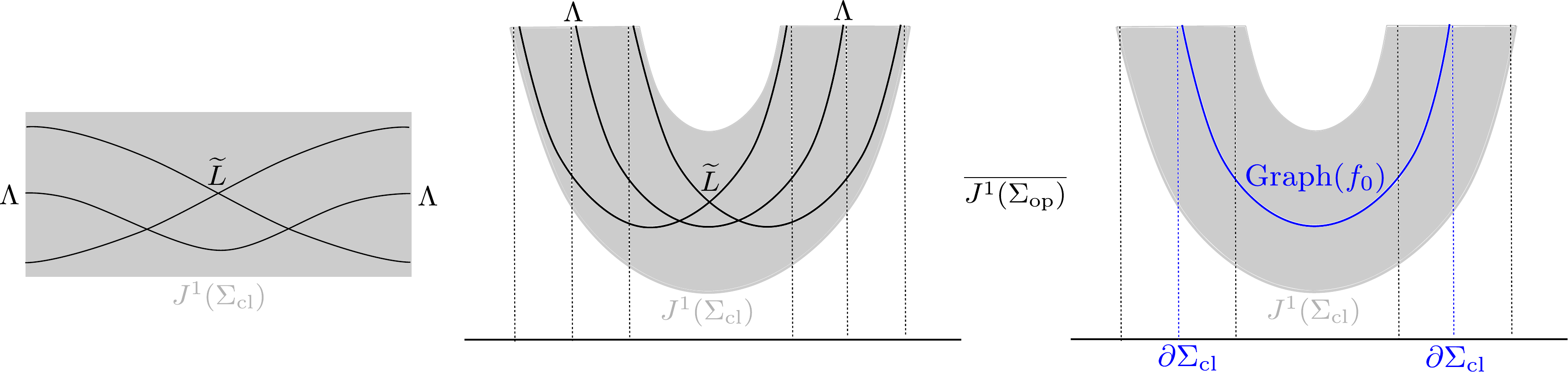}\\
  \caption{The Legendrian embedding $\Sigma_\text{cl} \hookrightarrow \ol{J^1\Sigma_\text{op}}$ via the graph of $f_{\Sigma_\text{cl}}$ and the contact embedding $J^1\Sigma_\text{cl} \hookrightarrow \ol{J^1\Sigma_\text{op}}$. This embedding enables one to view Legendrian weaves $\widetilde{L} \subset J^1\Sigma_\text{cl}$ as Legendrian weaves with boundary at infinity in $\ol{J^1\Sigma_\text{op}}$.}\label{fig:weave-bdy-at-infty}
\end{figure}

    In addition, we can consider the Legendrian lift of the exact Lagrangian filling. Compactifying $J^1\Sigma_\text{op}$ to $\ol{J^1\Sigma_\text{op}} := \ol{T^*\Sigma}_\text{op} \times \ol{\bR}$, the following diagram of Lagrangian projections and front projections commutes
    \[\xymatrix{
    \ol{T^*\Sigma}_\text{cl} \ar[d] & J^1\Sigma_\text{cl} \ar[d] \ar[l]_{\pi_\text{Lag}} \ar[r]^{\pi_\text{front}} & \Sigma_\text{cl} \times \bR \ar[d] \\
    \ol{T^*\Sigma}_\text{op} & \ol{J^1\Sigma_\text{op}} \ar[l]_{\pi_\text{Lag}} \ar[r]^{\pi_\text{front}} & \Sigma_\text{op} \times \ol{\bR},
    }\]
    where the vertical embedding on the right is determined by the Legendrian front $\mathrm{Graph}(f_{\Sigma_\text{cl}})$ of the graphical Lagrangian embedding $\ol{L}_{df_0} \subset \ol{T^{*,\infty}\Sigma}_\text{op}$. As a result, the fronts of Legendrian surfaces $\pi_\text{front}(\widetilde{L})$ in $J^1\Sigma_\text{op}$ are exactly the embeddings of the fronts of the original corresponding free Legendrian weave in $J^1\Sigma_\text{cl}$ via the embedding $\Sigma_\text{cl} \times \bR \hookrightarrow \Sigma_\text{op} \times \ol{\bR}$, and hence we can just encode the crossings in the front projection $\pi_\text{front}(\widetilde{L})$ by an $n$-graph as in the case of Legendrian weaves, but the $n$-graph will have boundary on the front of the Legendrian link $\pi(\Lambda) \times \{+\infty\} \subset \Sigma_\text{op} \times \ol{\bR}$.\footnote{Under the identification, when we view the boundary of the Legendrian weave as a Legendrian cylindrical braid closure, then the crossing coming from $G_i$ (labelled from bottom to top) corresponds to the $i$-th crossing $s_i$ (labelled from outside to inside).}

\begin{figure}[h!]
  \centering
  \includegraphics[width=0.6\textwidth]{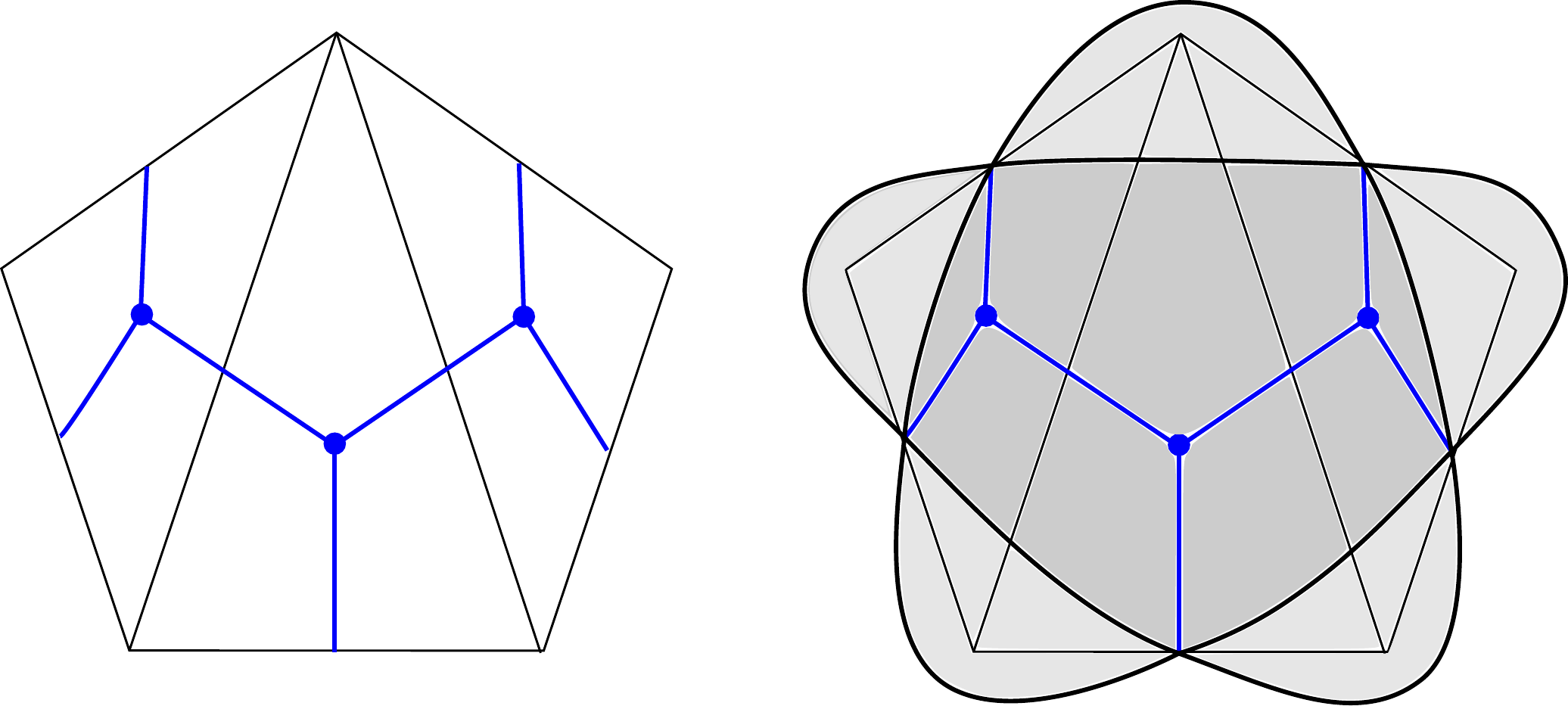}\\
  \caption{The front projection of the Legendrian weave in Example \ref{ex:2,nlink-weave} via the contact embedding $J^1\ol{\bD}^2 \hookrightarrow \ol{J^1\bD^2}$ such that the boundary lies in $J^1(S^1) \hookrightarrow T^{*,\infty}\bD^2$. In the right figure, different grey colors indicate the cardinality of preimages of the front projection in each stratum: darker grey stands for two points in the pre-image and lighter grey indicates one point.}\label{fig:2,nlink-weave-cotan}
\end{figure}
\begin{figure}[h!]
  \centering
  \includegraphics[width=0.7\textwidth]{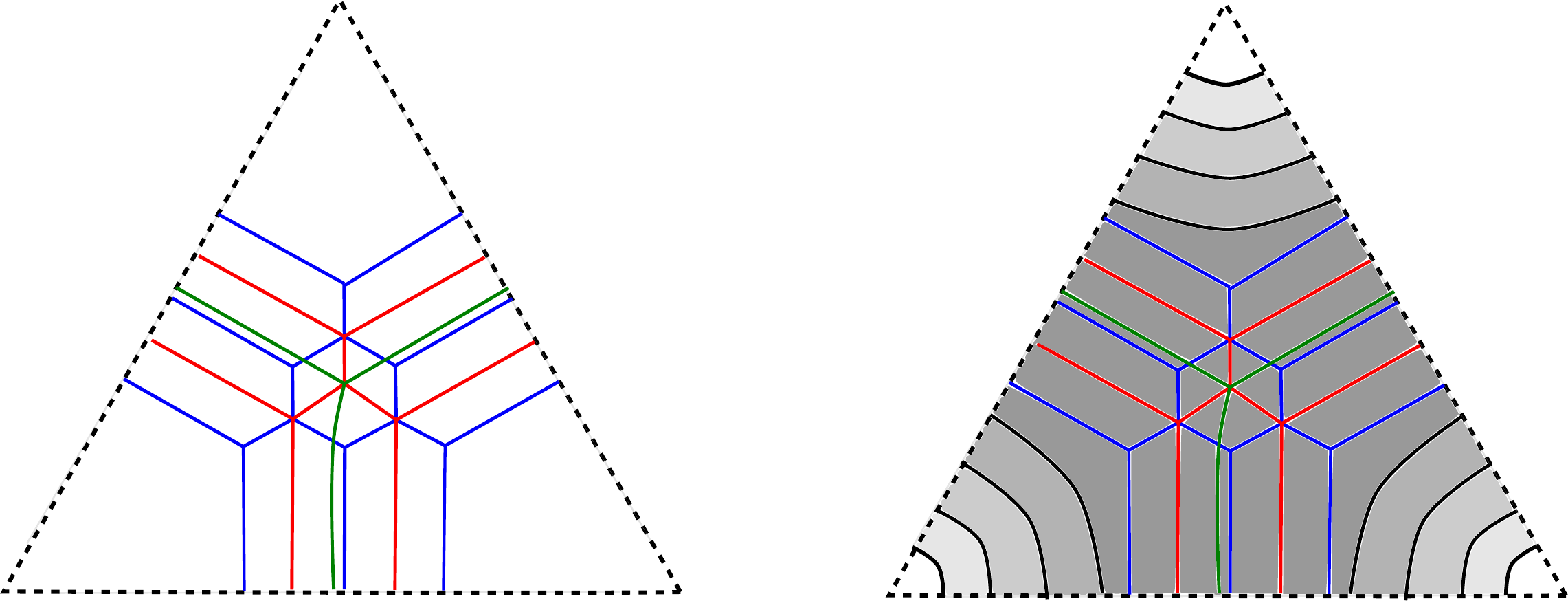}\\
  \caption{The front projection of the Legendrian weave for an $n$-triangulation via the contact embedding $J^1\Sigma_\text{cl} \hookrightarrow \ol{J^1\Sigma_\text{op}}$ such that the boundary lies in $J^1(\partial \Sigma_\text{cl}) \hookrightarrow T^{*,\infty}\Sigma_\text{op}$. Again, different shades of grey colors indicate the different cardinalities of the preimages of the front projection in each stratum.}\label{fig:triangle-weave-cotan}
\end{figure}

\begin{remark}
    For a Legendrian surface $\widetilde{L} \subset \ol{J^1\Sigma_\text{op}}$ with Legendrian boundary $T^{*,\infty}\Sigma_\text{op}$, whose front projection in $\Sigma_\text{op} \times \ol{\bR}$ has only $A_1^2$, $A_1^3$ and $D_4^-$-singularities, we will label the colors of the crossings by the number of sheets of the front below that crossing. For example, we label the crossing by \textcolor{blue}{blue} if there is no sheet below the crossing, by \textcolor{red}{red} if there is one sheet below, by \textcolor{green}{green} if there are two below, by \textcolor{purple}{purple} is there are three below, and so on.
\end{remark}

\subsection{Pinching Sequences}\label{sec:pinching}
    Finally, we recall the concept of pinching sequences of Reeb chords and how they can be used to obtain exact Lagrangian fillings of certain Legendrian knots; see \cite{CasalsNg21} and \cite[Section 6.5]{EHK16} for further details.\\
    
    Let $\Lambda_\pm \subset (\bR^3, \alpha_\text{std})$ be Legendrian links. By definition, an exact Lagrangian cobordism $L$ from $\Lambda_-$ to $\Lambda_+$ is an Lagrangian $L$ properly embedded in the symplectization $(\bR_t \times \bR^3_\text{std}, d(e^t\alpha_\text{std}))$ with $e^t\alpha_\text{std}|_L = df_L$, such that for $T \gg 0$,
    $$L \cap (-\infty, -T) \times \bR^3 = (-\infty, -T) \times \Lambda_-, \,\,\, L \cap (T, +\infty) \times \bR^3 = (T, +\infty) \times \Lambda_+,$$
    and the primitive $f_L$ is constant on $(-\infty, -T) \times \Lambda_-$ and on $(T, +\infty) \times \Lambda_+$. (This latter condition automatically holds if $\Lambda_\pm$ are connected). An exact Lagrangian filling is an exact Lagrangian cobordism from the empty set $\varnothing$ to $\Lambda$. Concatenations of exact Lagrangian cobordisms are still exact Lagrangian cobordisms; this requires the condition of the primitive $f_L$ being constant on either ends. Therefore, we can build Lagrangian fillings by concatenating exact Lagrangian cobordisms.\\
    
    \begin{figure}[h!]
  \centering
  \includegraphics[width=0.8\textwidth]{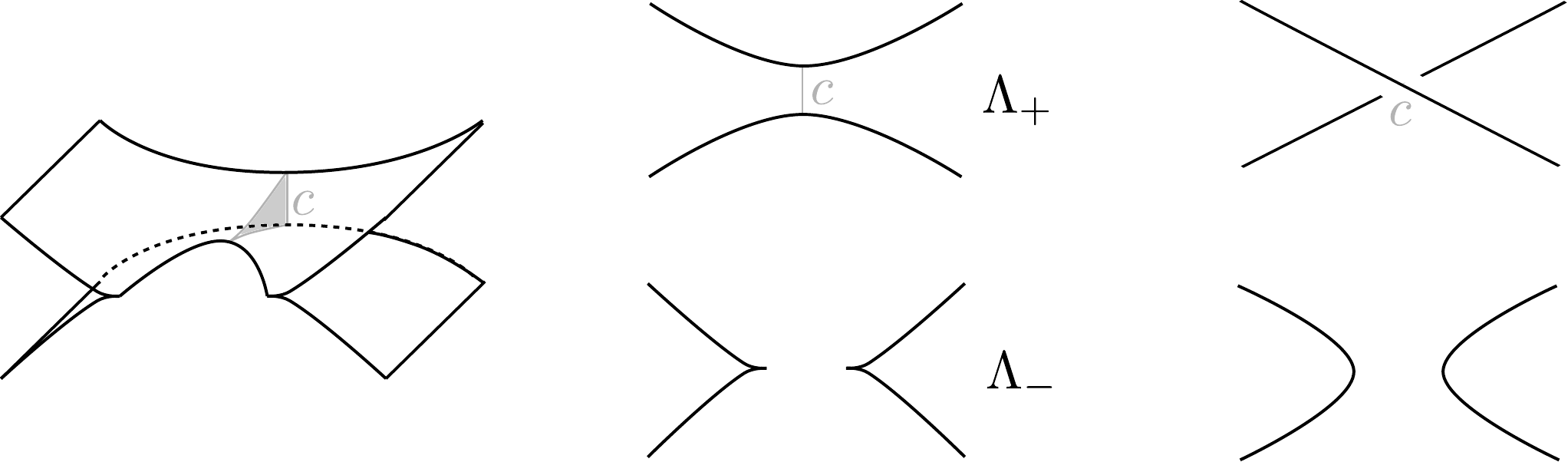}\\
  \caption{A Lagrangian saddle cobordism corresponding to a contractible Reeb chord $c$ on $\Lambda_+$. The picture in the middle is the front projections of $\Lambda_\pm$, while the picture on the right is the Lagrangian projections of $\Lambda_\pm$.}\label{fig:saddle-cob}
\end{figure}

    \noindent Three examples of exact Lagrangian cobordisms between Legendrian links that we use are:
    \begin{enumerate}
      \item the Lagrangian cobordism induced by a Legendrian isotopy \cite{Chan10};
      \item the minimal cobordism, which is the unique Lagrangian 0-handle from $\varnothing$ to the Legendrian unknot with maximal Thurston-Bennequin number \cite{EliPol96};
      \item the saddle cobordism, which is a Lagrangian 1-handle introducing a contractible Reeb chord (i.e.~there is a regular Legendrian homotopy such that the length of the chord converges to 0) as in Figure \ref{fig:saddle-cob}.
    \end{enumerate}
    Concatenations of the three types of cobordisms are called decomposable cobordisms. A pinching sequence is a sequence of minimal cobordisms and saddle cobordisms.

\begin{figure}[h!]
  \centering
  \includegraphics[width=1.0\textwidth]{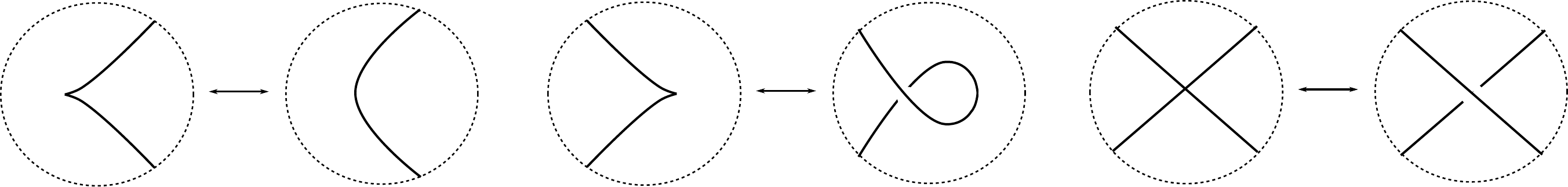}\\
  \caption{Ng's resolution translating the front projection into the Lagrangian projection of a Legendrian link.}\label{fig:Ng-resolution}
\end{figure}

    Since it is much simpler to see (contractible) Reeb chords from the Lagrangian projection than from the front projection, we frequently switch between the Lagrangian projection and front projection of a Legendrian link. For that we use the recipe in \cite{Ng03Reso}*{Proposition 2.2}: given a Legendrian link $\Lambda \subset (\bR^3, \xi_{st})$ with a generic front $\pi_\text{front}(\Lambda)$ is generic, there exists a Legendrian isotopy from $\Lambda$ to $\Lambda'$ such that the Lagrangian projection $\pi_\text{Lag}(\Lambda)$ can be obtained from the front $\pi_\text{front}(\Lambda)$ via the rules of Figure \ref{fig:Ng-resolution}. Note that Ng's resolution does not interchange Reidemeister moves in the front projection and in the Lagrangian projection in general. Therefore, when considering a Lagrangian saddle cobordism resolving a crossing in the Lagrangian projection as in Figure \ref{fig:saddle-cob}, it is possible that in the front projection we need to first apply some Reidemeister moves and then pinch the Reeb chord as in Figure \ref{fig:saddle-cob}. However, in the case of Legendrian positive braid closures, when both the Lagrangian projection and the front projection are identified as satellites of the Legendrian unknot, as in Figure \ref{fig:Lag-front} (right), we can keep track of all the contractible Reeb chords. Namely, a crossing in the Lagrangian projection corresponds to the closed region on the left of the crossing bounded by (the same) two strands in the front projection, where there is a unique Reeb chord between the two strands where they have the same slope. See Figure \ref{fig:Lag-front}, left and center, for a depiction of the Reeb chords and the regions in the front associated to the crossings in the Lagrangian projection in Figure \ref{fig:Lag-front} (right).
    
    \begin{figure}[h!]
  \centering
  \includegraphics[width=1.0\textwidth]{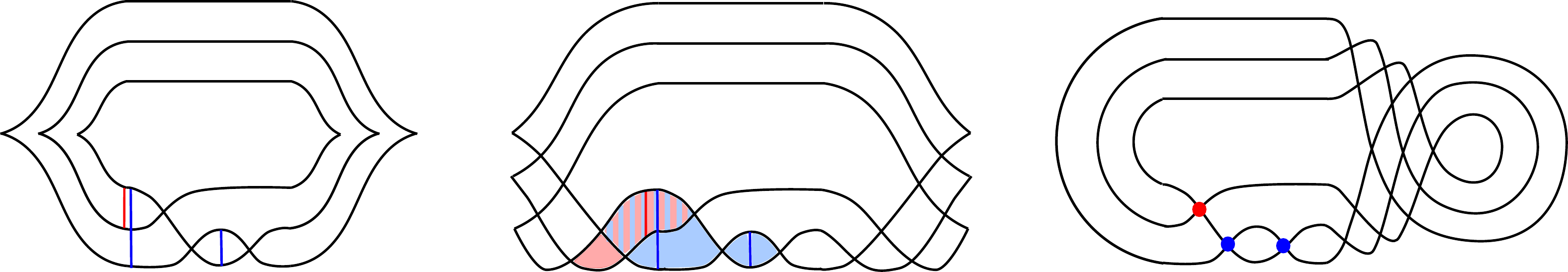}\\
  \caption{The Legendrian positive rainbow braid closures $\Lambda_{\beta_0}^\prec$ (on the left) after Legendrian isotopy to a Legendrian satelitte of the standard unknot (in the middle), and the correspondence between the front and Lagrangian projections (on the right), where each crossing in the Lagrangian projection corresponds to a closed region (on the left of the corresponding crossing) bounded by two strands in the front projection.}\label{fig:Lag-front}
\end{figure}

\begin{example}\label{ex:braid-pinch}
    Let $\beta \in \mbox{Br}_N^+$ be a positive braid and $\Lambda_{\beta}^\prec \subset \bR^3$ the Legendrian rainbow closure of $\beta_0$. Following Ng's resolution, we draw its Lagrangian projection as in Figure \ref{fig:Lag-front}. The crossings coming from cusps in the front are degree~1 Reeb chords while the crossings coming from crossings in the front are degree~0 Reeb chords; see \cite[Section 2.2]{CasalsNg21}. In the Lagrangian projection of $\Lambda_{\beta}^\prec$, all the degree~0 Reeb chords are contractible Reeb chords\footnote{I.e.~there is a regular Legendrian homotopy such that the length of the Reeb chord converges to 0, see \cite{EHK16}*{Lemma 8.1}.} and we can therefore consider concatenations of elementary cobordisms by pinching all the degree~0 Reeb chords in an arbitrary order.\\

    \noindent In general, different pinching sequences of Reeb chords may yield different Lagrangian fillings. For example, for Legendrian $(2, k)$-torus links, i.e.~$\beta = s_1^k$, \cite{EHK16}*{Section 8.1} introduced the following equivalence relation for different pinching sequences. Label the crossings by $1, 2, \ldots, k$ from left to right, and label the sequence by the order of pinching from the positive end $\Lambda_{2,k}$ to the negative end $\varnothing$. Two sequences are isotopy equivalent if they differ by a sequence of transpositions of the following type
    $$\sigma = (i_1 \cdots i_{j-1}\, i_j\, i_{j+1}\, i_{j+2} \cdots i_k), \,\,\, \sigma' = (i_1 \cdots i_{j-1}\, i_{j+1}\, i_j\, i_{j+2} \cdots i_k),$$
    where for $1 \leq i_j, i_{j+1} \leq k$ there exists $i_p\,(p \geq j+2)$ such that either $i_j < i_p < i_{j+1}$ or $i_{j+1} < i_p < i_j$. Conversely, \cite[Theorem 1.1]{Pan17Toruslink} showed that any two pinching sequences that are not isotopy equivalent produce Lagrangian fillings that are not Hamiltonian isotopic. This construction yields $C_k$ different Lagrangian fillings for $\Lambda_{2,k}$, as these sequences (modulo the equivalence above) are in bijection with 132-avoiding permutations; see also \cite{EHK16}*{Lemma 8.3}.\hfill$\Box$
\end{example}

\section{Hamiltonian Isotopies for Hybrid Lagrangians}\label{sec:localmove}

    In this section, we establish the local models of the Hamiltonian isotopies of the Lagrangian fillings needed to prove Theorem \ref{thm:main1}, relating conjugate Lagrangians with Legendrian weaves, and Theorem \ref{thm:braid-weavepinching}, relating Reeb pinching sequences with Legendrian weaves.\\
    
    \noindent By definition, a Lagrangian surface $L\sse T^*\R^2$ is said to be a hybrid Lagrangian surface if it can be in part locally described using the local models for a conjugate surface, and in part locally described using the local models of a (front for) Legendrian weave. A hybrid diagram is a planar superposition of the planar diagrams (locally) describing either conjugate surface or Legendrian weaves. Hybrid Lagrangians will allow us to translate from conjugate Lagrangians to the Lagrangian projection of Legendrian weaves. The main result we prove in this section is the following:
    
    \begin{thm}\label{thm:hybridReidemeister} Let $L\sse T^*\R^2$ be a hybrid Lagrangian surface and $\Pi_L\sse\R^2$ an associated hybrid diagram.  The four exchanges in Table \ref{fig:TableMoves} can be performed to $\Pi_L$ while preserving the Hamiltonian isotopy class of $L$.
    \end{thm}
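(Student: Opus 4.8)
## Proof proposal

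The plan is to establish each of the four moves in Table \ref{fig:TableMoves} by exhibiting an explicit local Hamiltonian isotopy in $T^*\R^2$, constructed from a compactly supported (in the base) Hamiltonian function, and then verifying that the isotopy carries one local model to the other. Since a hybrid Lagrangian is, by definition, patched together from the standard conjugate-surface local models (the logarithmic graphs $L_{df_B}$, $L_{df_W}$ and the branch model $i_\times$ from Proposition \ref{prop:conj-exist}) and the weave local models of Definition \ref{def:weave} (the sheets $D\times\{j\}$, the $A_1^2$, $A_1^3$ and $D_4^-$ fronts), each exchange is supported in a small ball $D\subset\R^2$ where the diagram has a prescribed normal form; outside that ball the two hybrid Lagrangians agree, so a compactly supported Hamiltonian isotopy in $T^*D$ glues to a global one. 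The recurring technique, already used in Proposition \ref{prop:RII-move0}, is to write both local models as (possibly multi-sheeted) graphs $L_{dh}$ over a common region and produce the isotopy by a fiberwise-type Hamiltonian $H(x,\xi)$ whose time-one flow translates fibers so as to take one family of sheets to the other without creating intersections; exactness is automatic since we stay within graphs of closed (indeed exact) one-forms, and the primitives can be tracked explicitly as in the formulas $f_\times$, $f_{L_1}$ above.

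Concretely, I would proceed move by move. First, for the move that converts a bivalent (degree-two) vertex region of a conjugate surface into a trivial pair of weave sheets — the ``conjugate-to-weave'' dictionary at an edge — I would reuse the model of Proposition \ref{prop:RII-move0}: near such a vertex the conjugate Lagrangian is the union $L_{df_B}\cup L_{df_W}\cup i_\times$, and after the Reidemeister~II-type isotopy of that proposition it becomes a graphical Lagrangian over a disk with no crossing, which is exactly the weave model (1) of Definition \ref{def:weave} (two parallel sheets) after the obvious normalization of the $z$-coordinate via $\lambda_{\mathrm{st}}$-primitives. Second, for the move exchanging the branch-point model $i_\times$ of a conjugate surface with the $D_4^-$ front of a weave trivalent vertex, I would write the conjugate branch model in the coordinates $(s,t)\mapsto(st,s(1-t),\sqrt{(1-t)/t},\sqrt{t/(1-t)})$ and the $D_4^-$ model as $\{x_1+\sqrt{-1}x_2=w^2,\ \xi=\mathrm{Re}(\text{something})\}$ after projecting out the Reeb direction; both are embedded exact Lagrangian disks whose Legendrian lifts at infinity are Legendrian isotopic, so by the contractibility of Lagrangian disk fillings (Proposition \ref{prop:conj-unique}, i.e.\ \cite[Theorem 1.1.B]{EliPol96}) they are Hamiltonian isotopic rel boundary; I would then make this isotopy compactly supported by the cut-off trick of Proposition \ref{prop:conj-exist}. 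Third and fourth, the two remaining exchanges in Table \ref{fig:TableMoves} (sliding a co-oriented strand past a hexagonal point / past another strand, i.e.\ the hybrid analogues of Reidemeister~II and III) are handled by the same fiberwise-translation Hamiltonian argument: on each level set of the translating coordinate the front projections are disjoint graphs, so the flow is free of new intersections and the resulting Lagrangian is embedded throughout.

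In each case the verification has three parts: (a) the time-one image is the asserted target local model — this is a direct computation with the explicit parametrizations; (b) the isotopy is through embedded \emph{exact} Lagrangians — exactness is immediate because every intermediate Lagrangian is a graph (or a union of disjoint graphs) of an exact one-form, and embeddedness follows from the disjointness of sheets on each level set; (c) the isotopy is compactly supported in the base, hence globalizes to the hybrid surface $L$ — this uses the standard cut-off of the generating Hamiltonian, exactly as in the last paragraph of the proof of Proposition \ref{prop:conj-exist}.

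The main obstacle I anticipate is the branch-point/$D_4^-$ exchange: there, unlike the purely graphical moves, one cannot realize the isotopy by a fiberwise translation, because the two local models differ by a genuine change in the branched-cover structure over the base (the conjugate model is a real blowup at a crossing, the weave model is a genuine three-fold branch point). The honest route is to argue abstractly — both are exact Lagrangian fillings of the same max-$tb$ Legendrian unknot inside a Darboux ball, so \cite{EliPol96} gives a Hamiltonian isotopy between them — and then to \emph{upgrade} that abstract isotopy to one that is compactly supported and compatible with the hybrid gluing, i.e.\ that fixes a collar of the boundary sphere and therefore extends by the identity. Controlling the behavior near the boundary sphere, so that the abstract disk isotopy does not move the part of $L$ outside the local ball, is the delicate point; I would resolve it by combining the relative (rel-boundary) version of \cite{EliPol96} with the Weinstein-neighbourhood cut-off already employed in Proposition \ref{prop:conj-exist}, exactly as needed to pass from a Hamiltonian isotopy of local models to a Hamiltonian isotopy of the globally-defined hybrid Lagrangian.
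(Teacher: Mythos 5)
Your overall framework (local normal forms, explicit Hamiltonians, gluing) matches the paper's strategy, and your treatment of the Reidemeister~II-type exchanges is essentially what the paper does: Lemmas \ref{lem:RII-move1} and \ref{lem:RII-move2} realize those moves by explicit families of graphical Lagrangians whose isotopy is generated by a fiberwise Hamiltonian of the form $H=2\beta(|x|)\eta$, and Proposition \ref{lem:localmove2} handles the $A_1^3$/hexagonal move by an explicit rotational parametrization with a radial Hamiltonian $H(r)$. However, your proposed treatment of the crossing-to-$D_4^-$ exchange (the RIII1 move) has a genuine gap. You propose to argue abstractly that both local models fill ``the same max-$tb$ Legendrian unknot'' and invoke \cite{EliPol96} rel boundary, then cut off. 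But the two local models do \emph{not} have the same Legendrian boundary: $\Lambda_0$ and $\Lambda_1$ in Figure \ref{fig:localmove1} differ by a Reidemeister~III move, so their boundaries at infinity are distinct Legendrian tangles (three strands forming a triangle versus not), not a closed unknot, and the required isotopy is therefore \emph{not} compactly supported and cannot fix a collar of the boundary sphere. This is exactly the point the paper flags after Theorem \ref{thm:main1}: the isotopies here necessarily move the Legendrian at infinity. The Eliashberg--Polterovich contractibility result is used in this paper only for Proposition \ref{prop:conj-unique} (uniqueness of the conjugate filling of a \emph{fixed} alternating Legendrian near a crossing), which is a different statement; it does not produce the RIII1 move.

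What the paper actually does for RIII1 (Proposition \ref{lem:localmove1}) is construct an explicit intermediate exact Lagrangian $L_{1/2}$ whose boundary is the triple-point front (a Thurston triple-point diagram), and then two radial Hamiltonians $H_\pm(r)$ on the cotangent fibers --- with $H_-'(r)=r^2(-1+\sqrt{1+2/r^2})$ and, e.g., $H_+'(r)=\tfrac{2}{\pi}\arctan(\pi r^2)$ --- whose flows carry $L_{1/2}$ to the conjugate model and to the $D_4^-$ model respectively; embeddedness is checked by analyzing the level sets $L_t\cap T^{*,r}\R^2$, and the non-compactly-supported behavior at infinity is controlled by $\lim_{r\to\infty}H_\pm'(r)=1$ together with Lemma \ref{lem:graph-at-infty}. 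Your fiberwise-graph argument also does not apply to this move as stated, since the $D_4^-$ model is a branched double cover of the base, not a union of disjoint graphs on each level set. To repair your proposal you would need to either reproduce such an explicit construction or find a substitute for \cite{EliPol96} that applies to fillings of a \emph{moving} Legendrian tangle in a ball; as written, the rel-boundary reduction does not go through.
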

    
    \begin{figure}[h!]
  \centering
  \includegraphics[width=1.0\textwidth]{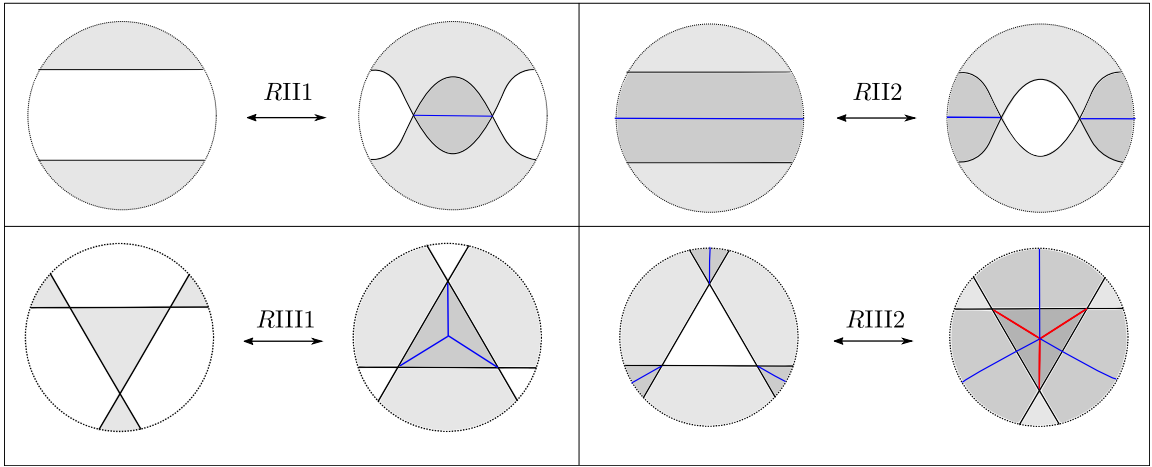}\\
  \caption{Reidemeister-type moves for hybrid Lagrangian surfaces. These moves allow us to transition from a conjugate surface towards a weave.}\label{fig:TableMoves}
\end{figure}

\noindent Theorem \ref{thm:hybridReidemeister} is proven in Subsections \ref{ssec:RIII1}, \ref{ssec:RIII2} and \ref{ssec:RII}. Subsections \ref{ssec:weave_pinching} and \ref{ssec:squaremove_Lagrangianmutation} respectively discuss part of the relation between Legendrian weaves and pinching sequences, and the square move of plabic graphs and Legendrian mutation. In particular, we will explain in detail the meaning of the hybrid diagrams in Figure \ref{fig:TableMoves}. Before delving into the proof of Theorem \ref{thm:hybridReidemeister}, we provide a necessary technical statement on Hamiltonian isotopies in Subsection \ref{ssec:behavior_infinity}.

\subsection{Behaviour at Infinity}\label{ssec:behavior_infinity}
In the Hamiltonian isotopies we construct, we have a smooth family of Lagrangian surfaces $L_t\subset T^*\Sigma$ with Legendrian boundaries $\ol{L}_t \cap T^{*,\infty}\Sigma = \Lambda_t$ and, for any collar neighbourhood $U$ of the boundary $\Lambda_t \subset \ol{T^*\Sigma}$, there is a 1-parametric family of Hamiltonian isotopies $\varphi_{t,s}$, $s\in[0,1]$, supported on $U$ so that $\varphi_{t,1}(L_t) \cap \{(x, \xi)\in T^*\Sigma | |\xi| > r\} = \Lambda_t \times (r,\infty)$, for $r$ sufficiently large. Note that the Hamiltonian isotopy $\varphi_{t,s}$ is fixed at $t = 0, 1$. Therefore, first we need to prove that there is a contractible choice of Hamiltonian isotopies $\varphi_{t,s}$.

\begin{lemma}\label{lem:graph-at-infty}
    Let $L_t \subset T^*\Sigma$, $t\in \bD^k$, be a smooth family of Lagrangian submanifolds such that $\ol{L}_t \cap T^{*,\infty}\Sigma = \Lambda_t$, and for $r_{t,0}$ sufficiently large, $L_t$ lies in a Weinstein neighbourhood of $\Lambda_t \times (r_0,+\infty)$ as a graphical Lagrangian.

    Suppose there are Hamiltonian isotopies $\varphi_{t,s}$, $t \in \partial \bD^k$, as in the proof of Proposition \ref{prop:conj-exist} such that for $r > r_0$ sufficiently large, $\varphi_{t,1}(L_t) \cap \{(x, \xi) | |\xi| > r\} = \Lambda_t \times (r,+\infty)$. Then there exists a extension for the family of Hamiltonian isotopies to $\varphi_{t,s}$, $t\in \bD^k$, such that for $r > r_0$ sufficiently large, $\varphi_{t,1}(L_t) \cap \{(x, \xi) | |\xi| > r\} = \Lambda_t \times (r,+\infty)$.
\end{lemma}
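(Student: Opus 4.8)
The plan is to reduce the statement to a parametrized version of the cutoff-function argument used in the proof of Proposition \ref{prop:conj-exist}, combined with the contractibility of the space of relevant Hamiltonian isotopies. First I would fix, for each $t\in\bD^k$, the Weinstein tubular neighbourhood $U_t$ of $\Lambda_t\times(r_0,+\infty)$ in which $L_t$ is graphical, so that $L_t\cap U_t = L_{dF_t}$ for a smooth function $F_t:\Lambda_t\times(r_0,+\infty)\to\bR$ depending smoothly on $t$ (after trivializing the family of neighbourhoods over the contractible base $\bD^k$, which is possible since $\bD^k$ is contractible and the $\Lambda_t$ form a smooth family). The explicit model isotopy $\varphi_{t,s}$ from Proposition \ref{prop:conj-exist} is determined by the choice of a cutoff function $\beta_t:\bR_+\to[0,1]$ with $\beta_t\equiv 1$ near $r_0$, $\beta_t\equiv 1$ for $r$ large, and $\beta_t'$ sufficiently small; the isotopy interpolates $L_{dF_t}$ to $L_{d(\beta_t F_t)}$ via the linear path $\beta_t^{(u)} = (1-u) + u\beta_t$ or, equivalently, via the Hamiltonian whose flow rescales the fiber coordinate, exactly as in the cited proof.

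Next I would observe that the space of admissible cutoff functions $\beta$ — those satisfying the three conditions above with the smallness bound on $\beta'$ dictated by the size of the Weinstein neighbourhood and by $\sup|dF_t|$ on the relevant region — is convex, hence contractible; and that the bound can be chosen uniformly in $t$ since $\bD^k$ is compact. Therefore the given boundary family $\{\varphi_{t,s}\}_{t\in\partial\bD^k}$, which corresponds to a map $\partial\bD^k\to\{\text{admissible }\beta\}$, extends to a map $\bD^k\to\{\text{admissible }\beta\}$; plugging this extension into the same formula produces the desired family $\{\varphi_{t,s}\}_{t\in\bD^k}$. The required identity $\varphi_{t,1}(L_t)\cap\{|\xi|>r\} = \Lambda_t\times(r,+\infty)$ for $r\gg r_0$ holds for every $t$ because $\beta_t\equiv 1$ for large $r$ forces $\beta_t F_t$ to be locally constant in the fiber directions there, so $L_{d(\beta_t F_t)}$ is literally the cylinder $\Lambda_t\times(r,+\infty)$; and because $\beta_t\equiv 1$ near $r_0$, the isotopy is supported in the collar $U_t$ as required. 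Smooth dependence on $t$ is immediate from the formulas.

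The main obstacle I anticipate is purely technical rather than conceptual: making precise that the family of Weinstein neighbourhoods $U_t$ and the defining functions $F_t$ can be chosen smoothly in $t$ over all of $\bD^k$, given only on $\partial\bD^k$ the data coming from the model isotopies. This is where one must use that $\bD^k$ is contractible — so that the family of Legendrians $\Lambda_t\subset T^{*,\infty}\Sigma$ extends the boundary family and admits a smooth family of contact tubular neighbourhoods, via a parametrized Weinstein neighbourhood theorem — and then restrict the neighbourhoods to a common size so that a single uniform cutoff-smallness bound works. One also has to check that the extended isotopy agrees with the prescribed one on $\partial\bD^k$, which is automatic if the extension of $\bD^k\to\{\text{admissible }\beta\}$ is taken to restrict to the given boundary map; no bump-function gluing is needed since the whole construction is given by a single explicit formula in $\beta$ and $F_t$. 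Everything else — exactness, the support condition, the cylinder-at-infinity condition — follows verbatim from the argument in Proposition \ref{prop:conj-exist}.
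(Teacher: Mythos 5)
Your proposal is correct and follows essentially the same route as the paper's proof: use compactness of $\bD^k$ to obtain a uniform $r_0$ and a uniform size $\epsilon$ for the Weinstein neighbourhoods, then extend the boundary family of cutoff functions $\beta_t$ over all of $\bD^k$ using the convexity (hence contractibility) of the space of admissible cutoffs. Your additional remarks on trivializing the family of Weinstein neighbourhoods over the contractible base are a reasonable elaboration of a point the paper leaves implicit, but they do not change the argument.
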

\begin{proof}
    As in the proof of Proposition \ref{prop:conj-exist}, we need to choose a family of sufficiently large $r_{t,0} > 0$, $\varphi_{t,1}(L_t) \cap \{(x, \xi) | |\xi| > r_{t,0}\}$ is contained in a Weinstein tubular neighbourhood $U_t$ of $\Lambda_t \times (r_{t,0},+\infty)$ as a graphical Lagrangian
    $$L_{dF_t} \subset U_t \cong \{(y, \eta) \in T^*(\Lambda_t \times (r_{t,0},+\infty)) \,|\, |\eta| < \epsilon_t\},$$
    and choose a family of cut-off functions $\beta_t: \mathbb{R}_+ \rightarrow [0, 1]$ for $t \in \partial \bD^k$ such that $\beta_t = 1$ when $r \leq r_{t,0}$, $\beta = 1$ when $r$ is sufficiently large, and $\beta_t'(r)$ is sufficiently small comparing to $\epsilon$. Then $L_{dF_t}$ is Hamiltonian isotopic to $L_{d(\beta_t F_t)}$, and for some $r_t \gg r_{t,0}$,
    $$L_{d(\beta_t F_t)} \cap \{(x, \xi) | |\xi| > r_t\} = \Lambda \times (r_t, +\infty).$$

    Note that since $\bD^k$ is compact, one can assume that there is a uniform $r_0$ such that $\varphi_{t,1}(L_t) \cap \{(x, \xi) | |\xi| > r_0\}$ is contained in a Weinstein tubular neighbourhood $U_t$ of $\Lambda_t \times (r_0,+\infty)$ as a graphical Lagrangian. We can also assume that there is a uniform $\epsilon > 0$ such that the neighbourhoods of $\Lambda_t \times (r_0,+\infty)$ are of the form
    $$U_t \cong \{(y, \eta) \in T^*(\Lambda_t \times (r_0,+\infty)) \,|\, |\eta| < \epsilon\}.$$
    Then we need to extend the family of cut-off functions $\beta_t: \mathbb{R}_+ \rightarrow [0, 1]$ from $\partial \bD^k$ to $t \in \bD^k$ such that $\beta_t = 1$ when $r \leq r_0$, $\beta = 1$ when $r$ is sufficiently large, and $\beta_t'(r)$ is sufficiently small comparing to $\epsilon$. However, note that the space of such cut-off functions is convex, and hence contractible. Therefore, the family of Hamiltonian isotopies for $t \in \partial \bD^k$ can be extended to $t \in \bD^k$.
\end{proof}

\subsection{Local Move~1 for Reidemeister III}\label{ssec:RIII1}
    For an alternating Legendrian and its conjugate Lagrangian filling, associated to a bipartite graph, we first consider a Reidemeister~III move near a trivalent black vertex. Figure \ref{fig:localmove1} depicts such a move, denoted by RIII1. This type of Reidemeister~III moves on a conjugate Lagrangian give rise to the Lagrangian projection of a $D_4^-$-singularity, which allows us to move a step closer to the Lagrangian projection of a Legendrian weave.
    
    Let $\Lambda_0$ and $\La_1$ be the alternating Legendrians depicted in Figure \ref{fig:localmove1}, left and right respectively. Then $\Lambda_0$ and $\Lambda_1$ are connected by a Legendrian Reidemeister~III Move and are thus Legendrian isotopic. The Legendrian $\La_0$, being local alternating, admits a conjugate Lagrangian surface. Figure \ref{fig:localmove1} depicts a hybrid Lagrangian surface which bounds $\La_1$. We now show that these two Lagrangian surfaces are Hamiltonian isotopic:

\begin{figure}[h!]
  \centering
  \includegraphics[width=0.7\textwidth]{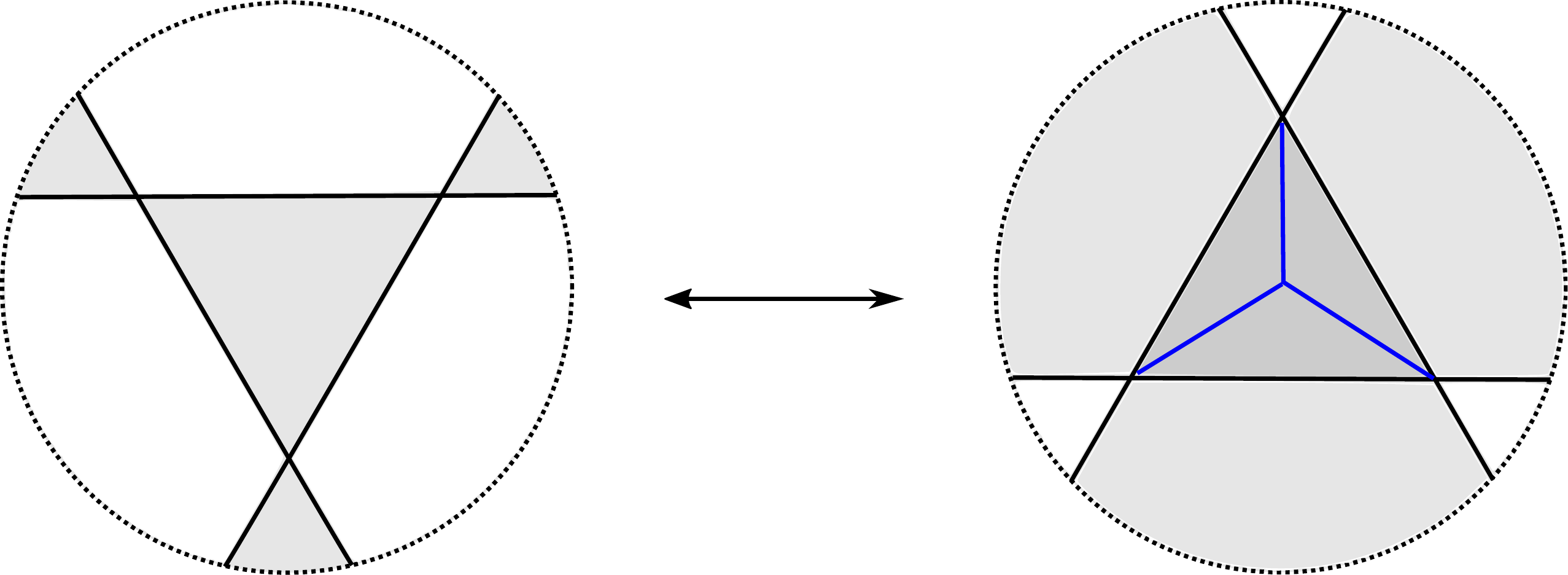}\\
  \caption{The local move~1 of Lagrangian fillings for a Legendrian Reidemeister~III move. The covectors of the Legendrian on the left are pointing towards the grey triangular region in the middle.}\label{fig:localmove1}
\end{figure}

\begin{prop}\label{lem:localmove1}
    Let $\Lambda_0$ and $\La_1$ be the alternating Legendrians depicted in Figure \ref{fig:localmove1}, left and right respectively, and $L_0$ the conjugate Lagrangian filling associated to $\La_0$. Then there exists a hybrid Lagrangian surface $L_1$ which is Hamiltonian isotopic to $L_0$ and satisfies that:
    
    \begin{itemize}
        \item[(i)] The projection to the base $\pi|_{L_1}: L_1 \rightarrow \pi(L_1)$ is a 2-fold branched covering inside the dark grey triangle, with branch set a point -- the trivalent vertex in Figure \ref{fig:localmove1} (right) -- and injective in the three unbounded light grey regions.\\
        
        \item[(ii)] The front projection of the Legendrian lift of $L_1$ is, when restricting to the dark grey triangle, a $D_4^-$-singularity as in Figure \ref{fig:localmove1} (right).
        \end{itemize} 
\end{prop}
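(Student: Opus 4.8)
The plan is to exhibit an explicit local model for $L_1$ near the black trivalent vertex, verify it is a hybrid Lagrangian with the asserted branching and front behaviour, and then connect it to $L_0$ by a Hamiltonian isotopy supported in a neighbourhood of the triangular region, applying Lemma \ref{lem:graph-at-infty} (via Proposition \ref{prop:conj-unique}) to handle the behaviour at infinity. First I would set up coordinates so that the front $\pi(\Lambda_0)$ near the black vertex is modelled on the three coordinate rays through the origin bounding the grey triangle, with the co-orientations all pointing into the triangle (as in the figure). The conjugate Lagrangian $L_0$ is then built, as in Proposition \ref{prop:conj-exist}, from $f_B = \ln m_B$ on the black triangle (glued to the three crossing models $f_\times$ at the three double points and to $f_W$ on the adjacent white regions); the point is that over the interior of the triangle $L_0$ is the graph $L_{df_B}$, a single sheet.

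The key step is to recognize that the $D_4^-$-front $\Pi(D_4^-)=\{(x_1,x_2,z): x_1+\sqrt{-1}x_2=w^2,\ z=\operatorname{Re}(w^3)\}$, after projecting out the $z$-coordinate, produces exactly a Lagrangian that is a $2$-fold branched cover of the triangle branched at the vertex, with three sheets coming together along the three edges emanating from the vertex and flattening out to single graphical sheets in the three outer regions — which is precisely the combinatorial shape of $\overline{\pi(L_0)}$ near a trivalent black vertex after the Reidemeister III move. Concretely, I would take the Lagrangian projection $L_1^{loc}$ of the standard $D_4^-$ Legendrian front inside the triangle, using the parametrization $w\in\mathbb{C}$ with $|w|$ small, $\xi = \partial_x z$, $\eta = \partial_y z$ computed from $z=\operatorname{Re}(w^3)$ and $(x_1,x_2)=(\operatorname{Re}w^2, \operatorname{Im}w^2)$; this is exact with primitive the Legendrian $z$-coordinate. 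Outside the triangle $L_1 = L_0$. One then checks that the two pieces glue to a smooth embedded exact Lagrangian: along the three edges the $D_4^-$ model degenerates to an $A_1^2$-type front (two sheets meeting), which one matches to the crossing/white-region pieces of $L_0$ by the same partition-of-unity gluing as in Proposition \ref{prop:conj-exist}, after possibly a further compactly supported Hamiltonian isotopy adjusting the primitives so they agree on the overlaps. Properties (i) and (ii) are then immediate from the construction.

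For the Hamiltonian isotopy $L_0\simeq L_1$: both surfaces have the same (topological) projection onto $\Sigma$, are graphical outside a neighbourhood $N$ of the triangle, and agree with $L_0$ there; inside $N$ both are exact Lagrangian fillings of the same Legendrian tangle $\Lambda_0\cap\partial N$ inside the Liouville ball $T^*N$. The strategy mirrors the end of the proof of Proposition \ref{prop:conj-unique}: smoothing corners, $T^*N$ is a Darboux ball, the common Legendrian boundary bounds a Lagrangian disk (locally), and by \cite[Theorem 1.1.B]{EliPol96} the space of such fillings is contractible, so $L_0|_N$ and $L_1|_N$ are Hamiltonian isotopic rel boundary; extend by the identity outside $N$. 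Finally, Lemma \ref{lem:graph-at-infty} guarantees the isotopy can be chosen compatibly with the graphical-at-infinity normalization required of hybrid Lagrangians, and that this choice is contractible, so the construction is canonical up to contractible ambiguity.

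The main obstacle I anticipate is the gluing in the second paragraph: checking that the $D_4^-$ Lagrangian model inside the triangle degenerates along its three boundary edges to exactly the $A_1^2$-pieces and single graphical sheets produced by the conjugate-surface construction (with matching primitives, so that the result is globally exact and embedded, not merely Lagrangian-immersed). This requires a careful local computation near each edge and each of the three double points, interpolating between the $w$-parametrized $D_4^-$ chart and the $(s,t)$-parametrized crossing chart $i_\times$ of Proposition \ref{prop:conj-exist}. Once the local model is correctly assembled, the isotopy statements follow from the now-available uniqueness result (Proposition \ref{prop:conj-unique}), Lemma \ref{lem:graph-at-infty}, and the contractibility of spaces of Lagrangian disk fillings of the standard Legendrian unknot.
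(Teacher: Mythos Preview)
Your approach is genuinely different from the paper's, and there is a real gap in the isotopy step.

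The paper does not build $L_1$ by gluing a $D_4^-$ chart to the ambient conjugate surface and then invoke a uniqueness theorem. Instead it writes down an explicit intermediate Lagrangian $L_{1/2}$ (whose front has a triple point, i.e.\ the middle slice of the Reidemeister~III homotopy), given by a concrete parametrization $(r,\theta)\mapsto (r^2\sin\tfrac{3\theta}{2}\cos\tfrac{\theta}{2},\, r^2\sin\tfrac{3\theta}{2}\sin\tfrac{\theta}{2},\, -r\sin\theta,\, -r\cos\theta)$, and then constructs two Hamiltonians $H_\pm=H_\pm(r)$ depending only on the fibre radius such that $L_0=\varphi_{H_-}^{-1/2}(L_{1/2})$ is a bona fide conjugate Lagrangian and $L_1=\varphi_{H_+}^{1/2}(L_{1/2})$ has Legendrian lift with a $D_4^-$ front. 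The isotopy is the concatenation of these two flows; no gluing, no Eliashberg--Polterovich, no Proposition~\ref{prop:conj-unique}. This explicitness matters: it is what allows Corollary~\ref{cor:1cycle-loc} to track the relative $1$-cycle across the move, which is essential for the cluster-variable comparison later.

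The gap in your argument is in the third paragraph. You assert that ``inside $N$ both are exact Lagrangian fillings of the same Legendrian tangle $\Lambda_0\cap\partial N$'', but this is not the case: the Legendrian boundaries at infinity are $\Lambda_0$ and $\Lambda_1$ respectively, and these differ \emph{inside} $T^{*,\infty}N$ by exactly the Reidemeister~III move. After smoothing the corner of $\partial(T^*N)$ to a contact $S^3$, the boundaries of $L_0|_{T^*N}$ and $L_1|_{T^*N}$ are two \emph{different} (though Legendrian isotopic) unknots, so you cannot apply \cite{EliPol96} rel boundary. You would first need to extend the RIII contact isotopy to a Hamiltonian isotopy of $T^*N$ carrying $\Lambda_0$ to $\Lambda_1$, show the image of $L_0$ under this isotopy agrees with $L_1$ outside a compact set, and only then invoke Eliashberg--Polterovich. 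Relatedly, your claim that $L_0$ and $L_1$ ``have the same (topological) projection onto $\Sigma$'' is false: $\pi(L_0)$ is the black triangle together with three vertex-adjacent wedges (each covered once), whereas $\pi(L_1)$ is the flipped triangle (covered twice) together with three different unbounded regions. Finally, Proposition~\ref{prop:conj-unique} does not help here because $L_1$ is not a conjugate Lagrangian in the sense of Definition~\ref{def:graphical-conj}, so its uniqueness statement does not apply to the pair $(L_0,L_1)$.
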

\begin{proof}
    Consider the Legendrian link $\Lambda_{1/2}$ with a triple point in its front projection, which is the time slice in the middle of the Reidemeister~III move from $\Lambda_0$ to $\Lambda_1$. We first construct a Lagrangian filling $L_{1/2}$ of $\Lambda_{1/2}$, and then show that $L_{1/2}$ is Hamiltonian isotopic to both $L_0$ and $L_1$. Define the Lagrangian embedding $\Phi_{L_{1/2}}:\R^2\lr\R^2$ of $\mathbb{R}^2$ with polar coordinates $(r,\theta)\in\R^2$, $r\in\R^{\geq0}, \theta\in[0,2\pi)$, in the domain $\bR^2$, to be
    $$\Phi_{L_{1/2}}: (r, \theta) \mapsto \Big(r^2\sin \frac{3\theta}{2} \cos \frac{\theta}{2}, r^2\sin \frac{3\theta}{2} \sin \frac{\theta}{2}, -r\sin \theta, -r\cos \theta \Big).$$
    This is an exact Lagrangian and the primitive of (the image of) $\Phi_{L_{1/2}}$ is
    $$f_{L_{1/2}}(r, \theta) = -\frac{2}{3}r^3\sin^2 \frac{3\theta}{2}.$$
    Alternatively, using polar coordinates $(\rho, \varphi)$ on the base $\mathbb{R}^2$ of $T^*\mathbb{R}^2$, we can write in Cartesian coordinates
    $$L_{1/2} = \left\{\Big(\rho\cos \varphi, \rho\sin \varphi, -\rho^{1/2}(\sin 3\varphi)^{-1/2}\sin 2\varphi, -\rho^{1/2}(\sin 3\varphi)^{-1/2}\cos 2\varphi \Big) \Big| \rho \geq 0\right\}.$$
    Notice that the coordinate change is given by $\rho = r^2\sin ({3\theta}/{2}), \,\,\, \varphi = {\theta}/{2}$. In this proof, we use the polar coordinates $(\rho,\varphi)$ for the zero-section of $\R^2$ and polar coordinates $(r,\theta)$ for the cotangent fibers of $T^*\R^2$.\\

\begin{figure}[h!]
  \centering
  \includegraphics[width=0.8\textwidth]{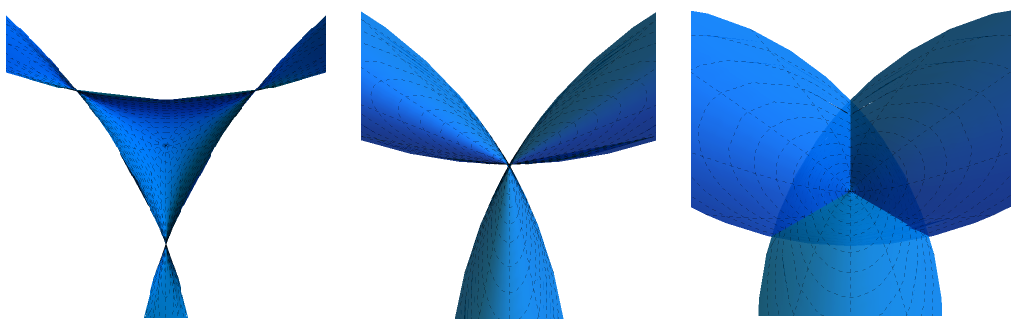}\\
  \caption{The projection of the Lagrangian fillings $L_0$, on the left, $L_{1/2}$, at the center, and $L_1$, on the right, onto the base $\bR^2$ and the level sets $L_t^r = L_t \cap T^{*,r}\mathbb{R}^2$ in dashed lines.}\label{fig:localmove1_levelset}
\end{figure}

    Consider Hamiltonians $H:T^*\R^2\lr\R$ of the form $H = H(r)$ on $T^*\mathbb{R}^2$ so that the level sets are exactly the radius-$r$ (co)circle bundles $T^{*,r}\mathbb{R}^2$ and $H'(r) < 0$. The projections onto the zero section of the image of $L_{1/2}$ under the Hamiltonian flows of these Hamiltonians are
    $$\pi(L_{1/2+t}) = \left\{\Big(r^2\sin \frac{3\theta}{2}\cos \frac{\theta}{2}, r^2\sin \frac{3\theta}{2} \sin\frac{\theta}{2}\Big) - tH'(r)(\sin \theta, \cos \theta)\right\}.$$
    Now we construct two explicit Hamiltonians $H_\pm$ of this form, so that:
    
    \begin{itemize}
        \item[(1)] The image $L_0 = \varphi_{H_-}^{-1/2}(L_{1/2})$ of $L_{1/2}$ under the  $(-1/2)$-time flow of $H_-$ is a conjugate Lagrangian as in Figure \ref{fig:localmove1} (left).
        
        \item[(2)] The image $L_1 = \varphi_{H_+}^{1/2}(L_{1/2})$ of $L_{1/2}$ under the  $1/2$-time flow of $H_+$ is the Lagrangian projection of a $D_4^-$-Legendrian front, as in the dark grey region of Figure \ref{fig:localmove1} (right).
    \end{itemize}
     
    \textit{Part 1.~Construction of $H_-$.} We construct the Hamiltonian $H_-$ so that $L_0 = \varphi_{H_-}^{-1/2}(L_{1/2})$ is a conjugate Lagrangian. First, consider a Hamiltonian $H_-:T^*\R^2\lr\R$, $H_-(\rho,\varphi;r,\theta) = H_-(r)$ and we will find constraints so as to determine it exactly. Under the $(-1/2)$-time Hamiltonian flow, the Lagrangian $L_{1/2}$ becomes
    $$L_0= \left\{\Big(r^2\sin \frac{3\theta}{2}\cos \frac{\theta}{2} + \frac{1}{2}H_-'(r)\sin \theta, r^2\sin \frac{3\theta}{2} \sin\frac{\theta}{2} + \frac{1}{2}H_-'(r)\cos \theta, r\cos\theta, r\sin\theta\Big) \right\}.$$
    For $L_0$ to be the required conjugate Lagrangian, we need to show that
    
    \begin{enumerate}
        \item the projection of $L_0$ onto $\mathbb{R}^2$ is contained in the light grey regions;
        \item the projection of $L_0$ onto $\bR^2$ is injective in the light grey region.
    \end{enumerate} 

\begin{figure}[h!]
  \centering
  \includegraphics[width=1.0\textwidth]{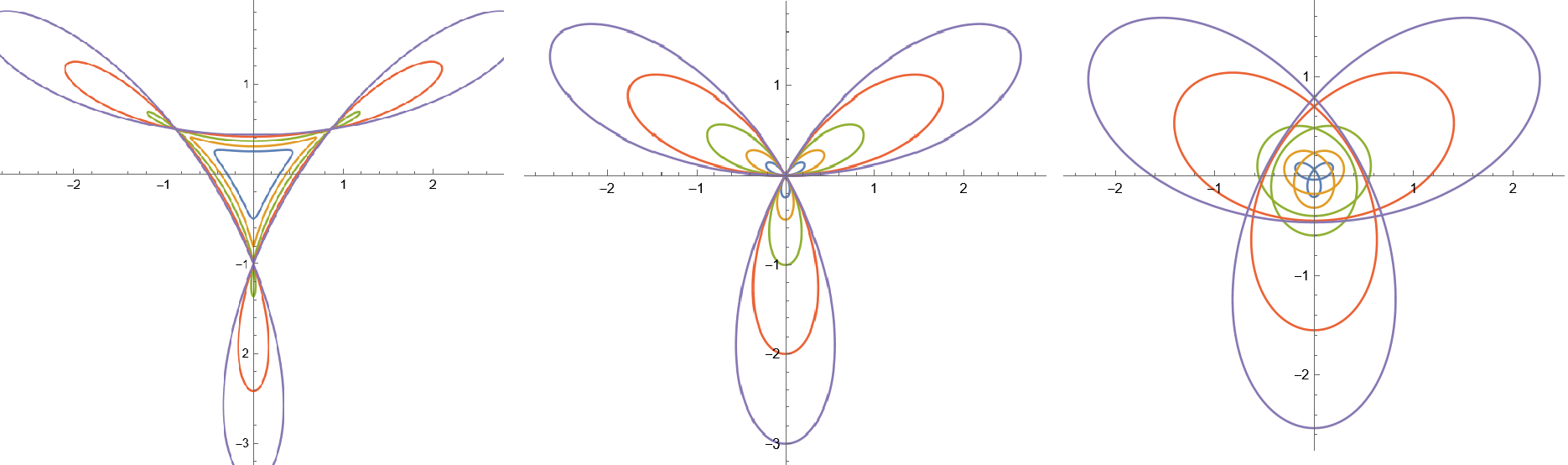}\\
  \caption{The projection of level curves of the Lagrangian fillings $L_{t}^r = L_{t} \cap T^{*,r}\bR^2$ onto $\bR^2$ for $t = 0$ (left), $t = 1/2$ (middle) and $t = 1$ (right), and $r^2 = 1/4, 1/2, 1, 2$ and $3$. Here $L_0 = \varphi_{H_-}^{1/2}(L_{1/2})$ and $L_1 = \varphi_{H_+}^{1/2}(L_{1/2})$.}\label{fig:levelset}
\end{figure}

\noindent Consider the intersection $L_0^r := L_0 \cap T^{*,r}\bR^2$ of the Lagrangian surface $L_0$ with the radius-$r$ cocircle bundle of $T^*\R^2$. Then the projection $\pi(L_0^r)$ is an immersed curve with 3 crossings on the line $\varphi = \pi/6, 5\pi/6$ and $3\pi/2$. The coordinates $(r, \theta(r))$ of the crossings at $\varphi = 3\pi/2$ satisfy
    $$r^2\sin\frac{3\theta}{2}\cos \frac{\theta}{2} + \frac{1}{2}H_-'(r)\sin \theta = \frac{1}{2}r^2\sin 2\theta + \frac{1}{2}(r^2 + H_-'(r))\sin \theta = 0.$$
    $$\cos\theta(r) = -\frac{1}{2}\Big(1 + \frac{H_-'(r)}{r^2}\Big).$$
In order to get a conjugate Lagrangian, this crossing of $\pi(L_0^r) \subset \bR^2$ should coincide with the crossing of the Legendrian $\pi(\Lambda_0)$. The Cartesian coordinates of the crossing of $\pi(\Lambda_0)$ are $(0, -1)$ and thus we require
    $$r^2\sin\frac{3\theta}{2}\sin \frac{\theta}{2} + \frac{1}{2}H_-'(r)\cos \theta = \frac{1}{2}r^2\cos 2\theta + \frac{1}{2}(r^2 + H_-'(r))\cos \theta = -1.$$
    By using the formula of $\cos\theta(r)$, we obtain the expression
    $$H_-'(r) = r^2\big(-1 + \sqrt{1 + {2}/r^2}\big)$$
    for the derivative of $H_-(r)$, which uniquely determines the required Hamiltonian $H(r)$, up to a constant. It is readily verified that ~$H_-'(r) > 0$ for $r \in (0, +\infty)$ and also that the limit $\lim_{r \rightarrow \infty}H_-'(r) = 1$ is a finite number.\footnote{Intuitively, $H'(r)$ is the speed of the flow on the cocircle level set $r$, which determines by how much the Legendrians are moved.} For this particular Hamiltonian $H_-(r)$, we can see explicitly from Figure \ref{fig:localmove1_levelset} that $L_0$ is a conjugate Lagrangian satisfying condition~(1) \& (2).\\

    \textit{Part 2.~Construction of $H_+$ such that $\varphi_{H_+}^{1/2}(L_{1/2})$ is the Lagrangian projection of $D_4^-$.} We use the same polar coordinates $(r,\theta)$ in the cotangent fibers of $T^*\R^2$ and $(\rho,\varphi)$ in the base. We want to construct a Hamiltonian $H_+:T^*\R^2\lr\R$ so that, following Figure \ref{fig:localmove1} (right), $L_1 := \varphi_{H_+}^{1/2}(L_{1/2})$ is the Lagrangian projection of a $D_4^-$-Legendrian front in the dark grey region, and an injective projection in the light grey region. First, note that for any such Hamiltonian the primitive of $L_1$ is
    $$f_{L_1}(r, \theta) = f_{L_{1/2}}(r, \theta) + G_+(r) = -\frac{2}{3}r^3\sin^2 \frac{3\theta}{2} + G_+(r), \,\,\, \mbox{where }dG_+(r) = \frac{1}{2}rdH_+'(r).$$
    In Cartesian coordinates $(\rho\cos\varphi,\rho \sin \varphi)$ on the base of $T^*\R^2$, the projection $\pi|_{L_1}: L_1 \rightarrow \bR^2$ is given by
    $$(\rho \cos\varphi, \rho \sin \varphi) = \Big(r^2\sin \frac{3\theta}{2}\cos \frac{\theta}{2} - \frac{1}{2}H_+'(r)\sin \theta, r^2\sin \frac{3\theta}{2}\sin \frac{\theta}{2} - \frac{1}{2}H_+'(r)\cos \theta\Big).$$
    The exact Lagrangian $L_1\subset T^*\mathbb{R}^2$ can be lifted to a Legendrian surface $\widetilde{L}_1\subset J^1\mathbb{R}^2$ using the primitive $f_{L_1}:L_1\lr\R$. For the Legendrian lift $\widetilde{L}_1$ to a weave with a $D_4^-$-front, we need to show that
    
    \begin{enumerate}
        \item the front projection of $\widetilde{L}_1$ near $\rho = 0$ agrees with the standard $D_4^-$-singularity;
        \item the crossings in the front projection $\widetilde{L}_1$ are the three rays $\varphi = \pi/2, 7\pi/6$ or $11\pi/6$;
        \item the projection onto $\bR^2$ in the light grey regions is injective.
    \end{enumerate} 

\begin{figure}[h!]
  \centering
  \includegraphics[width=0.7\textwidth]{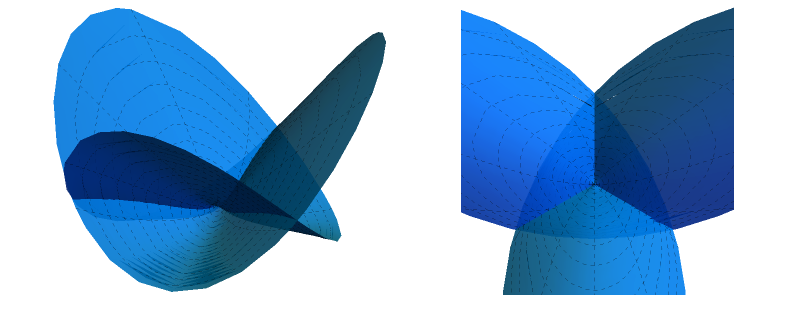}\\
  \caption{The front projection of the Legendrian lift $\widetilde{L}_1$ when we consider the Hamiltonian $H(r) = \arctan(\pi r^2) / \pi$.}\label{fig:Localmove1_weave}
\end{figure}

    For $\rho \geq 0$ sufficiently small, or equivalently $r \geq 0$ sufficiently small, we define the Hamiltonian $H_+(r)$ by the condition $H_+'(r) = r^2$. Then at time $t = 1/2$, the front projection of $\widetilde{L}_1$ is
    \[\begin{split}
    \Big(r^2\sin \frac{3\theta}{2}&\cos \frac{\theta}{2} + \frac{1}{2}H_+'(r)\sin \theta, r^2\sin \frac{3\theta}{2}\sin \frac{\theta}{2} + \frac{1}{2}H_+'(r)\cos \theta, -\frac{2}{3}r^3\sin^2 \frac{3\theta}{2} + G_+(r)\Big) \\
    =& \Big(r^2\sin \frac{3\theta}{2}\cos \frac{\theta}{2} - \frac{1}{2}r^2\sin \theta, r^2\sin \frac{3\theta}{2}\sin \frac{\theta}{2} - \frac{1}{2}r^2\cos \theta, -\frac{2}{3}r^3\sin^2 \frac{3\theta}{2} + \frac{1}{3}r^3\Big) \\
    =& \Big(\frac{1}{2}r^2 \sin 2\theta, -\frac{1}{2}r^2 \cos 2\theta, \frac{1}{3}r^3 \cos 3\theta \Big).
    \end{split}\]
    This coincides with the standard local model of the front projection of the $D_4^-$-Legendrian weave in Definition \ref{def:weave} given by
    $$\Big(\mathrm{Re}(w^2), \mathrm{Im}(w^2), \frac{2}{3}\mathrm{Im}(w^3)\Big)$$
    only up to a scalar multiple. Thus condition~(1) is satisfied.

    In general, $H_+(r)$ is chosen to be such that
    \begin{itemize}
        \item[(i)] $H_+'(r) = r^2$ when $r \ll 1$, as explained above;
        \item[(ii)] $H_+'(r) > 0$ for $r \in (0, +\infty)$;
        \item[(iii)]~$\lim_{r \rightarrow \infty}H_+'(r) = 1$.
    \end{itemize}
    \noindent Then the only double points in the front projection $\widetilde{L}_1$ are the three rays $\varphi = \pi/2, 7\pi/6$ or $11\pi/6$, thus satisfying condition (2) above. For instance, we can choose a Hamiltonian given by the condition
    $$H_+'(r) = \frac{2}{\pi} \arctan(\pi r^2), \,\,\, r \geq 0.$$
    With this choice, one can then see explicitly from Figure \ref{fig:Localmove1_weave} that conditions~(2) \& (3) are satisfied, thus concluding the proof.\footnote{The reader may wonder why not take $H_+'(r) = H_-'(r) = r^2(-1 + \sqrt{1 + {2}/r^2})$. This is because the $D_4^-$-singularity is non-generic: taking $H_+'(r) = r^2(-1 + \sqrt{1 + {2}/r^2})$ yields a generic perturbation of the $D_4^-$-singularity consisting of three $A_3$-swallowtails.}
\end{proof}

\noindent Proposition \ref{lem:localmove1} is stated only for the Reidemeister~III move at a black vertex, but the same argument shows that Reidemeister~III moves at white vertices work equally. Note also that the intermediate local model $L_{1/2}$ constructed in the proof of Proposition \ref{lem:localmove1} realizes Lagrangian fillings of the Legendrian links for Thurston's triple point diagrams \cite{Thurs17}.\\

We can also keep track of the 1-cycles in the Lagrangian fillings in the Reidemeister III1 move in Figure \ref{fig:Localmove1_weave}. The following corollary follows directly from Figure \ref{fig:levelset} or Figure \ref{fig:localmove1_levelset}.

\begin{cor}\label{cor:1cycle-loc}
    Consider the Hamiltonian isotopy of Lagrangian fillings in (the proof of) Proposition \ref{lem:localmove1}. Then the relative 1-cycle $[\gamma_0]\in H_1(L_0, L_0 \cap \partial T^*\bD^2)$ of the conjugate Lagrangian, as drawn in Figure \ref{fig:localmove-cycle} (left), is smoothly isotopic to the relative 1-cycle $[\gamma_1]\in H_1(L_1, L_1 \cap \partial T^*\bD^2)$ of the hybrid Lagrangian surface $L_1$ in Figure \ref{fig:localmove-cycle} (right).
\end{cor}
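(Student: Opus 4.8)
The plan is to track the relative cycle through the explicit Hamiltonian isotopy built in the proof of Proposition~\ref{lem:localmove1}. Recall that there one sets $L_0 = \varphi_{H_-}^{-1/2}(L_{1/2})$ and $L_1 = \varphi_{H_+}^{1/2}(L_{1/2})$, so the concatenation $\Phi_t$ of the backward flow of $H_-$ followed by the forward flow of $H_+$ is an ambient Hamiltonian isotopy with $\Phi_0 = \mathrm{id}$, $\Phi_1(L_0) = L_1$ and $\Phi_t(L_0) = L_t$ a smooth family of exact Lagrangian fillings. By Lemma~\ref{lem:graph-at-infty} this family is cylindrical near $\partial T^*\bD^2$, so $\Phi_t$ carries the boundary stratum $L_0 \cap \partial T^*\bD^2$ to $L_t \cap \partial T^*\bD^2$, and in particular $\Phi_1|_{L_0}$ is a diffeomorphism of pairs $(L_0, L_0\cap\partial T^*\bD^2) \to (L_1, L_1\cap\partial T^*\bD^2)$. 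Hence it suffices to show that $\Phi_1(\gamma_0)$ is smoothly isotopic, through curves with endpoints on $L_1\cap\partial T^*\bD^2$, to the curve $\gamma_1$ drawn in Figure~\ref{fig:localmove-cycle} (right); the family $\{\Phi_t(\gamma_0)\}_{t\in[0,1]}$ then furnishes the asserted smooth isotopy from $\gamma_0$ to $\Phi_1(\gamma_0)$.

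The mechanism that reduces this to a picture-tracking argument is that $H_\pm = H_\pm(r)$ depends only on the radial fibre coordinate $r = |\xi|$. Thus $r \circ \Phi_t = r$, so $\Phi_t$ preserves every cosphere bundle $T^{*,r}\bD^2$ and maps the level set $L_0^r := L_0 \cap T^{*,r}\bD^2$ to $L_t^r$; moreover, on each such bundle $\Phi_t$ acts by the explicit base translation computed in Part~1 and Part~2 of the proof of Proposition~\ref{lem:localmove1}. The projections $\pi(L_t^r) \subset \bD^2$ are precisely the immersed curves drawn in Figures~\ref{fig:localmove1_levelset} and~\ref{fig:levelset} for the relevant values of $r$. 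Consequently, representing $\gamma_0$ by a curve in $L_0$ and following each of its points, at radius $r$, under the action of $\Phi_t$ on $T^{*,r}\bD^2$, one reads the family $\Phi_t(\gamma_0)$ directly off these pictures: as $t$ runs from $0$ to $1$, the conjugate-surface cycle of Figure~\ref{fig:localmove-cycle} (left) is carried to the short $\mathsf{I}$-cycle through the $D_4^-$ vertex of Figure~\ref{fig:localmove-cycle} (right), with its endpoints staying on $L_t \cap \partial T^*\bD^2$ throughout.

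The one point requiring care, and the main obstacle, is the degeneration at $t = 1/2$: the base projection $\pi(L_{1/2})$ acquires a triple point, so the picture of the cycle in the base passes through a singular configuration. This is harmless because $L_{1/2}$ is itself an \emph{embedded} exact Lagrangian --- the triple point is only a self-intersection of the base projection --- so $\Phi_t(\gamma_0)$ remains an embedded curve in the smooth family $L_t$ and the isotopy is genuinely smooth across $t = 1/2$; one only needs to verify that the arcs of the cycle lying in the level sets $L_{1/2}^r$ close up consistently, which is visible in the central panels of Figures~\ref{fig:localmove1_levelset} and~\ref{fig:levelset}. Granting this, the identification of $\Phi_1(\gamma_0)$ with the drawn $\gamma_1$ is immediate from Figure~\ref{fig:levelset}, and since any two admissible choices of $H_\pm$ are connected through admissible ones the resulting isotopy class is independent of these choices. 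The analogous statement at a white vertex follows by the same argument, exactly as in Proposition~\ref{lem:localmove1}.
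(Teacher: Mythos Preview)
Your proposal is correct and takes essentially the same approach as the paper, which simply asserts that the corollary ``follows directly from Figure~\ref{fig:levelset} or Figure~\ref{fig:localmove1_levelset}''; you have unpacked why that figure-reading suffices, namely because $H_\pm = H_\pm(r)$ preserves each cosphere level so the cycle can be tracked level-by-level through the drawn curves. One minor slip: since $L_0 = \varphi_{H_-}^{-1/2}(L_{1/2})$, the flow carrying $L_0$ to $L_{1/2}$ is the \emph{forward} time-$1/2$ flow of $H_-$, not the backward one, but this does not affect the argument.
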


\begin{figure}[h!]
  \centering
  \includegraphics[width=0.7\textwidth]{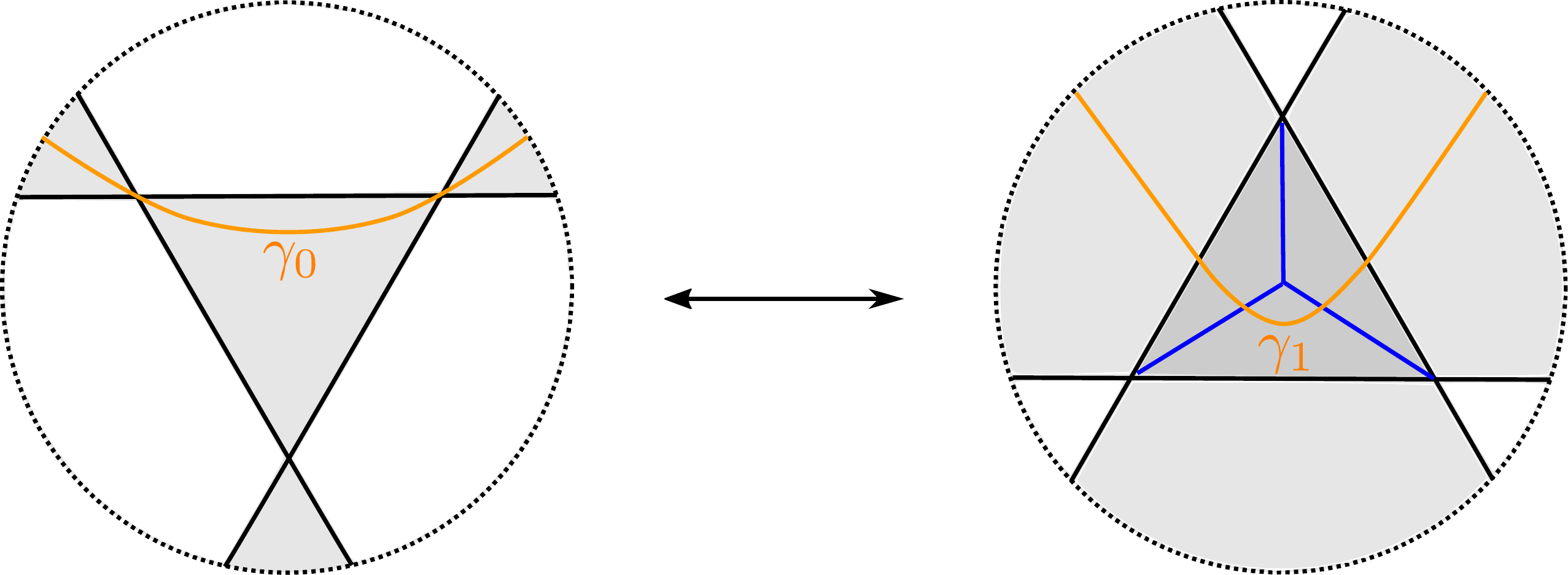}\\
  \caption{The relative 1-cycle $\gamma_0 \in H_1(L_0, L_0 \cap \partial T^*\bD^2)$ and the corresponding relative 1-cycle $\gamma_1 \in H_1(L_1, L_1 \cap \partial T^*\bD^2)$. Note that the 1-cycle $\gamma_1$ on the right starts at the unique sheet (above the light grey regions) and enters through the bottom sheet (of the two) in the dark grey regions; it then moves up to the upper sheet in the dark grey region only when it crosses the blue edges (thus when it is below the trivalent vertex).}\label{fig:localmove-cycle}
\end{figure}

\subsection{Local Move~2 for Reidemeister III}\label{ssec:RIII2}
The second local move that is required to interpolate between conjugate Lagrangians and the Lagrangian projection of Legendrian weaves is the Reidemeister III2 move, as depicted in Figure \ref{fig:TableMoves} and also Figure \ref{fig:localmove2}.

\begin{figure}[h!]
  \centering
  \includegraphics[width=0.7\textwidth]{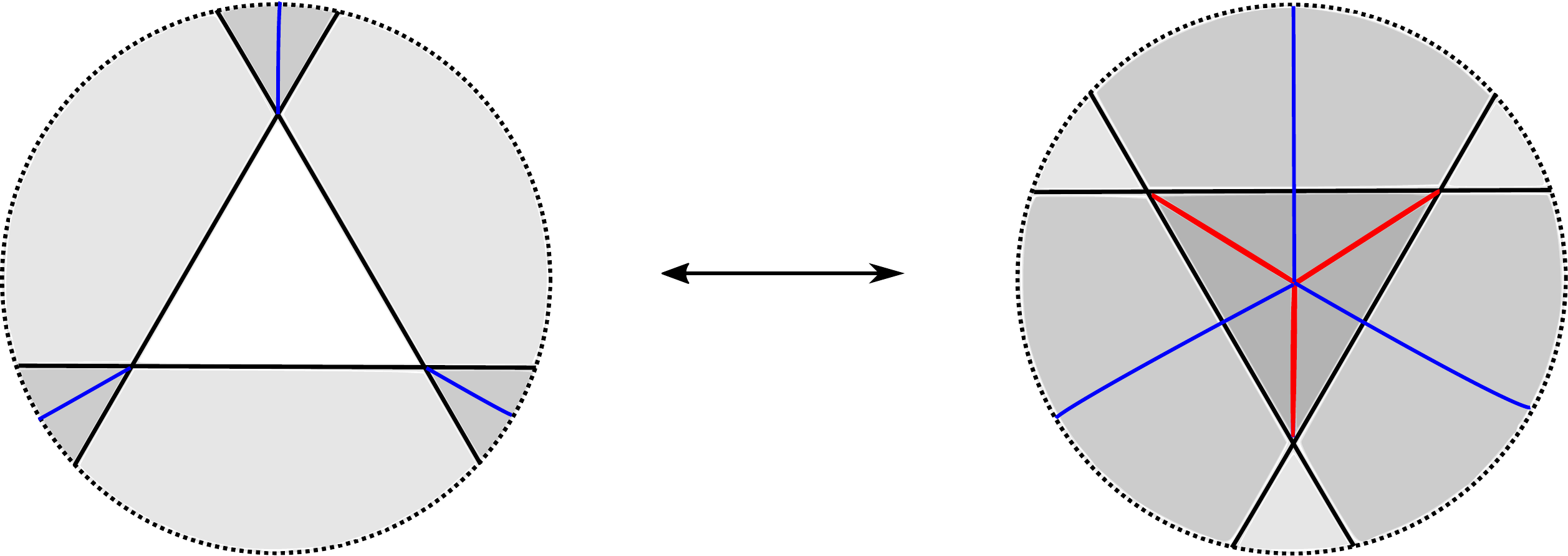}\\
  \caption{The second local move RIII2 of Lagrangian fillings for a Legendrian Reidemeister III move. The covectors of the Legendrian on the left are pointing towards the white region in the middle.}\label{fig:localmove2}
\end{figure}

\begin{prop}\label{lem:localmove2}
    Let $\Lambda_0$ and $\La_1$ be a Legendrian tangles depicted in Figure \ref{fig:localmove2}, left and right respectively. Let $L_0$ be the hybrid Lagrangian surface in Figure \ref{fig:localmove2} (left), whose projection onto the plane is injective in the light grey region, and a 2-fold branched covering in the three dark grey regions, where the front projection of the Legendrian lift has $A_1^2$-singularities along the blue edges.\\
    
    \noindent Then $L_0$ is Hamiltonian isotopic to the hybrid Lagrangian surface $L_1$ depicted in Figure \ref{fig:localmove2} (right), which satisfies that:
    
    \begin{itemize}
        \item[(i)] the projection to the base $\pi|_{L_1}: L_1 \rightarrow \pi(L_1)$ is an injection in the light grey region;
        
        \item[(ii)] the projection is a 2-fold covering inside the dark grey triangle and a 3-fold covering in the triangle in the middle;
        
        \item[(iii)] the front projection of Legendrian lift of $L_1$ is a $A_1^3$-singularity at the hexavalent vertex when restricted to the center triangle.
    \end{itemize}
\end{prop}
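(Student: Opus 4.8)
The plan is to follow closely the template of Proposition~\ref{lem:localmove1}: construct an intermediate Lagrangian $L_{1/2}$ whose front projection has a genuine triple point, realize $L_0$ and $L_1$ as the images of $L_{1/2}$ under two radial Hamiltonian flows, and then conclude $L_0 \simeq L_{1/2} \simeq L_1$ up to Hamiltonian isotopy. Here $L_{1/2}$ is a Lagrangian filling of the Legendrian $\Lambda_{1/2}$ obtained as the middle time-slice of the Reidemeister~III move relating $\Lambda_0$ and $\Lambda_1$ in Figure~\ref{fig:localmove2}.

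First I would fix the local model, placing the triple point of $\Lambda_{1/2}$ at the origin and using polar coordinates $(\rho,\varphi)$ on the base $\R^2$ and $(r,\theta)$ on the cotangent fibres of $T^*\R^2$. One takes $L_{1/2}$ to be an exact Lagrangian given by an explicit parametrization in the spirit of $\Phi_{L_{1/2}}$ from the proof of Proposition~\ref{lem:localmove1}, with the angular factor arranged so that the front of $L_{1/2}$ has exactly three crossing arcs along three prescribed rays through the origin, together with the additional flat sheets below prescribed by the hybrid diagram, and one records its primitive $f_{L_{1/2}}$.

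Second, I would run the two-Hamiltonian argument with radial Hamiltonians $H = H(r)$ on $T^*\R^2$, whose level sets are the cocircle bundles $T^{*,r}\R^2$. As in Part~1 of the proof of Proposition~\ref{lem:localmove1}, requiring that the three crossings of $\pi\big(\varphi_{H_-}^{-t}(L_{1/2}) \cap T^{*,r}\R^2\big)$ agree, at every radius $r$, with the crossings of the prescribed front $\pi(\Lambda_0)$ determines $H_-'(r)$ up to an additive constant; one checks $H_-'(r) > 0$ on $(0,\infty)$ and that $\lim_{r\to\infty}H_-'(r)$ is finite, which keeps the isotopy well-behaved at infinity (cf.~Lemma~\ref{lem:graph-at-infty} and Subsection~\ref{ssec:behavior_infinity}). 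For this $H_-$, the level-set analysis, as in Figures~\ref{fig:localmove1_levelset} and~\ref{fig:levelset}, shows that $L_0 = \varphi_{H_-}^{-1/2}(L_{1/2})$ is the stated hybrid surface: injective projection over the light grey region and an $A_1^2$-edge over each blue edge in the three dark grey regions. Symmetrically, choosing $H_+$ with $H_+'(r) = r^2$ for $r \ll 1$, exactly as in the $D_4^-$-computation of Part~2, forces the front of $L_1 = \varphi_{H_+}^{1/2}(L_{1/2})$ near $\rho = 0$ to become the union of the three planes $z = \pm x_1$ and $z = x_2$, i.e.~the standard $A_1^3$-front of Definition~\ref{def:weave}, while imposing $H_+'(r) > 0$ and $\lim_{r\to\infty}H_+'(r)$ finite guarantees that the only front double points of $L_1$ lie along the three prescribed rays. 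This yields properties (i)--(iii). In contrast with the $D_4^-$-case, the $A_1^3$-singularity is \emph{generic} (see Figure~\ref{fig:weave-wavefront}), so no special care is needed to avoid an accidental perturbation into swallowtails: any $H_+$ of the stated shape produces the hexagonal vertex.

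The step I expect to be the main obstacle is the construction of $L_{1/2}$ itself. Since the standard $A_1^3$-front is not rotationally symmetric, the $\sin(3\theta/2)$-ansatz of Proposition~\ref{lem:localmove1} cannot be reused verbatim; one must find an adapted parametrization, verify it is an embedded exact Lagrangian, and check that it carries the correct sheets-below data, so that under $\varphi_{H_+}^{1/2}$ the three crossing arcs acquire the correct colours $G_i$ and $G_{i+1}$ of a genuine hexagonal vertex and the front collapses cleanly to three planes rather than to some other triple-sheet configuration. A secondary bookkeeping point, which also pins down the sign conventions in $H_-$, is matching the coorientations of $\Lambda_0$ in Figure~\ref{fig:localmove2} — pointing into the central white region — with those induced on the boundary of $L_{1/2}$ by the flow.
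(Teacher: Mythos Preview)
Your overall strategy --- build an intermediate $L_{1/2}$ and flow in both directions with a radial Hamiltonian --- is the same as the paper's, and the paper does realize the isotopy via a Hamiltonian of the form $H(r)$. But you are making the problem much harder than it is by treating it as a variant of the $D_4^-$-case. The decisive structural difference is that here there is \emph{no branch point}: the hybrid Lagrangian $L_0$ is already a union of three graphical sheets, each the graph of an exact $1$-form over a half-plane, and so is $L_1$. The paper exploits this directly. For each $t\in[0,1]$ it writes $L_t = L_{t,0}\cup L_{t,1}\cup L_{t,2}$ with
\[
L_{t,j} = \Big\{ d\cR_{2\pi j/3}\Big(x,\ t-w-\tfrac12,\ 0,\ -\tfrac{1}{w}\Big)\ \Big|\ x\in\bR,\ w\in(0,+\infty)\Big\},
\]
three copies of the same graphical piece rotated by $2\pi/3$, all with primitive $\ln w$. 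Varying $t$ simply slides these three half-planes past one another; the $A_1^2$-edges and the hexagonal $A_1^3$-vertex arise automatically as the transverse intersections of the three fronts $\widetilde{L}_{t,j}$, and one checks by inspection that no Reeb chords appear.

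Your proposal to adapt the $\sin(3\theta/2)$-parametrization is forcing a single connected branched model onto a Lagrangian that has no branching; the ``main obstacle'' you identify --- producing an adapted parametrization of $L_{1/2}$ --- is an artefact of that choice. In the paper's three-piece model every $L_t$ is a one-line formula, and the verification of (i)--(iii) is immediate. Your route would eventually work once you realize the three sheets are separate and treat them individually, but at that point it collapses to the paper's argument; the elaborate $H_\pm$-analysis modelled on Proposition~\ref{lem:localmove1} is unnecessary here.
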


\begin{proof}
    Let $\cR_\theta:\R^2\lr\R^2$ be the rotation by angle $\theta$ centered at the origin of the plane $\mathbb{R}^2$. Let us parametrize the front projection of the Legendrian $\Lambda_t$ by the union of
    $$\Lambda_{t,j} := \left\{\cR_{2\pi j/3}\Big(x, t - \frac{1}{2}\Big) \Big| x\in \mathbb{R}\right\} \subset \mathbb{R}^2,\,\,0\leq j\leq 2.$$
    Let $d\cR_{\theta}:T^*\R^2\lr T^*\R^2$ be the differential of the rotation $\cR_\theta$ and consider the exact Lagrangian $L_t$ to be the union of
    $$L_{t,j} := \left\{d\cR_{2\pi j/3}\Big(x, t - w - \frac{1}{2}, 0, -\frac{1}{w}\Big) \Big| x\in \mathbb{R}, w \in (0,+\infty) \right\}, \,\, 0\leq j\leq 2,$$
    For each $t$, the Lagrangian $L_t$ is exact and the primitive of $L_{t,j}$ is $f_t(x, w) = \ln w$. Therefore, their corresponding Legendrian lifts in the 1-jet bundle are defined by
    $$\widetilde{L}_{t,j} = \left\{j^1{\cR}_{2\pi j/3}\Big(x, t - w - \frac{1}{2}, 0, -\frac{1}{w}, \ln w\Big) \Big| x\in \mathbb{R}, w \in (0,+\infty) \right\}, \,\, 0\leq j\leq 2.$$
    This defines the required Legendrian isotopy from $L_0$ to $L_1$, as the Legendrian lifts satisfy the conditions in the statement and no Reeb chords are created during this process either. Indeed, one can check that there is a unique hexavalent vertex at the center of $\widetilde{L}_1$ since the fronts of the three Legendrian surfaces $\widetilde{L}_{1,j}$ for $0 \leq j \leq 2$ intersect transversely.
    
    Finally, note that the exact Lagrangian isotopy can be induced by the Hamiltonian flow of $H(r)$, where $r$ is the radius coordinate of the fiber in $T^*\mathbb{R}^2$, such that $H'(r) \geq 0$, $H'(r) = 0$ when $r$ is sufficiently small, and $H'(r) = 1$ when $r$ is large. Therefore the proof is completed.
\end{proof}
\begin{remark}\label{rem:moresheats-localmove2}
In general, following the terminology in Section \ref{sec:weave-at-infty}, when there are already $i$ sheets below the Legendrian front, then we can apply the Reidemeister local move RIII2 to the $s_i$-edges and get a hexagonal vertex emanating $s_i$ and $s_{i+1}$-edges in the weave. 
\end{remark}

\subsection{Local Moves for Reidemeister II}\label{ssec:RII}
Let us prove the Reidemeister RII1 and RII2 moves in Figure \ref{fig:TableMoves} describing how certain Lagrangian fillings are isotoped under Reidemeister II moves applied to their Legendrian boundaries.

\begin{figure}[h!]
  \centering
  \includegraphics[width=0.7\textwidth]{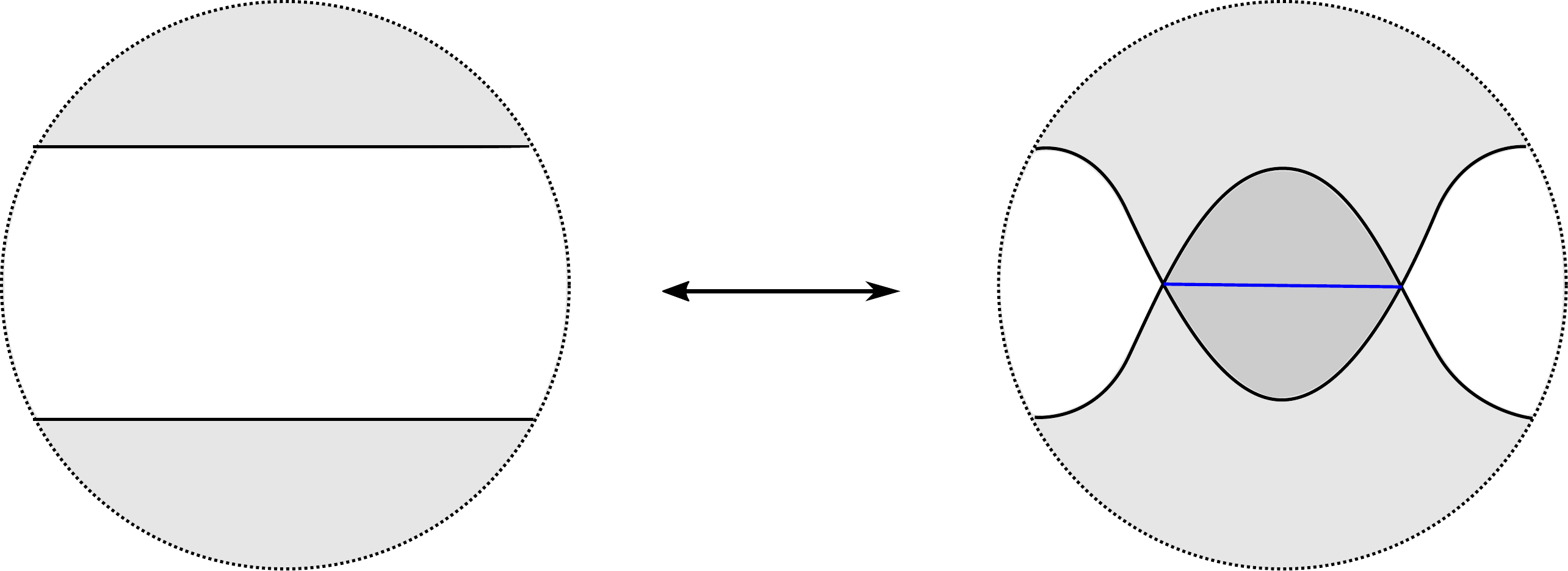}\\
  \caption{The local move~1 of Lagrangian fillings for a Legendrian Reidemeister II move. The covectors of the Legendrian are pointing towards the white region.}\label{fig:RII-move1}
\end{figure}

\begin{lemma}\label{lem:RII-move1}
    Let $\Lambda_0$ and $\La_1$ be the Legendrian tangles depicted in Figure \ref{fig:RII-move1}, left and right respectively. Let $L_0$ be the conjugate Lagrangian filling for $\La_0$ in Figure \ref{fig:RII-move1} (left), whose projection onto the plane is injective in the light grey region.\\

    \noindent Then $L_0$ is Hamiltonian isotopic to the hybrid Lagrangian surface  $L_1$ depicted in Figure \ref{fig:RII-move1} (right), which satisfies that:
    
    \begin{itemize}
        \item[(i)] the projection onto the plane is injective in the light grey region and a double covering in the dark grey region;
        
        \item[(ii)] the front projection of the Legendrian lift has a $A_1^2$-singularity in the bi-gon as in Figure \ref{fig:RII-move1} (right).
    \end{itemize}
\end{lemma}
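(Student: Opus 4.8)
The strategy is the one used in Proposition~\ref{lem:localmove2}: exhibit explicit local parametrizations of $L_0$ and of the target hybrid surface, connect them by an explicit exact Lagrangian isotopy realized by a fiberwise-radial Hamiltonian, and then use Lemma~\ref{lem:graph-at-infty} to normalize the behaviour at infinity so that the isotopy is Hamiltonian and the Legendrian boundary sweeps out exactly the Reidemeister~II move. I would model the bigon of Figure~\ref{fig:RII-move1} on the region of $\R^2$ bounded by the two arcs $y=\pm(\tfrac14-x^2)$, so that the two crossings of $\pi(\Lambda_0)$ lie at $(\pm\tfrac12,0)$. Near this bigon the conjugate Lagrangian $L_0$ is the union of the two graphical sheets over the two regions $W$ and $B$ that meet at both crossings, namely the graphs of $df_W$ and $df_B$ with $f_W=-\varepsilon\log m_W$ and $f_B=\varepsilon\log m_B$, where $m_W$ and $m_B$ vanish on the respective boundaries, glued near $(\pm\tfrac12,0)$ to the standard crossing model $i_\times$ of Proposition~\ref{prop:conj-exist}; the signs are chosen so that the conormals of both sheets point into the bigon along its boundary, as in the figure.

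Next, take a Hamiltonian $H=H(r)$ on $T^*\R^2$ depending only on the fiber radius $r$, with $H'(r)\ge 0$, with $H'(r)=0$ for $r\le r_0$ (where $r_0$ is smaller than the fiber radius of $L_0$ over the spine of the bigon) and $H'(r)=1$ for $r\gg 0$. Its time-$1$ flow $\varphi_H^1$ translates each point of $L_0$ in the direction of its own covector by the amount $H'(r)$; since the conormals of the $W$-sheet and of the $B$-sheet point into the bigon along its boundary, $\varphi_H^1$ drags the two sheets until they come to lie over one and the same (dark grey) bigon region, meeting transversally along an arc joining the two former crossings $(\pm\tfrac12,0)$, while the flow is the identity over the light grey region where $L_0$ is already a single graph with $r$ small. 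Set $L_1:=\varphi_H^1(L_0)$. Over the light grey region $\pi|_{L_1}$ is still an injective graph, and over the dark grey bigon it is a $2$-fold cover. A direct computation identifies the local front of the Legendrian lift of $L_1$ with $\{x_1^2-z^2=0\}$, i.e.\ the $A_1^2$ model of Definition~\ref{def:weave}: on the two overlapping sheets the primitives $f_W$ and $f_B$ differ by the everywhere nonzero function $-\varepsilon\log(m_Wm_B)$, so the Legendrian lifts stay separated in the Reeb direction over the interior of the bigon, while along the spine the fiber $1$-forms $df_W$ and $df_B$ point in opposite base directions, producing the transverse self-intersection; at its two endpoints this $A_1^2$ edge glues smoothly onto the unchanged $i_\times$ crossing models. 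Monotonicity of $H'$ ensures that the two sheets never become vertically tangent during the isotopy, so no Reeb chord is created and $L_t$ is a filling for all $t\in[0,1]$. Finally, because the flow at infinity is a unit translation that implements the Reidemeister~II move on the boundary Legendrian, Lemma~\ref{lem:graph-at-infty} furnishes a collar modification of $\varphi_H^t$, fixed at $t=0,1$, after which $\overline{L_t}\cap T^{*,\infty}\R^2$ is the expected Legendrian for every $t$ and $L_1$ is cylindrical near infinity; this produces the desired Hamiltonian isotopy $L_0\simeq L_1$. The move at a white vertex is handled identically after exchanging the roles of $f_W$ and $f_B$.

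I expect the main obstacle to be the genericity of the self-intersection that is created: one must check that after the two sheets have been pushed together they meet in exactly the clean transverse arc of the $A_1^2$ model, with no extra double points and no tangency, and that this arc matches smoothly onto the $i_\times$ models at its two ends. This is the mild analogue of the non-genericity issue flagged in the footnote to Proposition~\ref{lem:localmove1} for the $D_4^-$ singularity, and it is handled here by taking $\varepsilon$ small and choosing the profile of $H'(r)$ appropriately. A secondary technical point is that the no-Reeb-chord condition must be checked for the entire family $t\in[0,1]$, not merely at the endpoints; this uses the monotonicity of $H'$ together with the convexity argument already employed in Lemma~\ref{lem:graph-at-infty} near the contact boundary.
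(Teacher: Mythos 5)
Your starting local model is not the one in the lemma, and this is not a cosmetic issue. In the Reidemeister II1 move the two crossings and the bigon do not exist on the left-hand side: $\Lambda_0$ consists of two disjoint parallel strands with no double points, and $L_0$ is the union of two disjoint graphical sheets (in the paper's proof these are the $L_{0,\pm}$ parametrized over $\mathbb{R}\times(0,+\infty)$ using the constant front function $f_0\equiv-1$); the two crossings are \emph{created} by the Reidemeister II move and belong to $\Lambda_1$ only. Your $L_0$, by contrast, already contains two $i_\times$ crossing models and a bigon between a white and a black sheet --- that is essentially the configuration of the vertex-reduction move of Proposition \ref{prop:RII-move0}, not of RII1. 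The proposal is then internally inconsistent: you assert both that the two crossings and their $i_\times$ models are ``unchanged'' and that the flow at infinity ``implements the Reidemeister II move on the boundary Legendrian'', but an RII move changes the number of boundary crossings by two, so both cannot hold. Moreover, with $i_\times$ models present the two sheets of $L_0$ are joined through the real blow-ups at the crossings, whereas in the lemma's $L_1$ the two sheets are locally disjoint surfaces that merely overlap in projection over the bigon, with the $A_1^2$ edge terminating at the two \emph{new} crossings on the ideal boundary; a Hamiltonian isotopy preserves this local topology, so your construction cannot land on the correct surface.

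The verification of condition (ii) is also wrong as stated. An $A_1^2$ front singularity is a transverse intersection of the two front sheets, so along the singular locus the $z$-coordinates (the values of the primitives) of the two lifts must \emph{agree}; your claim that the lifts ``stay separated in the Reeb direction over the interior of the bigon'' because $f_W-f_B$ is nowhere zero would preclude any $A_1^2$ locus at all. (You are also comparing the wrong functions: the primitive of $\varphi_H^1(L_0)$ is not $f_W\sqcup f_B$ but acquires the correction $\int_0^1(\iota_{X_{H}}\lambda_{\text{st}}-H)\,dt$ along the flow.) The paper avoids all of this by writing the entire family explicitly, $L_{t,\pm}$ with front functions $f_t(x)=-1+2t\beta(|x|)$ and fiber parameter $w$, observing that each sheet stays exact with logarithmic primitive, and checking directly that the fronts of $\widetilde{L}_{1,\pm}$ meet transversely along the spine of the bigon; the generating Hamiltonian there is linear in the fiber coordinate $\eta$ and supported where $\beta(|x|)\neq 0$, i.e.\ a base translation localized near the bigon, rather than your fiberwise-radial $H(r)$, whose effect of translating each point in its own codirection depends on covector orientations your setup does not pin down and does not by itself localize the overlap to a bigon. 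To repair the argument, discard the crossings from $L_0$, start from two disjoint graphical sheets, and either reproduce the explicit family above or exhibit a Hamiltonian performing the RII move on the fronts while keeping each sheet graphical over its moving region.
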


\begin{proof}
    Let $\beta: [0,+\infty) \rightarrow [0,1]$ be a cut-off function such that $\beta(x) = 1$ when $x$ is sufficiently small, $\beta(x) = 0$ when $x$ is sufficiently large, and $\beta'(x) \leq 0$. Define $f_t(x) = -1 + 2t\beta(|x|)$. Suppose that the front projection of the Legendrian link $\Lambda_t$ is defined by the graph $\{(x, \pm f_t(x) | x \in \mathbb{R}^2\}$. Let $(x, w)$ be coordinates in the domain $\bR \times (0, +\infty)$ and consider the family of exact Lagrangians of two components
    $$L_{t,\pm} = \left\{\Big(x, \pm(w - f_t(x)), \pm\frac{f'_t(x)}{w}, \frac{1}{w}\Big)\Big|x \in \mathbb{R}, w \in (0,+\infty)\right\}.$$
    Since the primitive of the exact Lagrangians are $f(x, w) = \ln w$, their corresponding Legendrian lifts are thus defined by
    $$\widetilde{L}_{t,\pm} = \left\{\Big(x, \pm(w - f_t(x)), \pm\frac{f'_t(x)}{w}, \frac{1}{w}, \ln w \Big)\Big|x \in \mathbb{R}, w \in (0,+\infty)\right\}.$$
    It is clear that this defines an exact Lagrangian isotopy from $L_0$ to $L_1$ whose Legendrian lifts satisfy the conditions in the lemma (indeed, $L_1$ has a family of $A_1^2$-singularities in the bigon because the fronts of the two components $\widetilde{L}_{1,\pm}$ intersect transversely in the region). Finally, note that the exact Lagrangian isotopy can be induced by the Hamiltonian flow of ($(x, y)$ are the coordinates of $\mathbb{R}^2$ and $(\xi, \eta)$ the coordinates of the cotangent fiber)
    $$H(x, y, \xi, \eta) = 2\beta(|x|)\eta.$$
    Hence the proof is completed.
\end{proof}

\begin{lemma}\label{lem:RII-move2}
    Let $\Lambda_0$ and $\La_1$ be the Legendrian tangles depicted in Figure \ref{fig:RII-move2}, left and right respectively. Consider the hybrid Lagrangian surface $L_0$ in Figure \ref{fig:RII-move2} (left), whose projection onto the plane is injective in the light grey region and a 2-fold covering in the dark grey region, where the front projection of the Legendrian lift has $A_1^2$-singularities.\\

    \noindent Then $L_0$ is Hamiltonian isotopic to the hybrid Lagrangian surface  $L_1$ depicted in Figure \ref{fig:RII-move2} (right), which satisfies that:
    
    \begin{itemize}
        \item[(i)] the projection onto the plane is injective in the light grey region and is a double covering in the dark grey region;
        
        \item[(ii)] the front projection of the Legendrian lift has two connected families of $A_1^2$-singularities as specified in Figure \ref{fig:RII-move2} (right).
    \end{itemize}
\end{lemma}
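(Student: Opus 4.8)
The plan is to follow the same template used in the proof of Lemma \ref{lem:RII-move1}, but now starting from a hybrid surface that already has $A_1^2$-singularities (a double sheet) present in the dark grey region, so the Reidemeister II bigon is being created \emph{between} two weave sheets rather than between two simple (singly-covered) sheets. Concretely, I would work in a local chart $T^*\bR^2$ with coordinates $(x,y,\xi,\eta)$ and set up an explicit $1$-parameter family $\widetilde{L}_t$ of Legendrian surfaces in $J^1\bR^2$ whose Lagrangian projections realize the isotopy from $L_0$ to $L_1$. As in Lemma \ref{lem:RII-move1} the two front sheets will be modeled by graphs $\{y=\pm f_t(x)\}$ with $f_t(x)=-1+2t\beta(|x|)$ for a cutoff $\beta$, but over this pair of sheets I place a double sheet: each of the two components $L_{t,\pm}$ should be taken to itself carry an $A_1^2$-family, i.e.\ I take coordinates $(x,w)\in\bR\times(0,\infty)$ on each sheet and superimpose the local $A_1^2$ front model $x_1^2-z^2=0$ from Definition \ref{def:weave}(2) along the designated blue edges, so that $L_{t,\pm}$ is a two-to-one branched/immersed piece over the $y=\pm f_t(x)$ graph. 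The Legendrian lifts $\widetilde{L}_{t,\pm}$ are then written down using the primitive $f(x,w)=\ln w$ exactly as before, with the extra $z$-separation of the doubled sheet recorded additively.

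The key steps, in order, are: (1) write the explicit parametrization of $L_t$ and its Legendrian lift $\widetilde{L}_t$, checking exactness and that the primitive has the stated constant form near infinity in the fiber directions; (2) verify that for each $t\in[0,1]$ no Reeb chords are created, i.e.\ the $z$-coordinates of distinct sheets over a common base point stay separated --- this is where I use that the two families of $A_1^2$-singularities in the target picture $L_1$ are exactly the transverse self-intersection loci of the relevant pairs of graphs, so the front sheets meet cleanly and $\pi_{\mathrm{front}}$ restricted to each stratum has the cardinality claimed in (i)--(ii); (3) identify the isotopy with a genuine Hamiltonian isotopy by exhibiting the generating Hamiltonian, which (as in Lemma \ref{lem:RII-move1}) will be of the form $H(x,y,\xi,\eta)=2\beta(|x|)\eta$ --- the doubled sheet is simply carried along by the same flow since the flow is independent of the fiber-radius direction that separates the two weave sheets; and (4) invoke Lemma \ref{lem:graph-at-infty} (behaviour at infinity) to upgrade the local isotopy, which a priori only agrees with the standard model in a neighbourhood, to one that genuinely respects the conical/cylindrical structure of the filling near $T^{*,\infty}\bR^2$, so that the gluing of this local move into a global hybrid Lagrangian preserves the Hamiltonian isotopy class as asserted in Theorem \ref{thm:hybridReidemeister}.

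The main obstacle I anticipate is bookkeeping the sheet count and the colors correctly: in the RII2 setting there are already $i$ sheets below the relevant front (cf.\ Remark \ref{rem:moresheats-localmove2}), so the ``bigon'' being created is between the $(i{+}1)$st and $(i{+}2)$nd sheets, and one must check that the two new connected $A_1^2$-families in $L_1$ have the right colors (i.e.\ correspond to the right $G_j$ in the $n$-graph) and do not accidentally collide with the pre-existing lower sheets, which would produce spurious crossings or Reeb chords. Making the local model explicit enough to see this transversality --- while keeping the Hamiltonian of the simple form above so that positivity $H'(r)\ge 0$, $H'$ vanishing for small $r$ and $H'\to 1$ at infinity all hold --- is the delicate point; once the transversality of the relevant front graphs is in hand, the rest is a routine adaptation of Lemma \ref{lem:RII-move1}. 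I would also remark, as in the black/white symmetry comment after Proposition \ref{lem:localmove1} and Proposition \ref{prop:RII-move0}, that the mirror version of the move (with co-orientations reversed) follows by an identical argument, so it suffices to treat the one case drawn in Figure \ref{fig:RII-move2}.
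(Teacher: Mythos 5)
There is a genuine conceptual error in your local model. You propose to make each component $L_{t,\pm}$ itself a two-to-one branched piece by superimposing an $A_1^2$ front model on it, so that the pre-existing blue edges of $L_0$ live \emph{inside} each sheet. But that is not the configuration of Figure \ref{fig:RII-move2}: the lemma states that the projection of $L_0$ is \emph{injective} over the light grey region and a $2$-fold covering over the dark grey region. If each component were already $2$-to-$1$, the projection would be $2$-to-$1$ over the light grey region (covered by only one component) and $4$-to-$1$ over the overlap, contradicting the hypotheses. The $A_1^2$-singularities here are crossings \emph{between} the fronts of the two distinct graphical components $\widetilde{L}_{t,+}$ and $\widetilde{L}_{t,-}$ — exactly the same mechanism that produces the $A_1^2$ family in the bigon of Lemma \ref{lem:RII-move1} — and the content of the move is that the connectivity of this mutual crossing locus changes (one connected family becomes two, separated by a region covered by neither sheet) as the two graphs are deformed.

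Consequently the actual proof is a near-verbatim rerun of Lemma \ref{lem:RII-move1} with no extra sheets introduced: one takes $g_t(x)=1-2(1-t)\beta(|x|)$ in place of your $f_t$ (the sign change is what encodes the reversed co-orientation, i.e.\ which half-plane each component covers and hence where the overlap sits), sets $L_{t,\pm}=\{(x,\pm(w-g_t(x)),\pm g_t'(x)/w,1/w)\}$ with primitive $\ln w$, observes that the crossing locus of the two lifted fronts is $\{y=0,\,g_t(x)>0\}$, which passes from one component to two as $t$ varies, and generates the isotopy by the same Hamiltonian $H=2\beta(|x|)\eta$. Your steps (2)–(4) would be fine once the model is corrected, and your remark about the sheet index $i$ (Remark \ref{rem:moresheats-localmoveII}) is apt, but as written your construction produces a different surface from $L_0$ and also does not address how the two superimposed double sheets would avoid creating Reeb chords with each other during the flow.
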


\begin{figure}[h!]
  \centering
  \includegraphics[width=0.7\textwidth]{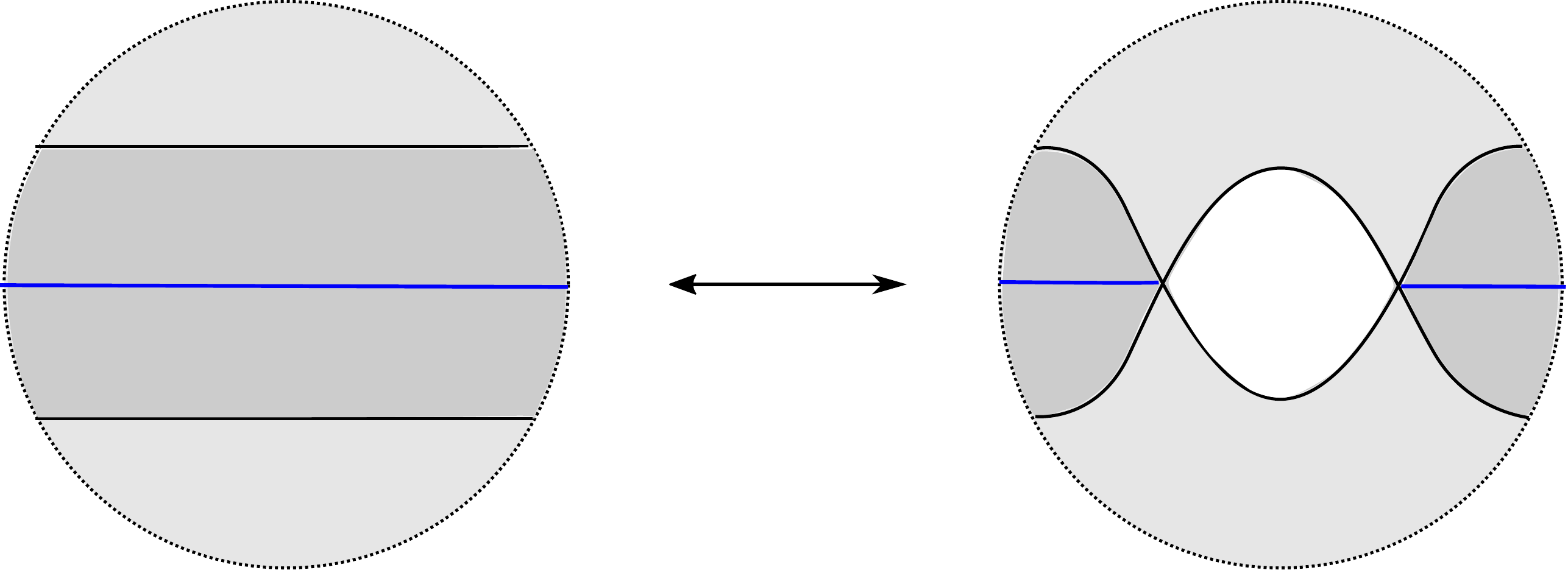}\\
  \caption{The lcoal move~2 for Lagrangian fillings under a Legendrian Reidemeister II move. The covectors of the Legendrian points towards the dark grey region.}\label{fig:RII-move2}
\end{figure}

\begin{proof} 
    Let $\beta: [0,+\infty) \rightarrow [0,1]$ be a cut-off function such that $\beta(x) = 1$ when $x$ is sufficiently small, $\beta(x) = 0$ when $x$ is sufficiently large, and $\beta'(x) \leq 0$. Define
    $$g_t(x) = 1 - 2(1 - t)\beta(|x|).$$
    Suppose the front projection of the Legendrian link $\Lambda_t$ be defined by $\{(x, \pm g_t(x) | x \in \mathbb{R}\}$. Consider the family of exact Lagrangians ($(x, w)$ are coordinates in the domain of $L_\pm$)
    $$L_{t,\pm} = \left\{\Big(x, \pm(w - g_t(x)), \pm\frac{g'_t(x)}{w}, \frac{1}{w}\Big)\Big|x \in \mathbb{R}, w \in (0,+\infty)\right\}.$$
    Since the primitive of the exact Lagrangians are $g(x, w) = \ln w$, their corresponding Legendrian lifts are thus defined by
    $$\widetilde{L}_{t,\pm} = \left\{\Big(x, \pm(w - g_t(x)), \pm\frac{g'_t(x)}{w}, \frac{1}{w}, \ln w \Big)\Big|x \in \mathbb{R}, w \in (0,+\infty)\right\}.$$
    It is clear that this defines an exact Lagrangian isotopy from $L_0$ to $L_1$ whose Legendrian lifts satisfy the conditions in the lemma (the $A_1^2$-singularities of $L_0$ and $L_1$ come from the transverse intersection of the fronts of $\widetilde{L}_{0,\pm}$ and $\widetilde{L}_{1,\pm}$ in the corresponding regions). Finally, note that the exact Lagrangian isotopy can be induced by the Hamiltonian flow of ($(x, y)$ are the coordinates of $\mathbb{R}^2$ and $(\xi, \eta)$ coordinates of the cotangent fiber)
    $$H(x, y, \xi, \eta) = 2\beta(|x|)\eta.$$
    Hence the proof is completed.
\end{proof}
\begin{remark}\label{rem:moresheats-localmoveII}
Similar to Remark \ref{rem:moresheats-localmove2} if there are already $i$ sheets below the Legendrian front, then we can apply the above two local moves to the $s_i$-edges in the weave. 
\end{remark}

\begin{figure}[h!]
  \centering
  \includegraphics[width=0.4\textwidth]{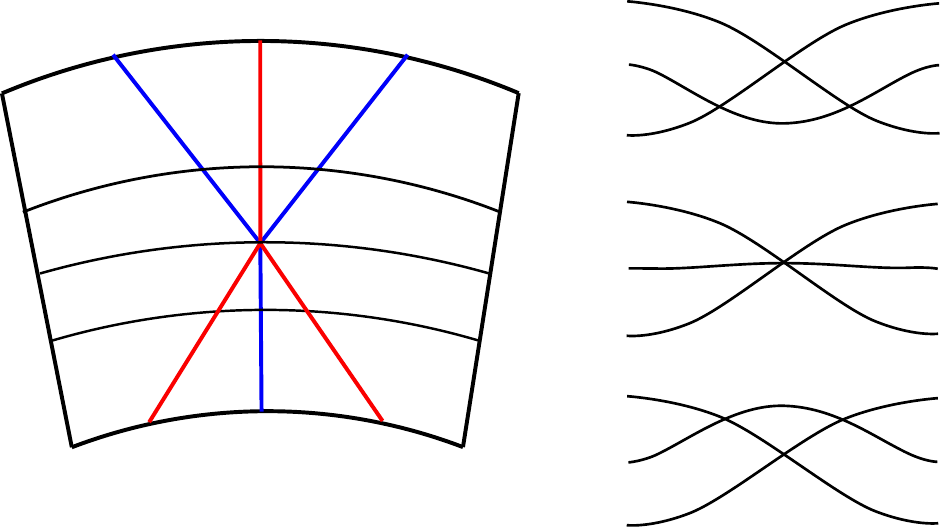}\\
  \caption{Realizing a free Legendrian weave with an $A_1^3$-singularity in the front as a cobordism induced by a Reidemeister~III move in the Legendrian front projection.}\label{fig:A_1_3-RIII}
\end{figure}

\subsection{Legendrian weaves and pinching of Reeb chords}\label{ssec:weave_pinching}
    For the relation between free Legendrian weaves and decomposable Lagrangian cobordisms, we briefly recall the relation between free Legendrian weaves and  elementary Lagrangian cobordisms.\\
    
    First, observe that a hexagonal vertex in the Legendrian weave, when sliced according to Figure \ref{fig:A_1_3-RIII} (left), corresponds to a Lagrangian cobordism induced by a Reidemeister~III move; see \cite{CasalsZas20}*{Remark 4.3}. The front braid slices are drawn in Figure \ref{fig:A_1_3-RIII} (right). Second, a trivalent vertex in the Legendrian weave, sliced horizontally top-to-bottom with two edges being intersected at the top and one edge intersected at the bottom (as in the left of Figure \ref{fig:D4-saddle}), corresponds to an elementary saddle cobordism. Indeed, this follows from the fact that the generic Legendrian perturbation of the front for the $D_4^-$-singularity contains three swallowtails as depicted in Figure \ref{fig:D4-saddle} (center); Figure \ref{fig:D4-saddle} (right) depicts the front braid slices illustrating what Reeb chord is precisely pinched under this (elementary) exact Lagrangian cobordism. See \cite[Section 4.3]{CasalsZas20} for further details.

%\begin{prop}[Hughes \cite{Hughes21A}]\label{prop:Hughes}
 %   Let $\Lambda_\pm$ be Legendrian cylindrical braids in a 1-jet bundle, connected by a free Legendrian weave $\widetilde{L}$ with a $D_4^-$-singularity in the front. Then the Lagrangian projection $L$ is the Lagrangian cobordism from $\Lambda_-$ to $\Lambda_+$ induced by an elementary saddle cobordism as in Figure \ref{fig:D4-saddle}.
%\end{prop}

\begin{figure}[h!]
  \centering
  \includegraphics[width=0.9\textwidth]{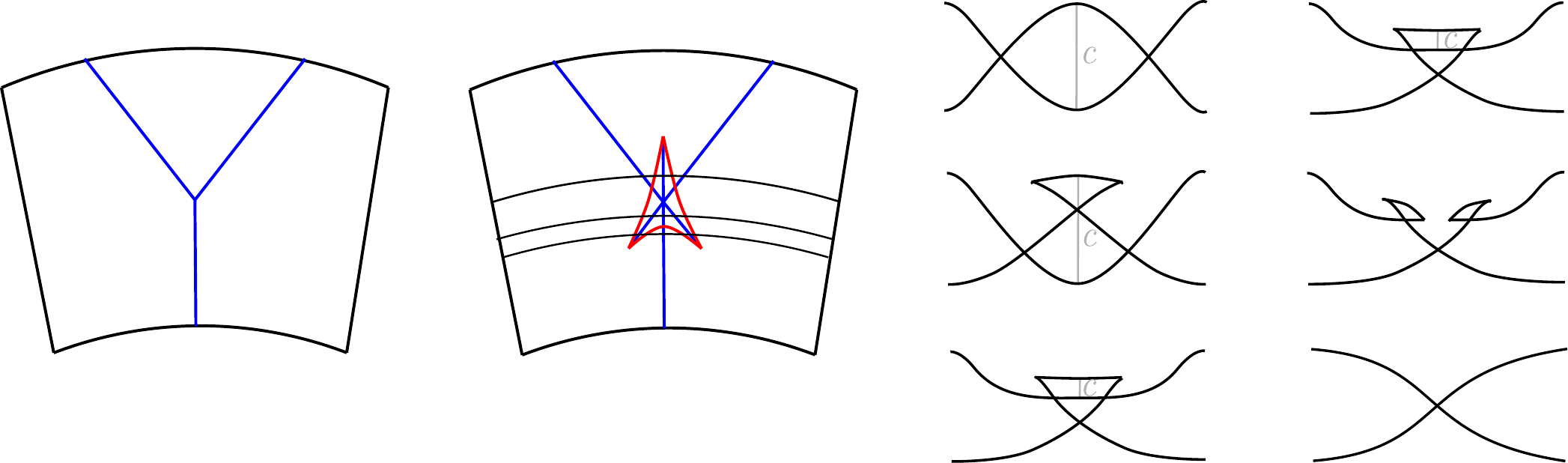}\\
  \caption{Realizing a $D_4^-$-singularity in the Legendrian front as a Lagrangian saddle cobordism. The graph on the left represents the $D_4^-$-singularity while the graph in the middle represents a generic perturbation of the $D_4^-$-singularity.}\label{fig:D4-saddle}
\end{figure}

\subsection{Square moves on conjugate Lagrangians and weave mutations}\label{ssec:squaremove_Lagrangianmutation} Let us illustrate the reason why a square move of conjugate Lagrangians has the same effect as a Legendrian mutation of Legendrian weaves in the local picture. On the one hand, this starts to show why the weave calculus includes the plabic graph calculus. On the other hand, it also serves as a starting exercise for the diagrammatic proofs that are presented in Section \ref{sec:main_proofs}.\\

\begin{figure}[h!]
  \centering
  \includegraphics[width=0.9\textwidth]{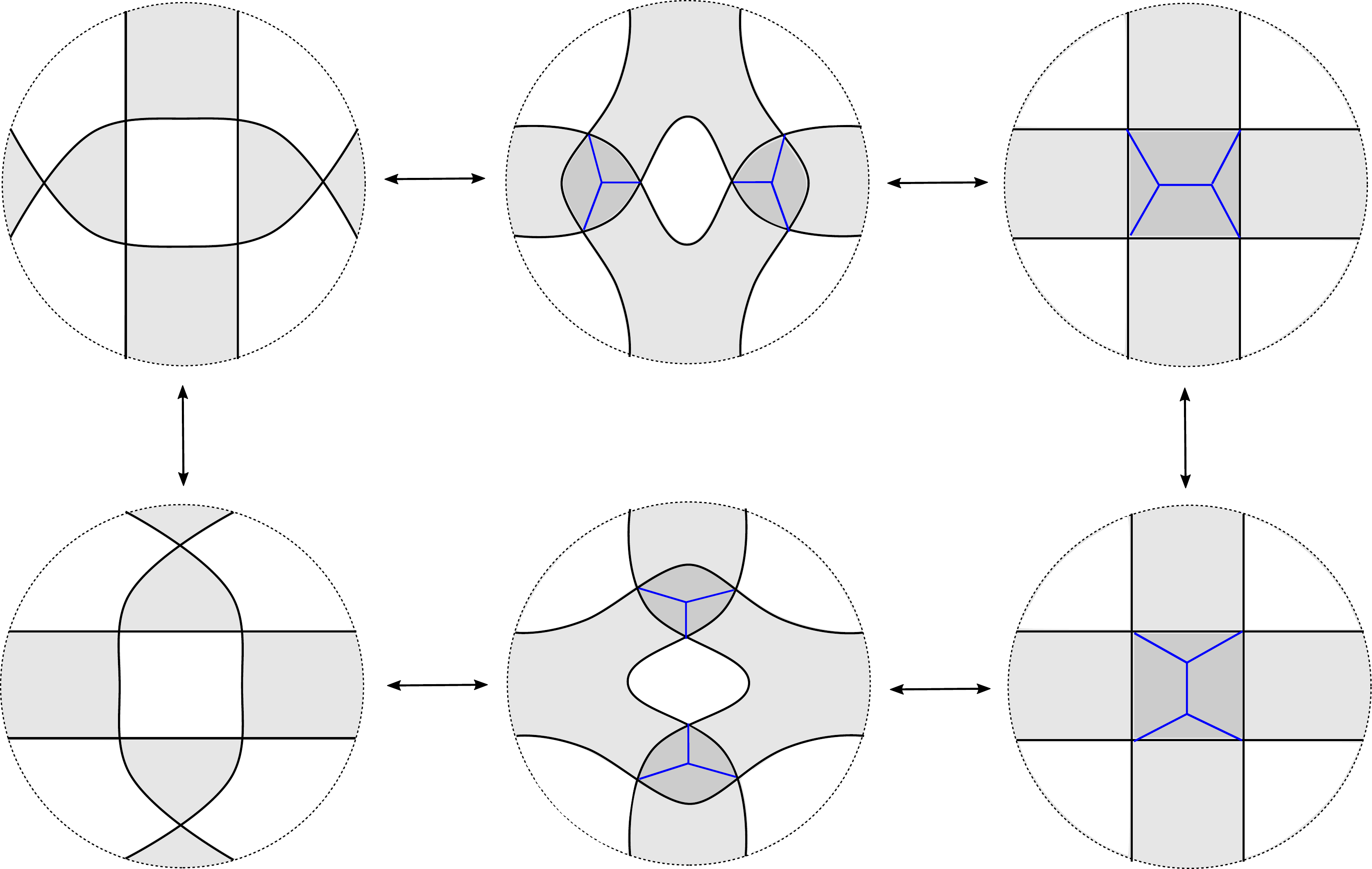}\\
  \caption{The conjugate Lagrangians related by a square move (on the left) and the Legendrian weaves related by a Legendrian mutation (on the right). On the left the covectors are pointing inward the two light grey triangles. On the right the covectors are pointing outward the dark grey squares.}\label{fig:SqMoveMutation}
\end{figure}

    Let $L_0, L'_0$ be conjugate Lagrangian surfaces related by a square move, as depicted in the two local models of Figure \ref{fig:SqMoveMutation} (left). Let $L_1, L_1'$ be hybrid Lagrangian surfaces depicted in Figure \ref{fig:SqMoveMutation} (right), whose projection onto the base is a 1-fold covering in the light grey regions, a 2-fold covering in the dark grey region, whose Legendrian lifts have crossings labeled by the blue edges. Then $L_0$ is Hamiltonian isotopic to $L_1$, as illustrated by the first row of Figure \ref{fig:SqMoveMutation}. This sequence of moves first uses two RIII1 moves, from $L_0$ to the hybrid Lagrangian in Figure \ref{fig:SqMoveMutation} (top center), and then an (inverse) RII2 move. Similarly, the Lagrangian $L_0'$ is Hamiltonian isotopic to $L_1'$. In addition, the 1-cycle coming from the null region in the middle of Figure \ref{fig:SqMoveMutation} (left) corresponds to the weave $\sf I$-cycle in the middle of Figure \ref{fig:SqMoveMutation} (right).\\

\begin{figure}[h!]
    \centering
    \includegraphics[width=0.55\textwidth]{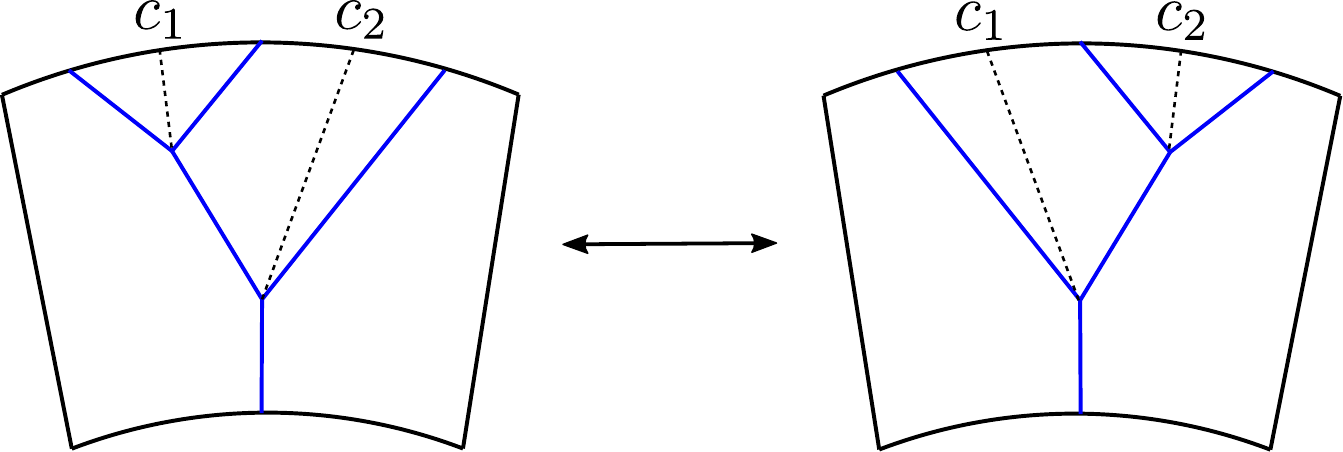}
    \caption{The Lagrangian cobordism of the pinching sequence $c_1, c_2$ (left) and $c_2, c_1$ (right), and the fronts of their Legendrian lifts.}
    \label{fig:weave-pinch-mutate}
\end{figure}

    Finally, let us emphasize that Legendrian weave mutation can also be seen as different choices of pinching sequences, as illustrated in Figure \ref{fig:weave-pinch-mutate}. Indeed, consider the two free Legendrian weaves from the (long) Legendrian braid $s_1 \subset J^1\bR$ to the Legendrian braid $s_1^3 \subset J^1\bR$ in that figure, which are related by a Legendrian mutation. The free Legendrian weave on the left corresponds to the concatenation of elementary cobordisms by first pinching the Reeb chord on the left and then pinching the Reeb chord on the right, while the free Legendrian weave on the right corresponds to the concatenation of first pinching the Reeb chord on the right and then pinching the Reeb chord on the left. That is, changing the order of two adjacent Reeb chords in the pinching sequence corresponds exactly to a Lagrangian mutation along the corresponding $\sf I$-cycle.

%For more general mutations of Legendrian weaves, however, it may be nontrivial to detect when it comes from Hamiltonian isotopies plus some square moves of conjugate Lagrangians, or change of orders of Reeb chords in pinching sequences.

\section{Conjugate Fillings and Reeb Pinchings as Legendrian weaves}\label{sec:main_proofs}

Let us now prove Theorem \ref{thm:main1}: conjugate Lagrangian fillings associated to plabic graphs $\bG\in\mathcal{C}(\Sigma)$ are Hamiltonian isotopic to Lagrangian projections of (free) Legendrian weaves, and so is any Lagrangian filling obtained by a pinching sequence, as shown in Theorem \ref{thm:braid-weavepinching} below. The proofs use the moves proven in Theorem \ref{thm:hybridReidemeister}.

\subsection{From conjugate Lagrangian fillings of $(2, k)$-torus links to weaves}\label{ssec:conjugate_to_weave_2klinks}
Let us address a simple class of examples by proving Theorem \ref{thm:main1} for the (reduced) plabic graphs in $\mathcal{C}(\Sigma)$, associated to $\mbox{Gr}(2,k+2)$, relating conjugate Lagrangians to the free Legendrian weaves for Legendrian $(2, k)$-torus links. In this case  $\Sigma=\D^2$ with $(k+2)$ marked points at the boundary.

\begin{proof}[\pfo Theorem \ref{thm:main1}: reduced plabic graphs for $\mbox{Gr}(2,k+2)$]
    Given a triangulation of an $(k+2)$-gon, we label all the vertices of the triangulation as white vertices, all faces of the triangulation as black vertices, and connect a pair of black and white vertices if the corresponding vertex lies in the closure of the corresponding face (the top left part of Figure \ref{fig:2,nlink}). This reduced plabic graph gives an alternating Legendrian isotopic to the $(2, k)$-torus link and a corresponding conjugate Lagrangian filling; see Section \ref{sec:conjugate}, especially Example \ref{ex:2,nlink-conj}.\\
    
    \begin{figure}[h!]
  \centering
  \includegraphics[width=0.9\textwidth]{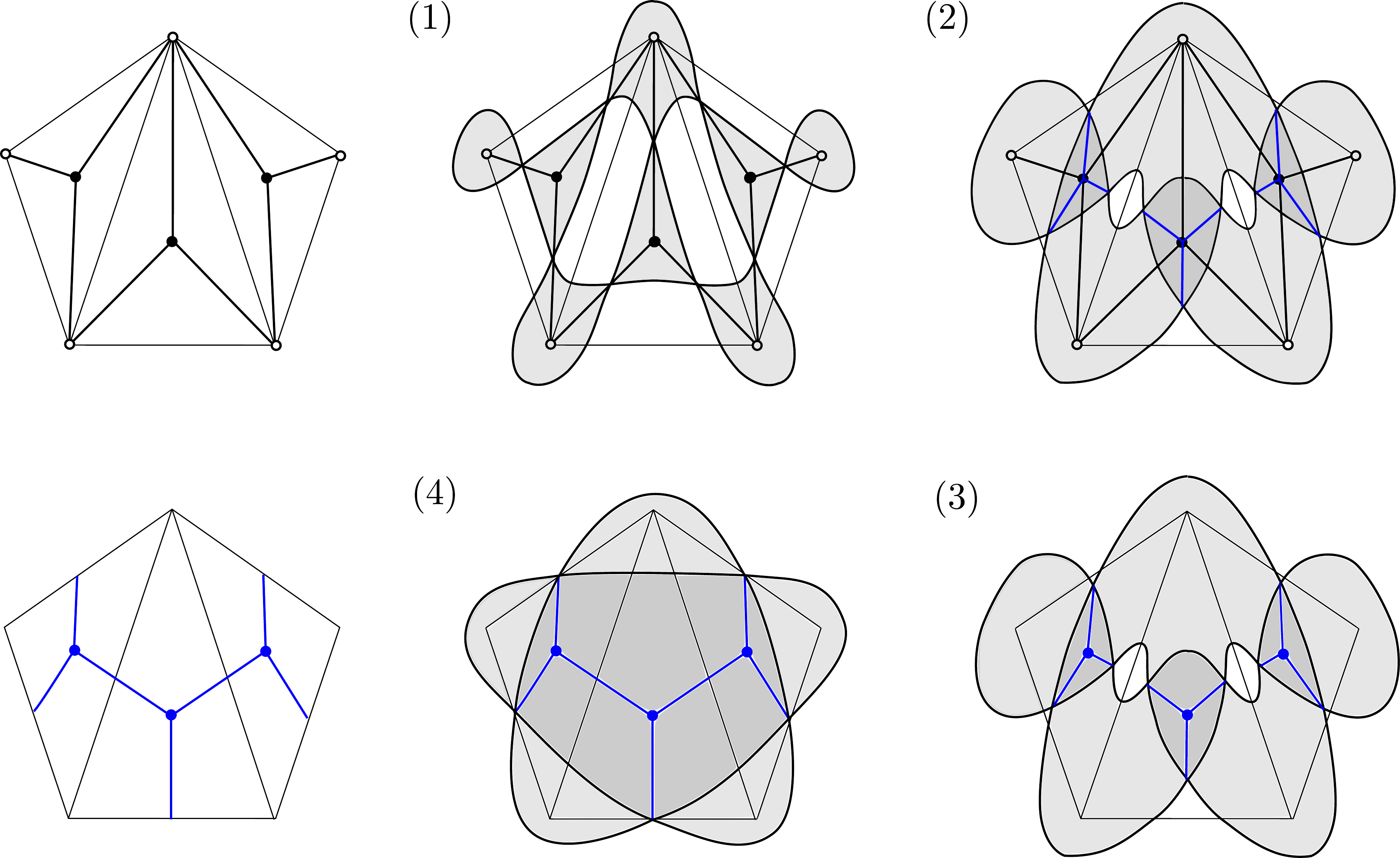}\\
  \caption{The correspondence between conjugate Lagrangian fillings and Legendrian 2-weaves coming from the same triangulation of an $(k+2)$-gon, depicted in the case $k=3$.}\label{fig:2,nlink}
\end{figure}

    Let us start with the alternating Legendrian and conjugate Lagrangian filling associated to this reduced plabic graph. An instance of such a triangulation for $k=3$ is depicted in Figure \ref{fig:2,nlink}.(1). Each black vertex of the plabic is connected with 3 white vertices, as the black vertex corresponds to a triangle and the white vertices correspond to its vertices. Now we apply one Reidemeister~III1 move at each black vertex, as drawn in Figure \ref{fig:TableMoves}. This creates a hybrid Lagrangian surface with a trivalent blue vertex at each black vertex. This is depicted in Figure \ref{fig:2,nlink}.(2)--(3), where Figure \ref{fig:2,nlink}.(3) is the hybrid Lagrangian surface without the plabic graph superimposed. At this stage, for each pair of black vertices that share two adjacent white vertices, we can perform the Reidemeister~II1 move in Figure \ref{fig:TableMoves}. After performing these $(k-1)$ RII1 moves, each pair of blue edges from two trivalent blue vertices separated by a diagonal are connected. This yields a free weave filling of the $(2, k)$-torus link at the boundary. Figure \ref{fig:2,nlink}.(4) illustrates that in the example.

    \noindent Finally, for the resulting Legendrian weave, there exists a blue edge if and only if two black vertices share a pair of adjacent white vertices, which means the corresponding two faces in the original triangulation share a common edge. Hence the weave we have obtained is exactly the one determined by the same triangulation of the $(k+2)$-gon as in Example \ref{ex:2,nlink-weave}; see the bottom left part of Figure \ref{fig:2,nlink} for an instance of such weave dual to a triangulation.
\end{proof}

\subsection{From conjugate fillings of positive braid closures to weaves}\label{sec:braid-conjweave}
    In this section we prove Theorem \ref{thm:main1} for conjugate surfaces of grid plabic fences, identifying conjugate Lagrangian associated to such plabic fences with Lagrangian projections of Legendrian weaves.
    
\begin{figure}[h!]
  \centering
  \includegraphics[width=1.0\textwidth]{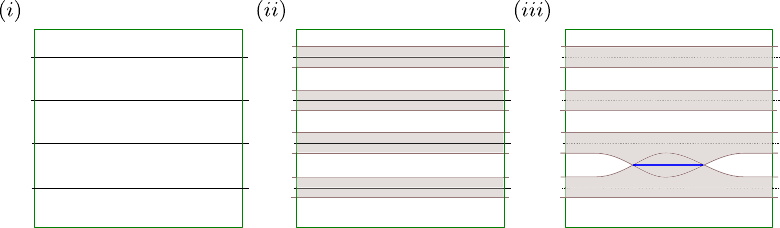}\\
  \caption{The start of the process from the conjugate surface of a Type 1 column towards its associated weave. (i) A Type 1 column for $n=4$. (ii) The conjugate surface associated to that Type 1 column. (iii) Inserting a Reidemeister II1 move between the lowest two strips of the conjugate surface.}\label{fig:Proof_Type1ToWeave_Part1}
\end{figure}

\begin{proof}[\pfo Theorem \ref{thm:main1} (continued): grid plabic fences] The proof is local in the three types of columns of a plabic fence: Type 1 columns, consisting of $n$ parallel horizontal edges, and Type 2 columns, $n$ parallel horizontal edges with a vertical edge (black on top) between the $i$th and $(i+1)$st strands, $i\in[1,n-1]$, and Type 3 columns, containing lollipops. Figure \ref{fig:Proof_Type1ToWeave_Part1}.(i) depicts a Type 1 column for $n=4$, Figure \ref{fig:Proof_CrossingToWeave_N5_Part1}.(1) depicts a Type 2 column with $n=5$ and $i=1$, and Figure \ref{fig:lolipop-conj-weave}.(1) depicts a Type 3 column. See \cite[Section 2]{CasalsWeng22} for this notation and more details.\\

\begin{figure}[h!]
  \centering
  \includegraphics[width=1.0\textwidth]{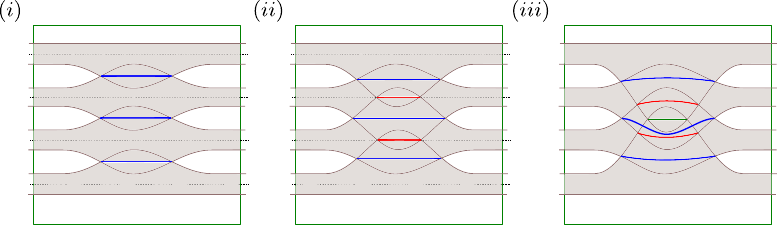}\\
  \caption{(i) The conjugate surface for Type 1 column with $n=4$ after inserting the initial $(n-1)$ Reidemeister II1 moves that create $(n-1)$ horizontal weave $s_1$-edges (in blue). (ii) The conjugate surface after the second iteration of Reidemeister II1 moves, where two such moves are performed to insert $(n-2)$ horizontal $s_2$-edges (in red) in between the previous $(n-1)$ horizontal $s_1$-edges. (iii) The result of performing the last Reidemeister II1 move between the two $s_2$-edges, introducing the last weave $s_3$-edge (in green).}\label{fig:Proof_Type1ToWeave_Part2}
\end{figure}

First, we address Type 1 columns. Locally, the conjugate surface is given by $n$ horizontal strips, each containing one of the $n$ horizontal strands: Figure \ref{fig:Proof_Type1ToWeave_Part1}.(ii) depicts the horizontal strips in grey and the horizontal strands (from the plabic fence) in black, for $n=4$. The sequence of moves from this conjugate surface to a (local) weave starts with the insertion of $(n-1)$ Reidemeister II1 moves, one per each adjacent pair of horizontal strips. Figure \ref{fig:Proof_Type1ToWeave_Part1}.(iii) illustrates one such moves, inserted between the bottom two horizontal strips, and Figure \ref{fig:Proof_Type1ToWeave_Part2}.(i) depicts the result of inserting the $(n-1)$ Reidemeister II1 moves, for $n=4$. After these moves have been inserted, there are $(n-1)$ horizontal weave $s_1$-edges, drawn in blue in Figures \ref{fig:Proof_Type1ToWeave_Part1} and \ref{fig:Proof_Type1ToWeave_Part2}. Right above and below each of such $s_1$-edge, the conjugate surface has a piece of its boundary component. In particular, in between two such adjacent $s_1$-edges there are two such pieces of boundary. The next step is to insert $(n-2)$ Reidemeister II1 moves in between these pieces of boundary: this creates $(n-2)$ horizontal weave $s_2$-edges, as the sheets of the conjugate surface that intersect are now the second and third, counted from below. Figure \ref{fig:Proof_Type1ToWeave_Part2}.(ii) depicts the result of this process for $n=4$, with the weave $s_2$-edges drawn in red.\\

\noindent The process can now be iterated among the newly creates weave edges. Indeed, performing $(n-3)$ Reidemeister II1 moves introduces $(n-3)$ horizontal $s_3$-edges, and iteratively performing $(n-i)$ Reidemeister II1 moves between the $(n-i+1)$ existing $s_{i-1}$-edges, which creates $(n-i)$ horizontal weave $s_i$-edges. After $(n-2)$ iterations, the last iteration consists of performing a unique Reidemeister II1 inserting a unique $s_{n-1}$-edge. Figure \ref{fig:Proof_Type1ToWeave_Part2}.(iii) illustrates the result for $n=4$ and the $s_3$-edge drawn in green. The resulting piece of weave consists of ${n\choose 2}$ horizontal weave edges which spell an expression for $w_0\in S_n$, the longest word in the symmetric group $S_n$. For concreteness, this expression can be taken to be $\mathtt{w}_{0,n}:=s_1(s_2s_1)(s_3s_2s_1)\ldots (s_{n-1}s_{n-1}\cdots s_2s_1)$ when the $s_i$-edges of the weave are read bottom to top. For reference, note that this weave is denoted by $\mathfrak{n}(\mathtt{w}_{0,n})$ in Subsection \ref{ssec:ex-free-weave} and \cite[Section 3.3.1]{CasalsWeng22}.\\

\begin{figure}[h!]
  \centering
  \includegraphics[width=1.0\textwidth]{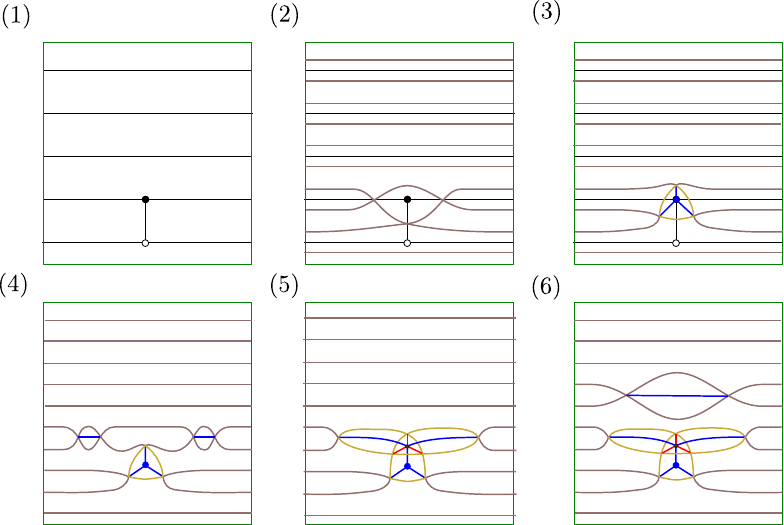}\\
  \caption{(1) A Type 2 column of a plabic fence with its crossing at the lowest possible level. (2) The conjugate Lagrangian surface associated to a Type 2 column. (3) The result of applying a Reidemeister RIII3 to the conjugate surface. (4) The resulting hybrid Lagrangian obtained from (3) after applying two RII2 moves. (5) The result after applying a RIII2 move. (6) The subsequent hybrid Lagrangian after using a Reidemeister RII1 move. }\label{fig:Proof_CrossingToWeave_N5_Part1}
\end{figure}

Second, we address Type 2 columns. Consider a Type 2 column with $n$ horizontal strands and a vertical edge (black on top) between the $i$th and $(i+1)$st strands. The case of $i=1$ is the most interesting, so let us set $i=1$. Figure \ref{fig:Proof_CrossingToWeave_N5_Part1}.(1) depicts the case $n=5$ and $i=1$ and Figure \ref{fig:Proof_CrossingToWeave_N5_Part1}.(2) draws the conjugate surface for this Type 2 column.\footnote{In Figures \ref{fig:Proof_CrossingToWeave_N5_Part1} and \ref{fig:Proof_CrossingToWeave_N5_Part2} we have not filled the interior of the conjugate surface (and its hybrid cousins) in grey in order to increase clarity and ease readibility of the weave lines. Even if not drawn, the corresponding regions {\it are} filled in the same manner as usual.} The first step for transforming this conjugate surface into a weave is to perform a Reidemeister III1 move at the (bounded) triangular region of the conjugate surface that contains the black vertex. This results in the hybrid surface drawn in Figure \ref{fig:Proof_CrossingToWeave_N5_Part1}.(3), inserting a trivalent vertex with three weave $s_1$-edges (in blue) emanating from it. The next step is to introduce two Reidemeister II1 moves in between the third strip, counting from the bottom, and the second strip. These two moves are inserted at the right and left of the trivalent vertex (and above it): this is drawn in Figure \ref{fig:Proof_CrossingToWeave_N5_Part1}.(4). The insertion of these two Reidemeister II1 moves creates an empty triangular region right above the trivalent vertex, with three $s_1$-edges being incident at its vertices. Therefore, we can perform a Reidemeister III2, introducing a hexavalent vertex with $s_1$- and $s_2$-edges emanating from it. Figure \ref{fig:Proof_CrossingToWeave_N5_Part1}.(5) illustrates this. This sequence of moves, first a Reidemeister III1, then two Reidemeister II1 and then a Reidemeister III2, is the {\it starting} step of the iteration; it is performed once. The iterative step, to be performed $(n-2)$ times, is similar but no identical.\\

\noindent Let us now continue the process by first performing a Reidemeister II1 move between the top of the third horizontal strip and the bottom of the fourth horizontal strip (counting from the bottom). This produces a horizontal $s_1$-edge, depicted in blue in Figure \ref{fig:Proof_CrossingToWeave_N5_Part1}.(6). That move itself moves a piece of the boundary of the conjugate surface (which used to be at the bottom of the fourth strip) towards a piece of the boundary of the conjugate surface near the hexavalent vertex. This allows for the introduction of two Reidemeister II1 moves which insert two horizontal $s_2$-edges. The precise location is drawn in Figure \ref{fig:Proof_CrossingToWeave_N5_Part2}.(7), and it is readily seen that a triangular region with only one sheet is created in the middle. Therefore, we can and do perform a Reidemeister III2 move creating a hexavalent vertex in that triangular region which emanates $s_2$ and $s_3$-edges. This is depicted in Figure \ref{fig:Proof_CrossingToWeave_N5_Part2}.(8).\\

\begin{figure}[h!]
  \centering
  \includegraphics[width=1.0\textwidth]{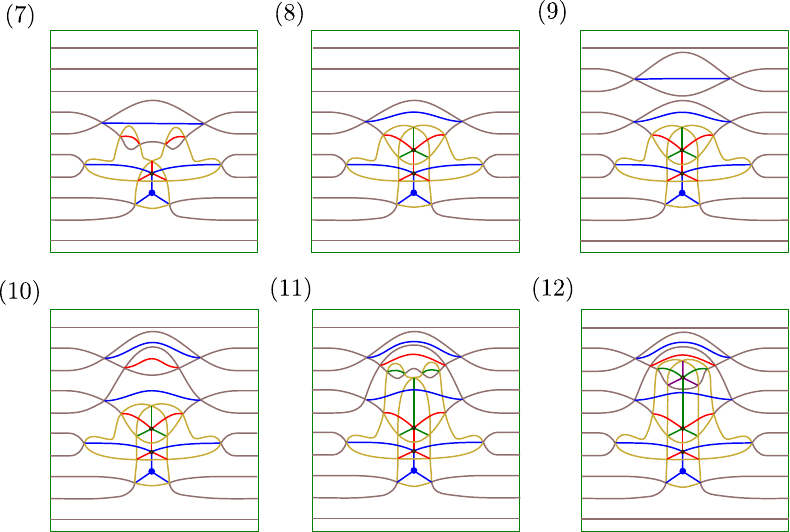}\\
  \caption{(7) Two Reidemeister RII1 being applied to the top two sheets, creating the two additional red edges. (8) The hybrid Lagrangian obtained after applying a Reidemeister RIII2 move to the second and third sheets, creating the hexavalent vertex. (9)-(10)-(11) Consists of a sequence of Reidemeister RII2 moves, one creating the blue edge in (8), another red edge in (9) and the two green edges in (11). (12) The hybrid Lagrangian surface after applying the final Reidemeister RIII2 move.}\label{fig:Proof_CrossingToWeave_N5_Part2}
\end{figure}

\noindent This process of inserting Reidemeister II1 moves and then a Reidemeister III2 move can now be iterated as follows, always inserting the moves above. At the $j$th step, we perform $j$ Reidemeister II1 moves, creating horizontal weave $s_i$-edges that spell $s_1s_2\ldots s_j$ top to bottom. Figure \ref{fig:Proof_CrossingToWeave_N5_Part2}.(9) and (10) show this for $n=5$ at the step $j=2$; the first iterative step $j=1$ was Figures \ref{fig:Proof_CrossingToWeave_N5_Part1}.(6) and \ref{fig:Proof_CrossingToWeave_N5_Part2}.(7) and (8).\footnote{In a sense, Figures \ref{fig:Proof_CrossingToWeave_N5_Part1}.(4) and (5) are the 0th iterative step.} Then, two Reidemeister II1 moves are created at the right and left (and on top) of the unique vertical weave $s_{j+1}$-edge, giving rise to two horizontal $s_{j+1}$-edges which are incident to vertices of a (newly created) triangular region. This is drawn in Figure \ref{fig:Proof_CrossingToWeave_N5_Part2}.(11). To finalize the $j$th iterative step, a Reidemeister III2 is performed at that triangular region, creating a hexavalent vertex emanating $s_{j+1}$- and $s_{j+2}$-edges. The iteration ends when $j=n-3$, creating a weave with $s_1$- up to $s_{n-1}$-edges. Figure \ref{fig:Proof_CrossingToWeave_N5_Part2}.(12) depicts the result of these iterations. Finally, the unique vertical long $\sf I$-cycle on the weave, with a vertical $s_{n-1}$-edge on towards the top, can be vertically lengthened beyond all weave edges above it. Indeed, all the weave edges above that vertical $s_{n-1}$-edge are $s_j$-edges with $j\leq n-3$ and thus the  vertical $s_{n-1}$-edge can cross them with tetravalent vertices. This concludes the construction of the weave for $i=1$ and any $n\in\N$.\\

\noindent The case of arbitrary $i\in[1,n-1]$ readily follows from the case $i=1$ by observing that the construction for $i=1$ only involves the horizontal edges of the plabic fence right (at and) above the vertical edge. In consequence, for any $i\in[1,n-1]$, we proceed as in the $i=1$ case for the portion of the plabic fence consisting of the horizontal edges $i$th through $n$th, counting from the bottom, and apply the Type 1 column construction for the horizontal edges $1$st through $(i-1)$st.\\

\begin{figure}[h!]
    \centering
    \includegraphics[width=0.7\textwidth]{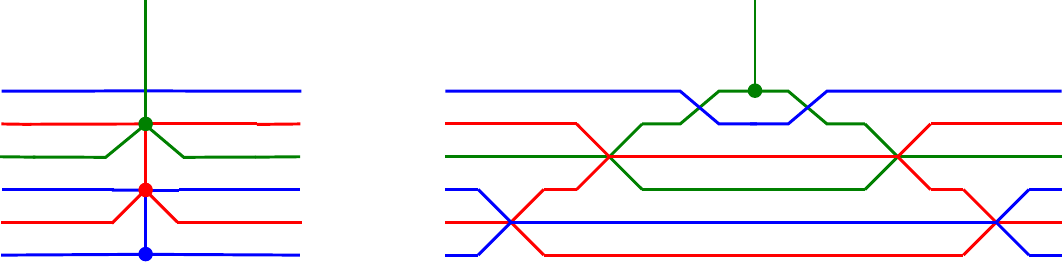}
    \caption{On the left, the weave we obtain for Type~II columns. On the right, the weave $\mathfrak{c}_{i}^{\uparrow}(\w_{0,n})$ for Type~II columns (see \cite[Sec.~3.3.2]{CasalsWeng22}). These two weaves differ by a sequence of push-through moves, the move II in Figure \ref{fig:ReidemeisterWeave}.}
    \label{fig:braid-weave-pushthru}
\end{figure}

 Finally, note that at the right and left the ends of these hybrid surfaces, hybrids between a conjugate surface and a weave, coincide, regardless of whether they are obtained from a Type 1 or a Type 2 column. Thus, these hybrid surface can be horizontally concatenated according to the plabic fence, since all the pieces have the same lateral boundary conditions. In order to remove the white (empty) areas and obtain a weave in the end, one then applies (the inverse of) Reidemeister II2 moves. This concludes the construction of the Hamiltonian isotopy from a conjugate surface to (the Lagrangian projection of) a Legendrian weave. This concludes the case of a Type 2 column.\\
 
Third, the case of a Type 3 column follows an argument analogous to the above and it is left for the reader. In fact, the resulting weaves are equivalent to the weaves $\mathfrak{l}_i^{b}$, or $\mathfrak{l}_i^{w}$ introduced in \cite[Sec.~3.3.3]{CasalsWeng22}. An explicit instance of the necessary sequence of moves (from Figure \ref{fig:TableMoves}) is depicted in Figure \ref{fig:lolipop-conj-weave}. The general case is a direct generalization of this sequence of moves.\\

At this stage we have found weaves for the Type 1, Type 2 and Type 3 columns and their lateral ends (both at the right and left) match. Therefore, we can horizontally concatenate these weaves to obtain a hybrid surface. In order to turn the hybrid surface entirely into a weave, it now suffices to apply ${n-1\choose 2}$ inverse Reidemeister II2 moves, which will remove the white (empty) areas and obtain a weave in the end. In addition, a sequence of push-through moves, pushing the unique trivalent vertex upwards, shows that this weave is equivalent to the weave $\mathfrak{c}_{i}^{\uparrow}(\w_{0,n})$ in Subsection \ref{ssec:ex-free-weave} Figure \ref{fig:braid-weave-cross2}, see also \cite[Sec.~3.3.2]{CasalsWeng22}; Figure \ref{fig:braid-weave-pushthru} illustrates this equivalence in an example.
\end{proof}

\begin{figure}[h!]
    \centering
    \includegraphics[width=1.0\textwidth]{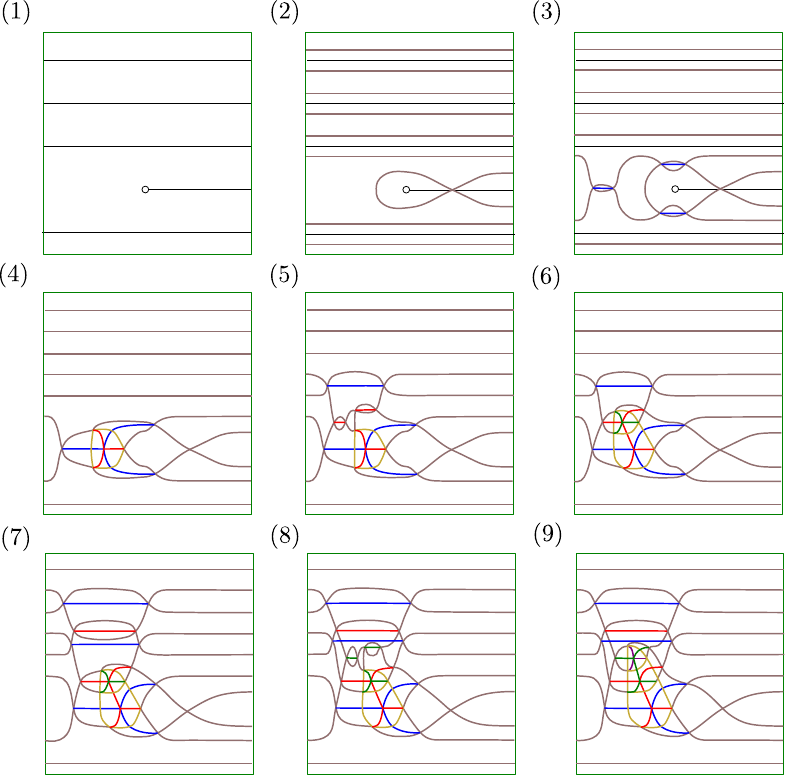}
    \caption{(1) A Type 3 column in a grid plabic graph. (2) The conjugate surface associated to a Type 3 column. (3) Performing three RII1 moves. (4) Inserting a hexavalent vertex with an RIII2 move. (5) Creating three more weave edges (one blue and two red) with three RII1 moves. (6) Performing a RIII2 move that creates a hexavalent vertex. (7)-(8) Introducing a sequence of RII1 moves. (9) The result of applying the final RIII2 move.}\label{fig:lolipop-conj-weave}
\end{figure}

\noindent Given a plabic fence with white vertices on top of its vertical edges, the proof described above can also be applied with the modification that now the process creates the weave downwards. Thus the base case is $i=n$, at the top horizontal edge of the plabic fence. The resulting weave is equivalent to $\mathfrak{c}_{i}^{\downarrow}(\w_{0,n})$ from \cite[Sec.~3.3.2]{CasalsWeng22}.\\

Note that in the correspondence between conjugate Lagrangians and Legendrian weaves, each black vertex on the $i$-th level of the bipartite graph in a Type~II column corresponds uniquely to an $s_i$-trivalent vertex in the corresponding weave $\mathfrak{c}_i^\uparrow(\mathtt{w}_{0,n})$. In the conjugate Lagrangian, the 1-cycles are given by the null components in the plabic fence between two vertical edges on the same level, while in the Legendrian weave, the 1-cycles are given by long $\sf I$-cycles, which connect two trivalent vertices adjoining edges on the same level. Keeping track of the 1-cycles of the Lagrangian fillings under the Hamiltonian isotopy, we can show that these 1-cycles correspond to each other, as explained in the following corollary:

\begin{figure}[h!]
    \includegraphics[width=0.95\textwidth]{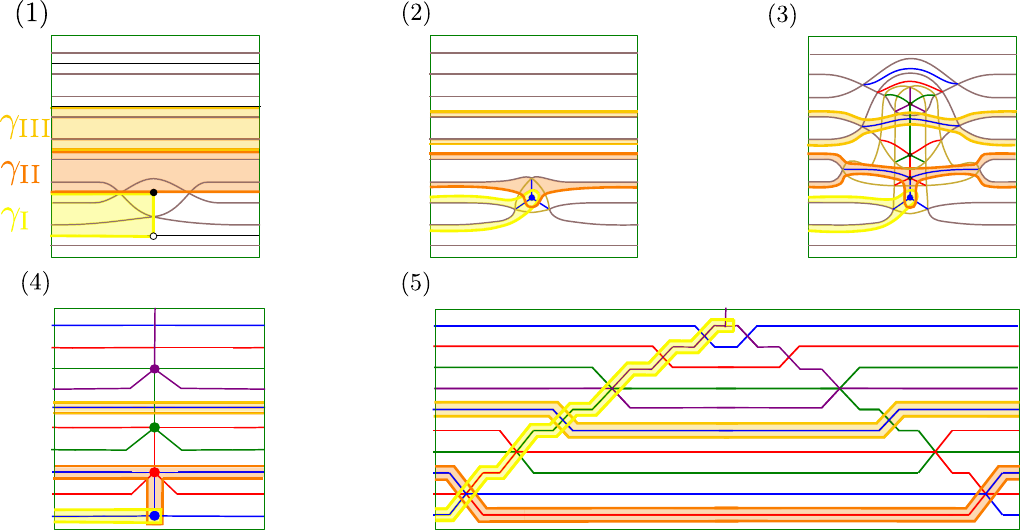}
    \caption{(1)~The relative 1-cycles in the Type~II column corresponding to null regions with boundaries. (2)~The relative 1-cycles after a Reidemeister~III1 move. (3)~The relative 1-cycles after all Reidemeister~III2 moves. (4)~The relative 1-cycles in the weave we obtain for Type~II columns before applying push throughs. (5)~The relative $\sf I$-cycles in the weave $\mathfrak{c}_i^\uparrow(\mathtt{w}_{0,n})$ after applying push throughs.}\label{fig:1cycle-braid}
\end{figure}

\begin{cor}\label{cor:1cycle-braid}
    Consider the Hamiltonian isotopies of the hybrid Lagrangians in Theorem \ref{thm:main1}. Then the following holds:
    
    \begin{enumerate}
    \item Each 1-cycle in the conjugate Lagrangian from the rectangular null regions between two vertical edges on $i$-th row and $j$ and $j'$-th columns corresponds to the long $\sf I$-cycle in the Lagrangian projection of the Legendrian weave connecting the two trivalent vertices on the $j$ and $j'$-th columns adjoining $s_i$-edges.\\
    
    \item Each relative 1-cycle in the conjugate Lagrangian from the half-open rectangular null regions from leftmost/rightmost vertical edges on $i$-th to the left/right boundary of the plabic fence corresponds to the relative long $\sf I$-cycle in the Lagrangian projection of the Legendrian weave from leftmost/rightmost the trivalent vertex adjoining $s_i$-edges to the boundary of the weave.
    \end{enumerate}
\end{cor}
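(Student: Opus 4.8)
The plan is to track the one-cycles explicitly through each elementary move in the Hamiltonian isotopy constructed in the proof of Theorem \ref{thm:main1} for Type 2 columns, exactly as is done locally in Corollary \ref{cor:1cycle-loc} for the single Reidemeister III1 move. The two statements are really one statement assembled from local pieces: a null region of the plabic fence between two vertical edges on the $i$-th row contributes a cycle $\gamma_F$ (Definition \ref{def:H1cycle-conj}) which, after the isotopy, becomes a concatenation of local arcs, and the claim is that these arcs glue to the long $\sf I$-cycle of the weave joining the two relevant $s_i$-trivalent vertices. I would organize the argument column by column and then concatenate, mirroring the structure of the proof of Theorem \ref{thm:main1}: first understand what a single Type 2 column does to the cycle, then observe that Type 1 columns only contribute trivial (sheet-preserving) pieces to the cycle, then use that the lateral boundary conditions match to glue.

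First I would fix the local picture of one Type 2 column and run through Figures \ref{fig:Proof_CrossingToWeave_N5_Part1} and \ref{fig:Proof_CrossingToWeave_N5_Part2} while carrying a chosen representative of the relevant relative cycle. The key local input is Corollary \ref{cor:1cycle-loc}: under the RIII1 move, the boundary arc of the null triangle containing the black vertex is carried to an arc in the hybrid surface that sits on the bottom sheet of the two-sheeted region and jumps to the upper sheet precisely when it crosses the blue ($s_1$) edges below the trivalent vertex. This is exactly the local model of (half of) a short $\sf I$-cycle at a trivalent vertex. The subsequent RII1 moves only extend the cycle horizontally along the newly created weave $s_i$-edges (they do not change which sheets the arc lies on, by the explicit Hamiltonians in Lemmas \ref{lem:RII-move1} and \ref{lem:RII-move2}), and the RIII2 moves, being genuine Legendrian isotopies with no sheet change away from the hexavalent point (Proposition \ref{lem:localmove2}), carry the arc rigidly. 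So the cycle attached to a rectangular null region between the vertical edges of columns $j$ and $j'$ on row $i$ is carried to: an $\sf I$-segment at the $j$-th trivalent vertex, a horizontal run along the $s_i$-edge joining the columns, and an $\sf I$-segment at the $j'$-th trivalent vertex — i.e.\ precisely the long $\sf I$-cycle of \cite{CasalsZas20}*{Section 2.4}. For part (2), the same bookkeeping applies with one endpoint being the relative boundary on the weave rather than a second trivalent vertex; the half-open rectangular null region's boundary arc limits to the lateral boundary $\Lambda \times \{+\infty\}$, and by Lemma \ref{lem:graph-at-infty} this behaviour at infinity is preserved throughout the isotopy, so the relative cycle class is carried to the relative long $\sf I$-cycle.

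The main obstacle I anticipate is purely bookkeeping-of-sheets rather than anything conceptual: one must verify that when the local moves are iterated (the $j$-th iterative step in the proof of Theorem \ref{thm:main1} involves $j$ nested RII1 moves and a RIII2, and there are $n-2$ iterations, plus the final vertical lengthening of the long $\sf I$-cycle past the $s_j$-edges with $j \le n-3$), the arc representing the cycle does not acquire spurious jumps between sheets and does not get pushed onto the wrong sheet by the tetravalent (push-through) vertices. This is handled by the observation that a tetravalent crossing of an $s_{n-1}$-edge with an $s_j$-edge, $j\le n-3$, involves disjoint pairs of sheets, so an arc on the $(n-1,n)$ sheets passes through it unchanged, and similarly the push-through equivalence between the weave we construct and $\mathfrak{c}_i^\uparrow(\mathtt{w}_{0,n})$ (Figure \ref{fig:braid-weave-pushthru}) is a genuine Legendrian isotopy of free weaves which carries $\sf I$-cycles to $\sf I$-cycles. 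I would present this as the content of Figure \ref{fig:1cycle-braid}, whose five panels are exactly the snapshots (1)–(5) of the cycle that the written proof should walk through, and remark that the white-vertex (downward) variant and the Type 3 (lollipop) columns are handled identically, with the long $\sf I$-cycles of $\mathfrak{c}_i^\downarrow$ and of the lollipop weaves $\mathfrak{l}_i^b,\mathfrak{l}_i^w$ respectively.
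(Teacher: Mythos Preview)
Your overall architecture matches the paper's: reduce to a single Type~2 column, track relative 1-cycles through the sequence of moves from Theorem~\ref{thm:main1} using Corollary~\ref{cor:1cycle-loc} as the only nontrivial local input, and then concatenate across columns. That is exactly what the paper does, and your panels (1)--(5) are the same snapshots as Figure~\ref{fig:1cycle-braid}.

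There is, however, a genuine gap in your case analysis. You only track the relative cycle $\gamma_{\mathrm I}$ on row~$i$ inside a Type~2 column whose vertical edge is also on row~$i$, and then dismiss everything else as ``Type~1 columns only contribute trivial (sheet-preserving) pieces.'' But a null region on row~$i$ will typically pass through Type~2 columns whose vertical edge is on a \emph{different} row, and the weave built in those columns has its own trivalent and hexavalent vertices that the cycle must navigate. The paper treats this by distinguishing \emph{three} relative cycles inside each Type~2 column with vertical edge on row~$i$: the two half-cycles $\gamma_{\mathrm I}$ on row~$i$ bounded by the vertical edge, the cycle $\gamma_{\mathrm{II}}$ on row~$i-1$ directly above the vertical edge, and the cycles $\gamma_{\mathrm{III},i'}$ on all other rows~$i'$. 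The case $\gamma_{\mathrm{II}}$ is the one you miss, and it is not carried ``rigidly'': after the RIII1 move it too winds $2\pi/3$ around the blue trivalent vertex (both $\gamma_{\mathrm I}$ and $\gamma_{\mathrm{II}}$ touch that triangle), and after the RIII2 moves it has become a relative $\sf Y$-piece, bifurcating at the blue--red hexavalent vertex, \emph{not} an $\sf I$-piece.

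This also means your final claim that the push-through equivalence to $\mathfrak{c}_i^\uparrow(\mathtt{w}_{0,n})$ ``carries $\sf I$-cycles to $\sf I$-cycles'' is the wrong thing to check. What the paper actually verifies is that the push-throughs turn the $\sf Y$-piece $\gamma_{\mathrm{II}}$ back into an $\sf I$-piece (and simultaneously slide the endpoint of $\gamma_{\mathrm I}$ from the $s_1$-trivalent vertex up to the $s_i$-trivalent vertex). Only after the full sequence of push-throughs do all three types of pieces become the relative $\sf I$-pieces of Figure~\ref{fig:1cycle-braid}.(5). So the fix is straightforward: replace your single cycle-tracking with the threefold case distinction $\gamma_{\mathrm I}, \gamma_{\mathrm{II}}, \gamma_{\mathrm{III},i'}$ in each Type~2 column and record what the push-throughs do to each.
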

\begin{proof}
    This follows from the algorithm in the proof of Theorem \ref{thm:main1} together with the local characterization of 1-cycles in Corollary \ref{cor:1cycle-loc}. In fact, since both the null regions in the plabic fence and the long $\sf I$-cycles in the Legendrian weave is the union of null regions and $\sf I$-cycles with boundaries in each Type~II column, it suffices for us to keep track of the relative 1-cycles in each Type~II column.\\
    
    In each Type~II column, there are three different types of relative 1-cycles coming from null regions with boundaries, as shown in Figure \ref{fig:1cycle-braid}.(1). $\gamma_\text{I}$ are the two null regions on the $i$-th level of the plabic fence which are bounded from one side by the vertical edge in the plabic fence, $\gamma_\text{II}$ are the null regions on the $(i-1)$-th level of the plabic fence right above the unique vertical edge in the plabic fence, and $\gamma_{\text{III},i'}$ are the other null regions on the $i'$-th level of the plabic fence with boundaries.\\
    
    \noindent Then we keep track of the three classes of relative 1-cycles in the algorithm of Theorem \ref{thm:main1} using Corollary \ref{cor:1cycle-loc}. Figures \ref{fig:1cycle-braid}.(2) shows that after the Reidemeister~III1 move, both $\gamma_\text{I}$ and $\gamma_\text{II}$ winds around the blue trivalent vertex by $2\pi/3$, and Figures \ref{fig:1cycle-braid}.(3) depicts the resulting 1-cycles after all Reidemeister~III2 moves. In Figure \ref{fig:1cycle-braid}.(4), $\gamma_\text{I}$ becomes the relative $\sf I$-cycle ending at the blue trivalent vertex, $\gamma_\text{II}$ becomes the relative $\sf Y$-cycle starting from the $(i-1)$-th blue edges, bifurcating at the blue-red hexavalent vertex and ending at the blue trivalent vertex, and $\gamma_{\text{III},i'}$ becomes the relative $\sf I$-cycles starting and ending at the $i'$-th blue edges.\\
    
    \noindent Finally we apply the push through moves in Subsection \ref{ssec:ex-free-weave} Figure \ref{fig:braid-weave-cross2}. After applying push through once, $\gamma_\text{I}$ becomes an $\sf I$-cycle ending at the red trivalent vertex adjoining $s_2$-edges and $\gamma_\text{II}$ becomes an $\sf I$-cycle starting and ending at the $(i-1)$-th blue edges. Therefore, by iteratively applying push throughs, we can conclude that all these 1-cycles become the $\sf I$-cycles as in Figure \ref{fig:1cycle-braid}.(5).
\end{proof}

\noindent Similar to Corollary \ref{fig:1cycle-braid}, we can prove the correspondence between null regions and long $\sf I$-cycles for a grid plabic graph, which is set up combinatorially in \cite{CasalsWeng22}.

%To Wenyuan: we might want to add this proof after posting to arXiv and before submitting to a journal. We can discuss.

\subsection{From conjugate fillings of $n$-triangulations to weaves}
    In this section we prove Theorem \ref{thm:main1} for $n$-triangulations, showing that the conjugate Lagrangian fillings associated to $A_n^*$-bipartite graphs of $n$-triangulations are Legendrian weave fillings.

\begin{proof}[\pfo Theorem \ref{thm:main1}: $n$-triangulations]
    The proof is locally in each triangle inside the ideal triangulation. Figure \ref{fig:triangle-conjweave2} explains the proof for $n = 2$, Figure \ref{fig:triangle-conjweave3} explains the proof for $n =3$, and Figure \ref{fig:triangle-conjweaveN} explains the inductive part of the proof. Let us now provide the necessary details for the general argument. We construct Hamiltonian isotopies fixing the boundary of the triangle and proceed by induction. First, consider the case $n = 2$ in Figure \ref{fig:triangle-conjweave2}.(1). we apply a Reidemeister~III1 move for the black vertex at the center, and thus obtain a trivalent vertex at the center; the resulting hybrid Lagrangian is drawn in Figure \ref{fig:triangle-conjweave2}.(2). Then, for any pair of adjacent triangles we perform two (inverse) Reidemeister II moves for conjugate surfaces\footnote{This is a local move between conjugate surfaces, with no hybrid Lagrangian involved.} as in Proposition \ref{prop:RII-move0}, so that the hybrid Lagrangian surface in each triangle becomes that depicted in Figure \ref{fig:triangle-conjweave2}.(3). Finally, we consider a Hamiltonian isotopy transforming Figure \ref{fig:triangle-conjweave2}.(3) into Figure \ref{fig:triangle-conjweave2}.(4), which is the required final weave in the $n=2$ case.\\
    
\begin{figure}[h!]
  \centering
  \includegraphics[width=1.0\textwidth]{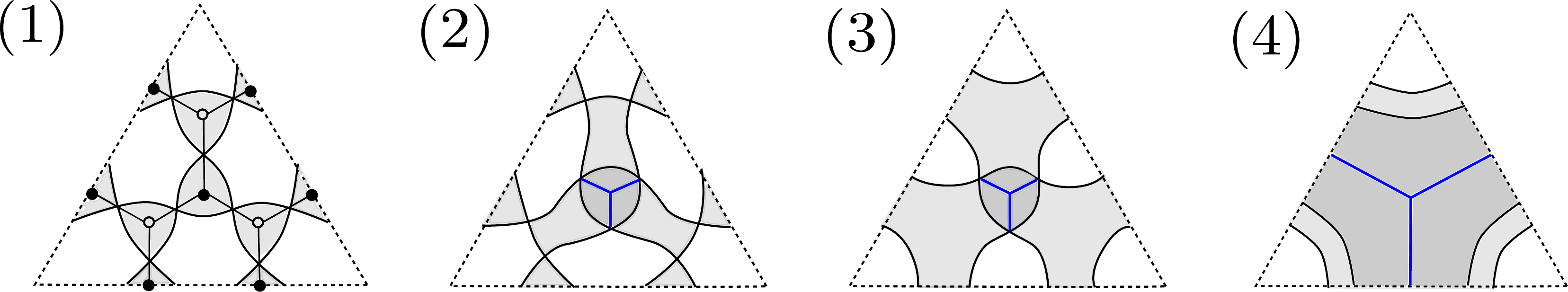}\\
  \caption{The correspondence between conjugate Lagrangian fillings of an $2$-triangulation and the Legendrian weave of a $2$-triangulation.}\label{fig:triangle-conjweave2}
\end{figure}

%    \textit{\textbf{The case $\boldsymbol{n = 3}$.}}~We first define Hamiltonian isotopies fixing the boundary of the triangle. Apply a Reidemeister~III move for the three black vertex. By Proposition \ref{lem:localmove1}, we can get the Lagrangian projection of a $D_4^-$-Legendrian weave at the center. Next we apply a Reidemeister~III move for the triangle at the center. By Proposition \ref{lem:localmove2}, we get the Lagrangian projection of a $A_1^3$-Legendrian weave at the center.

%    Then we perform Hamiltonian isotopies near the boundary. Apply Reidemeister~II moves for black vertices on the edges of the ideal triangulation. By Proposition \ref{prop:RII-move0} we can remove the crossings. Finally we apply Reidemeister~II moves for the middle two strands that pass through the edges. By Proposition \ref{lem:RII-move1}, the crossings that come from the $D_4^-$-Legendrian weaves in the two adjacent triangles will connect to each other. Thus we can complete the proof (Figure \ref{fig:triangle-conjweave3}).
    
\begin{figure}[h!]
  \centering
  \includegraphics[width=0.9\textwidth]{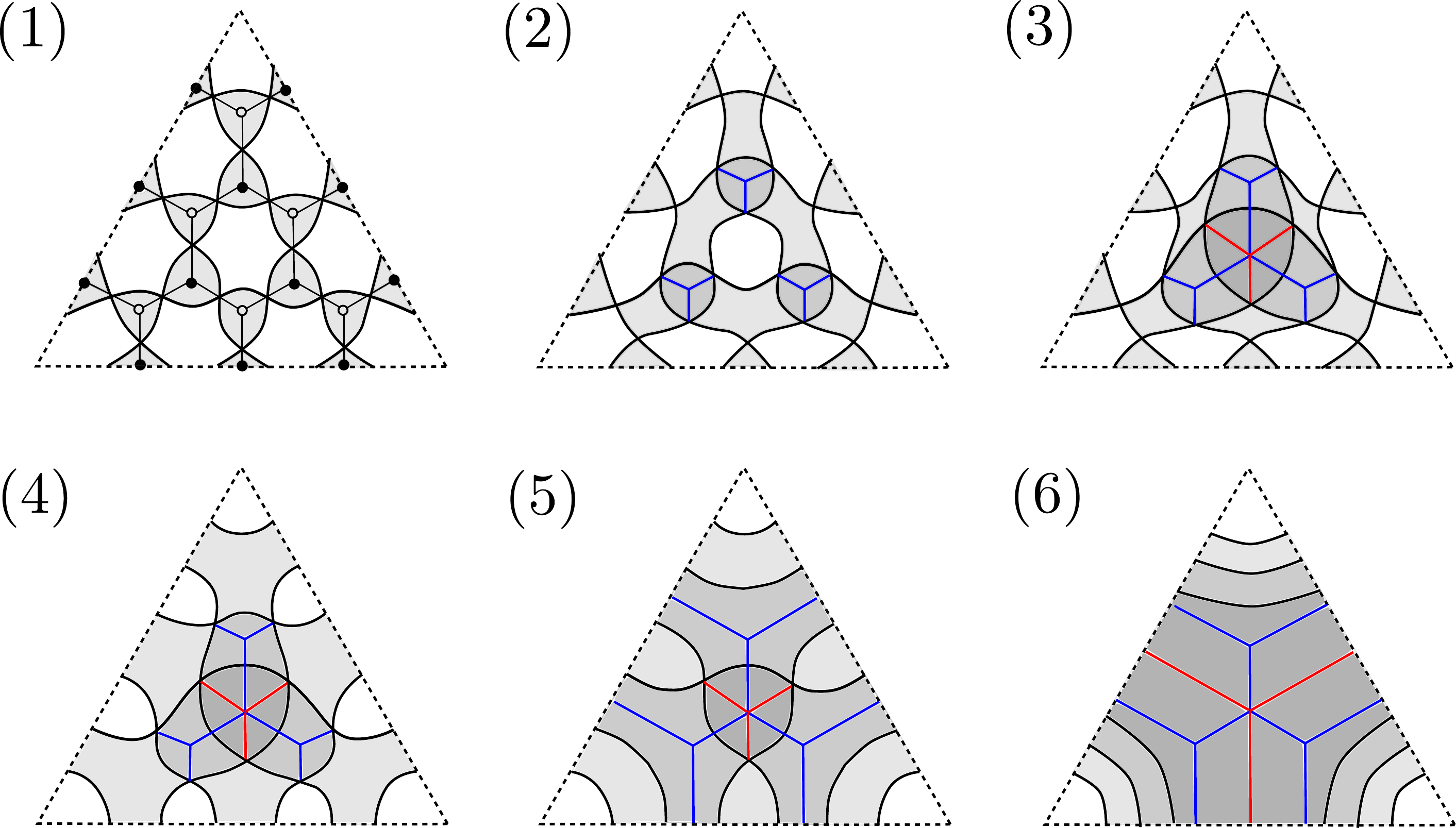}\\
  \caption{The correspondence between conjugate Lagrangian fillings of an $3$-triangulation and the Legendrian weave of a $3$-triangulation.The first row is obtained by Hamiltonian isotopies fixing the boundary. The second row is how we do Hamiltonian isotopies near the boundary.}\label{fig:triangle-conjweave3}
\end{figure}

    Let us proceed with the induction step and assume the correspondence between conjugate Lagrangians and Legendrian weaves for $n$-triangulations. For the $(n+1)$-triangulation, we start with the alternating Legendrian (and the corresponding conjugate Lagrangian) and we first consider the top $n$ rows of the black and white vertices which form the bipartite graph of an $n$-triangulation, as depicted in Figure \ref{fig:triangle-conjweaveN}.(1). By the induction hypothesis, we can apply Hamiltonian isotopies, relative to the boundary, that introduce the Legendrian weave of an $n$-triangulation, where all the strands in the $n$-graph end at the crossings in the front projection of the Legendrian link; we have depicted this in Figure \ref{fig:triangle-conjweaveN}.(2).\\

    In order to construct the weave for the $(n+1)$-triangulation, we first apply Reidemeister~III1 moves at all the $n$ triangles corresponding to the interior black vertices in the last row (the bottom row) of the bipartite graph, which remained fixed under the previous Hamiltonian isotopy. This introduces $n$ trivalent vertices in the last row, as depicted in Figure \ref{fig:triangle-conjweaveN}.(3). Second, consider the $n-1$ triangles above the $n$ trivalent vertices on the bottom row, whose three vertices are the two $s_1$-edges in blue from the adjacent trivalent vertices on the bottom and another $s_1$-edge in blue from the weave of the $n$-triangulation on the top. We then apply Reidemeister~III2 moves at each of these $n-1$ triangles. This introduces $n-1$ hexagonal vertices emanating $s_1$ and $s_2$-edges, as depicted in Figure \ref{fig:triangle-conjweaveN}.(4).

    \noindent We keep working iteratively. Consider the $n-2$ null triangles above the $n-1$ hexagonal vertices, whose three vertices are the two $s_2$-edges in red from the adjacent hexagonal vertices on the bottom and another $s_2$-edge in red from the weave of the $n$-triangulation on the top. We then apply Reidemeister~III2 moves at all these $n-2$ triangles: this introduces $n-1$ hexagonal singularities emanating $s_2$ and $s_3$-edges, as depicted in Figure \ref{fig:triangle-conjweaveN}.(5). Iteratively, we can perform Reidemeister~III2 moves at $n-j$ triangles whose three vertices are two $s_j$-edges from the bottom and another $s_j$-edge from the weave of the $n$-triangulation on the top. These move introduces $n-j$ new hexagonal vertices emanating $s_j$ and $s_{j+1}$-edges. This process yields the Legendrian weave of an $(n+1)$-triangulation, except that now all the strands in the $(n+1)$-graph end at some crossings in the front projection of the Legendrian link; see Figure \ref{fig:triangle-conjweaveN}.(5). At this stage, this is the Legendrian weave that that we obtain by performing Hamiltonian isotopies fixing the boundary of each triangle. In order to produce a weave, we now further perform Hamiltonian isotopies which are not compactly supported in each triangle.\\

    \noindent Indeed, we perform Reidemeister~II moves, which are Hamiltonian isotopies near the boundary of each triangle. This yields a Legendrian weave of an $(n+1)$-triangulation that fills the Legendrian $(n+1)$-satellite of the outward unit conormal of the circles at the marked points. Continuing, we apply Reidemerister~II moves (of conjugate surfaces) near all the remaining black vertices on the boundary of the triangle where we still have locally conjugate surface fillings, and by Proposition \ref{prop:RII-move0} we can resolve these crossings, as shown in Figure \ref{fig:triangle-conjweaveN}.(6).\\

    \noindent Now we apply Reidemerister~II2 moves near all the crossings where the $s_1$-edges in the weaves (colored in blue) end, and resolve the crossings and extend the blue strands to the boundary. This is illustrated in Figure \ref{fig:triangle-conjweaveN}.(7). Then inductively we apply Reidemeister~II2 moves at the crossings where all the $s_j$-edges end, to resolve these crossings and extend the edges to the boundary of the triangle, as in Figure \ref{fig:triangle-conjweaveN}.(8)--(9). This finishes the proof of the general case.
\begin{figure}[h!]
  \centering
  \includegraphics[width=1.0\textwidth]{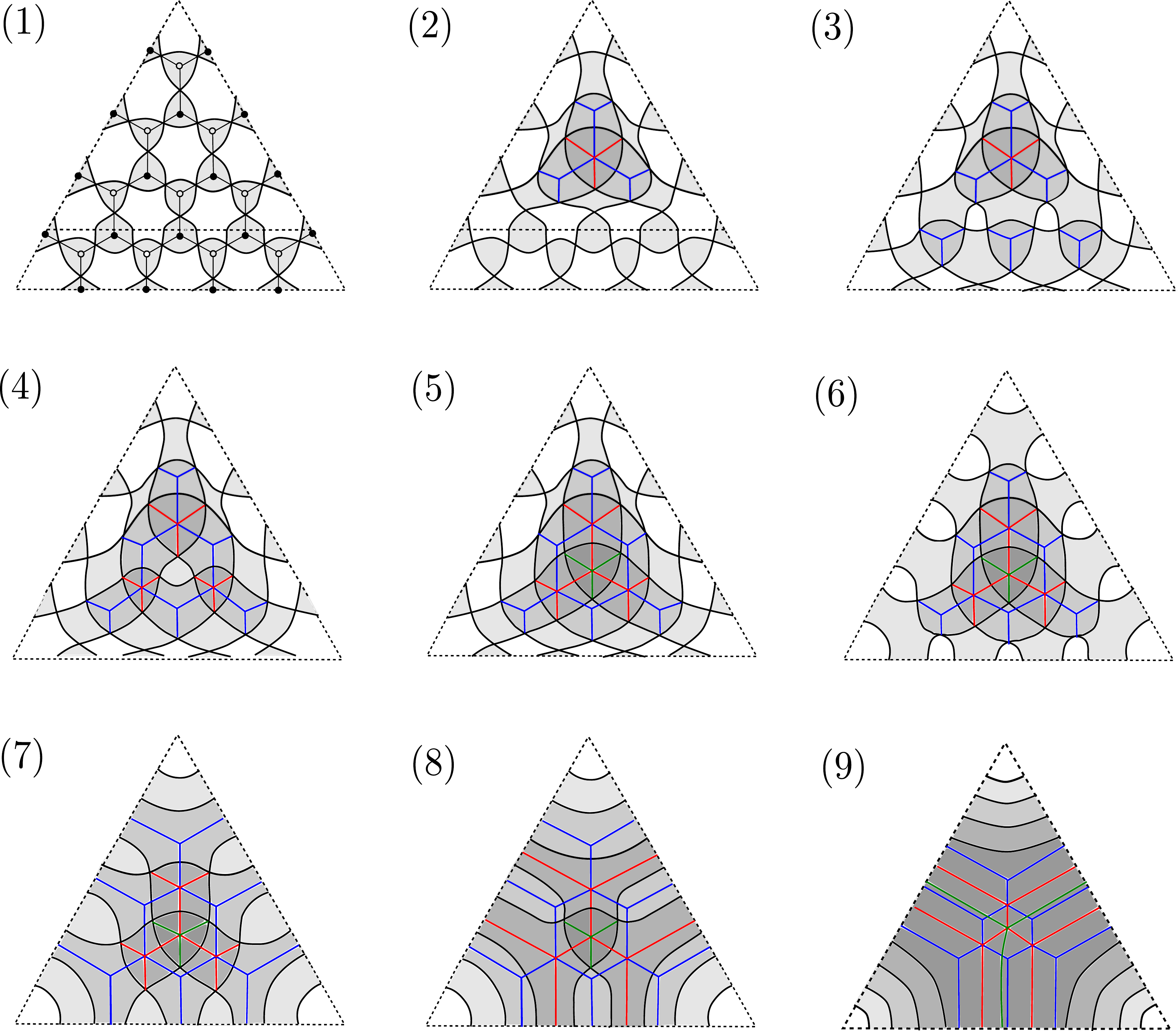}\\
  \caption{Given the correspondence between conjugate Lagrangian fillings of an $n$-triangulation and the Legendrian weave of a $n$-triangulation, how we build the correspondence for $(n+1)$-triangulations. The first two rows are obtained by Hamiltonian isotopies fixing the boundary. The last row is how we do Hamiltonian isotopies near the boundary.}\label{fig:triangle-conjweaveN}
\end{figure}
\end{proof}

    Note that in the correspondence between conjugate Lagrangians and Legendrian weaves, each black vertex in the interior of the bipartite graph corresponds uniquely to a blue trivalent vertex. In the conjugate Lagrangian, the 1-cycles in the interior are given by hexagonal null regions, consisting of three adjacent black vertices, while in the Legendrian weave, the 1-cycles in the interior are given by $\sf Y$-cycles arising from three adjacent blue trivalent vertices. We can also keep track of the 1-cycles of the Lagrangian filling under the Hamiltonian isotopy, as concluded in the following corollary:

\begin{cor}\label{cor:1cycle-triangle}
    Consider the Hamiltonian isotopies of the hybrid Lagrangians in Theorem \ref{thm:main1}. Then the following holds:
    
    \begin{enumerate}
        \item Each relative 1-cycle in the conjugate Lagrangian from a rectangular null region (yellow cycles in Figure \ref{fig:triangle-conjweave3-cycle} left) corresponds to the relative $\sf I$-cycle in the Lagrangian projection of the Legendrian weave (yellow cycles in Figure \ref{fig:triangle-conjweave3-cycle} right) arising from the corresponding blue trivalent vertex.\\
        
        \item Each 1-cycle in the conjugate Lagrangian from hexagonal null regions (orange cycles in Figure \ref{fig:triangle-conjweave3-cycle} left) corresponds to the $\sf Y$-cycle in the Lagrangian projection of the Legendrian weave (orange cycles in Figure \ref{fig:triangle-conjweave3-cycle} right) arising from the corresponding three adjacent blue trivalent vertices.
    \end{enumerate} 
    
    \noindent Furthermore, the null regions around the marked points on $\Sigma$ corresponds to the 1-cycles with lowest levels in the Legendrian weave around the punctures.
\end{cor}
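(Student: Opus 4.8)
The plan is to track relative $1$-cycles through the explicit sequence of hybrid Reidemeister moves used in the proof of Theorem \ref{thm:main1} for $n$-triangulations, exactly as Corollary \ref{cor:1cycle-braid} does for Type~II columns of plabic fences. The key input is Corollary \ref{cor:1cycle-loc}, which records how a relative $1$-cycle of a conjugate Lagrangian is deformed under a single RIII1 move into a relative $1$-cycle of the hybrid surface running through the bottom sheet of the $D_4^-$ region and crossing to the top sheet only below the trivalent vertex; together with the analogous (and easier) bookkeeping for RII1, RII2 and RIII2 moves, which merely extend cycles along newly created weave edges without changing their homology class, since each such move is supported in a ball where the Lagrangian isotopy is explicit. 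Because Theorem \ref{thm:main1} is proven locally in each triangle and then glued, it suffices to verify the cycle correspondence triangle-by-triangle and check compatibility along shared edges; this is what I would do.

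\emph{Step 1 (base cases).} For $n=2$ and $n=3$ I would inspect Figures \ref{fig:triangle-conjweave2} and \ref{fig:triangle-conjweave3} directly: the unique interior black vertex of a $2$-triangulation produces, after the RIII1 move of Proposition \ref{lem:localmove1}, a blue trivalent vertex, and the rectangular null region adjacent to it becomes precisely the relative $\sf I$-cycle of that trivalent vertex by Corollary \ref{cor:1cycle-loc}; for $n=3$ one additionally sees a hexagonal null region bounded by three adjacent black vertices transform, after the three RIII1 moves and the subsequent RIII2 moves, into the $\sf Y$-cycle bifurcating at the blue-red hexavalent vertex and terminating at the three blue trivalent vertices. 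This is the content of Figure \ref{fig:triangle-conjweave3-cycle} and requires no computation beyond reading off the figures and invoking Corollary \ref{cor:1cycle-loc}.

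\emph{Step 2 (induction).} Assuming the correspondence for $n$-triangulations, I would follow the inductive construction in the proof of Theorem \ref{thm:main1} (Figure \ref{fig:triangle-conjweaveN}): the top $n$ rows already carry the weave of an $n$-triangulation with its cycles identified by induction; the $n$ new interior black vertices in the bottom row each become a blue trivalent vertex under an RIII1 move, and by Corollary \ref{cor:1cycle-loc} the two rectangular null regions adjacent to each such vertex become its relative $\sf I$-cycle. The subsequent RIII2 moves (introducing hexavalent vertices with $s_j$- and $s_{j+1}$-edges) merely route the hexagonal null cycles — those bounded by three adjacent bottom-row black vertices, or by two bottom-row and one induced-weave black vertex — along the new edges; each RIII2 move preserves the relative homology class since it is a compactly supported Hamiltonian isotopy, so the hexagonal null region becomes the $\sf Y$-cycle emanating from the three corresponding blue trivalent vertices and bifurcating at the hexavalent vertices. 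Finally, the non-compactly-supported RII1, RII2 and conjugate-surface RII moves near $\partial$(triangle) extend these cycles to the boundary without altering their classes, and the gluing over shared triangle edges is automatic because the lateral boundary conditions match.

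\emph{Step 3 (marked points).} For the last assertion, I would observe that the null regions encircling a marked point $x_i$ are exactly the innermost faces of the alternating coloring around $x_i$, and under the Hamiltonian isotopy these remain the innermost — i.e.\ lowest-sheet — $1$-cycles of the weave $\mathfrak{w}_n^*$ around the puncture, since the weave construction of \cite{CasalsZas20}*{Section 3.1} places the $G_1$-edges (bottom sheet) adjacent to the boundary circles $\bigcirc_n(x_i)$; this matches the conventions of Section \ref{sec:weave-at-infty} labeling crossing colors by the number of sheets below. \textbf{The main obstacle} I anticipate is purely organizational: ensuring that the relative $1$-cycles are consistently oriented and that the identifications of $\sf Y$- and $\sf I$-cycles glue coherently across adjacent triangles — in particular that the shared-edge null regions, which are split between two triangles, map to well-defined global cycles — rather than any genuinely new geometric difficulty, since every local move has already been analyzed in Section \ref{sec:localmove} and Corollary \ref{cor:1cycle-loc}.
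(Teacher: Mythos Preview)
Your proposal is correct and follows exactly the approach the paper takes: the paper's proof is a two-sentence sketch saying the result follows from the algorithm in the proof of Theorem \ref{thm:main1} together with Corollary \ref{cor:1cycle-loc}, with Figure \ref{fig:triangle-conjweave3-cycle} given as the illustrative example. Your write-up is simply a careful elaboration of that same strategy---tracking cycles through the inductive sequence of RIII1, RIII2, RII1, RII2 moves triangle-by-triangle---and your anticipated organizational obstacle (coherent gluing across shared edges) is real but, as you note, not a new geometric difficulty.
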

\begin{proof}
    This follows from the algorithm in the proof of Theorem \ref{thm:main1} together with the local characterization of 1-cycles in Corollary \ref{cor:1cycle-loc}. An example is shown in Figure \ref{fig:triangle-conjweave3-cycle}.
\end{proof}
\begin{figure}[h!]
  \centering
  \includegraphics[width=1.0\textwidth]{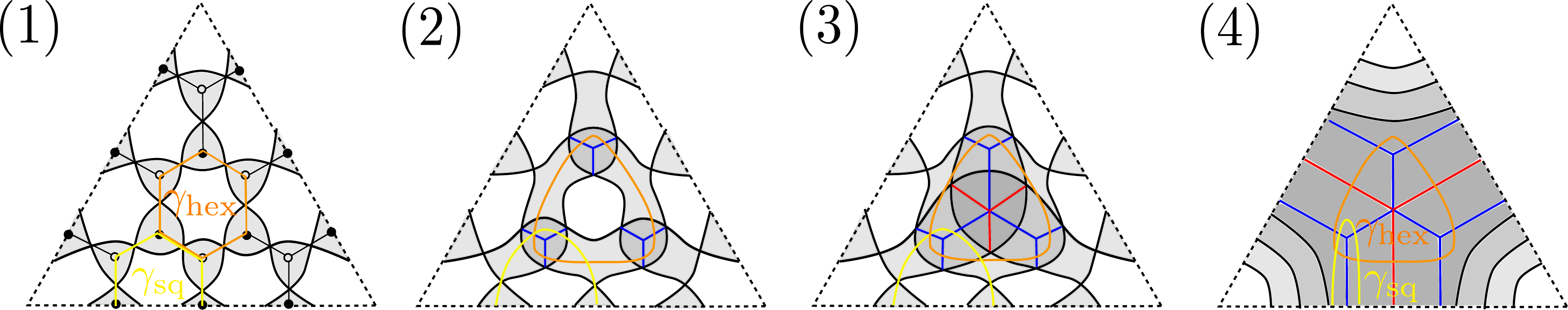}\\
  \caption{The correspondence between 1-cycles in the conjugate Lagrangian and in the Legendrian weave for a 3-triangulation. The yellow cycles arising from the rectangular null regions on the boundary correspond to the relative $\sf I$-cycles, while the orange cycles arising from the hexagonal null regions in the interior correspond to the $\sf Y$-cycles.}\label{fig:triangle-conjweave3-cycle}
\end{figure}

Note that we can also consider the conjugate Lagrangian filling $L(\bG_n)$ associated to the $A_n$-bipartite graph $\bG_n$ of the $n$-triangulation. Since $L(\bG_n) \not\subset T^{*}\Sigma_\text{op}$, this conjugate Lagrangian does not directly come from a free Legendrian weave in $J^1\Sigma_\text{op}$. Nevertheless, we are still able to describe the corresponding Legendrian surface filling of the link $\bigcirc_{n-1}$ in $J^1\Sigma$ as a Legendrian weave  away from neighbourhoods of the marked points. The statement reads:
    
\begin{thm}\label{thm:triangle-partweave}
    For an ideal triangulation on a surface $\Sigma$ with marked points, the conjugate Lagrangian associated to the $A_n$-bipartite graph $\bG_n$ associated to the $n$-triangulation is Hamiltonian isotopic to the Lagrangian projection of the Legendrian $\widetilde{L}(\mathfrak{w}_n)$ associated to the $n$-triangulation as in Figure \ref{fig:triangle-conj-weave-part}.
\end{thm}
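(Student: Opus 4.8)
The plan is to reduce the statement for the $A_n$-graph $\bG_n$ to the already-established case of the $A_n^*$-graph $\bG_n^*$ (the main proof of Theorem \ref{thm:main1} for $n$-triangulations), by isolating the local discrepancy near the marked points and handling it with the elementary moves from Section \ref{sec:localmove}. Recall the two graphs come from the same refined $n$-triangulation: for $\bG_n^*$ the small triangles are colored white/black with the boundary triangles white and the $n$ boundary edges black, while for $\bG_n$ the small triangles are colored black/null with the boundary triangles black and the $(n+1)(n+2)/2$ vertices white. Combinatorially, passing from $\bG_n^*$ to $\bG_n$ amounts to: (i) an overall color swap of the bulk bipartite structure (which on the level of alternating Legendrians/conjugate Lagrangians is harmless — it is just a relabeling of which regions carry $+\ln$ versus $-\ln$ defining functions, inducing no change in the Hamiltonian isotopy class by Proposition \ref{prop:conj-unique}), together with (ii) a genuine change of the coloring in the collar of each marked point $x_i$: for $\bG_n^*$ the point $x_i$ sits in a null region, whereas for $\bG_n$ it sits in a black region. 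Correspondingly $\Lambda(\bG_n^*)$ is Legendrian isotopic to $\bigcirc_n$ and $\Lambda(\bG_n)$ to $\bigcirc_{n-1}$, and $L(\bG_n)$ does not live in $T^*\Sigma_{\mathrm{op}}$ but only in $T^*\Sigma$.

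The key steps, in order, are as follows. First, work locally in a single triangle of the ideal triangulation, exactly as in the proof of Theorem \ref{thm:main1} for $n$-triangulations, and run the same sequence of hybrid Reidemeister moves (RIII1 at the interior black/white vertices dual to the small triangles, then the iterated RIII2 moves among the newly created weave edges, then the RII moves near the triangle boundary as in Figures \ref{fig:triangle-conjweave2}--\ref{fig:triangle-conjweaveN}). The only place the argument differs from the $\bG_n^*$ case is within the small balls $B_\epsilon(x_i)$ around the marked points, because there the coloring of the refined triangulation is shifted by one color. Second, in that collar one performs the vertex-reduction moves of Proposition \ref{prop:RII-move0} to remove the extra boundary black vertices and then an RII1 move (Lemma \ref{lem:RII-move1}) to absorb the innermost conjugate strip; this accounts for the shift $\bigcirc_n \rightsquigarrow \bigcirc_{n-1}$ and produces precisely the weave $\widetilde{L}(\mathfrak{w}_n)$ of Figure \ref{fig:triangle-conj-weave-part}, whose $n$-graph agrees with that of $\mathfrak{w}_n^*$ away from neighborhoods of the punctures but has the differing number of sheets beneath the front prescribed by the black region at $x_i$ (cf.\ Remarks \ref{rem:moresheats-localmove2} and \ref{rem:moresheats-localmoveII}). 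Third, assemble the local Hamiltonian isotopies across triangles: since all the hybrid surfaces produced have matching conjugate-surface boundary conditions along the shared triangle edges away from the punctures, and matching weave boundary conditions after the final (inverse) RII2 cleanup, they glue to a global Hamiltonian isotopy from $L(\bG_n)$ to the Lagrangian projection of $\widetilde{L}(\mathfrak{w}_n)$. Finally, invoke Lemma \ref{lem:graph-at-infty} and the behavior-at-infinity discussion of Subsection \ref{ssec:behavior_infinity} to ensure the isotopy is realized by a genuine Hamiltonian isotopy of fillings with the correct behavior near the contact boundary $T^{*,\infty}\Sigma$ (now the full closed cotangent bundle, since $L(\bG_n)\subset T^*\Sigma$ rather than $T^*\Sigma_{\mathrm{op}}$).

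The main obstacle I expect is the bookkeeping in the collar of each marked point: because $x_i$ lies in a black region for $\bG_n$, the number of front sheets below the weave edges near $x_i$ is not constant as in the $\bG_n^*$ case, so one must check carefully that the RIII2 and RII1/RII2 moves applied there still produce the standard weave local models (this is exactly what Remarks \ref{rem:moresheats-localmove2} and \ref{rem:moresheats-localmoveII} are designed to handle, but verifying that the resulting $n$-graph really is $\mathfrak{w}_n$ of Figure \ref{fig:triangle-conj-weave-part}, with the correct color/level assignments, requires care). A secondary subtlety is that $\widetilde{L}(\mathfrak{w}_n)$ is not a \emph{free} weave filling of a link in $T^{*,\infty}\Sigma_{\mathrm{op}}$ in the sense of Subsection \ref{sec:weave-at-infty}, so the comparison must be phrased, as in the statement, as an identification of $L(\bG_n)$ with the Lagrangian projection of the Legendrian surface $\widetilde{L}(\mathfrak{w}_n)$ in $J^1\Sigma$ only \emph{away from} neighborhoods of the marked points; inside those neighborhoods one keeps the local conjugate-surface description, and the uniqueness statement Proposition \ref{prop:conj-unique} guarantees this hybrid description still has a well-defined Hamiltonian isotopy class.
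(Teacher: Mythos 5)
Your overall strategy matches the paper's, which in fact offers no independent argument here: it simply states that ``the proofs are exactly the same as Theorem \ref{thm:main1} and Corollary \ref{cor:1cycle-triangle},'' i.e.\ one runs the RIII1/RIII2/RII moves of Table \ref{fig:TableMoves} triangle by triangle, the only new feature being the behaviour at the marked points, where the surface is now a branched cover of a black region rather than bounding a null region. Your second, third and fourth steps, and your closing remarks about $\widetilde{L}(\mathfrak{w}_n)$ failing to be a free weave near the punctures, capture exactly this.

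The one step I would object to is your preliminary ``reduction'' of $\bG_n$ to $\bG_n^*$ via an ``overall color swap of the bulk bipartite structure.'' This is not an accurate description of the combinatorics: $\bG_n$ is not a recoloring of $\bG_n^*$, since its white vertices sit at the $(n+1)(n+2)/2$ vertices of the refined triangulation rather than at the downward small triangles, its black vertices sit at the triangles that were \emph{white} for $\bG_n^*$, and its edge set (black triangle to its three vertices) is different from that of $\bG_n^*$ (white triangle to adjacent black triangles and boundary edges); correspondingly the alternating Legendrians $\bigcirc_{n-1}$ and $\bigcirc_n$ differ already in the number of strands. Moreover, even for a genuine black/white swap, Proposition \ref{prop:conj-unique} would not give what you want: swapping colors reverses every coorientation of the alternating strand diagram and hence changes the alternating Legendrian itself, whereas that proposition only asserts contractibility of the space of conjugate fillings of a \emph{fixed} alternating Legendrian. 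Fortunately this reduction is unnecessary: the black vertices of $\bG_n$ are trivalent and the interior null regions are hexagonal, so the moves of Section \ref{sec:localmove} apply to $\bG_n$ directly, exactly as in the proof of Theorem \ref{thm:main1}, which is the route the paper takes.
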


\noindent In particular, should we consider the compactification of the conjugate Lagrangian surface, by adding the necessary capping disks at each of its boundary components, then Theorem \ref{thm:triangle-partweave} implies that the resulting Hamiltonian isotopy class of exact Lagrangian in $T^*\Sigma$ admits a representative whose lift is a (non-free) Legendrian weave.

\begin{cor}\label{cor:1cycle-triangle-part}
    Consider the Hamiltonian isotopies of the Lagrangians in Theorem \ref{thm:triangle-partweave}. Then the following holds:
    
    \begin{enumerate}
        \item Each relative 1-cycle in the conjugate Lagrangian from a rectangular null region corresponds to the relative $\sf I$-cycle Legendrian surface.
        
        \item Each 1-cycle in the conjugate Lagrangian from hexagonal null regions corresponds to the $\sf Y$-cycle in the Legendrian surface.
    \end{enumerate} 
\end{cor}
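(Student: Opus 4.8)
The plan is to mirror the proof of Corollary~\ref{cor:1cycle-triangle}, adapting it from the $A_n^*$-graph to the $A_n$-graph. First I would recall that the Hamiltonian isotopy of Theorem~\ref{thm:triangle-partweave} is assembled triangle by triangle out of the same elementary moves used in the proof of Theorem~\ref{thm:main1} for $n$-triangulations: a Reidemeister~III1 move at each interior black trivalent vertex, iterated Reidemeister~III2 moves producing the hexagonal vertices, and Reidemeister~II-type moves (including the conjugate-surface reduction of Proposition~\ref{prop:RII-move0}) near the sides of each triangle. The only structural difference with the $A_n^*$ case is that the marked points now lie in black regions of the coloring of $\bG_n$ rather than in null regions, so the behaviour near the punctures changes; away from small neighbourhoods of the marked points the local pictures of all the moves are literally the same as in the $A_n^*$ argument.

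Next I would track how the relevant $1$-cycles are transported under each of these moves, using the local characterization in Corollary~\ref{cor:1cycle-loc} for the RIII1 move and the explicit level-set pictures accompanying Proposition~\ref{lem:localmove2} and Lemmas~\ref{lem:RII-move1}, \ref{lem:RII-move2} for the RIII2 and RII moves. A relative $1$-cycle supported on a rectangular null region abutting a single black trivalent vertex becomes, after the RIII1 move, a small arc winding by $2\pi/3$ around the new trivalent vertex, which is exactly the relative $\sf I$-cycle of the weave at that vertex; subsequent RIII2 and RII moves carry it along up to smooth isotopy. A $1$-cycle supported on a hexagonal null region is the sum of three such local contributions at three adjacent black trivalent vertices, and after all the RIII1 and RIII2 moves it becomes the $\sf Y$-cycle joining the three corresponding weave trivalent vertices. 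This is exactly the bookkeeping of Corollary~\ref{cor:1cycle-triangle}, performed relative to the boundary since we are now working with capped-off surfaces.

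The one genuinely new point, which I expect to require the most care, is the behaviour near the marked points after the capping disks are glued on. One must verify that the $\sf I$- and $\sf Y$-cycles computed above close up correctly inside $T^*\Sigma$, i.e.\ that no cycle leaks into a capping region. This holds because, near each marked point, $L(\bG_n)$ differs from $L(\bG_n^*)$ only by an extra black region, while the relative $1$-cycles in question are supported strictly away from that region; hence the capping disks are attached along curves disjoint from these cycles and do not affect their smooth isotopy classes. Assembling the local-to-global argument over all triangles of the ideal triangulation then yields the two stated correspondences.
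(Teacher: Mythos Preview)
Your proposal is correct and follows essentially the same approach as the paper, which simply states that the proof is identical to that of Theorem~\ref{thm:main1} and Corollary~\ref{cor:1cycle-triangle}. One minor point: your third paragraph about capping disks is unnecessary here, since the conjugate Lagrangian $L(\bG_n)$ for the $A_n$-graph already lives in $T^*\Sigma$ (the marked points are in black regions, not null regions), so there is no separate capping step to worry about; the cycles in question simply avoid the neighbourhoods of the marked points, where the hybrid surface remains a conjugate surface as in Figure~\ref{fig:triangle-conj-weave-part}.
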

\begin{figure}[h!]
    \centering
    \includegraphics[width=0.85\textwidth]{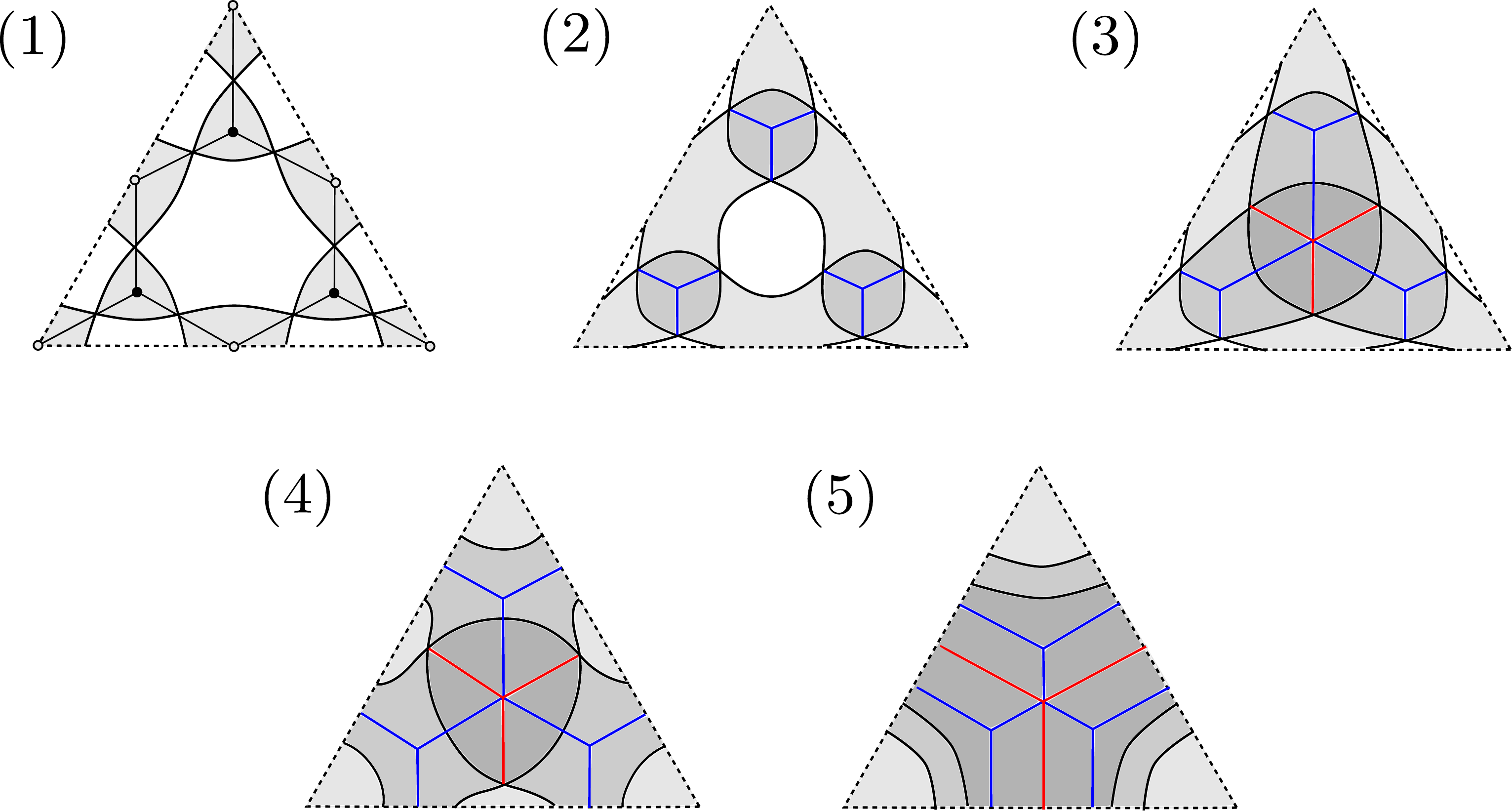}
    \caption{The conjugate Lagrangian filling $L(\bG_n) \subset T^{*}\Sigma$ and the corresponding hybrid Legendrian surface filling of $\bigcirc_{n-1}$, being a Legendrian weave away from neighbourhoods of the marked points.}
    \label{fig:triangle-conj-weave-part}
\end{figure}

\noindent The proofs are exactly the same as Theorem \ref{thm:main1} and Corollary \ref{cor:1cycle-triangle}.
    
    \subsection{From pinching sequences to weaves}\label{ssec:proof_pinching_to_weave}    Let us now prove that embedded exact Lagrangian fillings for Legendrian positive braid closures obtained through pinching sequences are Hamiltonian isotopic to (Lagrangian projections of) Legendrian weaves. The result reads as follows:
    
    \begin{thm}\label{thm:braid-weavepinching}
    Let $\beta \in \mbox{Br}_n^+$ be a positive braid and $\Delta$ be the half twist, $\Lambda_{\beta\Delta^2} \subset T^{*,\infty}\bD^2$ be the Legendrian cylindrical braid closure of $\beta\Delta^2$, and $\Lambda_{\beta}^\prec \subset \bD^3 \subset T^{*,\infty}\bD^2$ the Legendrian rainbow braid closure of $\beta$. Consider a pinching sequence $\sigma\in S_{\ell(\beta)}$.\\
    
    Then the embedded exact Lagrangian filling $L_{\sigma}$ associated to the pinching sequence $\sigma$ is Hamiltonian isotopic to the Lagrangian projection of a Legendrian weave. In fact, the weave associated to the Hamiltonian isotopy class given by $\sigma=\mbox{id}$, i.e.~left to right pinching, coincides with the weave associated to the plabic fence $\bG_\beta$ given by $\beta$.

\end{thm}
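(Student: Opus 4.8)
\textbf{Proof proposal for Theorem \ref{thm:braid-weavepinching}.}

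The plan is to prove the statement in two stages: first handle the model case $\sigma = \mathrm{id}$ by directly comparing the pinching construction with the weave construction $\mathfrak{w}(\bG_\beta)$, and then reduce the general pinching sequence to this case by an inductive argument using the local moves proven in Section \ref{sec:localmove}, especially the relation between weave mutation and reordering of adjacent Reeb chords in Subsection \ref{ssec:weave_pinching}. For the model case, recall from Example \ref{ex:braid-pinch} that the Lagrangian projection of $\Lambda_\beta^\prec$, after Ng's resolution, has one contractible degree-$0$ Reeb chord per crossing of $\beta$, and that pinching all of them left-to-right produces $L_{\mathrm{id}}$. On the weave side, the filling associated to $\bG_\beta$ is, by the construction recalled in Subsection \ref{ssec:ex-free-weave}, the Lagrangian projection of the horizontal concatenation $\mathfrak{w}_{\beta\Delta^2} = \mathfrak{c}_{i_1}^\uparrow(\mathtt{w}_{0,n}) \cdots \mathfrak{c}_{i_k}^\uparrow(\mathtt{w}_{0,n})$. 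The key observation — already flagged in the footnote after the definition of $\mathfrak{w}_{\beta\Delta^2}$ — is that each elementary block $\mathfrak{c}_i^\uparrow(\mathtt{w}_{0,n})$ encodes exactly one saddle cobordism: by Subsection \ref{ssec:weave_pinching}, the trivalent vertex in $\mathfrak{c}_i^\uparrow$ corresponds to an elementary saddle cobordism pinching a specific Reeb chord, and the hexagonal vertices in $\mathfrak{n}_i^\uparrow(\mathtt{w}_{0,n})$ and its opposite correspond to Reidemeister~III cobordisms (Figure \ref{fig:A_1_3-RIII}), which are just Legendrian isotopies. So the concatenation of the $\mathfrak{c}_{i_j}^\uparrow$ realizes, as a decomposable Lagrangian cobordism built from a $0$-handle and $k$ saddles interspersed with isotopies, precisely the left-to-right pinching sequence on $\Lambda_{\beta\Delta^2}$.

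Concretely, I would argue as follows. Start from the Legendrian cylindrical closure $\Lambda_{\beta\Delta^2} \subset J^1S^1$ viewed in $T^{*,\infty}\bD^2$, whose front is the annular front of $\beta\Delta^2$. The weave $\mathfrak{w}_{\beta\Delta^2}$, sliced by the coordinate along which the braid word is read, gives a sequence of front movies: within each block $\mathfrak{c}_{i_j}^\uparrow$ one first applies the braid moves of $\mathfrak{n}_{i_j}^\uparrow(\mathtt{w}_{0,n})$ (a Legendrian isotopy bringing the $s_1$-strand to the top, by the push-through / Reidemeister~III equivalences of Figure \ref{fig:ReidemeisterWeave}), then the trivalent vertex of $\mathfrak{c}_{i_j}^\uparrow$ performs a single saddle move — by Figure \ref{fig:D4-saddle} this pinches exactly the degree-$0$ Reeb chord corresponding to the $i_j$-th crossing — and then $\mathfrak{n}_{i_j}^\uparrow(\mathtt{w}_{0,n})^{\mathrm{op}}$ undoes the isotopy. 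Reading the blocks left to right, one sees that the total cobordism is a $0$-handle followed by the saddles on the crossings of $\beta$ pinched in the left-to-right order, all composed with Legendrian isotopies; by \cite{Chan10} the isotopy pieces contribute only Legendrian-isotopy cobordisms, so the concatenation is exactly the decomposable filling $L_{\mathrm{id}}$ of the pinching sequence $\sigma = \mathrm{id}$. Hence $L_{\mathrm{id}}$ is Hamiltonian isotopic to the Lagrangian projection of $\mathfrak{w}(\bG_\beta)$; combined with the already-proven Theorem \ref{thm:main1} for plabic fences, this also shows $L_{\mathrm{id}}$ matches the conjugate Lagrangian surface of $\bG_\beta$.

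For a general pinching sequence $\sigma \in S_{\ell(\beta)}$, I would proceed by induction on the number of adjacent transpositions needed to bring $\sigma$ to $\mathrm{id}$. A single adjacent transposition in the pinching order swaps two consecutively pinched Reeb chords; when these two chords are ``independent'' (no nesting constraint forces an order) the resulting fillings are Hamiltonian isotopic because the two saddle handles can be attached in either order — the local model is precisely the one in Figure \ref{fig:weave-pinch-mutate}, where changing the order of two adjacent Reeb chords corresponds to a Lagrangian mutation along the associated $\sf{I}$-cycle, which by Subsection \ref{ssec:squaremove_Lagrangianmutation} and \cite{CasalsZas20}*{Theorem 4.21} is realized by a Hamiltonian isotopy when the two saddles commute (disjoint supports). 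Iterating, any $\sigma$ is connected to $\mathrm{id}$ through a chain of such commuting swaps, so every $L_\sigma$ is Hamiltonian isotopic to $L_{\mathrm{id}}$, hence to a Lagrangian projection of a Legendrian weave (the same weave $\mathfrak{w}(\bG_\beta)$, up to weave Reidemeister moves). The main obstacle I anticipate is the careful bookkeeping in the model case: one must verify that the trivalent vertex in each $\mathfrak{c}_i^\uparrow(\mathtt{w}_{0,n})$ pinches the correct crossing — the $i$-th crossing of $\beta$ rather than one buried in the $\Delta^2$ part — which requires tracking the strand labelling conventions (the footnotes warn that ``bottom-to-top'' for the weave graphs $G_i$ corresponds to ``outside-to-inside'' for the braid closure) and checking that the auxiliary $\mathfrak{n}_i^\uparrow$ isotopies do not themselves create or pinch extra chords. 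A secondary subtlety is making the commuting-swap argument for general $\sigma$ rigorous: one must ensure that any two pinching orders differing by a non-trivial permutation are connected through orders that differ only by swaps of genuinely commuting (disjoint-support) saddles, which follows because the only obstruction to swapping is a nesting of Reeb chords, and nested chords are forced into a fixed relative order in every valid pinching sequence.
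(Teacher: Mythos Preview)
Your treatment of the case $\sigma = \mathrm{id}$ is essentially the paper's argument: each block $\mathfrak{c}_{i_j}^\uparrow(\mathtt{w}_{0,n})$ is the concatenation of a Reidemeister~III isotopy (hexagonal vertices, $\mathfrak{n}_i^\uparrow$), a single saddle (the trivalent vertex in $\mathfrak{c}_i^\uparrow$), and the reverse isotopy, so the weave $\mathfrak{w}_{\beta\Delta^2}$ reads as the left-to-right pinching sequence followed by the minimal cobordism $\mathfrak{n}(\mathtt{w}_{0,n})$ (which is forced by \cite{EliPol96}). This part is fine.

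The general $\sigma$ argument has a genuine error. You claim that every $L_\sigma$ is Hamiltonian isotopic to $L_{\mathrm{id}}$, and hence to the projection of the \emph{same} weave $\mathfrak{w}(\bG_\beta)$. This is false: different pinching sequences typically produce \emph{distinct} Hamiltonian isotopy classes of fillings --- this is the entire content of Pan's theorem \cite{Pan17Toruslink} for $(2,k)$-torus links (see Example~\ref{ex:braid-pinch}), and more generally the counts in Examples~\ref{ex:3,3torus}--\ref{ex:3,4torus}. The source of the error is the assertion that swapping two adjacent saddles corresponds to a Lagrangian mutation ``realized by a Hamiltonian isotopy when the two saddles commute''. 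Legendrian mutation is \emph{not} a Hamiltonian isotopy: Subsection~\ref{ssec:squaremove_Lagrangianmutation} and \cite{CasalsZas20}*{Theorem~4.21} say explicitly that mutation is a Lagrangian disk surgery that typically changes the Hamiltonian isotopy class. Your attempted dichotomy (either the swap is a commuting swap giving a Hamiltonian isotopy, or the chords are nested and the order is forced) also fails: for positive braid closures all degree-$0$ Reeb chords are contractible and can be pinched in any order, so there is no nesting constraint, yet the resulting fillings are genuinely different.

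The theorem only asserts that $L_\sigma$ is Hamiltonian isotopic to the projection of \emph{some} Legendrian weave, not to $\mathfrak{w}(\bG_\beta)$. The paper's argument for general $\sigma$ is much simpler than yours: any pinching sequence is a decomposable cobordism built from $0$-handles, saddle cobordisms, and Legendrian Reidemeister~III isotopies, and each of these elementary pieces is (by Subsection~\ref{ssec:weave_pinching}, Figures~\ref{fig:A_1_3-RIII} and~\ref{fig:D4-saddle}) the Lagrangian projection of a local weave. Concatenating those local weaves gives a weave for $L_\sigma$; different $\sigma$ give different weaves, related to one another precisely by the mutations in Figure~\ref{fig:weave-pinch-mutate}.
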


\begin{proof}%[\pfo Theorem \ref{thm:braid-weavepinching}]
    Consider $\sigma=\mbox{id}$. Following Example \ref{ex:braid-pinch}, each crossing in $\beta$ in the Lagrangian projection corresponds to the Reeb chord in the unique region on the left of the corresponding crossing in $\beta$ in the front projection. We prove that each Legendrian weave $\mathfrak{c}_i^\uparrow(\mathtt{w}_{0,n})$ defines the elementary cobordism pinching the leftmost Reeb chord.
    
    In the front projection, in order to pinch the Reeb chord corresponding to the crossing $s_i$ in the leftmost closed bigon region, we first need to perform a sequence of Reidemeister~III moves to the strands crossing the closed region such that they are moved away from the closed region. The closed bigon region is bounded by the $i$-th $s_1$-crossing in the half twist and the $s_i$-crossing. The strands crossing this region are the 1-st to the $i$-th strand. See Figure \ref{fig:braid-pinch}~(1). These Reidemeister~III moves introduce a number of hexagonal vertices in the weave following Figure \ref{fig:A_1_3-RIII}, bringing the $i$-th $s_1$-strand to the top, as depicted in Figure \ref{fig:braid-pinch}~(2)--(3). The resulting weave is $\mathfrak{n}_i^\uparrow(\mathtt{w}_{0,n})$. Then we pinch the Reeb chord in the closed region, as explained in Subsection \ref{ssec:weave_pinching} (see Figure \ref{fig:D4-saddle}), which introduces an $s_i$-trivalent vertex in the weave, as in Figure \ref{fig:braid-pinch}~(4). The resulting weave for the elementary cobordism is exactly $\mathfrak{c}_i^\uparrow$. Finally, we need Reidemeister~III moves to bring the braid word back to the half twist, whose resulting weave is $\mathfrak{n}_i^\uparrow(\mathtt{w}_{0,n})^\text{op}$, as in Figure \ref{fig:braid-pinch}~(5). The concatenation of these weaves is exactly $\mathfrak{c}_i^\uparrow(\mathtt{w}_{0,n})$.
    
\begin{figure}[h!]
    \centering
    \includegraphics[width=1.0\textwidth]{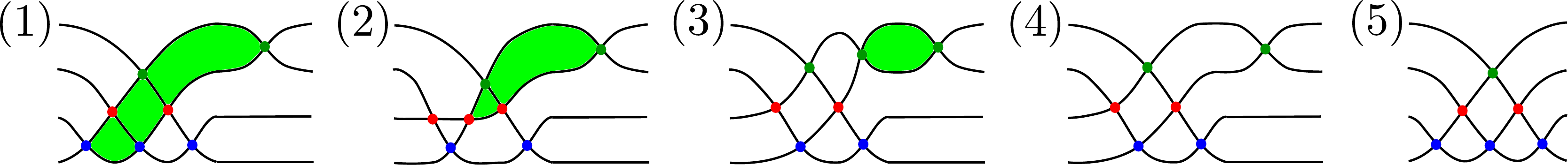}
    \caption{The Reeb chord pinching in the front projection. (1)~The Legendrian front of the braid word $\Delta s_i$. (2)~The front after a Reidemeister~III move emanating $s_1$ and $s_2$-crossings (introducing a hexagonal vertex in the weave). (3)~The front after a Reidemeister~III move emanating $s_2$ and $s_3$-crossings (introducing a hexagonal vertex). (4)~The front after Reeb chord pinching (introducing a trivalent vertex). (5)~The front after applying Reidemeister~III moves backwards.}\label{fig:braid-pinch}
\end{figure}
    
    In the Lagrangian projection, following \cite[Section 4]{Hughes21A}, the above elementary cobordism can be realized by a single Reeb chord pinching with no Reidemeister moves in the Lagrangian projection. The key observation is that in the front projection, none of the bigons (in which there exists a Reeb chord) are contained in the bigon on the left of the first crossing in $\beta$ (in which there exists a Reeb chord corresponding to the first crossing of $\beta$ in the Lagrangian projection). Therefore, under small perturbation of the front in Figure \ref{fig:braid-pinch}.(1), we may assume that the Reeb chords in other bigons are moved away from the bigon on the left of the first crossing in $\beta$. That means all Reeb chords corresponding to the crossings in the half twist $\Delta$ are beyond the left of the figure while all chords corresponding to the crossings after $s_i$ in $\beta$ are beyond the right of the figure. Hence we may assume that the Lagrangian projection is on the left of Figure \ref{fig:braid-pinch-lag}. Therefore, all the Reidemeister~III moves in the front do not change the Lagrangian projection. At this stage, the Reeb chord pinching as presented in in Subsection \ref{ssec:weave_pinching} does not introduce any additional Reidemeister moves in the Lagrangian projection. This is shown explicitly on the right of Figure \ref{fig:braid-pinch-lag}. Therefore, no Reidemeister moves are introduced in the Lagrangian projection.\\
    
\begin{figure}[h!]
    \centering
    \includegraphics[width=0.9\textwidth]{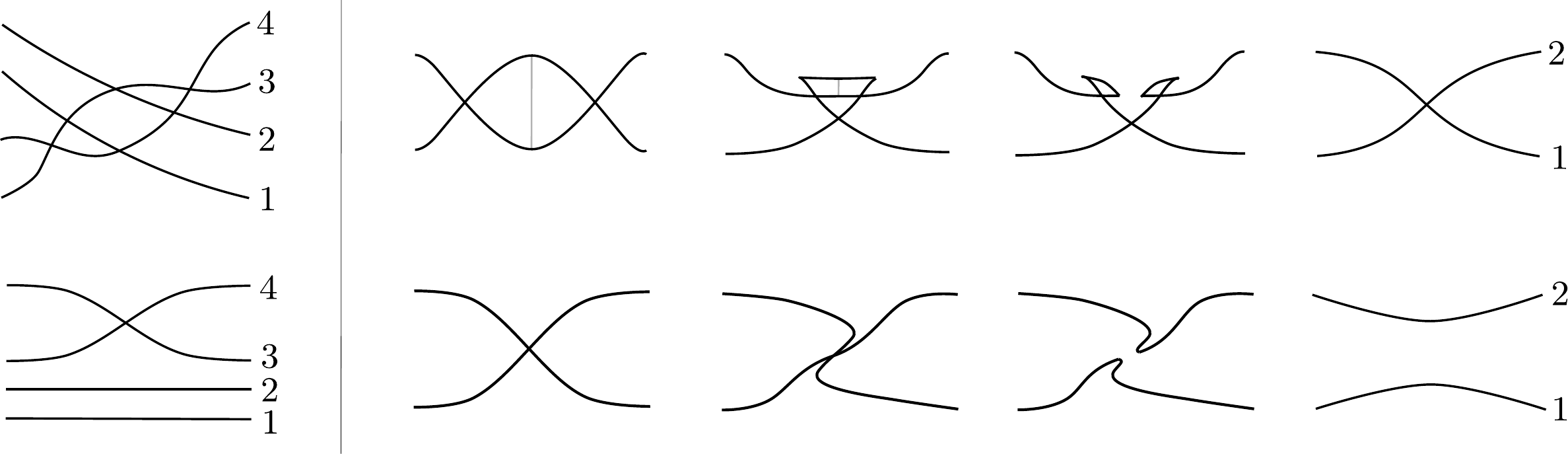}
    \caption{The front projection (on the top) and the corresponding Lagrangian projection (on the bottom) in Reeb chord pinching.}\label{fig:braid-pinch-lag}
\end{figure}
    
    \noindent Finally, we claim that the weave $\mathfrak{n}(\mathtt{w}_{0,n})$ defines the minimal cobordism of the Legendrian unlink. Indeed, since in $T^{*,\infty}\bD^2$, the Legendrian knot  on the boundary is isotopic to the Legendrian unlink, and it follows from \cite{EliPol96} that $\mathfrak{n}(\mathtt{w}_{0,n})$ has to define the minimal embedded Lagrangian cobordism.\\
    
    For a general permutation $\sigma \in S_{\ell(\beta)}$, to show that $L_\sigma$ is Hamiltonian isotopic to a Lagrangian projection of a Legendrian weave, it suffices to notice that a pinching sequence defines a decomposable Lagrangian filling, which in the front projection is a sequence of pinching saddle cobordisms, Lagrangian 0-handes, Legendrian Reidemeister III moves. Since each of these elementary cobordisms can be realized by Legendrian weaves (Section \ref{ssec:weave_pinching}), we can conclude that the Lagrangian filling is Hamiltonian isotopic to the projection of a weave.
\end{proof}

\noindent Reeb pinching sequences can be combined with Legendrian isotopies to produce new embedded exact Lagrangian fillings \cite{CasalsNg21}. The hexavalent vertices in a weave capture Legendrian Reidemeister III moves. Thus, in general, the above argument shows that a decomposable Lagrangian filling obtained via pinching saddle cobordisms, Lagrangian 0-handes, Legendrian Reidemeister III moves and cyclic rotation (of the braid word) is Hamiltonian isotopic to the Lagrangian projection of a Legendrian weave.\footnote{In general, Reidemeister moves might be required in the Lagrangian projection.}

%\begin{remark}
 %   The same strategy shows that we can translate freely between Legendrian weaves and concatenations of elementary cobordisms. In general, Reidemeister moves might be required in the Lagrangian projection.
%\end{remark}

\subsection{Numerical comparisons on Lagrangian fillings with each method} For any Legendrian $(2, k)$-torus link, the number of graphical conjugate Lagrangian fillings is the same as the number of free Legendrian weaves. In general, the number of conjugate Lagrangian fillings can be strictly less than the number of free Legendrian weaves.
%According to Question \ref{que:conj-weave-braid}, one may expect that any conjugate Lagrangian can be converted into some Lagrangian projection of a Legendrian weave in a consistent way. This will be left for further studies.\\

 \noindent For a Legendrian positive $(n, k)$-torus link, the number of graphical conjugate Lagrangian fillings corresponds to the number of certain reduced plabic graphs of an $(k+n)$-gon (Example \ref{ex:braid-conj-red}), which is also the number of cluster charts in $\mathrm{Gr}(n, k+n)$ defined by Pl\"{u}cker coordinates \cite{OPS15WeakSep} (see Section \ref{sec:framed-locsys}). Indeed, each Lagrangian filling defines a distinguished cluster chart in the moduli space of sheaves (which we discuss in Section \ref{sec:quantize}), and fillings that define different charts can be shown to be different; see \cite{GaoShenWeng20,CasalsWeng22}.\footnote{A precise relation between Lagrangian fillings and cluster varieties is conjectured in \cite[Conjecture 5.1]{Casals20Sk}; we will discuss it partially in Section \ref{sec:cluster}.}. The number of Legendrian weaves is also well studied, here are some explicit examples:

\begin{ex}\label{ex:k,ntorus}
    For Legendrian $(n, k)$-torus links, where either $n \geq 3, k \geq 6$ or $n, k \geq 4$, the number of graphical conjugate Lagrangian fillings (which is the number of reduced plabic graphs of a $(k+n)$-gon) is always finite. In contrast, we showed in  \cite{CasalsGao20} that there are infinitely many free Legendrian weaves that fill the torus link; see also \cite{CasalsZas20}*{Theorem 7.6} and \cite{CasalsWeng22}. Hence the number of graphical conjugate Lagrangians is strictly smaller than the number of free Legendrian weaves.\hfill$\Box$
\end{ex}

\begin{ex}\label{ex:3,3torus}
    For the Legendrian $(3, 3)$-torus link, there are 34 graphical conjugate Lagrangian fillings, corresponding to the number of Pl\"{u}cker-type cluster coordinates in $\mathrm{Gr}(3, 6)$ \cite{KK21Gr37}*{Theorem 4.2}. In constrast, \cite{Hughes21D} shows that the number of different free Legendrian weaves is (at least) 50, which is the number of all cluster charts in $\mathrm{Gr}(3, 6)$. Conjecturally, all conjugate Lagrangian fillings should be Hamiltonian isotopic to the Lagrangian projection of a Legendrian weave.\\

    \noindent We can also consider Lagrangian fillings coming from pinching sequences, i.e.~concatenations of standard local models of 1-handle attachments and 0-handle attachments (with no Reidemeister moves in the Lagrangian projection). For different Lagrangian projections of the Legendrian link, i.e.~braid representatives of the $(3, 3)$-torus link, the number of Lagrangian fillings coming from pinching sequences will be different. For example, for the braid $(s_1s_2)^3$, the number of different fillings given by pinching sequences is 46, while for the braid $(s_1s_2s_2)^2$ or $(s_2s_1s_1)^2$, the number is 42. Again, all these fillings coming from pinching sequences are Hamiltonian isotopic to the Lagrangian projection of a Legendrian weave, but there are strictly more Legendrian weave fillings.\hfill$\Box$
\end{ex}

\begin{ex}\label{ex:3,4torus}
    For the Legendrian $(3, 4)$-torus link, there are 259 graphical conjugate Lagrangian fillings, in bijection with the number of Pl\"{u}cker-type cluster coordinates in $\mathrm{Gr}(3, 7)$; see \cite{KK21Gr37}*{Theorem 5.2}. In contrast, \cite{AnBaeLee21ADE} shows that the number of different free Legendrian weaves is (at least) 833, which is the number of all cluster charts in $\mathrm{Gr}(3, 7)$). Finally, the number of Lagrangian fillings coming from pinching sequences for the braid $(s_1s_2)^4$ is 633, which we have verified by computer.\footnote{We thank D.~Weng for sharing with us his code computing the different clusters that can be obtained by pinching Reeb chords in positive braids.} In consequence, free Legendrian weaves still provide the maximal number of Lagrangian fillings.
\end{ex}

\section{Sheaf Quantization of Conjugate Fillings and Legendrian weaves}\label{sec:quantize}

    Sections \ref{sec:prelim}, \ref{sec:localmove} and \ref{sec:main_proofs} prove results about Hamiltonian isotopy classes of Lagrangian fillings. In order to connect the comparison results between Lagrangian fillings to the comparison result of cluster coordinates in Section \ref{sec:cluster}, we need to first enhance the geometric objects, i.e.~Lagrangian fillings, with some algebraic structures, i.e.~constructible sheaves or microlocal sheaves. We use the notation from Appendix \ref{appen:cat-sheaf}.\\
    
    Let $L\sse (T^*\Sigma,\la_{st})$ be an exact Lagrangian orientable surface with non-empty Legendrian boundary $\dd_\infty L\sse(T^{*,\infty}\Sigma,\ker(\la_{st}))$, and $\mathcal{L}\in \Loc^{pp}(L)$ a local system with perfect stalks. Consider the category of sheaves singularly supported on $\widetilde{L}$ with perfect stalks $\Sh_{\widetilde{L}}^{pp}(\Sigma \times \bR)$ and the microlocal functor $m_{\widetilde{L}}:\Sh_{\widetilde{L}}^{pp}(\Sigma \times \bR)\lr\Loc^{pp}(L)$, where $\widetilde{L}\sse (J^1\Sigma,\ker(dz-\la_{st}))$ is the Legendrian lift of $L$.\footnote{Here we have trivially identified $\Loc(\widetilde{L})=\Loc(L)$, as local systems only depend on $L$ itself and not its Hamiltonian isotopy class.} The first precise definition we need is the following:
    
    \begin{definition}\label{def:sheafquant}
        Let $L\sse (T^*\Sigma,\la_{st})$ be an exact Lagrangian orientable surface with non-empty boundary, with Legendrian lift $\widetilde{L} \in (J^1\Sigma, \xi_{st})$, and $\mathcal{L}\in \Loc^{pp}(L)$ a local system. A sheaf $\mathcal{F}\in \Sh_{\widetilde{L}}^{pp}(\Sigma \times \bR)$ is said to be a {\it sheaf quantization} of $(L,\mathcal{L})$ if $m_{\widetilde{L}}(\mathcal{F})=\mathcal{L}$. By definition, a sheaf quantization functor of $L$ is a functor of dg-categories
        $$\psi_{\widetilde{L}}: \Loc^{pp}(L) \longrightarrow \Sh_{\widetilde{L}}^{pp}(\Sigma \times \bR)$$
        such that $\psi_{\widetilde{L}} \circ m_{\widetilde{L}} = \mathrm{id}_{\Loc(\Lambda)}$.
    \end{definition}
    
    \noindent By Theorem \ref{thm:JinTreumann}.(1) below, there exists a sheaf quantization functor for Legendrian lifts of Lagrangian fillings. In addition, such functor is fully faithful. This sheaf quantization functor admits a left-adjoint which is between the corresponding categories of compact objects (in the unbounded dg-categories) 
    $$\psi_{\widetilde{L}}^L:\Sh^{cpt}_{\widetilde{L}}(\Sigma \times \bR)\lr\Loc^{cpt}(L).$$
    Thus, a sheaf quantization of $L$ leads to an open embedding%{\WL: Do you prefer to put the discussions of functors together and then talk about maps between moduli, or rather to first discuss sheaf quantization (both the functor and the map between moduli), and then talk about $\pi_!$?}{\RC: I think it's fine as it is, matter of taste. If you prefer otherwise feel free to change.} 
    $$\bR \mathcal{M}\big(\psi_{\widetilde{L}}^L\big):\bR\cL oc(L) \hookrightarrow \bR\mathcal{M}\big(\Sigma \times \bR, \widetilde{L}\big)$$
    between the associated derived stacks of pseudo-perfect objects in $\Sh^{cpt}_{\widetilde{L}}(\Sigma \times \bR)$ and $\Loc^{cpt}(L)$ \cite[Section 3]{ToenVaquie07}. Note that these derived stacks parametrize objects in the subcategories $\Loc^{pp}(L)$ and $\Sh^{pp}_{\widetilde{L}}(\Sigma \times \bR)$.
    
    %and thus $\iota$ above can be understood as an open embedding
    %$$\iota:\mathcal{M}(\Loc^{pp}(L)))\lr\mathcal{M}(\Sh^{pp}_{\widetilde{L}}(\Sigma \times \bR))$$

    \noindent By Theorem \ref{thm:JinTreumann}.(2), the proper push-forward $\pi_{\Sigma,!}$ of the projection $\Sigma\times\R\lr\R$ yields a fully faithful functor $$\pi_{\Sigma,!}:\Sh_{\widetilde{L}}^{pp}(\Sigma \times \bR)\lr\Sh_\La^{pp}(\Sigma),$$
    where $\La:=\dd_\infty L\sse(T^{*,\infty}\Sigma,\ker(\la_{st}))$ is the ideal Legendrian boundary of $L$. The functor $\pi_{\Sigma,!}$ admits a left-adjoint which is between the corresponding categories of compact objects (in the unbounded dg-categories)  $$\pi_{\Sigma,!}^L:\Sh^{cpt}_\La(\Sigma)\lr\Sh^{cpt}_{\widetilde{L}}(\Sigma \times \bR)$$
    and thus we obtain an open embedding
    %\Sh^{cpt}_{\widetilde{L}}(\Sigma \times \bR)\lr\Sh^{cpt}_\La(\Sigma)
    of the corresponding derived stacks:
    $$\bR \mathcal{M}\big(\pi_{\Sigma,!}^L\big): \bR \mathcal{M}\big(\Sigma \times \bR, \widetilde{L}\big) \hookrightarrow \bR \mathcal{M}(\Sigma, \La).$$
    Again, these moduli parametrize objects in the subcategories $\Sh^{pp}_{\widetilde{L}}(\Sigma\times\R)$ and $\Sh^{pp}_\La(\Sigma)$. By restricting to Abelian local systems of (differential graded) perfect $\Bbbk$-modules, the composition $\mathcal{M}\big(\psi_{\widetilde{L}}^L\circ \pi_{\Sigma,!}^L\big)$ yields two open embeddings of algebraic stacks
    $$H^1(L; \Bbbk^\times) \hookrightarrow \cM_1(\Sigma, \Lambda)_0, \;\mbox{and}\; H^1(L\setminus T, \Lambda\setminus T; \Bbbk^\times) \hookrightarrow \cM_1^{\mu,\textit{fr}}(\Sigma, \Lambda)_0,$$
    \noindent where $\cM_1(\Sigma, \Lambda)_0\sse \bR\mathcal{M}(\Sigma, \La)$ is the moduli space of microlocal rank~1 sheaves singular support on the Legendrian and vanishing stalk near them marked points of $\Sigma$, and $\cM_1^{\mu,\textit{fr}}(\Sigma, \Lambda)_0$ its framed version with base points $T\sse \La$, see Appendix \ref{appen:sheaf-moduli}. In Section \ref{sec:cluster} we explain that these embeddings define cluster charts.\\
    
    %on the moduli space of sheaves\footnote{Alternatively, one can work with augmentations of the Chekanov-Eliashberg dg algebra instead of constructible sheaves, as in \cites{Ek12Seideliso,Riz16Seideliso,EHK16,GaoRuther21}. Conjecturally augmentations are sheaves (see \cite{NRSSZ} for Legendrians in $\bR^3_\text{st}$), so it they should give rise to identical functors and embeddings.}. 
    
    \begin{remark}
        In general, the definition of sheaf quantization (and a sheaf quantization functor) is more general than Definition \ref{def:sheafquant}, in that $\Loc^{pp}(L)$ is substituted by the global sections of the Kashiwara-Schapira stack on the Lagrangian cone over the Legendrian lift of $L$; see Appendix \ref{appen:sheaf}. Indeed, it is not generally the case that this category of global sections is equivalent to $\Loc^{pp}(L)$: see the obstructions discussed in \cite[Section 10]{Gui19}. That said, for an orientable exact Lagrangian surface with non-empty Legendrian boundary these obstruction vanish, and we can equivalently work with $\Loc^{pp}(L)$. (In the main body of the section, in order to better keep track of signs, we will work over dg-derived category of twisted local systems $\Loc^{pp}_\epsilon(\cI_{\widetilde{L}})$ on the frame bundle $\cI_{\widetilde{L}}$. See Section \ref{ssec:muloc_coherentsigns}.)\hfill$\Box$
    \end{remark}
    
    In brief, the goal in this section is to describe sheaf quantizations for Lagrangian fillings through explicit combinatorial models in the two cases of conjugate Lagrangian surfaces and Legendrian weaves. The necessary subtleties and results about sheaf categories and constructible sheaves are provided in Appendices \ref{appen:cat-sheaf} and \ref{appen:sheaf}.
    
\subsection{Generalities on constructible sheaves and sheaf quantization}
Let $\Sh(M)$ be the dg-derived category of sheaves on a smooth manifold $M$. In \cite[Chapter V]{KSbook}, M.~Kashiwara and P.~Schapira introduced the notion of singular support of a sheaf in $\Sh(M)$ (Definition \ref{def:ss}) and show in \cite[Theorem 6.5.4]{KSbook} that the singular support of is a coisotropic subset in $T^{*,\infty}M$. By \cite[Theorem 8.4.2]{KSbook}, if the singular support of a sheaf is a (possibly singular) Legendrian in the ideal contact boundary $T^{*,\infty}M$, then that sheaf itself must be a constructible sheaf.\\
    
Let $\Lambda \subset T^{*,\infty}M$ be a Legendrian submanifold. The front projection $\pi(\Lambda) \subset M$ gives a stratification of $M$ and a (constructible) sheaf in $\Sh_\Lambda^{pp}(M)$ is a stratified locally constant sheaf with respect to the stratification $\pi(\Lambda)$. The co-directions of the Legendrian $\Lambda$ determine directions of transport maps between (derived) sections of such sheaves. Discrete, rather combinatorial, descriptions of such sheaves can be obtained by applying work of R.~MacPherson, see \cite[Section 3.3]{STZ17}, \cite[Section 4.1]{CasalsWeng22} and references therein. For instance, \cite{STZ17}*{Section 3} discusses the following examples, which cover the case of Legendrian links:\\

\begin{ex}\label{ex:sheafcombin}
    In the case that $\pi(\Lambda)$ is locally a hyperplane in $\bR^{n+1}$ as in Figure \ref{fig:sheaf-combin} (left), with covectors pointing upward, then the sheaf is locally determined by the following diagram 
    $$F_d \longrightarrow F_u$$
    of sheaves of chain complexes of $\Bbbk$-modules. In the case that $\pi(\Lambda)$ is locally two transversely intersecting hyperplanes in $\bR^{n+1}$ as in Figure \ref{fig:sheaf-combin} (middle), covectors both pointing upward, then the sheaf is locally determined by a diagram 
    \[\xymatrix@R=2.5mm{
    & F_l \ar[dr]^{\phi_{lu}} & \\
    F_d \ar[ur]^{\phi_{dl}} \ar[dr]_{\phi_{dr}} & & F_u \\
    & F_r \ar[ur]_{\phi_{ru}} &
    }\]
    where the total complex $\mathrm{Tot}\big(F_{d} \xrightarrow{(\phi_{dl}, \phi_{dr})} F_{l} \oplus F_{r} \xrightarrow{(-\phi_{lu}, \phi_{ru})} F_{u}\big) \simeq 0$ is required to be acyclic; see \cite{STZ17}*{Theorem 3.12}. Finally, in the case that $\pi(\Lambda)$ is locally a smooth family of cusps in $\bR^{n+1}$, as in Figure \ref{fig:sheaf-combin} (right), with covectors pointing upward, the sheaf is locally determined by a diagram 
    $$F_o \xrightarrow{\phi_d} F_i \xrightarrow{\phi_u} F_o$$
    where $\phi_u \circ \phi_d$ is a quasi-isomorphism. In the examples we study, namely Legendrian rainbow braid closures, we can also assume that $\phi_u \circ \phi_d = \mathrm{id}$ \cite{STZ17}*{Proposition 3.22}.\hfill$\Box$
\end{ex}
\begin{figure}[h!]
  \centering
  \includegraphics[width=0.8\textwidth]{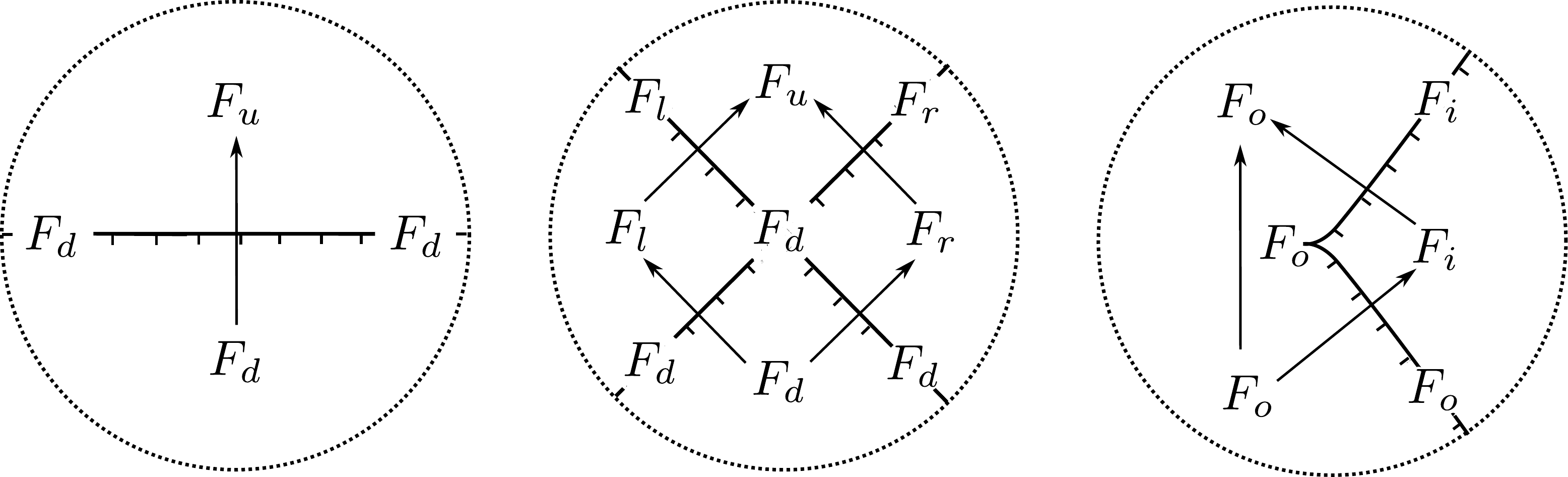}\\
  \caption{The local combinatoric models for sheaves in $Sh^b_\Lambda(\bR^{n+1})$ where on the left $\pi(\Lambda)$ is a hyperplane, in the middle $\pi(\Lambda)$ is two transverse hyperplanes, and on the right $\pi(\Lambda)$ is a smooth family of cusps.}\label{fig:sheaf-combin}
\end{figure}

\subsubsection{Combinatorial microlocal functor}\label{sec:microlocal}
    In the combinatorial description of Example \ref{ex:sheafcombin}, the functor $m_\La:\Sh_\La^{pp}(M)\lr\Loc^{pp}(\La)$ can be also computed explicitly, following \cite{STZ17}*{Section 5}, as follows. (For the general framework, see Appendix \ref{appen:microlocal}). Consider a Legendrian $\Lambda \subset T^{*,\infty}M$, and a generic point $p \in \Lambda$ such that $\pi(\Lambda)$ is a smooth hypersurface near $\pi(p)$, as in Figure \ref{fig:sheaf-combin} (left). Then the microstalk of $\mathcal{F}\in\Sh_\La^{pp}(M)$ at $p$ can be computed as
    $$m_{\Lambda,p}(\cF) = \mathrm{Cone}(F_d \rightarrow F_u).$$
    Given a path $\gamma\sse\Lambda$ connecting two points $p, q\in \Lambda$, the cone construction above yields a local system $m_{\Lambda, \gamma}(\cF)$, often referred to as the microlocalization of $\cF$ along $\gamma$, and thus the microstalk of $\cF$ is independent of the point we choose in $\Lambda$. By definition, a sheaf $\cF$ is said to be of microlocal rank $r$ sheaf if the microstalk is concentrated in a single degree and has rank $r$.

\begin{ex}\label{ex:mumon-STZ}
    Consider a path $\gamma$ in $\Lambda$ such that $\pi(\gamma)$ passes through a smooth family of transverse double points, as in Figure \ref{fig:sheaf-combin} (right). Then the parallel transport map from bottom right to top left is computed by
    $$m_{\Lambda,dr}(\cF) =  \mathrm{Cone}\big(F_{d} \xrightarrow{\phi_{dr}} F_{r}\big) \xrightarrow{\sim} \mathrm{Cone}\big(F_{l} \xrightarrow{\phi_{lu}} F_{u}\big) = m_{\Lambda,ul}(\cF).$$
    Similarly, the parallel transport map from top right to bottom left is computed by
    $$m_{\Lambda,ur}(\cF) =  \mathrm{Cone}\big(F_{r} \xrightarrow{\phi_{ru}} F_{u}\big) \xrightarrow{\sim} \mathrm{Cone}\big(F_{d} \xrightarrow{\phi_{dl}} F_{l}\big) = m_{\Lambda,dl}(\cF).$$
    In general, we equip the front diagram $\pi(\Lambda)$ with a Maslov potential and the microstalk carries a degree shift by the Maslov potential. 
%    When $\pi(\gamma)$ passes through a smooth family of cusps, the Maslov potential change by 1 when passing through the cusp, and the parallel transport map from bottom to top is
%    $$m_{\Lambda,d}(\cF) = \mathrm{Cone}\big(F_o \xrightarrow{\phi_d} F_i \big) \xrightarrow{\sim} \mathrm{Cone}\big(F_i \xrightarrow{\phi_u} F_o \big)[-1] = m_{\Lambda,u}(\cF).$$
\end{ex}

    In the sheaf quantization of Lagrangian fillings, we need to study both the case that $\La$ is a Legendrian link and the case that $\La$ is a Legendrian surface (the Legendrian lift of a Lagrangian filling of a Legendrian link). In higher dimension, including Legendrian surfaces, the signs of microlocal monodromies need to be fixed and that requires fixing a spin structure on the 2-skeleton of the Legendrian. In particular, the sign issue appears  when we compute microlocal merodromy along relative 1-cycles (see \cite[Section 4.5]{CasalsWeng22}) so as to obtain cluster variables over $\Z$. This is discussed in Appendix \ref{appen:microlocal}.

\subsubsection{Existence of sheaf quantization for Lagrangian fillings}
 Let $\Sh_\Lambda^{pp}(M)\sse\Sh_\Lambda(M)$ be the full subcategory of constructible sheaves with singular support in $\Lambda$ and $\Bbbk$-perfect stalks, and consider the dg-category $\Loc^{pp}_\epsilon(\cI_{\widetilde{L}})$ of pseudo-perfect twisted local systems on the (relative) frame bundle $\cI_{\widetilde{L}}$ defined in Appendix \ref{appen:microlocal}. The superscript {\it pp} means pseudo-perfect and see Appendix \ref{appen:cat-sheaf} for descriptions of these categories. By \cite[Theorem 3.7]{GKS12}, the dg-category $\Sh^{pp}_\Lambda(M)$ is an invariant of a Legendrian $\La$ under Legendrian isotopies; see Theorem \ref{thm:GKS} and Appendix \ref{appen:cat-sheaf}.(b).\\
    
    \noindent Results of X.~Jin and D.~Treumann \cite{JinTreu17}, following S.~Guillermou's \cite{Gui12}, can be adapted to show that a sheaf quantization result exists for Lagrangian fillings, explaining how Lagrangian fillings of $L$ equipped with local systems gives rise to sheaves in $\Sh^{pp}_\Lambda(M)$.
    
\begin{thm}[Theorem \ref{thm:JinTreu-app}]\label{thm:JinTreumann}
    Let $\Lambda \subset T^{*,\infty}M$ be a Legendrian and $L \subset T^*M$ a relatively spin exact Lagrangian filling of $\La$ with zero Maslov class. Then there exists Legendrian lift $\widetilde{L} \subset J^1(M)$ of $L \subset T^*M$ whose primitive is bounded from below and\\
\begin{enumerate}
  \item There exists a fully faithful functor $\Psi_{\widetilde{L}}: \Loc^{pp}_\epsilon(\cI_{\widetilde{L}}) \rightarrow \Sh^{pp}_{\widetilde{L}}(M \times \bR)$ such that the stalk at $M \times \{-\infty\}$ is acyclic and $m_{\widetilde{L}} \circ \Psi_{\widetilde{L}} \simeq \mathrm{id}$.\\
  \item The proper push forward functor $\pi_{M,!}: \Sh^{pp}_{\widetilde{L}}(M \times \bR) \rightarrow Sh^{pp}_\Lambda(M)$ via the projection $\pi_M: M \times \bR \rightarrow M$ is fully faithful.
\end{enumerate}
\end{thm}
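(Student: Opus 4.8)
The plan is to deduce the statement from Guillermou's sheaf quantization of exact Lagrangians \cite{Gui12} (built on \cite{GKS12}, see Theorem \ref{thm:GKS}) together with the functorial refinement via brane structures of X.~Jin and D.~Treumann \cite{JinTreu17}, adapting both to the relative setting of a Lagrangian filling with nonempty ideal Legendrian boundary $\Lambda = \dd_\infty L$.

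First I would fix the Legendrian lift and check the boundedness of the primitive. Since $L \subset T^*M$ is exact with primitive $f_L$, so that $\lambda_{st}|_L = df_L$, its $1$-jet lift $\wt L := \{(p, f_L(p)) : p \in L\} \subset J^1 M$ is Legendrian for $\alpha_{st} = dz - \lambda_{st}$. Because $L$ is a filling, $L$ is cylindrical over $\Lambda$ outside a relatively compact core, and $f_L$ is locally constant along that cylindrical end; hence $f_L$ is bounded, and after a global additive shift we may assume $\inf f_L \ge 0$, so $\wt L$ lies in $\{z \ge 0\}$. This is precisely what will let us normalize the quantization to vanish for $z \ll 0$, which yields the acyclicity of the stalk at $M \times \{-\infty\}$.

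Second I would produce the functor $\Psi_{\wt L}$. Passing to the conification $\wh L \subset T^*(M \times \bR)$ of $\wt L$, Guillermou's construction quantizes $L$ with its trivial brane: it yields an object of $\Sh_{\wt L}(M \times \bR)$ with microstalk $\Bbbk$ in degree $0$ and acyclic stalk at $M \times \{-\infty\}$, using the zero Maslov class and the relative spin structure of $L$ to fix orientations. The Jin--Treumann analysis of brane structures upgrades this to a functor out of twisted local systems: the space of brane structures on $L$ is a torsor recorded by the relative frame bundle $\cI_{\wt L}$, and twisting the trivial quantization by $\mathcal{L} \in \Loc^{pp}_\epsilon(\cI_{\wt L})$ — which does not enlarge the singular support, as one tensors with a local system pulled back from $L$ — defines
$$\Psi_{\wt L}\colon \Loc^{pp}_\epsilon(\cI_{\wt L}) \lr \Sh^{pp}_{\wt L}(M \times \bR).$$
By construction the microstalk of $\Psi_{\wt L}(\mathcal{L})$ along a path recovers $\mathcal{L}$ up to the fixed signs, so $m_{\wt L}\circ \Psi_{\wt L} \simeq \mathrm{id}$, which already forces $\Psi_{\wt L}$ to be faithful. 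For fullness I would compute $\mathrm{Hom}_{\Sh_{\wt L}}(\Psi_{\wt L}\mathcal{L}_1, \Psi_{\wt L}\mathcal{L}_2)$ by microlocal Morse theory along $L$ (again the vanishing stalk at $-\infty$ removes boundary contributions) and identify it with $\mathrm{Hom}_{\Loc_\epsilon(\cI_{\wt L})}(\mathcal{L}_1, \mathcal{L}_2)$; this is the content of the Jin--Treumann argument transported to the present relative situation.

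Third, for part (2), I would show that $\pi_{M,!}\colon \Sh^{pp}_{\wt L}(M \times \bR) \to \Sh^{pp}_\Lambda(M)$ is fully faithful by a non-characteristic cut-off argument for $\pi_M\colon M \times \bR \to M$. The singular support $\wt L$ meets $T^{*,\infty}(M\times\bR)$ only in codirections with strictly positive $dz$-component, so along $\pi_M$ no microsupport is contracted; combined with the acyclicity at $z = -\infty$ and properness of the $f_L$-levels on the core, one gets $R\pi_{M!} \simeq R\pi_{M*}$ on this subcategory, and via the adjunction $\pi_{M!} \dashv \pi_M^!$ together with the singular support estimates of \cite[Ch.~V--VI]{KSbook}, the unit $F \to \pi_M^! \pi_{M!}F$ becomes an isomorphism after applying $\mathrm{Hom}(G,-)$ for any $G \in \Sh^{pp}_{\wt L}(M \times \bR)$, giving $\mathrm{Hom}(\pi_{M!}F, \pi_{M!}G) \simeq \mathrm{Hom}(F, G)$. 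That $\pi_{M!}F$ has singular support in $\Lambda$ is read off from the front near $z = +\infty$.

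I expect the main obstacle to be the coherent bookkeeping of signs and gradings, i.e.\ carrying the $B\bZ/2$-twist through the whole construction so that $\Psi_{\wt L}$ is genuinely a functor on $\Loc^{pp}_\epsilon(\cI_{\wt L})$ (not merely a quantization of individual objects) and so that $m_{\wt L}\circ \Psi_{\wt L} \simeq \mathrm{id}$ and the $\mathrm{Hom}$-comparison respect it; a secondary difficulty is that Guillermou's theorems are stated for closed ambient manifolds, so the Legendrian boundary $\Lambda$ and the cylindrical end of $L$ must be handled, which is exactly why the bounded-below primitive and the vanishing stalk at $M \times \{-\infty\}$ are built into the statement.
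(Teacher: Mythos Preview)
Your overall strategy — deduce from Guillermou's quantization and the Jin--Treumann refinement, adapted to the relative setting — is the paper's strategy as well. But two of your steps are placeholders rather than arguments, and both miss the same geometric input: that $L$ is embedded, equivalently that $\widetilde{L}$ has no Reeb chords.

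In part~(1), ``twisting the trivial quantization by $\mathcal{L}$'' is not a well-defined operation: $\mathcal{L}$ lives on $\cI_{\widetilde{L}}$ while the quantization lives on $M\times\bR$, and there is no map along which to pull back and tensor. The paper instead passes through Guillermou's equivalence $\Loc_\epsilon(\cI_{\widetilde{L}})\simeq\mu\Sh_{\widetilde{L}}(\widetilde{L})$ (Theorem~\ref{thm:Gui}) and then applies the doubling convolution: from a microlocal sheaf one gets $\widetilde{\cF}_{\text{dbl},c}\in\Sh_{\widetilde{L}\cup T_c(\widetilde{L})}(M\times\bR)$, and precisely because $\widetilde{L}$ has no Reeb chords one can push $T_c(\widetilde{L})$ to $T_{C'}(\widetilde{L})$ with $C'\gg 0$ by a Legendrian isotopy and then cut off to land in $\Sh_{\widetilde{L}}(M\times\bR)$. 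In part~(2), the sentence ``the unit $F\to\pi_M^!\pi_{M!}F$ becomes an isomorphism after applying $\mathrm{Hom}(G,-)$'' is the desired conclusion, not a proof, and it does not follow from the \cite{KSbook} estimates alone. The paper's argument is concrete: a Hamiltonian flow together with the microlocal Morse lemma for $\pi_{M!}$ identifies $\pi_M^!\pi_{M!}\widetilde{\cG}$ (on a half-space) with the Reeb-translate $T_{-a'}\widetilde{\cG}$, and then once more the absence of Reeb chords gives $\mathrm{Hom}(\widetilde{\cF},T_{-a'}\widetilde{\cG})\simeq\mathrm{Hom}(\widetilde{\cF},\widetilde{\cG})$. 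Finally, a minor point of setup: your observation that the primitive is locally constant on the conical end is correct and suffices for ``bounded from below'', but the paper in fact first Hamiltonian-isotopes $L$ so that the primitive is \emph{proper} and bounded below (Lemma~\ref{lem:lowerexact}); this places $\Lambda$ at $M\times\{+\infty\}$ in the front and yields the identity $\pi_{M!}\widetilde{\cF}=i_{M\times\{+\infty\}}^{-1}j_{M\times\bR,*}\widetilde{\cF}$ used downstream.
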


Let $\Pi_M:(J^1M,\xi_{st})\lr (T^*M,\la_{st})$ be the projection forgetting the $j_0$-value of a section.

\begin{prop}[Theorem \ref{prop:JinTreu-inv-app}]\label{prop:JinTreu-inv} Let $L \subset (T^*M,\la_{st})$ be a relatively spin exact Lagrangian filling of a Legendrian submanifold $\La\subset T^{*,\infty}M$, $\widetilde{L}\subset(J^1M,\xi_{st})$ its Legendrian lift and $\cL \in \Loc^{pp}_\epsilon(\cI_{\widetilde{L}})$ a twisted local system. Consider a Hamiltonian  $H:T^*M\lr\R$ that is homogeneous at infinity, $\varphi_t\in \mbox{Ham}(T^*M,d\la_{st})$ its time-$t$ flow, $t\in[0,1]$, which extends to an homonymous contactomorphism $\varphi_t\in \mbox{Cont}(T^{*,\infty}M,\ker\la_{st})$ of the ideal contact boundary, $t\in[0,1]$, $\widetilde H=\Pi^*_M(H):J^1M\lr\R$ the pull-back and $\widetilde{\varphi}_t\in\mbox{Cont}(J^1M,\xi_{st})$ its time-$t$ flow.\\

\noindent Suppose that $\widetilde{\cF}_t \in \Sh^{pp}_{\widetilde{L}_t}(M \times \bR)$ is a sheaf quantization of $(\varphi_t(L),\varphi_{t *}\cL)$ and consider the proper push-forward $\cF_t := \pi_{M,!}(\widetilde{\cF}_t) \in \Sh^{pp}_{\varphi_t(\Lambda)}(M)$, $t\in[0,1]$. Then:

\begin{itemize}
    \item[(i)] The sheaf kernel convolution $\Phi_{\widetilde{H}}:\Sh^{pp}_{\widetilde{L}}(M \times \bR)\lr\Sh^{pp}_{\varphi_1(\widetilde{L})}(M \times \bR)$ associated to $\widetilde{H}$ satisfies $\Phi_{\widetilde{H}}(\widetilde{\cF}_0) \simeq \widetilde{\cF}_1$.\\
    
    \item[(ii)] The sheaf kernel convolution $\Phi_H:\Sh^{pp}_{\Lambda}(M)\lr\Sh^{pp}_{\varphi_1(\Lambda)}(M)$ associated $H$ satisfies $\Phi_H(\cF_0) \simeq \cF_1.$
\end{itemize}

%Let $L_t \subset T^*M$, $t \in [0, 1]$, be a family of relatively spin exact Lagrangian fillings of a family of Legendrian submanifolds $\Lambda_t \subset T^{*,\infty}M$, $t\in [0, 1]$ with zero Maslov class, where $L_t$ is obtained by a symplectic Hamiltonian $H$ that is homogeneous at infinity and $\Lambda_t$ is obtained by the contact Hamiltonian $H|_{T^{*,\infty}M}$ on the ideal contact boundary. Let 
  %  $$\cF_t = \pi_{M,!}(\widetilde{\cF}_t) \in \Sh^{pp}_{\Lambda_t}(M),$$
 %   where $\widetilde{\cF}_t \in \Sh^{pp}_{\widetilde{L}_t}(M \times \bR)$ is the sheaf quantization of any (twisted) local system $\cL \in \Loc^{pp}_\epsilon(\cI_{\widetilde{L}})$. 
% Then
 %   $$\Phi_H(\cF_0) \simeq \cF_1.$$
\end{prop}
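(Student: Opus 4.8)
The plan is to deduce both statements from the Guillermou--Kashiwara--Schapira quantization of Hamiltonian/contact isotopies (Theorem~\ref{thm:GKS}), combined with the uniqueness of sheaf quantizations for Lagrangian fillings recorded in Theorem~\ref{thm:JinTreumann}. First I would set up the two kernels. Since $H$ is homogeneous at infinity on $T^*M$, the GKS construction produces a kernel $K\in\Sh\big((M\times M)\times\bR_{\mathrm{time}}\big)$ whose convolution $\Phi_H:=K\circ(-)$ is an equivalence $\Sh^{pp}_{\La}(M)\xrightarrow{\ \sim\ }\Sh^{pp}_{\varphi_1(\La)}(M)$. Likewise $\widetilde H=\Pi_M^*(H)$ is $z$-translation invariant and homogeneous at infinity on $J^1M$, so its homogenization is a homogeneous Hamiltonian isotopy on the half of $T^*(M\times\bR)\setminus 0$ dual to the $z$-direction; GKS then yields a kernel $\widetilde K$ and a convolution equivalence $\Phi_{\widetilde H}:\Sh^{pp}_{\widetilde L}(M\times\bR)\xrightarrow{\ \sim\ }\Sh^{pp}_{\widetilde L_1}(M\times\bR)$, where $\widetilde L_1:=\widetilde\varphi_1(\widetilde L)$. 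Because $\widetilde H$ is pulled back along $\Pi_M$ and acts trivially in the $\bR$-coordinate and its dual, the kernel $\widetilde K$ is itself pulled back from $K$ along the projection $\pi_M\colon M\times\bR\to M$ (up to the diagonal/translation factor in $\bR$); a base-change and projection-formula computation then gives the intertwining $\pi_{M,!}\circ\Phi_{\widetilde H}\simeq\Phi_H\circ\pi_{M,!}$, which will reduce (ii) to (i).

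Second, I would check that $\Phi_{\widetilde H}\big(\widetilde{\cF}_0\big)$ is again a sheaf quantization, this time of $(\varphi_1(L),\varphi_{1*}\cL)$. Its singular support is automatic: convolution with $\widetilde K$ carries the Lagrangian cone over the conification of $\widetilde L$ to the cone over that of $\widetilde L_1$. The stalk along $M\times\{-\infty\}$ stays acyclic, since $\widetilde\varphi_t$ fixes the relevant piece of the ideal boundary of $M\times\bR$, and pseudo-perfectness (equivalently, being a compact object in the unbounded completion) is preserved by the equivalence. The real content is microlocal compatibility: GKS convolution commutes with the microlocalization functor up to the canonical identification of Kashiwara--Schapira stacks along $\widetilde\varphi_t$, and on Abelian twisted local systems this identification is transport along the isotopy-induced diffeomorphism $\widetilde L\to\widetilde L_1$; hence $m_{\widetilde L_1}\big(\Phi_{\widetilde H}(\widetilde{\cF}_0)\big)\simeq\varphi_{1*}\,m_{\widetilde L}(\widetilde{\cF}_0)=\varphi_{1*}\cL=m_{\widetilde L_1}(\widetilde{\cF}_1)$. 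Here one must also observe that $\varphi_1(L)$ inherits the hypotheses of Theorem~\ref{thm:JinTreumann} --- the Maslov class is a Hamiltonian invariant, and a relative spin structure transports along $\varphi_1$ --- and then track the coherent signs: one has to verify that GKS convolution is compatible with the trivializations of the frame bundles $\cI_{\widetilde L}\to\cI_{\widetilde L_1}$ used to define $\Loc^{pp}_\epsilon$, which is exactly the bookkeeping of Appendix~\ref{appen:microlocal}.

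Third, I would invoke uniqueness to finish. By Theorem~\ref{thm:JinTreumann}(1), $\Psi_{\widetilde L_1}$ is fully faithful with $m_{\widetilde L_1}\circ\Psi_{\widetilde L_1}\simeq\mathrm{id}$, and (Guillermou's doubling, resp.\ the analysis in Appendix~\ref{appen:sheaf}) its essential image is the full subcategory of objects of $\Sh^{pp}_{\widetilde L_1}(M\times\bR)$ with acyclic stalk along $M\times\{-\infty\}$; thus $m_{\widetilde L_1}$ restricts there to an equivalence onto $\Loc^{pp}_\epsilon(\cI_{\widetilde L_1})$. Both $\Phi_{\widetilde H}(\widetilde{\cF}_0)$ and $\widetilde{\cF}_1$ lie in this subcategory and have microlocalization $\varphi_{1*}\cL$, so they are isomorphic, which is (i). Then $\cF_1=\pi_{M,!}(\widetilde{\cF}_1)\simeq\pi_{M,!}\Phi_{\widetilde H}(\widetilde{\cF}_0)\simeq\Phi_H\,\pi_{M,!}(\widetilde{\cF}_0)=\Phi_H(\cF_0)$, which is (ii).

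\textbf{Main obstacle.} I expect the principal difficulties to be the two behaviours-at-infinity: (a) making the GKS formalism apply verbatim to a Hamiltonian that is only homogeneous at infinity, and in particular controlling the homogenization $J^1M\rightsquigarrow T^*(M\times\bR)$ so that the resulting convolution genuinely preserves $\Sh^{pp}_{\widetilde L}(M\times\bR)$, the acyclicity of the stalk at $M\times\{-\infty\}$, and the class of pseudo-perfect/compact objects; and (b) the uniqueness input, i.e.\ that $m_{\widetilde L}$ is an equivalence onto twisted local systems once the Maslov class vanishes and a relative spin structure is fixed, together with the compatibility of GKS convolution with the sign data encoded in the frame bundles $\cI_{\widetilde L}$. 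Once these are in place, the remaining manipulations of sheaf kernels are formal.
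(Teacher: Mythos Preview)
Your proposal is correct, and for part~(ii) it coincides with the paper: both arguments establish the intertwining $\pi_{M,!}\circ\Phi_{\widetilde H}\simeq\Phi_H\circ\pi_{M,!}$ via proper base change and then reduce (ii) to (i).

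For part~(i) the routes diverge slightly. The paper works parametrically: it applies the sheaf quantization functor to the constant family of local systems on $\widetilde L\times[0,1]$, obtaining a sheaf on $M\times\bR\times[0,1]$ with singular support in the Legendrian movie $\widetilde L_{\widetilde H}$, and then uses the commutative square
\[
\xymatrix@C=5mm{
\Loc_\epsilon(\cI_{\widetilde L}) \ar[d] & \Loc_\epsilon(\cI_{\widetilde L\times[0,1]}) \ar[l]_{i_0^{-1}\hspace{12pt}}\ar[r]^{\hspace{12pt}i_1^{-1}}\ar[d] & \Loc_\epsilon(\cI_{\widetilde L}) \ar[d]\\
\Sh_{\widetilde L_0}(M\times\bR) & \Sh_{\widetilde L_{\widetilde H}}(M\times\bR\times[0,1]) \ar[l]_{i_0^{-1}\hspace{18pt}}\ar[r]^{\hspace{18pt}i_1^{-1}} & \Sh_{\widetilde L_1}(M\times\bR).
}
\]
The top row composes to the identity and the bottom to $\Phi_{\widetilde H}$, so $\Phi_{\widetilde H}(\widetilde\cF_0)\simeq\widetilde\cF_1$ drops out in one line. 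Your approach instead transports $\widetilde\cF_0$ by $\Phi_{\widetilde H}$, verifies directly that the result has the right singular support, acyclic stalk at $-\infty$, and microlocalization $\varphi_{1*}\cL$, and then invokes uniqueness of sheaf quantization. The microlocal-compatibility check you isolate is precisely what the paper's diagram packages; the family argument is a bit slicker because it avoids characterizing the essential image of $\Psi_{\widetilde L_1}$ and checking the conditions one at a time, while your approach has the virtue of making the role of uniqueness completely explicit. Both are valid, and the obstacles you flag (homogenization at infinity, preservation of the acyclicity at $-\infty$, and sign bookkeeping) are the right ones.
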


%{\RC: Just double-checking here for Prop.\ref{prop:JinTreu-inv}, but do we want the conclusion to be about $\mathcal{F}_t$ or the $\widetilde{\mathcal{F}}_t$? Because what we want to be a Hamiltonian invariant is the sheaf quantizing the filling, right? (Not just the proper push-forward.) Can you explain what's going on here? (Is that stronger statement true too or not? Do we need it?)}{\WL I explained why we need invariance of the proper push forward. Invariance of the sheaf quantization is in fact also obtained in the proof in the appendix. Maybe we could add that part into the statement? Oh I see your worries. You are worried about invariance of microlocal monodromy, which are defined on the sheaf quantization. I would say that it is fine since the composition is fully faithful, so invariance of $\cF_t$ should probably imply invariance of the sheaf quantizations, but maybe we should add the invariance of sheaf quantziation as well.}\\
\noindent The convolutions of sheaf kernels $\Phi_H$ and $\Phi_{\widetilde{H}}$ from \cite{GKS12} are described in Appendix \ref{appen:sheaf-quan}. In brief, Theorem \ref{thm:JinTreumann} states that a sheaf quantization for a Lagrangian filling exists, and Proposition \ref{prop:JinTreu-inv} shows that such sheaf quantization is invariant under Hamiltonian isotopies. The two statements above are slightly different from the results presented in \cite{JinTreu17}. In particular, we assume that the front of the Legendrian $\tilde{L}$ is bounded from below, while Jin-Treumann assume boundedness from above, and we use the proper push forward by the projection $\pi_{M !}$, while Jin-Treumann use the push forward $\pi_{M *}$. (Our choices are given by the geometry of Lagrangian surfaces in our setting.) Therefore, we technically need to provide proofs of the above results, which can be obtained by appropriately modifying the arguments in \cite{JinTreu17}. These proofs are written in Appendix \ref{appen:sheaf-quan}.
    
\begin{remark}
The Hamiltonian invariance in Proposition \ref{prop:JinTreu-inv}, which allows for the Legendrian boundary condition $\varphi_t(\La)$ to vary, is needed in our context. The fixed boundary condition in \cite[Section 3.20]{JinTreu17} is not enough because in our setting (see Sections \ref{sec:localmove} and \ref{sec:main_proofs}) the Legendrian boundaries change, via a contact isotopy, when the Lagrangian fillings change, via a Hamiltonian isotopy. In addition, we must also use the Hamiltonian invariance statement for $\cF_t \in \Sh^{pp}_{\varphi_t(\Lambda)}(M)$, rather than just the invariance of the sheaf quantization $\widetilde{\cF}_t \in \Sh^{pp}_{\widetilde\varphi_t(\widetilde{L})}(M)$. Indeed, the toric charts for the cluster structures we use are defined by a composition
    $$H^1(\varphi_t(L); \Bbbk^\times) \hookrightarrow \cM_1(M \times \bR, \widetilde\varphi_t(\widetilde{L}))_0 \hookrightarrow \cM_1(M, \varphi_t(\Lambda))_0,$$
into a derived stack of moduli of (pseudo-perfect) objects of $\Sh^{cpt}_{\varphi_t(\Lambda)}(M)$, and not just in $\Sh^{cpt}_{\widetilde{\varphi}_t(\widetilde{L})}(M\times\R)$.
  Thus, we need to show that the toric chart itself is a Hamiltonian invariant and we do so by showing the invariance of the composition above.
\end{remark}
    
\subsubsection{The moduli space of microframed sheaves}\label{sec:sheaf-moduli}
    Let $\Sigma$ be a smooth surface and $\Lambda \subset T^{*,\infty}\Sigma$ a Legendrian link. By \cite{ToenVaquie07}, there exists a moduli derived stack $\cM(\Sigma, \Lambda)$ which parametrizes pseudo-perfect objects in the smooth dg-category $\Sh^{cpt}_\Lambda(\Sigma)$. These can be identified with objects in $\Sh^{pp}_\Lambda(\Sigma)$, which are constructible sheaves with perfect stalks; see Appendix \ref{appen:cat-sheaf} and Definition \ref{def:moduli-sh}. In particular, if we fix a Maslov potential, we can consider the locus $\cM_r(\Sigma, \Lambda)_0 \sse \cM(\Sigma, \Lambda)$ of microlocal rank~$r$ sheaves, concentrated in a given degree, with acyclic stalks at the marked points of $\Sigma$, and respectively the locus $\cM_r(\Sigma \times \bR, \widetilde{L})_0 \sse \cM(\Sigma \times \bR, \widetilde{L})$ of microlocal rank~$r$ sheaves, concentrated in a given degree, with acyclic stalks at $M \times \{-\infty\}$. For the Legendrian links we study, these microlocal rank~$r$ sheaves have no negative self-extension groups, and thus the moduli space $\cM(\Sigma, \Lambda)_0$ is an Artin stack, though typically not an algebraic variety. By Theorem \ref{thm:JinTreumann}, one can construct an open embedding of moduli stacks
    $$\cL oc_r(L) \hookrightarrow \cM_r(\Sigma \times \bR, \widetilde{L})_0 \hookrightarrow \cM_r(\Sigma, \Lambda)_0$$
    which define open toric charts on the moduli $\cM_r(\Sigma, \Lambda)_0$; see Appendix \ref{appen:sheaf-moduli}. As shown in \cite{CasalsWeng22}, this Artin stack does only typically admit a cluster $\mathcal{X}$-structure. In order to obtain a better suited moduli space, we add the data of certain framings, as follows.\\

   Let $\Lambda \subset T^{*,\infty}\Sigma$ be a Legendrian link equipped with a Maslov potential and choose a finite set of base points $T = \{p_1, \dots , p_n\} \subset \Lambda$. By definition, the microframed moduli of microlocal rank~$r$ sheaves is defined as
    $$\cM_r^{\mu,\textit{fr}}(\Sigma, \Lambda)_0 := \left\{(\cF, t_1, \dots, t_n) | \cF \in \cM_r(\Sigma, \Lambda)_0,\, t_i: m_\Lambda(\cF)_{p_i} \xrightarrow{\sim} \Bbbk^r \right\}.$$
    It parametrizes those sheaves in $\cM_r(\Sigma, \Lambda)_0$ with the additional data of a fixed trivialization of microstalks at the base points.  See Definition \ref{def:moduli-sh-mufr} and Appendix \ref{appen:sheaf-moduli}.\footnote{The terminology microframing is used to distinguish the notion of framing in \cite{STWZ19}*{Section 2.4}. Their relation will be discussed in Appendix \ref{appen:sheaf-moduli}.} A first advantage is that $\cM_r^{\mu,\textit{fr}}(\Sigma, \Lambda)_0$ is an algebraic variety in the cases we study, whereas $\cM_r(\Sigma, \Lambda)_0$ is not. From the perspective of cluster algebras, $\cM_r^{\mu,\textit{fr}}(\Sigma, \Lambda)_0$ is better behaved than the unframed $\cM_r(\Sigma, \Lambda)_0$ in that $\C[\cM_1^{\mu,\textit{fr}}(\Sigma, \Lambda)_0]$ is actually a commutative cluster algebra.\footnote{We expect $\C[\cM_r^{\mu,\textit{fr}}(\Sigma, \Lambda)]$ to be a non-commutative cluster algebra for $r\geq2$, in some sense of the definition of a {\it non-commutative} cluster algebra, see \cite{GonKon21}.}
    
\begin{figure}[h!]
  \centering
  \includegraphics[width=0.7\textwidth]{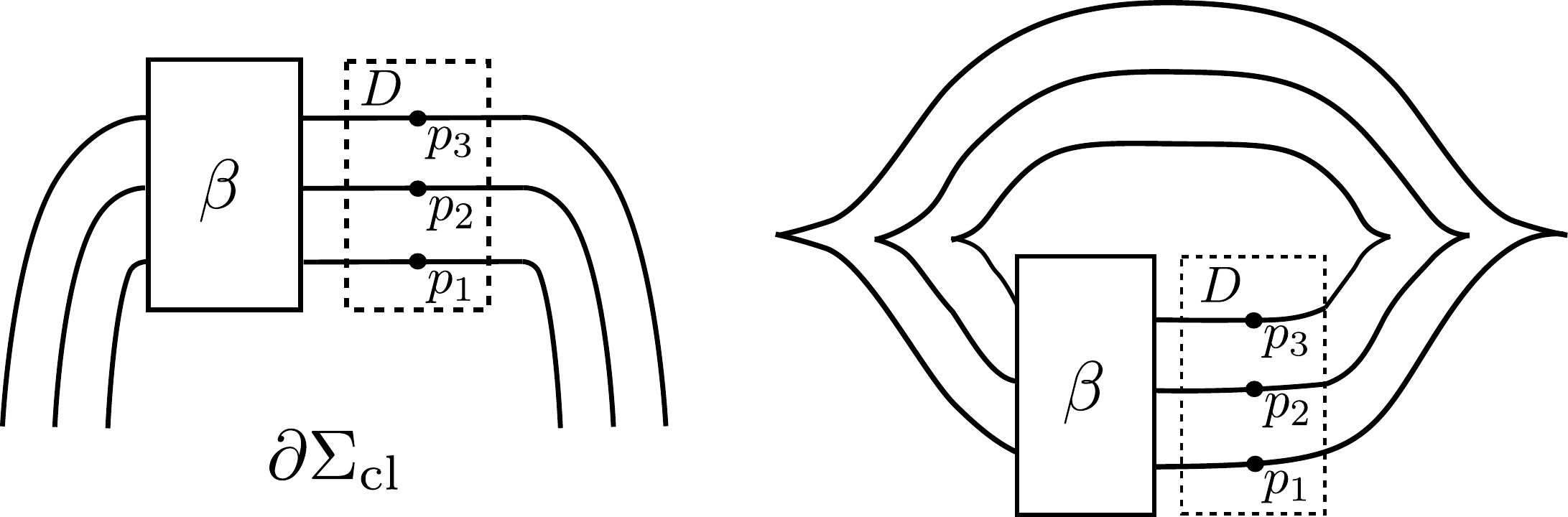}\\
  \caption{The rectangular regions $D$ and the set of base points $T = \{p_1, \dots, p_n\}$ for a Legendrian (cylindrical or rainbow) $n$-braid  closure.}\label{fig:braid-markpt}
\end{figure}

The symplectic geometric definition of $\cM_r^{\mu,\textit{fr}}(\Sigma, \Lambda)_0$ can be described in a Lie theoretic manner by using affine flags. The following result illustrates this:

\begin{prop}\label{prop:frame-markpt}
    Let $\Lambda \subset T^{*,\infty}\Sigma$ be a Legendrian positive braid closure, either cylindrical closure or rainbow closure, equipped with a Maslov potential, $D \subset \Sigma$ an open disk such that $\Lambda \cap T^*D$ consist of $n$ parallel strands. Let $T = \{p_1, ..., p_n\} \subset \Lambda$ be a collection of base points in $D$, one on each strand. Then the restriction $\cF|_D$ of a microframed sheaf $(\cF, t_1, \dots, t_n) \in \cM^{\mu,\textit{fr}}_r(\Sigma, \Lambda)_0$ determines the complete flag
    $$0 = F_0 \subset F_1 \subset F_2 \subset \dots \subset F_n \cong \Bbbk^n,$$
    and the microframing $(t_1, \dots , t_n)$ determines an assigned volume $\omega_i$ on each $F_i\,(0 \leq i \leq n)$.
\end{prop}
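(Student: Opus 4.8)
The plan is to unwind the combinatorial description of a constructible sheaf $\cF \in \Sh^{pp}_\Lambda(\Sigma)$ on the disk $D$ and to identify the resulting data with a complete flag together with volume forms. First I would restrict attention to $T^*D$, where $\Lambda \cap T^*D$ is $n$ parallel strands with all covectors co-oriented in the same direction (say upward), so the front projection $\pi(\Lambda) \cap D$ consists of $n$ parallel lines stratifying $D$ into $n+1$ horizontal strips. By the combinatorial model recalled in Example \ref{ex:sheafcombin} (the hyperplane case, applied iteratively), a sheaf in $\Sh^{pp}_\Lambda(D)$ is equivalent to a sequence of chain complexes $F_0 \to F_1 \to \cdots \to F_n$ of perfect $\Bbbk$-modules, where $F_0$ is the stalk in the strip ``below'' all strands and $F_j$ the stalk in the strip between the $j$-th and $(j+1)$-st strand. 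The acyclicity condition at the marked points (the ``$0$'' in $\cM_r(\Sigma,\Lambda)_0$), together with the definition of microlocal rank $r$, forces $F_0 \simeq 0$ and each microstalk $m_\Lambda(\cF)_{p_j} \simeq \mathrm{Cone}(F_{j-1} \to F_j)$ to be concentrated in a single degree of rank $r$. Here I specialize to $r = 1$.

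The second step is to extract the flag. Since $F_0 \simeq 0$, each transition map $F_{j-1} \to F_j$ has cone concentrated in degree $0$ of rank one; by an induction on $j$ this makes each $F_j$ (quasi-isomorphic to) a vector space concentrated in degree $0$, with $\dim F_j = \dim F_{j-1} + 1$, so $\dim F_j = j$ and $F_n \cong \Bbbk^n$. Moreover each map $F_{j-1} \hookrightarrow F_j$ is injective (its cone being concentrated in degree $0$ rules out kernel in degree $0$), so the chain of maps
\[
0 = F_0 \subset F_1 \subset F_2 \subset \cdots \subset F_n \cong \Bbbk^n
\]
is precisely a complete flag in $\Bbbk^n$, after choosing the identification $F_n \cong \Bbbk^n$ coming from the sheaf data. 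This uses only that the sheaf has microlocal rank one, acyclic stalk at the relevant marked point, and perfect stalks; the stratification geometry of $D$ does the rest.

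The third step is to see how the microframing produces volumes. By definition a microframed sheaf carries, at each base point $p_j \in T$ (one on the $j$-th strand, $1\le j\le n$), an isomorphism $t_j : m_\Lambda(\cF)_{p_j} \xrightarrow{\sim} \Bbbk$. Under the identification $m_\Lambda(\cF)_{p_j} \simeq \mathrm{Cone}(F_{j-1}\to F_j) \simeq F_j / F_{j-1}$ from the microlocal functor computation in Section \ref{sec:microlocal}, the trivialization $t_j$ is the datum of a nonzero element of $(F_j/F_{j-1})^\vee$, i.e. a volume form on the one-dimensional quotient $F_j/F_{j-1}$. Taking the wedge of these successive quotient volumes, $\omega_i := t_1 \wedge \cdots \wedge t_i$ interpreted via the canonical isomorphism $\det F_i \cong \bigotimes_{j=1}^{i} (F_j/F_{j-1})$ coming from the flag, yields the assigned volume $\omega_i$ on each $F_i$ for $0 \le i \le n$ (with $\omega_0$ the canonical volume on the zero space). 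Conversely this data clearly determines $(\cF|_D, t_1,\dots,t_n)$, giving the claimed identification.

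\textbf{Main obstacle.} The routine part is the combinatorial dictionary; the genuinely delicate point is keeping the Maslov potential and the attendant degree shifts coherent, so that ``concentrated in a single degree'' really does produce honest (ungraded) vector spaces $F_j$ and the cones $m_\Lambda(\cF)_{p_j}$ all sit in one degree simultaneously — this is what lets the $F_j$ assemble into a flag rather than just a filtered complex. I expect this bookkeeping (and the compatible choice of sign/orientation data from Section \ref{ssec:muloc_coherentsigns} needed to pin down the isomorphism $\det F_i \cong \bigotimes (F_j/F_{j-1})$ canonically over $\Z$) to be where the care is required; the rest follows formally from Example \ref{ex:sheafcombin} and the microlocal functor computation of Section \ref{sec:microlocal}.
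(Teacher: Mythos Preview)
Your proposal is correct and follows essentially the same approach as the paper: both restrict to the stratified disk $D$, read off the complete flag from the stalks $F_0\subset F_1\subset\cdots\subset F_n$ via the combinatorial model, identify each microstalk with the quotient $F_j/F_{j-1}$, and build the volumes $\omega_i$ inductively from the trivializations $t_j$. The only cosmetic difference is that the paper phrases the volume construction by choosing a representative $v_i\in F_i$ of $t_i^{-1}(1)\in F_i/F_{i-1}$ and setting $\omega_i=\omega_{i-1}\wedge v_i$, whereas you invoke the canonical isomorphism $\det F_i\cong\bigotimes_{j\le i}(F_j/F_{j-1})$ directly; these are the same construction.
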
    
\begin{proof}
    Consider the region $D = D_0 \cup D_1 \cup ... \cup D_n$, microlocal rank~1 sheaves on $D$ is characterized by a complete flag
    $$B_k: 0 = V_0 \subset V_1 \subset ... \subset V_n \cong \Bbbk^n.$$
    Given the microframing data, we have specified isomorphisms of microstalks
    $$t_i: \, m_{\Lambda^\prec_{\beta_0}}(\cF)_{p_i} = V_i/V_{i-1} \cong \Bbbk.$$
    Note that the preimage of the the unit $1 \in \Bbbk$ determines a vector $\ol{v}_i \in V_i/V_{i-1}$. In particular, when $i = 1$ we have a volume form $\omega_1 = v_1$ on $V_1$. Inductively, a vector $\ol{v}_i \in V_i/V_{i-1}$ and the given volume form $\omega_{i-1}$ on $V_{i-1}$ determines a unique volume form on $V_i$ by
    $$\omega_i = \omega_{i-1} \wedge v_i,$$
    where $v_i$ is any representative in the equivalence class $\ol{v}_i \in V_i/V_{i-1}$. Thus we get a collection of volume forms by induction.
    
    Conversely, consider a collection of volume forms $\omega_1, \dots, \omega_n$ on $V_0, V_1, \dots, V_n$. There exists a vector $v_i  \in V_i/V_{i-1}$ such that
    $$\omega_i = \omega_{i-1} \wedge v_i.$$
    In addition, the equivalence class $\ol{v}_i \in V_i / V_{i-1}$ is well defined. The vector $\ol{v}_i$ on $V_i/V_{i-1}$ then determines a trivialization $t_i: V_i/V_{i-1} \cong \Bbbk$ by $t_i(v_i) = 1$.
\end{proof}

    In Appendix \ref{appen:sheaf-moduli}, we discuss different notions of framings of sheaves in the literature \cites{STWZ19,CasalsWeng22} and explain how microlocal merodromy along relative 1-cycles gives rise to functions the above moduli space.

\subsection{Sheaf quantization of conjugate Lagrangians}\label{sec:quan-conj}
    For an alternating Legendrian $\Lambda \subset T^{*,\infty}\Sigma$ and its conjugate Lagrangian filling $L\sse T^*\Sigma$, the sheaf quantization of rank~$r$ local systems $\Loc^r(L)$ in the conjugate surface is studied in \cite{STWZ19}*{Section 4.3}. This subsection briefly summarizes that construction.

\subsubsection{Alternating sheaves}
Let $\Lambda \subset T^{*,\infty}\Sigma$ be an  alternating  Legendrian. Following Theorem \ref{thm:JinTreumann} (see also Theorem \ref{thm:JinTreu-app} Appendix \ref{appen:sheaf-quan}), which adapts the results of \cite{JinTreu17}, a conjugate Lagrangian filling $L$ gives rise to a fully faithful sheaf quantization functor
    $$\Loc^{pp}(L) \hookrightarrow \Sh^{pp}_\Lambda(\Sigma).$$
   In \cite{STWZ19}*{Definition 4.13}, the sheaves in $\Sh^{pp}_{\Lambda}(\Sigma)$ which are the sheaf quantizations of the corresponding conjugate Lagrangians are characterized as follows:

\begin{definition}\label{def:altsheaf}
    Let $\Lambda \subset T^{*,\infty}\Sigma$ be an alternating Legendrian link and $L$ its conjugate Lagrangian filling. An alternating sheaf is an object in $\Sh_\Lambda^{pp}(\Sigma)$ whose support is contained in the closure of the union of white and black regions.
\end{definition}

\noindent In \cite{STWZ19}*{Theorem 4.17 \& Proposition 4.18} it is shown that this definition indeed captures the required notion:

\begin{thm}[\cite{STWZ19}]\label{thm:altsheaf}
Let $\Lambda \subset T^{*,\infty}\Sigma$ be an alternating Legendrian link and $L$ its conjugate Lagrangian filling. The subcategory of alternating sheaves in $\Sh_\Lambda^{pp}(\Sigma)$ is equivalent to the category of local systems on $L$. In fact, $\cF \in \Sh_\Lambda^{pp}(\Sigma)$ is the image of a rank~$r$ local system on $L$ under the Jin-Treumann sheaf quantization functor (Theorem \ref{thm:JinTreumann})
    $$\Loc^{pp}(L) \hookrightarrow \Sh^{pp}_{\widetilde{L}}(\Sigma \times \bR) \hookrightarrow \Sh^{pp}_\Lambda(\Sigma)$$
    if and only if it is an alternating sheaf with microlocal rank~$r$.
\end{thm}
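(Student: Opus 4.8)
The plan is to combine two inputs: the Jin--Treumann sheaf quantization functor from Theorem \ref{thm:JinTreumann} applied to the conjugate Lagrangian $L$, and an explicit combinatorial identification of the essential image of that functor with the subcategory of alternating sheaves of microlocal rank $r$. The first input already gives a fully faithful functor $\Loc^{pp}(L) \hookrightarrow \Sh^{pp}_{\widetilde L}(\Sigma\times\bR) \hookrightarrow \Sh^{pp}_\Lambda(\Sigma)$, so the content of the theorem is the description of its essential image. I would argue in two directions: (1) every sheaf in the image of a rank~$r$ local system under $\Psi_{\widetilde L}$ followed by $\pi_{\Sigma,!}$ is an alternating sheaf of microlocal rank $r$; (2) conversely, every alternating sheaf of microlocal rank $r$ arises this way.

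For direction (1), the key point is to compute the stalks and microstalks of $\pi_{\Sigma,!}(\Psi_{\widetilde L}(\mathcal L))$ from the local models of the conjugate Lagrangian in Proposition \ref{prop:conj-exist}. Away from the crossings of $\pi(\Lambda)$, the Lagrangian $L$ is the graph of $df$ where $f = \ln m_B$ on a black region $B$ (so $f \to -\infty$ at $\partial B$, hence the Legendrian lift $\widetilde L$ has its $z$-coordinate $\to -\infty$ at the boundary of $B$) and $f = -\ln m_W$ on a white region $W$ (so $z\to +\infty$ at $\partial W$). Since the Jin--Treumann quantization of a graphical Lagrangian over a region is the (co)standard extension of the local system from that region, with direction dictated by the sign of the primitive, the proper push-forward $\pi_{\Sigma,!}$ produces a sheaf supported on $\overline{\pi(L)}$, i.e.\ on the closure of the union of black and white regions, which vanishes on the null regions. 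This is exactly the defining property of an alternating sheaf. The local model $i_\times$ at a crossing shows the sheaf near a crossing is a colimit of the four adjacent region-contributions and one reads off from Example \ref{ex:sheafcombin} that the microlocal rank equals the rank of $\mathcal L$. I would package the first three of these as: checking the support condition away from crossings, checking the local model at a crossing agrees with the alternating-sheaf combinatorics of \cite{STWZ19}, and computing the microstalk.

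For direction (2), the cleanest route is to invoke the fully faithfulness in Theorem \ref{thm:JinTreumann} together with an equality of derived stacks of objects: the moduli of microlocal rank~$r$ alternating sheaves is cut out inside $\cM_r(\Sigma,\Lambda)_0$ by the vanishing of stalks on null regions, and one checks this locus is precisely the image of $\cL oc_r(L)$ by comparing dimensions/tangent complexes, or, more combinatorially, by running the assignment $\cF \mapsto$ (its restrictions to black and white regions, glued along crossings) as an inverse construction. That is, given an alternating sheaf $\cF$ of microlocal rank $r$, its restriction to the interior of each black region is a rank~$r$ local system (support forces it to be lisse there since $\pi(\Lambda)$ has no strata inside the region), likewise on white regions, and the crossing combinatorics of Example \ref{ex:sheafcombin} (the acyclicity of the total complex) forces the gluing data to descend to a rank~$r$ local system on the real blowup $L$. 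One then checks this reconstruction is inverse to the quantization functor; since both functors are fully faithful and we have exhibited mutually inverse assignments on objects, the subcategories are equivalent.

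The main obstacle I expect is direction (2): showing that an abstract alternating sheaf of microlocal rank $r$ is genuinely in the essential image of the sheaf quantization functor, rather than just having the right support and microlocal rank. The subtlety is that being an alternating sheaf is a condition on the underlying constructible sheaf, while being a quantization of $(L,\mathcal L)$ is a condition relating it to the Legendrian lift $\widetilde L$; one must verify that the support/microlocal-rank conditions rigidify the sheaf enough to pin down its singular support to lie along $\widetilde L$ (at the level of $\Sigma\times\bR$, after the unique lift) and that the reconstructed local system on $L$ recovers $\cF$ under quantization. This is essentially the content of \cite{STWZ19}*{Theorem 4.17 \& Proposition 4.18}, and I would follow that argument, using Proposition \ref{prop:conj-unique} (uniqueness of the conjugate filling up to Hamiltonian isotopy) and Proposition \ref{prop:JinTreu-inv} (Hamiltonian invariance of the quantization) to ensure the statement is independent of the choices of defining functions $m_B, m_W$ and local models used in Proposition \ref{prop:conj-exist}.
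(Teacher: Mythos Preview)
The paper does not give its own proof of this theorem: it is stated with attribution to \cite{STWZ19} (specifically Theorem 4.17 and Proposition 4.18 there), and the paragraph following the statement only sketches the local characterization of alternating sheaves near a crossing, namely the exact triangle $\cF_{\ol W}[1]\to\cF\to\cF_{\ol B}\xrightarrow{+1}$ and the classification of such extensions by invertible elements of $\mathrm{Ext}^1(\cF_{\ol B},\cF_{\ol W}[1])$. So there is no proof in the paper to compare your proposal against; the result is imported wholesale from the cited reference.

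That said, your plan is essentially the argument of \cite{STWZ19}, and you have correctly identified both the structure (direction (1) via computing stalks of the push-forward from the local models of $L$; direction (2) via reconstructing a local system on $L$ from the black/white restrictions and the crossing gluing data) and the main subtlety (that the support and microlocal-rank conditions alone must force the sheaf to lie in the image of $\Psi_{\widetilde L}$). One small point: the paper's brief discussion emphasizes the $\mathrm{Ext}^1$ classification at a crossing and the invertibility condition coming from the singular support constraint, which is exactly the mechanism by which the ``gluing data descends to a rank $r$ local system on the real blowup $L$'' in your direction (2); you might make that step more explicit rather than appealing to the acyclic total complex of Example \ref{ex:sheafcombin}, since the crossing in an alternating front has both conormal directions pointing \emph{into} the null region, which is not the configuration of that example.
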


    Indeed, an alternating sheaf can be characterized in the following way, following \cite{STWZ19}*{Proposition 4.15 \& 16}. Locally near a crossing, suppose that the white region $W$ is the first quadrant while the black region $B$ is the third quadrant. Then the alternating sheaf $\cF$ fits into an exact triangle
    $$\cF_{\ol{W}}[1] \to \cF \to \cF_{\ol{B}} \xrightarrow{+1}.$$
    Therefore, when $\cF_{\ol{W}}$ and $\cF_{\ol{B}}$ are given, the sheaf $\cF$ is classified by $Ext^1(\cF_{\ol{B}}, \cF_{\ol{W}}[1])$. In addition, the singular support condition $SS^\infty(\cF) \subset \Lambda$ is satisfied only when the element in $Ext^1(\cF_{\ol{B}}, \cF_{\ol{W}}[1])$ is invertible.

\begin{ex}
    Let $\cF_{\ol{W}} = \Bbbk_W^r[1]$, $\cF_{\ol{B}} = \Bbbk_{\ol{B}}^r$. Any nonzero element in $Ext^1(\cF_{\ol{B}}, \cF_{\ol{W}}[1]) = M_{r\times r}(\Bbbk)$ defines an alternating sheaf $\cF$, which in this case is the sheaf quantization of a rank~$r$ local system on the conjugate Lagrangian $L$.
\end{ex}

\subsubsection{Microlocal holonomies}
By Theorem \ref{thm:altsheaf}, an alternating sheaf comes from a local system of a conjugate Lagrangian filling. The holonomies of the local system determine the microlocal holonomies of the sheaf. Let us now explain how to compute the unsigned microlocal monodromies in Section \ref{sec:microlocal} and Example \ref{ex:mumon-STZ} (see also Appendix \ref{appen:microlocal}) along a 1-cycle $\gamma_F \in H_1(L; \bZ)$ for a null region $F$, following \cite{STWZ19}*{Section 5.2}.\\

Suppose that the front $\pi(\Lambda)$ is the union of the $x$-axis and $y$-axis on the plane, and $\cF$ is an alternating sheaf supported in the first and third quadrant of the plane. Then the microlocal merodromy $m_{L,\gamma}(\cF)$ along the path $\gamma$ from the positive $x$-axis to the negative $y$-axis is the composition of
\begin{enumerate}
  \item the parallel transport of the microstalk along the horizontal strand of $\pi(\Lambda)$;
  \item the isomorphism between the microstalk on the left of the horizontal strand of $\pi(\Lambda)$ and the stalk in the third quadrant;
  \item the isomorphism between the stalk in the third quadrant and the microstalk at the bottom of the vertical strand of $\pi(\Lambda)$.
\end{enumerate}
    Note that this is in fact the microlocal holonomy of the corresponding sheaf $\widetilde{\cF} \in \Sh^{pp}_{\widetilde{L}}(\Sigma \times \bR)$ from Theorem \ref{thm:JinTreumann} Part~(1). In an abuse of notation, so as to follow \cite{STWZ19}, we still refer to it as the microlocal holonomy of the alternating sheaf. In Section \ref{sec:cluster} we study cluster algebras over $\Z$. Therefore, we need to fix the signs when computing these microlocal holonomies on Legendrian surfaces. The details of how to achieve this are explained in Appendix \ref{appen:microlocal}. In consequence, we need to choose a coherent collection of sign curves (following Appendix \ref{appen:microlocal} Definition \ref{def:sign-curve}), which accounts for fixing a relative spin structure on $L$. The definition of such a coherent collection reads as follows:
    
\begin{definition}
    Let $L \subset T^*\Sigma$ be a conjugate Lagrangian filling of $\Lambda \subset T^{*,\infty}\Sigma$. Consider line segments $\gamma_v \subset L$ connecting two components of $\Lambda$ at each crossing $v \in \pi(\Lambda)$ such that $\pi(\gamma_v)$ are embedded arcs in the white region. Let the 1-skeleton $(L, \Lambda)_{\leq 1}$ of $(L, \Lambda)$ be the union of the boundary $\Lambda$ and the line segments $\gamma_v$. By definition, a coherent collection of sign curves associated to the 1-skeleton is a graph $P\sse\Sigma$ whose vertices are the black vertices and boundary points in $\Lambda$, such that the number of curves starting from the black vertices equals their degrees in the bipartite graph.
\end{definition}
    
    We leave it to the readers to verify that this definition is compatible with Appendix \ref{appen:microlocal}, accounting for relative spin structures on $L$, and also the computation \cite{GonKon21}*{Section 3.2} where the $-1$'s in the monodromies appear.\\
    
    Finally, in the context of conjugate Lagrangian surfaces, the first hint of the appearance of cluster $\mathcal{X}$-structures, observed for partial $\mathcal{X}$-structures in \cite[Section 5.1]{STWZ19} and proven in general in \cite[Corollary 1.2]{CasalsWeng22}, is illustrated by the precise rational transformation that the microlocal monodromy along 1-cycles undergoes upon performing a square move:
    
\begin{prop}[\cite{STWZ19}*{Theorem 5.8}]\label{prop:mumon-square}
    %{\WL This is not used anywhere. We can decide whether to leave it here. }
    Let $\Lambda_0, \Lambda_1 \subset T^{*,\infty}\Sigma$ be alternating Legendrians that differ by a square move at the null region $F$, and $L_0, L_1$ the corresponding conjugate Lagrangians. For $\cF_0 \in \Sh^{pp}_{\Lambda_0}(\Sigma)$ an alternating sheaf of microlocal rank~$1$, its image $\cF_1 \in \Sh^{pp}_{\Lambda_1}(\Sigma)$ is an alternating sheaf of microlocal rank~$1$ if and only if $m_{L_0, \gamma_F}(\cF_0) \neq -1$. Under this assumption, for any null region $C$,
    \[m_{L_1, \gamma_C}(\cF_1) = \begin{cases}
    m_{L_0, \gamma_C}(\cF_0)\left(1 + m_{L_0, \gamma_F}(\cF_0)\right)^{\left<\gamma_F,\gamma_C\right>},& C \neq F, \\
    m_{L_0, \gamma_C}(\cF_0)^{-1}, & C=F.
    \end{cases}\]
\end{prop}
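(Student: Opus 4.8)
The plan is to reduce Proposition~\ref{prop:mumon-square} to an explicit finite-dimensional linear-algebra computation in a disk $U\sse\Sigma$ containing the square move, using the combinatorial model for constructible sheaves from Example~\ref{ex:sheafcombin} and the combinatorial microlocal functor of Section~\ref{sec:microlocal}. Since the two alternating Legendrians $\Lambda_0$ and $\Lambda_1$ agree outside $U$, and the GKS equivalence $\Sh^{pp}_{\Lambda_0}(\Sigma)\simeq\Sh^{pp}_{\Lambda_1}(\Sigma)$ induced by the square-move Legendrian isotopy (Theorem~\ref{thm:GKS}) is supported in $U$, both the claim ``$\cF_1$ is an alternating sheaf of microlocal rank $1$'' and the transformation formula for $m_{L_1,\gamma_C}$ are local: for any null region $C$ disjoint from $U$ the sheaf, hence its microlocal holonomy, is literally unchanged, and the intersection number $\langle\gamma_F,\gamma_C\rangle$ vanishes, so such $C$ contribute trivially. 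Thus it suffices to analyze the finitely many null regions $C$ whose boundary meets $U$, namely the faces adjacent to $F$ together with $F$ itself.

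First I would fix the standard local model of the square move: a square with white vertices at its corners, two black vertices corresponding to the two triangles of a chosen diagonal, and the central null region $F$, so that the alternating strand diagram of $\Lambda_0$ cuts $U$ into the white/black/null regions of Figure~\ref{fig:SqMove}. Writing a microlocal rank-$1$ alternating sheaf $\cF_0$ in the combinatorial model amounts to recording a one-dimensional stalk on each black region, zero on the white regions (after the shift normalization of Theorem~\ref{thm:altsheaf}), together with invertible corner/transport maps; the condition $SS^\infty(\cF_0)\sse\Lambda_0$ forces, as in \cite{STWZ19}*{Prop.~4.15--16}, the connecting class in $\mathrm{Ext}^1(\cF_{\ol B},\cF_{\ol W}[1])$ to be invertible. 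I would then describe $\cF_1$ as the image of $\cF_0$ under the cone/flop functor realizing the square-move equivalence: its stalk on the two new black regions is the cone of the appropriate composite of transport maps of $\cF_0$, and a direct check shows this cone has singular support in $\Lambda_1$ and microlocal rank $1$ \emph{iff} that composite is an isomorphism, which in the $\mathcal X$-coordinates is exactly the statement $1+m_{L_0,\gamma_F}(\cF_0)\neq 0$. This yields the ``if and only if'' clause, and simultaneously the case $C=F$: the path computing $m_{L_1,\gamma_F}(\cF_1)$ traverses the flipped diagonal and is the inverse of the one computing $m_{L_0,\gamma_F}(\cF_0)$, giving $m_{L_1,\gamma_F}(\cF_1)=m_{L_0,\gamma_F}(\cF_0)^{-1}$.

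For a face $C\neq F$ adjacent to $F$, I would compute $m_{L_1,\gamma_C}(\cF_1)$ by composing the three elementary maps listed after Proposition~\ref{prop:mumon-square} (parallel transport along a horizontal strand, the stalk-to-microstalk identification on a black region, and its partner on the adjacent strand), now using the transport maps of $\cF_1$ obtained above. Relative to the corresponding composite for $\cF_0$, the only change is that the path through $\cF_1$ must detour through the cone-stalk on a new black region, and unwinding this detour contributes a single factor $\bigl(1+m_{L_0,\gamma_F}(\cF_0)\bigr)^{\pm1}$, with the sign of the exponent dictated by whether $\gamma_C$ crosses $\gamma_F$ positively or negatively, i.e.\ by $\langle\gamma_F,\gamma_C\rangle\in\{+1,-1\}$; collecting these cases gives the displayed formula. (Alternatively, one could deduce the rational transformation from the general cluster statement, but the local sheaf computation is cleaner here.) Throughout, to obtain the formula over $\Z$ rather than up to sign, I would carry along the coherent collection of sign curves fixed before the proposition (equivalently, the relative spin structure on $L_0$), track how the square move modifies these curves near $F$, and verify that the induced signs match those appearing in \cite{GonKon21}*{Section 3.2}. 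This sign bookkeeping, rather than the linear algebra, is where I expect the real work to lie.
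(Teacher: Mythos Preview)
The paper does not provide its own proof of Proposition~\ref{prop:mumon-square}: the statement is quoted verbatim as a citation of \cite{STWZ19}*{Theorem~5.8}, and the only follow-up in the text is the one-line remark that this is a cluster $\cX$-transformation. So there is nothing in the paper to compare your argument against; you have in effect sketched a proof of a result the authors import wholesale.

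That said, your outline is broadly the right shape for how this is actually proved in \cite{STWZ19}: reduce to a disk $U$ where the move is supported, use the combinatorial model for alternating sheaves, and compute the parallel-transport composites explicitly. One inaccuracy worth flagging: your description of the local model (``one-dimensional stalk on each black region, zero on the white regions'') is not what alternating sheaves look like in this paper. By Definition~\ref{def:altsheaf} and the discussion immediately following Theorem~\ref{thm:altsheaf}, an alternating sheaf has support equal to the closure of the union of black \emph{and} white regions, with stalk $\Bbbk_{\ol B}$ on black and $\Bbbk_W[1]$ on white (glued by an invertible extension class at each crossing), and is zero on the \emph{null} regions. This matters for the computation, since the path $\gamma_F$ you trace lives in the conjugate Lagrangian, and its projection runs along the boundary of the null face $F$, crossing alternately between black and white regions; the monodromy is assembled from the crossing isomorphisms, not from transport through null regions. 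The rest of your plan (locality, the cone condition giving $1+m_{L_0,\gamma_F}(\cF_0)\neq 0$, the detour factor $(1+m_{L_0,\gamma_F})^{\pm 1}$ for adjacent faces) is consistent with the computation in \cite{STWZ19}.
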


    As we will see in Section \ref{sec:cluster}, this is a cluster $\cX$-transformation in a cluster $\cX$-variety defined by the intersection quiver of $H_1(L; \bZ)$.

\subsection{Sheaf quantization of Legendrian weaves}\label{sec:quan-weave}
    Let $\Lambda \subset T^{*,\infty}\Sigma$ be a  Legendrian link and $L \subset T^*\Sigma$ a Lagrangian filling  of $\La$ whose Legendrian lift is a Legendrian weave $\widetilde{L} \subset J^1\Sigma$. In this context of Legendrian weaves, the sheaf quantization of $L$ can be obtained explicitly as follows.

\subsubsection{Sheaves singularly supported on weaves}
    Given the Legendrian weave $\widetilde{L} \subset J^1\Sigma$, a microlocal rank~$r$ sheaf $\widetilde{\cF} \in \Sh^{pp}_{\widetilde{L}}(\Sigma \times \bR)$ with singular support on the Legendrian weave can be described by a framed flag on the associated $n$-graph $G$, generalizing Example \ref{ex:sheafcombin}. This is a consequence of the discussion in \cite{CasalsZas20}*{Section 5}. The framed flag moduli for a weave is described as follows. By definition, a flag of local systems on $X$ is a local system $E \rightarrow X$ with a complete filtration $E_\bullet$ by local systems $E_k \rightarrow X$ such that the total monodromy preserves the filtration. For $U \subset X$, a flag of sub-local systems $F_\bullet$ on $U$ is said to be compatible with $E_\bullet$ if for $\gamma \in \pi_1(U, u)$ and $v \in F_k$, $i_k(\gamma \cdot v) = i_{k*}\gamma \cdot i_k(v)$, where $i_k: F_k(u) \hookrightarrow E_k(u)$. In \cite{CasalsZas20}*{Section 5}, a framed flag for a weave (given by an $n$-graph $G$) is given by the following data:
    \begin{enumerate}
      \item a rank $rn$ local system $E \rightarrow \Sigma$;
      \item for each face $F$, a flag of local systems $0 = V_0(F) \subset V_1(F) \subset ... \subset V_n(F) \cong \Bbbk^{rn}$;
      \item for each pair of adjacent faces $F_1$ and $F_2$ sharing a common edge $e \in G_i$, only the $i$-th vector space of $V_\bullet(F_1)$ and $V_\bullet(F_2)$ are transverse, and there are chosen isomorphisms $V_j(F_1) \cong V_j(F_2)\, (j \neq i)$.
      \item by gluing, the isomorphisms define local systems in each region, and these local systems in each region are compatible with $E$.
    \end{enumerate}

\subsubsection{Microlocal holonomies}\label{sec:quan-weave-mon}
    Given a Legendrian weave $\widetilde{L} \subset J^1\Sigma_\text{cl}$ with boundary $\Lambda \subset J^1(\partial \Sigma_\text{cl})$, and a sheaf $\widetilde{\cF} \in \Sh^{pp}_{\widetilde{L}}(\Sigma_\text{op} \times \bR)$, the following examples suffice in order to compute microlocal holonomies for the weaves we need. We follow Section \ref{sec:microlocal} and Example \ref{ex:mumon-STZ} (see also Appendix \ref{appen:microlocal}). Given a sheaf $\widetilde{\cF} \in \Sh^{pp}_{\widetilde{L}}(\Sigma_\text{op} \times \bR)$ which is the sheaf quantization of a local system on the free Legendrian weave $\widetilde{L}$, the monodromy of the local system exactly determines the microlocal monodromy of the sheaf. The computations of unsigned microlocal holonomies, according to \cite{STZ17}*{Section 5} and \cite{CasalsZas20}*{Section 7}, are as follows:

\begin{ex}\label{ex:mero-2strand}
    Suppose $\gamma$ is a path passing transversely through an edge $e_i$ starting from the $i$-th sheet of the Legendrian front. Then the corresponding flags in the two adjacent regions have the same $V_0, ..., V_{i-1}$, and also the same $V_{i+1}, ..., V_n$. Consider the quotient $V := V_{i+1}/V_{i-1}$. By taking $V_i/V_{i-1} \subset V_{i+1}/V_{i-1}$, we get two lines $l_a, l_b$ (in microlocal rank $r$, to $r$-dimensional subspaces), equipped with volume forms $\omega_a$ and $\omega_b$. The parallel transport map is given by the isomorphism $l_a \xrightarrow{\sim} V/l_b$, and hence is computed by $\omega_a \wedge \omega_b \in \det(V)$.
\end{ex}

\begin{ex}\label{ex:mero-3strand}
    Suppose $\gamma$ is a path passing transversely through the edges $e_i, e_{i+1}, e_i$ starting from the $i$-th sheet of the Legendrian front. Then the corresponding flags in the four regions have the same $V_0, ..., V_{i-1}$, and also the same $V_{i+2}, ..., V_n$. Consider the 3-dimensional quotient $V := V_{i+2}/V_{i-1}$. By taking $V_i/V_{i-1} \subset V_{i+1}/V_{i-1} \subset V_{i+2}/V_{i-1}$, we get four pairs of lines and planes
    $$(l_a, l_A), \,\, (l_{AB}, l_A), \,\, (l_{AB}, l_B), \,\, (l_b, l_B),$$
    where $l_{AB} = l_A \cap l_B$. The pairs $(l_a, l_A), (l_b, l_B)$ are equipped with volume forms $(\omega_a, \omega_A)$ and $(\omega_b, \omega_B)$. The parallel transport map is given by the composition
    $$l_a \xrightarrow{\sim} l_A/l_{AB} \xrightarrow{\sim} V/l_B,$$
    which is computed by $\omega_a \wedge \omega_B \in \det(V)$.
\end{ex}

    Given the microlocal merodromy computations in Examples \ref{ex:mero-2strand} and \ref{ex:mero-3strand}, we can compute the microlocal monodromy of $\widetilde{\cF}$ along (the Legendrian lift $\widetilde{L}$ of) the 1-cycles $\gamma \in H_1(\widetilde{L}; \bZ)$, see \cite{CasalsZas20}*{Section 7}. For simplicity, here we only describe the microlocal monodromy along $\sf I$-cycles and $\sf Y$-cycles.

\begin{ex}[$\sf I$-cycles]\label{ex:mumon-Icycle}
    Suppose $\gamma \in H_1(\widetilde{L}; \bZ)$ is an $\sf I$-cycle that locally lies on the $i$ \& $(i+1)$-th sheet of $\pi_\text{front}(\widetilde{L})$. Thus the flags in the four regions have the same $V_0, ..., V_{i-1}$, and also the same $V_{i+1}, ..., V_n$. Consider $V = V_{i+1}/V_{i-1}$. Then by taking $V_i/V_{i-1} \subset V_{i+1}/V_{i-1}$, we get four lines $l_a, l_b, l_c, l_d \subset V$ in the four regions, such that any two lines in adjacent regions are transverse, as in Figure \ref{fig:mumonI}. Then the microlocal monodromy $m_{\widetilde{L},\gamma}(\widetilde{\cF})$ along $\gamma$ is the composition
    $$l_a \xrightarrow{\sim} V/l_b \xrightarrow{\sim} c \xrightarrow{\sim} V/l_d \xrightarrow{\sim} l_a.$$
    Therefore, the monodromy is the cross ratio which, assigning each $l_i$ a volume form $\omega_i$, by Example \ref{ex:mero-2strand}, can be written as
    $$\left<l_a, l_b, l_c, l_d\right> = \frac{\omega_a \wedge \omega_b}{\omega_b \wedge \omega_c} \frac{\omega_c \wedge \omega_d}{\omega_d \wedge \omega_a}.$$
\end{ex}
\begin{figure}[h!]
	\begin{tikzpicture}
	\pgfmathsetmacro{\A}{.866}
	\draw[blue,thick] (-1/2,\A) -- (0,0) -- (-1/2,-\A);
	\draw[blue,thick] (0,0) -- (2,0);
	\draw[blue,thick] (2.5,\A) -- (2,0) -- (2.5,-\A);
	\node at (1,.6) {$a$};
	\node at (-.5,0) {$b$};
	\node at (1,-.6) {$c$};
	\node at (2.5,0) {$d$};
	\node[blue] at (.7,.15) {$e$};
	\draw[orange,thick] (1,0) ellipse (2 and 1);
	\end{tikzpicture}
	\caption{Neighborhood of a monochromatic edge $e$ with the data determining a constructible sheaf $F$. As we show, the microlocal monodromy $m_{\Lambda,\gamma}(F)$ along the 1-cycle $\gamma(e)$ is given by the cross-ratio $\langle l_a, l_b, l_c, l_d\rangle$.}
	\label{fig:mumonI}
\end{figure}
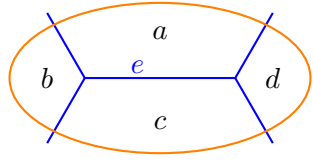

\begin{ex}[$\sf Y$-cycles]\label{ex:mumon-Ycycle}
    Suppose $\gamma \in H_1(\widetilde{L}; \bZ)$ is an $\sf Y$-cycle that locally lies on the $(i-1)$, $i$ \& $(i+1)$-th sheet of $\pi_\text{front}(\widetilde{L})$. Then consider $V = V_{i+2}/V_{i-1}$. Then by taking $V_i/V_{i-1}, V_{i+1}/V_{i-1} \subset V_{i+1}/V_{i-1}$, we get a couple of flags as in Figure \ref{fig:mumonY}. In particular we have 1-dimensional vector spaces $l_a, l_b, l_c$ and 2-dimensional vector spaces $l_A, l_B, l_C$. Then by assigning each $l_i$ a volume form, by Example \ref{ex:mero-3strand} the monodromy is
    $$\left<(l_a, l_A), (l_b, l_B), (l_c, l_C) \right> = \frac{(\omega_B \wedge \omega_a)(\omega_C \wedge \omega_b)(\omega_A \wedge \omega_c)}{(\omega_B \wedge \omega_c)(\omega_C \wedge \omega_a)(\omega_A \wedge \omega_b)}.$$
\end{ex}
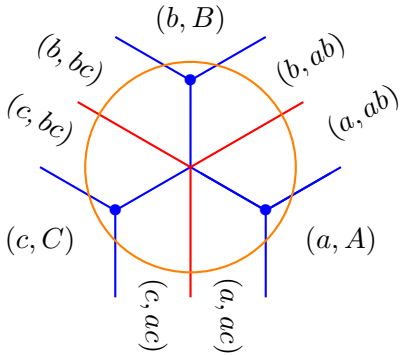
\begin{figure}[h!]
	\begin{tikzpicture}
	\pgfmathsetmacro{\A}{1.732}
	\pgfmathsetmacro{\a}{.866}
	\draw[blue,thick] (1-1,\A+1/\A)--(1,\A)--(1+1,\A+1/\A);
	\draw[blue,thick] (0-1,0+1/\A)--(0,0)--(0+1,0+1/\A);
	\draw[blue,thick] (2-1,0+1/\A)--(2,0)--(2+1,0+1/\A);
	\draw[blue,thick] (2-\a,0+.5)--(2,0)--(2+\a,0+.5);
	\draw[blue,thick] (0,0)--(0,-2/\A);
	\draw[blue,thick] (2,0)--(2,-2/\A);
	\draw[blue,thick] (1,\A)--(1,\A-2/\A);
	\draw[red,thick] (1,\A-2/\A)--(1,\A-2/\A-\A);
	\draw[red,thick] (0+1,0+1/\A)--(0+1+1.5,0+1/\A+\a);
	\draw[red,thick] (0+1,0+1/\A)--(0+1-1.5,0+1/\A+\a);
	\node[blue] at (0,0) {$\bullet$};
	\node[blue] at (2,0) {$\bullet$};
	\node[blue] at (1,\A) {$\bullet$};
	\node at (3,-.4) {$(a,A)$};
	\node at (-1,-.4) {$(c,C)$};
	\node at (1,\A+.8) {$(b,B)$};
	\node[rotate=30] at (3.3,1.3) {$(a,ab)$};
	\node[rotate=30] at (2.6,2) {$(b,ab)$};
	\node[rotate=-30] at (-1,1.2) {$(c,bc)$};
	\node[rotate=-30] at (-.6,2) {$(b,bc)$};
	\node[rotate=-90] at (.5,-1.4) {$(c,ac)$};
	\node[rotate=-90] at (1.5,-1.4) {$(a,ac)$};
	\draw[orange,thick] (1,1/\A) ellipse (1.4 and 1.4);
	\end{tikzpicture}
	\caption{Neighborhood of a $\sf Y$-cycle with the data determining a constructible sheaf $F$. As we compute, the microlocal monodromy $m_{\Lambda,\gamma}(F)$ along the associated 1-cycle $\gamma$ is given by the triple ratio of the three transverse flags. Here $l_{ab} = l_a + l_b$, while $l_{AB} = l_A \cap l_B$.}
	\label{fig:mumonY}
\end{figure}

    We need to fix the signs when computing microlocal holonomies on Legendrian surfaces (as explained in Appendix \ref{appen:microlocal}). Hence we choose a coherent collection of sign curves (following Appendix \ref{appen:microlocal} Definition \ref{def:sign-curve} and \cite{CasalsWeng22}*{Section 4.5}), which accounts for fixing a relative spin structure on $\widetilde{L}$.
    
\begin{definition}[\cite{CasalsWeng22}*{Definition 4.15}]\label{def:sign}
    Let $\widetilde{L}(\mathfrak{w}) \subset J^1\Sigma_\text{cl}$ be a Legendrian weave. Then a coherent collection of sign curves is a graph $P$ whose vertices are the trivalent vertices in $\mathfrak{w}$ and boundary points on $\Lambda$, such that each trivalent vertex has degree 1.
\end{definition}

    For simplicity, consider the free Legendrian weave $ \widetilde{L}(\mathfrak{w}_\beta) \subset J^1\bD^2$ with boundary on a Legendrian positive braid closure $\Lambda_{\beta\Delta^2} \subset J^1S^1$ as in Subsection \ref{ssec:ex-free-weave}.\footnote{Following the identification in Subsection \ref{sec:weave-at-infty}, we do not distinguish $\Lambda_{\beta\Delta^2}$ and $\Lambda(\beta \Delta^2)$ used in Subsection \ref{ssec:triangle-positive}.} Then we can choose a coherent collection of sign curves such that each trivalent vertex is connected to the boundary crossing in $\beta$ as in Figure \ref{fig:braid-sign}.
    
\begin{figure}
    \centering
    \includegraphics[width=0.65\textwidth]{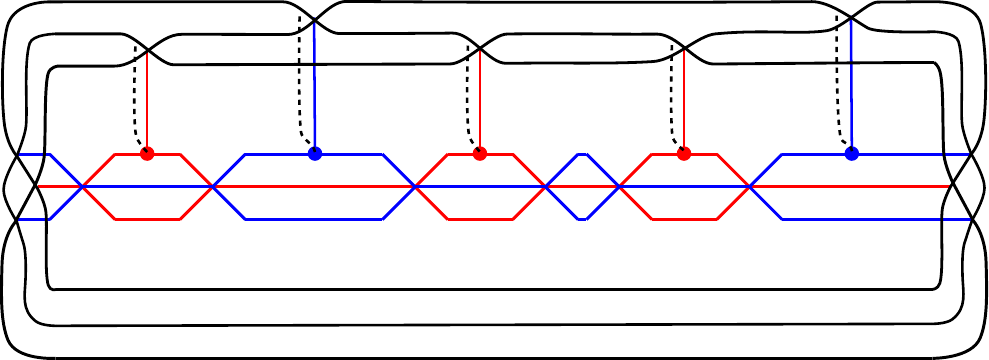}
    \caption{The sign curves (in dashed lines) associated to the Legendrian weave $ \widetilde{L}(\mathfrak{w}_\beta) \subset J^1\bD^2$ with boundary being the Legendrian cylindrical braid closure $\Lambda_{\beta\Delta^2} \in J^1S^1$.}
    \label{fig:braid-sign}
\end{figure}

    In order to compute microlocal merodromies along relative 1-cycles, we need to recall the following lemma. We follow the notations in \cite{ShenWeng21,GaoShenWeng20}: a pair of flags $B, B'$ are in relative position $s_i \in S_n$ if
    $$B_i \neq B'_i; \;\; B_j = B'_j, \, \forall\,j \neq i.$$
    In addition, two decorations $A, A'$ over $B, B'$, are called compatible if there exists $v_i \neq v_i' \in \Bbbk^n$ such that
    $$\omega_{i} = \omega_{i-1} \wedge v_i, \, \omega'_{i} = \omega'_{i-1} \wedge v'_i; \;\; \omega_{i+1} = \omega_{i-1} \wedge v_i \wedge v'_i; \;\; \omega_j = \omega'_j, \, \forall\, j \neq i.$$

\begin{lemma}[\cite{ShenWeng21}*{Lemma 2.10}, \cite{GaoShenWeng20}*{Lemma 4.14}]\label{lem:decorate-flag}
    Let $w \in S_n$ be an element of the symmetric group. Given $B \xrightarrow{w} B'$, for any decoration $A$ over $B$, there exists a unique decoration $A'$ over $B'$ such that $A \xrightarrow{w} A'$ is compatible.
\end{lemma}

    As in Proposition \ref{prop:frame-markpt}, we associate $n$ base points on each Legendrian link $\Lambda_{\beta\Delta^2}$ on the top right region in Figure \ref{fig:braid-sign}. By Lemma \ref{lem:decorate-flag}, given a system of flags associated to $\beta\Delta^2$, we can start with a decorated flag on the top right region and determine a compatible decoration on the flags associated to $\Lambda(\beta\Delta^2)$ counterclockwise. By Proposition \ref{prop:frame-markpt}, the compatible decorations are equivalent to trivialization data of the microstalks along each strand. These trivializations coincide with the ones determined by the parallel transport maps along the paths specified above.\footnote{These paths should be viewed as capping paths, connecting arbitrary points to the base points (on the top right region of $\Lambda(\beta\Delta^2)$) as in the computation of the Legendrian contact dg-algebra.} 

\begin{lemma}\label{lem:frame-decorate}
    Let $\beta \in \mbox{Br}_n^+$ be a positive braid and $ \widetilde{L}(\mathfrak{w}_{\beta}) \subset J^1\bD^2$ the associated free Legendrian weave with boundary on a Legendrian positive braid closure $\Lambda_{\beta \Delta^2} \in J^1S^1$. Consider the choice of (micro)framings in Proposition \ref{prop:frame-markpt} and the coherent collection of sign curves such that each trivalent vertex is connected to the boundary crossing in $\beta$ (Figure \ref{fig:braid-sign}).\\
    
    Then the decorations of flags over $\beta$ determined by microlocal parallel transport maps along $\beta$ starting from the microframing on the right coincide with the decorations determined by Lemma \ref{lem:decorate-flag} by following the decoration on the right.
\end{lemma}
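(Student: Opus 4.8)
The plan is to reduce the statement to a local, crossing-by-crossing comparison of two prescriptions for decorating the flags along the braid word $\beta$. On one side we have the decorations obtained by microlocal parallel transport: given a microframed sheaf $\widetilde{\cF}$, the microframing at the base points $T=\{p_1,\dots,p_n\}$ in the top-right region trivializes the microstalks on the $n$ outgoing strands of $\beta\Delta^2$; transporting these trivializations along the strands (using Examples \ref{ex:mero-2strand} and \ref{ex:mumon-Icycle}, i.e.\ the wedge formula $\omega_a\wedge\omega_b$ for transport across a monochromatic edge) produces, at every face adjacent to $\beta$, a decoration of the associated flag. On the other side we have the decorations produced purely combinatorially by Lemma \ref{lem:decorate-flag}: starting from the decorated flag on the top-right region one propagates the decoration counterclockwise, and at each crossing $s_i$ of $\beta$ one uses the compatibility relation $\omega_{i+1}=\omega_{i-1}\wedge v_i\wedge v_i'$, $\omega_i=\omega_{i-1}\wedge v_i$, $\omega'_i=\omega'_{i-1}\wedge v'_i$, $\omega_j=\omega'_j$ for $j\neq i$. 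I would show these two agree.

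First I would set up notation identifying, via Proposition \ref{prop:frame-markpt}, the restriction of $\widetilde{\cF}$ near each strand with a complete flag together with a chosen volume form on each subquotient, so that ``microframing at $p_k$'' literally means a volume form $\omega_k$ on $F_k/F_{k-1}$. Then I would observe that the $n$ strands of $\beta$ run monotonically (left to right) through the weave, and that each crossing $s_i$ in the front of $\beta\Delta^2$ — by the construction of $\mathfrak{w}(\bG_\beta)=\mathfrak{w}_{\beta\Delta^2}$ in Subsection \ref{ssec:ex-free-weave} and the pinching picture of Theorem \ref{thm:braid-weavepinching}, Figure \ref{fig:braid-pinch} — corresponds to exactly one trivalent vertex together with a block of hexagonal vertices in $\mathfrak{w}$. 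The key local computation is then: passing a capping path from the top-right base points across this block and past the trivalent vertex, the microlocal parallel-transport rule of Example \ref{ex:mumon-Ycycle} (the $\sf Y$-cycle transport $l_a\xrightarrow{\sim} l_A/l_{AB}\xrightarrow{\sim} V/l_B$, computed by $\omega_a\wedge\omega_B$) reproduces precisely the relation defining a compatible decoration in the sense of the paragraph before Lemma \ref{lem:decorate-flag}. In other words, the trivalent vertex of the weave \emph{is} the geometric incarnation of the compatibility relation in Lemma \ref{lem:decorate-flag}. Because both procedures start from the same decorated flag on the top-right region and are determined crossing-by-crossing, an induction on the length $\ell(\beta)$ of the braid word, walking counterclockwise, then yields the coincidence of the two decorations everywhere along $\beta$.

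I would also need to check that the chosen coherent collection of sign curves (Figure \ref{fig:braid-sign}, each trivalent vertex joined to its boundary crossing in $\beta$) is exactly the one making the signs in the microlocal transport across the trivalent vertex match the (sign-free, or rather Lie-theoretically normalized) convention in Lemma \ref{lem:decorate-flag}; this is where the relative spin structure enters, and one invokes the compatibility of Definition \ref{def:sign} with Appendix \ref{appen:microlocal} together with the sign bookkeeping of \cite{GonKon21}*{Section 3.2}. Concretely this amounts to verifying that no extra $-1$ is introduced when the capping path crosses a sign curve in the prescribed configuration, which is a finite check given that each trivalent vertex meets exactly one sign curve.

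The main obstacle I anticipate is bookkeeping the hexagonal vertices cleanly: a single crossing $s_i$ of $\beta$ is resolved in the weave as the half-twist piece $\mathfrak{n}_i^\uparrow(\mathtt{w}_{0,n})$, then a trivalent vertex, then $\mathfrak{n}_i^\uparrow(\mathtt{w}_{0,n})^{\mathrm{op}}$ (Figure \ref{fig:braid-weave-cross2}), so one must argue that transport through $\mathfrak{n}_i^\uparrow$ and back through $\mathfrak{n}_i^{\uparrow,\mathrm{op}}$ cancels — i.e.\ the hexagonal blocks only permute which pair of sheets the relevant $2$-dimensional (or $3$-dimensional) quotient $V$ lives in, without altering the induced volume forms — leaving only the trivalent-vertex contribution, which is the compatibility relation. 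This cancellation is essentially the statement that $\mathfrak{n}_i^\uparrow(\mathtt{w}_{0,n})$ realizes a Legendrian isotopy (push-through moves, Figure \ref{fig:ReidemeisterWeave}), hence induces the identity on microlocal monodromy data up to the relabeling of sheets, so it should go through, but it is the step that requires the most care with indices and volume forms. Once that is in place the proof is a short induction.
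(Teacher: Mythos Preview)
Your inductive, crossing-by-crossing strategy is sound, but you have misidentified where the parallel transport is taking place, and this sends you into the weave interior unnecessarily. The statement concerns microlocal parallel transport \emph{along the strands of the boundary link $\Lambda_{\beta\Delta^2}$} as they run over the portion $\beta$ of the front; this is the microlocal monodromy of $m_\Lambda(\cF)\in\Loc(\Lambda)$ along the $1$-dimensional link, as in Example~\ref{ex:mumon-STZ}. It is \emph{not} transport along paths in the $2$-dimensional weave $\widetilde L(\mathfrak w_\beta)$, so Examples~\ref{ex:mero-2strand}--\ref{ex:mumon-Ycycle} and the structure of $\mathfrak n_i^\uparrow(\mathtt w_{0,n})$, $\mathfrak c_i^\uparrow$, $\mathfrak n_i^\uparrow(\mathtt w_{0,n})^{\mathrm{op}}$ are irrelevant here. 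In particular your ``main obstacle'' --- cancellation through the hexagonal blocks --- is a problem you have created by choosing the wrong stage for the computation; it simply does not arise.

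What actually happens is much simpler. Along a strand of $\Lambda$ the microstalk is parallel-transported trivially except when the strand passes a sign point. With the coherent collection of sign curves of Figure~\ref{fig:braid-sign}, each sign curve terminates at a boundary crossing of $\beta$, so exactly one of the two strands at a crossing $s_i$ --- the one going down --- passes through a sign point and picks up a $-1$; the strand going up does not. If the decoration on the right is $\omega_j=v_1\wedge\cdots\wedge v_j$, the transported microstalks on the left are $v'_i=-v_{i+1}$, $v'_{i+1}=v_i$, $v'_j=v_j$ otherwise, and one checks directly that the resulting $\omega'_j$ satisfy $\omega'_j=\omega_j$ for $j\neq i$ and $\omega_{i+1}=\omega_{i-1}\wedge v_i\wedge v'_i$. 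This is exactly the compatibility relation defining the decoration in Lemma~\ref{lem:decorate-flag}, and the induction is immediate. The weave enters only through the endpoints of its sign curves on $\Lambda$; you never need to leave the link.
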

\begin{proof}
    At a crossing $s_i$ in the braid $\beta$, suppose the flag on the right is $B$ and the flag on the left is $B'$. Suppose the decoration on $B$ (determined by microstalks) is
    $$\omega_j = v_1 \wedge v_2 \wedge \dots \wedge v_j, \; \; 1\leq j\leq n.$$
    At the crossing, the strand going up induces the parallel transport from $v_i$ to $v_i$, while the strand going down passes through the sign curve, and thus induces the parallel transport from $v_{i+1}$ to $-v_{i+1}$. Hence the decoration on $B'$ is
    \[\begin{array}{c}
    \omega_j' = v_1' \wedge v_2' \wedge \dots \wedge v_j', \; \; 1\leq j\leq n, \\
    v_j' = v_j, \, \forall\, j \neq i, i+1; \;\; v_i' = -v_{i+1}, \; v_{i+1}' = v_i.
    \end{array}\]
    Hence Lemma \ref{lem:decorate-flag} indeed provides a way to compute parallel transport maps and microlocal merodromies.
\end{proof}

    In parallel to Proposition \ref{prop:mumon-square}, the effect of a Legendrian mutation is also readily computed. By direct computation in \cite{STWZ19}*{Proposition 5.8} and \cite{CasalsZas20}*{Lemma 7.11}, the change of microlocal monodromies under a Legendrian mutation along an $\sf I$-cycle or $\sf Y$-cycle $\gamma \in H_1(\widetilde{L}; \bZ)$ reads as follows.

\begin{prop}[\cite{CasalsWeng22,CasalsZas20}]\label{prop:mumon-weave} Let $\widetilde{L}_0, \widetilde{L}_1 \subset J^1\Sigma$ be Legendrian weaves that differ by a Legendrian mutation along an $\sf I$-cycle or $\sf Y$-cycle $\gamma \in H_1(\widetilde{L}_0; \bZ)$. Then for microlocal rank~$1$ sheaves $\widetilde{\cF}_0 \in \Sh^{pp}_{\widetilde{L}_0}(\Sigma \times \bR)$ and $\widetilde{\cF}_1 \in \Sh^{pp}_{\widetilde{L}_1}(\Sigma \times \bR)$ that are identical away from the standard $3$-ball in $\Sigma \times \bR$ where Legendrian mutation is applied, we have
    \[m_{L_1,\xi}(\widetilde{\cF}_1) = \begin{cases}
    m_{L_0,\xi}(\widetilde{\cF}_0)\big(1 + m_{L_0,\gamma}(\widetilde{\cF}_0)\big)^{\left<\gamma, \xi\right>},& \xi \neq \gamma, \\
    m_{L_0,\xi}(\widetilde{\cF}_0)^{-1}, & \xi = \gamma.
    \end{cases}\]
\end{prop}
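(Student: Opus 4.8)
The plan is to reduce the identity to a local computation in a Darboux ball and then carry out the relevant cross-ratio / triple-ratio manipulations, exactly paralleling the conjugate-surface computation in Proposition \ref{prop:mumon-square}. First I would use the fact that a Legendrian mutation along an $\sf I$- or $\sf Y$-cycle is, by construction \cite{CasalsZas20}*{Section 4.8}, supported in a standard contact ball $B \subset J^1\Sigma$ containing the cycle $\gamma$; outside $B$ the weaves $\widetilde{L}_0$ and $\widetilde{L}_1$ coincide and, by hypothesis, so do the sheaves $\widetilde{\cF}_0$ and $\widetilde{\cF}_1$. Thus for any $\xi \in H_1(\widetilde{L}_0;\bZ)$ I would choose a representative whose intersection with $B$ is a disjoint union of arcs, each contributing $\pm 1$ to the algebraic intersection number $\langle \gamma,\xi\rangle$, so that $m_{L_j,\xi}(\widetilde{\cF}_j)$ factors as a product of parallel transports across these arcs times a contribution from $\xi \setminus B$ that is independent of the mutation. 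It then suffices to compute how the parallel transport of a microlocal rank-$1$ sheaf across $B$ changes, check that the change is by a factor $(1+m_{L_0,\gamma}(\widetilde{\cF}_0))^{\pm 1}$ according to the sign of the local intersection, and sum these local signs to recover the exponent $\langle\gamma,\xi\rangle$. The case $\xi = \gamma$ is separate: here $\gamma$ is entirely contained in $B$ and is replaced by $-\gamma$ under the disk surgery, which produces the inversion.

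Next I would set up the flag model inside $B$ using the framed-flag description of sheaves on weaves from Section \ref{sec:quan-weave}. In the $\sf I$-cycle case this yields, as in Example \ref{ex:mumon-Icycle}, four pairwise transverse lines $l_a,l_b,l_c,l_d$ in a rank-$2$ quotient local system $V=V_{i+1}/V_{i-1}$ with $m_{L_0,\gamma}(\widetilde{\cF}_0) = \langle l_a,l_b,l_c,l_d\rangle$; in the $\sf Y$-cycle case it yields three transverse flags in a rank-$3$ quotient $V=V_{i+2}/V_{i-1}$ with $m_{L_0,\gamma}(\widetilde{\cF}_0)$ the triple ratio of Example \ref{ex:mumon-Ycycle}. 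The Legendrian mutation performs a Lagrangian disk surgery \cite{CasalsZas20}*{Theorem 4.21}, and on the level of flags this replaces the configuration in $B$ by its \emph{flip}: in the $\sf I$-case the opposite lines bounding the monochromatic edge are exchanged for the diagonal pair (a square move on the associated $n$-graph), and in the $\sf Y$-case the three flags are reconfigured by the corresponding local weave move. The flipped configuration is again pairwise transverse — equivalently, $\widetilde{\cF}_1$ is again microlocal rank $1$ — precisely when $1+m_{L_0,\gamma}(\widetilde{\cF}_0)$ is invertible (and, for $\xi=\gamma$, when $m_{L_0,\gamma}(\widetilde{\cF}_0)$ is invertible); both hold under the standing hypothesis that $\widetilde{\cF}_1$ exists. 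The computation proper is then the multilinear-algebra identity: expressing every cross-ratio (resp.\ triple ratio) for a cycle $\xi$ adjacent to $\gamma$ in wedge products of chosen volume forms on the $l_\bullet$ and substituting the flipped data gives $m_{L_1,\xi}(\widetilde{\cF}_1)=m_{L_0,\xi}(\widetilde{\cF}_0)(1+m_{L_0,\gamma}(\widetilde{\cF}_0))^{\langle\gamma,\xi\rangle}$ for $\xi\neq\gamma$ and $m_{L_1,\gamma}(\widetilde{\cF}_1)=m_{L_0,\gamma}(\widetilde{\cF}_0)^{-1}$. These are the weave-theoretic counterparts of \cite{STWZ19}*{Theorem 5.8} (for $\sf I$-cycles, i.e.\ square moves) and \cite{CasalsZas20}*{Lemma 7.11} (for $\sf Y$-cycles), so the remaining work is to transport those computations through the flag dictionary above.

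Finally, to obtain the identity over $\bZ$ I would fix the coherent collection of sign curves of Definition \ref{def:sign} (equivalently a relative spin structure on $\widetilde{L}$), so that the microlocal parallel transports above carry well-defined signs; away from $B$ the $(-1)$'s produced at trivalent vertices crossed by $\gamma$ and $\xi$ agree for $\widetilde{L}_0$ and $\widetilde{L}_1$, and inside $B$ they are pinned down by the standard local model of \cite{CasalsWeng22}*{Section 4.5}. The main obstacle I expect is this last step together with the precise matching in the flip argument: one must verify that the local reconfiguration of flags induced by the Lagrangian disk surgery is exactly the one appearing in the square-move / weave-mutation computations of the cited references — with coherent signs — rather than differing from it by an overall automorphism of $V$ (which would still change individual $l_\bullet$ but must leave cross- and triple ratios untouched). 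Once that identification is secured, the cross-ratio / triple-ratio substitution is routine.
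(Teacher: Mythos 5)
The paper offers no proof of this proposition: it is quoted as a ``direct computation'' from \cite{STWZ19}*{Proposition 5.8} (the $\sf I$-cycle/cross-ratio case) and \cite{CasalsZas20}*{Lemma 7.11} (the $\sf Y$-cycle/triple-ratio case). Your outline --- localizing to the surgery ball where the two sheaves agree outside, expressing the monodromies as cross-ratios and triple ratios of the local flag data as in Examples \ref{ex:mumon-Icycle} and \ref{ex:mumon-Ycycle}, performing the flip induced by the disk surgery, and pinning down signs by a coherent collection of sign curves --- is exactly the strategy of those cited computations, so it takes essentially the same approach and I see no gap.
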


    Again, as we will see in Section \ref{sec:cluster}, this is a cluster $\cX$-transformation in a cluster $\cX$-variety defined by the intersection quiver of $H_1(\widetilde{L}; \bZ)$.

\subsubsection{Sheaves singularly supported on links via weaves}\label{sec:quan-weave-link}
    Following the construction (in the proof of) Theorem \ref{thm:JinTreumann}, the sheaf quantization of the Lagrangian projection of the $\widetilde{L}$ in $\Sh^{pp}_{\Lambda}(\Sigma)$ in the case of Legendrian weaves is described as follows.\\

    First, by Theorem \ref{thm:JinTreumann}.(1), there is a unique microlocal rank~1 sheaf $\widetilde{\cF} \in \Sh_{\widetilde{L}}^{pp}(\Sigma \times \bR)$ whose microlocal monodromy along $\widetilde{L}$ is the prescribed rank~1 local system. Then, by Theorem \ref{thm:JinTreumann}.(2), the projection $\pi_\Sigma: \Sigma \times \bR \rightarrow \Sigma$ defines the sheaf quantization $\cF$ of the Lagrangian filling $L \subset T^*\Sigma$ by
    $$\cF = \pi_{\Sigma!}\widetilde{\cF} \in \Sh_\Lambda^{pp}(\Sigma).$$
    One can compute that the stalk of $\cF$ is determined by the stalk of $\widetilde{\cF}$ at $M \times \{+\infty\}$; see also Theorem \ref{thm:JinTreu-app}.(2). Hence wherever the projection $\widetilde{L} \rightarrow \Sigma$ or $L \rightarrow \Sigma$ is a (branched) $k$-fold covering, the rank of the stalk of $\cF$ is $k$. Indeed, $\cF$ can be described by the diagram as in Figure \ref{fig:weavesheafproj}. Taking proper push forward by the projection $\pi_\Sigma$ does not lose any information of the sheaf, as is promised by the full faithfulness in Theorem \ref{thm:JinTreumann}.\\

    In particular, let $\Lambda_{\beta\Delta^2} \subset T^{*,\infty}\bD^2$ be a cylindrical positive braid closure, satellited along the unit outward conormal bundle of a disk. If we view $\widetilde{L} \subset J^1\bD^2 \cong T^{*,\infty}_{\tau>0}(\bD^2 \times \bR)$ as a free Legendrian weave which forms a multiple cover of $\bD^2$, and $\widetilde{\cF} \in \Sh^{pp}_{\widetilde{L}}(\bD^2 \times \bR)$, then $\cF|_{\bD^2 \backslash \{0\}} = \widetilde{\cF}|_{\partial\bD^2 \times \bR}$.

\begin{figure}[h!]
  \centering
  \includegraphics[width=0.6\textwidth]{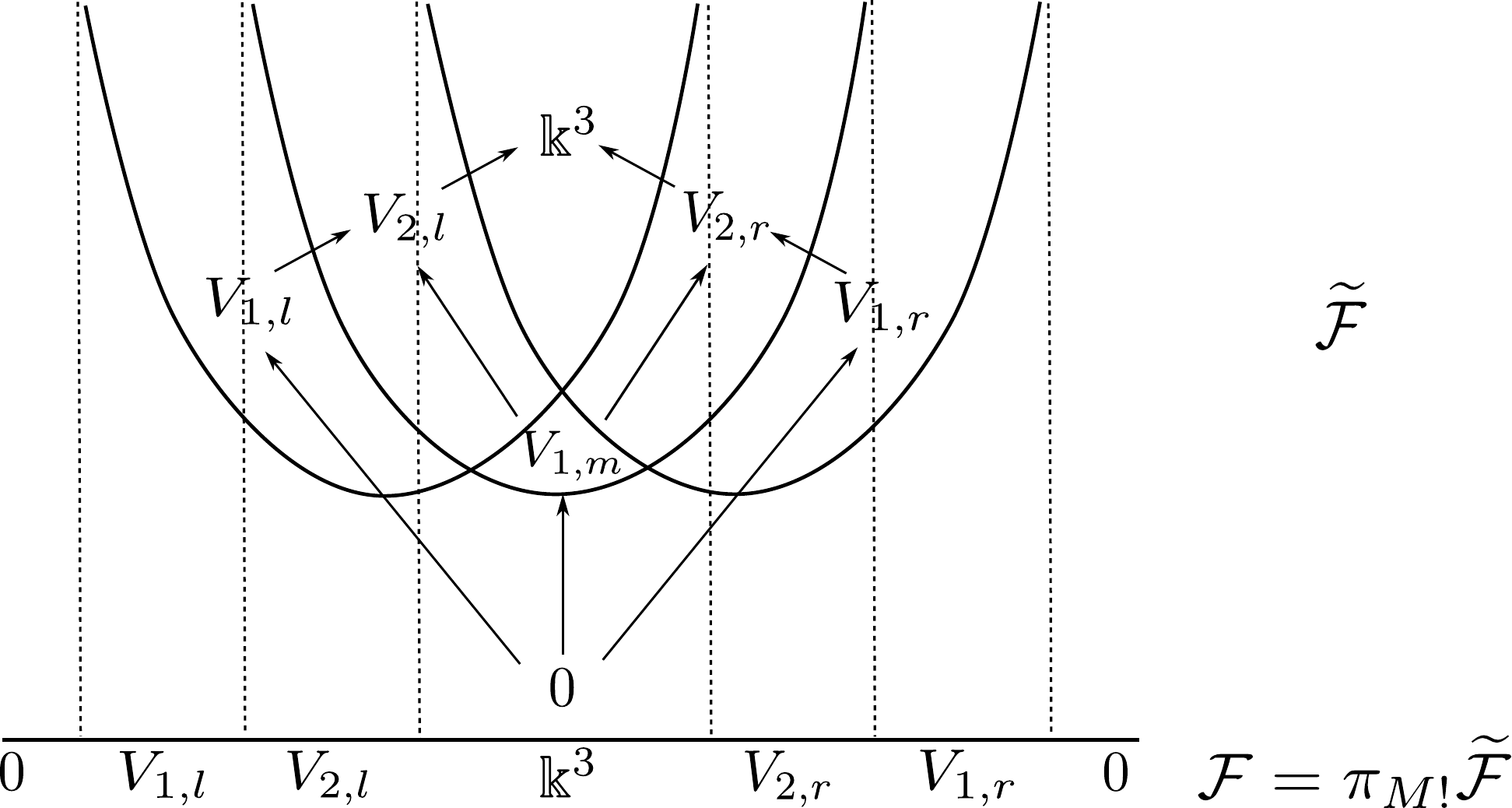}\\
  \caption{The sheaf quantization of a Legendrian weave on $M \times \bR$ and the proper push forward on $M$ following Theorem \ref{thm:JinTreumann}.}\label{fig:weavesheafproj}
\end{figure}

\begin{ex}\label{ex:weave-flag} Consider the special case of the Legendrian weave associated to an $n$-triangulation described in Subsections \ref{ssec:ex-free-weave} and \ref{sec:weave-at-infty}. Let $\Sigma$ be a triangle with vertices $x_1, x_2$ and $x_3$, and $\bigcirc_n$ the inward unit conormal bundle of $n$ concentric sectors at the vertices of the triangle. Let $A_0(x)$ be the sector centered at $x$ bounded by the smallest concentric arc, and $A_i(x)$ the region between the $i$-th and $(i+1)$-th concentric arc (from inside to outside) centered at $x$. Consider any microlocal rank~$1$ sheaf $\cF \in \Sh^{pp}_{\bigcirc_n}(\Sigma)$ whose stalks in $A_0(x_j)$ are acyclic. The stalks of $\cF$ in $A_0(x_j), A_1(x_j), \dots , A_n(x_j)$ define a flag
    $$F_\bullet(x_j): \,\, 0 = F_0(x_j) \subset F_1(x_j) \subset ... \subset F_n(x_j) \cong \Bbbk^n.$$
    Then, by \cite[Theorem 8.1]{CasalsZas20}, $\cF$ is the image of a rank~$1$ local system on the Legendrian weave $\widetilde{L}(\mathfrak{w}_n^*)$ associated to an $n$-triangulation under the Jin-Treumann functor (Theorem \ref{thm:JinTreumann})
    $$\Loc^{pp}(\widetilde{L}(\mathfrak{w}_n^*)) \hookrightarrow \Sh^{pp}_{\widetilde{L}(\mathfrak{w}_n^*)}(\Sigma \times \bR) \hookrightarrow \Sh^{pp}_{\bigcirc_n}(\Sigma)$$
    if and only if $F_\bullet(x_1), F_\bullet(x_2)$ and $F_\bullet(x_3)$ are transverse flags.
\end{ex}

\section{Comparison of Cluster Coordinates and Related Applications}\label{sec:cluster}

    By using the results from Section \ref{sec:quantize}, a Lagrangian filling $L$ of a Legendrian link $\Lambda \subset T^{*,\infty}\Sigma$ determines fully faithful embeddings
    $$\Loc^1(\widetilde{L}) \hookrightarrow \Sh^1_{\widetilde{L}}(\Sigma \times \bR) \hookrightarrow \Sh^1_\Lambda(\Sigma).$$
    These result in the following open embeddings of moduli spaces
    $$H^1(L; \Bbbk^\times ) \hookrightarrow \cM_1(\Sigma, \Lambda)_0, \;\text{and}\; H^1(L \backslash T, \Lambda \backslash T; \Bbbk^\times ) \hookrightarrow \cM_1^{\mu,\textit{fr}}(\Sigma, \Lambda)_0.$$
    By \cite{CasalsWeng22}, microlocal holonomies along certain curves on the Lagrangian fillings give rise to cluster $\mathcal{X}$-coordinates on $\cM_1(\Sigma, \Lambda)_0$, and microlocal merodromies give rise to cluster $\mathcal{X}$-coordinates on $\cM_1^{\mu,\textit{fr}}(\Sigma, \Lambda)_0$. With the Hamiltonian isotopies in Section \ref{sec:main_proofs}, between conjugate Lagrangian surfaces and (Lagrangian projections of) Legendrian weaves, we can relate cluster structures that arise from different combinatoric data, either plabic graphs or weaves, and identify the different cluster coordinates.\\
    
    First, in Section \ref{sec:clust-A}, we compare the cluster $\cA$-structures on the double Bott-Samelson cell $\mathrm{Conf}^e_\beta(\cC)$ defined in \cite{ShenWeng21,GaoShenWeng20} using plabic graphs and genralized minors, and the framed moduli of sheaves $\cM_1^{\mu,\textit{fr}}(\bD^2, \Lambda_{\beta\Delta^2})_0$, defined in \cite{CasalsWeng22} using microlocal merodromies along Legendrian weaves, and hence prove Theorem \ref{thm:main_algebra1}. As a corollary, we explain in Section \ref{sec:clust-X-sheaf} how to deduce the identifications between cluster $\cX$-coordinates.
    
    Second, in Section \ref{sec:framed-locsys}, we compare the cluster $\cX$-structures on the moduli space of framed local systems on punctured surfaces $\cX_{\GL_n}(\Sigma)$, comparing the cluster coordinates defined in \cite{FockGon06a} using plabic graphs, the ones defined in \cite{GMN14} using non-abelianization maps, relating them to weaves, and the ones defined using microlocal monodromies of sheaves \cite{STWZ19,CasalsZas20}. In particular, this proves Theorem \ref{cor:FG=GMN}.

\subsection{Cluster coordinates on flag moduli over $\bD^2$ and Theorem \ref{thm:main_algebra1}}\label{sec:clust-A}
   Consider the moduli space of microlocal sheaves for a positive cylindrical braid closure $\Lambda_{\beta\Delta^2} \in T^{*,\infty}\bD^2$ for $\beta \in \mbox{Br}_n^+$.\footnote{Following the convention in Subsection \ref{ssec:triangle-positive}, the strands in the braids are labeled from bottom to top, i.e.~the crossing $s_1$ is on the bottom and $s_{n-1}$ is on the top.}. Let us study the cluster $\cA$-coordinates on the moduli space of (micro)framed microlocal rank~1 sheaves $\cM_1^{\mu,\textit{fr}}(\bD^2, \Lambda_{\beta\Delta^2})_0$, which, according to \cite{BFZ05}, also determine the cluster $\cX$-variables on the moduli of (unframed) sheaves $\cM_1(\bD^2, \Lambda_{\beta\Delta^2})_0$.\\
   
   %\noindent The conjugate Lagrangian surfaces coming from plabic fences naturally provide the setting to associate a quiver and define the cluster variety. Legendrian weaves provide the setting where one can easily compute microlocal mero/monodromies. Using Theorem \ref{thm:braidfillings} and Corollary \ref{cor:1cycle-braid}, we will identify cluster coordinates with microlocal mero/monodromies for the initial seed, proving Theorem \ref{cor:clusterA} and Corollary \ref{cor:clusterX}.

\subsubsection{Flag moduli over $\bD^2$ and double Bott-Samelson cells}
    Proposition \ref{prop:frame-markpt}, see also Appendix \ref{appen:sheaf-moduli} (and Proposition \ref{prop:frame-markpt-app}), describes the moduli space of framed microlocal rank~1 sheaves in $\cM_1^{\mu,\textit{fr}}(\bD^2, \Lambda_{\beta}^\prec)_0$ for the Legendrian rainbow closure of a positive $n$-braid $\beta$, see \cite{ShenWeng21}. The moduli of (unframed) microlocal rank~1 sheaves $\cM_1(\bD^2, \Lambda_{\beta}^\prec)_0$ has been studied in \cite{ShenWeng21} as double Bott-Samelson cells. Denote by $B_0$ a flag
    $$B_0: 0 = V_0 \subset V_1 \subset \dots \subset V_n \cong \Bbbk^n,$$
    and denote by $B^0$ a flag
    $$B^0: \Bbbk^n \cong V'_n \twoheadrightarrow \dots \twoheadrightarrow V'_1  \twoheadrightarrow V'_0 = 0.$$
    They are said to be in opposite positions if $V_j = V_j'$.
    
\begin{definition}
    Let $\beta \in \mbox{Br}_n^+$ be a positive $n$-braid. The double Bott-Samelson cell $\mathrm{Conf}_{\beta}^e(\cB)$ is the moduli stack of sequences of  $\PGL_n$-flags
    \[\xymatrix@R=7mm{
    & & B^0 \ar@{-}[dll] \ar@{-}[drr] & & \\
    B_0 \ar[r]_{s_{i_1}} & B_1 \ar[r]_{s_{i_2}} & B_2 \ar[r]_{s_{i_3}} & \dots \ar[r]_{s_{i_k}} & B_k.
    }\]
    where $B_0, B_1, \dots, B_k$ are $\PGL_n$-flags such that $B_{j-1}, B_j$ are in relative position $s_{i_j} \in S_n$, and $B_0, B^0$ are in opposite positions.\\
    
    \noindent By definition, the half-decorated double Bott-Samelson cell $\mathrm{Conf}_{\beta}^e(\cC)$ is the moduli stack of sequences of  $\PGL_n$-flags
    \[\xymatrix@R=7mm{
    & & B^0 \ar@{-}[dll] \ar@{-}[drr] & & \\
    B_0 \ar[r]_{s_{i_1}} & B_1 \ar[r]_{s_{i_2}} & B_2 \ar[r]_{s_{i_3}} & \dots \ar[r]_{s_{i_k}} & B_k.
    }\]
    together with a decoration $A_k$ over $B_k$, where $B_0, B_1, \dots, B_k$ are $\PGL_n$-flags such that $B_{j-1}, B_j$ are in relative position $s_{i_j} \in S_n$, and $B_0, B^0$ are in opposite positions.
\end{definition}
\begin{thm}[\cite{ShenWeng21}*{Theorem 6.11}]\label{thm:ShenWeng-sheaf}
    Let $\beta \in \mbox{Br}_n^+$ be a positive braid and $\cM_1(\bD^2, \Lambda_{\beta}^\prec)_0$ the moduli of microlocal rank~1 sheaves on the Legendrian rainbow closure $\Lambda_{\beta}^\prec$. Then $\cM_1(\bD^2, \Lambda_{\beta}^\prec)_0$ is isomorphic to the double Bott-Samelson cell $\mathrm{Conf}_{\beta}^e(\cB)$.
\end{thm}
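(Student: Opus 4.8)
The statement is \cite{ShenWeng21}*{Theorem 6.11}, so the proof I would give is essentially a recollection of the argument there. The plan is to first replace the dg-category $\Sh^{pp}_{\Lambda_\beta^\prec}(\bD^2)$ by an explicit combinatorial model attached to a chosen front of the rainbow closure, then read a chain of flags off that model, and finally recognize the resulting data as a point --- and, in families, an $R$-point --- of $\mathrm{Conf}_\beta^e(\cB)$. Throughout, independence of the particular front is guaranteed by the Legendrian isotopy invariance of $\Sh^{pp}_{\Lambda_\beta^\prec}(\bD^2)$, Theorem \ref{thm:GKS}.

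Concretely, I would fix the standard front of $\Lambda_\beta^\prec$ for $\beta = s_{i_1}\cdots s_{i_k}$: the $n$ nested rainbow arcs closing up the $n$ strands, with the braid word $\beta$ inserted along the bottom. By \cite[Theorem 8.4.2]{KSbook} and the local models recalled in Example \ref{ex:sheafcombin}, a microlocal rank~$1$ object of $\Sh^{pp}_{\Lambda_\beta^\prec}(\bD^2)$ with acyclic stalk in the unbounded region is the same as an assignment of a perfect $\Bbbk$-complex to each region of $\bD^2\setminus\pi(\Lambda_\beta^\prec)$, together with transport maps across strata, subject to: the total complex at each crossing is acyclic \cite{STZ17}*{Theorem 3.12}, and $\phi_u\circ\phi_d$ is a quasi-isomorphism at each cusp, normalized to the identity \cite{STZ17}*{Proposition 3.22}. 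Reading from the acyclic region across the nested cusps, the normalization together with the microlocal rank~$1$ condition forces the stalks to form a sequence of split injections $0=V_0\hookrightarrow V_1\hookrightarrow\cdots\hookrightarrow V_n\cong\Bbbk^n$ with $\dim V_j=j$, that is, a complete flag; one side of the diagram produces $B_0$ and the other an opposite flag $B^0$. Crossing the $j$-th letter $s_{i_j}$ of $\beta$ from left to right, the two-transverse-hyperplanes acyclicity condition forces the flag to move only in its $i_j$-th step, while microlocal rank~$1$ guarantees that step genuinely moves, so $B_{j-1}$ and $B_j$ are in relative position exactly $s_{i_j}\in S_n$; the corner regions where the outermost cusps abut the braid enforce that $B_0$ and $B^0$ are in opposite position. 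This associates to a sheaf $\cF$ the configuration
\[
\Big(B_0\xrightarrow{s_{i_1}}B_1\xrightarrow{s_{i_2}}\cdots\xrightarrow{s_{i_k}}B_k,\ B^0\Big),
\]
which is precisely an object of $\mathrm{Conf}_\beta^e(\cB)$; conversely, any such configuration assembles via the same local models into a microlocal rank~$1$ sheaf, and the two assignments are mutually inverse.

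The remaining work is to promote this equivalence of groupoids of objects to an isomorphism of moduli stacks. Both sides are open substacks of derived mapping stacks attached to the relevant small dg-categories, so by \cite{ToenVaquie07} it suffices to check that the equivalence above is natural in test schemes and that it identifies the open locus of microlocal rank~$1$, acyclic-outer-stalk sheaves with the locus cut out by the relative-position and opposite-position conditions. I expect this last bookkeeping --- verifying the stack structures (not only $\Bbbk$-points) agree, and keeping the cusp-corner normalizations and Maslov-potential shifts coherent along the way --- to be the main technical obstacle; once it is in place, the isomorphism $\cM_1(\bD^2,\Lambda_\beta^\prec)_0\cong\mathrm{Conf}_\beta^e(\cB)$ follows.
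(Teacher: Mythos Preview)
The paper does not supply its own proof of this statement: it is quoted verbatim as \cite{ShenWeng21}*{Theorem 6.11} and used as a black box, with the surrounding text moving directly to the framed analogue (Corollary \ref{cor:frame-braid-mod}) and the comparison with the cylindrical closure (Proposition \ref{prop:sheaf-X-rain=sate}). Your proposal is a faithful outline of the argument in \cite{ShenWeng21}, and the combinatorial model you invoke (Example \ref{ex:sheafcombin}, \cite{STZ17}*{Theorem 3.12, Proposition 3.22}) is exactly the machinery the paper has set up for such computations, so nothing here conflicts with the paper's treatment.
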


    The framed version is as follows. Consider the (micro)framed moduli of sheaves, following Section \ref{sec:sheaf-moduli} and Appendix \ref{appen:sheaf-moduli} (see Definition \ref{def:moduli-sh-mufr}), parametrizing sheaves with fixed trivializations of microstalks at base points. We fix the framing data consisting of $n$ base points $T = \{p_1, \dots, p_n\}$ on the right of the braid $\beta$, with one point in each strand, as in Proposition \ref{prop:frame-markpt} (Figure \ref{fig:braid-markpt}). By Proposition \ref{prop:frame-markpt}, the (micro)framing data is equivalent to a decoration on the flags; these are oftentimes called affine flags, decorated flags or principal flags as well.

\begin{cor}\label{cor:frame-braid-mod}
    Let $\beta \in \mbox{Br}_n^+$ be a positive braid and $\cM_1^{\mu,\textit{fr}}(\bD^2, \Lambda_{\beta}^\prec)_0$ the framed moduli space of microlocal rank~$1$ sheaves on the Legendrian rainbow closure $\Lambda_{\beta}^\prec$, with framing data determined by $n$ base points $T = \{p_1, \dots, p_n\}$. Then $\cM_1^{\mu,\textit{fr}}(\bD^2, \Lambda_{\beta}^\prec)_0$ is isomorphic to the half decorated double Bott-Samelson cell $\mathrm{Conf}_{\beta}^e(\cC)$.
\end{cor}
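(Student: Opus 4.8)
The plan is to deduce Corollary \ref{cor:frame-braid-mod} from Theorem \ref{thm:ShenWeng-sheaf} by adding the framing data on both sides and checking that the identification of unframed moduli lifts to an identification of the framed enhancements. First I would recall that, by Theorem \ref{thm:ShenWeng-sheaf}, there is an isomorphism $\cM_1(\bD^2, \Lambda_{\beta}^\prec)_0 \cong \mathrm{Conf}_{\beta}^e(\cB)$, and I would record that both the source and the target carry a natural forgetful map from their decorated/framed counterparts: on the sheaf side, $\cM_1^{\mu,\textit{fr}}(\bD^2, \Lambda_{\beta}^\prec)_0 \to \cM_1(\bD^2, \Lambda_{\beta}^\prec)_0$ is the map that forgets the trivializations $t_i : m_{\Lambda_\beta^\prec}(\cF)_{p_i} \xrightarrow{\sim} \Bbbk$ at the base points $T = \{p_1,\dots,p_n\}$; on the Lie-theoretic side, $\mathrm{Conf}_{\beta}^e(\cC) \to \mathrm{Conf}_{\beta}^e(\cB)$ forgets the decoration $A_k$ over $B_k$. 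The strategy is then to observe that both these forgetful maps are torsors under the same group — the torus of volume forms on the flag $B_k$, i.e.\ a $(\Bbbk^\times)^n$ worth of compatible choices of $\omega_0, \omega_1, \dots, \omega_n$ — and that the isomorphism of Theorem \ref{thm:ShenWeng-sheaf} intertwines these torsor structures.

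The key input making this work is Proposition \ref{prop:frame-markpt} together with its proof: there I established, for the relevant disk region $D$ containing $n$ parallel strands with base points, a natural bijection between microframings $(t_1,\dots,t_n)$ on the microstalks along those strands and collections of volume forms $(\omega_1,\dots,\omega_n)$ on the pieces $F_1 \subset F_2 \subset \dots \subset F_n$ of the flag $\cF|_D$ determines, with $\omega_i = \omega_{i-1} \wedge v_i$ and $t_i(v_i) = 1$. Concretely, the steps are: (1) under the isomorphism of Theorem \ref{thm:ShenWeng-sheaf}, the restriction $\cF|_D$ of a microlocal rank~1 sheaf to the region $D$ on the right of $\beta$ — where the $n$ base points sit, as in Figure \ref{fig:braid-markpt} — corresponds exactly to the flag $B_k$ at the right end of the Bott-Samelson chain; (2) by Proposition \ref{prop:frame-markpt}, a microframing on $\cF$ is the same datum as a decoration (a compatible tuple of volume forms) over $B_k$, i.e.\ precisely a point of the fiber of $\mathrm{Conf}_{\beta}^e(\cC) \to \mathrm{Conf}_{\beta}^e(\cB)$; (3) this identification is functorial in the moduli parameter and equivariant for the $(\Bbbk^\times)^n$-action rescaling the $\omega_i$ (equivalently rescaling the $t_i$), hence assembles into an isomorphism of the total spaces $\cM_1^{\mu,\textit{fr}}(\bD^2, \Lambda_{\beta}^\prec)_0 \xrightarrow{\sim} \mathrm{Conf}_{\beta}^e(\cC)$ over the isomorphism of Theorem \ref{thm:ShenWeng-sheaf}.

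I expect the main obstacle to be a bookkeeping one rather than a conceptual one: verifying that the base points $T$ sit in the region that Theorem \ref{thm:ShenWeng-sheaf} associates to the flag $B_k$ (and not, say, to $B^0$ or $B_0$), and that the convention by which the microstalk at $p_i$ corresponds to the successive quotient $V_i/V_{i-1}$ of the flag is the one used in \cite{ShenWeng21}. This requires carefully matching the labeling conventions — strands labeled bottom-to-top versus the ordering of the flag filtration, and the placement of base points relative to the half twist — between Section \ref{sec:sheaf-moduli}, Proposition \ref{prop:frame-markpt}, and \cite{ShenWeng21}*{Theorem 6.11}. Once conventions are aligned, the compatibility of the two forgetful maps and their torsor structures is immediate from the explicit inductive description $\omega_i = \omega_{i-1}\wedge v_i$ in the proof of Proposition \ref{prop:frame-markpt}, and one concludes by noting that an isomorphism of the base spaces which lifts compatibly to the torsor fibers is itself an isomorphism of the total spaces. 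This completes the proof, modulo the stated convention check.
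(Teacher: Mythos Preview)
Your proposal is correct and matches the paper's own reasoning: the paper does not give a separate proof of this corollary but deduces it directly from Theorem~\ref{thm:ShenWeng-sheaf} together with Proposition~\ref{prop:frame-markpt}, the sentence immediately preceding the corollary making explicit that the microframing data is equivalent to a decoration on the flag. Your write-up spells out the torsor argument in more detail than the paper does, but the approach is the same.
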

\begin{remark}
    Note that our convention of half decorated Bott-Samelson cells is different from \cite{GaoShenWeng20}, as in our case the decorated flags below the cusps lie in $\SL_n/\mathrm{U}_+$ instead of $\SL_n/\mathrm{U}_-$, where $\mathrm{U}_\pm$ are the unipotent subgroup of upper/lower triangular matrices. It can be shown that different conventions actually give isomorphic moduli spaces.
\end{remark}

    Let us identify the sheaf moduli for the rainbow closure $\Lambda_{\beta}^\prec$ and the cylindrical closure $\Lambda_{\beta\Delta^2}$ where $\Delta$ is the half twist, we will compute microlocal holonomies over the latter moduli space by considering Legendrian weaves.

\begin{prop}\label{prop:sheaf-X-rain=sate}
    Let $\beta \in Br_n^+$ be a positive braid, $\Delta$ be the half twist, $\Lambda_{\beta\Delta^2} \subset T^{*,\infty}\bD^2$ the Legendrian cylindrical braid closure of $\beta\Delta^2$ and $\Lambda_{\beta}^\prec \subset T^{*,\infty}_{\eta < 0}\bD^2$ the Legendrian rainbow braid closure of $\beta$. Then there is an isomorphism
    $$\cM_1(\bD^2, \Lambda_{\beta}^\prec)_0 \cong \cM_1(\bD^2, \Lambda_{\beta\Delta^2})_0.$$
\end{prop}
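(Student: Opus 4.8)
The plan is to exhibit an explicit Legendrian isotopy in $T^{*,\infty}\bD^2$ carrying the rainbow closure $\Lambda_\beta^\prec$ to the cylindrical closure $\Lambda_{\beta\Delta^2}$, and then invoke the Legendrian invariance of the sheaf category to conclude the isomorphism of moduli spaces. Indeed, by the discussion in Subsection \ref{ssec:triangle-positive} (see Figures \ref{fig:NgBraidclosure} and \ref{fig:braidclosure}) together with \cite{Ng01Satellite}, the Legendrian link $\Lambda_\beta^\prec \subset \bR^3 \subset T^{*,\infty}\bD^2$ is Legendrian isotopic to the satellite $\Lambda'_{\beta\Delta^2}$ of the standard Legendrian unknot, while $\Lambda_{\beta\Delta^2}$ is the satellite along the outward unit conormal of a disk. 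The footnote to item (i)/(ii) in Subsection \ref{ssec:triangle-positive} records that these two satellite companions — the standard unknot and the outward unit conormal of a disk — become Hamiltonian isotopic (equivalently the underlying Legendrians become Legendrian isotopic) after compactifying $T^{*,\infty}\bD^2$ to $S^3$; since $\bD^2$ is contractible, this compactification does not affect the relevant sheaf categories on $\bD^2$. Hence $\Lambda_\beta^\prec$ and $\Lambda_{\beta\Delta^2}$ are Legendrian isotopic in $T^{*,\infty}\bD^2$ (up to the harmless compactification), the isotopy being supported away from the two marked regions where the base-point/microlocal-stalk data lives.

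Next I would feed this Legendrian isotopy into the GKS invariance theorem \cite{GKS12} (Theorem \ref{thm:GKS}, as recalled before Theorem \ref{thm:JinTreumann}): a Legendrian isotopy $\Lambda_t$ induces, via convolution with the GKS sheaf kernel, an equivalence of dg-categories $\Sh^{pp}_{\Lambda_\beta^\prec}(\bD^2) \simeq \Sh^{pp}_{\Lambda_{\beta\Delta^2}}(\bD^2)$ which preserves singular support, microlocal rank, and the vanishing-stalk conditions at the marked points (the latter because the isotopy is supported away from those loci). Passing to moduli of pseudo-perfect objects via \cite{ToenVaquie07} (Definition \ref{def:moduli-sh}), this equivalence of categories yields an isomorphism of the derived stacks $\cM(\bD^2, \Lambda_\beta^\prec) \cong \cM(\bD^2, \Lambda_{\beta\Delta^2})$, which restricts to the locus $\cM_1(\,\cdot\,)_0$ of microlocal rank-one sheaves with acyclic stalks near the marked points once a Maslov potential is fixed and matched under the isotopy. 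This gives exactly the claimed isomorphism $\cM_1(\bD^2, \Lambda_\beta^\prec)_0 \cong \cM_1(\bD^2, \Lambda_{\beta\Delta^2})_0$.

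The one point requiring care — and what I expect to be the main obstacle — is bookkeeping the Maslov potentials and the number of strands on the two sides so that the microlocal rank-one / acyclic-stalk conditions correspond under the GKS equivalence. The rainbow closure of $\beta \in \mathrm{Br}_n^+$ presents as $n$ nested strands, whereas the cylindrical closure $\beta\Delta^2$ lives in $J^1S^1$ as an annular front; the Reidemeister moves realizing the isotopy (pulling the $\Delta$'s apart as in Figure \ref{fig:NgBraidclosure}) must be checked to preserve the grading data and not create or destroy microlocal rank. Concretely I would fix the Maslov potential so that all cusp sheets of the rainbow closure carry the standard grading, track it through the sequence of Reidemeister II/III moves, and verify it matches the potential used in \cite{CasalsWeng22} for $\Lambda_{\beta\Delta^2}$. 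Once this compatibility is in place the isomorphism is immediate; this is also why the statement is phrased as an isomorphism of the pointed loci $\cM_1(\,\cdot\,)_0$ rather than of the full moduli stacks. I would also remark that, alternatively, one may bypass GKS and obtain the same conclusion directly from Theorem \ref{thm:ShenWeng-sheaf} and Corollary \ref{cor:frame-braid-mod} by identifying both moduli with the double Bott–Samelson cell $\mathrm{Conf}^e_\beta(\cB)$ — but the Legendrian-isotopy argument is cleaner and is what the rest of Section \ref{sec:cluster} will need in order to transport cluster coordinates.
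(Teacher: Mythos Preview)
Your argument has a genuine gap at the step where you claim the two Legendrians are isotopic in $T^{*,\infty}\bD^2$. They are not: as noted in Subsection~\ref{ssec:triangle-positive}, the cylindrical closure $\Lambda_{\beta\Delta^2}$ is a satellite of a cotangent fiber and hence represents $n$ times the generator of $H_1(T^{*,\infty}\bR^2;\bZ)\cong\bZ$, whereas the rainbow closure $\Lambda_\beta^\prec$ lives in the contractible half-space $\bR^3\subset T^{*,\infty}\bR^2$ and is null-homologous. So no contact isotopy of $T^{*,\infty}\bD^2$ carries one to the other, and GKS (Theorem~\ref{thm:GKS}) does not apply. Your appeal to ``since $\bD^2$ is contractible the compactification is harmless'' is precisely the step that fails: passing to $S^3$ kills the homological obstruction, but $\Sh_\Lambda(\bD^2)$ is not the same as a sheaf category on $S^2$, and GKS needs the isotopy in the actual ideal boundary $T^{*,\infty}\bD^2$. (Your fallback via Theorem~\ref{thm:ShenWeng-sheaf} does not work either, since that result only treats the rainbow side.)

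The paper's remark after the proof acknowledges that a conceptual argument exists, but it goes through Weinstein deformation invariance of microlocal sheaf categories on relative skeleta \cite{GPS18a,GPS18b,NadShen20}, which is substantially deeper than GKS and still requires tracking microlocal rank. More to the point, the paper deliberately gives an \emph{explicit} bijection: starting from $\cF_{rain}$, it performs Reidemeister~II moves to introduce the two half-twists $\Delta$, computes the new flag $B_{\mathrm{bot}}$ as a flag of kernels, and reads off $\cF_{cyl}$; the inverse direction reconstructs the stalks above the upper strands as iterated cokernels. This explicit description is not incidental --- it is used immediately afterward in Lemma~\ref{lem:decorate-top} and in the proof of Theorem~\ref{thm:main_algebra1} to compute decorations and match microlocal merodromies with generalized minors. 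An abstract isomorphism, even if you could produce one, would not suffice for those computations.
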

\begin{proof}
    Consider $\cF_{rain} \in \cM_1(\bD^2, \Lambda_{\beta}^\prec)_0$. In the leftmost region, below the cusps we have a complete flag
    $$B_0: \,\, 0 = V_0 \hookrightarrow V_1 \hookrightarrow V_2 \hookrightarrow \dots \hookrightarrow V_n \cong \Bbbk^n,$$
    while above the cusps we have a complete flag
    $$B^0: \,\, \Bbbk^n \cong V_n \twoheadrightarrow \dots \twoheadrightarrow V_2 \twoheadrightarrow V_1 \twoheadrightarrow V_0 = 0,$$
    such that compositions $V_i \hookrightarrow \dots \hookrightarrow V_n \twoheadrightarrow \dots \twoheadrightarrow V_i$ is the identity. Focus only on the $n$ lower half strands below the cusps. Applying Reidemeister~II moves (as in Subsection \ref{ssec:triangle-positive} Figure \ref{fig:NgBraidclosure}) we introduce extra crossings between $n$ lower half strands that form a half twist $\Delta$. When pulling out the 2nd lowest strand by a Reidemeister~II move, we introduce a new region in the stratification by the front projection with stalk $\ker(V_2 \rightarrow V_1)$. After pulling out the $j$-th lowest strand, we introduce $j-1$ new regions in the stratification with stalks
    $$\ker(V_j \rightarrow V_{j-1}) \hookrightarrow \dots \hookrightarrow \ker(V_j \rightarrow V_2) \hookrightarrow \ker(V_j \rightarrow V_1) \hookrightarrow V_j.$$
    Hence after pulling out all strands via Reidemeister~II moves, on the left of the half twist $\Delta$, we now have the complete flag
    \footnote{We denote it by $B_\text{bot}$ following the convention in Subsection \ref{ssec:triangle-positive} and \ref{ssec:ex-free-weave}, that $\Lambda_{\beta}^\prec$ has the braid $\beta$ on the bottom while the cylindrical closure $\Lambda_{\beta\Delta^2}$ has to be flipped upside down with the braid $\beta$ on the top. Indeed, the flag $B_\text{bot}$ will correspond to the sheaf $\cF_{cyl}$ in the region on the bottom of $\Lambda_\beta$.}
    $$B_\text{bot}: \,\, 0 \hookrightarrow \ker(V_n \rightarrow V_{n-1}) \hookrightarrow \dots \hookrightarrow \ker(V_n \rightarrow V_2) \hookrightarrow \ker(V_n \rightarrow V_1) \hookrightarrow V_n \cong \Bbbk^n.$$
    With the Reidemeister~II moves, we have introduced a half twist $\Delta$ on both sides of $\Lambda_{\beta}^\prec$ and a corresponding system of flags. In addition, the flag in the leftmost region and in the rightmost region are both $B_\text{bot}$ because of the conditions coming from cusps on both sides. Therefore, we get a sheaf $\cF_{cyl} \in \cM_1(\bD^2, \Lambda_{\beta\Delta^2})_0$ determined by the system of flags, as on the right of Figure \ref{fig:rain-cylin}.

\begin{figure}[h!]
  \centering
  \includegraphics[width=0.8\textwidth]{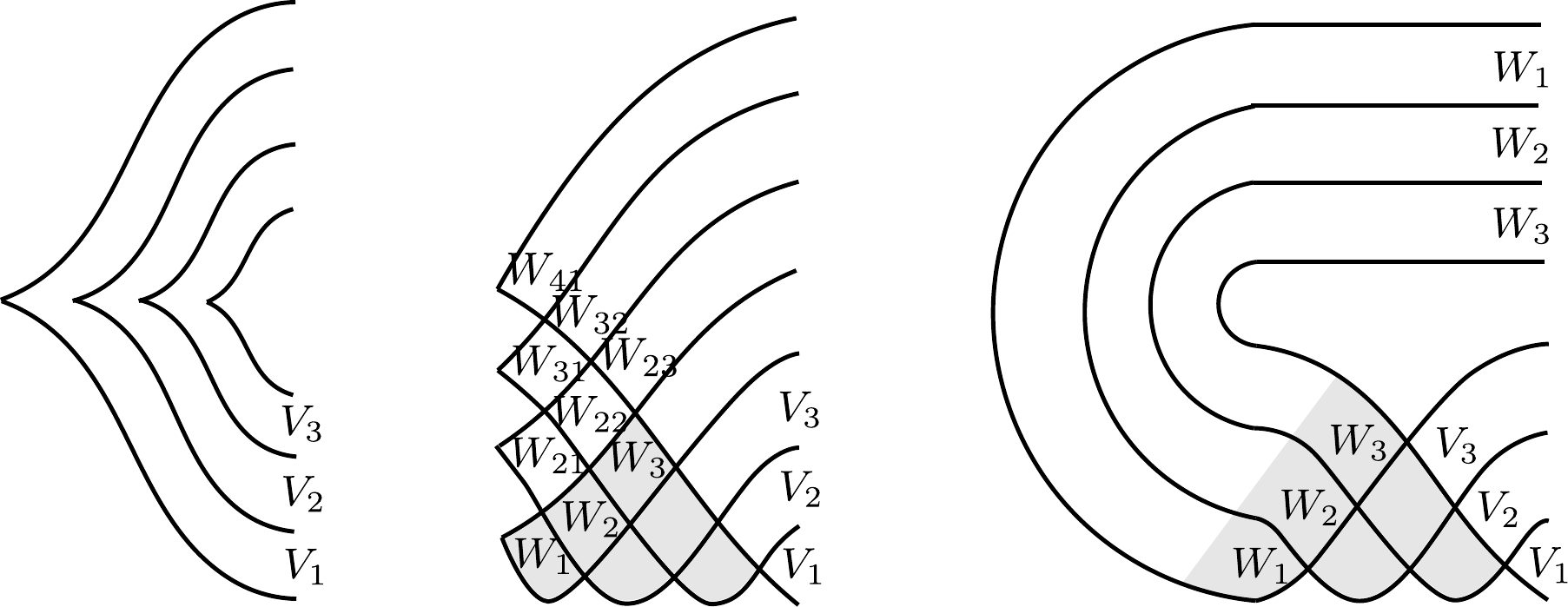}\\
  \caption{The identification between sheaves with singular support in the rainbow closure $\Lambda_{\beta}^\prec$ (left) and the cylindrical closure $\Lambda_{\beta\Delta^2}$ before flipped upside down (right). The Legendrian in the middle is obtained by applying Reidemeister~II moves to the Legendrian rainbow closure on the left. The grey regions are where the half twist $\Delta$ is introduced.}\label{fig:rain-cylin}
\end{figure}

    Conversely, given $\cF_{cyl} \in \cM_1(\bD^2, \Lambda_{\beta\Delta^2})_0$ as on the right of Figure \ref{fig:rain-cylin}, we have a sequence of flags in the regions formed by the braid $\Delta\beta\Delta$. Hence we can define a corresponding sheaf $\cF_{rain}'$ in the regions below all the upper half strands in the middle of Figure \ref{fig:rain-cylin}, which are formed by the braid $\Delta\beta\Delta$. In order to define the sheaf $\cF_\text{rain}'$ on the whole plane in the middle of Figure \ref{fig:rain-cylin} we just need to note that the stalks in all upper strata are determined by the given data below all the upper half strands. Indeed, suppose the stalks of the sheaf $\cF_{cyl}$ on the top region is determined by the flag
    $$B_\text{bot}: \,\, 0 \hookrightarrow W_1 \hookrightarrow W_2 \hookrightarrow \dots \hookrightarrow W_n \cong \Bbbk^n.$$
    Then the stalks of $\cF'_{rain}$ on the leftmost region of $\Delta$, i.e.~in the $n$ strata right below the lowest upper half strand, are determined by the same flag $B_\text{bot}$. The stalks of the sheaf in the $n-1$ regions (from bottom to top) in between the lowest and 2nd lowest upper half strands have to be the flag $W_{2,\bullet}: 0 \hookrightarrow W_{21} \hookrightarrow W_{22} \hookrightarrow \dots \hookrightarrow W_{2,n-1}$ where
    \[\begin{split}
    W_{21} = \mathrm{coker}(W_1 &\rightarrow W_2), W_{22} = \mathrm{coker}(W_2 \rightarrow W_3 \oplus W_{21}),\dots,\\
    W_{2,n-1} &= \mathrm{coker}(W_{n-1} \rightarrow W_n \oplus W_{2,n-2}).
    \end{split}\]
    Suppose the stalks in between the $(j-1)$-th and $j$-th lowest upper half strands are determined by the flag $W_{j-1,\bullet}$. For the stalks in between the $j$-th and $(j+1)$-th lowest upper half strands have to be the flag $W_{j,\bullet}$ where
    \[\begin{split}
    W_{j1} = \mathrm{coker}(W_{j-1,1} &\rightarrow W_{j-1,2}), W_{j2} = \mathrm{coker}(W_{j-1,2} \rightarrow W_{j-1,3} \oplus W_{j1}),\dots,\\
    W_{j,n-j+1} =&\, \mathrm{coker}(W_{j-1,n-j+1} \rightarrow W_{j-1,n-j+2} \oplus W_{j,n-j}).
    \end{split}\]
    Therefore inductively we are able to define the sheaf $\cF_{rain}'$ on the whole plane in the middle of Figure \ref{fig:rain-cylin}. By Reidemeitser~II moves we get a sheaf $\cF_{rain} \in \cM_1(\bD^2, \Lambda_{\beta}^\prec)_0$ on the left of Figure \ref{fig:rain-cylin}, and the stalks in the region stratified by $\beta$ are the same as $\cF_{cyl}$. Hence in particular we know that this is inverse to the previous assignment.
\end{proof}

    Since the framing data is simply a decoration on one of the flags, it is preserved in the above bijection. Hence, similar to Corollary \ref{cor:frame-braid-mod}, we can show the following corollary from Proposition \ref{prop:sheaf-X-rain=sate}.

\begin{cor}\label{cor:sheaf-rain=sate}
    Let $\beta \in \mbox{Br}_n^+$ be a positive braid, $\Delta$ be the half twist, $\Lambda_{\beta\Delta^2} \subset T^{*,\infty}\bD^2$ the Legendrian cylindrical braid closure of ${\beta\Delta^2}$ and $\Lambda_{\beta}^\prec \subset T^{*,\infty}\bD^2$ the Legendrian rainbow braid closure of $\beta$. Then there is an isomorphism
    $$\cM_1^{\mu,\textit{fr}}(\bD^2, \Lambda_{\beta}^\prec)_0 \cong \cM_1^{\mu,\textit{fr}}(\bD^2, \Lambda_{\beta\Delta^2})_0.$$
\end{cor}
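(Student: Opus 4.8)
The statement to prove is Corollary \ref{cor:sheaf-rain=sate}: the microframed moduli spaces $\cM_1^{\mu,\textit{fr}}(\bD^2, \Lambda_{\beta}^\prec)_0$ and $\cM_1^{\mu,\textit{fr}}(\bD^2, \Lambda_{\beta\Delta^2})_0$ are isomorphic. The plan is to bootstrap directly off Proposition \ref{prop:sheaf-X-rain=sate}, which already furnishes a mutually inverse pair of maps between the unframed moduli $\cM_1(\bD^2, \Lambda_{\beta}^\prec)_0$ and $\cM_1(\bD^2, \Lambda_{\beta\Delta^2})_0$, and to check that this isomorphism lifts to the microframed level once the base points are chosen compatibly.

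First I would fix the framing data. Following Proposition \ref{prop:frame-markpt} and Figure \ref{fig:braid-markpt}, place the $n$ base points $T = \{p_1, \dots, p_n\}$ on the $n$ strands in the region of the rainbow closure $\Lambda_{\beta}^\prec$ that lies \emph{below} all the cusps and to one side of $\beta$ (say the leftmost such region, where the flag $B_0$, equiv. $B_\text{bot}$, is recorded). The key structural point established in the proof of Proposition \ref{prop:sheaf-X-rain=sate} is that the Reidemeister~II moves used to pass from $\Lambda_{\beta}^\prec$ to $\Lambda_{\beta\Delta^2}$ do not alter the stratum containing these base points: on the cylindrical side, the flag $B_\text{bot}$ reappears verbatim as the stalk data in the top region, and on the strand carrying $p_i$ the microstalk $m_{\Lambda}(\cF)_{p_i}$ is canonically the same one-dimensional vector space $\ker(V_i \to V_{i-1})$ before and after the moves. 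Hence a microframing $t_i \colon m_{\Lambda_{\beta}^\prec}(\cF_{rain})_{p_i} \xrightarrow{\sim} \Bbbk$ transports to a microframing on $\cF_{cyl}$ under the forward map of Proposition \ref{prop:sheaf-X-rain=sate}, and conversely; the two transports are mutually inverse because the underlying sheaf maps already are. This is essentially the remark made just before the corollary: ``the framing data is simply a decoration on one of the flags, it is preserved in the above bijection.''

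Concretely, the steps are: (1) recall from Appendix \ref{appen:sheaf-moduli}, Definition \ref{def:moduli-sh-mufr}, that $\cM_1^{\mu,\textit{fr}}$ is the fibered product of $\cM_1(-)_0$ with the trivialization torsor over the microstalks at $T$, i.e. the forgetful map $\cM_1^{\mu,\textit{fr}} \to \cM_1$ is a torsor under $(\Bbbk^\times)^{|T|}$; (2) invoke Proposition \ref{prop:sheaf-X-rain=sate} for the isomorphism $\Phi$ on the unframed moduli; (3) observe that $\Phi$ is the identity on the locus of the surface containing $T$ (this is read off directly from the explicit description of $\Phi$ via Reidemeister~II moves in the proof of Proposition \ref{prop:sheaf-X-rain=sate} and Figure \ref{fig:rain-cylin}, since those moves only modify strata strictly away from that region), hence $\Phi$ intertwines the two microstalk local systems along $T$; (4) conclude that $\Phi$ lifts canonically to an isomorphism of the torsors, i.e. of the microframed moduli, and that $\Phi^{-1}$ lifts to its inverse. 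A small bookkeeping point to verify in step (3) is the orientation/``flip upside down'' convention noted in the proof of Proposition \ref{prop:sheaf-X-rain=sate}: the braid $\beta$ sits on the bottom for $\Lambda_{\beta}^\prec$ and on the top for $\Lambda_{\beta\Delta^2}$, so one must make sure the base points on the two sides are identified through this flip; the footnote in that proof about $B_\text{bot}$ already records exactly this matching, so no new argument is needed.

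The main obstacle — such as it is — is not the geometry but the precise compatibility of conventions: one must confirm that the notion of microframing used in $\cM_1^{\mu,\textit{fr}}(\bD^2, \Lambda_{\beta}^\prec)_0$ (base points on strands of the rainbow closure, as in the discussion of double Bott-Samelson cells and \cite{ShenWeng21}) and the one used in $\cM_1^{\mu,\textit{fr}}(\bD^2, \Lambda_{\beta\Delta^2})_0$ (base points on strands of the cylindrical closure, as in \cite{CasalsWeng22}) are carried to each other by the Reidemeister~II isotopy, with no spurious sign or degree shift from the Maslov potential. Since microstalks are computed as cones that are invariant under Legendrian isotopy (Theorem \ref{thm:GKS}), and the isotopy here is supported away from $T$, this compatibility holds; spelling it out is the only real content beyond citing Proposition \ref{prop:sheaf-X-rain=sate}. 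I would therefore present the proof as a short paragraph: fix $T$ as above, apply Proposition \ref{prop:sheaf-X-rain=sate}, note the isomorphism is the identity near $T$ hence preserves the microstalk data there, and deduce the framed statement by lifting along the $(\Bbbk^\times)^{|T|}$-torsors.
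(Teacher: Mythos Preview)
Your proposal is correct and follows essentially the same route as the paper: the paper's proof is simply the one-line observation that the bijection of Proposition \ref{prop:sheaf-X-rain=sate} preserves the decoration on the distinguished flag, hence lifts to the microframed moduli, exactly as you argue via the $(\Bbbk^\times)^{|T|}$-torsor picture. Your write-up is more detailed (and the paper places $T$ on the right of $\beta$ rather than the left, with microstalks $V_i/V_{i-1}$ rather than kernels), but the substance is identical.
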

\begin{remark}
    More conceptually, one can view $\Lambda_{\beta\Delta^2}, \Lambda_{\beta}^\prec$ as Legendrians in the contact boundary $S^3$ of the Weinstein manifold $\bD^4$, and adapt the proof in \cite{GPS18a,GPS18b} or \cite{NadShen20} by translating the sheaf category with singular support condition into the microlocal sheaf category over the relative Lagrangian skeleton of $(\bD^4, \Lambda)$ and apply Weinstein deformation invariance, keeping track of the microlocal rank. However, the proof above gives an explicit bijection that could be of independent interest and makes the following concrete computation possible.
\end{remark}

%    Since the framing data is simply a decoration on one of the flags, it is preserved in the above bijection. By the argument in Proposition \ref{prop:sheaf-X-rain=sate}, we get the isomorphism $\cM_1^{\mu,\textit{fr}}(\bD^2, \Lambda_{\beta_0}^\prec)_0 \cong \cM_1^{\mu,\textit{fr}}(\bD^2, \Lambda_\beta)_0$.

    Moreover, we can choose relative spin structures by assigning sign points and sign curves as in Definition \ref{def:sign} (Appendix \ref{appen:microlocal} Definition \ref{def:sign-knot} and \ref{def:sign-curve}) on $\Lambda_{\beta\Delta^2}$ and $\Lambda_{\beta}^\prec$ as in Lemma \ref{lem:decorate-flag}, and consider the decoration of the flags.\\
    
    Note that a complete flag of vector spaces is uniquely determined by a coset $x\mathrm{B}_+ \in \PGL_n/\mathrm{B}_+$, where $\mathrm{B}_+$ is a Borel subgroup; e.g.~ the subgroup of all upper triangular invertible matrices. A decorated complete flag of vector spaces with a common ambient volume form is uniquely determined by a coset  $x\mathrm{U}_+ \in \SL_n/\mathrm{U}_+$, where $\mathrm{U}_+$ is a unipotent subgroup, e.g.~ the subgroup of all strictly upper triangular matrices.

\begin{lemma}\label{lem:decorate-top}
    Let $\beta \in \mbox{Br}_n^+$ be a positive braid, $\Delta$ be the half twist, $\Lambda_{\beta\Delta^2} \subset T^{*,\infty}\bD^2$ the Legendrian cylindrical braid closure of $\beta \Delta^2$ and $\Lambda_{\beta}^\prec \subset T^{*,\infty}\bD^2$ the Legendrian rainbow braid closure of $\beta$. For each crossing in $\beta$, assign a sign point on the lower right half strand as in Definition \ref{def:sign}. Suppose that the unique compatible decorations of flags over $\beta$ in Lemma \ref{lem:decorate-flag} is given by
    $$U_+ \xrightarrow{s_{i_1}} x_1U_+ \xrightarrow{s_{i_2}} \dots \xrightarrow{s_{i_{k-1}}} x_{k-1}U_+ \xrightarrow{s_{i_k}} x_kU_+.$$
    Then the decoration of the flag $B_\text{bot}$ in the bottom region of $\Lambda_\beta$\footnote{The convention follows the one in Subsection \ref{ssec:triangle-positive} and \ref{ssec:ex-free-weave}.} determined by microlocal parallel transport maps is
    $$\omega_{\text{bot},j} = e_n \wedge e_{n-1} \wedge \dots \wedge e_{n-j+1}, \;\; 1 \leq j \leq n.$$
\end{lemma}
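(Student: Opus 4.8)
The statement is essentially a bookkeeping claim: once one fixes the decorations over $\beta$ by the compatibility recipe of Lemma~\ref{lem:decorate-flag} and Lemma~\ref{lem:frame-decorate}, and fixes the sign curves as in Definition~\ref{def:sign} (each crossing connected to a sign point on the lower right half strand), then the decoration transported through the cusp structure of $\Lambda_\beta^\prec$ down to the bottom region $B_{\text{bot}}$ is forced to be the standard antidiagonal volume tower $\omega_{\text{bot},j} = e_n \wedge e_{n-1} \wedge \dots \wedge e_{n-j+1}$. My plan is to compute this directly by propagating the decoration, using only the local models already established: the microlocal merodromy through a cusp (Example~\ref{ex:sheafcombin}, where $\phi_u\circ\phi_d = \mathrm{id}$), the cokernel/pushforward description of stalks under Reidemeister~II identifications (as in the proof of Proposition~\ref{prop:sheaf-X-rain=sate}), and the compatibility relation $\omega_{i+1} = \omega_{i-1}\wedge v_i \wedge v_i'$ for decorations related by $s_i$.

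\textbf{Key steps.} First I would set up coordinates: place the rainbow closure $\Lambda_\beta^\prec$ with the braid $\beta$ on top and the $n$ nested cusps on the left, so that the leftmost region below all cusps carries a flag $B_0$ and above all cusps a flag $B^0$ in opposite position, with the composition $V_i \hookrightarrow V_n \twoheadrightarrow V_i$ the identity (as recalled before Proposition~\ref{prop:sheaf-X-rain=sate}). Second, I would normalize the ambient space: the microframing at the $n$ base points $T=\{p_1,\dots,p_n\}$ on the right of $\beta$ determines, via Proposition~\ref{prop:frame-markpt}, a decoration on the rightmost flag, and by Lemma~\ref{lem:frame-decorate} the microlocal parallel transport along $\beta$ realizes exactly the compatible decorations $U_+ \xrightarrow{s_{i_1}} x_1 U_+ \to \dots \to x_k U_+$. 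One may therefore assume, after acting by $\mathrm{SL}_n$, that the rightmost (equivalently leftmost, since the cusp conditions force the two ends to agree) decorated flag over $\beta$ is the standard one $e_1, e_1\wedge e_2, \dots, e_1\wedge\dots\wedge e_n$. Third — the core computation — I would transport this decoration through the $n$ cusps on the left. Each cusp relates the two microstalks on the two sides by the isomorphism coming from $\phi_u\circ\phi_d = \mathrm{id}$ in Example~\ref{ex:sheafcombin}, and the nesting of the $n$ cusps reverses the order of the basis vectors: the stalk $\ker(V_n \to V_{n-1})$, $\ker(V_n \to V_{n-2}),\dots$ appearing in the cokernel-flag $B_{\text{bot}}$ of the proof of Proposition~\ref{prop:sheaf-X-rain=sate} is spanned, after this normalization, by $e_n, e_n\wedge e_{n-1}, \dots$. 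Finally, I would check the sign contributions: the sign curves in Definition~\ref{def:sign} attach each crossing of $\beta$ to a point on the lower right half strand and do \emph{not} cross the cusp-transport paths on the left; hence no extra $-1$ is picked up in transporting from $\beta$ down to $B_{\text{bot}}$, and the decoration is precisely $\omega_{\text{bot},j} = e_n\wedge e_{n-1}\wedge\dots\wedge e_{n-j+1}$.

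\textbf{Main obstacle.} The routine part is the cusp transport; the delicate part is keeping the signs coherent. One must verify that the chosen collection of sign curves (each trivalent/crossing point joined to the boundary on the lower right) is disjoint from the capping paths used to define the decorations at $B_{\text{bot}}$, so that the relative spin structure contributes trivially along precisely these paths, matching the sign conventions of Lemma~\ref{lem:frame-decorate} and \cite{CasalsWeng22}*{Section 4.5}. I expect the argument here to be a short parity check: reflecting the half twist $\Delta$ on both sides in the Reidemeister~II moves of Proposition~\ref{prop:sheaf-X-rain=sate} introduces the crossings symmetrically, the sign curves sit below the upper strands, and the only sign that could appear — from passing a transport path through a sign curve — is absent by construction. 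Once this is confirmed, the formula follows by induction on $j$ exactly as in the inductive cokernel computation in the proof of Proposition~\ref{prop:sheaf-X-rain=sate}, with the antidiagonal ordering of $\{e_n,\dots,e_1\}$ being the shadow of the cusp nesting.
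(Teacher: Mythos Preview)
Your proposal is correct in spirit but takes a longer route than the paper. The paper never uses cusps or the Reidemeister~II identification of Proposition~\ref{prop:sheaf-X-rain=sate}: it works entirely in the cylindrical closure $\Lambda_{\beta\Delta^2}$. After invoking Lemma~\ref{lem:frame-decorate} to normalize the decoration of $B_0$ on the leftmost region of $\beta$ to the standard one (microstalks $e_1,\dots,e_n$ on the $n$ strands), it parallel-transports directly through the half-twist $\Delta$, which simply permutes the strands by the longest word $w_0$ and so carries the microstalks to $e_n,e_{n-1},\dots,e_1$; Proposition~\ref{prop:frame-markpt} then yields the claimed decoration on $B_{\text{bot}}$ in one line. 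No sign enters this transport because the sign curves of Figure~\ref{fig:braid-sign} attach only to the crossings in $\beta$, never to those in $\Delta$. Your approach via cusps and the kernel description of $B_{\text{bot}}$ should also arrive at the answer, but it adds two obligations the paper avoids: you must check that the rainbow-to-cylindrical isomorphism of Corollary~\ref{cor:sheaf-rain=sate} carries decorations in the way you expect, and your statement that the kernel flag is ``spanned by $e_n, e_n\wedge e_{n-1},\dots$'' only identifies the underlying subspaces---pinning down the volume forms exactly (rather than up to scalar or sign) still requires an explicit cusp-transport argument. The half-twist route sidesteps both by moving microstalks to microstalks along actual strands.
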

\begin{proof}
    Lemma \ref{lem:frame-decorate} shows that the decorations for the flags over $\beta$ determined by compatible decorations coincide with the decorations determined by microlocal parallel transport maps. Therefore we may assume that decoration of the flag $B_0$ on the leftmost region of $\beta$ is determined by volume forms 
    $$\omega_{0,j} = e_1 \wedge e_2 \wedge \dots \wedge e_j, \;\; 1\leq j\leq n,$$
    or equivalently, by Proposition \ref{prop:frame-markpt} the microstalks on the $n$ strands in the leftmost region are $e_1, e_2, \dots, e_n$. Then the microlocal parallel transport maps in the region $\Delta$ determines the microstalks on the top by $e_n, e_{n-1}, \dots , e_1$. Therefore, Proposition \ref{prop:frame-markpt} shows that the decoration of $B_{bot}$ is determined by
    $$\omega_{\text{bot},j} = e_n \wedge e_{n-1} \wedge \dots \wedge e_{n-j+1}, \;\; 1 \leq j \leq n.$$
    This completes the proof.
\end{proof}

\subsubsection{Cluster $\cA$-coordinates on double Bott-Samelson cells}
    The cluster $\cA$-coordinates on half decorated double Bott-Samelson cells $\mathrm{Conf}_{\beta}^e(\cC)$ constructed in \cite{ShenWeng21,GaoShenWeng20} are computed by generalized minors. Let us show that these cluster coordinates coincide with certain microlocal merodromies of framed sheaves (so as to compare with \cite{CasalsWeng22,CasalsZas20}).

    Following Lemma \ref{lem:decorate-flag}, given any point in the half-decorated Bott-Samelson cell, we can uniquely determine compatible decorations clockwise from right to left and then to the top as in Gao-Shen-Weng \cites{GaoShenWeng20}. In particular, we get a unique decorated sequence of flags
    \[\xymatrix@R=7mm{
    & & A^0 \ar@{-}[dll] \ar@{-}[drr] & & \\
    A_0 \ar[r]_{s_{i_1}} & A_1 \ar[r]_{s_{i_2}} & A_2 \ar[r]_{s_{i_3}} & \dots \ar[r]_{s_{i_k}} & A_k.
    }\]
    (Note that by Lemma \ref{lem:decorate-flag}, the decorations from $A_0$ to $A_k$ are indeed the ones determined by microlocal parallel trasnport maps.) Since decorated flags with a common ambient volume form can be characterized by cosets $x\mathrm{U}_+ \in \SL_n/\mathrm{U}_+$, we can fix a trivialization for the decorated flag $A_0$ and write the sequence of flags as
    \[\xymatrix@R=7mm{
    & & U_- \ar@{-}[dll] \ar@{-}[drr] & & \\
    U_+ \ar[r]_{s_{i_1}} & x_1U_+ \ar[r]_{s_{i_2}} & x_2U_+ \ar[r]_{s_{i_3}} & \dots \ar[r]_{s_{i_k}} & x_kU_+.
    }\]

    Consider the triangulation associated to the pair $(e, \beta)$ on top of Figure \ref{fig:BS-cluster}. On top of the triangulation, we draw $n-1$ parallel lines, labeled from top to bottom.\footnote{Following the conventions in Subsection \ref{ssec:triangle-positive}, in the string diagram and the plabic fence, the strands are labeled from top to bottom, (i.e.~$s_1$ is on the top while $s_{n-1}$ is on the bottom), which is opposite to the labeling of the crossings in the Legendrian braid closure.} Depending on the labeling of the base, each triangle places a node at one of the lines, cutting it into segments called strings. For each string $a$ on the $i$-th line, it will at least cross one diagonal in the triangulation, say, the $j$-th diagonal connecting $\mathrm{U}_-$ and $x_j\mathrm{U}_+$. Then we define the cluster $\cA$-coordinate associated to $a$ as the $i$-th principal minor of $x_j$, i.e.
    $$A_a = \Delta_i(x_j).$$
    (Note that there may be more than one choice for the diagonal $1 \leq j \leq k$, but it turns out that any diagonal that intersects the string $a$ will define the same function $A_a$, see \cite{ShenWeng21}*{Proposition 3.21}.)

\begin{figure}
  \centering
  \includegraphics[width=0.6\textwidth]{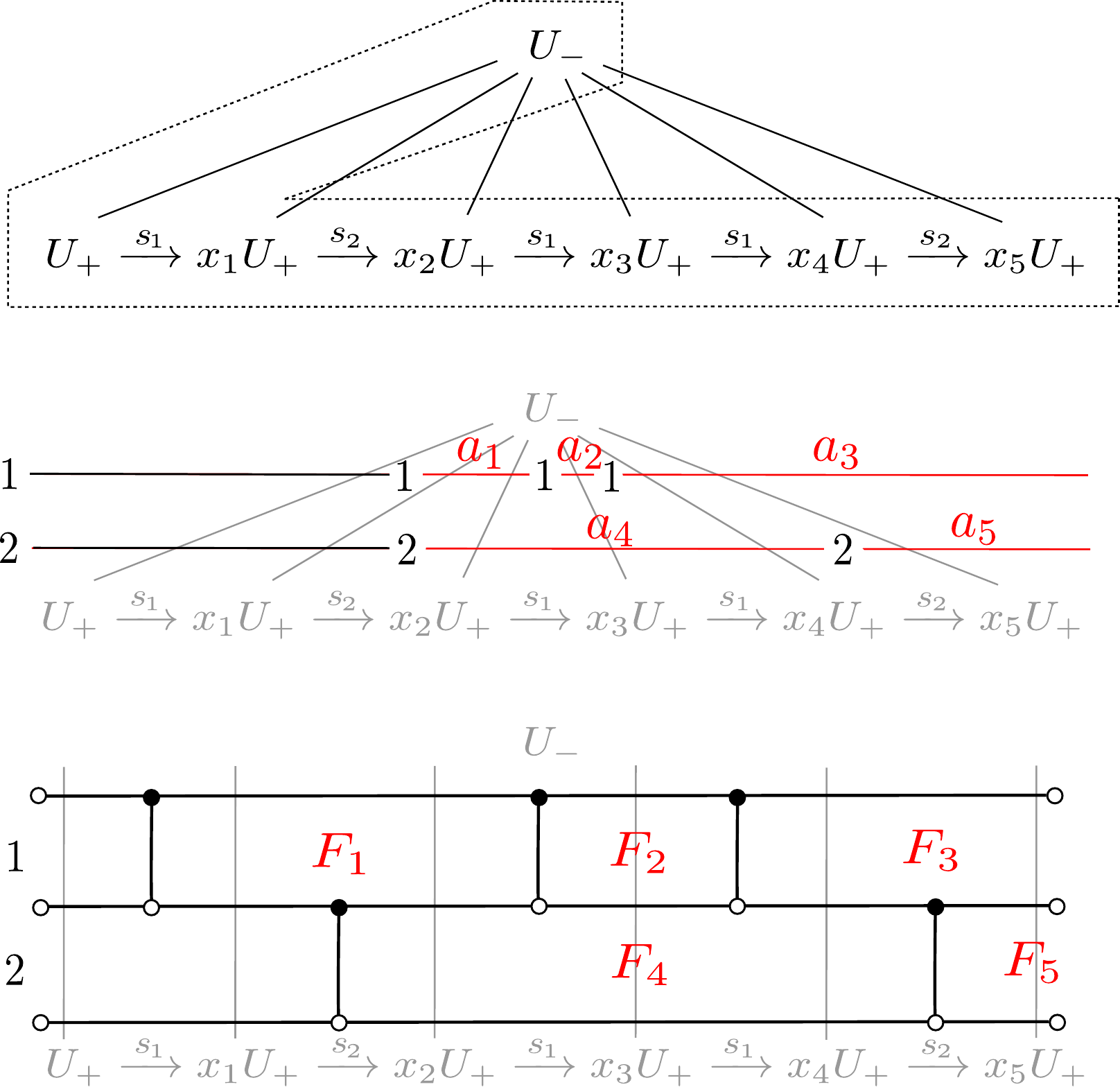}\\
  \caption{The diagram of flags on the top illustrates a point in $\mathrm{Conf}^e_{\beta}(\cC)$ for the positive braid $\beta = s_1s_2s_1^2s_2$. The decoration of the flags are determined clockwise from right to left and then to the top. The diagram in the middle is the string diagram (consisting of all red strings, either closed or extendable to the right) associated to the grey triangulation. The one on the bottom is the plabic fence $\bG_{\beta}$, where closed faces correspond to closed strings, and half-open faces correspond to the half-open strings.}\label{fig:BS-cluster}
\end{figure}

    In order to interpret the geometric meaning of the above minors, we first identify the set of strings with the set of null regions (i.e.~faces) in the corresponding plabic fence associated to the braid $\bG_{\beta}$. The following lemma is immediate from Figure \ref{fig:BS-cluster}.

\begin{lemma}
    Let $\beta \in \mbox{Br}_n^+$ be a positive braid. Then the set of closed strings $I_\text{str}^\text{cl}$ in the string diagram of the triangulation associated to $(e, \beta)$ corresponds bijectively to the set of closed null regions $I_\text{fence}^\text{cl}$ in the plabic fence associated to $\beta$.
\end{lemma}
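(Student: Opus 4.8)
The plan is to establish this bijection by a direct combinatorial matching between the string diagram of the triangulation associated to $(e,\beta)$ and the plabic fence $\bG_\beta$, using the picture in Figure \ref{fig:BS-cluster} as the guiding model. First I would recall the explicit description of the string diagram from \cite{ShenWeng21}: given the positive braid word $\beta = s_{i_1}\cdots s_{i_k}$, the triangulation of the associated polygon has $n-1$ horizontal lines, and each letter $s_{i_j}$ places a node on the $i_j$-th line, thereby subdividing that line into segments (strings). A string is \emph{closed} when it is bounded on both the left and the right by nodes (equivalently, by two diagonals of the triangulation), as opposed to the half-open strings that run off to the left or right boundary. On the plabic fence side, $\bG_\beta$ consists of $n$ horizontal strands with a vertical edge inserted along the line $y = j$ between the $i_j$-th and $(i_j+1)$-st strand for each $j$; a null region (face) is \emph{closed} when it is a bounded complementary region of the fence, i.e.\ bounded on the left and right by two consecutive vertical edges lying between the same pair of adjacent strands.

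The key step is to set up the correspondence. I would send a string $a$ on the $i$-th line, bounded on the left by the node coming from letter $s_{i_j}$ (so $i_j = i$) and on the right by the node coming from letter $s_{i_{j'}}$ (so $i_{j'} = i$, with $j < j'$ and no intervening letter equal to $s_i$), to the bounded complementary region of $\bG_\beta$ lying between the $i$-th and $(i+1)$-st strands, bounded on the left by the vertical edge at $y = j$ and on the right by the vertical edge at $y = j'$. The point is that both objects are indexed by exactly the same data: an index $i \in \{1,\dots,n-1\}$ together with a pair of \emph{cyclically consecutive occurrences} of the letter $s_i$ in the word $\beta$ (consecutive in the sense that no $s_i$ occurs strictly between them). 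I would verify that the node structure of the $i$-th line of the string diagram is in order-preserving bijection with the vertical edges between strands $i$ and $i+1$ of the fence (both are indexed by $\{\,j : i_j = i\,\}$ in the natural left-to-right order), so that consecutive nodes match consecutive vertical edges, hence closed strings match closed faces; the half-open strings at the two ends match the two half-open regions of the fence adjacent to the left/right boundary. This gives a bijection $I_\text{str}^\text{cl} \leftrightarrow I_\text{fence}^\text{cl}$ as claimed, and simultaneously $I_\text{str} \leftrightarrow I_\text{fence}$ on the nose including the half-open ones.

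I expect the proof to be essentially a careful bookkeeping argument rather than one with a genuine obstacle, since Figure \ref{fig:BS-cluster} already exhibits the correspondence in an example and the general pattern is dictated by it; the lemma statement even says ``immediate from Figure \ref{fig:BS-cluster}''. The one place that requires a little care — the mild ``hard part'' — is making precise the claim that the nodes on a given line of the string diagram really are indexed by the occurrences of the corresponding generator in the word, and in the correct order: one must track how the choice of ``base'' labeling in the double Bott-Samelson picture (which triangle places its node on which line) dictates the node placement, and check that this matches the convention (strands labeled top to bottom, crossings $s_1$ on top) used for the plabic fence in Subsection \ref{ssec:triangle-positive}. Once these conventions are aligned, the bijection is forced. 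I would therefore structure the write-up as: (1) fix conventions and recall the two combinatorial structures; (2) define the map on all strings/faces; (3) observe it is a bijection by the common indexing set, and that it restricts to closed-to-closed; which completes the proof.
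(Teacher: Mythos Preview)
Your proposal is correct and is exactly the argument the paper has in mind: the paper states that the lemma ``is immediate from Figure \ref{fig:BS-cluster}'' and gives no further proof, so your explicit matching of nodes on the $i$-th line with vertical edges between strands $i$ and $i+1$ (both indexed by the ordered set $\{j : i_j = i\}$, so that closed strings and closed faces are both indexed by consecutive pairs of such $j$'s) is precisely the intended bookkeeping.
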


    As a result, the set of closed strings corresponds bijectively to the basis of $H_1(L(\bG_{\beta}); \bZ)$ for the conjugate Lagrangian in Subsection \ref{sec:H1cycle-conj}, and by Corollary \ref{cor:1cycle-braid}, is identified with the basis of $H_1(\mathfrak{w}_{\beta}; \bZ)$ for the Legendrian weave $\mathfrak{w}_{\beta}$ by the corresponding long $\sf I$-cycles in Subsection \ref{sec:H1cycle-weave}\footnote{In this section, abusing notations, we write $\mathfrak{w}_\beta$ for the weave instead of $\widetilde{L}(\mathfrak{w}_\beta)$.}. Moreover, the half open strings are relative 1-cycles on the conjugate Lagrangian, and by Corollary \ref{cor:1cycle-braid} they are identified with the relative $\sf I$-cycles in the weave.

    Based on the string diagram and the plabic fence, we now explain a choice of dual relative 1-cycles in $(\mathfrak{w}_{\beta}^*, \Lambda_{\beta\Delta^2}^*)$ corresponding to all strings in order to define cluster $\cA$-coordinates from sheaves on $\mathfrak{w}_{\beta}$, where
    $$\mathfrak{w}_{\beta}^* = \mathfrak{w}_{\beta} \backslash T = \mathfrak{w}_\beta \backslash \{p_1, \dots, p_n\}, \;\; \Lambda_{\beta\Delta^2}^* = \Lambda_{\beta\Delta^2} \backslash T = \Lambda_{\beta\Delta^2} \backslash \{p_1, \dots, p_n\}.$$
    The $k$ grey edges in the triangulation in Figure \ref{fig:BS-cluster} connecting the top decorated flag $\mathrm{U}_-$ and $x\mathrm{U}_+$ determine $n$ vertical slices that split the front projection of $\mathfrak{w}_\beta$ (in particular, we choose the $k$ slices so that they avoid all the candy twists in the Legendrian weave), as depicted in Figure \ref{fig:braid-rel-cycle}. On each slice we have the Legendrian braid $\Delta$ connecting $n$ points on both ends.

    For a string $a$ on the $i$-th level that intersects the $j$-th diagonal, we consider the $j$-th vertical slice in the Legendrian weave, whose front consists of $n$ strands. Denote by $\delta_{a,j,r}$ the strand that starts from the $r$-th level on the top of the slice to the $(n-r)$-the level on the bottom of the slice in Figure \ref{fig:braid-rel-cycle}. Define 
    $$\eta_{a,j} = \sum_{r=1}^i \xi_{a,j,r} \in H_1(\mathfrak{w}_\beta^*, \Lambda_{\beta \Delta^2}^*; \bZ).$$
    From the definition, if a string $a$ intersects the $j$-th and $j'$-th diagonal in the triangulation, then $\eta_{a,j}$ and $\eta_{a,j'}$ are isotopic relative to $\Lambda_{\beta \Delta^2}^*$.
    
\begin{figure}
    \centering
    \includegraphics[width=1.0\textwidth]{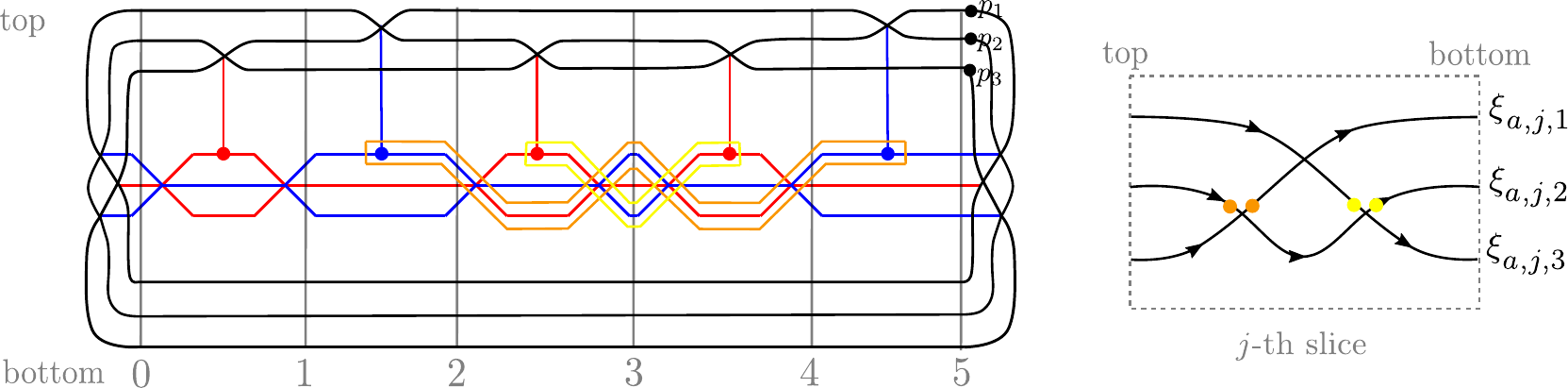}
    \caption{The vertical slices in the Legendrian weave $\mathfrak{w}_\beta$ in grey and the strands $\xi_{a,j,r}$ on each vertical slice. Two closed $1$-cycles on $\mathfrak{w}_\beta$ corresponding to the closed strings on the 1st and 2nd levels are respectively drawn in orange and yellow.}
    \label{fig:braid-rel-cycle}
\end{figure}

    Denote by $\eta_a$ the homology class in $H_1(\mathfrak{w}_\beta^*, \Lambda_{\beta\Delta^2}^*; \bZ)$ associated to the string $a$.

\begin{lemma}\label{lem:H1cycle-cluster}
    Let $\beta \in \mbox{Br}_n^+$ be a positive braid and $\Delta$ be the half twist. Then $\{\eta_a\, | \,a \in I_\text{str}\}$ forms a basis of $H_1(\mathfrak{w}_\beta^*, \Lambda_{\beta\Delta^2}^*; \bZ)$, and the subset $\{\eta_a | a \in I_\text{str}^\text{cl}\}$ forms a basis of $H_1(\mathfrak{w}_\beta, \Lambda_{\beta\Delta^2}; \bZ)$ dual to $\{\gamma_a | a \in I_\text{str}^\text{cl}\}$ in $H_1(\mathfrak{w}_\beta; \bZ)$ via the intersection product.
\end{lemma}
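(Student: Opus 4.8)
\textbf{Proof strategy for Lemma \ref{lem:H1cycle-cluster}.}

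The plan is to reduce the statement to two essentially combinatorial verifications: first that the classes $\eta_a$ are well-defined and that their number equals the rank of the relevant (relative) homology group, and second that the pairing between the closed classes $\eta_a$ and the cycles $\gamma_a$ is unimodular. I would begin by recalling the structure of the weave $\mathfrak{w}_\beta$ and its front projection from Subsection \ref{ssec:ex-free-weave}, in particular the decomposition into vertical slices dictated by the $k$ grey diagonals of the triangulation of $(e,\beta)$ (Figure \ref{fig:braid-rel-cycle}). On each slice the front is a half-twist $\Delta$ on $n$ strands, and the relative cycle $\eta_{a,j}$ is by construction a sum of the lowest $i$ ``descending'' strands $\delta_{a,j,r}$ through that slice, with endpoints on $\Lambda_{\beta\Delta^2}^*$. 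The first step is to check independence of the diagonal: if a string $a$ meets both the $j$-th and $j'$-th diagonal, then no node of the string diagram lies strictly between them on the $i$-th line, so the portion of the weave between the two slices contains no trivalent vertex on level $\geq i$; hence the relative cycles $\eta_{a,j}$ and $\eta_{a,j'}$ cobound an annulus-with-corners and are isotopic rel $\Lambda_{\beta\Delta^2}^*$. This is exactly the weave-theoretic counterpart of \cite{ShenWeng21}*{Proposition 3.21}, and I would phrase it that way.

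Next I would establish that $\{\eta_a \mid a\in I_\text{str}\}$ spans and is independent. For spanning, I would use that $H_1(\mathfrak{w}_\beta^*,\Lambda_{\beta\Delta^2}^*;\bZ)$ is generated by relative arcs, each of which can be pushed to lie inside a single vertical slice (using that the weave retracts onto a spine compatible with the slices), and on a single slice the descending strands $\delta_{a,j,r}$ together with the boundary give an obvious generating set; the relations come precisely from the hexagonal and trivalent vertices, which match the node structure of the string diagram. For independence and the count, I would invoke Lemma \ref{lem:H1cycle-conj} together with Corollary \ref{cor:1cycle-braid}: the latter identifies the closed strings $I_\text{str}^\text{cl}$ with the basis of $H_1(\mathfrak{w}_\beta;\bZ)$ given by long $\sf I$-cycles and the half-open strings with the relative $\sf I$-cycles, so $\#I_\text{str} = \operatorname{rk} H_1(\mathfrak{w}_\beta^*,\Lambda_{\beta\Delta^2}^*;\bZ)$ and $\#I_\text{str}^\text{cl} = \operatorname{rk} H_1(\mathfrak{w}_\beta,\Lambda_{\beta\Delta^2};\bZ) = \operatorname{rk} H_1(\mathfrak{w}_\beta;\bZ)$. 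A spanning set of the right cardinality in a free $\bZ$-module is automatically a basis, which closes the first two assertions.

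The last and, I expect, the most delicate step is the duality claim: $\{\eta_a \mid a\in I_\text{str}^\text{cl}\}$ is dual to $\{\gamma_a \mid a\in I_\text{str}^\text{cl}\}$ under the intersection pairing $H_1(\mathfrak{w}_\beta,\Lambda_{\beta\Delta^2};\bZ)\times H_1(\mathfrak{w}_\beta;\bZ)\to\bZ$. Here I would argue locally: a closed $\sf I$-cycle $\gamma_b$ associated to a closed string $b$ on level $i$ lives on sheets $i,i+1$ between two consecutive trivalent vertices on that level, while $\eta_a$ is supported on the descending strands through a slice on levels $1,\dots,i$. Choosing the slice defining $\eta_a$ to be generic with respect to the (finitely many) candy twists and $\sf I$-cycles, the geometric intersection $\eta_a\cap\gamma_b$ is concentrated at the points where the level-$i$ descending strand $\delta_{a,j,i}$ crosses the sheet region supporting $\gamma_b$; a direct inspection of the half-twist front (as in Examples \ref{ex:mero-2strand}--\ref{ex:mumon-Icycle}) shows this contributes $\pm 1$ when $a=b$ and $0$ otherwise, the vanishing for $a\neq b$ following because distinct closed strings on the same level are separated by a node (hence a trivalent vertex) across which the relevant descending strand no longer meets the $\sf I$-cycle, and strings on different levels intersect the wrong pair of sheets. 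The sign can be normalized to $+1$ by the orientation conventions fixed for $\sf I$-cycles in \cite[Section 2.4]{CasalsZas20}. The main obstacle is bookkeeping: making the ``generic slice'' precise so that no spurious intersections with candy twists or with other $\sf Y$-cycles appear, and keeping the orientations coherent; I would handle this by first treating a single Type~2 column (where everything is explicit from Figure \ref{fig:braid-rel-cycle} and Corollary \ref{cor:1cycle-braid}) and then concatenating, exactly as in the proof of Theorem \ref{thm:main1}, since both the $\eta_a$ and the $\gamma_a$ are built column by column with matching lateral boundary conditions.
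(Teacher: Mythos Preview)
Your proposal is broadly sound, but it inverts the logical order the paper uses, and this inversion forces you into the one step that is not fully justified: the spanning argument in your second paragraph (``each relative arc can be pushed to lie inside a single vertical slice using that the weave retracts onto a spine compatible with the slices''). This retraction claim is plausible but would need real work to make precise, and the paper avoids it entirely.

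The paper's route is more economical. It proves the duality statement \emph{first}, by the same local intersection computation you sketch in your third paragraph: the long $\sf I$-cycle $\gamma_c$ on level $i'$ meets the $j$-th vertical slice (when it meets it at all) in exactly two points near the $i'$-th blue crossing, with opposite signs, so $\langle \xi_{a,j,r},\gamma_c\rangle$ is $+1$ for $r=i'$, $-1$ for $r=i'+1$, and $0$ otherwise; summing over $r\le i$ gives $\langle\eta_a,\gamma_c\rangle=\delta_{ac}$. Once you know $\{\eta_a\mid a\in I_\text{str}^\text{cl}\}$ is dual to the known basis $\{\gamma_a\}$ of $H_1(\mathfrak{w}_\beta;\bZ)$ under the Poincar\'e--Lefschetz pairing, it is automatically a basis of $H_1(\mathfrak{w}_\beta,\Lambda_{\beta\Delta^2};\bZ)$, with no spanning argument needed. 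The extension to all of $I_\text{str}$ is then handled not by a cardinality count but by the split exact sequence
\[
0 \to H_1(\mathfrak{w}_\beta,\Lambda_{\beta\Delta^2};\bZ) \to H_1(\mathfrak{w}_\beta^*,\Lambda_{\beta\Delta^2}^*;\bZ) \to \widetilde{H}_0(T;\bZ) \to 0,
\]
where the splitting sends each base point $p_i$ to a small semicircle around it; one checks that these semicircles are isotopic rel $\Lambda_{\beta\Delta^2}^*$ to the strands $\xi_i$ on the rightmost slice, and that the half-open $\eta_a$'s are integer combinations of them modulo the closed ones. So the paper gets both basis statements as consequences of the intersection pairing and a short exact sequence, rather than proving spanning directly. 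Your duality computation is essentially the same as the paper's; the difference is that you should lead with it.
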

\begin{proof}
    We first check that $\{\eta_a | a \in I_\text{str}^\text{cl}\}$ forms a dual basis of $\{\gamma_a | a \in I_\text{str}^\text{cl}\}$ via the intersection product. We need to check that
    $$\left<\eta_a, \gamma_c \right> = \delta_{ac}.$$
    First, suppose $a \in I_\text{str}^\text{cl}$ is on the $i$-th level of the string diagram, and $c \in I_\text{str}^\text{cl}$ on the $i'$-th level. Note that according to the description of the 1-cycles $\{\gamma_c | c \in I_\text{str}^\text{cl}\}$, such a cycle $\gamma_c$ will intersect the slice of the weave at 2 points near the $i'$-th blue crossing with opposite signs as in Figure \ref{fig:braid-rel-cycle} and \ref{fig:braid-cycle-int}. Hence
    $$\left<\xi_{a,j,i}, \gamma_c \right> = 1, \,\,\, \left<\xi_{a,j,i+1}, \gamma_c \right> = -1, \,\,\, \left<\xi_{a,j,r}, \gamma_c \right> = 0, \, \forall\,r \neq i, i+1.$$
    This shows that indeed $\left<\eta_a, \gamma_c \right> = \delta_{ac}$. Hence $\{\eta_a | a \in I_\text{str}^\text{cl}\}$ forms a dual basis of $\{\gamma_a | a \in I_\text{str}^\text{cl}\}$.

\begin{figure}[h!]
    \centering
    \includegraphics[width=1.0\textwidth]{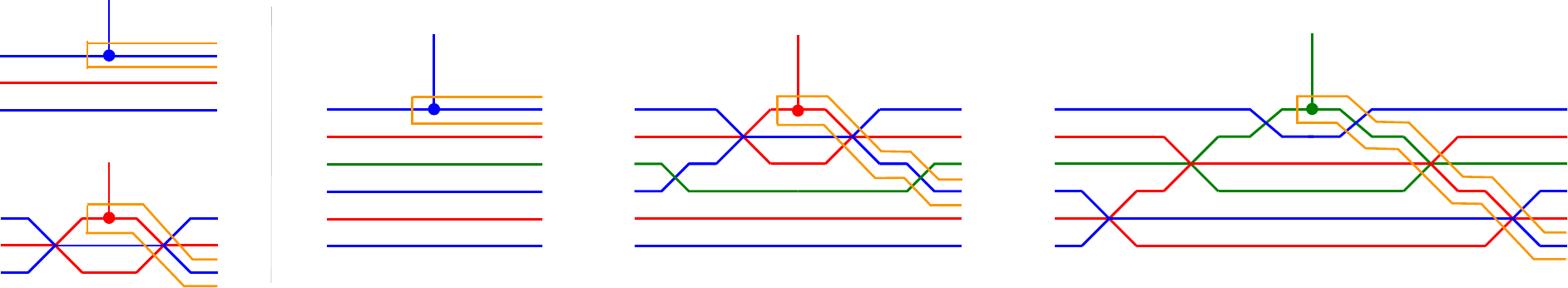}
    \caption{The closed 1-cycles on $\mathfrak{w}_\beta$ that start from a trivalent vertex in $G_i\,(1 \leq i \leq n-1)$, which correspond to the closed 1-cycles on the plabic fence $\bG_{\beta}$ on the $i$-the level, intersect the vertical slices at the $i$-th blue crossing.}
    \label{fig:braid-cycle-int}
\end{figure}

    Now we show that $\{\eta_a | a \in I_\text{str}\}$ is a basis of $H_1(\mathfrak{w}_\beta^*, \Lambda_{\beta\Delta^2}^*; \bZ)$. Consider the exact sequence
    $$0 \rightarrow H_1(\mathfrak{w}_\beta, \Lambda_{\beta\Delta^2}; \bZ) \rightarrow H_1(\mathfrak{w}_\beta^*, \Lambda_{\beta\Delta^2}^*; \bZ) \rightarrow \widetilde{H}_0(T; \bZ) \rightarrow 0.$$
    We have shown that $\{\eta_a |\, a \in I_\text{str}^\text{cl}\}$ forms a dual basis of $\{\gamma_a |\, a \in I_\text{str}^\text{cl}\}$, and thus it suffices to show that there is a backward map
    $$\widetilde{H}_0(T; \bZ) \rightarrow H_1(\mathfrak{w}_\beta^*, \Lambda_{\beta\Delta^2}^*; \bZ)$$
    that splits the exact sequence, and send generators to the set $\{\eta_a |\, a \in I_\text{str} \backslash I_\text{str}^\text{cl}\}$. Indeed, consider $s_i'$ to be to be the semicircle in $\mathfrak{w}_\beta$ around the base point $p_i \in \Lambda_{\beta\Delta^2}$. This defines such a backward map, and clearly
    $$\xi_i' \in \mathrm{coker}(H_1(\mathfrak{w}_\beta , \Lambda_{\beta\Delta^2}; \bZ) \rightarrow H_1(\mathfrak{w}_\beta^*, \Lambda_{\beta\Delta^2}^*; \bZ)).$$
    Note that $\xi_i'$ is isotopic to the strand $\xi_i$ on the rightmost slice of the weave relative to $\Lambda_{\beta\Delta^2}^*$. For $a \in I_\text{str} \backslash I_\text{str}^\text{cl}$ the unique half open string on the $i$-th level of the string diagram, since $\eta_a = \sum_{r=1}^i\xi_i$, we conclude that they form a basis of $\widetilde{H}_0(T; \bZ) \hookrightarrow H_1(\mathfrak{w}_\beta^*, \Lambda_{\beta\Delta^2}^*; \bZ)$.
\end{proof}

    Finally, under the isomorphism in (the proof of) Corollary \ref{cor:sheaf-rain=sate}, we show how to identify the cluster $\cA$-coordinates in $\mathrm{Conf}_{\beta}^e(\cC)$ defined in \cite{ShenWeng21} with the microlocal merodromies along relative 1-cycles in $H_1(\mathfrak{w}_\beta^*, \Lambda_{\beta\Delta^2}^*; \bZ)$ specified in the above construction.

\begin{proof}[\pfo Theorem \ref{thm:main_algebra1}]
    Consider a framed sheaf $\cF \in \cM_1^{\mu,\textit{fr}}(\bD^2, \Lambda_{\beta\Delta^2})_0$ that is the sheaf quantization of a framed local system on $\mathfrak{w}_\beta$. By Corollary \ref{cor:sheaf-rain=sate}, it is represented by the decorated system of flags
    \[\xymatrix{
    & & U_- \ar@{-}[dll] \ar@{-}[drr] & & \\
    U_+ \ar[r]_{s_{i_1}} & x_1U_+ \ar[r]_{s_{i_2}} & x_2U_+ \ar[r]_{s_{i_3}} & \dots \ar[r]_{s_{i_k}} & x_kU_+,
    }\]
    We show that for a given string $a \in I_\text{str}$ on the $i$-th level of the string diagram that intersects the $j$-th diagonal, the microlocal merodromy
    $$m_{L(\mathfrak{w}_\beta),\eta_{a,j}}(\widetilde{\cF}) = \Delta_i(x_j).$$
    Consider the $j$-th vertical slice of the Legendrian weave $\mathfrak{w}_\beta$ as in Figure \ref{fig:braid-rel-cycle}. On the bottom of the slice, by Proposition \ref{prop:sheaf-X-rain=sate}, we have a decorated flag represented by 
    $$(e_n, e_{n-1}, \dots, e_2, e_1).$$
    the trivialization data of microlocal stalks from top to bottom are given by $\Bbbk e_n, \Bbbk e_{n-1},\dots,$ $\Bbbk e_2, \Bbbk e_1$. On the top of the slice, we have a decorated flag represented by the matrix
    $$x_j = \left(v_{j1}, v_{j2}, \dots, v_{jn}\right),$$
    i.e.~the trivialization data of microlocal stalks from top to bottom are given by $\Bbbk v_{j1}, \Bbbk v_{j2}, \dots,$ $\Bbbk v_{j,n-1}, \Bbbk v_{jn}$. Note that $\eta_{a,j} = \sum_{r=1}^i\xi_{a,j,r}$. Along the strand $\xi_{a,j,r}$, the parallel transport map is the isomorphism
    $$t_{a,j,r}: \Bbbk\, v_{jr} \xrightarrow{\sim} \Bbbk\, e_r.$$
    Whenever there is a crossing where on the left the assigned vectors associated to the flag are $v_1, v_2$ and one the right the vectors are $u_2, u_1$, we may always assume that the isomorphism $\Bbbk \left<v_1, v_2\right> \xrightarrow{\sim} \Bbbk \left<u_2, u_1\right>$ are given by an upper triangular matrix $R \in \GL_2(\Bbbk)$ with
    $$R(v_1) = p_1u_1, \,\,\, R(v_2) = q_1u_1+q_2u_2.$$
    Therefore, under the above convention of trivializations, we may assume by induction that the isomorphism $R_{a,j,12...i-1,i}: \Bbbk \left<v_{j1}, v_{j2}, \dots, v_{jn}\right> \xrightarrow{\sim} \Bbbk \left<e_n, e_{n-1}, \dots, e_1\right>$ satisfies
    $$R_{a,j,12\dots i-1,i}(v_{jr}) = \sum_{s=1}^r t_{a,j,sr}e_s, \,\,\, t_{a,j,rr} = t_{a,j,r}, \,\,\, 1 \leq r \leq n.$$
    The microlocal merodromy along $\eta_{a,j}$ is $t_{a,j,12\dots i-1,i} = \prod_{r=1}^it_{a,j,r}$, so since $R_{a,j,12\dots i-1,i}$ is an upper triangular matrix, we can conclude that $t_{a,j,12\dots i-1,i}$ is indeed the isomorphism
    $$t_{a,j,12\dots i-1,i} = \det(R_{a,j,12\dots i-1,i}): \Bbbk\, v_{j1} \wedge v_{j2} \wedge \dots \wedge v_{ji} \xrightarrow{\sim} \Bbbk\, e_1 \wedge e_2 \wedge \dots \wedge e_i.$$
    Write $t_{12\dots i-1,i} = t_{a,j,12\dots i-1,i}$ for short. Write $v_{j1} \wedge v_{j2} \wedge \dots \wedge v_{ji} = \sum_{k_1 < k_2 < \dots < k_i} t_{k_1k_2\dots k_i} e_{k_1} \wedge e_{k_2} \wedge \dots \wedge e_{k_i}$. Take the wedge with $e_{i+1} \wedge \dots \wedge e_n$, we can conclude that
    $$v_{j1} \wedge v_{j2} \wedge \dots \wedge v_{ji} \wedge e_{i+1} \wedge \dots \wedge e_n = t_{12\dots i-1,i} e_1 \wedge e_2 \wedge \dots \wedge e_n = t_{12\dots i-1,i}.$$
    However, the left hand side is exactly the $i$-th principal minor of the matrix $x_j$, i.e.~$\Delta_i(x_j)$. Hence this shows that the microlocal merodromy is the cluster $\cA$-coordinates defined by principal minors of matrices.
\end{proof}

\subsubsection{Cluster $\cX$-coordinates on Bott-Samelson cells}\label{sec:clust-X-sheaf}
    Following Section \ref{sec:clust-A}, we discuss the cluster $\cX$-variables on $\cM_1(\bD^2, \Lambda_{\beta\Delta^2})_0$ and show that they are exactly the microlocal monodromies along the 1-cycles in $H_1(\mathfrak{w}_\beta; \bZ)$. In \cite{ShenWeng21}, cluster $\cX$-variables on the double Bott-Samelson cell $\mathrm{Conf}_{\beta}^e(\cB)$ are defined independently and they showed in \cite[Proposition 3.43]{ShenWeng21} that the cluster $\cX$ and $\cA$-variables satisfy the standard duality relation: 

\begin{thm}[\cite{ShenWeng21}]\label{thm:ShenWeng-X}
    Let $(I_\text{str}, I_\text{str}^\text{cl}, \left<-, -\right>_\text{str})$ be the intersection quiver associated to the string diagram of $\beta$. Let $\{A_a\}_{a \in I_\text{str}}$ be the set of cluster $\cA$-variables on $\mathrm{Conf}_{\beta}^e(\cC)$. Then the set of cluster $\cX$-variables $\{X_a\}_{a \in I_\text{str}}$ satisfy
    $$X_a = \prod_{c \in I_\text{str}^\text{cl}}A_c^{\left<a, c\right>}.$$
\end{thm}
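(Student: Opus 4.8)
The plan is to reduce everything to the already-established comparison between the sheaf-theoretic picture on the weave $\mathfrak{w}_\beta$ and the flag-theoretic picture on $\mathrm{Conf}_\beta^e(\mathcal C)$. Concretely, by the proof of Theorem \ref{thm:main_algebra1}, the isomorphism $\cM_1^{\mu,\textit{fr}}(\bD^2,\Lambda_{\beta\Delta^2})_0\cong\mathrm{Conf}_\beta^e(\mathcal C)$ sends the cluster $\cA$-variable $A_a$ (for $a\in I_\text{str}$) to the microlocal merodromy $m_{L(\mathfrak{w}_\beta),\eta_a}$ along the relative $1$-cycle $\eta_a\in H_1(\mathfrak{w}_\beta^*,\Lambda_{\beta\Delta^2}^*;\bZ)$, and under the same isomorphism the intersection quiver $(I_\text{str},I_\text{str}^\text{cl},\langle-,-\rangle_\text{str})$ of the string diagram is identified, via Lemma \ref{lem:H1cycle-cluster} and Corollary \ref{cor:1cycle-braid}, with the intersection quiver of $H_1(\mathfrak{w}_\beta;\bZ)$ (for the closed part) together with the relative cycles $\{\eta_a\}$. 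So it suffices to prove the duality formula $X_a=\prod_{c\in I_\text{str}^\text{cl}}A_c^{\langle a,c\rangle}$ on whichever side is most convenient; and in fact it is stated and proven on the Bott-Samelson side as \cite[Proposition 3.43]{ShenWeng21}, which is exactly Theorem \ref{thm:ShenWeng-X} as quoted. Thus the real content is to check that the two sets of identifications (of variables and of the quiver) are mutually compatible, so that the Shen-Weng relation transports verbatim.

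First I would record precisely what ``cluster $\cX$-variables $X_a$'' means in our setting: on the unframed moduli $\cM_1(\bD^2,\Lambda_{\beta\Delta^2})_0\cong\mathrm{Conf}_\beta^e(\mathcal B)$, the cluster $\cX$-variable attached to a seed vertex $a\in I_\text{str}^\text{cl}$ is, by \cite{CasalsWeng22} (see Subsection \ref{sec:quan-weave-mon} and Proposition \ref{prop:mumon-weave}), the microlocal monodromy $m_{\mathfrak{w}_\beta,\gamma_a}(\widetilde{\cF})$ along the absolute $1$-cycle $\gamma_a\in H_1(\mathfrak{w}_\beta;\bZ)$; and this is also exactly the $\cX$-variable of \cite{ShenWeng21} under Theorem \ref{thm:ShenWeng-sheaf}. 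Next I would recall the general fact relating $\cA$- and $\cX$-coordinates for any pair of dual bases: if $\{\gamma_c\}_{c\in I_\text{str}^\text{cl}}$ is the chosen basis of $H_1(\mathfrak{w}_\beta;\bZ)$ and $\{\eta_a\}_{a\in I_\text{str}}$ is the dual basis of $H_1(\mathfrak{w}_\beta^*,\Lambda_{\beta\Delta^2}^*;\bZ)$ under the intersection pairing (this duality is the content of Lemma \ref{lem:H1cycle-cluster}), then microlocal merodromy is multiplicative under gluing of relative cycles, and the class of the absolute cycle $\gamma_a$ expressed in the dual coordinates has entries $\langle a,c\rangle_\text{str}=\langle\gamma_a,\gamma_c\rangle$; combining this with the fact that $m_{\mathfrak{w}_\beta}(-)$ is a homomorphism from the relevant homology group to $\Bbbk^\times$ gives $m_{\mathfrak{w}_\beta,\gamma_a}=\prod_c m_{\mathfrak{w}_\beta,\eta_c}^{\langle a,c\rangle}$, i.e.\ $X_a=\prod_{c}A_c^{\langle a,c\rangle}$. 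This is precisely the ``standard duality relation'' phrased geometrically, and it matches the statement of \cite[Proposition 3.43]{ShenWeng21} transported through Theorem \ref{thm:main_algebra1}.

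Then I would assemble the argument: invoke Theorem \ref{thm:main_algebra1} to get the cluster algebra isomorphism $\C[\mathrm{Conf}_\beta^e(\mathcal C)]\cong\C[\cM_1^{\mu,\textit{fr}}(\bD^2,\Lambda_{\beta\Delta^2})_0]$ sending $A_a\mapsto m_{L(\mathfrak{w}_\beta),\eta_a}$; invoke Corollary \ref{cor:sheaf-rain=sate} and Proposition \ref{prop:sheaf-X-rain=sate} to pass freely between $\Lambda_{\beta}^\prec$ and $\Lambda_{\beta\Delta^2}$; invoke Corollary \ref{cor:1cycle-braid} and Lemma \ref{lem:H1cycle-cluster} to match the string-diagram quiver with the weave quiver and the closed/half-open strings with absolute/relative $\sf I$-cycles; and invoke Theorem \ref{thm:ShenWeng-X} (= \cite[Proposition 3.43]{ShenWeng21}) for the duality identity on the Bott-Samelson side. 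Since every arrow in this chain is already available in the excerpt, the conclusion follows. The main obstacle I anticipate is the bookkeeping of signs and orientations in the intersection pairing: one must verify that the sign conventions used in \cite{ShenWeng21} for $\langle a,c\rangle_\text{str}$ agree (possibly after a global transpose, i.e.\ swapping $\langle a,c\rangle\leftrightarrow-\langle a,c\rangle$ or $\leftrightarrow\langle c,a\rangle$) with the intersection quiver on $H_1(\mathfrak{w}_\beta;\bZ)$ as normalized in \cite{CasalsWeng22} and in Corollary \ref{cor:1cycle-braid}; once the orientations of the $\eta_a$ (fixed in Lemma \ref{lem:H1cycle-cluster} so that $\langle\eta_a,\gamma_c\rangle=\delta_{ac}$) and of the $\gamma_c$ are pinned down consistently on both sides, the exponent $\langle a,c\rangle$ in the duality formula is forced and the two statements coincide on the nose.
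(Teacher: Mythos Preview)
This theorem is not proved in the paper at all: it is quoted verbatim from \cite{ShenWeng21} (Proposition 3.43 there) and used as a black box. The paper's own contribution in Section \ref{sec:clust-X-sheaf} goes in the \emph{opposite} logical direction: it takes Theorem \ref{thm:ShenWeng-X} as input, combines it with Theorem \ref{thm:main_algebra1} (identifying $A_c$ with the microlocal merodromy along $\eta_c$) and Lemma \ref{lem:1cycle-AX} (expressing $\gamma_a$ as $\sum_c\langle a,c\rangle\,\eta_c$), and \emph{concludes} that Shen--Weng's $X_a$ coincides with the microlocal monodromy along $\gamma_a$. You correctly identify the citation, so the one-line ``proof'' is simply to refer to \cite[Proposition 3.43]{ShenWeng21}.

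Your attempt at an independent microlocal argument, however, is circular. In your final assembly you explicitly ``invoke Theorem \ref{thm:ShenWeng-X}'' to prove Theorem \ref{thm:ShenWeng-X}. More subtly, the step where you assert that the Shen--Weng $\cX$-variable $X_a$ equals the microlocal monodromy $m_{\mathfrak{w}_\beta,\gamma_a}$ ``under Theorem \ref{thm:ShenWeng-sheaf}'' is unjustified: Theorem \ref{thm:ShenWeng-sheaf} only identifies the underlying moduli spaces, not any particular functions on them. The identification of Shen--Weng's $X_a$ with the microlocal monodromy is precisely the output of the argument in Section \ref{sec:clust-X-sheaf}, and that argument consumes Theorem \ref{thm:ShenWeng-X}. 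What your computation actually shows, without circularity, is that the \cite{CasalsWeng22} $\cX$-variable (defined as microlocal monodromy) satisfies the duality relation $\prod_c A_c^{\langle a,c\rangle}$ with respect to the Shen--Weng $\cA$-variables; to upgrade this to the stated theorem about the Shen--Weng $X_a$ you still need either the cited result or an independent comparison of the two $\cX$-variables, neither of which your proposal supplies.
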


    Our goal is to show that $X_a$ is the microlocal monodromy along $\gamma_a \in H_1(\mathfrak{w}_\beta; \bZ)$. It follows from Theorem \ref{thm:main_algebra1} that $A_a$ is the microlocal merodromy along $\delta_a \in H_1(\mathfrak{w}_\beta^*, \Lambda_{\beta\Delta^2}^*; \bZ)$. Hence we will prove the theorem by determining the relation between the corresponding 1-cycles. By Lemma \ref{lem:H1cycle-cluster}, $\{\delta_a\}_{a \in I_\text{str}^\text{cl}}$ form a basis of $H_1(\mathfrak{w}_\beta, \Lambda_{\beta\Delta^2}; \bZ) \subset H_1(\mathfrak{w}_\beta^*, \Lambda_{\beta\Delta^2}^*; \bZ)$. Consider the exact sequence
    $$0 \rightarrow H_1(\mathfrak{w}_\beta; \bZ) \rightarrow H_1(\mathfrak{w}_\beta^*, \Lambda_{\beta\Delta^2}^*; \bZ) \rightarrow \widetilde{H}_0(\Lambda_{\beta\Delta^2} \backslash T; \bZ) \rightarrow 0.$$
    We can view $H_1(\mathfrak{w}_\beta; \bZ)$ as a submodule in $H_1(\mathfrak{w}_\beta^*, \Lambda_{\beta\Delta^2}^*; \bZ)$. Then the relation between $\{\gamma_a\}$ and $\{\delta_c\}$ is as follows:

\begin{lemma}\label{lem:1cycle-AX}
    Under the identification $H_1(\mathfrak{w}_\beta; \bZ) \subset H_1(\mathfrak{w}_\beta^*, \Lambda_{\beta\Delta^2}^*; \bZ)$, we have
    $$\gamma_a = \sum_{c \in I_\text{str}^\text{cl}}\left<a, c \right>\delta_c.$$
\end{lemma}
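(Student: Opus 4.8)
The plan is to verify the claimed identity by pairing both sides against the basis $\{\gamma_{c'}\}_{c' \in I_\text{str}^\text{cl}}$ of $H_1(\mathfrak{w}_\beta; \bZ)$ via the intersection product, and then invoking the nondegeneracy of that pairing coming from Poincar\'e--Lefschetz duality on the surface $\mathfrak{w}_\beta$ with boundary $\Lambda_{\beta\Delta^2}$. Concretely, recall from Lemma \ref{lem:H1cycle-cluster} that $\{\delta_c\}_{c \in I_\text{str}^\text{cl}}$ is a basis of $H_1(\mathfrak{w}_\beta, \Lambda_{\beta\Delta^2}; \bZ)$ \emph{dual} to $\{\gamma_c\}_{c \in I_\text{str}^\text{cl}}$ in $H_1(\mathfrak{w}_\beta; \bZ)$, i.e.~$\langle \delta_c, \gamma_{c'}\rangle = \delta_{cc'}$. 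Hence, pairing the proposed right-hand side with $\gamma_{c'}$ gives $\sum_{c}\langle a,c\rangle \langle \delta_c, \gamma_{c'}\rangle = \langle a, c'\rangle$, where $\langle a, c'\rangle = \langle a, c'\rangle_\text{str}$ is the entry of the intersection quiver of the string diagram. So the identity will follow once we show that
$$\langle \gamma_a, \gamma_{c'}\rangle = \langle a, c'\rangle_\text{str} \qquad \text{for all } a \in I_\text{str},\ c' \in I_\text{str}^\text{cl}.$$

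First I would set up the duality carefully: since $H_1(\mathfrak{w}_\beta; \bZ)$ injects into $H_1(\mathfrak{w}_\beta^*, \Lambda_{\beta\Delta^2}^*;\bZ)$ (from the exact sequence quoted just before the statement), and the intersection product on $H_1(\mathfrak{w}_\beta^*, \Lambda_{\beta\Delta^2}^*;\bZ) \times H_1(\mathfrak{w}_\beta;\bZ)$ restricts compatibly, the claimed equality $\gamma_a = \sum_c \langle a,c\rangle\delta_c$ (which a priori only makes sense after this inclusion, as $\gamma_a$ for $a$ a closed string lives in $H_1(\mathfrak{w}_\beta;\bZ)$) is an identity in $H_1(\mathfrak{w}_\beta^*, \Lambda_{\beta\Delta^2}^*;\bZ)$; but both sides actually lie in the submodule $H_1(\mathfrak{w}_\beta;\bZ)$, on which $\{\gamma_{c'}\}$ pairs nondegenerately with $\{\delta_{c'}\}$, so testing against all $\gamma_{c'}$ suffices. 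The content is then purely the combinatorial-geometric computation of $\langle\gamma_a,\gamma_{c'}\rangle$ in terms of the string diagram. For this I would work locally in each Type~II column of the plabic fence / weave, using Corollary \ref{cor:1cycle-braid} to identify $\gamma_a$ with the long $\sf I$-cycle connecting the two trivalent vertices adjoining $s_i$-edges associated to the string $a$ on the $i$-th level (and similarly for relative cycles if $a$ is half-open). Two such long $\sf I$-cycles on rows $i$ and $i'$ intersect precisely when the corresponding trivalent vertices are ``interleaved'' along the braid word, contributing $\pm 1$ according to the orientation of the crossing — this is exactly the adjacency in the string-diagram intersection quiver $(I_\text{str}, I_\text{str}^\text{cl}, \langle -,-\rangle_\text{str})$ of \cite{ShenWeng21}, which was built to match the quiver of $H_1(\mathfrak{w}_\beta;\bZ)$.

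The key steps, in order: (1) recall and fix the duality $\langle\delta_c,\gamma_{c'}\rangle = \delta_{cc'}$ from Lemma \ref{lem:H1cycle-cluster} and the nondegeneracy of the resulting pairing on $H_1(\mathfrak{w}_\beta;\bZ)$; (2) reduce the identity to the equality of intersection numbers $\langle\gamma_a,\gamma_{c'}\rangle = \langle a, c'\rangle_\text{str}$ by pairing with the basis; (3) compute $\langle\gamma_a,\gamma_{c'}\rangle$ column-by-column using the explicit positions of the $\sf I$-cycles on the vertical slices of $\mathfrak{w}_\beta$ drawn in Figure \ref{fig:braid-rel-cycle} and Figure \ref{fig:braid-cycle-int}, as in the proof of Lemma \ref{lem:H1cycle-cluster}; (4) match the resulting sign-count with the intersection quiver of the string diagram of $\beta$, citing \cite{ShenWeng21}*{Section 3} for the identification of the two quivers (this is essentially the content of Theorem \ref{thm:ShenWeng-X}, whose duality relation $X_a = \prod_c A_c^{\langle a,c\rangle}$ is the cluster-theoretic shadow of precisely this homological identity). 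The main obstacle I anticipate is bookkeeping the signs consistently: the orientation conventions on the $\sf I$-cycles, the handedness of the weave crossings, and the sign conventions in \cite{ShenWeng21}'s string-diagram quiver must all be aligned, and getting an honest $\pm 1$ rather than $\pm 1$-up-to-global-sign requires pinning down orientations once and for all (the coherent collection of sign curves from Definition \ref{def:sign} is what makes this canonical). Once the local intersection picture in a single Type~II column is settled with correct signs, concatenating over all columns is routine and the global identity follows.
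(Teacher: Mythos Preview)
Your core idea---pair both sides with $\{\gamma_{c'}\}_{c' \in I_\text{str}^\text{cl}}$ and invoke the duality $\langle \delta_c, \gamma_{c'}\rangle = \delta_{cc'}$ from Lemma \ref{lem:H1cycle-cluster}---is exactly the paper's. However, there is a genuine gap in your nondegeneracy step. You assert that ``both sides actually lie in the submodule $H_1(\mathfrak{w}_\beta;\bZ)$'', but this is precisely what needs to be shown for the right-hand side: each $\delta_c$ lives in $H_1(\mathfrak{w}_\beta, \Lambda_{\beta\Delta^2};\bZ)$, which is a \emph{different} subspace of $H_1(\mathfrak{w}_\beta^*, \Lambda_{\beta\Delta^2}^*;\bZ)$ from the image of $H_1(\mathfrak{w}_\beta;\bZ)$. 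The intersection pairing of $H_1(\mathfrak{w}_\beta^*, \Lambda_{\beta\Delta^2}^*;\bZ)$ against $H_1(\mathfrak{w}_\beta;\bZ) = H_1(\mathfrak{w}_\beta^*;\bZ)$ is \emph{not} perfect (the ranks differ by $|I_\text{str}| - |I_\text{str}^\text{cl}|$), so testing against the $\gamma_{c'}$ alone cannot detect the difference of two classes lying in the kernel. The paper handles this by explicitly splitting the exact sequence $0 \to H_1(\mathfrak{w}_\beta;\bZ) \to H_1(\mathfrak{w}_\beta^*, \Lambda_{\beta\Delta^2}^*;\bZ) \to \widetilde{H}_0(\Lambda_{\beta\Delta^2}^*;\bZ) \to 0$ via paths $\gamma_b$ (for $b \in \pi_0(\Lambda_{\beta\Delta^2}^*)$) connecting components to a fixed one, and checking that both sides have trivial component in the complementary summand; only then does the pairing with $\{\gamma_{c'}\}$ finish the job.

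A second remark on emphasis: you devote most of the plan to the identity $\langle \gamma_a, \gamma_{c'}\rangle = \langle a, c'\rangle_\text{str}$, i.e.\ matching the geometric intersection form on $H_1(\mathfrak{w}_\beta;\bZ)$ with the combinatorial string-diagram quiver. The paper treats this as already in hand (it writes $\langle a, b\rangle = \langle \gamma_a, \gamma_b\rangle$ without further comment), relying on the dictionary between null regions/long ${\sf I}$-cycles from Corollary \ref{cor:1cycle-braid} and the quiver construction of \cite{ShenWeng21}. Your proposed column-by-column verification is a valid way to establish this, but it is not where the actual content of \emph{this} lemma lies. Also, be careful not to invoke Theorem \ref{thm:ShenWeng-X} itself to justify the quiver match: the lemma is being proved precisely in order to deduce the corollary after Theorem \ref{thm:ShenWeng-X}, so citing the duality relation there as evidence would be circular.
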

\begin{proof}
    Similar to the proof of Lemma \ref{lem:H1cycle-cluster}, we can show that the exact sequence
    $$0 \rightarrow H_1(\mathfrak{w}_\beta; \bZ) \rightarrow H_1(\mathfrak{w}_\beta^*, \Lambda_{\beta\Delta^2}^*; \bZ) \rightarrow \widetilde{H}_0(\Lambda_{\beta\Delta^2}^*; \bZ) \rightarrow 0$$
    splits, where the backward map $\widetilde{H}_0(\Lambda_{\beta\Delta^2}^*; \bZ) \rightarrow H_1(\mathfrak{w}_\beta^*, \Lambda_{\beta\Delta^2}^*; \bZ)$ maps each connected component $b$ of $\Lambda_{\beta\Delta^2}^*$ to a path connecting $b$ and a fixed component $b_0$ of $\Lambda_{\beta\Delta^2}^*$.

    Denote the image of $b \in \pi_0(\Lambda_{\beta\Delta^2}^*)$ by $\gamma_b$. Then it is obvious that $\left< \gamma_a, \gamma_b \right> = 0$ for any $a \in I_\text{str}^\text{cl}$ and $b \in \pi_0(\Lambda_{\beta\Delta^2}^*)$. On the other hand, for any $b \in I_\text{str}^\text{cl}$, since $\{\eta_c | c\in  I_\text{str}^\text{cl}\}$ forms a dual basis of $\{\gamma_a | a \in I_\text{str}^\text{cl}\}$ via the intersection product,
    $$\sum_{c \in I_\text{str}^\text{cl}}\left<\left<a, c \right>\eta_c, \gamma_b \right> = \sum_{c \in I_\text{str}^\text{cl}}\left<a, c \right>\delta_{cb} = \left<\gamma_a, \gamma_b \right>.$$
    This proves the lemma.
\end{proof}

    In consequences, it follows from Lemma \ref{lem:1cycle-AX}, Theorem \ref{thm:main_algebra1}, and Theorem \ref{thm:ShenWeng-X} above, that the cluster $\cX$-variable $X_a$ coincides with the microlocal monodromy functions of the microlocal rank~1 sheaf $\widetilde{\cF} \in \Sh^{1}_{\mathfrak{w}_\beta}(\bD^2 \times \bR)$ along $\gamma_a$.
    
    %Therefore, we can conclude Corollary \ref{cor:clusterX}.

\subsection{Cluster coordinates on the moduli of framed local systems via ideal triangulations and Theorem \ref{cor:FG=GMN}}\label{sec:framed-locsys}
    Let $\Sigma$ be a smooth surface with marked points and consider the moduli space of sheaves $\cM_1(\Sigma, \bigcirc_n)_0$ for the trivial Legendrian $n$-satellite $\bigcirc_n \subset T^{*,\infty}\Sigma$ of outward unit conormals around marked points, with acyclic stalks at the marked points. The moduli space of sheaves $\cM_1(\Sigma, \bigcirc_n)_0$ is isomorphic to the cluster varieties $\cX_{\GL_n}(\Sigma)$ of rank~$n$ framed local systems on $\Sigma$, studied by Fock-Goncharov \cite{FockGon06a,Gon17Web}.
    
    First, let us review two classes of cluster $\cX$-variables associated to an ideal $n$-triangulation, those from bipartite graphs considered in \cite{GonKen13,Gon17Web}, from non-abelianization maps considered in \cite{GMN13,GMN14}, and those from sheaf quantizations of conjugate Lagrangians \cite{STWZ19}. Then we prove Theorem \ref{cor:FG=GMN}, comparing these two classes of $\cX$-variables, at the end of the section.

\begin{remark}
    Note that \cite{FockGon06a} considered cluster coordinates on the variety $\cX_{\PGL_n}(\Sigma)$ of rank~$n$ $\PGL_n$-local systems on $\Sigma$. However, we focus on $\cX_{\GL_n}(\Sigma)$, as cluster $\cX$-variables there determine the $\cX$-variables on $\cX_{\PGL_n}(\Sigma)$ via a projection map.\footnote{In \cite{FockGon06a}, there is also a corresponding cluster $\cA$-variety $\cA_{\GL_n}(\Sigma)$,  resp.~$\cA_{\SL_n}(\Sigma)$, whose coordinates determine the cluster $\cX$-coordinates of $\cX_{\GL_n}(\Sigma)$, resp.~$\cX_{\PGL_n}(\Sigma)$. A similar construction with microlocal holonomies, following in Section \ref{sec:clust-A} should likely exist but it has not yet been constructed in the literature. We thus only focus on the cluster $\cX$-structure.}
\end{remark}

\subsubsection{Framed local systems on a surface}
    The moduli space of sheaves $\cM_1(\Sigma, \bigcirc_n)_0$, see Appendix \ref{appen:sheaf-moduli}, can be identified with the following moduli space of framed local systems:

\begin{definition}[\cite{FockGon06a}*{Definition 1.2}]\label{def:framed-loc}
    Let $\Sigma$ be a surface with marked points $\{x_1, ..., x_m\}$, $\Sigma_\text{op} = \Sigma \backslash \{x_1, ..., x_m\}$ and $\Sigma_\text{cl} = \Sigma \backslash \bigcup_{i=1}^mB_\epsilon(x_i)$. A framed $\GL_n$-local system $(\cL, \sigma)$ on $\Sigma_\text{op}$ is a $\GL_n$-local system $\cL$ with a flat section $\sigma: \partial \Sigma_\text{cl} \rightarrow \cL/\mathrm{B}_+$. The space $\cX_{\GL_n}(\Sigma)$ is the moduli space of framed local systems on $\Sigma_\text{op}$.
    
    A framed $\PGL_n$-local system $(\cL, \sigma)$ on $\Sigma_\text{op}$ is a $\PGL_n$-local system $\cL$ with a flat section $\sigma: \partial \Sigma_\text{cl} \rightarrow \cL/\mathrm{B}_+.$ The space $\cX_{\PGL_n}(\Sigma)$ is the moduli space of framed local systems on $\Sigma_\text{op}$.
\end{definition}

\begin{remark}\label{rem:frameloc-proj}
    There exists a natural projection map $\cX_{\GL_n}(\Sigma) \rightarrow \cX_{\PGL_n}(\Sigma)$ with fiber $(\Bbbk^\times)^m$. The fiber over $(\cL_0, \sigma_0)$ parametrizes framed $\GL_n$-local systems $(\cL, \sigma)$ where the flat section $\sigma: \partial \Sigma_\text{cl} \rightarrow \cL/\mathrm{B}_+$ is reduce to $\sigma_0$ under the quotient map $\GL_n \rightarrow \PGL_n$. Note that the flat section lives in the Cartan subgroup, and $\GL_n \rightarrow \PGL_n$ factors through the corresponding Cartan subgroups $(\Bbbk^\times)^n \rightarrow (\Bbbk^\times)^{n-1}$. Remarks \ref{rem:frozen-abelian} and \ref{rem:frozen-dim} explain that, from the point of view of cluster varieties, the projection map is exactly defined by forgetting the frozen variables.
\end{remark}

    \noindent The following Proposition \ref{prop:framedloc-sheaf} is folklore, see \cite{GonKon21}*{Lemma 8.11} for a similar result in a slightly different setting, and we include its proof so it is explicitly available:

\begin{prop}\label{prop:framedloc-sheaf}
    Let $\Sigma$ be a surface with marked points $\{x_1, ..., x_m\}$. Let $\bigcirc_n$ be the union of the inward conormal bundle of $n$ concentric circles around $x_1, ..., x_m$. Let $\cM_1(\Sigma, \bigcirc_n)_0$ be the moduli space of microlocal rank 1 sheaves in $\Sh^1_{\bigcirc_n}(\Sigma)$ with acyclic stalks at $x_1, ..., x_m$. Then there is an equivalence of moduli spaces
    $$\cX_{\GL_n}(\Sigma) \xrightarrow{\sim} \cM_1(\Sigma, \bigcirc_n)_0.$$
\end{prop}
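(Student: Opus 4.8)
\textbf{Proof strategy for Proposition \ref{prop:framedloc-sheaf}.}

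The plan is to construct the equivalence locally near the marked points and then glue, using the combinatorial model of constructible sheaves with singular support in $\bigcirc_n$. First I would set up the local picture: around each marked point $x_i$, the front $\pi(\bigcirc_n)$ is $n$ concentric circles, all coorientations pointing inward, so by the MacPherson-type description recalled in Example \ref{ex:sheafcombin} (the hyperplane case iterated $n$ times), a sheaf $\cF \in \Sh^1_{\bigcirc_n}(\Sigma)$ with acyclic stalk on the innermost disk $A_0(x_i)$ is exactly the data of a complete filtration
$$0 = F_0(x_i) \subset F_1(x_i) \subset \cdots \subset F_n(x_i) \cong \Bbbk^n$$
of the stalk in the outermost annular region, where the $j$-th transport map is injective precisely because $\cF$ has microlocal rank $1$ along the $j$-th circle (cone of the inclusion is rank one). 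Meanwhile, on $\Sigma_\text{cl} = \Sigma \setminus \bigcup B_\epsilon(x_i)$, away from the circles the sheaf $\cF$ restricts to a rank-$n$ local system $\cL := \cF|_{\Sigma_\text{cl}}$, since $SS^\infty(\cF) \cap T^{*,\infty}\Sigma_\text{cl} = \varnothing$ after shrinking the collars. This already produces, tautologically, a $\GL_n$-local system on $\Sigma_\text{op}$ (homotopy equivalent to $\Sigma_\text{cl}$) together with, on each boundary component $\partial_i \Sigma_\text{cl}$, a flat section of the associated flag bundle given by the local filtration $F_\bullet(x_i)$ — i.e.\ exactly the framing data of Definition \ref{def:framed-loc}. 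I would package this as a functor $\cM_1(\Sigma, \bigcirc_n)_0 \to \cX_{\GL_n}(\Sigma)$ on the level of groupoids/moduli stacks.

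Next I would build the inverse. Given a framed local system $(\cL, \sigma)$, the framing $\sigma$ on $\partial_i \Sigma_\text{cl}$ is a flat reduction of structure group to $\mathrm{B}_+$, hence a parallel complete flag $F_\bullet(x_i)$ in the fiber of $\cL$ near $x_i$; one then defines a sheaf on the collar $B_\epsilon(x_i)$ by declaring the stalk on $A_j(x_i)$ to be $\cL_{x_i}/F_{n-j}(x_i)$ (or $F_j$, depending on coorientation bookkeeping — this is where one must be careful to match the conventions of Definition \ref{def:altsheaf}–\ref{def:graphical-conj} and the inward/outward orientation stated in the hypothesis) with the evident quotient maps as transport, and glues this to the local system $\cL$ on $\Sigma_\text{cl}$ along $T^*(\partial \Sigma_\text{cl})$. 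The singular support of the resulting sheaf is contained in $\bigcirc_n$ by construction, each transport map has rank-one cone so the microlocal rank is $1$, and the innermost stalk is acyclic. One checks the two constructions are mutually quasi-inverse: the composite $\cM_1 \to \cX \to \cM_1$ returns a sheaf with the same restriction to $\Sigma_\text{cl}$ and the same local filtrations, hence is canonically isomorphic, and similarly the other composite. Since these constructions are natural in the test scheme, they upgrade to an isomorphism of moduli stacks; that $\cM_1(\Sigma,\bigcirc_n)_0$ is the stack of pseudo-perfect objects in $\Sh^{cpt}_{\bigcirc_n}(\Sigma)$ in the relevant locus is exactly the content of Appendix \ref{appen:sheaf-moduli} (Definition \ref{def:moduli-sh}), so no further representability check is needed beyond identifying the functor of points.

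The main obstacle I expect is bookkeeping rather than conceptual: matching coorientation conventions so that "inward conormals of concentric circles with acyclic stalk at the center" corresponds to the filtration (rather than a cofiltration) and so that the framing section lands in $\cL/\mathrm{B}_+$ with the same Borel as in \cite{FockGon06a}*{Definition 1.2}; and verifying that the gluing along $T^*(\partial \Sigma_\text{cl})$ is unobstructed, i.e.\ that no extra extension data enters (this uses that the relevant $\mathrm{Ext}^{<0}$ groups vanish for microlocal rank-$1$ sheaves on a link, as noted in Section \ref{sec:sheaf-moduli}). A secondary subtlety is the behaviour at the marked points: one must confirm that imposing acyclic stalks at $x_1,\dots,x_m$ on the sheaf side corresponds precisely to the "framed" (as opposed to "decorated") moduli, so that the fiber of $\cX_{\GL_n}(\Sigma) \to \cX_{\PGL_n}(\Sigma)$ in Remark \ref{rem:frameloc-proj} is correctly reproduced; this is handled by observing that the acyclicity kills exactly the overall scaling ambiguity of each local flag's ambient space, leaving the Borel-reduction and not a $\mathrm{U}_+$-reduction. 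Once these conventions are pinned down the equivalence is formal.
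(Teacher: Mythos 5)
Your proposal matches the paper's proof: both directions are constructed exactly as you describe, restricting to $\Sigma_\text{cl}$ to obtain the local system and reading the framing off the flag of $S^1$-local systems on the annuli $A_i(x_j)$, with the microlocal rank-one condition along each circle forcing injectivity of the transport maps by induction. The paper resolves your coorientation hedge by using the sub-local systems $\cL_j$ with the inclusions $\cL_{j-1}\hookrightarrow \cL_j$ as transport maps (not quotients), and it needs no separate gluing or representability argument beyond checking the two explicit constructions are mutually inverse.
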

\begin{proof}
    We fix the following notations. Consider the stratfication of $\Sigma$ defined by $\pi(\Lambda_n) \subset \Sigma$. Write $A_0(x_j)$ for the closed disks inside the smallest concentric circle around $x_j\,(1\leq j\leq m)$. Write $A_i(x_j)$ for the annulus between the $i$-th and $(i+1)$-th concentric circles (from inside to outside) around $x_j\,(1\leq j\leq m)$ containing the $i$-th concentric circle. Without loss of generality, let $B_\epsilon(x_j) = \bigcup_{i=0}^{n-1}A_i(x_j)$ and $\Sigma_\text{cl} = \Sigma \backslash \bigcup_{j=1}^mB_\epsilon(x_j)$.

    For a $GL_n$-local system $\cL$, a section $\sigma$ near $\partial \Sigma_\text{cl}$ determines a framing $\{s_1, ..., s_n\}$ near $\cL|_{\partial B_\epsilon(x_i)}$ for each marked point $x_i\,(1\leq i\leq m)$. Therefore, we can consider the flag of $S^1$-local systems near each marked point $x_i\,(1\leq i\leq m)$
    $$0_{S^0} = \cL_0 \subset \cL_1 \subset \cL_2 \subset ... \subset \cL_n \cong \cL|_{\partial B_\epsilon(x_j)}.$$
    Therefore we can define a sheaf $\cF \in \Sh^1_{\bigcirc_n}(\Sigma)$ such that (1)~in each small disk $A_0(x_j)$, $\cF|_{A_0(x_j)} = 0_{A_0(x_j)}$; (2)~in the annulus $A_i(x_j)$, the restriction of $\cF|_{A_i(x_j)}$ is a local system determined by (the pullback of) a $S^1$-local system $\cL_i$ to $A_i(x_j)$; (3)~finally in $\Sigma_\text{cl}$, the restriction $\cF|_{\Sigma_\text{cl}} \simeq \cL|_{\Sigma_\text{cl}}$. Locally when we pass the $i$-th concentric circle, we assign the parallel transport map (see Example \ref{ex:sheafcombin}) from $\cF|_{A_{i-1}(x_j)}$ to $\cF|_{A_i(x_j)}$ to be the natural inclusion
    $$\cL_{i-1} \hookrightarrow \cL_i.$$
    Since along the $i$-th concentric circle around $x_j\,(1\leq j\leq m)$, the microlocal monodromy is a rank~1 local system $\cL_i/\cL_{i-1}$. Therefore the sheaf $\cF$ lies in $\cM(\Sigma, \bigcirc_n)_0$.

    Conversely, consider any sheaf $\cF \in \Sh^1_{\bigcirc_n}(\Sigma)$ with acyclic stalks at $x_1, ..., x_m$. Then define $\cL = \cF|_{\Sigma_\text{cl}^\circ}$ where $\Sigma_\text{cl}^\circ$ is the interior of $\Sigma_\text{cl}$, diffeomorphic to $\Sigma_\text{op}$. We now try to define a flag of $S^1$-local systems
    $$0_{S^1} = \cL_0 \subset \cL_1 \subset \cL_2 \subset ... \subset \cL_n \cong \cL|_{\partial B_\epsilon(x_i)}$$
    along $\cL|_{\partial B_\epsilon(x_j)}$ for each marked point $x_j\,(1\leq j\leq m)$. Note that for any $1 \leq i \leq n-1$, $\cF|_{A_i(x_j)}$ is a locally constant sheaf, and thus come from the pullback of an $S^1$-local system $\cL_i$. We can check that $\cL_i\,(1\leq i\leq n-1)$ defines a flag of $S^1$-local systems. Note that along the $i$-th concentric circle, the microlocal monodromy
    $$\mathrm{Cone}(\cL_{i-1} \rightarrow \cL_i)$$
    is a rank~1 local system. When $i = 0$, $\cL_0 \cong 0_{S^1}$, using this fact we can tell that $\cL_1$ is a rank~1 local system (in degree 0) and $\cL_0 \rightarrow \cL_1$ is an injection. Suppose $\cL_{i-1}$ is a rank $(i-1)$ local system (in degree 0), then using the same fact we can tell that $\cL_i$ is a rank $i$ local system (in degree 0), and $\cL_{i-1} \rightarrow \cL_i$ is an injection. Therefore we can get a flag of $S^1$-local systems, which determines the framing around $\cL|_{\partial B_\epsilon(x_j)}\,(1\leq j\leq m)$.

    These two constructions from framed local systems to microlocal sheaves and back are inverses to each other, and thus we have completed the proof.
\end{proof}
\begin{remark}
    Similarly, one can show that given a collection of braids $\beta = \{\beta_1, ..., \beta_m\}$ around each puncture, the moduli stack of $\beta$-framed local systems $\cX_{\GL_n}(\Sigma,\beta)$ introduced in \cite{GonKon21}*{Section 9} is isomorphic to $\cM_1(\Sigma, \Lambda_\beta)_0$, which is also referred to as the moduli of $\beta$-filtered local system in \cite{STWZ19}*{Section 3.1}.
\end{remark}

\subsubsection{Non-abelianization map and sheaves over weaves}
    As has been noticed in \cite{CasalsZas20}*{Section 3.1}, the Legendrian $n$-weave of an $n$-triangle is closedly related to spectral covers introduced by D.~Gaiotto, G.~Moore and A.~Neitzke \cite{GMN13,GMN14}, at least on the smooth level. Here we interpret the cluster $\cX$-variables on the moduli of framed local systems coming from non-abelianization \cite{GMN13} using the language of Legendrian weaves and microlocal sheaves.

    When defining cluster $\cX$-coordinates, \cite{GMN13,GMN14} only use the information of the Lagrangian as smooth branched covers and define the coordinates as monodromies of local systems on that Lagrangian spectral curve. The spectral covers they considered are Lagrangian branched covers in the cotangent bundle of the triangle with exactly $n^2$ simple branched points, and hence there is a diffeomorphism between these spectral covers and (Lagrangian projections of) the weaves coming from the $n$-triangulation, that intertwines with the projection maps.

    Starting from a Riemann surface $\Sigma$ with punctures, consider a free Legendrian weave $\widetilde{L} \subset J^1(\Sigma_\text{op})$, with compactification  in $J^1(\Sigma_\text{cl})$. Then the boundary $\partial \widetilde{L} \subset \partial \Sigma_\text{cl} \times \bR$ determines a Legendrian $n$-braid closure $\Lambda_\beta \subset J^1(\partial \Sigma_\text{cl})$, which, under the convention in Subsection \ref{sec:weave-at-infty}, can also be identified with Legendrian $n$-braid closures in $T^{*,\infty}\Sigma$ as satellites around the inward unit conormals of small disks around the punctures.
    
    Let $\cX_{\mathfrak{w}} = H^1(\mathfrak{w}; \Bbbk^\times)$ be the moduli space of rank~1 local systems on the Legendrian weave $\mathfrak{w}$.\footnote{In the literature, for example \cite{GMN13}*{Section 10}, \cite{Kuwagaki20WKB} and \cite{GonKon21}*{Section 8}, twisted local systems are used instead of local systems. Here we identify twisted local systems with local systems by choosing a basis of smooth 1-cycles together with a lifting in the circle bundle (by keeping track of their tangent vectors). This is equivalent to fixing a (relative) spin structure. See Appendix \ref{appen:microlocal} and also \cite{Kuwagaki20WKB}*{Section 8.2} or \cite{GonKon21}*{Section 8.3}.} Note that any 1-cycle in $H_1(\mathfrak{w}; \bZ)$ defines a regular function on $\cX_{\mathfrak{w}}$ by simply taking monodromy along the 1-cycle.

\begin{prop}\label{prop:weave-cluster}
    Let $\mathfrak{w} \subset J^1\Sigma_\text{op}$ be a free Legendrian weave with boundary being a Legendrian braid closure $\Lambda \subset J^1(\partial \Sigma_\text{cl}) \subset T^{*,\infty}\Sigma_\text{op}$. Then there is an embedding of algebraic stacks
    $$\cX_{\mathfrak{w}} \hookrightarrow \cM_1(\Sigma, \Lambda)_0.$$
\end{prop}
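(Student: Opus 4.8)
The plan is to produce the embedding $\cX_{\mathfrak{w}} \hookrightarrow \cM_1(\Sigma, \Lambda)_0$ as the composition of the Jin--Treumann sheaf quantization functor (Theorem \ref{thm:JinTreumann}) with the proper push-forward, exactly as in the general set-up of Section \ref{sec:quantize}, and then to check that the image lands in the locus $\cM_1(\Sigma,\Lambda)_0$ of microlocal rank~$1$ sheaves with acyclic stalks near the marked points. First I would recall that $\mathfrak{w} \subset J^1\Sigma_{\text{op}}$ is a free Legendrian weave, so its Lagrangian projection $L = \Pi_{\Sigma_{\text{op}}}(\widetilde L(\mathfrak{w})) \subset T^*\Sigma_{\text{op}}$ is an embedded exact Lagrangian filling of $\Lambda$; via the symplectic embedding $\ol{T^*\Sigma}_{\text{cl}} \hookrightarrow \ol{T^*\Sigma}_{\text{op}}$ of Subsection \ref{sec:weave-at-infty}, $L$ is viewed as a Lagrangian filling of $\Lambda \subset T^{*,\infty}\Sigma_{\text{op}}$. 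Since $\mathfrak{w}$ is the lift of an \emph{orientable} exact Lagrangian surface with non-empty Legendrian boundary and vanishing Maslov class (weaves are canonically graded and carry relative spin structures, e.g.\ by choosing a coherent collection of sign curves as in Definition \ref{def:sign}), Theorem \ref{thm:JinTreumann} applies: there is a fully faithful functor $\Psi_{\widetilde L}: \Loc^{pp}_\epsilon(\cI_{\widetilde L}) \to \Sh^{pp}_{\widetilde L}(\Sigma_{\text{op}} \times \bR)$ with $m_{\widetilde L} \circ \Psi_{\widetilde L} \simeq \mathrm{id}$, and the proper push-forward $\pi_{\Sigma_{\text{op}},!}: \Sh^{pp}_{\widetilde L}(\Sigma_{\text{op}} \times \bR) \to \Sh^{pp}_\Lambda(\Sigma_{\text{op}})$ is fully faithful.

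Next I would pass to the moduli of (pseudo-perfect) objects. Restricting $\Psi_{\widetilde L}$ to rank~$1$ abelian local systems of perfect $\Bbbk$-modules and composing with $\pi_{\Sigma_{\text{op}},!}$, the functoriality of derived stacks of pseudo-perfect objects (\cite{ToenVaquie07}) yields an open embedding of algebraic stacks
$$H^1(\mathfrak{w};\Bbbk^\times) \;\hookrightarrow\; \cM_1(\Sigma_{\text{op}} \times \bR, \widetilde L)_0 \;\hookrightarrow\; \cM_1(\Sigma_{\text{op}}, \Lambda)_0,$$
exactly as in the discussion following Definition \ref{def:sheafquant}; note $\cX_{\mathfrak{w}} = H^1(\mathfrak{w};\Bbbk^\times)$. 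The microlocal rank of $\pi_{\Sigma_{\text{op}},!}\Psi_{\widetilde L}(\cL)$ along $\Lambda$ equals the microlocal rank of $\cL$ along $\widetilde L$, which is $1$ for a rank~$1$ local system, so the image lies in $\cM_1$. It remains to identify $\cM_1(\Sigma_{\text{op}},\Lambda)_0$ with $\cM_1(\Sigma,\Lambda)_0$: this is the statement that extending a microlocal rank~$1$ sheaf on $\Sigma_{\text{op}}$ with singular support on the $n$-braid closure $\Lambda$ (a satellite of the inward unit conormals around the punctures) by the $n$-step flag $0 = \cL_0 \subset \cL_1 \subset \cdots \subset \cL_n$ of $S^1$-local systems on the concentric annuli, with acyclic stalk on the innermost disk $A_0(x_j)$, is the unique way to fill in the neighbourhood of each puncture; this is precisely the argument of Proposition \ref{prop:framedloc-sheaf} (alternatively, one sees directly from the stratification that the stalks in the annular strata are determined by the inclusions $\cL_{i-1}\hookrightarrow\cL_i$ and the acyclicity forces the stalk at $x_j$ to vanish). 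Composing with this identification gives the desired embedding $\cX_{\mathfrak{w}} \hookrightarrow \cM_1(\Sigma,\Lambda)_0$.

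The main obstacle I anticipate is the bookkeeping around the compactification $J^1\Sigma_{\text{cl}} \hookrightarrow \ol{J^1\Sigma_{\text{op}}}$ of Subsection \ref{sec:weave-at-infty}: one must be careful that the front of $\widetilde L$, after the embedding by the graph of $f_{\Sigma_{\text{cl}}}$, is bounded \emph{from below} near $\Sigma_{\text{op}} \times \{-\infty\}$ (so that the hypotheses of Theorem \ref{thm:JinTreumann}, as stated with stalk at $M\times\{-\infty\}$ acyclic, genuinely apply), and that the boundary $n$-graph correctly encodes $\Lambda$ as a satellite around the punctures with the Maslov potential induced by the weave's grading. A secondary point, needed only for the cluster-theoretic applications but worth flagging here, is the relative spin structure / sign issue: the open embedding is canonical only once a coherent collection of sign curves on $\mathfrak{w}$ (Definition \ref{def:sign}) is fixed, which trivializes $\cI_{\widetilde L}$ and identifies $\Loc^{pp}_\epsilon(\cI_{\widetilde L})$ with ordinary local systems on $\mathfrak{w}$. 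Modulo these verifications, which are routine given the machinery already set up, the proof is the composition described above.
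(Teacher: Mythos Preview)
Your proposal is correct and follows essentially the same approach as the paper: invoke the Jin--Treumann sheaf quantization (Theorem \ref{thm:JinTreumann}) to get the fully faithful composition $\Loc^1(\mathfrak{w}) \hookrightarrow \Sh^1_{\mathfrak{w}}(\Sigma \times \bR) \hookrightarrow \Sh^1_\Lambda(\Sigma)$ and then pass to moduli, yielding $H^1(\mathfrak{w};\Bbbk^\times) \hookrightarrow \cM_1(\Sigma,\Lambda)_0$. The paper's own proof is much terser (three sentences, citing Theorem \ref{thm:JinTreumann} and Subsection \ref{sec:quan-weave-link}); your extra discussion of the $\Sigma_{\text{op}}$ versus $\Sigma$ identification, the boundedness-from-below issue near the punctures, and the sign curves is not needed for the statement but is a legitimate unpacking of what the paper leaves implicit.
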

\begin{proof}
    This follows from Theorem \ref{thm:JinTreumann} and Subsection \ref{sec:quan-weave-link}. Indeed, the category of rank~1 local systems $\Loc^1(\mathfrak{w})$ embeds into $\Sh^1_{\mathfrak{w}}(\Sigma \times \bR)$, and taking proper push forward via the projection defines a fully faithful embedding into $\Sh^1_\Lambda(\Sigma)$. Thus we know that
    $$H^1(\mathfrak{w}, \Bbbk^\times) \hookrightarrow \cM_1(\Sigma, \Lambda)_0$$
    is an embedding of algebraic stacks.
\end{proof}

    As a corollary, we obtain the non-abelianization map from sheaf quantization of Legendrian weaves. The word non-abelianization \cite{GMN13} refers to the process of getting a framed (non-abelian) $\GL_n$-local system on $\Sigma$ from an (abelian) $\GL_1$-local system on $\mathfrak{w}$.

\begin{cor}\label{cor:weave-framedloc}
    Let $\Sigma$ be a surface with marked points and $\mathfrak{w}_n^* \subset J^1\Sigma_\text{op}$ be the free Legendrian weave associated to the ideal $n$-triangulation. Then there is a non-abelianization map
    $$\cX_{\mathfrak{w}_n^*} \hookrightarrow \cX_{\GL_n}(\Sigma).$$
\end{cor}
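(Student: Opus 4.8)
The plan is to deduce Corollary~\ref{cor:weave-framedloc} as a direct specialization of Proposition~\ref{prop:weave-cluster} to the particular weave $\mathfrak{w}_n^*$ coming from an ideal $n$-triangulation, using the identifications established in Subsections~\ref{sec:weave-at-infty} and \ref{sec:framed-locsys}. First I would recall that the free Legendrian weave $\widetilde L(\mathfrak{w}_n^*) \subset J^1\Sigma_\text{cl}$ associated to the $n$-triangulation has Legendrian boundary the trivial $n$-satellite $\bigcirc_n \subset J^1(\partial \Sigma_\text{cl})$, which under the contact embedding $J^1(\partial\Sigma_\text{cl}) \hookrightarrow T^{*,\infty}\Sigma_\text{op}$ of Subsection~\ref{sec:weave-at-infty} is identified with the union of inward unit conormals of small concentric circles around the punctures $\{x_1,\dots,x_m\}$; this is exactly the link $\bigcirc_n$ appearing in Proposition~\ref{prop:framedloc-sheaf}. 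So applying Proposition~\ref{prop:weave-cluster} with $\Lambda = \bigcirc_n$ yields an embedding of algebraic stacks $\cX_{\mathfrak{w}_n^*} \hookrightarrow \cM_1(\Sigma, \bigcirc_n)_0$, where I must take the moduli with acyclic stalks at the marked points, i.e.\ the locus $\cM_1(\Sigma,\bigcirc_n)_0$ — and here I would note that the sheaf quantizations arising from the weave do have acyclic stalks at the $x_i$ because the weave lives over $\Sigma_\text{cl}$ and the proper push-forward from $\Sigma_\text{op}\times\bR$ vanishes near the punctures, as recorded in Theorem~\ref{thm:JinTreumann}.(1) and the discussion in Subsection~\ref{sec:quan-weave-link}.

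Next I would invoke Proposition~\ref{prop:framedloc-sheaf}, which gives an isomorphism of moduli spaces $\cX_{\GL_n}(\Sigma) \xrightarrow{\sim} \cM_1(\Sigma, \bigcirc_n)_0$. Composing the embedding from the previous paragraph with this isomorphism produces the desired map $\cX_{\mathfrak{w}_n^*} \hookrightarrow \cX_{\GL_n}(\Sigma)$, and since it is a composition of an embedding with an isomorphism it is again an embedding. I would then remark that this composite is precisely the non-abelianization map in the sense of \cite{GMN13}: an abelian $\GL_1$-local system on $\mathfrak{w}_n^*$ is sent, via sheaf quantization and proper push-forward, to a constructible sheaf on $\Sigma$ which under Proposition~\ref{prop:framedloc-sheaf} corresponds to a framed $\GL_n$-local system, and by Example~\ref{ex:weave-flag} the flags assigned at the punctures are exactly the transverse flags determining the framing. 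This matches the characterization in Theorem~\ref{thm:altsheaf}-type statements for weaves (\cite{CasalsZas20}*{Theorem 8.1}).

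The only genuine subtlety — and the step I expect to require the most care — is the bookkeeping of the acyclicity-at-marked-points condition and the matching of the concentric-circle stratification used in Proposition~\ref{prop:framedloc-sheaf} with the stratification coming from the front of $\widetilde L(\mathfrak{w}_n^*)$ under the embedding $\Sigma_\text{cl}\times\bR \hookrightarrow \Sigma_\text{op}\times\ol{\bR}$ of Subsection~\ref{sec:weave-at-infty}. Concretely, one must check that the boundary satellite $\bigcirc_n = \partial\widetilde L(\mathfrak{w}_n^*)$, viewed in $T^{*,\infty}\Sigma_\text{op}$ via the graphical embedding of $\ol L_{df_{\Sigma_\text{cl}}}$, genuinely agrees as a Legendrian (up to Legendrian isotopy) with the union of inward conormals of the $A_i(x_j)$ circles used in Proposition~\ref{prop:framedloc-sheaf}; this is the same identification that was used implicitly in Figure~\ref{fig:triangle-weave-cotan}, so it is available, but it should be stated explicitly so the composition is well-posed. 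Once this compatibility is in hand, the corollary is immediate from Propositions~\ref{prop:weave-cluster} and~\ref{prop:framedloc-sheaf}.

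\begin{proof}
    By Subsection~\ref{sec:weave-at-infty}, the free Legendrian weave $\mathfrak{w}_n^* = \widetilde L(\mathfrak{w}_n^*) \subset J^1\Sigma_\text{op}$ associated to an ideal $n$-triangulation has Legendrian boundary the trivial $n$-satellite $\bigcirc_n$ which, under the contact embedding $J^1(\partial\Sigma_\text{cl}) \hookrightarrow T^{*,\infty}\Sigma_\text{op}$, is identified with the union of inward unit conormals of small concentric circles around the marked points of $\Sigma$; this is the Legendrian $\bigcirc_n$ of Proposition~\ref{prop:framedloc-sheaf}. Applying Proposition~\ref{prop:weave-cluster} with $\Lambda = \bigcirc_n$, and observing that sheaf quantizations of $\mathfrak{w}_n^*$ have acyclic stalks at the marked points since the proper push-forward $\pi_{\Sigma !}$ of a sheaf supported over $\Sigma_\text{op}\times\bR$ vanishes near the punctures (Theorem~\ref{thm:JinTreumann}.(1) and Subsection~\ref{sec:quan-weave-link}), we obtain an embedding of algebraic stacks
    $$\cX_{\mathfrak{w}_n^*} \hookrightarrow \cM_1(\Sigma, \bigcirc_n)_0.$$
    Composing with the isomorphism $\cX_{\GL_n}(\Sigma) \xrightarrow{\sim} \cM_1(\Sigma, \bigcirc_n)_0$ of Proposition~\ref{prop:framedloc-sheaf}, we get the desired embedding $\cX_{\mathfrak{w}_n^*} \hookrightarrow \cX_{\GL_n}(\Sigma)$. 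By Example~\ref{ex:weave-flag} (see also \cite{CasalsZas20}*{Theorem 8.1}) the flags assigned to the sectors at the marked points are the transverse flags determining the framing data, so this composite is the non-abelianization map in the sense of \cite{GMN13,GMN14}.
\end{proof}
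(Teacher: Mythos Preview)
Your proof is correct and follows essentially the same approach as the paper: invoke Proposition~\ref{prop:weave-cluster} with $\Lambda=\bigcirc_n$ and then compose with the isomorphism of Proposition~\ref{prop:framedloc-sheaf}. Your version is more detailed in spelling out why the boundary of $\mathfrak{w}_n^*$ is the trivial $n$-satellite and why the resulting sheaves have acyclic stalks at the marked points, but the logical structure is identical to the paper's two-sentence argument.
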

\begin{proof}
    This follows from Proposition \ref{prop:framedloc-sheaf} and Proposition \ref{prop:weave-cluster}. Indeed, the condition that the $n$-graph $\bG_n^*$ has no vertices at the punctures ensures that $\mathfrak{w}_n^*$ is a Legendrian weave with boundary $\bigcirc_n \subset J^1(\partial \Sigma_\text{cl})$ being the trivial braid. Then the result follows from the fact that $\cM_1(\Sigma, \bigcirc_n)_0 \cong \cX_{\GL_n}(\Sigma)$.
\end{proof}
\begin{remark}\label{rem:frozen-abelian}
    We can consider a basis of a subspace in $H_1(\mathfrak{w}_n^*; \bZ)$ (see Lemma \ref{lem:H1cycle-conj}) given by the $\sf I, \sf Y$-cycles and the small loops around the marked points, forming the vertices of the quiver defining the cluster variety. The small loops around marked points are frozen variables (which cannot be mutated), and by forgetting the frozen variables one gets the cluster coordinates on $\cX_{\PGL_n}(\Sigma)$ via the projection map in Remark \ref{rem:frameloc-proj} (compare Remark \ref{rem:frozen-dim}). Indeed, one can compare the basis of $H_1$ for the Legendrian surface in Corollary \ref{cor:1cycle-triangle} and \ref{cor:1cycle-triangle-part}, where the difference is exactly the loops around the marked points.
\end{remark}
\begin{remark}\label{rem:abelianization}
    If we forget the data of the flags near the punctures, this will produce a map from $\cX_{\mathfrak{w}}$ to the moduli space of rank~$n$ local systems on $\Sigma_\text{cl}$, or equivalently, the moduli space of microlocal rank~$n$ sheaves with singular support on the 1-strand braid $\bigcirc_1$ around the punctures $\cM_n(\Sigma, \bigcirc_1)_0$. This is just the microlocal non-abelianization map introduced in \cite{STWZ19}*{Section 3.1} or the Legendrian degeneration in \cite{TWZ19Kasteleyn}*{Section 7}.
\end{remark}

    The cluster $\cX$-variables \cite{GMN13,GMN14} in the study of spectral networks are defined via the non-abelianization map, by the monodromies of the rank~1 local system in $\cX_{\mathfrak{w}}$ along a specific collection of 1-cycles. These 1-cycles are defined by three adjacent branched points of the spectral covering \cite{GMN14}*{Section~5.9, Figure~8 \& 9}, and under the diffeomorphism between the spectral covers and (Lagrangian projections of) the weaves, are exactly the $\sf Y$-cycles in the weave defined by three adjacent blue trivalent vertices in Corollary \ref{cor:1cycle-triangle}.

    By Theorem \ref{thm:JinTreumann}, the monodromies of the local systems in $\Loc^1(\mathfrak{w})$ are the same as the microlocal monodromies of the sheaf quantization in $\Sh^1_{\mathfrak{w}}(\Sigma \times \bR)$. Therefore, Gaiotto-Moore-Neitzke's cluster $\cX$-variables are the same as the microlocal monodromies of the sheaf quantizations along $\sf Y$-cycles specified in Corollary \ref{cor:1cycle-triangle}.

\subsubsection{Dimers and sheaves over conjugate Lagrangians}
    Dimer models in cluster varieties, based on bipartite graphs, are discussed in \cite{FockGon06a,Gon17Web} and they are shown to yield cluster $\cX$-variables on the moduli of framed $\GL_n$-local systems on punctured surfaces. In this context, we start with a Riemann surface $\Sigma$ and a bipartite graph $\bG \subset \Sigma$. Let $\cX_\bG = H^1(\bG; \Bbbk^\times)$ be the moduli space of rank~1 local systems on $\bG$. Any null region $F$ (meaning neither black nor white) in $\Sigma \backslash \bG$, which are called faces in \cite{Gon17Web}, defines a cycle $\gamma_F \in H_1(\bG; \bZ)$. They can be viewed as regular functions on $\cX_\bG$. At this stage the following is rather immediate:

\begin{prop}\label{prop:conj-cluster}
    Let $\bG$ be a bipartite graph on a surface $\Sigma$, and $\Lambda(\bG)$ be the alternating Legendrian link determined by $\bG$. Let $\cM_1(\Sigma, \Lambda(\bG))_0$ the moduli space of microlocal rank~1 sheaves in $\Sh^1_{\Lambda(\bG)}(\Sigma)$ with acyclic stalks at the marked points in $\Sigma$. Then there is an embedding of algebraic stacks
    $$\cX_\bG \hookrightarrow \cM_1(\Sigma, \Lambda(\bG))_0.$$
\end{prop}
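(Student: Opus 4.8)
The plan is to mimic the proof of Proposition \ref{prop:weave-cluster}, using the conjugate Lagrangian $L(\bG)$ in place of the free Legendrian weave. First I would recall from Proposition \ref{prop:conj-exist} and Proposition \ref{prop:conj-unique} that $\bG$ determines, up to Hamiltonian isotopy, a well-defined conjugate Lagrangian filling $L(\bG)\subset T^*\Sigma$ of the alternating Legendrian $\Lambda(\bG)\subset T^{*,\infty}\Sigma$, and that $H_1(L(\bG);\bZ)$ is identified with (a quotient or subgroup of) the lattice $\bZ[\{\gamma_F\}]$ generated by the null regions via Lemma \ref{lem:H1cycle-conj}. Moreover, by construction $\pi(L(\bG))$ is the closure of the union of the black and white regions, so the $\gamma_F$ for null regions $F$ lift to $1$-cycles $\gamma_F\in H_1(L(\bG);\bZ)$, which in turn produces the identification $\cX_\bG = H^1(\bG;\Bbbk^\times) \cong H^1(L(\bG);\Bbbk^\times)$ (both sides being determined by the dual lattice of null-region cycles; the bipartite graph $\bG$ is a deformation retract of $\pi(L(\bG))$ and hence of $L(\bG)$ itself).

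Second, I would invoke the Jin--Treumann sheaf quantization, Theorem \ref{thm:JinTreumann}, applied to the relatively spin exact Lagrangian filling $L=L(\bG)$ of $\Lambda=\Lambda(\bG)$ with zero Maslov class: it provides a fully faithful functor $\Loc^{pp}(L(\bG))\hookrightarrow \Sh^{pp}_{\widetilde L(\bG)}(\Sigma\times\bR)$ whose composition with the fully faithful proper pushforward $\pi_{\Sigma,!}\colon \Sh^{pp}_{\widetilde L(\bG)}(\Sigma\times\bR)\hookrightarrow \Sh^{pp}_{\Lambda(\bG)}(\Sigma)$ is fully faithful. Here one checks the hypotheses of Theorem \ref{thm:JinTreumann}: $L(\bG)$ is exact by Proposition \ref{prop:conj-exist}, orientable (it is built from real blowups of planar regions), has vanishing Maslov class, and is relatively spin since one may choose a coherent collection of sign curves as in the definition preceding Proposition \ref{prop:mumon-square}. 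Restricting this chain of fully faithful functors to abelian rank-one (twisted) local systems of perfect $\Bbbk$-modules, and passing to the associated derived moduli stacks of pseudo-perfect objects via \cite{ToenVaquie07}, yields an open embedding $H^1(L(\bG);\Bbbk^\times)\hookrightarrow \cM_1(\Sigma,\Lambda(\bG))_0$, exactly as in the general discussion in Section \ref{sec:quantize} and the proof of Proposition \ref{prop:weave-cluster}. Alternatively, and even more directly, Theorem \ref{thm:altsheaf} already identifies the category of rank-one local systems on $L(\bG)$ with the subcategory of alternating sheaves of microlocal rank one inside $\Sh^{pp}_{\Lambda(\bG)}(\Sigma)$, which is precisely the substack cutting out the image; either route gives the embedding of stacks, and one must only note that the acyclicity-at-marked-points condition defining $\cM_1(\Sigma,\Lambda(\bG))_0$ is automatically satisfied by alternating sheaves since their stalks vanish on null regions, in particular near the marked points.

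Finally I would assemble these pieces: the composite
$$\cX_\bG = H^1(\bG;\Bbbk^\times)\xrightarrow{\ \sim\ } H^1(L(\bG);\Bbbk^\times)\hookrightarrow \cM_1(\Sigma,\Lambda(\bG))_0$$
is the desired embedding of algebraic stacks, where the first arrow is the homotopy equivalence $\bG\simeq L(\bG)$ and the second is the open embedding produced by sheaf quantization (equivalently, the inclusion of the alternating-sheaf locus). The statement that the $\gamma_F$ for null regions $F$ become regular functions on $\cX_\bG$ is the statement that microlocal monodromy of the quantized sheaf along the lifted cycle $\gamma_F\in H_1(L(\bG);\bZ)$ computes the monodromy of the underlying local system, which is the content of Example \ref{ex:mumon-STZ} and the unsigned computation recalled in Section \ref{sec:quan-conj}; invertibility of these monodromies is exactly the singular-support condition $SS^\infty(\cF)\subset\Lambda(\bG)$ discussed after Theorem \ref{thm:altsheaf}. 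The main obstacle, and the only point requiring genuine care rather than citation, is verifying that $L(\bG)$ satisfies \emph{all} the hypotheses of Theorem \ref{thm:JinTreumann} uniformly --- in particular relative spin-ness and the existence of a Legendrian lift with primitive bounded below, which in the non-compact/marked setting requires the behaviour-at-infinity analysis of Subsections \ref{sec:weave-at-infty} and \ref{ssec:behavior_infinity} together with condition (3) of Definition \ref{def:graphical-conj}; once that is in place the rest is a formal consequence of results already established in Sections \ref{sec:prelim} and \ref{sec:quantize}.
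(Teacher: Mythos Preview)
Your proposal is correct and follows essentially the same approach as the paper: identify $\cX_\bG\cong H^1(L(\bG);\Bbbk^\times)$ via the deformation retraction of the conjugate Lagrangian onto $\bG$, then apply the Jin--Treumann sheaf quantization (Theorem \ref{thm:JinTreumann}) to obtain the fully faithful embedding and pass to moduli. The paper's proof is terser and does not pause to verify the hypotheses or discuss the alternating-sheaf route via Theorem \ref{thm:altsheaf}, but your added detail is consistent with how the paper uses these ingredients elsewhere.
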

\begin{proof}
    Note that the conjugate Lagrangian $L(\bG)$ associated to $\Lambda(\bG)$ admits a deformation retraction to the graph $\bG$. Thus $\cX_\bG \cong H^1(L(\bG); \Bbbk^\times)$, both equipped with the coordinate system given by monodromies along $\gamma_F$ for null regions $F$. Following Theorem \ref{thm:JinTreumann}, we get a fully faithful embedding of $\Loc^1(L(\bG))$ into $\Sh^1_{\Lambda(\bG)}(\Sigma)$, and hence an embedding of algebraic stacks
    $$H^1(L(\bG); \Bbbk^\times) \hookrightarrow \cM_1(\Sigma, \Lambda(\bG))_0,$$
    and this completes the proof.
\end{proof}

    With the above preparations established, we now turn to the following specific setting in Corollary \ref{cor:FG=GMN}. Following \cite{Gon17Web}, we can consider a bipartite graph $\bG_n^*$ that come from an ideal $n$-triangulation on a Riemann surface $\Sigma$. It is implicit in \cite{Gon17Web}, and explicitly written (in a more general setting) in \cite{GonKon21} that the moduli space of rank 1 local systems on $\Gamma_n$ embeds into the moduli space of rank~$n$ framed local systems. Now we can use the Hamiltonian isotopies from Theorem \ref{thm:main1} so as to conclude the comparison between the bipartite graph dimer model and the spectral cover reads as follows:

\begin{cor}\label{cor:dimer-framedloc}
    Let $\Sigma$ be a closed surface with marked points $\{x_1, ..., x_m\}$ and an associated ideal $n$-triangulation. Let $\bG_n^*$ be the $A_n^*$-bipartite graph associated to the ideal $n$-triangulation. Then there is an embedding of algebraic varieties
    $$\cX_{\bG_n^*} \hookrightarrow \cX_{\GL_n}(\Sigma).$$
    In addition, for $\mathfrak{w}_n^*$ the spectral cover of the $n$-triangulation, there is a canonical isomorphism $\cX_{\bG_n^*} \cong \cX_{\mathfrak{w}_n^*}$ such that the embedding is identified with the embedding
    $$\cX_{\mathfrak{w}_n^*} \hookrightarrow \cX_{\GL_n}(\Sigma)$$
    defined by sheaf quantization of Legendrian weaves in Corollary \ref{cor:weave-framedloc}.
\end{cor}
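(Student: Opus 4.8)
The plan is to assemble the statement from the pieces already available in the excerpt. First, I would produce the embedding $\cX_{\bG_n^*} \hookrightarrow \cX_{\GL_n}(\Sigma)$ directly: by Proposition \ref{prop:conj-cluster}, applied to the bipartite graph $\bG_n^*$ (which, since $\bG_n^*$ has no vertices at the punctures, gives an alternating Legendrian isotopic to $\bigcirc_n$), one gets an embedding $\cX_{\bG_n^*} \hookrightarrow \cM_1(\Sigma, \Lambda(\bG_n^*))_0$, and then Proposition \ref{prop:framedloc-sheaf} identifies $\cM_1(\Sigma, \bigcirc_n)_0 \cong \cX_{\GL_n}(\Sigma)$. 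One must check that the Legendrian isotopy $\Lambda(\bG_n^*)\simeq\bigcirc_n$ of Section \ref{ssec:triangle-positive} induces the correct identification of moduli spaces; this is handled by Theorem \ref{thm:GKS}/Theorem \ref{thm:JinTreumann}, since the sheaf category is a Legendrian isotopy invariant. This establishes the first assertion.

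Next I would construct the canonical isomorphism $\cX_{\bG_n^*} \cong \cX_{\mathfrak{w}_n^*}$. Since $\cX_{\bG_n^*} = H^1(\bG_n^*;\Bbbk^\times) \cong H^1(L(\bG_n^*);\Bbbk^\times)$ (the conjugate Lagrangian deformation retracts to the bipartite graph, as noted in the proof of Proposition \ref{prop:conj-cluster}) and $\cX_{\mathfrak{w}_n^*} = H^1(\mathfrak{w}_n^*;\Bbbk^\times) = H^1(\widetilde{L}(\mathfrak{w}_n^*);\Bbbk^\times)$, the desired isomorphism is $(\phi^\vee)^{-1}$ where $\phi: H^1(L(\bG_n^*);\Bbbk^\times)\to H^1(\mathfrak{w}_n^*;\Bbbk^\times)$ is the isomorphism induced on cohomology by the Hamiltonian isotopy of Theorem \ref{thm:main1} (the $n$-triangulation case, proven in Section \ref{sec:main_proofs}). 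Concretely, Corollary \ref{cor:1cycle-triangle} already records how this isomorphism acts on the relevant bases of $H_1$: rectangular null regions $\leftrightarrow$ relative $\sf I$-cycles, hexagonal null regions $\leftrightarrow$ $\sf Y$-cycles, and small loops around punctures $\leftrightarrow$ lowest-level cycles around punctures. So $\phi$ is canonical once the Hamiltonian isotopy class is fixed, which it is, by Proposition \ref{prop:conj-unique} (uniqueness of the conjugate filling) together with the fact that free weaves associated to a fixed $n$-triangulation have a well-defined Hamiltonian isotopy class.

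The final and main step is to verify that under this isomorphism the two embeddings into $\cX_{\GL_n}(\Sigma)$ agree, i.e.\ that the square
\[
\begin{CD}
\cX_{\bG_n^*} @>{\sim}>> \cX_{\mathfrak{w}_n^*}\\
@VVV @VVV\\
\cX_{\GL_n}(\Sigma) @= \cX_{\GL_n}(\Sigma)
\end{CD}
\]
commutes, where the left vertical map is from Proposition \ref{prop:conj-cluster}$+$Proposition \ref{prop:framedloc-sheaf} and the right vertical map is the non-abelianization map from Corollary \ref{cor:weave-framedloc}. This is exactly the Hamiltonian-invariance statement for sheaf quantizations: by Proposition \ref{prop:JinTreu-inv}, a Hamiltonian isotopy of Lagrangian fillings (with its accompanying contact isotopy of the Legendrian boundary) intertwines the associated sheaf quantizations and hence the induced open embeddings of moduli stacks $H^1(L;\Bbbk^\times)\hookrightarrow \cM_1(\Sigma,\Lambda)_0$. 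Applying this to the Hamiltonian isotopy of Theorem \ref{thm:main1} between $L(\bG_n^*)$ and the Lagrangian projection of $\mathfrak{w}_n^*$ — both fillings of Legendrians that are identified, up to the contact isotopy $\Lambda(\bG_n^*)\simeq\bigcirc_n\simeq\partial\mathfrak{w}_n^*$, with $\bigcirc_n\subset T^{*,\infty}\Sigma$ — gives precisely the commutativity of the square, once we combine it with the identification $\cM_1(\Sigma,\bigcirc_n)_0\cong\cX_{\GL_n}(\Sigma)$ of Proposition \ref{prop:framedloc-sheaf}.

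I expect the main obstacle to be bookkeeping at infinity and at the punctures: the conjugate surface $L(\bG_n^*)$ and the weave filling of $\mathfrak{w}_n^*$ live a priori over slightly different ambient spaces ($T^*\Sigma_\text{op}$ vs.\ the cotangent bundle after the $\Sigma_\text{cl}\hookrightarrow\Sigma_\text{op}$ embedding of Section \ref{sec:weave-at-infty}), and one must make sure the Hamiltonian isotopy of Theorem \ref{thm:main1} is compatible with the identification of Legendrian boundary conditions used to define both $\cX_{\GL_n}(\Sigma)$-valued maps. The technical control for this is already in place — Section \ref{sec:weave-at-infty} for the ambient identification, Lemma \ref{lem:graph-at-infty} and the behaviour-at-infinity discussion for the Hamiltonian isotopies, and the ``acyclic stalks at the marked points'' normalization in Proposition \ref{prop:framedloc-sheaf} matching the ``acyclic stalks at $\Sigma\times\{-\infty\}$'' condition in Theorem \ref{thm:JinTreumann} — so the argument is a matter of carefully threading these identifications through, rather than proving anything genuinely new. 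The proof then concludes by noting that, since both embeddings agree and the coordinate functions on both sides are microlocal monodromies/merodromies along the matched 1-cycles of Corollary \ref{cor:1cycle-triangle}, the cluster $\cX$-coordinates are identified as well, which also recovers the comparison with the Fock--Goncharov and Gaiotto--Moore--Neitzke coordinates asserted in Theorem \ref{cor:FG=GMN}.
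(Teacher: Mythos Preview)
Your proposal is correct and follows essentially the same approach as the paper: both use Theorem \ref{thm:main1} to produce the isomorphism $H^1(\bG_n^*;\Bbbk^\times)\cong H^1(\mathfrak{w}_n^*;\Bbbk^\times)$, invoke Proposition \ref{prop:framedloc-sheaf} and Corollary \ref{cor:weave-framedloc} for the embedding into $\cX_{\GL_n}(\Sigma)$, and appeal to the Hamiltonian invariance of Proposition \ref{prop:JinTreu-inv} to identify it with the embedding of Proposition \ref{prop:conj-cluster}. Your write-up is considerably more detailed than the paper's three-sentence proof (the discussion of bookkeeping at infinity and the cycle-matching of Corollary \ref{cor:1cycle-triangle} belongs more to the surrounding discussion and to Theorem \ref{cor:FG=GMN} than to this corollary itself), but the logical skeleton is the same.
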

\begin{proof}
    It follows immediately from Theorem \ref{thm:main1} that the Hamiltonian isotopy from the conjugate Lagrangian $L(\bG_n^*)$ to the projection $w_n^*$ of the Legendrian weave $\mathfrak{w}_n^*$ defines an isomorphism $H^1(\bG_n^*, \Bbbk^\times) \cong H^1(\mathfrak{w}_n^*, \Bbbk^\times)$. Proposition \ref{prop:framedloc-sheaf} and Corollary \ref{cor:weave-framedloc} therefore implies the existence of the embedding
    $$H^1(\bG_n^*, \Bbbk^\times) \xrightarrow{\sim} H^1(\mathfrak{w}_n^*, \Bbbk^\times) \hookrightarrow \cX_{\GL_n}(\Sigma).$$
    By Hamiltonian invariance in Proposition \ref{prop:JinTreu-inv} we know that this embedding coincides with the embedding in Proposition \ref{prop:conj-cluster}.
\end{proof}
\begin{remark}\label{rem:frozen-dim}
    One can also consider a basis of $H_1(\bG_n^*; \bZ)$ given by all the null regions of the bipartite graph, forming the vertices of the quiver defining the cluster variety. Then the null regions around marked points are frozen variables (which cannot be mutated), and by forgetting the frozen variables one gets the cluster coordinates on $\cX_{\PGL_n}(\Sigma)$ via the projection map in Remark \ref{rem:frameloc-proj} (compare Remark \ref{rem:frozen-abelian}). Indeed, one can compare the basis of $H_1$ for the corresponding $A_n^*$-bipartite graphs as in Subsection \ref{ssec:triangle-positive}, or Corollary \ref{cor:1cycle-triangle} and \ref{cor:1cycle-triangle-part}, where the difference is exactly the null regions around the marked points.
\end{remark}

    Corollary \ref{cor:dimer-framedloc} shows that in fact, the cluster coordinates $\cX_{\bG_n^*} \hookrightarrow \cX_{\GL_n}(\Sigma)$ defined by microlocal monodromies of sheaf quantizations of conjugate Lagrangians are identical as the cluster coordinates $\cX_{\mathfrak{w}_n^*} \hookrightarrow \cX_{\GL_n}(\Sigma)$ defined by microlocal monodromies of sheaf quantizations of the Legendrian weaves.

\subsubsection{Proof of Theorem \ref{cor:FG=GMN}}
    Corollary \ref{cor:weave-framedloc} implies that the cluster coordinates in (3) \& (4) are identical, and Corollary \ref{cor:dimer-framedloc} implies that the cluster coordinates in (1) \& (3) are identical. Therefore it suffices to identify the Fock-Goncharov cluster coordinates in (2) with any other class of cluster coordinates.

    For that, note that Fock-Goncharov's cluster $\cX$-coordinates on the moduli of framed local systems $\cX_{\GL_n}(\Sigma)$ \cite{FockGon06a,Gon17Web} are exactly defined by an embedding $\cX_{\bG_n^*} \hookrightarrow \cX_{\GL_n}(\Sigma)$. For each null component (or face) in the alternating coloring, they assigned a rational function, whose explicit formulas are the cross ratios and triple products as in Section \ref{sec:quan-weave-mon}. The 1-cycles are in $H_1(\bG_n^*; \bZ)$ but the functions are microlocal monodromies of sheaves over $H_1(\mathfrak{w}_n^*; \bZ)$. By Corollary \ref{cor:1cycle-triangle}, the isomorphism $\cX_{\bG_n^*} \cong \cX_{\mathfrak{w}_n^*}$ identifies standard monodromy functions along the standard 1-cycles on both sides. Therefore the microlocal monodromies along 1-cycles in the Legendrian weave are exactly the ones along the corresponding 1-cycles in the conjugate Lagrangian. Hence the Fock-Goncharov cluster coordinates in (2) are indeed the cluster coordinates defined by microlocal monodromies.

\newpage
\appendix
\section{DG-Categories in the microlocal theory of sheaves}\label{appen:cat-sheaf}

Let $M$ be a smooth manifold, $\La\sse T^{*,\infty}M$ a real subanalytic Legendrian submanifold, $L\sse T^*M$ an embedded exact Lagrangian such that $\dd_\infty L=\La$ and $\widetilde{L}\sse J^1M$ its Legendrian lift. Let $\Mod(\Bbbk)$ be the dg-category of chain complexes of $\Bbbk$-modules, $\Bbbk$ a field of characteristic zero. The following diagram displays dg-categories that are relevant in the study of the Legendrian isotopy class of $\La$:

\begin{center}
\begin{tikzcd}
  & \Loc^{pp}(\La) \arrow[r,hook] & \Loc^{cpt}(\La) \arrow[dl,swap, "m_\La^L"] \arrow[rr,hook] & & \Loc(\La) \\

  \Sh^{pp}_\La(M)\arrow[r,hook, "\iota_2"] \arrow[ur, "m_\La"] & \Sh^{cpt}_\La(M) \arrow[r,hook, "\iota_1"]\arrow[d,swap,"(i^{-1}_{M \times \{+\infty\}})^L"'] &\Sh^{ctr}_\La(M) \arrow[r,hook, "\iota_0"] & \Sh_\La(M) \arrow[ur, "m_\La"] & \\
 
  \Sh^{pp}_{\widetilde{L}}(M\times\overline{\R})\arrow[r,hook,swap, "\tilde\iota_2"]\arrow[u,"i^{-1}_{M \times \{+\infty\}}"'] \arrow[dr,swap, "m_{\widetilde{L}}"] & \Sh^{cpt}_{\widetilde{L}}(M\times\overline{\R}) \arrow[r,hook,swap, "\tilde\iota_1"] &\Sh^{ctr}_{\widetilde{L}}(M\times\overline{\R}) \arrow[r,hook,swap, "\tilde\iota_0"] \arrow[u,"i^{-1}_{M \times \{+\infty\}}"']& \Sh_{\widetilde{L}}(M\times\overline{\R}) \arrow[dr,swap, "m_{\widetilde{L}}"] \arrow[u,"i^{-1}_{M \times \{+\infty\}}"']& \\
  
  & \Loc^{pp}(\widetilde{L}) \arrow[r,hook] & \Loc^{cpt}(\widetilde{L}) \arrow[ul, "m_{\widetilde{L}}^L"] \arrow[rr,hook] & & \Loc(\widetilde{L}) \arrow[uuu,"i^{-1}_\La"'] \\
\end{tikzcd}
\end{center}

%\begin{center}
%\begin{tikzcd}
% \Sh^{pp}_\La(M)\arrow[r,hook, "\iota_2"] & \Sh^{cpt}_\La(M) \arrow[r,hook, "\iota_1"]\arrow[d,swap,"(i^{-1}_{M \times \{+\infty\}})^L"'] &\Sh^{ctr}_\La(M) \arrow[r,hook, "\iota_0"] & \Sh_\La(M) \arrow[r, "m_\La"] & \Loc(\La)\\
 
%  \Sh^{pp}_{\widetilde{L}}(M\times\overline{\R})\arrow[r,hook, "\tilde\iota_2"]\arrow[u,"i^{-1}_{M \times \{+\infty\}}"'] & \Sh^{cpt}_{\widetilde{L}}(M\times\overline{\R}) \arrow[r,hook, "\tilde\iota_1"] &\Sh^{ctr}_{\widetilde{L}}(M\times\overline{\R}) \arrow[r,hook, "\tilde\iota_0"] \arrow[u,"i^{-1}_{M \times \{+\infty\}}"']& \Sh_{\widetilde{L}}(M\times\overline{\R}) \arrow[r, "m_{\widetilde{L}}"] \arrow[u,"i^{-1}_{M \times \{+\infty\}}"']& \Loc(\widetilde{L})\arrow[u,"i^{-1}_\La"']\\
%\end{tikzcd}
%\end{center}

\noindent By definition, the chart legend for the above categories reads as follows:\\

\begin{itemize}
    \item[(a)] {\bf $\Loc(\La)$ is the dg-derived category of local systems of $\Mod(\Bbbk)$ on $\La$}, i.e.~locally constant sheaves of $\Bbbk$-modules on $\La$.\\
    
    \noindent Let us assume that $\La$ is connected and $*_\La\in\La$ is a base point. A computationally useful characterization of $\Loc(\La)$ is that it is dg-equivalent to the dg-category of dg-modules on the dg-algebra $C_{-*}(\Omega\La,*_\La;\Bbbk)$ of chains over the loop space, endowed with the Pontryagin product. Indeed, by \cite[Appendix A.4]{Lurie_HigherAlgebra}, $\Loc(\La)$ is equivalent to the category of (quasi)functors $\Fun_{Simp}(\mbox{Sing}(\La,*_\La),N_{dg}(\Mod(\Bbbk)))$ between simplicial sets, where $\mbox{Sing}(\La)$ is the Kan complex of singular simplices of $\La$, $*_\La$ is a base point, and $N_{dg}$ is the dg-nerve from the dg-category of dg-categories to simplicial sets. By adjunction, this is equivalent to the dg-category of dg-functors $\Fun_{dg}(N_{dg}^L(\mbox{Sing}(\La,*_\La)),\Mod(\Bbbk))$ where $N_{dg}^L$ is the left adjoint of $N_{dg}$. It is then computed that the dg-category $N_{dg}^L(\mbox{Sing}(\La,*_\La))$ is dg-equivalent to the dg-category (with one object) associated to $C_{-*}(\Omega\La,*_\La;\Bbbk)$; e.g.~see \cite{RiveraZeinalian20} for details.\\

\item[(b)] {\bf $\Loc^{cpt}(\La)$ is the full dg-subcategory of compact objects in $\Loc(\La)$, and $\Loc^{pp}(\La)$ is the full dg-subcategory of local systems in $\Loc(\La)$ whose stalks are perfect $\Bbbk$-modules.} Let $C_{-*}(\Omega\La,*_\La;\Bbbk)$ denote the algebra of chains on the based loop space, as above, and $\Mod(C_{-*}(\Omega\La,*_\La;\Bbbk))$ its dg-category of dg-modules over $\Bbbk$. Then $\Loc(\La)\cong \Mod(C_{-*}(\Omega\La,*_\La;\Bbbk))$ and we have the two isomorphisms:
$$\hspace{30pt} \Loc^{pp}(\La)\cong \Fun^{ex}(C_{-*}(\Omega\La,*_\La;\Bbbk),\Perf(\Bbbk)),\quad \Loc^{cpt}(\La)\cong \Perf(C_{-*}(\Omega\La,*_\La;\Bbbk)).$$
In the case of $\Loc^{cpt}$, the stalks of the local systems are perfect as $C_{-*}(\Omega\La,*_\La;\Bbbk)$-modules (and thus typically infinite rank $\Bbbk$-modules), whereas local systems in $\Loc^{pp}$ have a stalk that is a perfect $\Bbbk$-module.\\

\item[(c)] {\bf $\Sh_\La(M)$ is the dg-derived category of sheaves of $\Mod(\Bbbk)$}.\\

\noindent First, there is the need to at least consider sheaves of chain complexes, and not just $\Bbbk$-modules, for $\Sh_\La(M)$ to be invariant under Legendrian isotopies of $\La$. In that sense, we are lead to consider (classically) derived categories and it is possible to work with the derived category of sheaves without enhancing to dg-categories. This is the approach in \cite{KSbook,GKS12}. Nevertheless, since the invariance result \cite{GKS12} actually builds the equivalences through convolution with a sheaf kernel, it also implies that the dg-category $\Sh_\La(M)$ is a Legendrian isotopy invariant, see \cite{Schnurer18}. Two advantages of working with dg-categories are:\\

\begin{itemize}
    \item[(i)] For a dg-category $\mathcal{C}$ of finite type, \cite{ToenVaquie07} shows that there exists a $D^{-}$-stack locally geometric and locally of finite presentation which parametrizes pseudo-perfect objects in $\mathcal{C}$. This allows for the passage to commutative (homotopical) algebraic geometry \cite{HAGII}, often more manageable than the non-commutative setting of dg-categories.\\
    
    \item[(ii)] Mapping cones are functorial for dg-categories. Note that key constructions in symplectic topology use mapping cones (and cocones), e.g. S.~Guillermou's sheaf quantization \cite[Part 11--12]{Gui19}.\footnote{Indeed, the quantization of the double uses the convolution $k_\gamma\star F$ for the functor $\Psi$, as presented (and with the notation) in \cite[Part 11]{Gui19}. The restriction of $\Bbbk_\gamma\star F$ to $u=\varepsilon$ is a sheaf isomorphism to the mapping cone of the sheaf $F$ with an $\varepsilon$-Reeb push-off of $F$.}  In addition, $\Sh_\La(M)$ is a sheaf (and a cosheaf) on $M$, whereas their homotopical categories, which are the (classically) derived categories, are only pre-stacks.\\
\end{itemize}

\noindent Second, more technically, we can consider the dg-categories of chain complexes of sheaves or sheaves of chain complexes. There is a natural functor from the former to the latter, by associating to the sheafification of the corresponding pre-sheaf of chain complexes to each complex of sheaves on $X$.\footnote{See the Mathoverflow thread ``Sheaves of complexes and complexes of sheaves'' and references therein.} This is an equivalence for a smooth manifold $M$ of finite Lebesgue covering dimension, which is indeed the case for any manifolds in this manuscript. Finally, there is a choice of support for the chain complexes: typical choices are unbounded, bounded, non-negatively graded, or non-positively graded. We work in the unbounded setting, or the locally bounded one, following the results from \cite{MarcoSchapira18MicInfty}.\\

\item[(d)] {\bf $\Sh^{ctr}_\La(M)$ is the dg-derived category of constructible sheaves of $\Mod(\Bbbk)$}. This is a dg-subcategory of $\Sh^{ctr}_\La(M)$ and, in the cases that the Legendrian $\La$ is subanalytic, the inclusion yields an equivalence of dg-categories. Since all fronts for the Legendrian links $\La$ being considered in this article are subanalytic, and thus Whitney stratified, we indistinguishable work with $\Sh^{ctr}_\La(M)$ or $\Sh_\La(M)$. The first statement in \cite[Lemma 3.11]{NadWrapped} shows that $\Sh^{ctr}_\La(M)$ is compactly generated by the microlocal skyscrapers \cite[Definitinion 3.14]{NadWrapped}. (It is not split generated by these objects though.)\\ 

\item[(e)] {\bf $\Sh^{cpt}_\La(M)$ is the full dg-subcategory of compact objects in $\Sh^{ctr}_\La(M)$}. This category is split generated by the microlocal skyscrapers and its ind-completion yields $\Sh^{ctr}_\La(M)$. The category $\Sh^{cpt}_\La(M)$ is a cosheaf on $M$. It follows from \cite{NadWrapped} that $\Sh^{cpt}_\La(M)$ is a dg-category of finite-type, as it is a finite colimit of finite-type dg-categories. This is relevant so as to apply \cite{ToenVaquie07} to conclude that its moduli of pseudo-perfect objects is a locally geometric derived stack (and locally of finite presentation). In the examples of this manuscript, the moduli we consider are even Artin stacks.\\

Note that \cite{NadWrapped} refers to $\Sh^{cpt}_\La(M)$ as {\it wrapped} sheaves. We have opted for $\Sh^{cpt}_\La(M)$ so as to emphasize that wrapping is actually related to the singular support condition $\Sh_\La(M)$ and not the subcategory $\Sh^{cpt}_\La(M)$ itself. That is, the sheaves in $\Sh_\La(M)$ are already geometrically {\it wrapped} in that geometric wrapping (partially stopped to $\La$) truly occurs when taking the left adjoint to the inclusion $\Sh_\La(M)$ into $\Sh(M)$, see \cite[Theorem 1.2]{Kuo_WrappedSheaves}.\\

\item[(f)] {\bf $\Sh^{pp}_\La(M)$ is the full dg-subcategory of sheaves in $\Sh_\La(M)$ whose stalks are perfect $\Bbbk$-modules.} The category $\Sh^{cpt}_\La(M)$ is a sheaf on $M$. It is proven in \cite[Theorem 3.21]{NadWrapped} or \cite[Corollary 4.22]{GPS18b} that the hom-pairing provides an equivalence between $\Sh^{pp}_\La(M)$ and the dg-category of exact functors $\Fun^{ex}(\Sh^{cpt}_\La(M),\Perf(\Bbbk))$, where $\Perf(\Bbbk)$ is the dg-category of perfect $\Bbbk$-modules. In that sense, objects in $\Sh^{pp}_\La(M)$ can be referred to as the pseudo-perfect (equiv.~proper) objects in $\Sh^{cpt}_\La(M)$, thence the notation.\\
\end{itemize}

Finally, the functors $\iota_2,\iota_1$ and $\iota_0$ are the defining inclusions. The fact that
$$\Loc^{pp}(\La) \hookrightarrow \Loc^{cpt}(\La), \;\text{and}\; \Sh^{pp}_\La(M) \hookrightarrow \Sh^{cpt}_\La(M)$$
uses the result that $\Loc^{cpt}(\La)$ and $\Sh^{cpt}_\La(M)$ are smooth categories \cite[Corollary 4.25 \& Lemma A.8]{GPS18a}. The functor $i_{M \times \{+\infty\}}^{-1}$ is given by the restriction from $M \times \ol{\bR}$ (with the stratification determined by $\widetilde{L}$) to its boundary $M \times \{+\infty\}$ (with the stratification determined by $\Lambda$). The functor $m_\La$ is the microlocal functor discussed in Appendix \ref{appen:sheaf}. Note that the functor
$m_\La$ is defined to map to the global sections $\mu\sh_\La(\La)$ of the Kashiwara-Schapira stack. In general, there can be obstructions to identify $\mu\sh_\La(\La)$ with $\Loc(\La)$, and even then there is a subtlety with regards to {\it twisted} local systems. For the Legendrian links in this manuscript, these obstructions vanish and twisting is addressed in Appendix \ref{appen:microlocal}.\\

\noindent Neither of the two functors $i_{M \times \{+\infty\}}^{-1}$ or $m_\La$ preserve compact objects. Nevertheless, each of them admits a left adjoint, e.g.~see \cite[Lemma 4.12]{GPS18a} and references therein. These left adjoints $(i_{M \times \{+\infty\}}^{-1})^L$ and $m_\La^L$ preserve compact objects and thus given rise to functors among $\Sh^{cpt}_\La(M)$, $\Sh^{cpt}_{\widetilde{L}}(M \times \ol\bR)$ and $\Loc^{cpt}(\Lambda)$ in the opposite direction.\\

%\begin{remark}
%Let $C_{-*}(\Omega\La,*_\La;\Bbbk)$ denote the algebra of chains on the based loop space, as above, and $\Mod(C_{-*}(\Omega\La,*_\La;\Bbbk))$ its dg-category of dg-modules over $\Bbbk$. Then $\Loc(\La)\cong \Mod(C_{-*}(\Omega\La,*_\La;\Bbbk))$ and we have the two isomorphisms:
%$$\Loc^{pp}(\La)\cong \Fun_{ex}(C_{-*}(\Omega\La,*_\La;\Bbbk),\Perf(\Bbbk)),\quad \Loc^{cpt}(\La)\cong \Perf(C_{-*}(\Omega\La,*_\La;\Bbbk)).$$
%In the case of $\Loc^{cpt}$, the stalks of the local systems are perfect as $C_{-*}(\Omega\La,*_\La;\Bbbk)$-modules (and thus typically infinite rank $\Bbbk$-modules), whereas local systems in $\Loc^{pp}$ have a stalk that is a perfect $\Bbbk$-module. These categories fit within the following commutative diagram:

%\begin{center}
%\begin{tikzcd}
%\Loc^{pp}(\La)\arrow[r,hook] & \Loc^{cpt}(\La) \arrow[r,hook]\arrow[d,"m_\La^L"]& \Loc(\La)\\

% \Sh^{pp}_\La(M)\arrow[r,hook]\arrow[u,"m_\La"] & \Sh^{cpt}_\La(M)\arrow[r,hook]  & \Sh_\La(M) \arrow[u, "m_\La"]. \\

%\end{tikzcd}
%\end{center}
%\end{remark}

In this manuscript we focus on $\La$ as a Legendrian submanifold of the ideal contact boundary $T^{*,\infty}M$. That said, from the perspective of Lagrangian skeleta, it is natural to instead use the (singular) Lagrangian given by the union of $M$ and the Lagrangian cone $\mbox{cone}(\La)$ of $\La$. The global sections of the Kashiwara-Schapira stack on $M\cup \mbox{cone}(\La)$ lead to $\Sh_\La(M)$ and thus the categories above appear naturally as well. The perspective of Lagrangian skeleta and (co)sheaves of categories on them are also useful in the study of Weinstein manifolds.

%\begin{remark}
%$\Sh^{pp}$ sheaf of $M$, $\Sh^{cpt}$ cosheaf on $M$, and $\Sh_\La$ is both a sheaf and a cosheaf. Possible enhancement to $M$ union cone of $\La$ as a sheaf/cosheaf: $\mu\Sh$ can be defined on $M\cup\La$ and the global sections are $\Sh_\La$.
%\end{remark}

\newpage

\section{Results on the microlocal theory of constructible sheaves}\label{appen:sheaf}

    The goal of this appendix is to review and modify the necessary results in the microlocal theory of sheaves, as it relates to studying Lagrangian fillings, and fill in the gaps and necessary comparisons between the works in the current literature. The appendix consists of three separate parts, as follows.\\
    
    \begin{enumerate}
        \item Firstly, in Section \ref{appen:microlocal} we review S.~Guillermou's result on microlocalization \cite{Gui12}, discuss the role of twisted local systems, and provide a combinatoric approach for the computation with coherent signs refining the construction in \cite{STZ17}, which is not systematically written in literature.\\
        
        \item Secondly, in Section \ref{appen:sheaf-quan} we present Jin-Treumann's result on sheaf quantization \cite{JinTreu17} and make modifications on their original results, which is not compatible with the current constructible sheaf model on Legendrian weaves. In particular, we prove Hamiltonian invariance property without fixing boundary conditions as this is needed for the applications in our setting; this modification was also not written in literature.\\
        
        \item Thirdly, in Section \ref{appen:sheaf-moduli}, we define moduli and framed moduli of sheaves with singular support on the Legendrians, compare different approaches to framed moduli of sheaves, and define microlocal merodromies along relative 1-cycles.
    \end{enumerate}

\subsection{Singular support of sheaves}\label{appen:sheaf-general}
    For sheaves on manifolds, M.~Kashiwara and P.~Schapira introduced in \cite{KSbook} the concept of singular support of (complexes of) sheaves, which detect when derived sections fail to extend along a certain codirection; they refer to it as the derived sections fail to propagate. Let $\Sh(M)$ be the dg-derived category of sheaves on $M$ in the sense of \cite{Keller06DG}*{Section 4.4}, i.e.~dg category of complexes of sheaves with unbounded cohomology localized along quasi-isomorphisms, see Appendix \ref{appen:cat-sheaf}. Following \cite{STZ17}*{Section 3}, Example \ref{ex:sheafcombin} gave basic examples of what sheaves with a Legendrian singular support are like. The general definition reads as follows.
    
\begin{definition}[\cite{KSbook}]\label{def:ss}
    For a complex of sheaves $\cF \in \Sh(M)$, the singular support $SS(\cF) \subset T^*M$ is the closure of points $(x, \xi)$ such that for some neighbourhood $U_x$ of $x$, $\varphi\in C^\infty(M)$, $d\varphi(x) = \xi$,
    $$\mathrm{Cone}\left(\Gamma(U_x, \cF) \rightarrow \Gamma(U_x \cap \varphi^{-1}((-\infty,0)), \cF)\right) \not\simeq 0.$$
    $SS^\infty(\cF) = \ol{SS(\cF)} \cap T^{*,\infty}M$.
\end{definition}

    In \cite[Theorem 6.5.4]{KSbook} it is shown that $SS^\infty(\cF)$ is a (singular) coisotropic subset. By \cite[Theorem 8.4.2]{KSbook}, if $SS^\infty(\cF)$ is a (singular) Legendrian, then $\cF$ is a constructible sheaf. Following Appendix \ref{appen:cat-sheaf}, given a (subanalytic) Legendrian $\Lambda \subset T^{*,\infty}M$, $\Sh_\Lambda(M)$ denotes the full dg-subcategory in $\Sh(M)$ of constructible sheaves $\cF$ such that $SS^\infty(\cF) \subset \Lambda$.

\subsection{Microlocalization functor}\label{appen:microlocal}
    Given a constructible sheaf $\cF \in \Sh_\Lambda(M)$, there is a microlocalization functor $m_\Lambda$ which produces a (twisted) local system on the Legendrian $\Lambda \subset T^{*,\infty}M$. Microlocalization has been studied in \cite{KSbook}*{Chapter 6}, \cite{Gui12}*{Section 6--11} (or \cite{Gui19}*{Part 10}), and also \cite{NadWrapped,JinTreu17}. We summarize their results and in particular give a combinatorial approach of computing microlocal monodromies with coherent signs, refining the combinatorial construction in \cite{STZ17}*{Section 5}.

\subsubsection{Microlocalization with twisted coefficients}
    In \cite{STZ17}, a microlocal monodromy map for a sheaf $\cF \in \Sh_\Lambda(M)$ is  constructed in a combinatorial way, as in Example \ref{ex:mumon-STZ}. Their construction fits into a more general framework of microlocalization, see \cite[Chapter IV]{KSbook}. In general, we can consider a sheaf of categories  $\mu \Sh_\Lambda$ on $T^{*,\infty}M$, the Kashiwara-Schapira stack \cites{Gui12,Gui19} or the brane category \cite{JinTreu17}, as follows. Here, working with dg-categories, we adopt the definitions in \cite[Section 5]{NadShen20}. 
    
\begin{definition}
    Let $\Lambda \subset T^{*,\infty}M$. The presheaf of categories $\mu\Sh_\Lambda^\text{pre}$ on $T^{*,\infty}M$ associated to $\Lambda$ is defined by, for any $\Omega \subset T^{*,\infty}M$,
    $$\mu\Sh_\Lambda^\text{pre}(\Omega) := \Sh_{\Lambda \cup (T^{*,\infty}M \backslash \Omega)}(M) /\Sh_{T^{*,\infty}M \backslash \Omega}(M).$$
    By definition, $\mu\Sh_\Lambda$ is the sheafification of $\mu\Sh_\Lambda^\text{pre}$, which is induced by a sheaf of categories on $\Lambda$.
    
    \noindent Finally, by definition, the microlocalization functor
    $$m_\Lambda: \Sh_\Lambda(M) \rightarrow \mu \Sh_\Lambda(\Lambda)$$
    is the natural quotient functor on the sheaf of categories.
\end{definition}
\begin{remark}
    It can be verified that the definition above is the same as \cite[Section 5]{Gui12} or \cite[Part 10]{Gui19}, using \cite[Proposition 6.1.2]{KSbook}, which proves that the stalks of the two sheaves of categories coincide. Alternatively, one can follow the definition in \cite[Section 3.4]{NadWrapped}, that for a carefully chosen pair of small open balls $B \subset M$, $\Omega \subset T^{*,\infty}M$ such that $\pi(\Omega) \subset B$ and $\Omega$ is a neighbourhood of a component of $\Lambda \cap T^{*,\infty}B$,
    $$\mu\Sh_\Lambda(\Omega) = \Sh_{\Lambda \cup (T^{*,\infty}B \backslash \Omega)}(B).$$
    The equivalence of these two definitions relies on existence of adjoints to the inclusion $\Sh_{\Lambda \cup (T^{*,\infty}B \backslash \Omega)}(B) $ $\hookrightarrow \Sh_{\Lambda \cup (T^{*,\infty}M \backslash \Omega)}(M)$. Similarly, in \cite[Section 3.8--11]{JinTreu17}, the definition
    $$\mu\Sh_\Lambda(\Omega) = \Sh_\Lambda(B) / \Loc(B)$$
    is used. Equivalence between this definition and the above ones is an application of the refined microlocal cut-off lemma \cite[Lemma 6.7]{Gui12}, see also \cite[Lemma 3.8]{JinTreu17}.
    %{\WL This is perhaps overly technical, and we may delete it. In the previous version, I just wrote the last definition.}
\end{remark}

Note that the microlocalization functor $m_\Lambda$ restricts to $\Sh^{pp}_\Lambda(M) \rightarrow \mu\Sh_\Lambda^{pp}(\Lambda)$. In addition, the left adjoint $m_\Lambda^L$ of microlocalization can be restricted to $\mu\Sh_\Lambda^{cpt}(\Lambda) \rightarrow \Sh^{cpt}_\Lambda(M)$; see Appendix \ref{appen:cat-sheaf}.\\

%Suppose that the manifold $M$ is oriented and $\Lambda$ is an oriented manifold with vanishing Maslov class; e.g.~ when the front projection is smoothly immersed away from a codimension 2 locus, and the Maslov class is always 0. Then S.~Guillermou \cite{Gui12} consider the following objects.

    Let $M$ and $\Lambda \subset T^{*,\infty}M$ be oriented, and $\pi_{\Lambda \times \bR_+}: \Lambda \times \bR_+ \rightarrow M$ the projection where $\Lambda \times \bR_+ \subset T^*M \backslash M$ is a conical Lagrangian. In our context, we have vanishing of Maslov classes. The (stabilized) oriented relative frame bundle over $\Lambda$ is the principal $GL_{n,+}$-bundle defined by
    $$\cI_{\Lambda \times \bR_+} := \mathrm{Iso}_+(\pi_{\Lambda \times \bR_+}^* TM, T(\Lambda \times \bR_+)).$$
    Note that $\pi_\Lambda^* TM$ splits as the direct sum of a Lagrangian subbundle in $\xi \subset T(T^{*,\infty}M)$ and a rank~1 trivial line bundle tangent to the radius $\bR_+$-direction in $TM$. Therefore, there is indeed a stabilization map
    $$\cI_{\Lambda} = \mathrm{Iso}_+(\pi_\Lambda^*((TM \backslash M) /\bR_+), T\Lambda) \hookrightarrow \mathrm{Iso}_+(\pi_\Lambda^*TM, T(\Lambda \times \bR_+)) = \cI_{\Lambda \times \bR_+}.$$

\begin{thm}[ \cite{Gui12}*{Theorem 11.5}]\label{thm:Gui}
    Let $M$ be an oriented smooth manifold, $\Lambda \subset T^{*,\infty}M$ be an oriented Legendrian with vanishing Maslov class, and $\cI_{\Lambda \times \bR_+}$ be the oriented relative frame bundle over $\Lambda$. Then
    $$\mu \Sh_\Lambda(\Lambda) \simeq \Loc_\epsilon(\cI_{\Lambda\times \bR_+}) $$
    where $\Loc_\epsilon(\cI_{\Lambda \times \bR_+})$ is the dg-derived category of twisted local systems on $\cI_{\Lambda \times \bR_+}$, i.e.~local systems whose monodromy along each fiber is $-1$.
\end{thm}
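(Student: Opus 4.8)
\textbf{Proof proposal for Theorem \ref{thm:Gui} (following Guillermou).}

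The plan is to reduce the statement to Guillermou's original microlocalization theorem \cite{Gui12}*{Theorem 11.5} and reinterpret its output in the language of twisted local systems on the oriented relative frame bundle. First I would recall that, by the local structure of the Kashiwara--Schapira stack $\mu\Sh_\Lambda$ together with the refined microlocal cut-off lemma \cite[Lemma 6.7]{Gui12}, one has a purely local description: for a sufficiently small open ball $B\subset M$ and a suitable neighbourhood $\Omega$ of a connected piece of $\Lambda\cap T^{*,\infty}B$, the sections $\mu\Sh_\Lambda(\Omega)$ are equivalent to $\Sh_\Lambda(B)/\Loc(B)$, which is then equivalent to $\Loc(\Bbbk)$ (a single ``microstalk'' line, up to shift). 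The content of the theorem is therefore how these local copies of $\Loc(\Bbbk)$ glue as $\Omega$ varies over $\Lambda$: the gluing data is a local system of categories on $\Lambda$, and the claim is that it is classified precisely by the $GL_{n,+}$-torsor $\cI_{\Lambda\times\bR_+}$ together with the sign twist.

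The key steps, in order, would be: (1) Set up the oriented relative frame bundle $\cI_{\Lambda\times\bR_+}=\mathrm{Iso}_+(\pi^*_{\Lambda\times\bR_+}TM,\,T(\Lambda\times\bR_+))$ and verify the stabilization map $\cI_\Lambda\hookrightarrow \cI_{\Lambda\times\bR_+}$ is a homotopy equivalence, so that twisted local systems on either agree; this uses that $\pi^*_\Lambda TM$ splits off a trivial $\bR_+$-line and that $GL_{n,+}\hookrightarrow GL_{n+1,+}$ is a homotopy equivalence. (2) Invoke Guillermou's theorem \cite{Gui12}*{Theorem 11.5} directly: for $M$ oriented, $\Lambda$ oriented with vanishing Maslov class, $\mu\Sh_\Lambda(\Lambda)\simeq\Loc_\epsilon(\cI_{\Lambda\times\bR_+})$, where the subscript $\epsilon$ records that the monodromy around each circle fibre of $\cI_{\Lambda\times\bR_+}\to\Lambda$ (the fibrewise $GL_{n,+}\simeq SO(n)$ loop) acts by $-1$. (3) Explain why the sign appears: the equivalence carries a microstalk to a $\Bbbk$-line placed in a degree determined by a choice of Lagrangian plane, and rotating that plane by a full loop in $\cI_{\Lambda\times\bR_+}$ introduces the Maslov-type sign $-1$; equivalently, the microlocalization functor is naturally valued not in honest local systems but in local systems twisted by the $\bZ/2$-gerbe classified by $w_1$/the metaplectic obstruction, which after fixing orientations becomes exactly the ``$-1$ on fibres'' twisting. (4) Match this with the other definitions of $\mu\Sh_\Lambda$ used in the excerpt (the one via $\Sh_{\Lambda\cup(T^{*,\infty}M\setminus\Omega)}(M)/\Sh_{T^{*,\infty}M\setminus\Omega}(M)$, and the Nadler--Shende and Jin--Treumann variants) using the cited comparisons \cite[Proposition 6.1.2]{KSbook}, \cite[Lemma 6.7]{Gui12}, \cite[Lemma 3.8]{JinTreu17}, so that the statement is independent of which model one uses; this is bookkeeping rather than new content.

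The main obstacle is Step (3): pinning down the sign twist precisely and functorially, i.e.\ identifying the $\bZ/2$-gerbe through which microlocalization is forced to factor and checking that, after the orientation and vanishing-Maslov-class hypotheses trivialize everything except this last $\bZ/2$, the residual twist is exactly ``monodromy $-1$ along each $GL_{n,+}$-fibre.'' In a rigorous write-up one tracks this through Guillermou's construction of the ``doubled'' sheaf and the associated Maslov-potential/grading conventions; here, since Theorem \ref{thm:Gui} is quoted essentially verbatim from \cite{Gui12}, I would simply cite \cite{Gui12}*{Theorem 11.5} for the equivalence and confine the discussion to (a) the stabilization homotopy equivalence of Step (1), (b) the compatibility of the various $\mu\Sh_\Lambda$ models of Step (4), and (c) a remark recording how the sign manifests in the combinatorial computations of Section \ref{sec:microlocal} (cross-ratios, triple ratios, and the $-1$'s appearing in \cite{GonKon21}*{Section 3.2}). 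Thus the proof is mostly a matter of assembling cited results into the precise form needed downstream, with the genuinely delicate point—the coherent-sign/spin-structure data—deferred to Appendix \ref{appen:microlocal} where the coherent collections of sign curves are introduced.
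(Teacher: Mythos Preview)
Your proposal is correct in spirit and, in fact, goes further than the paper itself: Theorem~\ref{thm:Gui} is stated in the paper as a cited result from \cite{Gui12}*{Theorem 11.5} with no accompanying proof, only the subsequent Remark~\ref{rem:lag-plane-twist} explaining where the twisting comes from (the dependence of the microstalk computation on the choice of Lagrangian plane $T_p\Lambda_\varphi$). Your Steps (1)--(4) constitute a reasonable expository sketch of Guillermou's argument and the compatibility checks between the various models of $\mu\Sh_\Lambda$, but the paper simply invokes the theorem as a black box and defers the sign bookkeeping to the discussion of sign curves in Appendix~\ref{appen:microlocal}, exactly as you anticipated.
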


\begin{remark}\label{rem:lag-plane-twist}
    Twisting appears as follows, see also \cite{Gui12}*{Section 9 \& 10.2} or \cite{Gui19}*{Section 10.3 \& 10.6}. For simplicity, assume that $\pi(\Lambda)$ is defined a smooth hypersurface in a neighbourhood of $\pi(p)$. Suppose $\pi(\Lambda)$ is defined by a function $\phi_\Lambda$ where
    $$\phi_\Lambda^{-1}(0) = \pi(\Lambda), \;\; d\phi_\Lambda(\pi(\Lambda)) \subset \Lambda \times \bR_+.$$
    In computing the microstalk of a sheaf $\cF \in Sh^b_\Lambda(M)$ at $p \in \Lambda$, in general we do not simply consider the cone
    \[\mathrm{Cone}(\Gamma(\phi_\Lambda^{-1}((-\infty, 0], \cF) \rightarrow \Gamma(\phi_\Lambda^{-1}((-\infty, 0)), \cF)).\]
    In general, rather, we need to consider a function $\varphi$ such that
    $$\varphi^{-1}(0) \cap \pi(\Lambda) = \pi(p), \;\; d\varphi(\varphi^{-1}(0)) \pitchfork \Lambda \times \bR_+,$$
    and compute the local Morse group
    \[\mathrm{Cone}(\Gamma(\varphi^{-1}((-\infty, 0], \cF) \rightarrow \Gamma(\varphi^{-1}((-\infty, 0)), \cF)),\]
    which depends on the relative Morse index of $\phi_\Lambda$ and $\varphi$ at $\pi(p)$. There are different choices of Lagrangian planes $T_p\Lambda_\varphi$ for $\Lambda_\varphi = \{(x, d\varphi(x)) | x \in M\}$ at a point $p \in \Lambda$, and any $\varphi$ such that $\Lambda_\varphi \pitchfork \Lambda \times \bR_+$ determines a linear isomorphism
    \[T_p(T_{\pi(p)}^*M) \rightarrow T_p\Lambda_\varphi \oplus T_p(\Lambda \times \bR_+) \rightarrow T_p(\Lambda \times \bR_+).\]
    The computation of \cite{STZ17}, where they essentially used the defining function $\phi_\Lambda$ for the Legendrian, is equivalent to the choice of a function $\varphi$ with Morse index 0 relative to $\pi(\Lambda)$ plus the degree shifting by the Maslov potential.
\end{remark}

    Under the unique surjective homomorphism $\pi_1(GL_{n,+}(\bR)) \rightarrow \bZ/2\bZ$, $n \geq 2$, the obstruction to the existence of a rank~1 twisted local system is characterized by a class in $H^2(\Lambda; \bZ/2\bZ)$, which turns out to be the relative second Stiefel-Whitney class, see \cite{Gui19}.

\begin{cor}[\cite{Gui12}*{Theorem 11.5}]\label{cor:Gui}
    Let $\Lambda \subset T^{*,\infty}M$ be oriented with vanishing Maslov class. If $\Lambda$ is relatively spin, then there exists a rank~1 twisted local system on $\cI_{\Lambda \times \bR_+}$, and each choice of a rank~1 twisted local system determines an equivalence
    $$\mu \Sh_\Lambda(\Lambda) \simeq \Loc_\epsilon(\cI_{\Lambda \times \bR_+}) \simeq \Loc(\Lambda).$$
\end{cor}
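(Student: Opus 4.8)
\textbf{Proof proposal for Corollary \ref{cor:Gui}.}

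The plan is to deduce the statement from Theorem \ref{thm:Gui}, which already identifies $\mu\Sh_\Lambda(\Lambda)$ with $\Loc_\epsilon(\cI_{\Lambda\times\bR_+})$, the category of $\epsilon$-twisted local systems on the relative frame bundle. So the two things that remain to be established are: first, that the hypothesis ``$\Lambda$ relatively spin'' guarantees the existence of a rank~$1$ object in $\Loc_\epsilon(\cI_{\Lambda\times\bR_+})$; and second, that any such choice induces an equivalence $\Loc_\epsilon(\cI_{\Lambda\times\bR_+})\simeq\Loc(\Lambda)$. The first point is precisely the vanishing of the obstruction class discussed in the paragraph preceding the corollary: the fibration $GL_{n,+}(\bR)\to\cI_{\Lambda\times\bR_+}\to\Lambda$ (with $n\geq 2$, using the stabilized bundle) has $\pi_1(GL_{n,+}(\bR))\cong\bZ/2$, and a rank~$1$ twisted local system, i.e.\ a rank~$1$ local system on the total space $\cI_{\Lambda\times\bR_+}$ restricting to the sign representation of $\bZ/2$ on each fiber, exists if and only if a certain class in $H^2(\Lambda;\bZ/2)$ vanishes. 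I would identify this class with the relative second Stiefel--Whitney class $w_2(\Lambda)$ (relative to the chosen trivialization along the boundary / Legendrian link, hence the word ``relatively'') by the standard obstruction-theory argument: a relative spin structure is exactly a lift of the classifying map to $BSpin$ along $BSpin\to BSO$, and the obstruction to extending the fiberwise sign local system over the $2$-skeleton is the pullback of the generator of $H^2(BSO;\bZ/2)$, i.e.\ $w_2$. Thus ``$\Lambda$ relatively spin'' $\Leftrightarrow$ ``this obstruction vanishes'' $\Leftrightarrow$ ``a rank~$1$ twisted local system exists.''

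For the second point, fix one rank~$1$ twisted local system $\mathcal{E}\in\Loc_\epsilon(\cI_{\Lambda\times\bR_+})$. Then tensoring by $\mathcal{E}^{-1}$ (its inverse, which is again a rank~$1$ twisted local system, since the square of the sign representation is trivial) gives a functor $\Loc_\epsilon(\cI_{\Lambda\times\bR_+})\to\Loc(\cI_{\Lambda\times\bR_+})$, where the target is genuinely untwisted: any object $\mathcal F$ with fiberwise monodromy $-1$ becomes, after tensoring with $\mathcal E^{-1}$, an object whose fiberwise monodromy is $(-1)\cdot(-1)^{-1}=1$, hence descends. This is manifestly an equivalence of dg-categories with quasi-inverse $(-)\otimes\mathcal E$. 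It then remains to see $\Loc(\cI_{\Lambda\times\bR_+})\simeq\Loc(\Lambda)$; this follows because the projection $\cI_{\Lambda\times\bR_+}\to\Lambda$ is a fiber bundle with fiber $GL_{n,+}(\bR)$, which is connected and has the homotopy type of $SO(n)$, and — the key point — after the relative-spin twist the relevant $\bZ/2\subset\pi_1$ has been killed at the level of local systems, so pullback $\pi^\ast\colon\Loc(\Lambda)\to\Loc(\cI_{\Lambda\times\bR_+})$ is fully faithful with essential image the descended objects; alternatively one argues directly that $\pi$ induces an isomorphism on the relevant truncated homotopy/homology needed to classify the (twisted) local systems. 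Composing, $\mu\Sh_\Lambda(\Lambda)\simeq\Loc_\epsilon(\cI_{\Lambda\times\bR_+})\simeq\Loc(\Lambda)$, and different choices of $\mathcal E$ differ by tensoring with an honest (untwisted) rank~$1$ local system on $\Lambda$, i.e.\ by an element of $H^1(\Lambda;\Bbbk^\times)$, which is the expected torsor ambiguity.

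The main obstacle, I expect, is making the identification in the first part fully rigorous: carefully pinning down \emph{which} $H^2(\Lambda;\bZ/2)$ class the fiberwise-sign obstruction is — i.e.\ matching the stabilization map $\cI_\Lambda\hookrightarrow\cI_{\Lambda\times\bR_+}$ and the splitting $\pi_\Lambda^\ast TM\cong(\text{Lagrangian in }\xi)\oplus\underline{\bR}$ with the conventions under which ``relatively spin'' is defined for a Lagrangian filling with Legendrian boundary, and checking that stabilization does not alter the obstruction (it does not, since $GL_{n,+}\hookrightarrow GL_{n+1,+}$ is a $\pi_1$-isomorphism for $n\geq 2$). Everything else — the tensoring equivalence and the descent along the $GL_{n,+}$-bundle — is formal once this bookkeeping is settled, and indeed this is exactly the content attributed to \cite{Gui12}*{Theorem 11.5} and \cite{Gui19}, so I would simply cite it for the precise obstruction-theoretic statement and spell out only the tensoring argument in detail.
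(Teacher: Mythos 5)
Your proposal is correct and matches the paper's treatment: the paper gives no proof of this corollary, simply citing Guillermou's Theorem 11.5, and the paragraph immediately preceding it already records the obstruction-theoretic fact you establish (the fiberwise-sign obstruction is the relative second Stiefel--Whitney class, killed by the relative spin hypothesis), while your tensoring-by-$\mathcal{E}^{-1}$ and descent argument for $\Loc_\epsilon(\cI_{\Lambda\times\bR_+})\simeq\Loc(\Lambda)$ is the standard one and is sound. One small slip in a parenthetical: $GL_{2,+}\hookrightarrow GL_{3,+}$ induces the surjection $\bZ\to\bZ/2\bZ$ on $\pi_1$, not an isomorphism; this does not affect your conclusion, since the obstruction depends only on the composite surjections $\pi_1(GL_{n,+}(\bR))\to\bZ/2\bZ$, which are compatible under stabilization.
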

    
    In particular, this recovers the combinatorial construction of
    $$m_\Lambda: \Sh_\Lambda(M) \rightarrow \Loc(\Lambda)$$
    in \cite{STWZ19} up to signs. However, even though $\Loc_\epsilon(\cI_\Lambda)$ and $\Loc(\Lambda)$ are equivalent, we will still frequently use the twisted version $\Loc_\epsilon(\cI_\Lambda)$ in the following sections, since the twisted version will give the correct signs that one will expect in cluster theory.\footnote{\cite{GonKon21}*{Section 8.3}, for example, explains the reason to consider twisted local systems in cluster theory. \cite{GMN13,Kuwagaki20WKB} also use twisted local systems.}
    
\begin{remark}
    The stabilization process in Theorem \ref{thm:Gui} is important. For Legendrian knots, the oriented frame bundle is a $GL_{1,+}(\bR)$-bundle over $S^1$, hence $\cI_\Lambda \simeq \Lambda$ and there is no information on twisting in $\cI_\Lambda$. However, the twisting data appear in the stabilized bundle $\cI_{\Lambda \times \bR_+} \simeq S^1 \times \Lambda$. In fact, the sheaf categories are going to be stabilized in the sense that the classifying map of the Lagrangian factors through the stable Gauss map so that we obtain a stable $\infty$-category over ring spectra \cite{JinTreu17}, and hence we indeed need to study the Kashiwara-Schapira stack or twisted local systems up to stabilization induced by the embedding $\cI_\Lambda \hookrightarrow \cI_{\Lambda \times \bR} \hookrightarrow \dots \hookrightarrow \cI_{\Lambda \times \bR^N} \hookrightarrow\dots$, see \cite{Gui12}*{Section 10.1} or \cite{Gui19}*{Section 10.6}, which yields
    $$\cdots \rightarrow \Loc_\epsilon(\cI_{\Lambda \times \bR^N}) \rightarrow \cdots \rightarrow \Loc_\epsilon(\cI_{\Lambda \times \bR}) \rightarrow \Loc_\epsilon(\cI_\Lambda).$$
    For example, when $\Lambda$ arises as the boundary of a surface $\widetilde{L}$. Then the stabilization embedding provides a natural restriction map
    $$\Loc_\epsilon(\cI_{\widetilde{L}}) \rightarrow \Loc_\epsilon(\cI_{\Lambda \times \bR}) \rightarrow \Loc_\epsilon(\cI_\Lambda).$$
\end{remark}

\subsubsection{Microlocalization with coherent signs}\label{ssec:muloc_coherentsigns}
    Given a Legendrian $\Lambda \subset T^{*,\infty}M$ that has vanishing Maslov class and vanishing relative second Stiefel-Whitney class, Corollary \ref{cor:Gui} shows a non-canonical equivalence
    $$\mu \Sh_\Lambda(\Lambda) \simeq \Loc_\epsilon(\cI_{\Lambda \times \bR}) \simeq \Loc(\Lambda).$$
    Different choices of rank~$1$ twisted local systems may result in different identifications with the category of local systems. Here we discuss these choice, see also \cite{Gui12}*{Section 9 \& 10.2} or \cite{Gui19}*{Section 10.3 \& 10.6}.
    
\begin{lemma}\label{lem:twist-loc-lift}
    Let $\Lambda \subset T^{*,\infty}M$ be relatively spin. Then there is a bijection between
    
    \begin{enumerate}
        \item relative spin structures on $\Lambda$,
        \item rank~1 twisted local systems on $\Loc_\epsilon(\cI_{\Lambda \times \bR})$,
        
        \item sections of the stable relative oriented frame bundle $\cI_{\Lambda \times \bR^N}$ over the 2-skeleton $\Lambda_{\leq 2}$.
    \end{enumerate}
\end{lemma}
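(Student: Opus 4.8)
The plan is to establish the three bijections by going through the standard obstruction-theoretic dictionary for the group $GL_{n,+}(\mathbb{R})$ and its relation to $\mathrm{Spin}$, reducing everything to the unique nontrivial double cover $\pi_1(GL_{n,+}(\mathbb{R})) \to \mathbb{Z}/2\mathbb{Z}$ for $n \geq 2$. Throughout, ``relatively spin'' means that the relative second Stiefel–Whitney class $w_2(\Lambda)$ (relative to a background class on $M$, as in Theorem \ref{thm:Gui} and Corollary \ref{cor:Gui}) vanishes, which is exactly the condition guaranteeing that the classifying map $\Lambda \to BGL_{n,+}(\mathbb{R})$ for the (stable) relative frame bundle lifts over the $2$-skeleton.

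First I would treat the equivalence $(2) \Leftrightarrow (3)$, which is the most direct. A rank-one twisted local system on $\cI_{\Lambda \times \mathbb{R}}$ is by definition a rank-one local system on the total space whose monodromy around each $GL_{n,+}$-fiber is $-1 \in \Bbbk^\times$; equivalently, it is a local system pulled back along the universal quotient $GL_{n,+}(\mathbb{R}) \to \mathbb{Z}/2\mathbb{Z}$ fiberwise, i.e.\ a $\mathbb{Z}/2\mathbb{Z}$-reduction of the structure group in the homotopy-coherent sense. Such a thing is the same data as a lift of the classifying map $\Lambda \to BGL_{n,+}(\mathbb{R})$ to the double cover $\widetilde{BGL_{n,+}(\mathbb{R})} = B\widetilde{GL_{n,+}(\mathbb{R})}$, and since $\widetilde{GL_{n,+}(\mathbb{R})} \to GL_{n,+}(\mathbb{R})$ is $2$-connected (it kills $\pi_1$ and changes nothing in higher degrees after stabilization, since $GL_{n,+} \simeq SO(n)$ and $\mathrm{Spin}(n) \to SO(n)$ is $2$-connected), such a lift is determined by its restriction to $\Lambda_{\leq 2}$ and, conversely, any lift over $\Lambda_{\leq 2}$ extends uniquely up to homotopy. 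The set of such lifts, when nonempty, is a torsor over $H^1(\Lambda; \mathbb{Z}/2\mathbb{Z})$, and nonemptiness is precisely $w_2^{\mathrm{rel}}(\Lambda) = 0$. I would phrase this via the fibration sequence $B\mathbb{Z}/2 \to B\mathrm{Spin} \to BSO$ and the resulting long exact sequence, concluding that lifts over $\Lambda_{\leq 2}$ are in bijection with sections of $\cI_{\Lambda \times \mathbb{R}^N}$ over $\Lambda_{\leq 2}$ after stabilization — this is item (3), up to noting that stabilization $\cI_{\Lambda \times \mathbb{R}} \hookrightarrow \cI_{\Lambda \times \mathbb{R}^N}$ induces isomorphisms on the relevant homotopy data below the stable range, which holds for $N \geq 3$ on a (2-dimensional or lower) Legendrian.

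Next, for $(1) \Leftrightarrow (3)$, I would recall that a relative spin structure on $\Lambda$, in the sense used for orienting moduli spaces (Fukaya–Oh–Ohta–Ono, and as in \cite{JinTreu17}), is by definition a trivialization of $w_2(T\Lambda)$ against the chosen background class over the $2$-skeleton, together with the choice of such trivialization — which is exactly a lift of the Gauss/frame map over $\Lambda_{\leq 2}$ to the spin double cover, i.e.\ a section of the (stable) relative oriented frame bundle's associated $B\mathrm{Spin}$-lift over $\Lambda_{\leq 2}$. For a Legendrian submanifold $\Lambda \subset T^{*,\infty}M$, the relevant bundle is the relative frame bundle $\cI_{\Lambda \times \mathbb{R}_+}$ of Theorem \ref{thm:Gui} (the Lagrangian Gauss map composed with the splitting off of the $\mathbb{R}_+$-direction), so ``relative spin structure on $\Lambda$'' unwinds tautologically to ``section over $\Lambda_{\leq 2}$ of the double cover of $\cI_{\Lambda \times \mathbb{R}^N}$,'' which is item (3). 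I would be careful here to match conventions: the relative tangent bundle appearing in the definition of relative spin structure versus the relative frame bundle $\cI_{\Lambda \times \mathbb{R}_+}$ appearing in Guillermou's theorem differ by the comparison between $T\Lambda$ and $\pi^*TM$ through the Lagrangian condition, which is an orientation-compatible isomorphism once one has fixed orientations of $M$ and $\Lambda$; this is the content that makes the dictionary canonical rather than just a bijection of sets.

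The main obstacle I anticipate is bookkeeping the \emph{relative} nature of all three structures consistently — the background class on $M$ (or on the ambient $\Sigma \times \mathbb{R}$, resp.\ the symplectization) against which $w_2$ is trivialized must be the same in all three descriptions, and one must check that the stabilization maps $\cI_{\Lambda} \hookrightarrow \cI_{\Lambda \times \mathbb{R}} \hookrightarrow \cdots \hookrightarrow \cI_{\Lambda \times \mathbb{R}^N}$ intertwine the double-cover data compatibly, so that ``$\mathbb{R}$'' versus ``$\mathbb{R}^N$'' in items (2) and (3) genuinely give the same answer. This is essentially the observation, already used implicitly in \cite{Gui12}*{Section 10.1} and \cite{Gui19}*{Section 10.6}, that the inclusion $GL_{n,+} \hookrightarrow GL_{n+1,+}$ is an isomorphism on $\pi_1$ for $n \geq 2$, so the $\mathbb{Z}/2$-twisting data is stable; once this is recorded, the three-way bijection is a formal consequence of obstruction theory applied on $\Lambda_{\leq 2}$, and the torsor structure over $H^1(\Lambda; \mathbb{Z}/2\mathbb{Z})$ is the same in all three pictures, so it suffices to match basepoints (e.g.\ the trivial twisted system corresponds to the fixed background relative spin structure).
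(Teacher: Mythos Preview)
Your proposal is correct and follows essentially the same obstruction-theoretic approach as the paper: both reduce all three structures to $H^1(\Lambda;\bZ/2\bZ)$-torsors via the surjection $\pi_1(GL_{N,+}(\bR))\to\bZ/2\bZ$ and the vanishing of $w_2^{\mathrm{rel}}$ over the $2$-skeleton. The paper's argument is simply more terse---for $(2)\Leftrightarrow(3)$ it fixes a base twisted local system $\cL_0$ and writes every other one as $\pi^{-1}s^{-1}\cL_0$ for a section $s$, rather than passing through classifying spaces and the $B\bZ/2\to B\mathrm{Spin}\to BSO$ fibration as you do---but the underlying content is identical.
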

\begin{proof}
    For (1)~\&~(3), since $\Lambda \subset T^{*,\infty}M$ is relative spin, the relative oriented frame bundle
    $$\cI_{\Lambda \times \bR} = \mathrm{Iso}_+(\pi_{\Lambda \times \bR_+}^*TM, T(\Lambda \times \bR_+))$$
    is trivial over the 2-skeleton $\Lambda_{\leq 2}$, and any global section uniquely determines a relative spin structure on $\Lambda$. On the 1-skeleton $\Lambda_{\leq 1} \subset T^{*,\infty}\Sigma$, the section determines a trivialization where vanishing of the obstruction class in $H^2(\Lambda; \bZ/2\bZ)$ implies existence of an extension of the trivialization to the 2-skeleton. Since
    $$\pi_1(\GL_{N,+}(\bR)) \xrightarrow{\sim} \bZ/2\bZ,$$
    the choice of a section on the 2-skeleton is classified by $H^1(\Lambda; \bZ/2\bZ)$. 
    
    For (2)~\&~(3), rank~1 (twisted) local system can be uniquely extended from a rank~1 (twisted) local system on the 2-skeleton $\Lambda_{\leq 2}$. Fix a rank~1 twisted local system $\cL_{0}$ on $\Lambda_{\leq 2}$. Then any rank~1 twisted local system can be obtained by $\cL = \pi^{-1}s^{-1}\cL_0$ for a section $s$ on the 2-skeleton $\Lambda_{\leq 2}$, and this correspondence is bijective.
\end{proof}
\begin{remark}\label{rem:twist-spin}
    When we choose a section on the 2-skeleton $\Lambda_{\leq 2}$, we are still doing computations on twisted local systems $\Loc_\epsilon(\cI_{\Lambda \times \bR})$, even though the choice of the section determines a relative spin structure and an identification with $\Loc(\Lambda)$.
\end{remark}

    Therefore, in order to compute the microlocal monodromy with signs, we will fix a relative spin structure and consider a section over the 2-skeleton $s: \Lambda_{\leq 2} \rightarrow \cI_{\Lambda \times \bR^N}|_{\Lambda_{\leq 2}}$.
    
\begin{definition}\label{def:sign-mumon}
    Let $\Lambda \subset T^{*,\infty}M$ be a Legendrian that has vanishing Maslov class with a fixed relatively spin structure, determined by $s: \Lambda_{\leq 2} \rightarrow \cI_{\Lambda \times \bR^N}|_{\Lambda_{\leq 2}}$. Given a sheaf $\cF \in \Sh_\Lambda(M)$, the (signed) microlocal monodromy along a path $\gamma \in H_1(\Lambda; \bZ)$ is the monodromy along the lifting $s \circ \gamma \in H_1(\cI_{\Lambda \times \bR^N}; \bZ)$
    $$m_{\Lambda,\gamma}(\cF) = m_\Lambda(\cF)_{s \circ \gamma}.$$
\end{definition}

    For a Legendrian knot, there are only 2 relative spin structures since $H^1(\Lambda; \bZ/2\bZ) = \bZ/2\bZ$. For the generator $\gamma \in H_1(\Lambda; \bZ)$, the relative spin structure only depends on the parity of the winding number $s \circ \gamma$ along the fiber $\GL_{2,+}(\bR)$ in $\cI_{\Lambda \times \bR}$. Note that under the stabilization map
    $$\pi_1(\GL_{2,+}(\bR)) \rightarrow \pi_1(\GL_{N,+}(\bR)) \xrightarrow{\sim} \bZ/2\bZ \xrightarrow{\sim} \bZ^\times,$$
    the microlocal monodromy only depends on the parity of the winding number.\\
    
    We now explain how the combinatorial microlocal monodromy of \cite{STZ17}*{Section 5} can be enhanced to involve the choice of a lifting of a path in the relative oriented frame bundle.
    
\begin{ex}
    For a Legendrian knot with no cusps in the front, when computing the microlocal monodromy at $(x, y, \xi, \eta) \in \Lambda$, following Remark \ref{rem:lag-plane-twist}, the approach in \cite{STWZ19} is equivalent to choosing the Lagrangian planes defined by $T_p\Lambda_\varphi$ where
    $$\Lambda_\varphi = \{(x, y, d\varphi(x, y)) | (x, y) \in \bR^2\}, \;\; \varphi(x, y) = 0, \, d\varphi(x, y) = (\xi, \eta)$$
    such that $\varphi^{-1}(0)$ has relative Morse index 0 relative to $\pi(\Lambda)$. One may observe that the winding number of this family of Lagrangian planes coincides with the winding number tangent vector in the fiber of $T\Sigma$ (mod 2).
    
    For a Legendrian $n$-satellite $\Lambda \subset T^{*,\infty}\bR^2$ of the outward unit conormal bundle of a disk, this family of Lagrangian planes determines a section in $\cI_{\Lambda \times \bR}$ with winding number $n$ (mod 2) around the fiber and thus corresponds to the Lie group spin structure when $n$ is even and to the null-cobordant spin structure when $n$ is odd.
\end{ex}

    Still for a Legendrian knot, we can also choose different spin structures on $S^1$ when computing microlocal monodromies. In fact, such a choice can be localized near the set of points $P = \{p_1, \dots, p_n\}$. 

\begin{definition}\label{def:sign-knot}
    For a Legendrian knot $\Lambda \subset T^{*,\infty}\Sigma$, a set of sign points is a discrete set of points $P$, such that for the generator $\gamma \in H_1(\Lambda; \bZ)$, the lifting $s \circ \gamma$ winds around the fiber of $\cI_{\Lambda \times \bR^N}$ once in a small neighbourhood of each sign point and follows the tangent vector of $\pi(\gamma)$ away from sign points.
\end{definition}
\begin{ex}\label{ex:sign-knot}
    For a Legendrian knot, and a set of sign points $P = \{p_1, \dots, p_n\}$, we can modify the sign in the computation of \cite{STZ17} by the number of sign points that we choose.

    For a Legendrian $n$-satellite $\Lambda \subset T^{*,\infty}\bR^2$ of the outward unit conormal bundle of a disk, we may add no sign points when $n$ is odd and add 1 sign point $p$ when $n$ is even, then the section in $\cI_{\Lambda \times \bR}$ determined by the small loop around the fiber in a neighbourhood of $p$ together with the family of Lagrangian planes of \cite{STZ17} will define the null-cobordant spin structure.
\end{ex}
    
    For an oriented Legendrian surface, relative spin structures are classified by $H^1(\Lambda; \bZ/2\bZ)$. For a basis $\gamma_i \in H_1(\Lambda; \bZ)$, the relative spin structure depends on the parity of the winding number $s \circ \gamma_i$ along the fiber $\GL_{3,+}(\bR)$ in $\cI_{\Lambda \times \bR}$. Therefore, when fixing a relative spin structure, we in fact fix the liftings of a basis  $\gamma_i \in H_1(\Lambda; \bZ)$.
    
    The following definition is first made in \cite{CasalsWeng22}*{Section 4.5} for Legendrian weaves, and we are simply borrowing the terminology from their work.
    
\begin{definition}\label{def:sign-curve}
    For a closed Legendrian surface $\widetilde{L} \subset T^{*,\infty}(\Sigma \times \bR)$, a set of sign curves is a graph $P$ whose vertices are the centers of 2-cells, such that for each generator $\gamma \in H_1(\widetilde{L}; \bZ)$, the lifting $s \circ \gamma$ winds around the fiber of $\cI_{\Lambda \times \bR^N}$ once in neighbourhoods of intersections with sign curves, and follows the tangent vector of $\pi(\gamma)$ away from sign points.\\
    
    \noindent For a Legendrian surface $\widetilde{L} \subset T^{*,\infty}(\Sigma \times \bR)$ with boundary on a Legendrian knot $\Lambda \subset T^{*,\infty}(\Sigma \times \{+\infty\})$ (Section \ref{sec:weave-at-infty}), a set of sign curves is correspondingly a graph $P$ whose vertices are the centers of 2-cells and 1-cells in the boundary.\\
    
    Finally, a compatible collection of sign curves on $\widetilde{L} \subset T^{*,\infty}(\Sigma \times \bR)$ is a a graph $P$ whose vertices are the centers of 2-cells such that for every loop $\gamma = 0 \in H_1(\widetilde{L}; \bZ)$ that bounds a 2-cell, the winding number of the lifting $s \circ \gamma$ around the fiber is 1.
\end{definition}

The collections of sign curves in Definition \ref{def:sign-curve} indeed exist:

\begin{lemma}
    For an oriented Legendrian surface $\widetilde{L} \subset T^{*,\infty}(\Sigma \times \bR)$, there exists a compatible collection of sign curves.
\end{lemma}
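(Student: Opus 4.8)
The plan is to reduce the statement to a cohomological obstruction computation and exhibit the sign curves explicitly as a spanning graph. First I would recall that, by Lemma \ref{lem:twist-loc-lift} and the discussion preceding it, a section of the stable relative oriented frame bundle $\cI_{\widetilde{L} \times \bR^N}$ over the $2$-skeleton of $\widetilde{L}$ always exists because $\widetilde{L}$ is relatively spin: the obstruction to such a section lies in $H^2(\widetilde{L}; \bZ/2\bZ)$ and is precisely the relative second Stiefel--Whitney class $w_2^{rel}$, which vanishes by hypothesis (orientability of $\widetilde{L}$ and the ambient relative spin condition). So the real content of the lemma is not existence of the spin structure but the \emph{combinatorial realization} of one of them by a graph $P$ whose vertices are centers of $2$-cells and which has the prescribed local winding behavior — namely winding number $1$ of the lift $s\circ\gamma$ around the fiber of $\cI_{\widetilde L\times\bR^N}$ for every small loop $\gamma$ bounding a $2$-cell.

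Second I would set up the CW structure. Fix a CW decomposition of $\widetilde{L}$ (coming from the $n$-graph / front data, refined so that each $2$-cell is a disk). The ``tautological'' lift $s_0$ over the $1$-skeleton, given by following the tangent framing of each edge, has a well-defined winding number $w(\partial D) \in \bZ/2\bZ$ around each $2$-cell $D$: this is the local Morse-index discrepancy discussed in Remark \ref{rem:lag-plane-twist}, and the cochain $D \mapsto w(\partial D)$ is exactly a representative of $w_2^{rel} \in H^2(\widetilde L;\bZ/2\bZ)$. Since $w_2^{rel}=0$ there is a $1$-cochain $c \in C^1(\widetilde{L};\bZ/2\bZ)$ with $\delta c = w$. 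The sign curve graph $P$ is then built so that $P$ crosses an edge $e$ exactly when $c(e)=1$: crossing $P$ once flips the framing by a loop in the fiber. The condition $\delta c = w$ translates precisely into: for every $2$-cell $D$, the total winding of the modified lift $s = s_0 \cdot (\text{corrections along } P)$ around $\partial D$ equals $w(\partial D) + \langle c,\partial D\rangle = w(\partial D) + w(\partial D) = 0$ in $\bZ/2\bZ$, i.e. the net winding is trivial — wait, but the lemma demands winding number $1$, not $0$, for loops bounding $2$-cells. This is a normalization/convention point: one fixes the baseline so that the \emph{desired} trivialization has winding $1$ per $2$-cell (this matches the convention of \cite{CasalsWeng22}*{Section 4.5} and the twisted-coefficient sign $-1$ along fibers of $\cI_{\widetilde L\times\bR_+}$ in Theorem \ref{thm:Gui}); concretely one replaces $w$ by $w + 1$ (the constant cochain), which is still a coboundary iff $w_2^{rel}=0$, and proceeds identically. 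So $P$ is the Poincaré dual $1$-complex to the chosen primitive $c$, and its vertices can be taken at the centers of the $2$-cells since $P$ is built cell-by-cell as a union of arcs joining centers of adjacent $2$-cells through shared edges with $c(e)=1$.

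Third I would check the two boundary variants listed in Definition \ref{def:sign-curve}: for $\widetilde L$ with Legendrian knot boundary $\Lambda \subset T^{*,\infty}(\Sigma\times\{+\infty\})$ one adds the centers of the boundary $1$-cells as allowed vertices, which gives the extra freedom to absorb the relative part of $w_2^{rel}$ coming from $\Lambda$ (so that the relative spin structure on $\widetilde L$ restricting compatibly to a chosen one on $\Lambda$ can be realized); for the ``compatible collection'' one simply uses the primitive $c$ with the $+1$ normalization so that every null-homotopic loop bounding a $2$-cell sees winding $1$ — this is automatic once $\delta c = w+1$. The main obstacle is purely bookkeeping rather than conceptual: reconciling the several sign/winding conventions in play (the Morse-index convention of \cite{STZ17} recalled in Remark \ref{rem:lag-plane-twist}, the stabilization $\cI_\Lambda \hookrightarrow \cI_{\Lambda\times\bR}\hookrightarrow\cdots$, and the ``$-1$ along the fiber'' twisting of Theorem \ref{thm:Gui}) so that ``winding number $1$'' in the statement is the same normalization as ``trivialization of the twisted coefficient system over the $2$-skeleton''. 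Once those conventions are pinned down, existence of $P$ is exactly the statement that a $\bZ/2\bZ$-coboundary admits a Poincaré-dual realization by a subcomplex of the dual $1$-skeleton, which is elementary. I would therefore present the proof as: (i) identify the winding cochain $w$ with $w_2^{rel}$; (ii) use $w_2^{rel}=0$ to get $c$ with $\delta c = w+1$; (iii) let $P$ be the dual $1$-complex of $c$ with vertices at $2$-cell centers (and boundary $1$-cell centers in the relative case), and verify the local winding conditions cell by cell.
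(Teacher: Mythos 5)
Your overall strategy -- encode the required winding numbers as a $\bZ/2$-valued $2$-cochain, show it is a coboundary, and realize a primitive $1$-cochain as the Poincar\'e-dual graph $P$ with vertices at centers of $2$-cells -- is a legitimate cohomological repackaging of what the paper does by hand. The paper instead argues greedily: attach the $2$-cells in some order, and whenever the tangent winding of a cell boundary is $0$ mod $2$, place a sign point on one of its (still free) boundary $1$-cells; the only non-trivial step is the last $2$-cell of a \emph{closed} surface, whose boundary data is already forced, and there the paper checks the parity works out because the induced spin structure on that boundary circle is null-cobordant. Your reformulation is cleaner in principle, but it has a gap exactly at the point where the paper does its real work.

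The gap is in step (ii). You assert that replacing $w$ by $w+\mathbf{1}$ (the constant $2$-cochain) ``is still a coboundary iff $w_2^{rel}=0$.'' This is false in general: the class $[\mathbf{1}]\in H^2(\widetilde L;\bZ/2)$ pairs with the fundamental class to give the number of $2$-cells mod $2$, which is not zero for an arbitrary CW structure on a closed surface (e.g.\ the one-cell decomposition of $S^2$), so adding $\mathbf{1}$ can change the cohomology class. More to the point, the cochain whose coboundary you actually need to take is $u(D)=w_0(\partial D)+1$, where $w_0$ is the winding of the tangent vector of $\pi(\partial D)$ in the fiber of $T\Sigma$; since $\pi:\widetilde L\to\Sigma$ is a branched cover, $u$ is supported precisely on the $2$-cells containing branch points (trivalent vertices), and $[u]$ is the number of branch points mod $2$. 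Its vanishing for closed $\widetilde L$ is a genuine claim that needs proof -- it follows from Riemann--Hurwitz ($\#\{\text{branch points}\}=n\chi(\Sigma)-\chi(\widetilde L)$ is even), or equivalently from the paper's observation that the spin structure induced on the boundary of the last $2$-cell is null-cobordant -- and your proposal supplies no such argument; identifying the obstruction with $w_2^{rel}$ and invoking relative spin-ness does not address the $+\mathbf 1$ shift. For $\widetilde L$ with boundary the conclusion holds trivially because $H^2(\widetilde L;\bZ/2)=0$, but the closed case is exactly where your justification breaks down. With the parity of branch points supplied, your argument closes up and is a nice alternative to the inductive one.
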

\begin{proof}
    We consider all the 2-cells attached inductively, which determines an (partial) order on all the 1-cells. For each loop 2-cell in $\Lambda$, we can compute the winding number of the tangent vector in the fiber of $T\Sigma$. Whenever the winding number is $0$ (mod $2$), we add a sign point in any of the 1-cells contained in its boundary. Then connect all the sign points on $\gamma$ with the centers of the adjacent 2-cells. 
    
    When $\Lambda$ is a surface with boundary, we thus define a compatible collection of sign curves. When $\Lambda$ is a closed surface, we can defines a compatible collection of sign curves on the compliment of a 2-cell $\Lambda \backslash D$. On the boundary of the 2-cell $D$, the spin structure determined by sign points is null-cobordant. Hence the parity of sign points determined by $\Lambda \backslash D$ agrees with the parity determined by $D$. Therefore there is a compatible choice of sign curves.
\end{proof}

\begin{ex}\label{ex:sign-curve-unknot}
    For the Legendrian unknot $\Lambda \subset T^{*,\infty}(S^1 \times \bR)$ which is the 2-satallite of the unknot with a Legendrian weave filling $\widetilde{L} \subset T^{*,\infty}(\bD^2 \times \bR)$, we can define the sign curve to be the curve from the center of the disk to a boundary sign point as in \cite{CasalsWeng22}*{Example 4.28}.\\
    
    More generally, for a Legendrian weave $\widetilde{L} \subset T^{*,\infty}(\Sigma \times \bR)$, we can consider a triangulation on $\Sigma$ such that each 2-cell contains 0 or 1 trivalent vertex (in the interior). This lifts to a triangulation of $\widetilde{L}$ such that the parity of number of sign points on the boundary of each 2-cell agrees with the number of trivalent vertices inside (either 0 or 1). Therefore, we can choose sign curves to be a set of curves connecting trivalent vertices and $\partial \widetilde{L}$ such that all trivalent vertices have degree 1. This is exactly the notion of a coherent collection of sign curves in \cite{CasalsWeng22}*{Definition 4.15}.
\end{ex}

\subsection{Sheaf quantization results}\label{appen:sheaf-quan}
    We summarize the necessary results on sheaf quantizations in symplectic topology and contact topology.
    %Given a geometric object, we will upgrade it in the category of sheaves (to define an equivalence of sheaf categories for a Hamiltonian diffeomorphism, or to define a sheaf for an exact Lagrangian submanifold).

\subsubsection{Sheaf quantization of Hamiltonian isotopy}
    The following theorem shows that the category $\Sh_\Lambda(M)$ is an invariant of the Legendrian submanifold, which enables one to study the properties of a Legendrian by studying the constructible sheaf category. First, recall that for $\{\Lambda_t\}_{t \in [0,1]}$ a family of Legendrians in $T^{*,\infty}M$ induced by the contact Hamiltonian $H$, the Legendrian movie is
    $$\Lambda_H = \{(x, \xi, t, \tau) | (x, \xi) = \varphi_H^t(x_0, \xi_0), \tau = -H(x_0, \xi_0), (x_0, \xi_0) \in \Lambda_0\},$$
    where $\varphi_H^t\,(t \in [0, 1])$ is the contact flow induced by $H$ such that $\Lambda_t = \varphi_H^t(\Lambda_0)$.

\begin{thm}[\cite{GKS12}]\label{thm:GKS}
    Let $\{\Lambda_t\}_{t \in [0,1]}$ be a Legendrian isotopy in $T^{*,\infty}M$ induced by the contact Hamiltonian $H$. Then there is an equivalence of categories given by convolution of sheaf kernels
    $$\Phi_H: \, \Sh_{\Lambda_0}(M) \xleftarrow{i_0^{-1}} \Sh_{\Lambda_H}(M \times [0, 1]) \xrightarrow{i_1^{-1}} \Sh_{\Lambda_1}(M),$$
    where $i_t: M \times \{t\} \hookrightarrow M \times [0, 1]$ are the inclusions. In addition, the equivalence preserves microlocal rank~$r$ objects on both sides.
\end{thm}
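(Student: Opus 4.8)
The statement to prove is Theorem \ref{thm:GKS}, attributed to Guillermou--Kashiwara--Schapira: the sheaf-theoretic quantization of a Legendrian (equivalently Hamiltonian) isotopy induces an equivalence of singular-support categories preserving microlocal rank.

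\textbf{Outline of the approach.} The plan is to follow the original Guillermou--Kashiwara--Schapira construction, adapted to the ideal-contact-boundary formulation used in this paper. The heart of the matter is to produce a sheaf kernel $K \in \Sh(M \times M \times [0,1])$ whose singular support is controlled by the graph of the lifted Hamiltonian isotopy, and then to check that convolution with $K$ implements the desired equivalences. First I would recall that a contact isotopy $\{\varphi_H^t\}$ of $(T^{*,\infty}M,\xi_{st})$ lifts, after choosing the primitive/homogenization, to a homogeneous Hamiltonian isotopy $\{\widetilde\varphi_H^t\}$ of $\dot T^*M = T^*M\setminus M$; its graph, suitably coned off, is a conic Lagrangian $\Lambda_{\widetilde H} \subset \dot T^*(M\times M\times [0,1])$ (or rather its corresponding Legendrian at infinity), and the GKS existence theorem produces a unique $K$ with $SS(K)\subset \Lambda_{\widetilde H}\cup (\text{zero section})$ and $K|_{t=0} \simeq \Bbbk_{\Delta_M}$, the constant sheaf on the diagonal. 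This is the step I would cite from \cite{GKS12} (their Theorem 3.7 / Proposition 3.2) rather than reprove; uniqueness of $K$ is what makes the construction functorial.

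\textbf{Key steps, in order.} (1) Set up the kernel $K$ and its restrictions $K_t := K|_{M\times M\times\{t\}}$; note $K_0 = \Bbbk_{\Delta_M}$ and each $K_t$ is invertible for the convolution product $\circ$ (with inverse given by the kernel of the reverse isotopy), because composition of kernels corresponds to composition of the Hamiltonian flows and $K_0$ is the identity. (2) Define $\Phi_{H,t} := K_t \circ (-) : \Sh(M) \to \Sh(M)$; this is an equivalence with inverse $K_t^{-1}\circ(-)$. (3) Compute the singular-support estimate: by the functoriality of $SS$ under composition of kernels (\cite{KSbook} Prop.\ 5.4.4, the "microlocal composition" bound), $SS(K_t\circ F) \subset \varphi_H^t(SS^\infty(F))$ (union zero section), so $\Phi_{H,t}$ restricts to an equivalence $\Sh_{\Lambda_0}(M)\xrightarrow{\sim}\Sh_{\Lambda_t}(M)$ for every $t$, in particular $t=1$. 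The movie formulation in the statement is just the observation that $K$ itself, viewed as a sheaf on $M\times[0,1]$ after convolving with a fixed object, has singular support in $\Lambda_H$, and restriction $i_t^{-1}$ to a time slice recovers $\Phi_{H,t}$; so the zig-zag $\Sh_{\Lambda_0}(M)\xleftarrow{i_0^{-1}}\Sh_{\Lambda_H}(M\times[0,1])\xrightarrow{i_1^{-1}}\Sh_{\Lambda_1}(M)$ consists of equivalences because both restrictions are (one checks the restriction functor $i_t^{-1}$ on the family category is an equivalence onto the fiber, using that the isotopy is trivial near the endpoints or by the general non-characteristic deformation lemma \cite{KSbook} Prop.\ 2.7.2). (4) Microlocal rank preservation: the microlocalization functor $m_\Lambda$ is computed by local Morse groups (Definition \ref{def:ss}, Appendix \ref{appen:microlocal}), and convolution with an invertible kernel whose singular support is a graph of a contact transformation induces, fiberwise over $\Lambda$, an equivalence of the Kashiwara--Schapira stacks $\mu\Sh_{\Lambda_0}\simeq\varphi_H^{1*}\mu\Sh_{\Lambda_1}$ covering $\varphi_H^1$. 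Since this is an equivalence of sheaves of categories over the Legendrian, it carries microlocal-rank-$r$ objects (those whose microstalk is perfect of rank $r$ in one degree) to microlocal-rank-$r$ objects. Concretely one can also argue pointwise: pick $p\in\Lambda_0$, a test function $\varphi$ transverse to $\Lambda_0$ at $p$; the kernel $K_1$ moves the Morse datum $(\varphi, p)$ to a Morse datum $(\varphi', \varphi_H^1(p))$ transverse to $\Lambda_1$, and the GKS estimate plus invertibility gives $m_{\Lambda_0,p}(F)\simeq m_{\Lambda_1,\varphi_H^1(p)}(\Phi_{H,1}F)$ up to a degree shift by the relative Maslov/Morse index, which is locally constant; hence the rank is preserved.

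\textbf{Main obstacle.} The genuinely delicate point is step (1)--(3): producing the kernel $K$ with the sharp singular-support bound and, crucially, establishing that the restrictions $i_t^{-1}$ on the family category $\Sh_{\Lambda_H}(M\times[0,1])$ are equivalences onto the fibers $\Sh_{\Lambda_t}(M)$ — this is where the homogeneity of the lifted Hamiltonian and the cone structure of $\Lambda_H$ at infinity really enter, and where one must be careful that the ideal-boundary Legendrian $\Lambda_t\subset T^{*,\infty}M$ corresponds to a genuinely conic Lagrangian movie in $\dot T^*(M\times[0,1])$ with no "vertical" singular-support directions obstructing restriction. In the present paper this is handled by citing \cite{GKS12} directly (see also \cite{Schnurer18} for the dg-enhancement), so I would present the proof as: invoke the GKS existence/uniqueness of $K$, deduce invertibility of $\Phi_{H,t}$ from $\Phi_{H,0}=\mathrm{id}$ and the composition law for kernels, apply the $SS$ composition estimate to get the refined statement $\Sh_{\Lambda_0}(M)\simeq\Sh_{\Lambda_1}(M)$, and finally use the induced equivalence of Kashiwara--Schapira stacks over the Legendrian to conclude preservation of microlocal rank. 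The microlocal-rank statement, while not in the literal statement of \cite{GKS12}, follows formally once one knows $\Phi_H$ is realized by an invertible kernel, since microlocalization is functorial for such convolutions.
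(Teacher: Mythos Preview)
Your outline is an accurate summary of the Guillermou--Kashiwara--Schapira argument, but note that the paper does not give its own proof of this theorem: it is stated as a citation from \cite{GKS12} and used as a black box, with only a brief remark afterward that the equivalence descends to $\Sh^{cpt}$ and $\Sh^{pp}$. Your recognition of this (``in the present paper this is handled by citing \cite{GKS12} directly'') is correct, so there is nothing further to compare.
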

\begin{remark}
    The equivalence $\Sh_{\Lambda_0}(M) \simeq \Sh_{\Lambda_1}(M)$ automatically implies that $\Sh_{\Lambda_0}^{cpt}(M) \simeq \Sh_{\Lambda_1}^{cpt}(M)$ and $\Sh_{\Lambda_0}^{pp}(M) \simeq \Sh_{\Lambda_1}^{pp}(M)$, see Appendix \ref{appen:cat-sheaf}.
\end{remark}

\subsubsection{Sheaf quantization of exact Lagrangians}
    Given an exact Lagrangian filling $L \subset T^*M$ of a Legendrian submanifold $\Lambda \subset T^{*,\infty}M$, the theorem we are going to state defines a fully faithful functor $\Loc(L) \hookrightarrow \Sh_\Lambda(M)$, which realizes exact Lagrangian fillings, endowed with local systems, as objects in the constructible sheaf category.
    
    %Alternatively, one can bypass through augmentations of the Legendrian contact homology \cites{Ek12Seideliso,Riz16Seideliso,EHK16,GaoRuther21} based on the result that augmentation are sheaves \cite{NRSSZ}.

\begin{lemma}\label{lem:lowerexact}
    Let $L_t \subset T^*M$, $t \in \bD^k$, be a family of exact Lagrangian fillings of a family of Legendrian submanifold $\Lambda_t \subset T^{*,\infty}M$, $t\in \bD^k$. Then $L_t$ is Hamiltonian isotopic to a family of exact Lagrangians $L_t'$ whose primitive $f_{L_t'}$ is proper and bounded from below (where $\lambda_\text{st}|_{L_t'}= df_{L_t'}$).
\end{lemma}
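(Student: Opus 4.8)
The goal is to show that any smooth family of exact Lagrangian fillings $L_t \subset T^*M$, parametrized by $t \in \bD^k$, can be brought, via a family of Hamiltonian isotopies, to a family $L_t'$ whose primitives $f_{L_t'}$ (with $\lambda_\text{st}|_{L_t'} = df_{L_t'}$) are proper and bounded from below. The plan is to separate the behaviour near the ideal boundary from the behaviour on the compact part, and to use a cutoff-function argument in the spirit of the proof of Proposition \ref{prop:conj-exist} and Lemma \ref{lem:graph-at-infty}.

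\textbf{Step 1: Normalize the primitive on the compact part.} Since each $L_t$ is an exact Lagrangian filling, it is in particular connected with non-empty Legendrian boundary, so the primitive $f_{L_t}$ is well-defined up to an additive constant. First I would fix the additive constant in a continuous way: choose a continuous family of points $x_t \in L_t$ (for instance on the Legendrian boundary, which varies smoothly) and normalize $f_{L_t}(x_t) = 0$. On any compact piece $L_t \cap \{|\xi| \le r_0\}$ the primitive is automatically bounded (uniformly in $t$, using compactness of $\bD^k$), so there is nothing to fix there except to make sure the normalization is consistent; the only place where $f_{L_t}$ can fail to be proper or bounded below is near $T^{*,\infty}M$.

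\textbf{Step 2: Analyze and correct the behaviour at infinity.} Near infinity, by the standard structure of exact Lagrangian fillings, for $r_0$ sufficiently large $L_t \cap \{|\xi| > r_0\}$ lies in a Weinstein tubular neighbourhood of the cylinder $\Lambda_t \times (r_0, \infty)$ and is the graph $L_{dF_t}$ of the differential of a function $F_t : \Lambda_t \times (r_0,\infty) \to \bR$ with $|dF_t|$ small; this is exactly the setup used in the proof of Proposition \ref{prop:conj-exist} and in Lemma \ref{lem:graph-at-infty}, and by the compactness of $\bD^k$ one obtains uniform constants $r_0$ and $\epsilon$ as in the proof of Lemma \ref{lem:graph-at-infty}. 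In these coordinates the primitive of $L_{dF_t}$ differs from $F_t$ by the Liouville primitive of the cylinder, which along $\Lambda_t \times (r_0,\infty)$ with $\lambda_\text{st} = r\,\alpha_{\Lambda_t}$ grows (or is constant, after the appropriate conventions) like $r$ times a function on $\Lambda_t$; in particular the failure of properness/boundedness below is entirely controlled by this linear-in-$r$ term plus the small bounded term coming from $F_t$. I would then pick a family of cutoff functions $\beta_t : \bR_+ \to [0,1]$, convex space hence contractible as in Lemma \ref{lem:graph-at-infty}, with $\beta_t \equiv 1$ for $r \le r_0$ and $\beta_t \equiv 0$ (or $\equiv 1$, whichever is needed) for $r$ large and $|\beta_t'|$ small, and replace $L_{dF_t}$ by $L_{d(\beta_t F_t)}$; this is a Hamiltonian isotopy fixed near the compact part and at infinity, after which $L_t'$ agrees with a cylinder $\Lambda_t \times (r, \infty)$ for $r \gg r_0$. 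On such a cylinder the primitive is a fixed linear function of $r$, hence proper, and by adjusting the normalization constant (Step 1) one arranges it to be bounded from below — for the chosen orientation of $\lambda_\text{st}$ the primitive on the cylinder grows to $+\infty$, so boundedness below on the cylinder is automatic, and combined with Step 1 it is bounded below on all of $L_t'$.

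\textbf{Step 3: Assemble into a family.} Finally I would check that the corrections in Steps 1--2 can be performed continuously (indeed smoothly) in $t \in \bD^k$: the normalization constants depend continuously on $t$ since $f_{L_t}(x_t)$ does; the Weinstein neighbourhoods and the functions $F_t$ can be chosen to depend smoothly on $t$ by the parametrized Weinstein neighbourhood theorem; and the space of admissible cutoff functions is convex, hence contractible, so a continuous family $\beta_t$ exists (this is precisely the mechanism already used in Lemma \ref{lem:graph-at-infty}). The resulting family of Hamiltonians generating these isotopies is then a smooth family of compactly-supported-at-each-scale Hamiltonians, producing the desired family $L_t'$. \textbf{The main obstacle} I anticipate is bookkeeping the sign/growth conventions for the Liouville primitive on the conical end and confirming that after the cutoff the primitive is genuinely bounded \emph{below} rather than above — this depends on the orientation conventions for $\lambda_\text{st}$ near $T^{*,\infty}M$ and on whether one takes $\beta_t$ to vanish or to be $1$ at infinity; once the correct convention is fixed (matching the ``front bounded from below'' convention of Theorem \ref{thm:JinTreumann}), the rest is a parametrized version of arguments already carried out in the excerpt.
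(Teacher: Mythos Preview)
There is a genuine gap in Step~2. Your key claim --- that on the cylinder $\Lambda_t \times (r,\infty)$ the primitive is ``a fixed linear function of $r$, hence proper'' --- is false. On the cone over a Legendrian the Liouville form pulls back to $r\,\alpha|_{\Lambda_t}$, and since $\Lambda_t$ is Legendrian we have $\alpha|_{\Lambda_t}=0$ identically; thus $\lambda_{\mathrm{st}}$ restricts to zero on the cylinder and the primitive is a \emph{constant} there. Running the cutoff argument of Lemma~\ref{lem:graph-at-infty} to make $L_t'$ exactly cylindrical at infinity therefore produces a primitive that is bounded below but \emph{not} proper: you have arranged precisely the situation the lemma is trying to escape, not the goal. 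Your hedging remark that the primitive ``grows (or is constant, after the appropriate conventions)'' is the right instinct --- it is constant, and that kills the argument.

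The paper's proof runs in the opposite direction. Rather than straightening $L_t$ onto the Legendrian cone, it starts from a family that is already conical for $r>r_0$ (uniformly in $t$ by compactness of $\bD^k$) and applies a single Hamiltonian $H=\beta(r)$ depending only on the fiber radius, with $\beta(r)=-1/r$ for large $r$. The time-$\epsilon$ flow of such a radial Hamiltonian preserves each level $\{|\xi|=r\}$ but pushes the conical end \emph{off} the Legendrian cone along the Reeb direction; on the resulting tilted end the Liouville form no longer vanishes, and one checks directly that the primitive tends to $+\infty$ with $r$. The mechanism you need is thus a deformation away from the cylinder, not onto it.
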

\begin{proof}
    The proof is close to the non-parametric version of the lemma in \cite{JinTreu17}*{Section 3.6}. Since $L_t$, $t\in \bD^k$, is a family of exact Lagrangian fillings of $\Lambda_t$, $t \in \bD^k$, for any $t \in \bD^k$ there is some $r_t \gg 0$ sufficiently large such that $L_t \cap \{(x, \xi) \in T^*M | |\xi| > r_t\} = \Lambda \times (r_t, +\infty)$. Since $D^k$ is compact one can find $r_0 \gg 0$ such that for any $t \in D^k$, $L_t \cap \{(x, \xi) \in T^*M | |\xi| > r_0\} = \Lambda \times (r_0, +\infty)$.

    Let $\beta: [0, +\infty) \rightarrow \bR$ be a function such that $\beta(r) = 0$ when $r$ is sufficiently small and $\beta(r) = -1/r$ when $r > r_0$ is sufficiently large, and the Hamiltonian $H(r) = \beta(r)$. Then consider $L_t' = \varphi^{\epsilon}_H(L_t)$. One can check that when $r > r_0$, $df_{L_t'} = \epsilon/r$ and thus are proper and bounded from below.
\end{proof}

\begin{thm}[\cite{Gui12}; \cite{JinTreu17}]\label{thm:JinTreu-app}
    Let $L \subset T^*M$ be an exact Lagrangian filling of a Legendrian submanifold $\Lambda \subset T^{*,\infty}M$ with zero Maslov class, whose primitive $f_{L}$ is proper and bounded from below (where $\lambda_\text{st}|_{L}= df_{L}$). Let $\widetilde{L} \subset J^1(M) \cong T^{*,\infty}_{\tau < 0}(M \times \bR)$ be the Legendrian lift of $L$. Then
\begin{enumerate}
  \item there exists a fully faithful functor $\Psi_{\widetilde{L}}: \Loc_\epsilon(\cI_{\widetilde{L}}) \rightarrow \Sh_{\widetilde{L}}(M \times \bR)$ such that the stalk at $M \times \{-\infty\}$ is acyclic, and $m_{\widetilde{L}} \circ \Psi_{\widetilde{L}} \simeq \mathrm{id}$;
  \item the proper push forward functor $\pi_{M,!}: \Sh_{\widetilde{L}}(M \times \bR) \rightarrow \Sh_\Lambda(M)$ via the projection $\pi_M: M \times \bR \rightarrow M$ is fully faithful, and for $\widetilde{\cF} \in \Sh_{\widetilde{L}}(M \times \bR)$,
  $$\pi_{M,!}\widetilde{\cF} = i_{M \times \{+\infty\}}^{-1}j_{M \times \bR,*}\widetilde{\cF},$$
  where $i_{M \times \{+\infty\}}: M \times \{+\infty\} \hookrightarrow M \times [-\infty, +\infty]$ and $j_{M \times \bR}: M \times \bR \hookrightarrow M \times [-\infty, +\infty]$ are the inclusions.
\end{enumerate}
\end{thm}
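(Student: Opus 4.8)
\textbf{Proof proposal for Theorem \ref{thm:JinTreu-app}.} The plan is to adapt the arguments of Guillermou \cite{Gui12} and Jin--Treumann \cite{JinTreu17} to the sign conventions dictated by our geometric setting, where the primitive of the Lagrangian lift is bounded from below rather than from above. By Lemma \ref{lem:lowerexact}, after a Hamiltonian isotopy we may and do assume that the primitive $f_L$ is proper and bounded from below, so that the Legendrian lift $\widetilde{L} \subset J^1(M) \cong T^{*,\infty}_{\tau<0}(M \times \bR)$ has front which is bounded from below in the $\bR$-direction. The essential point is that the constructions in \cite{Gui12,JinTreu17} are insensitive to a global reflection $z \mapsto -z$ of the $\bR$-coordinate together with the corresponding exchange of $+\infty$ and $-\infty$; applying this reflection to their constructions converts their ``bounded from above, push-forward $\pi_{M,*}$, stalk acyclic at $M \times \{+\infty\}$'' setup into our ``bounded from below, proper push-forward $\pi_{M,!}$, stalk acyclic at $M \times \{-\infty\}$'' setup.

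For part (1), I would proceed as follows. First, invoke Guillermou's sheaf quantization of exact Lagrangians \cite{Gui12} (see also \cite[Part 11--12]{Gui19}): given $\widetilde{L}$ with vanishing Maslov class and primitive bounded below, one constructs a sheaf $\widetilde{\cF}_{\widetilde{L}} \in \Sh_{\widetilde{L}}(M \times \bR)$ whose microlocalization along $\widetilde{L}$ is the chosen rank-one twisted local system and whose stalk at $M \times \{-\infty\}$ is acyclic. Guillermou's argument produces this sheaf via a doubling/GKS-convolution construction starting from the conormal of a domain, using the fact that the front of $\widetilde{L}$ eventually becomes conical over $\partial_\infty L = \La$; the boundedness-from-below hypothesis ensures the relevant direct limit defining the quantization stabilizes near $-\infty$. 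To upgrade this to a \emph{functor} $\Psi_{\widetilde{L}} \colon \Loc_\epsilon(\cI_{\widetilde{L}}) \to \Sh_{\widetilde{L}}(M \times \bR)$, one uses that the category $\Loc_\epsilon(\cI_{\widetilde{L}})$ is, via Theorem \ref{thm:Gui} and Corollary \ref{cor:Gui}, equivalent to the global sections $\mu\Sh_{\widetilde{L}}(\widetilde{L})$ of the Kashiwara--Schapira stack, and one takes $\Psi_{\widetilde{L}} := m_{\widetilde{L}}^L$, the left adjoint to microlocalization (see Appendix \ref{appen:cat-sheaf}), suitably corrected so that $m_{\widetilde{L}} \circ \Psi_{\widetilde{L}} \simeq \mathrm{id}$; full faithfulness is equivalent to the counit $\Psi_{\widetilde{L}} \circ m_{\widetilde{L}}$-type identity, which holds because $\widetilde{L}$ is the singular support and the stalk at $M \times \{-\infty\}$ vanishes, pinning down the sheaf uniquely from its microlocal data by Guillermou's uniqueness statement. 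This is precisely the content of \cite[Section 3.12--3.20]{JinTreu17} after the reflection.

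For part (2), the claim is that $\pi_{M,!} \colon \Sh_{\widetilde{L}}(M \times \bR) \to \Sh_\Lambda(M)$ is fully faithful and is computed by $\pi_{M,!}\widetilde{\cF} = i_{M\times\{+\infty\}}^{-1} j_{M \times \bR, *}\widetilde{\cF}$. The formula for $\pi_{M,!}$ follows from base change and the standard identification of proper pushforward along the open inclusion $M \times \bR \hookrightarrow M \times [-\infty,+\infty]$ with $j_{M\times\bR,*}$ followed by restriction to the fiber at $+\infty$: since the front of $\widetilde{L}$ is bounded below, the relevant sheaf is eventually constant as $z \to +\infty$, so this restriction computes the sections over a half-line $\{z > c\}$, which by definition of the direct-image-with-proper-supports over the non-proper fiber $\bR$ agrees with $\pi_{M,!}$. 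Full faithfulness is the heart of the matter: one must show $\mathrm{Hom}(\widetilde{\cF}, \widetilde{\cG}) \xrightarrow{\sim} \mathrm{Hom}(\pi_{M,!}\widetilde{\cF}, \pi_{M,!}\widetilde{\cG})$ for $\widetilde{\cF}, \widetilde{\cG} \in \Sh_{\widetilde{L}}(M \times \bR)$. The argument, following \cite[Section 3.19--3.21]{JinTreu17}, uses that the singular support condition forces $\widetilde{\cF}$ to be locally constant in the $z$-direction outside the front, so that $\pi_{M,!}$ loses no information: concretely, one shows that $\widetilde{\cF}$ is recovered from $\pi_{M,!}\widetilde{\cF}$ (together with the knowledge of $\widetilde{L}$) via a ``lift'' functor, essentially because the whole sheaf is determined by its behavior on a neighborhood of $M \times \{+\infty\}$ in $M \times [-\infty,+\infty]$ together with the acyclicity at $-\infty$.

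The step I expect to be the main obstacle is the rigorous verification of full faithfulness of $\pi_{M,!}$ in part (2) with the \emph{proper} pushforward in place of the ordinary pushforward. In \cite{JinTreu17} the analogous statement uses $\pi_{M,*}$, and the interchange of $\pi_{M,*}$ and $\pi_{M,!}$ is not formal once the fiber $\bR$ is non-compact; one needs to control the sections near \emph{both} ends $\pm\infty$ and use the boundedness-from-below of the front to argue that the ``missing'' contribution near $-\infty$ is killed by the acyclicity of the stalk there, so that $\pi_{M,!}\widetilde{\cF}$ and $\pi_{M,*}\widetilde{\cF}$ carry the same information on the relevant subcategory. Concretely I would set $\widehat{M} := M \times [-\infty,+\infty]$, $j = j_{M\times\bR}$, $i_\pm = i_{M\times\{\pm\infty\}}$, and exploit the triangle relating $j_!$, $j_*$, and the contributions $i_{\pm,*}i_\pm^{-1}$; acyclicity of the stalk at $-\infty$ removes the $i_-$ term, giving $i_+^{-1}j_* \widetilde{\cF} = i_+^{-1}j_!\widetilde{\cF}[1]$ up to a shift bookkeeping that matches the definition of $\pi_{M,!}$ (this shift is the origin of the $[1]$ appearing in comparisons between $\pi_{M,!}$ and the naive fiber restriction, cf. Theorem \ref{thm:JinTreumann}.(2) in the main text). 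Once this homological bookkeeping is pinned down, full faithfulness of $\pi_{M,!}$ reduces, by the same doubling argument as in \cite{JinTreu17}, to full faithfulness of the GKS kernel associated to the obvious Hamiltonian isotopy degenerating $\widetilde{L}$ onto its conical model, which is Theorem \ref{thm:GKS}.
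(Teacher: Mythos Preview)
Your global strategy---reflect $z\mapsto -z$ to trade Jin--Treumann's ``bounded above, $\pi_{M,*}$'' conventions for ``bounded below, $\pi_{M,!}$''---is exactly the right organizing principle, and your sketch of Part~(1) via the doubling construction is broadly correct, though your aside that $\Psi_{\widetilde{L}}$ is ``$m_{\widetilde{L}}^L$'' is misleading: the left adjoint to microlocalization goes between compact objects, not pseudoperfect ones, and the paper instead builds $\Psi_{\widetilde{L}}$ directly by Guillermou's doubling $\widetilde{\cF}_{\text{dbl},c} \in \Sh_{\widetilde{L} \cup T_c(\widetilde{L})}(M\times\bR)$, then uses absence of Reeb chords to push $c \to C' \gg 0$ via GKS, and finally restricts to $M\times(-\infty,C)$ and rescales. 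This is a minor point.

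The genuine gap is in Part~(2), full faithfulness of $\pi_{M,!}$. Your proposed triangle argument relating $j_!$, $j_*$, and $i_{\pm,*}i_\pm^{-1}$ does not do what you want: $i_+^{-1}j_!\widetilde{\cF}=0$ always, so the displayed identity $i_+^{-1}j_*\widetilde{\cF} = i_+^{-1}j_!\widetilde{\cF}[1]$ is vacuous, and the final reduction to ``full faithfulness of the GKS kernel degenerating $\widetilde{L}$ onto its conical model'' is too vague to carry weight. The paper's argument is different and more direct: by adjunction,
\[
\mathrm{Hom}(\pi_{M!}\widetilde{\cF},\pi_{M!}\widetilde{\cG}) \simeq \mathrm{Hom}(\widetilde{\cF},\pi_M^!\pi_{M!}\widetilde{\cG}),
\]
and the key step is to identify $\pi_M^!\pi_{M!}\widetilde{\cG}$ on $M\times(-a,+\infty)$ with a \emph{negative} Reeb translate $T_{-a'}\widetilde{\cG}$. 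This is done by constructing a Hamiltonian flow carrying the conical end $\Lambda\times(-a,+\infty)$ to $T_{-a'}(\widetilde{L})$ and applying the microlocal Morse lemma for \emph{proper} pushforward (this is where the $\pi_{M,!}$-versus-$\pi_{M,*}$ distinction actually bites). One then uses the singular support estimate and a second application of microlocal Morse to get $\mathrm{Hom}(\widetilde{\cF},\pi_M^!\pi_{M!}\widetilde{\cG})\simeq\mathrm{Hom}(\widetilde{\cF},T_{-a'}\widetilde{\cG})$, and finally $\mathrm{Hom}(\widetilde{\cF},T_{-a'}\widetilde{\cG})\simeq\mathrm{Hom}(\widetilde{\cF},\widetilde{\cG})$ because $T_{-a'}\widetilde{L}$ lies entirely in $T^{*,\infty}_{\tau<0}(M\times\bR)$ (this is \cite[Proposition~3.16]{JinTreu17}). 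The formula $\pi_{M,!}\widetilde{\cF}\simeq i_{+\infty}^{-1}j_{M\times\bR,*}\widetilde{\cF}$ is likewise proved by microlocal Morse on the compactification $M\times[-\infty,+\infty]$, not by a $j_!/j_*$ triangle.
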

\begin{proof}
    The proof appropriately modifies the arguments from \cite{JinTreu17}*{Section 3}. The differences from \cite{JinTreu17} are that (i)~we are considering exact Lagrangians with primitives bounded from below instead of the ones bounded from above, and (ii)~we are using the proper push forward $\pi_{M!}$ in Step~(2) instead of the standard push forward $\pi_{M*}$.\\

    %We now explain how Jin-Treumann's proof should be modified to obtain our result.
    Firstly, Step~(1) is the same as in \cite{JinTreu17}. Namely, we use the equivalence $\Loc_\epsilon(\cI_{\widetilde{L}}) \simeq \mu \Sh_{\widetilde{L}}(\widetilde{L})$ to obtain a microlocal sheaf $\widetilde{\cF}_\mu \in \mu \Sh_{\widetilde{L}}(\widetilde{L})$. Then consider S.~Guillermou's convolution functor to obtain the doubling sheaf $\widetilde{\cF}_{\text{dbl},c} \in \Sh_{\widetilde{L} \cup T_c(\widetilde{L})}(M \times \bR)$ when $c > 0$ is small. Since $\widetilde{L}$ has no Reeb chords, for any $c, c' > 0$ we have an equivalence induced by Hamiltonian isotopy\footnote{Strictly speaking, non-existence of Reeb chords only implies the existence of a Legendrian isotopy from $\widetilde{L} \cup T_c(\widetilde{L})$ to $\widetilde{L} \cup T_{c'}(\widetilde{L})$. Since the corresponding Hamiltonian is not compactly supported, it is not clear that the integrated flow exists. In our setting this can be verified and is explained in \cite{JinTreu17}*{Section 3.14}.}
    $$T_{c,c'}: \Sh_{\widetilde{L} \cup T_c(\widetilde{L})}(M \times \bR) \xrightarrow{\sim} \Sh_{\widetilde{L} \cup T_{c'}(\widetilde{L})}(M \times \bR); \,\,\, T_{c,c'}(\widetilde{\cF}_{\text{dbl},c}) = \widetilde{\cF}_{\text{dbl},c'}.$$
    Choose $C' \gg C \gg 0$ such that (i)~$T^{*,\infty}(M \times (-\infty, C)) \cap T_{C'}(\widetilde{L}) = \emptyset$, and (ii)~there exists a diffeomorphism $\varphi_C: M \times (-\infty, +\infty) \xrightarrow{\sim} M \times (-\infty, C)$ such that $\varphi_C^*: T^{*,\infty}(M \times (-\infty, C)) \xrightarrow{\sim} T^{*,\infty}(M \times \bR)$ sends $\widetilde{L} \cap T^{*,\infty}_{\tau < 0}(M \times (-\infty, C))$ to $\widetilde{L}$. Write $j_C: M \times (-\infty, C) \hookrightarrow M \times \bR$ for the inclusion map. Then for $C' \gg C$,
    $$\widetilde{\cF} = \varphi_{C}^{-1}(j_C^{-1}\widetilde{\cF}_{\text{dbl},C'}) \in \Sh_{\widetilde{L}}(M \times \bR).$$
    Finally, since the stratification of $M \times \bR$ determined by the front projection of $\widetilde{L}$ extends to a stratification of $M \times [-\infty, +\infty]$, we can get a sheaf
    $$j_{\bR *}\widetilde{\cF} \in \Sh_{\widetilde{L}}(M \times [-\infty, +\infty]).$$

    In Step~(2), there are some differences with \cite{JinTreu17} (see their Section 3.18) when showing full faithfulness, in our case the argument is as follows. Fix two sheaves $\widetilde{\cF}, \widetilde{\cG} \in \Sh_{\widetilde{L}}(M \times \bR)$. By adjunction we have
    $$Hom(\pi_{M!}\widetilde{\cF}, \pi_{M!}\widetilde{\cG}) \simeq Hom(\widetilde{\cF}, \pi_{M}^!\pi_{M!}\widetilde{\cG}).$$
    Choose $a' \gg a \gg 0$. Consider a Hamiltonian flow $\varphi_H$ on $T^{*,\infty}_{\tau<0}(M \times \bR)$ that carries $(\Lambda \times (-a, +\infty)) \cup (\widetilde{L} \cap T^{*,\infty}(M \times (-a, +\infty)))$ to $(T_{-a'}(\widetilde{L}) \cup \widetilde{L}) \cap T^{*,\infty}(M \times (-a, +\infty))$. Write $j_a: M \times (-a, +\infty) \rightarrow M \times \bR$ for the inclusion, and $\pi_{M,a}: M \times (-a, +\infty) \rightarrow M$ the projection. Since $T_{-a'}(\widetilde{L}) \subset T^{*,\infty}_{\tau<0}(M \times \bR)$, by microlocal Morse lemma \cite{KSbook}*{Proposition 5.4.17~(ii)} for proper push forward (instead of standard pushforward as in \cite{JinTreu17}), we know that
    $$\Phi_H(\pi_{M,a}^!\pi_{M!}\widetilde{\cG}) \simeq \Phi_H(\pi_{M,a}^!\pi_{M,a!}j_a^{-1}T_{-a'}(\widetilde{\cG})) \simeq j_a^{-1}T_{-a'}\widetilde{\cG}.$$
    Then by the singular support estimate \cite{KSbook}*{Proposition 5.4.14} and microlocal Morse lemma again we have
    $$Hom(\widetilde{\cF}, \pi_M^{-1}\pi_{M!}\widetilde{\cG}[1]) \simeq \Gamma(M \times (-a, +\infty), \mathscr{H}om(\widetilde{\cF}, T_{-a'}\widetilde{\cG})) \simeq Hom(\widetilde{\cF}, T_{-a'}\widetilde{\cG}).$$
    However since $T_{-a'}\widetilde{L} \subset T^{*,\infty}_{\tau<0}(M \times \bR)$, following \cite{JinTreu17}*{Proposition 3.16} we will conclude that $Hom(\widetilde{\cF}, T_{-a'}\widetilde{\cG}) \simeq Hom(\widetilde{\cF}, \widetilde{\cG})$, which gives the full faithfulness.
    
    Finally, consider the extended sheaf $j_{M \times \bR, *}\widetilde{\cG} \in \Sh_{\widetilde{L}}(M \times [-\infty, +\infty]).$ Write $\ol\pi_M: M \times [-\infty, +\infty] \rightarrow M$ for the proper projection and $i_c: M \times [c, +\infty] \hookrightarrow M \times [-\infty, +\infty]$ for the closed embedding. Then by microlocal Morse lemma, we know that
    $$\pi_{M!}\widetilde{\cG} \simeq \ol\pi_{M!}j_{M \times \bR, *}\widetilde{\cG} \simeq \ol\pi_{M!}\,i_{c!}\,i_c^{-1}\,(j_{M \times \bR, *}\widetilde{\cG}) \simeq i_{+\infty}^{-1}(j_{M \times \bR, *}\widetilde{\cG}).$$
    This completes the proof.
\end{proof}
\begin{remark}\label{rem:Jin-Treu-app}
    The sheaf quantization functor in Theorem \ref{thm:JinTreu-app} can be naturally restricted to pseudoperfect objects, in other words sheaves with perfect stalks,
    $$\Loc_\epsilon^{pp}(\widetilde{L}) \hookrightarrow \Sh^{pp}_{\widetilde{L}}(M \times [-\infty, +\infty]) \hookrightarrow \Sh^{pp}_\Lambda(M).$$
    In addition, by passing to the left adjoints,    as in Appendix \ref{appen:cat-sheaf}., we are able to obtain a functor, which is the sheaf theoretic Viterbo restriction functor
    $$\Sh^{cpt}_\Lambda(M) \rightarrow \Sh^{cpt}_{\widetilde{L}}(M \times [-\infty, +\infty]) \rightarrow \Loc_\epsilon^{cpt}(\widetilde{L}).$$
\end{remark}

\noindent    Moreover, we can show the Hamiltonian invariance of the sheaf quantization in the same spirit as \cite{JinTreu17}*{Section 3.19-20}. \\
    
    Recall that given a family of Lagrangian fillings that are conical at infinity, Lemma \ref{lem:lowerexact} ensures that there exists a Hamiltonian flow such that the image of the family of exact Lagrangians will have primitives that are proper and bounded from below.

\begin{prop}\label{prop:JinTreu-inv-app}
Let $L \subset (T^*M,\la_{st})$ be a relatively spin exact Lagrangian filling of a Legendrian submanifold $\La\subset T^{*,\infty}M$, $\widetilde{L}\subset(J^1M,\xi_{st})$ its Legendrian lift and $\cL \in \Loc_\epsilon(\cI_{\widetilde{L}})$ a twisted local system. Consider a Hamiltonian  $H:T^*M\lr\R$ that is homogeneous at infinity, $\varphi_t\in \mbox{Ham}(T^*M,d\la_{st})$ its time-$t$ flow, $t\in[0,1]$, which extends to an homonymous contactomorphism $\varphi_t\in \mbox{Cont}(T^{*,\infty}M,\ker\la_{st})$ of the ideal contact boundary, $t\in[0,1]$, $\widetilde H=\Pi^*_M(H):J^1M\lr\R$ the pull-back and $\widetilde{\varphi}_t\in\mbox{Cont}(J^1M,\xi_{st})$ its time-$t$ flow.\\

\noindent Suppose that $\widetilde{\cF}_t \in \Sh_{\widetilde{L}_t}(M \times \bR)$ is a sheaf quantization of $(\varphi_t(L),(\varphi_{t})_*\cL)$ and consider the proper push-forward $\cF_t := \pi_{M,!}(\widetilde{\cF}_t) \in \Sh_{\varphi_t(\Lambda)}(M)$, $t\in[0,1]$. Then:

\begin{itemize}
    \item[(i)] The sheaf kernel convolution $\Phi_{\widetilde{H}}:\Sh_{\widetilde{L}}(M \times \bR)\lr\Sh_{\varphi_1(\widetilde{L})}(M \times \bR)$ associated to $\widetilde{H}$ satisfies $\Phi_{\widetilde{H}}(\widetilde{\cF}_0) \simeq \widetilde{\cF}_1$.\\
    
    \item[(ii)] The sheaf kernel convolution $\Phi_H:\Sh_{\Lambda}(M)\lr\Sh_{\varphi_1(\Lambda)}(M)$ associated $H$ satisfies $\Phi_H(\cF_0) \simeq \cF_1.$
\end{itemize}
\end{prop}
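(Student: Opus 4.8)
The plan is to deduce both statements from the combination of the sheaf quantization construction in Theorem \ref{thm:JinTreu-app} and the Guillermou–Kashiwara–Schapira sheaf-kernel invariance in Theorem \ref{thm:GKS}, being careful about the fact that the Legendrian boundary $\varphi_t(\Lambda)$ varies. The starting point is that the family $\varphi_t(L)$ is, by Lemma \ref{lem:lowerexact} (applied to $t\in\bD^1=[0,1]$), Hamiltonian isotopic through a family of exact Lagrangian fillings whose primitives are proper and bounded from below; the associated Legendrian lifts $\widetilde{L}_t\subset J^1M$ then fit into a Legendrian movie inside $T^{*,\infty}_{\tau<0}((M\times\R)\times[0,1])$. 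Here the key compatibility is that $\widetilde{\varphi}_t$ is the lift of $\varphi_t$ along $\Pi_M$, so $\widetilde{\varphi}_t(\widetilde{L})=\widetilde{L}_t$ up to reparametrization of the fiber $\R$-coordinate of $J^1M$; this is what makes the contact Hamiltonian $\widetilde H=\Pi_M^*H$ generate exactly the isotopy carrying $\widetilde{L}_0$ to $\widetilde{L}_1$. In particular $\widetilde H$ is homogeneous at infinity on $J^1M=T^{*,\infty}_{\tau<0}(M\times\R)$ because $H$ is, so Theorem \ref{thm:GKS} applies and produces the convolution equivalence $\Phi_{\widetilde H}:\Sh_{\widetilde{L}}(M\times\R)\xrightarrow{\sim}\Sh_{\varphi_1(\widetilde{L})}(M\times\R)$ preserving microlocal rank.

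For part (i): I would first recall that the sheaf quantization functor $\Psi_{\widetilde{L}_t}$ of Theorem \ref{thm:JinTreu-app}.(1) is characterized (up to the choices involved) by the property $m_{\widetilde{L}_t}\circ\Psi_{\widetilde{L}_t}\simeq\mathrm{id}$ together with acyclicity of the stalk at $M\times\{-\infty\}$. Now $\Phi_{\widetilde H}$ intertwines the microlocalization functors on both ends, i.e. $m_{\varphi_1(\widetilde{L})}\circ\Phi_{\widetilde H}\simeq\varphi_{1,*}\circ m_{\widetilde{L}}$ (this is the functoriality of microlocalization under GKS convolution, which follows from the fact that convolution by the GKS kernel is a quantization of the contact isotopy and the microlocalization functor is natural for quotient categories — one uses that $\Phi_{\widetilde H}$ restricts to an equivalence of Kashiwara–Schapira stacks covering $\widetilde\varphi_1$). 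Applying this to $\widetilde{\cF}_0=\Psi_{\widetilde{L}}(\cL)$ gives $m_{\varphi_1(\widetilde{L})}(\Phi_{\widetilde H}(\widetilde{\cF}_0))\simeq \varphi_{1,*}m_{\widetilde{L}}(\widetilde{\cF}_0)\simeq\varphi_{1,*}\cL$, which is precisely the microlocal data defining $\widetilde{\cF}_1$. One also checks that $\Phi_{\widetilde H}$ preserves the acyclicity of the stalk at $M\times\{-\infty\}$: this is because the isotopy is supported in the region where the front is bounded from below, so the kernel convolution does not move the $-\infty$ end — more precisely, the stalk at $M\times\{-\infty\}$ is computed by a limit over the region $\tau<0$ far below the front, where $\widetilde H$ and hence the convolution kernel acts trivially. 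By the full faithfulness of $\Psi_{\varphi_1(\widetilde{L})}$ and the uniqueness of the sheaf with prescribed microstalk and vanishing stalk at $-\infty$ (again Theorem \ref{thm:JinTreu-app}.(1)), we conclude $\Phi_{\widetilde H}(\widetilde{\cF}_0)\simeq\widetilde{\cF}_1$.

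For part (ii): I would use the compatibility of the GKS convolutions with the proper push-forward $\pi_{M,!}$ under the two Hamiltonians $\widetilde H$ and $H$. Concretely, $\widetilde H=\Pi_M^*H$ is independent of the $\R$-coordinate of $J^1M$ and of its dual, so the GKS kernel for $\widetilde H$ on $(M\times\R)\times(M\times\R)\times[0,1]$ is the external pullback of the GKS kernel for $H$ along the projection that forgets the $\R$-factors; therefore $\pi_{M,!}$ intertwines the two convolutions, $\pi_{M,!}\circ\Phi_{\widetilde H}\simeq\Phi_H\circ\pi_{M,!}$. (This uses proper base change for $\pi_{M,!}$ against the convolution integral, where properness of $\pi_M$ on the relevant supports is guaranteed after passing to $M\times[-\infty,+\infty]$ as in the proof of Theorem \ref{thm:JinTreu-app}.(2).) Applying this identity to $\widetilde{\cF}_0$ and using part (i), $\Phi_H(\cF_0)=\Phi_H(\pi_{M,!}\widetilde{\cF}_0)\simeq\pi_{M,!}\Phi_{\widetilde H}(\widetilde{\cF}_0)\simeq\pi_{M,!}\widetilde{\cF}_1=\cF_1$, which is the claim.

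The main obstacle I expect is in part (ii): verifying cleanly that $\pi_{M,!}$ commutes with the two sheaf-kernel convolutions when the contact isotopy is not compactly supported and the Legendrian boundary moves. The proof of Theorem \ref{thm:JinTreu-app}.(2) shows that $\pi_{M,!}\widetilde{\cF}=i_{M\times\{+\infty\}}^{-1}j_{M\times\R,*}\widetilde{\cF}$, so one really needs to control the behaviour of $\Phi_{\widetilde H}$ near $M\times\{\pm\infty\}$ and match it with the behaviour of $\Phi_H$ at the ideal boundary $T^{*,\infty}M$; this requires a microlocal Morse-type estimate (as in \cite{KSbook}*{Proposition 5.4.17}) to push the convolution through the proper push-forward, and checking that the GKS kernel, which is only defined after suitable bounds, genuinely has the product structure needed. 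One must also be slightly careful that the choices in $\Psi_{\widetilde{L}_t}$ (the constants $C,C'$ in the doubling construction) can be chosen uniformly in $t\in[0,1]$ by compactness, so that $\widetilde{\cF}_t$ is a genuine continuous family; this is routine but should be stated.
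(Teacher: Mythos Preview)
Your argument for part (ii) is the same as the paper's: the commutation $\pi_{M,!}\circ\Phi_{\widetilde H}\simeq\Phi_H\circ\pi_{M,!}$ follows from proper base change (the paper says exactly this), and then part (i) finishes the job.

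For part (i) you take a genuinely different route. You argue by \emph{characterization}: show that $\Phi_{\widetilde H}(\widetilde\cF_0)$ has the correct microlocalization $(\varphi_1)_*\cL$ and acyclic stalk at $M\times\{-\infty\}$, then conclude it must equal $\widetilde\cF_1$ by uniqueness of the sheaf quantization. The paper instead argues directly via a \emph{family} diagram: the vertical arrows are sheaf quantization, the middle column is quantization of the Legendrian movie $\widetilde L_{\widetilde H}\subset T^{*,\infty}(M\times\bR\times[0,1])$, and the horizontal restrictions $i_0^{-1},i_1^{-1}$ give the identity on the top row (a local system on $\widetilde L\times[0,1]$ restricts to the same thing at both ends) and $\Phi_{\widetilde H}$ on the bottom row by definition. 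Commutativity of the square is then immediate from the construction of $\Psi$ in families, and the conclusion drops out.

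Your approach is correct in outline, but the uniqueness step you invoke requires knowing that the essential image of $\Psi_{\widetilde L_1}$ is precisely the full subcategory of $\Sh_{\widetilde L_1}(M\times\bR)$ with acyclic stalk at $-\infty$; Theorem \ref{thm:JinTreu-app}.(1) only asserts full faithfulness and $m_{\widetilde L}\circ\Psi_{\widetilde L}\simeq\mathrm{id}$, not this essential surjectivity. It is true (and follows from Guillermou's doubling construction), but you would need to supply it. The paper's family argument sidesteps this by never leaving the image of $\Psi$. On the other hand, the technical concerns you raise at the end --- uniformity of the doubling constants in $t$, and the behavior of the kernel near $M\times\{\pm\infty\}$ --- are real and the paper addresses them only with a one-line remark that the sheaves extend to $M\times[-\infty,+\infty]$; your caution there is well placed.
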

\begin{proof}
    Consider the Legendrian isotopy of the Legendrian lifts $\widetilde{L}_t$, $t \in [0, 1]$, induced by the contact Hamiltonian $\widetilde{H}$. We have the following commutative diagram
    \[\xymatrix@C=5mm{
    \Loc_\epsilon(\cI_{\widetilde{L}}) \ar[d] & \Loc_\epsilon(\cI_{\widetilde{L} \times [0, 1]}) \ar[l]_{i_0^{-1}\hspace{15pt}} \ar[r]^{\hspace{15pt}i_1^{-1}} \ar[d] & \Loc_\epsilon(\cI_{\widetilde{L}}) \ar[d] \\
    \Sh_{\widetilde{L}_0}(M \times (-\infty,+\infty)) & \Sh_{\widetilde{L}_{\widetilde{H}}}(M \times (-\infty,+\infty) \times [0, 1]) \ar[l]_{i_0^{-1}\hspace{15pt}} \ar[r]^{\hspace{15pt}i_1^{-1}} & \Sh_{\widetilde{L}_1}(M \times (-\infty,+\infty)).
    }\]
    Note that the composition in the first row is the identity, while the composition in the second row is $\Phi_{\widetilde{H}}$. Therefore we know that $\Phi_{\widetilde{H}}(\widetilde{\cF}_0) \simeq \widetilde{\cF}_1.$ 
    
    Next, consider the following commutative diagram
    \[\xymatrix@C=5mm{
    \Sh_{\widetilde{L}_0}(M \times (-\infty,+\infty)) \ar[d]_{\pi_{M!}} & \Sh_{\widetilde{L}_{\widetilde{H}}}(M \times (-\infty,+\infty) \times [0, 1]) \ar[l]_{i_0^{-1}\hspace{15pt}} \ar[r]^{\hspace{15pt}i_1^{-1}} \ar[d]_{\pi_{M \times [0, 1]!}} & \Sh_{\widetilde{L}_1}(M \times (-\infty,+\infty)) \ar[d]_{\pi_{M!}} \\
    \Sh_{\Lambda_0}(M) & \Sh_{\Lambda_H}(M \times [0, 1]) \ar[l]_{i_0^{-1}\hspace{15pt}} \ar[r]^{\hspace{15pt}i_1^{-1}} & \Sh_{\Lambda_1}(M).
    }\]
    Note that the composition in the first row is $\Phi_{\widetilde{H}}$, while the composition in the second row is $\Phi_H$. By proper base change formula we have
    $$\Phi_H(\cF_0) = \Phi_H(\pi_{M!}\widetilde{\cF}_0) \simeq \pi_{M!}(\Phi_{\widetilde{H}}\widetilde{\cF}_0) \simeq \pi_{M!}\widetilde{\cF}_1 = \cF_1,$$
    which completes the proof. 
    
    Finally, we remark that sheaves in $\Sh_{\widetilde{L}_t}(M \times \bR)$ can be naturally extended to $\Sh_{\widetilde{L}_0}(M \times [-\infty,+\infty])$, and we can still get the commutative diagram as above.
\end{proof}

\begin{remark}
    There is another sheaf quantization functor following \cite{NadShen20} $\Loc_\epsilon(\cI_L) \hookrightarrow \Sh_\Lambda(M)$ independent of the work of S.~Guillermou and X.~Jin-D.~Treumann. In general, we do not know whether the two constructions coincide. However, in all examples in this paper, one can verify that the two constructions yield the same result.
\end{remark}

\subsection{Moduli space and framed sheaves}\label{appen:sheaf-moduli}
    The microlocal rank is a discrete piece of data that can be specified to choose components in the derived stack of pseudo-perfect objects in $\Sh_{\La}^{cpt}(M)$. In fact, we also study the moduli space of sheaves with additional framing data. In this section, we will interpret framing data from different perspectives, focusing on the case of Legendrian links. Given an exact Lagrangian filling $L$, not necessarily embedded, with Legendrian lift $\widetilde{L}$, we will also define microlocal merodromy functions of framed sheaves with singular support on $\widetilde{L}$ the along relative 1-cycles.

\subsubsection{Moduli space of framed microlocal sheaves}
The first moduli space that we focused on is defined as follows: 

%Following \cite{ToenVaquie07}, we can consider the classical moduli space of microlocal rank~$r$ sheaves $\cM_r(\Sigma, \Lambda)_0$ following \cite{STWZ19}*{Definition/Proposition 2.16} and obtain an Artin stack.{\RC This needs to be set up properly.}

\begin{definition}\label{def:moduli-sh}
    Let $\Sigma$ be a surface with marked points\footnote{In the whole section, marked points always mean the punctures on the surface. In Legendrian contact homology, people also use the term marked points for a different meaning. We will call those points base points to avoid confusion.} and $\Lambda \subset T^{*,\infty}\Sigma$ a non-empty Legendrian link equipped with Maslov potential. By definition, the derived stack $\bR \cM(\Sigma, \Lambda)$ is the derived moduli stack of the finite type dg-category $\Sh^{cpt}_\Lambda(\Sigma)$, as defined in \cite{ToenVaquie07}, which parametrizes pseudoperfect modules $\Sh^{pp}_\Lambda(\Sigma)$.\\
    
    Let $\bR \cM_r(\Sigma, \Lambda)_0$ be the locus in $\bR \cM(\Sigma, \Lambda)$ parametrizing microlocal rank~$r$ sheaves in $\Sh_\Lambda^{pp}(\Sigma)$ with acyclic stalks at the marked points. Then the Artin stack $\cM_r(\Sigma, \Lambda)_0$ is the 1-rigid locus of the truncated derived moduli of objects $t_0\bR \cM_r(\Sigma, \Lambda)_0$.
\end{definition}

\noindent The fact that $\Sh^{cpt}_\Lambda(\Sigma)$ is a smooth dg-category when $\Lambda$ is subanalytic isotropic is proved in \cite[Section 4.5]{GPS18b}; see also Appendix \ref{appen:cat-sheaf}.

\begin{remark}
    The moduli $\cM_r(\Sigma, \Lambda)_0$ only parametrizes microlocal rank~$r$ sheaves with singular support on $\Lambda$ with no negative self extensions, as it is truncated and the 1-rigid locus is selected. For instance, the $m(5_2)$ Legendrian knot with maximal Thurston-Bennequin invariant that does not have a binary Maslov potential supports microlocal rank~1 sheaves with negative self extensions \cite{STZ17}*{Proposition 7.6}; these are not parametrized by that above Artin stack.
\end{remark}

    From the sheaf quantization result Theorem \ref{thm:JinTreu-app}, see Remark \ref{rem:Jin-Treu-app}, for an exact Lagrangian filling $L$ of $\Lambda$ we have the left adjoint to the fully faithful sheaf quantization functor
    $$\Sh^{cpt}_\Lambda(\Sigma) \lr \Sh^{cpt}_{\widetilde{L}}(\Sigma \times \bR) \lr \Loc^{cpt}(\widetilde{L}),$$
    as is explained in Appendix \ref{appen:cat-sheaf}. Therefore, due to the full faithfulness, we obtain an open embedding of moduli spaces by \cite[Theorem 0.2]{ToenVaquie07}, see also {\cite[Proposition 2.15]{STWZ19}}, which reads as follows:
    
\begin{prop}\label{prop:embed-moduli}
    Let $L \subset T^*\Sigma_\text{op}$ be an exact Lagrangian filling of $\Lambda \subset T^{*,\infty}\Sigma$. Then there is an open embedding of moduli spaces $\bR \cL oc(L) \hookrightarrow \bR \cM(\Sigma, \Lambda)$ and $\cL oc_r(L) \hookrightarrow \cM_r(\Sigma, \Lambda)_0$.
\end{prop}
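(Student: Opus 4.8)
\textbf{Proof proposal for Proposition \ref{prop:embed-moduli}.}

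The plan is to reduce the statement to the To\"en--Vaqui\'e representability theorem \cite{ToenVaquie07} together with the sheaf quantization functor of Theorem \ref{thm:JinTreu-app}. First I would recall from Remark \ref{rem:Jin-Treu-app} that the exact Lagrangian filling $L$, after the adjustment of Lemma \ref{lem:lowerexact} so that its primitive is proper and bounded from below, produces via its Legendrian lift $\widetilde{L}$ a chain of fully faithful functors $\Loc^{pp}_\epsilon(\cI_{\widetilde{L}}) \hookrightarrow \Sh^{pp}_{\widetilde{L}}(\Sigma_\text{op} \times \bR) \hookrightarrow \Sh^{pp}_\Lambda(\Sigma)$ on pseudo-perfect objects, and correspondingly, passing to left adjoints on the compact (finite type) dg-subcategories as in Appendix \ref{appen:cat-sheaf}, a sequence of functors of smooth dg-categories of finite type
$$\Sh^{cpt}_\Lambda(\Sigma) \longrightarrow \Sh^{cpt}_{\widetilde{L}}(\Sigma_\text{op} \times \bR) \longrightarrow \Loc^{cpt}(\widetilde{L}).$$
The key point is that the composite functor $\Phi\colon \Sh^{cpt}_\Lambda(\Sigma) \to \Loc^{cpt}(\widetilde{L})$ admits a fully faithful right adjoint $\Psi = \Psi_{\widetilde{L}}$ (the sheaf quantization functor), by Theorem \ref{thm:JinTreu-app}.(1)--(2): indeed $\Psi$ is fully faithful, hence its left adjoint $\Phi$ exhibits $\Loc^{cpt}(\widetilde{L})$ as a quotient of $\Sh^{cpt}_\Lambda(\Sigma)$, i.e.\ $\Phi$ is a localization (Bousfield) functor. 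Since $\widetilde{L}$ deformation-retracts onto $L$, we identify $\Loc^{cpt}(\widetilde{L}) \simeq \Loc^{cpt}(L)$ as in footnote to Definition \ref{def:sheafquant}.

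Next I would invoke \cite[Theorem 0.2]{ToenVaquie07}: for a dg-category of finite type $\mathcal{T}$ there is a locally geometric derived stack $\bR\mathcal{M}_{\mathcal{T}}$, locally of finite presentation, parametrizing pseudo-perfect $\mathcal{T}$-modules; moreover $\bR\mathcal{M}_{(-)}$ is functorial and sends a localization functor $\Phi\colon \mathcal{T} \to \mathcal{T}'$ to an open immersion $\bR\mathcal{M}_{\mathcal{T}'} \hookrightarrow \bR\mathcal{M}_{\mathcal{T}}$. Applying this to $\Phi\colon \Sh^{cpt}_\Lambda(\Sigma) \to \Loc^{cpt}(L)$ yields the open embedding $\bR\cL oc(L) = \bR\mathcal{M}_{\Loc^{cpt}(L)} \hookrightarrow \bR\mathcal{M}_{\Sh^{cpt}_\Lambda(\Sigma)} = \bR\cM(\Sigma,\Lambda)$; concretely, a point of $\bR\cL oc(L)$ is a pseudo-perfect $\Loc^{cpt}(L)$-module $\mathcal{L}$, and its image is $\Psi(\mathcal{L}) \in \Sh^{pp}_\Lambda(\Sigma)$, which is open since $\Psi$ is fully faithful with left adjoint $\Phi$ (so the essential image of $\Psi$ is a reflective, hence open, substack of the moduli of objects). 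For the rank-$r$ statement one restricts to the locus where the microlocal rank along $\widetilde{L}$ equals $r$ and the stalks at the marked points are acyclic: a rank-$r$ local system $\mathcal{L} \in \cL oc_r(L)$ has $\Psi(\mathcal{L})$ of microlocal rank $r$ by the relation $m_{\widetilde{L}} \circ \Psi_{\widetilde{L}} \simeq \mathrm{id}$ in Theorem \ref{thm:JinTreu-app}.(1), and has acyclic stalk at $\Sigma_\text{op} \times \{-\infty\}$, which after the proper push-forward $\pi_{\Sigma,!}$ of part (2) forces acyclicity of the stalk at the marked points; finally the rank-$r$ locus with acyclic marked-point stalks is $1$-rigid (no negative self-extensions), so the derived structure is classical there and the open embedding descends to $\cL oc_r(L) \hookrightarrow \cM_r(\Sigma, \Lambda)_0$.

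The main obstacle I anticipate is verifying carefully that $\Phi$ is genuinely a \emph{localization} functor (equivalently that $\Psi$ is fully faithful \emph{and} that its left adjoint preserves compact objects, so that the induced map on moduli of pseudo-perfect objects is an open immersion rather than merely a monomorphism). This is exactly the content packaged into Theorem \ref{thm:JinTreu-app} and Remark \ref{rem:Jin-Treu-app}, together with the smoothness of $\Sh^{cpt}_\Lambda(\Sigma)$ and $\Loc^{cpt}(\widetilde{L})$ from \cite[Section 4.5]{GPS18b} and \cite[Corollary 4.25]{GPS18a}; once those are in hand, the functoriality of $\bR\mathcal{M}_{(-)}$ under localizations and the openness of reflective substacks (see also \cite[Proposition 2.15]{STWZ19} for the analogous statement in the case of conjugate surfaces) make the rest formal. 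A secondary, purely bookkeeping point is to confirm that the identification $\Loc^{cpt}(\widetilde{L}) \simeq \Loc^{cpt}(L)$ is compatible with the microlocal rank stratification and the acyclicity conditions, which is immediate from the deformation retraction and the computations in Appendix \ref{appen:microlocal}.
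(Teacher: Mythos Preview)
Your proposal is correct and follows essentially the same approach as the paper: the paper's argument (given in the paragraph immediately preceding the proposition) also passes to the left adjoint of the fully faithful sheaf quantization functor on compact objects and then invokes \cite[Theorem 0.2]{ToenVaquie07} together with \cite[Proposition 2.15]{STWZ19} to obtain the open embedding. You have simply supplied more detail on why $\Phi$ is a localization and how the rank-$r$ and acyclicity conditions descend, which the paper leaves implicit.
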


    The framed analogues are discussed as follows. First, we define the (micro)framed moduli space using trivializations of microstalks at a point instead of sheaves locally defined near a point. This is closer in relation to both the definition of base points in Legendrian contact homology and the definition of decorated moduli space of sheaves of \cite{CasalsWeng22}*{Definition 2.37}. We introduce the following new definition:

\begin{definition}\label{def:moduli-sh-mufr}
    A microframing $(T, t_T)$ of microlocal rank~$r$ sheaves in $\cM_r(\Sigma, \Lambda)_0$ consists of a finite subset $T \subset \Lambda$ and a rank~$r$ local system $t_T \in \cL oc_r(T)$. The microframed moduli space of microlocal rank~$r$ sheaves with singular support in $\Lambda$ with respect to the framing $(T, \cF_T)$ is the fiber product
    \[\xymatrix{
    \cM^{\mu,\textit{fr}}_r(\Sigma, \Lambda)_0 \ar[r] \ar[d] & \cM_r(\Sigma, \Lambda)_0 \ar[d]^{m_{\Lambda,T}} \\
    \mathrm{Spec}\,\Bbbk \ar[r]^{t_T} & \cL oc_r(T).
    }\]
\end{definition}
\begin{remark}
    The map between moduli spaces $\cM_r(\Sigma, \Lambda)_0 \rightarrow \cL oc_r(T)$ is induced by the left adjoint of the microlocalization functor and restriction functor (see Appendix \ref{appen:cat-sheaf})
    $$m_{\Lambda,T}^L: \Loc^{cpt}(T) \rightarrow \Loc^{cpt}(\Lambda) \rightarrow \Sh_\Lambda^{cpt}(M).$$
\end{remark}
\begin{remark}
    When $\Lambda$ is equipped with a relative spin structure, or equivalently a section $s$ in $\cI_{\Lambda \times \bR}$ over the 2-skeleton, then strictly speaking, the microframing is defined over the section at $T$ as $t_T \in \cL oc_r(s({T}))$.
\end{remark}

    Similarly, given an exact Lagrangian filling $L$ of $\Lambda$, consider the moduli space of (micro)framed local systems $\cL oc_r^\textit{fr}(L)$ as the fiber product
    \[\xymatrix{
    \cL oc^{\textit{fr}}_r(L) \ar[r] \ar[d] & \cL oc_r(L) \ar[d] \\
    \mathrm{Spec}\,\Bbbk \ar[r]^{t_T} & \cL oc_r(T).
    }\]
    Then the sheaf quantization result Theorem \ref{thm:JinTreu-app} and Proposition \ref{prop:embed-moduli} asserts the following open embedding of (micro)framed moduli spaces. 
    
\begin{cor}
    Let $L \subset T^*\Sigma_\text{op}$ be an exact Lagrangian filling of $\Lambda \subset T^{*,\infty}\Sigma$. Then there is an open embedding of moduli spaces $\cL oc^{\textit{fr}}_r(L) \hookrightarrow \cM_r^{\mu,\textit{fr}}(\Sigma, \Lambda)_0$.
\end{cor}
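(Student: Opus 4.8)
The statement is the framed analogue of Proposition \ref{prop:embed-moduli} (i.e.\ of Proposition \ref{prop:embed-moduli} in the excerpt, coming from \cite{ToenVaquie07}), so the entire strategy is to deduce it from the unframed open embedding by taking fiber products over $\cL oc_r(T)$. First I would record the two defining pullback squares: the square defining $\cM_r^{\mu,\textit{fr}}(\Sigma,\Lambda)_0$ along the microstalk-restriction map $m_{\Lambda,T}\colon \cM_r(\Sigma,\Lambda)_0 \to \cL oc_r(T)$ and the point $t_T\colon \Spec\Bbbk \to \cL oc_r(T)$, and the square defining $\cL oc_r^{\textit{fr}}(L)$ along the analogous restriction map $\cL oc_r(L) \to \cL oc_r(T)$ and the same point $t_T$. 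The key compatibility that must be checked is that the open embedding $\cL oc_r(L) \hookrightarrow \cM_r(\Sigma,\Lambda)_0$ of Proposition \ref{prop:embed-moduli} intertwines the two restriction maps to $\cL oc_r(T)$, i.e.\ that the square
\[
\xymatrix{
\cL oc_r(L) \ar[r] \ar[d] & \cM_r(\Sigma,\Lambda)_0 \ar[d]^{m_{\Lambda,T}} \\
\cL oc_r(T) \ar@{=}[r] & \cL oc_r(T)
}
\]
commutes. This is exactly the statement that, under the sheaf quantization functor of Theorem \ref{thm:JinTreu-app}, the microstalk of the quantized sheaf $\cF$ at a base point $p \in T \subset \Lambda$ is naturally identified with the stalk at $p$ of the local system on $L$ (via the relation $m_{\widetilde L}\circ \Psi_{\widetilde L}\simeq \mathrm{id}$ in Theorem \ref{thm:JinTreu-app}.(1), together with the stabilization/restriction map $\Loc_\epsilon(\cI_{\widetilde L}) \to \Loc_\epsilon(\cI_\Lambda)$ discussed after Corollary \ref{cor:Gui}); one then uses that microlocalization commutes with the proper pushforward $\pi_{\Sigma,!}$ on the nose at a generic point of $\Lambda$, which is the content of the combinatorial computation in Section \ref{sec:microlocal}. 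I would phrase this as: the composite $\cL oc_r(L) \hookrightarrow \cM_r(\Sigma,\Lambda)_0 \xrightarrow{m_{\Lambda,T}} \cL oc_r(T)$ equals the geometric restriction-to-$T$ map $\cL oc_r(L) \to \cL oc_r(T)$ used in the pullback square defining $\cL oc_r^{\textit{fr}}(L)$.

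Granting that compatibility, the argument is formal. Both framed moduli are homotopy fiber products over $\cL oc_r(T)$ of a diagram in which one leg ($t_T$ from $\Spec\Bbbk$) is shared, and the map between the two other legs, $\cL oc_r(L) \hookrightarrow \cM_r(\Sigma,\Lambda)_0$, is an open immersion that is a map \emph{over} $\cL oc_r(T)$. Pullback of an open immersion along any morphism is again an open immersion (openness is stable under base change in the category of (derived) Artin stacks), and base change of pullback squares gives that $\cL oc_r^{\textit{fr}}(L) \to \cM_r^{\mu,\textit{fr}}(\Sigma,\Lambda)_0$ is the base change of $\cL oc_r(L)\hookrightarrow \cM_r(\Sigma,\Lambda)_0$ along $\cM_r^{\mu,\textit{fr}}(\Sigma,\Lambda)_0 \to \cM_r(\Sigma,\Lambda)_0$; hence it is an open embedding. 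Concretely: $\cL oc_r^{\textit{fr}}(L) \simeq \cL oc_r(L)\times_{\cL oc_r(T)}\Spec\Bbbk \simeq \bigl(\cL oc_r(L)\times_{\cM_r(\Sigma,\Lambda)_0}\cM_r(\Sigma,\Lambda)_0\bigr)\times_{\cL oc_r(T)}\Spec\Bbbk \simeq \cL oc_r(L)\times_{\cM_r(\Sigma,\Lambda)_0}\cM_r^{\mu,\textit{fr}}(\Sigma,\Lambda)_0$, and the last projection to $\cM_r^{\mu,\textit{fr}}(\Sigma,\Lambda)_0$ is the desired open immersion. If one also wants the relatively-spin refinement, the same diagram chase works verbatim with $\cL oc_r(s(T))$ in place of $\cL oc_r(T)$, using the section $s$ over the $2$-skeleton from Lemma \ref{lem:twist-loc-lift}; this only affects the target of the restriction map, not the formal structure of the pullbacks.

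The main obstacle is genuinely the compatibility square above, i.e.\ checking that Jin--Treumann sheaf quantization identifies the microlocal-stalk-at-$p$ functor on $\Sh^{pp}_\Lambda(\Sigma)$ with the honest-stalk-at-$p$ functor on $\Loc^{pp}(L)$, compatibly in families. The clean way to see this is: $m_{\widetilde L}\circ \Psi_{\widetilde L}\simeq \mathrm{id}$ gives it for the microlocalization $m_{\widetilde L}$ valued in $\Loc_\epsilon(\cI_{\widetilde L})$; then one checks that for a base point $p$ on a generic stratum of $\Lambda\subset T^{*,\infty}\Sigma$ the microstalk of $\pi_{\Sigma,!}\widetilde{\cF}$ at $p$ agrees with the microstalk of $\widetilde{\cF}$ at the corresponding point of $\widetilde L\subset J^1\Sigma$ — this is immediate from the proper-pushforward description $\pi_{\Sigma,!}\widetilde{\cF} = i_{\Sigma\times\{+\infty\}}^{-1}j_{\Sigma\times\bR,*}\widetilde{\cF}$ in Theorem \ref{thm:JinTreu-app}.(2) and the fact that near a generic point of $\Lambda$ the front of $\widetilde L$ is a graph, so the cone computing the microstalk is unchanged under $\pi_{\Sigma,!}$. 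Finally, functoriality in families (so that the square commutes as a diagram of moduli stacks, not just pointwise) follows because all functors involved — $\Psi_{\widetilde L}$, $\pi_{\Sigma,!}$, the left adjoints producing the maps of moduli, and microlocalization — are functors of dg-categories, so the induced maps on derived moduli stacks à la \cite{ToenVaquie07} automatically fit into the commuting square. Everything else in the proof is a two-line invocation of stability of open immersions under base change.
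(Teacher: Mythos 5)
Your proposal is correct and follows essentially the same route as the paper, which offers no written argument beyond invoking Theorem \ref{thm:JinTreu-app} and Proposition \ref{prop:embed-moduli}: the corollary is obtained by base-changing the unframed open embedding along the fiber-product squares defining the two framed moduli. You correctly identify the only substantive point — that the embedding $\cL oc_r(L) \hookrightarrow \cM_r(\Sigma,\Lambda)_0$ is a map over $\cL oc_r(T)$, which follows from $m_{\widetilde L}\circ \Psi_{\widetilde L}\simeq \mathrm{id}$ and the compatibility of microstalks with $\pi_{\Sigma,!}$ at a generic point of $\Lambda$ — and the rest is stability of open immersions under base change, exactly as you say.
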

    
    Here we use the terminology of microframings in order to distinguish them from the definition of framing data in \cite{STWZ19}*{Section 2.4}, which reads as follows:\\
  
\begin{definition}[\cite{STWZ19}*{Definition 2.19}]\label{def:moduli-sh-fr}
    A framing $(U, \cF_U)$ of microlocal rank~$r$ sheaves in $\cM_r(\Sigma, \Lambda)_0$ consists of an open subset $U \subset \Sigma$ and a microlocal rank~$r$ sheaf $\cF_U \in \cM_r(U, \Lambda \cap T^{*,\infty}U)_0$. The framed moduli space of microlocal rank~$r$ sheaves with singular support in $\Lambda$ with respect to the framing $(U, \cF_U)$ is the fiber product
    \[\xymatrix{
    \cM^\textit{fr}_r(\Sigma, \Lambda)_0 \ar[r] \ar[d] & \cM_r(\Sigma, \Lambda)_0 \ar[d] \\
    \mathrm{Spec}\,\Bbbk \ar[r]^{\cF_U\hspace{30pt}} & \cM_r(U, \Lambda \cap T^{*,\infty}U)_0.
    }\]
\end{definition}
\begin{remark}
    We can always consider a sufficiently small contractible open subset $U \subset \Sigma$ containing $\pi(T)$, choose $\cF_U$ such that $m_{\Lambda,P}(\cF_U) = \cF_T$, and ask for these framing data and microframing data whether $\cM^{\textit{fr},\mu}_r(\Sigma, \Lambda)_0 \simeq \cM^\textit{fr}_r(\Sigma, \Lambda)_0$. In general, this is false. To wit, we consider the right-handed Legendrian trefoil with maximal Thurston-Bennequin number whose front projection admits a Maslov potential with values $-1, 0, 1$, and let $p \in \Lambda$ as in Figure \ref{fig:frame-muframe}. Then
    $$\cM^\textit{fr}_r(\bD^2, \Lambda)_0 \rightarrow \cM_r(\bD^2, \Lambda)_0$$
    is not surjective, while $\cM^{\mu,\textit{fr}}_r(\bD^2, \Lambda)_0 \rightarrow \cM_r(\bD^2, \Lambda)_0$ is surjective.
\end{remark}
\begin{figure}[h!]
    \centering
    \includegraphics[width=0.55\textwidth]{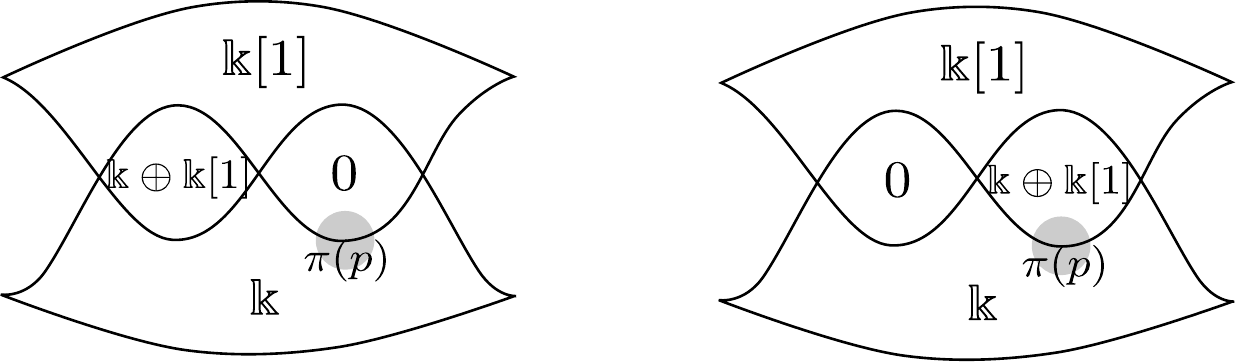}
    \caption{The right-handed Legendrian trefoil with maximal Thurston-Bennequin number whose front projection admits a Maslov potential with values $-1, 0, 1$. The restrictions of two sheaves in the grey region $U$ around $\pi(p) \in \pi(\Lambda)$ are clearly different.}\label{fig:frame-muframe}
\end{figure}

\noindent Despite the fact that a microframing is not always a framing, we can still discuss two simple cases where these notions agree. For connected Legendrians, there is always an appropriate choice of a framing in the sense of Definition \ref{def:moduli-sh-fr} that can be understood as simply choosing base point and specifying a trivialization of the microstalks at the base point.
%(one can compare the choice of a base point in Legendrian contact homology and their augmentations \cite{ENS02LCHorient}).

\begin{lemma}\label{lem:frame-markpt-gen}
    Let $\Lambda \subset T^{*,\infty}\Sigma$ be a Legendrian knot equipped with Maslov potential. Let $\cM_r(\Sigma, \Lambda)_0$ be the moduli space of microlocal rank~$r$ sheaves with acyclic stalk in a specific stratum $V$ of $\Sigma \backslash \pi(\Lambda)$. Let $p \in \Lambda$ be a base point such that $\pi(p)$ is adjacent to $V$, and $U$ be a small neighbourhood of $\pi(p)$ such that $\pi(\Lambda) \cap U$ is smoothly embedded. Then there exists a framing data $(U, \cF_U)$ such that $\cM^\textit{fr}_r(\Sigma, \Lambda)_0 \cong \cM^{\mu,\textit{fr}}_r(\Sigma, \Lambda)_0$.
\end{lemma}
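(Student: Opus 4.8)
The statement is a local normalization: for a Legendrian knot $\Lambda\subset T^{*,\infty}\Sigma$ with a Maslov potential, a fixed stratum $V$ where stalks are acyclic, and a base point $p\in\Lambda$ adjacent to $V$ with $\pi(\Lambda)\cap U$ a smooth arc, one wants a framing datum $(U,\cF_U)$ in the sense of Definition \ref{def:moduli-sh-fr} such that the framed moduli agrees with the microframed moduli of Definition \ref{def:moduli-sh-mufr} for the microframing $(T,t_T)$ with $T=\{p\}$. The plan is to reduce the comparison of the two fiber products to a comparison of the two base maps
$$\cM_r(\Sigma,\Lambda)_0\xrightarrow{\;m_{\Lambda,p}\;}\cL oc_r(\{p\}),\qquad \cM_r(\Sigma,\Lambda)_0\xrightarrow{\;\rho_U\;}\cM_r(U,\Lambda\cap T^{*,\infty}U)_0,$$
where $\rho_U$ is restriction to $U$. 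Since fiber products along isomorphic base targets over a common total space agree, it suffices to produce an isomorphism $\cM_r(U,\Lambda\cap T^{*,\infty}U)_0\cong\cL oc_r(\{p\})$ intertwining $\rho_U$ with $m_{\Lambda,p}$, and to choose $\cF_U$ to be the point of $\cM_r(U,\Lambda\cap T^{*,\infty}U)_0$ corresponding to $t_T$ under this isomorphism.

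\textbf{Key steps.} First I would shrink $U$ so that $U$ is a contractible ball, $\pi(\Lambda)\cap U$ is a single smooth hypersurface separating $U$ into two pieces $U_d,U_u$ (with covector pointing from $U_d$ to $U_u$, as in Example \ref{ex:sheafcombin}, left), and the piece adjacent to $V$, say $U_d$, is contained in $V$; this is possible because $\pi(p)$ is adjacent to $V$ and $\pi(\Lambda)\cap U$ is smoothly embedded. Second, I would invoke the combinatorial model of Example \ref{ex:sheafcombin}: a microlocal rank~$r$ sheaf on $U$ with singular support in $\Lambda\cap T^{*,\infty}U$ is the datum of a map $F_d\to F_u$ of perfect complexes whose cone is the microstalk, concentrated in the degree dictated by the Maslov potential and of rank $r$; requiring acyclicity of the stalk in $U_d$ (inherited from acyclicity in $V$) forces $F_d\simeq0$, so the sheaf is determined by $F_u$, which is then itself the microstalk at $p$. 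This gives a canonical equivalence $\cM_r(U,\Lambda\cap T^{*,\infty}U)_0\cong\cL oc_r(\{p\})$, and by construction of the microlocal functor $m_{\Lambda,p}(\cF)=\mathrm{Cone}(F_d\to F_u)$ (Section \ref{sec:microlocal}) this equivalence intertwines the two base maps: restricting a global sheaf to $U$ and passing to $F_u$ is, under acyclicity on $U_d$, the same complex as the microstalk at $p$. Third, I would take $\cF_U$ to be the object of $\cM_r(U,\Lambda\cap T^{*,\infty}U)_0$ corresponding to $t_T$, and then the two fiber products defining $\cM^{\textit{fr}}_r(\Sigma,\Lambda)_0$ and $\cM^{\mu,\textit{fr}}_r(\Sigma,\Lambda)_0$ are pullbacks of the same diagram, hence canonically isomorphic.

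\textbf{Main obstacle.} The delicate point is making sure the acyclicity hypothesis genuinely rigidifies $\cM_r(U,\Lambda\cap T^{*,\infty}U)_0$ down to $\cL oc_r(\{p\})$ \emph{as stacks}, not merely on points: one must check that the map $F_d\to F_u$ really has $F_d$ contractible for every test family, which uses that the stalk in the stratum $U_d\subseteq V$ is the acyclic stalk cut out in the definition of $\cM_r(\Sigma,\Lambda)_0$, and that the combinatorial description of Example \ref{ex:sheafcombin} is functorial in the parameterizing scheme (this is where smoothness of the Legendrian arc and the MacPherson-type description referenced after Example \ref{ex:sheafcombin} are used). A secondary check is that one may indeed arrange $U$ so that the piece of $U\setminus\pi(\Lambda)$ adjacent to $\pi(p)$ on the side of $V$ lies in $V$ and carries an acyclic stalk; this is immediate once $U$ is small enough, since $V$ is an open stratum and $\pi(p)\in\partial V$. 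Everything else — compatibility of Maslov potentials, the degree shift in the microstalk, and the fiber-product bookkeeping — is routine and follows the template of the surjectivity discussion preceding the lemma.
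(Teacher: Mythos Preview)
Your approach is correct and essentially the same as the paper's: both reduce to the observation that, on the small neighbourhood $U$, the acyclicity condition on the $V$-side forces the local moduli $\cM_r(U,\Lambda\cap T^{*,\infty}U)_0$ to collapse to $\cL oc_r(\{p\})$, so that specifying a framing isomorphism with a fixed $\cF_U$ is exactly the same data as trivializing the microstalk. The only minor point is that your ``say $U_d$'' silently assumes the covector at $p$ points away from $V$; the paper handles both orientations explicitly by writing $\cF_U$ as $\Bbbk^r_{\bR\times\bR_{\geq 0}}[d(p)]$ or $\Bbbk^r_{\bR\times\bR_{>0}}[d(p)-1]$ according to the case, but the other case (covector toward $V$, so $F_u\simeq 0$ and the microstalk is $F_d[1]$) is entirely symmetric and your argument adapts immediately.
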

\begin{proof}
    Consider $\cF = \Bbbk^r_{\bR \times \bR_{\geq 0}}[d(p)]$ when the codirection of $p$ is pointing toward the stratum $V$ and $\cF = \Bbbk^r_{\bR \times \bR_{>0}}[d(p)-1]$ otherwise, where $d(p) \in \bZ$ is the Maslov potential. Then for any $\cF \in Sh^b_\Lambda(\Sigma)$ of microlocal rank~$r$, there is always an isomorphism $\cF_U \xrightarrow{\sim} \cF$ and the choice of the isomorphism is equivalent to the the $t: m_\Lambda(\cF)_p \xrightarrow{\sim} \Bbbk^r$.
\end{proof}
\begin{remark}
    Reduction of base points and the relation between our choice of base points and the choice in \cite{STWZ19}*{Section 3.2} for Legendrian $(n,k)$-torus links $\Lambda_{n,k} \subset T^{*,\infty}\bD^2$ with maximal Thurston-Bennequin numbers (where $\Lambda_{n,k}$ are realized as Legendrian satellites of the outward unit conormal bundle of a disk following Example \ref{ex:braid-conj-red}) is as follows. In \cite{STWZ19}*{Section 3.2} the base points are chosen so that their front projection adjacent to the unbounded stratum and the strata corresponding to white vertices in the $(n+k)$-gon in Figure \ref{fig:braidgraphpolygon}). The stalk in the unbounded stratum is assigned to be acyclic. Hence in that case the framing data is also equivalent to the microframing data.
\end{remark}

    The above case is not sufficient for applications. Now we focus on Legendrian braid closures and discuss the case of multiple base points.
    %(one can compare the choice of multiple base points in Legendrian contact homology and their augmentations \cite{GaoShenWeng20}).

\begin{prop}\label{prop:frame-markpt-app}
    Let $\Lambda \subset T^{*,\infty}\Sigma$ be a Legendrian positive braid closure (either cylindrical closure or rainbow closure) equipped with Maslov potential, $D \subset \Sigma$ an open disk such that $\Lambda \cap T^*D$ consist of $n$ parallel strands. Let $T = \{p_1, ..., p_n\} \subset \Lambda$ be a collection of base points in $D$, one on each strand, $U_j$ a sufficiently small neighbourhood of $\pi(p_j) \in \Sigma$ and $U = \bigcup_{j=1}^mU_j$. Then there exists a framing data such that $\cM^\textit{fr}_r(\Sigma, \Lambda)_0 \cong \cM^{\mu,\textit{fr}}_r(\Sigma, \Lambda)_0$.
\end{prop}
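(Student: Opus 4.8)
The plan is to reduce the statement to Lemma \ref{lem:frame-markpt-gen} applied locally, one strand at a time, and then glue the local framings. Concretely, for each base point $p_j \in T$ lying on the $j$-th strand of $\Lambda \cap T^*D$, I would choose the local model sheaf $\cF_{U_j}$ on $U_j$ exactly as in the proof of Lemma \ref{lem:frame-markpt-gen}, namely $\cF_{U_j} = \Bbbk^r_{\bR \times \bR_{\geq 0}}[d(p_j)]$ or $\Bbbk^r_{\bR \times \bR_{>0}}[d(p_j)-1]$ depending on whether the codirection at $p_j$ points into the chosen adjacent stratum, where $d(p_j)$ is the value of the Maslov potential. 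Since the disks $U_j$ are pairwise disjoint (after shrinking) and each $U_j$ meets only the single strand through $p_j$, the framing datum $(U, \cF_U)$ with $\cF_U = \bigoplus_j \cF_{U_j}$ (i.e.\ $\cF_U|_{U_j} = \cF_{U_j}$) makes sense in $\cM_r(U, \Lambda \cap T^{*,\infty}U)_0 = \prod_j \cM_r(U_j, \Lambda \cap T^{*,\infty}U_j)_0$.

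The key step is then to verify that, with this choice, the two fiber products in Definitions \ref{def:moduli-sh-fr} and \ref{def:moduli-sh-mufr} agree. For any microlocal rank~$r$ sheaf $\cF \in \Sh^{pp}_\Lambda(\Sigma)$ with the prescribed acyclic stalks, the restriction $\cF|_{U_j}$ is, by the local classification of sheaves with singular support on a smooth hypersurface (Example \ref{ex:sheafcombin} and the microlocal rank condition), abstractly isomorphic to $\cF_{U_j}$; moreover, because $U_j$ is contractible and $\pi(\Lambda)\cap U_j$ is a single smooth arc, the space of isomorphisms $\cF_{U_j} \xrightarrow{\sim} \cF|_{U_j}$ is a torsor over $\mathrm{Aut}(\Bbbk^r) = \GL_r$, and the induced action on microstalks identifies this torsor with the torsor of trivializations $t_j \colon m_\Lambda(\cF)_{p_j} \xrightarrow{\sim} \Bbbk^r$. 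Taking the product over $j$, a framing in the sense of Definition \ref{def:moduli-sh-fr} (a global isomorphism $\cF_U \xrightarrow{\sim} \cF|_U$) is the same data as a microframing $(t_1,\dots,t_n)$ in the sense of Definition \ref{def:moduli-sh-mufr}, and this identification is compatible with families, hence gives an isomorphism of the two moduli stacks $\cM^\textit{fr}_r(\Sigma,\Lambda)_0 \cong \cM^{\mu,\textit{fr}}_r(\Sigma,\Lambda)_0$ over $\cM_r(\Sigma,\Lambda)_0$.

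The main obstacle I anticipate is purely bookkeeping rather than conceptual: one must check that the chosen adjacent strata for the different $p_j$ are mutually consistent with the acyclicity hypothesis defining $\cM_r(\Sigma,\Lambda)_0$ (so that the degree shifts by the Maslov potential are the right ones and the local model $\cF_{U_j}$ indeed has the correct microstalk direction), and that the product decomposition $\cM_r(U,\Lambda\cap T^{*,\infty}U)_0 = \prod_j \cM_r(U_j, \cdot)_0$ is valid, which uses that $U$ is a disjoint union and that a constructible sheaf on a disjoint union is the same as a tuple of constructible sheaves on the pieces. For a positive braid closure, satellited either cylindrically or as a rainbow closure, one can arrange $D$ so that the $n$ strands are genuinely parallel and the Maslov potential restricted to $D$ takes the standard decreasing values, which makes the consistency check routine. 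Once this is in place, functoriality of fiber products gives the stated isomorphism, and one notes as before (cf.\ the remark following Lemma \ref{lem:frame-markpt-gen}) that this is exactly the identification used to compare with the conventions of \cite{STWZ19}*{Section 3.2}.
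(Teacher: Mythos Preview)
Your reduction to Lemma \ref{lem:frame-markpt-gen} does not go through, and the obstacle is not just bookkeeping. That lemma requires the base point $p$ to be adjacent to the stratum $V$ with \emph{acyclic} stalk; this is what makes the local model $\Bbbk^r_{\bR\times\bR_{\ge 0}}[d(p)]$ correct and forces its automorphism group to be $\GL_r$. In the braid closure setting, only the outermost strand $p_1$ borders the acyclic region $D_0$. For $j\ge 2$, the neighbourhood $U_j$ sits between strata $D_{j-1}$ and $D_j$ where the stalks of any $\cF\in\cM_r(\Sigma,\Lambda)_0$ are $\Bbbk^{(j-1)r}$ and $\Bbbk^{jr}$ respectively, both nonzero. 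Hence $\cF|_{U_j}$ is never isomorphic to your chosen $\cF_{U_j}$, and even with the correct local model (an injection $\Bbbk^{(j-1)r}\hookrightarrow\Bbbk^{jr}$) the automorphism group is a parabolic much larger than $\GL_r$. Your torsor claim therefore fails for $j\ge 2$, and a framing on the disjoint union $U=\bigsqcup U_j$ records strictly more data than a microframing.

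The paper's proof addresses exactly this: it takes the correct local models $\cF_i$ with stalks $\Bbbk^{(i-1)r}\hookrightarrow\Bbbk^{ir}$ and shows by induction that the trivializations $T_i^\pm$ of the stalks on $U_i\cap D_{i-1}$ and $U_i\cap D_i$ are uniquely determined by the microframings $t_1,\dots,t_i$, using contractibility of the intermediate strata $D_i$ to propagate $T_i^+$ to $T_{i+1}^-$ and the five lemma to pass across each strand. The coupling between the $U_j$'s through the common strata $D_j$ is precisely what kills the extra parabolic freedom and is what your disjoint-union argument misses.
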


\begin{figure}[h!]
    \centering
    \includegraphics[width=0.7\textwidth]{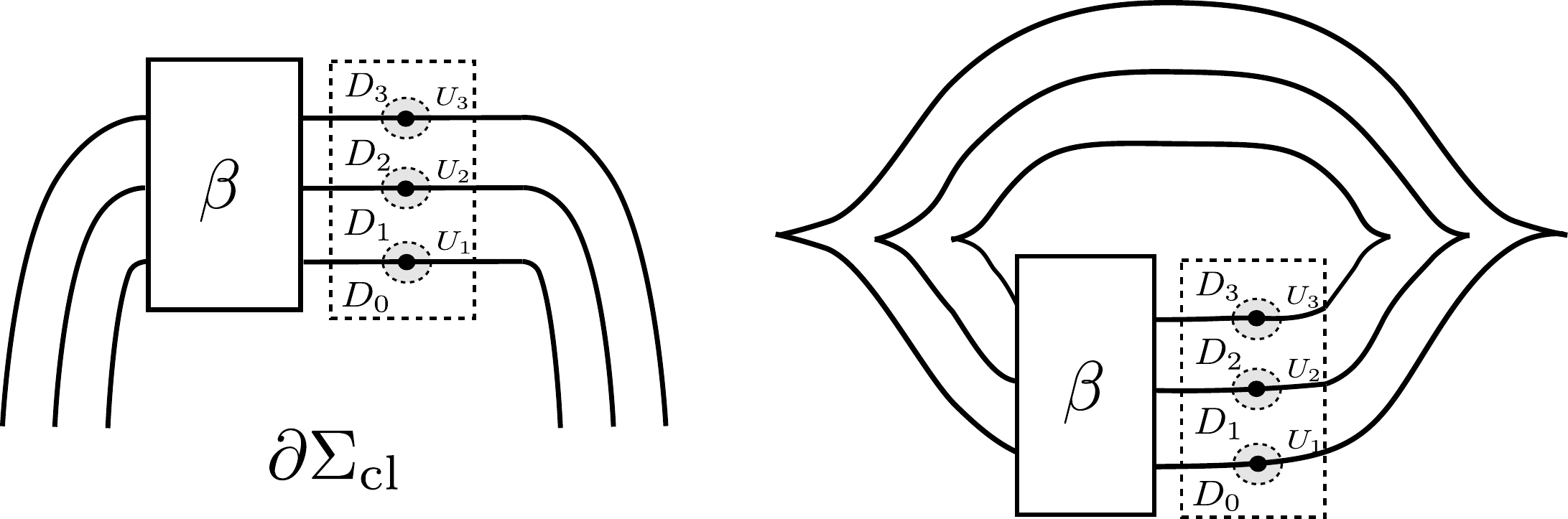}
    \caption{The Legendrian positive braid closure $\Lambda \subset T^{*,\infty}\Sigma$, the open disk $D \subset \Sigma$ specified in the proposition, the base points $T = \{p_1, \dots, p_n\} \subset \Lambda$ which determine the microframing of sheaves, and the small open neighbourhoods $U = \bigcup_{j=1}^mU_j$ around projections of base points which determine the framing of sheaves.}
    \label{fig:braid-markpt-appen}
\end{figure}

\begin{proof}%[\pfo Proposition \ref{prop:frame-markpt}]
    Given a Legendrian positive $n$-braid closure $\Lambda$ and suppose $\cF \in Sh^b_\Lambda(\Sigma)$ a microlocal rank 1 sheaf. Fix the base point $p_i$ lying on the $i$-th strand of $\Lambda$ and a small contractible neighbourhood $U_i$ of $\pi(p_i)$. Consider the rectangular regions $D_i\,(0 \leq i \leq n)$ lying in between the $(i-1)$ and $i$-th strand, as shown in Figure \ref{fig:braid-markpt}. Since the stalks of $\cF$ at marked points $\{x_1, ..., x_m\} \subset \Sigma$ are acyclic and the Maslov potential is fixed, following Example \ref{ex:sheafcombin} we know that
    $$m_\Lambda(\cF)_{p_i} \simeq \mathrm{Cone}(\Gamma(D_{i-1}, \cF) \rightarrow \Gamma(D_i, \cF)) \simeq \Bbbk^r,$$
    and hence by induction $\Gamma(D_i, \cF) \simeq \Bbbk^i$. Since $D_i$ are contractible, we have $\cF_{D_i} \simeq \Bbbk^i_{D_i}$.

    Define $\cF_i \in Sh^b_{\Lambda \cap T^{*,\infty}U_i}(U_i)$ to be the (unique) microlocal rank 1 sheaf such that
    $$\Gamma(U_i \cap D_{i-1}, \cF) = \Bbbk^{(i-1)r}, \,\, \Gamma(U_i \cap D_i, \cF) = \Bbbk^{ir},$$
    and the map $\Gamma(U_i \cap D_{i-1}, \cF) \rightarrow \Gamma(U_i \cap D_i, \cF)$ is the inclusion of the first $(i-1)r$ coordinates. We claim that there is a bijection between the trivialization data of the microstalk
    $$t_i: \, m_\Lambda(\cF)_{p_i} \xrightarrow{\sim} \Bbbk^r$$
    and the pairs $(\cF_i, \varphi_i)$ consisting of $\cF_i\,(1 \leq i \leq n)$ and specified quasi-isomorphisms
    $$\varphi_i: \, \cF|_{U_i} \xrightarrow{\sim} \cF_i|_{U_i}.$$

    First, suppose that the data $t_i\,(1 \leq i \leq n)$ are given. Then we prove by induction that there is a unique quasi-isomorphism
    $$T_i^-: \, \Gamma(U_i \cap D_{i-1}, \cF) \xrightarrow{\sim} \Bbbk^{(i-1)r}, \,\,\, T_i^+: \, \Gamma(U_i \cap D_i, \cF) \xrightarrow{\sim} \Bbbk^{ir}.$$
    Then since $U_i \cap D_{i-1}, U_i \cap D_i$ are contractible, there are canonical quasi-isomorphisms
    $$\cF|_{U_i \cap D_{i-1}} \xrightarrow{\sim} \cF_{i-1}|_{U_i \cap D_{i-1}}, \,\,\, \cF|_{U_i \cap D_i} \simeq \cF_i|_{U_i \cap D_i},$$
    which uniquely determine quasi-isomorphisms $\varphi_{i-1}: \, \cF|_{U_i} \xrightarrow{\sim} \cF_i|_{U_i}.$ Therefore, given the trivializations of microstalks $t_i\,(1\leq i \leq n)$, to get unique choices of $\varphi_i\,(1 \leq i\leq n)$, it suffices to prove by induction that the choices of $T_i^\pm\,(1 \leq i \leq n)$ are uniquely determined by $t_i\,(1\leq i \leq n)$. When $i = 0$ this is the data
    $$T_1^-: \,\Gamma(U_1 \cap D_0, \cF) \xrightarrow{\sim} 0,$$
    which is obviously unique. Suppose $T_i^-$ is uniquely determined. Then it follows from the five lemma that there exists a unique quasi-isomorphism $T_i^+$ such that the diagram commutes
    \[\xymatrix{
    \Gamma(U_i \cap D_{i-1}, \cF) \ar[r] \ar[d]^{T_i^-} & \Gamma(U_i \cap D_i, \cF) \ar[r] \ar[d]^{T_i^+} & m_\Lambda(\cF)_{p_i} \ar[r]^{\hspace{15pt}+1} \ar[d]^{t_i} & \\
    \Bbbk^{(i-1)r} \ar[r] & \Bbbk^{ir} \ar[r] & \Bbbk^r \ar[r]^{\hspace{15pt}+1} & .
    }\]
    Suppose now that $T_i^+: \Gamma(U_i \cap D_i, \cF) \xrightarrow{\sim} \Bbbk^{ir}$ is uniquely determined, then it follows from the fact that $D_i, U_i \cap D_i, U_{i+1} \cap D_i$ are contractible that there are canonical quasi-isomorphisms
    $$T_{i+1}^-: \Gamma(U_{i+1} \cap D_i, \cF) \xrightarrow{\sim} \Gamma(D_i, \cF) \xrightarrow{\sim} \Gamma(U_i \cap D_i, \cF) \xrightarrow{\sim} \Bbbk^{ir}.$$
    Therefore, the data $t_i\,(1\leq i \leq n)$ uniquely determines $T_i^\pm\,(0 \leq i\leq n)$ and hence the framing data $\varphi_i\,(1 \leq i \leq n)$.

    Next, assume that the data $\varphi_i\,(1 \leq i\leq n)$ are given. Then from the previous discussion, we know that the data $T_i^\pm\,(0 \leq i\leq n)$ are thus uniquely given. Hence it follows from the five lemma that there exists a unique trivialization
    $$t_i: \, m_\Lambda(\cF)_{p_i} \xrightarrow{\sim} \Bbbk^r$$
    that makes the diagram commutes. This implies that the bijection we have claimed.
\end{proof}
\begin{remark}
    Consider Legendrian $(n,k)$-torus links with maximal Thurston-Bennequin invariant, as in Subsection \ref{ssec:triangle-positive}). The framing data in \cite{STWZ19}*{Section 3.2} is equivalent to the microframing data at $(n+k)$ base points adjacent to the $(n+k)$ vertices coming from the plabic graph on the $(n+k)$-gon by Lemma \ref{lem:frame-markpt-gen}, while here we only choose $n$ base points. It is not difficult to check that when we move the $(n+k)$ base points counterclockwise to the region $D$ as in Proposition \ref{prop:frame-markpt-app} there will be at least one base point on each strand in $\Lambda_{n,k} \cap T^{*,\infty}D$. Therefore there is a forgetful map from the framed moduli of sheaves in \cite{STWZ19}*{Section 3.2} to our framed moduli space, whose fiber is an algebraic torus $(\Bbbk^\times)^k$.
\end{remark}

\subsubsection{Microlocal merodromies of sheaves}
    Our goal in this section is to geometrically introduce the notion of microlocal merodromies along a relative 1-cycle and thus define local coordinate functions on the moduli space $\cM_r^{\mu,\textit{fr}}(\Sigma, \Lambda)_0$; such construction was implicitly mentioned in \cite{STWZ19}*{Section 2.4 \& 5.3}, up to signs, and appeared in detail in \cite{CasalsWeng22}.

    Let $\Lambda \subset T^{*,\infty}\Sigma$ be a Legendrian link with Maslov potential. For a microframed sheaf $\cF \in \cM_r^{\mu,\textit{fr}}(\Sigma, \Lambda)_0$, we have specified trivializations of microstalks at base points on $\Lambda_i$. Given a Lagrangian filling $L$ of $\Lambda$, a relative 1-cycle in $H_1(L, \Lambda; \bZ)$ could end at any point in $\Lambda$. Therefore we need to choose paths in $\Lambda$ that cap off the relative 1-cycles.

\begin{definition}
    Let $\Sigma$ be a surface with marked points, and $\Lambda = \bigsqcup_{i=1}^m\Lambda_i \subset T^{*,\infty}\Sigma$ be a Legendrian link with a chosen relative spin structure $s$ and a $\pi_0$-surjective set of base points $T = \bigsqcup_{i=1}^mT_i$, i.e.~each component $\Lambda_i$ is equipped with a non-empty set of base points $T_i$.
    
    For a base point $p \in \Lambda_i$, the capping path $c_p$ is the smooth path on $\Lambda_i$ that goes following the orientation of $\Lambda_i$, starting from $p$ and ending at some $q$ without intersecting any other base points. 
\end{definition}
\begin{remark}
    Note that by Lemma \ref{lem:twist-loc-lift}, when choosing a spin structure, we fix the a section $s$ of the 2-skeleton from $\Lambda$ to $\cI_{\Lambda \times \bR}$. Therefore, the capping paths are equipped with liftings $s \circ c_p$ in the relative oriented frame bundle $\cI_{\Lambda \times \bR}$.
\end{remark}

%\begin{ex}\label{ex:cappath}
%    For a Legendrian knot $\Lambda \subset T^{*,\infty}\bD^2$, following Definition \ref{def:sign-knot}, we choose the spin structure on $\Lambda$ by considering sign points, such that the resulting section in $\cI_{\Lambda \times \bR}$ corresponds to choosing the null-cobordant spin structure on the knot.
%\end{ex}

    Using capping paths, it follows that, for a sheaf $\cF \in \cM_r^{\mu,\textit{fr}}(\Sigma, \Lambda)_0$, the microlocalization $m_{\Lambda}(\cF)$ is uniquely trivialized in each component of $\Lambda \backslash P$. Conversely, given a trivialization of $m_{\Lambda}(\cF)$ in each of these connected component, which is an open interval from $p$ to $q$), we obtain a trivialization of the microstalk on the rightmost endpoint $q$. Consequently, we conclude the following.

\begin{prop}
    Let $\Lambda \subset T^{*,\infty}\Sigma$ be a Legendrian link with a spin structure $s$ and a $\pi_0$-surjective set of base points $T$. Then points in $\cM_r^{\mu,\textit{fr}}(\Sigma, \Lambda)_0$ consist of the data $\cF \in \cM_r(\Sigma, \Lambda)_0$ together with a trivialization of $m_{\Lambda}(\cF)$ on each connected component of $\Lambda \backslash T$.
\end{prop}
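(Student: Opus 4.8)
The statement to prove is a direct unwinding of definitions: it says that a point of $\cM_r^{\mu,\textit{fr}}(\Sigma, \Lambda)_0$ is the same as a sheaf $\cF \in \cM_r(\Sigma, \Lambda)_0$ together with a trivialization of $m_\Lambda(\cF)$ on each connected component of $\Lambda \backslash T$. Recall that, by Definition \ref{def:moduli-sh-mufr}, a point of $\cM_r^{\mu,\textit{fr}}(\Sigma, \Lambda)_0$ is by definition a sheaf $\cF \in \cM_r(\Sigma, \Lambda)_0$ together with an isomorphism $t_T : m_{\Lambda,T}(\cF) \xrightarrow{\sim} \Bbbk^r$ at the finite set of base points $T$ (one isomorphism on each point of $T$, since $T$ is a discrete set). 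So the content of the proposition is the identification of these base-point trivializations with trivializations on the complementary intervals $\Lambda \backslash T$. The key observation is that $\Lambda \backslash T$ is a disjoint union of open intervals $I_p$, one for each base point $p$, where $I_p$ runs from $p$ (following the orientation) to the next base point $q$; this is exactly what the capping path $c_p$ traverses.

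First I would fix the relative spin structure $s$ on $\Lambda$, which by Lemma \ref{lem:twist-loc-lift} is a section of $\cI_{\Lambda \times \bR}$ over the 2-skeleton, and recall that this endows every capping path $c_p$ with a preferred lift $s \circ c_p$; hence parallel transport of $m_\Lambda(\cF)$ along $c_p$ is well-defined with signs. The forward direction then goes as follows: given a microframed sheaf $\cF$ with base-point trivializations $(t_p)_{p \in T}$, on the interval $I_p = \Lambda \backslash T$ emanating from $p$, the local system $m_\Lambda(\cF)|_{I_p}$ is constant (an interval is contractible) and the base-point trivialization $t_p$ at its left endpoint extends canonically, via the flat structure (using the lift $s \circ c_p$), to a trivialization of $m_\Lambda(\cF)$ on all of $I_p$. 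This gives a trivialization on each connected component of $\Lambda \backslash T$. For the reverse direction, a trivialization of $m_\Lambda(\cF)$ on the interval $I_p$ restricts, in particular, to its left endpoint $p$, yielding the base-point isomorphism $t_p : m_\Lambda(\cF)_p \xrightarrow{\sim} \Bbbk^r$; running over all $p \in T$ recovers the microframing. These two assignments are mutually inverse, again because $I_p$ is contractible so parallel transport is the unique extension of the endpoint data.

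This argument is essentially formal once the setup is in place, so I do not expect a genuine obstacle; the only point that requires care — and the place where the preliminary material is really used — is the bookkeeping of signs, i.e.\ making sure the identification is compatible with the twisted local system structure $\Loc_\epsilon(\cI_{\Lambda\times\bR})$ rather than naive local systems. Concretely, one must check that the lift $s \circ c_p$ of the capping path is what pins down the extension of $t_p$ from the endpoint to the whole interval, and that this is consistent with the conventions of Definition \ref{def:sign-mumon} and Theorem \ref{thm:Gui}. Since $m_\Lambda$ lands in $\Loc_\epsilon(\cI_{\Lambda\times\bR})$ and the section $s$ precisely trivializes this over the (in this case 1-dimensional, so already 1-skeletal) $\Lambda$, the interval $I_p$ lifts canonically to $\cI_{\Lambda\times\bR}$ and the extension is unambiguous. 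I would close by noting that the construction is functorial in $\cF$ and in families, so it upgrades to an isomorphism of moduli stacks, matching the fiber-product definition in Definition \ref{def:moduli-sh-mufr}.
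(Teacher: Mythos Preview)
Your proposal is correct and matches the paper's own argument, which appears as the brief discussion immediately preceding the proposition rather than as a formal proof: extend base-point trivializations along capping paths (using the lift $s\circ c_p$) to trivialize $m_\Lambda(\cF)$ on each open arc of $\Lambda\backslash T$, and conversely recover the base-point data from the arc trivializations. The only cosmetic difference is that the paper phrases the inverse direction as reading off the trivialization at the \emph{right} endpoint $q$ of each arc rather than the left endpoint $p$; since each base point is the right endpoint of exactly one arc this is an equivalent bijection.
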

\noindent Note that this is the definition of decorated moduli space of sheaves in \cite{CasalsWeng22}*{Definition 2.37}.

Now consider any decorated sheaf $\cF \in \cM_r^{\mu,\textit{fr}}(\Sigma, \Lambda)_0$. Suppose $\cF$ is the sheaf quantization of a twisted local system on the Lagrangian filling $L$ (with Legendrian lift $\widetilde{L}$). By Theorem \ref{thm:JinTreu-app}, $\cF$ is the proper push forward of a sheaf quantization $\widetilde{\cF} \in \cM_r(\Sigma \times \bR, \widetilde{L})_0$. We want to introduce the notion of microlocal merodromy associated to the sheaf quantization of a (not necessarily embedded) exact Lagrangian filling.
    
%\begin{definition}
%    Let $\widetilde{L} \subset T^{*,\infty}(\Sigma \times \bR)$, $P \subset \partial\widetilde{L}$ a set of base points. Let $\gamma$ be a smooth path connecting $p$ to $q$. Then a signed lifting $\widetilde{\gamma}$ in the oriented relative frame bundle $\cI_{\widetilde{L}}$ is a smooth lifting of $\gamma$ in $\cI_{\widetilde{L}}$ concatenated with paths in the fiber $\cI_{\widetilde{L},p}$ and $\cI_{\widetilde{L},q}$ connecting $\widetilde{\gamma}(0)$ to $\widetilde{p}$ against the orientation in $\cI_{\widetilde{L},p}$ and $\widetilde{\gamma}(1)$ to $\widetilde{q}$ along the orientation in $\cI_{\widetilde{L},q}$.
%\end{definition}

\begin{definition}\label{def:micromero}
    Let $\Lambda \subset T^{*,\infty}\Sigma$ be a Legendrian link with the null-cobordant spin structure and a $\pi_0$-surjective set of base points $T \subset \Lambda$ and capping paths, and $\widetilde{L} \subset T^{*,\infty}(\Sigma \times \bR)$ a Legendrian surface with boundary $\Lambda$ with a spin structure $s$. Consider a decorated microlocal rank~$r$ sheaf $\widetilde{\cF} \in \cM_r^{\mu,\textit{fr}}(\Sigma \times \bR, \widetilde{L})_0$.\\

    Let $\gamma \in H_1(L \backslash T, \Lambda \backslash T; \bZ)$ be a relative cycle connecting $p$ to $q$ and $c_p, c_q$ be capping paths of $p, q$. By definition, the microlocal merodromy of $\widetilde{\cF}$ along $\gamma$ is the composition of parallel transport maps of the microstalks
    $$t_q \circ \phi_{s \circ {c}_q} \circ \phi_{s \circ {\gamma}} \circ \phi_{s \circ {c}_p}^{-1} \circ t_p^{-1}: \, \Bbbk^r \xrightarrow{\sim} m_{L}(\widetilde{\cF})_{p_{ij}} \xrightarrow{\sim} m_{L}(\widetilde{\cF})_p \xrightarrow{\sim} m_{L}(\widetilde{\cF})_q \xrightarrow{\sim} m_{L}(\widetilde{\cF})_{q_{ij'}} \xrightarrow{\sim} \Bbbk^r.$$
\end{definition}

\noindent The values of microlocal merodromies depend on the choice of the spin structure $s$, and the lifting of the paths in the relative oriented frame bundle. The reason for the spin structure on $\Lambda$ to be null cobordant is because it should coincide with the restriction of the spin structure on the surface $\widetilde{L}$.

\begin{remark}
    Consider the case $r = 1$. When $\widetilde{L}$ is the Legendrian lift of an embedded Lagrangian $L$, then sheaf quantization of the Lagrangian filling $L$ determines a torus chart in the framed moduli of sheaves by microlocal merodromies
    $$H^1(L \backslash T, \Lambda \backslash T; \Bbbk^\times) \xrightarrow{\sim} \cM_1^{\mu,\textit{fr}}(\Sigma \times \bR, \widetilde{L})_0 \hookrightarrow \cM_1^{\mu,\textit{fr}}(\Sigma, \Lambda)_0.$$
    The microlocal merodromy map is a regular function on a Zariski open set of $\cM_1^{\mu,\textit{fr}}(\Sigma, \Lambda)_0$. In Section \ref{sec:clust-A} we explain that they can actually be extended to regular functions on the whole space $\cM_1^{\mu,\textit{fr}}(\Sigma, \Lambda)_0$.
\end{remark}
\begin{remark}
    If $\Lambda$ is a Legendrian link with $n$ base points, at least one in each connected component, and $L$ a connected Lagrangian filling, then there is an exact sequence
    $$0 \rightarrow H_1(L, \Lambda; \bZ) \rightarrow H_1(L \backslash T, \Lambda \backslash T; \bZ) \rightarrow \widetilde{H}_0(T; \bZ) \rightarrow 0.$$
    The exact sequence splits and hence we obtain the isomorphism
    $$H_1(L \backslash T, \Lambda \backslash T; \Bbbk^\times) \cong H_1(L, \Lambda; \Bbbk^\times) \times (\Bbbk^\times)^{n-1}.$$
\end{remark}

\noindent Definition \ref{def:micromero} can be a priori difficult to work with in practice. However, since the choice of spin structures can be reduced to the choice of sign curves, microlocal merodromies can actually be computed in examples, as shown in Section \ref{sec:quan-weave}.
\newpage

\bibliography{ConjugateWeave}

@article {Leclerc,
    AUTHOR = {Leclerc, Bernard},
     TITLE = {Cluster structures on strata of flag varieties},
   JOURNAL = {Adv. Math.},
  FJOURNAL = {Advances in Mathematics},
    VOLUME = {300},
      YEAR = {2016},
     PAGES = {190--228},
      ISSN = {0001-8708},
   MRCLASS = {13F60 (14M15)},
  MRNUMBER = {3534832},
MRREVIEWER = {Alfredo N\'{a}jera Ch\'{a}vez},
       DOI = {10.1016/j.aim.2016.03.018},
       URL = {https://doi.org/10.1016/j.aim.2016.03.018},
}

@article {serhiyenko2019cluster,
    AUTHOR = {Serhiyenko, K. and Sherman-Bennett, M. and Williams, L.},
     TITLE = {Cluster structures in {S}chubert varieties in the
              {G}rassmannian},
   JOURNAL = {Proc. Lond. Math. Soc. (3)},
  FJOURNAL = {Proceedings of the London Mathematical Society. Third Series},
    VOLUME = {119},
      YEAR = {2019},
    NUMBER = {6},
     PAGES = {1694--1744},
      ISSN = {0024-6115},
   MRCLASS = {14M15 (05E14 05E16 13F60)},
  MRNUMBER = {4295518},
       DOI = {10.1112/plms.12281},
       URL = {https://doi.org/10.1112/plms.12281},
}

@article{FWZ,
  doi = {10.48550/ARXIV.2008.09189},
  
  journal = {arXiv preprint arXiv:2008.09189},
  
  author = {Fomin, Sergey and Williams, Lauren and Zelevinsky, Andrei},
  
  keywords = {Commutative Algebra (math.AC), Combinatorics (math.CO), Rings and Algebras (math.RA), FOS: Mathematics, FOS: Mathematics},
  
  title = {{I}ntroduction to {C}luster {A}lgebras},
  
  publisher = {arXiv},
  
  year = {2020},
  
  copyright = {arXiv.org perpetual, non-exclusive license}
}

@article {Scott2,
    AUTHOR = {Scott, Joshua S.},
     TITLE = {Grassmannians and cluster algebras},
   JOURNAL = {Proc. London Math. Soc. (3)},
  FJOURNAL = {Proceedings of the London Mathematical Society. Third Series},
    VOLUME = {92},
      YEAR = {2006},
    NUMBER = {2},
     PAGES = {345--380},
      ISSN = {0024-6115},
   MRCLASS = {14M15 (05E15 16S99 22E46)},
  MRNUMBER = {2205721},
MRREVIEWER = {Eric N. Sommers},
       DOI = {10.1112/S0024611505015571},
       URL = {https://doi.org/10.1112/S0024611505015571},
}

@article {Scott1,
    AUTHOR = {Scott, Josh},
     TITLE = {Quasi-commuting families of quantum minors},
   JOURNAL = {J. Algebra},
  FJOURNAL = {Journal of Algebra},
    VOLUME = {290},
      YEAR = {2005},
    NUMBER = {1},
     PAGES = {204--220},
      ISSN = {0021-8693},
   MRCLASS = {16W35 (20G42)},
  MRNUMBER = {2154990},
MRREVIEWER = {St\'{e}phane Launois},
       DOI = {10.1016/j.jalgebra.2001.12.001},
       URL = {https://doi.org/10.1016/j.jalgebra.2001.12.001},
}

@article {GLSS1,
    AUTHOR = {Galashin, Pavel and Lam, Thomas and Sherman-Bennett, Melissa  and Speyer, David},
     TITLE = {Braid variety cluster structures, {I}: 3{D} plabic graphs, in progress},
}

@article {GLSS2,
    AUTHOR = {Galashin, Pavel and Lam, Thomas and Sherman-Bennett, Melissa  and Speyer, David},
     TITLE = {Braid variety cluster structures, {I}{I}: general type, in progress},
}

@book {Lurie_HigherAlgebra,
    AUTHOR = {Lurie, Jacob},
     TITLE = {Higher Algebra},
    
 PUBLISHER = {Available online},
      YEAR = {2017},
     PAGES = {1-1553},
      
       URL = {https://www.math.ias.edu/~lurie/papers/HA.pdf},
}

@article {Schnurer18,
    AUTHOR = {Schn\"{u}rer, Olaf M.},
     TITLE = {Six operations on dg enhancements of derived categories of
              sheaves},
   JOURNAL = {Selecta Math. (N.S.)},
  FJOURNAL = {Selecta Mathematica. New Series},
    VOLUME = {24},
      YEAR = {2018},
    NUMBER = {3},
     PAGES = {1805--1911},
      ISSN = {1022-1824},
   MRCLASS = {18G10 (14F05 16E45)},
  MRNUMBER = {3816495},
MRREVIEWER = {Mee Seong Im},
       DOI = {10.1007/s00029-018-0392-4},
       URL = {https://doi.org/10.1007/s00029-018-0392-4},
}

@article {RiveraZeinalian20,
    AUTHOR = {Rivera, Manuel and Zeinalian, Mahmoud},
     TITLE = {The colimit of an {$\infty$}-local system as a twisted tensor
              product},
   JOURNAL = {High. Struct.},
  FJOURNAL = {Higher Structures},
    VOLUME = {4},
      YEAR = {2020},
    NUMBER = {1},
     PAGES = {33--56},
   MRCLASS = {18N60 (18A30 18D20 18G80 55P48)},
  MRNUMBER = {4074273},
MRREVIEWER = {Seyed Naser Hosseini},
}

@book {ArnoldSing,
    AUTHOR = {Arnol\cprime d, V. I.},
     TITLE = {Singularities of caustics and wave fronts},
    SERIES = {Mathematics and its Applications (Soviet Series)},
    VOLUME = {62},
 PUBLISHER = {Kluwer Academic Publishers Group, Dordrecht},
      YEAR = {1990},
     PAGES = {xiv+259},
      ISBN = {0-7923-1038-1},
   MRCLASS = {58C27 (58F05 58F14 78A05)},
  MRNUMBER = {1151185},
MRREVIEWER = {I. R. Porteous},
       DOI = {10.1007/978-94-011-3330-2},
       URL = {https://doi.org/10.1007/978-94-011-3330-2},
}

@article{Kuo_WrappedSheaves,
  title={Wrapped sheaves},
  author={Christopher Kuo},
  journal={arXiv preprint arXiv:2102.06791},
  year={2022}
}

@article{AnBaeLee21ADE,
  title={Lagrangian fillings for {L}egendrian links of finite type},
  author={An, Byung Hee and Bae, Youngjin and Lee, Eunjeong},
  journal={arXiv preprint arXiv:2101.01943},
  year={2021}
}

@article{AnBaeLee21DEtilde,
  title={Lagrangian fillings for {L}egendrian links of affine type},
  author={An, Byung Hee and Bae, Youngjin and Lee, Eunjeong},
  journal={arXiv preprint arXiv:2107.04283},
  year={2021}
}

@article{BFZ05,
  title={Cluster algebras {III}: Upper bounds and double {B}ruhat cells},
  author={Berenstein, Arkady and Fomin, Sergey and Zelevinsky, Andrei},
  journal={Duke Mathematical Journal},
  volume={126},
  number={1},
  pages={1--52},
  year={2005},
  publisher={Duke University Press}
}

@article {Casals20Sk,
    AUTHOR = {Casals, Roger},
     TITLE = {Lagrangian skeleta and plane curve singularities},
   JOURNAL = {J. Fixed Point Theory Appl.},
  FJOURNAL = {Journal of Fixed Point Theory and Applications},
    VOLUME = {24},
      YEAR = {2022},
    NUMBER = {2},
     PAGES = {Paper No. 34, 43},
      ISSN = {1661-7738},
   MRCLASS = {53D12 (14H20 57K33)},
  MRNUMBER = {4405603},
       DOI = {10.1007/s11784-022-00939-8},
       URL = {https://doi.org/10.1007/s11784-022-00939-8},
}

@article {CasalsGao20,
    AUTHOR = {Casals, Roger and Gao, Honghao},
     TITLE = {Infinitely many {L}agrangian fillings},
   JOURNAL = {Ann. of Math. (2)},
  FJOURNAL = {Annals of Mathematics. Second Series},
    VOLUME = {195},
      YEAR = {2022},
    NUMBER = {1},
     PAGES = {207--249},
      ISSN = {0003-486X},
   MRCLASS = {57K33 (53D10)},
  MRNUMBER = {4358415},
       DOI = {10.4007/annals.2022.195.1.3},
       URL = {https://doi.org/10.4007/annals.2022.195.1.3},
}

@article {ToenVaquie07,
    AUTHOR = {To\"{e}n, Bertrand and Vaqui\'{e}, Michel},
     TITLE = {Moduli of objects in dg-categories},
   JOURNAL = {Ann.~Sci. \'{E}NS},
  FJOURNAL = {Annales Scientifiques de l'\'{E}cole Normale Sup\'{e}rieure. Quatri\`eme
              S\'{e}rie},
    VOLUME = {40},
      YEAR = {2007},
    NUMBER = {},
     PAGES = {387--444},
      ISSN = {0012-9593},
   MRCLASS = {18G55 (14D20 18E30)},
  MRNUMBER = {2493386},
MRREVIEWER = {Andrey Yu. Lazarev},
       DOI = {10.1016/j.ansens.2007.05.001},
       URL = {https://doi.org/10.1016/j.ansens.2007.05.001},
}

@article {HAGII,
    AUTHOR = {To\"{e}n, Bertrand and Vezzosi, Gabriele},
     TITLE = {Homotopical algebraic geometry. {II}. {G}eometric stacks and
              applications},
   JOURNAL = {Mem. Amer. Math. Soc.},
  FJOURNAL = {Memoirs of the American Mathematical Society},
    VOLUME = {193},
      YEAR = {2008},
    NUMBER = {902},
     PAGES = {x+224},
      ISSN = {0065-9266},
   MRCLASS = {14A20 (18F10 18F20 18G55 55P42 55U40)},
  MRNUMBER = {2394633},
MRREVIEWER = {Paul Arne \O stv\ae r},
       DOI = {10.1090/memo/0902},
       URL = {https://doi.org/10.1090/memo/0902},
}

@article{CGGLSS22,
  title={Cluster structures on braid varieties},
  author={Casals, Roger and Gorsky, Eugene and Gorsky, Mikhail and Simental, Jos{\'e}},
  journal={arXiv:2207.11607},
  year={2022}
}

@article{CGGS20AlgWeave,
  title={Algebraic weaves and braid varieties},
  author={Casals, Roger and Gorsky, Eugene and Gorsky, Mikhail and Simental, Jos{\'e}},
  journal={arXiv preprint arXiv:2012.06931},
  year={2020}
}

@article{CasalsNg21,
  title={Braid Loops with infinite monodromy on the {L}egendrian contact {DGA}},
  author={Casals, Roger and Ng, Lenhard},
  journal={J.~Topology},
  year={2022}
}

@article{CasalsWeng22,
  title={Microlocal theory of {L}egendrian links and cluster algebras},
  journal={arXiv preprint arXiv:2204.13244},
  author={Casals, Roger and Weng, Daping}
}

@article{CasalsZas20,
  title={Legendrian Weaves: {N}-graph Calculus, Flag Moduli and Applications},
  author={Roger Casals and Eric Zaslow},
  journal={Geometry\&Topology},
  year={2022}
}

@article{Chan10,
  title={Lagrangian concordance of {L}egendrian knots},
  author={Chantraine, Baptiste},
  journal={Algebraic \& Geometric Topology},
  volume={10},
  number={1},
  pages={63--85},
  year={2010},
  publisher={Mathematical Sciences Publishers}
}

@article{EHK16,
  title={Legendrian knots and exact {L}agrangian cobordisms},
  author={Ekholm, Tobias and Honda, Ko and K{\'a}lm{\'a}n, Tam{\'a}s},
  journal={Journal of the European Mathematical Society},
  volume={18},
  number={11},
  pages={2627--2689},
  year={2016}
}

@article{EliPol96,
  title={Local {L}agrangian 2-knots are trivial},
  author={Eliashberg, Yakov and Polterovich, Leonid},
  journal={Annals of mathematics},
  volume={144},
  number={1},
  pages={61--76},
  year={1996},
  publisher={JSTOR}
}

@article{FockGon06a,
  title={Moduli spaces of local systems and higher {T}eichm{\"u}ller theory},
  author={Fock, Vladimir and Goncharov, Alexander},
  journal={Publications Math{\'e}matiques de l'IH{\'E}S},
  volume={103},
  pages={1--211},
  year={2006}
}

@incollection{FockGon06b,
  title={Cluster {$\chi$}-varieties, amalgamation, and {P}oisson-{L}ie groups},
  author={Fock, Vladimir V and Goncharov, Alexander B},
  booktitle={Algebraic geometry and number theory},
  pages={27--68},
  year={2006},
  publisher={Springer}
}

@article{FZ02ClusterI,
  title={Cluster algebras {I}: foundations},
  author={Fomin, Sergey and Zelevinsky, Andrei},
  journal={Journal of the American Mathematical Society},
  volume={15},
  number={2},
  pages={497--529},
  year={2002}
}

@article{FPST17MorseMut,
  title={Morsifications and mutations},
  author={Fomin, Sergey and Pylyavskyy, Pavlo and Shustin, Eugenii and Thurston, Dylan},
  journal={arXiv preprint arXiv:1711.10598},
  year={2017}
}

@inproceedings{GMN13,
  title={Spectral networks},
  author={Gaiotto, Davide and Moore, Gregory W and Neitzke, Andrew},
  booktitle={Annales {H}enri {P}oincar{\'e}},
  volume={14},
  number={7},
  pages={1643--1731},
  year={2013},
  organization={Springer}
}

@inproceedings{GMN14,
  title={Spectral networks and snakes},
  author={Gaiotto, Davide and Moore, Gregory W and Neitzke, Andrew},
  booktitle={Annales {H}enri {P}oincar{\'e}},
  volume={15},
  number={1},
  pages={61--141},
  year={2014},
  organization={Springer}
}

@article{GPS18a,
  title={Sectorial descent for wrapped {F}ukaya categories},
  author={Ganatra, Sheel and Pardon, John and Shende, Vivek},
  journal={arXiv preprint arXiv:1809.03427},
  year={2018}
}

@article{GPS18b,
  title={Microlocal {M}orse theory of wrapped {F}ukaya categories},
  author={Ganatra, Sheel and Pardon, John and Shende, Vivek},
  journal={arXiv preprint arXiv:1809.08807},
  year={2018}
}

@article{GaoShenWeng20,
  title={Augmentations, fillings, and clusters},
  author={Gao, Honghao and Shen, Linhui and Weng, Daping},
  journal={arXiv preprint arXiv:2008.10793},
  year={2020}
}

@article{GaoShenWeng20Filling,
  title={Positive Braid Links with Infinitely Many Fillings},
  author={Gao, Honghao and Shen, Linhui and Weng, Daping},
  journal={arXiv preprint arXiv:2009.00499},
  year={2020}
}

@article{Giroux17Ideal,
  title={Ideal {L}iouville Domains--a cool gadget},
  author={Giroux, Emmanuel},
  journal={arXiv preprint arXiv:1708.08855},
  year={2017}
}

@incollection{Gon17Web,
  title={Ideal webs, moduli spaces of local systems, and 3d {C}alabi--{Y}au categories},
  author={Goncharov, Alexander B},
  booktitle={Algebra, geometry, and physics in the 21st century},
  pages={31--97},
  year={2017},
  publisher={Springer}
}

@inproceedings{GonKen13,
  title={Dimers and cluster integrable systems},
  author={Goncharov, Alexander B and Kenyon, Richard},
  booktitle={Annales scientifiques de l'{\'E}cole {N}ormale {S}up{\'e}rieure},
  volume={46},
  number={5},
  pages={747--813},
  year={2013}
}

@article{GonKon21,
  title={Spectral description of non-commutative local systems on surfaces and non-commutative cluster varieties},
  author={Goncharov, Alexander and Kontsevich, Maxim},
  journal={arXiv preprint arXiv:2108.04168},
  year={2021}
}

@article{Gui12,
  title={Quantization of conic {L}agrangian submanifolds of cotangent bundles},
  author={Guillermou, St{\'e}phane},
  journal={arXiv preprint arXiv:1212.5818},
  year={2012}
}

@article{Gui19,
  title={Sheaves and symplectic geometry of cotangent bundles},
  author={Guillermou, St{\'e}phane},
  journal={arXiv preprint arXiv:1905.07341},
  year={2019}
}

@article{GKS12,
  title={Sheaf quantization of {H}amiltonian isotopies and applications to nondisplaceability problems},
  author={Guillermou, St{\'e}phane and Kashiwara, Masaki and Schapira, Pierre},
  journal={Duke Mathematical Journal},
  volume={161},
  number={2},
  pages={201--245},
  year={2012},
  publisher={Duke University Press}
}

@article{Haug15AntiSurg,
  title={Lagrangian antisurgery},
  author={Haug, Luis},
  journal={arXiv preprint arXiv:1511.05052},
  year={2015}
}

@article{Hicks20,
  title={Tropical {L}agrangian hypersurfaces are unobstructed},
  author={Hicks, Jeffrey},
  journal={Journal of Topology},
  volume={13},
  number={4},
  pages={1409--1454},
  year={2020},
  publisher={Wiley Online Library}
}

@article{Hughes21D,
  title={Weave Realizability for {$D$}-type},
  author={Hughes, James},
  journal={Algebraic\&Geometric Topology (to appear)},
  year={2022}
}

@article{Hughes21A,
  title={Lagrangian Fillings in {A}-type and their {K}alman Loop Orbits},
  author={Hughes, James},
  journal={arXiv preprint arXiv:2109.09662},
  year={2021}
}

@article{JinTreu17,
  title={Brane structures in microlocal sheaf theory},
  author={Xin Jin and David Treumann},
  journal={arXiv preprint arXiv:1704.04291},
  year={2017}
}

@article{KK21Gr37,
  title={Triangulations and soliton graphs for totally positive {G}rassmannian},
  author={Karpman, Rachel and Kodama, Yuji},
  journal={Advances in Mathematics},
  volume={376},
  pages={107439},
  year={2021},
  publisher={Elsevier}
}

@book{KSbook,
  title={Sheaves on Manifolds},
  author={Kashiwara, Masaki and Schapira, Pierre},
  volume={292},
  year={2013},
  publisher={Springer Science \& Business Media}
}

@article{Keller06DG,
  title={On differential graded categories},
  author={Keller, Bernhard},
  journal={arXiv preprint math/0601185},
  year={2006}
}

@article{Kuwagaki20WKB,
  title={Sheaf quantization from exact {WKB} analysis},
  author={Kuwagaki, Tatsuki},
  journal={arXiv preprint arXiv:2006.14872},
  year={2020}
}

@article{LalondeSikorav91,
author = {Lalonde, Francois and Sikorav, J.-C.},
journal = {Commentarii mathematici Helvetici},
number = {1},
pages = {18-33},
title = {Sous-variétés lagrangiennes et lagrangiennes exactes des fibrés cotangents},
volume = {66},
year = {1991},
}

@article{Matessi21,
  title={Lagrangian pairs of pants},
  author={Matessi, Diego},
  journal={International Mathematics Research Notices},
  volume={2021},
  number={15},
  pages={11306--11356},
  year={2021},
  publisher={Oxford University Press}
}

@article{NadWrapped,
  title={Wrapped microlocal sheaves on pairs of pants},
  author={Nadler, David},
  journal={arXiv preprint arXiv:1604.00114},
  year={2016}
}

@article{NadShen20,
  title={Sheaf quantization in {W}einstein symplectic manifolds},
  author={Nadler, David and Shende, Vivek},
  journal={arXiv preprint arXiv:2007.10154},
  year={2020}
}

@article{Ng01Satellite,
  title={The {L}egendrian satellite construction},
  author={Ng, Lenhard L},
  journal={arXiv preprint math/0112105},
  year={2001}
}

@article{Ng03Reso,
  title={Computable {L}egendrian invariants},
  author={Ng, Lenhard L},
  journal={Topology},
  volume={42},
  number={1},
  pages={55--82},
  year={2003},
  publisher={Elsevier}
}

@article{OPS15WeakSep,
  title={Weak separation and plabic graphs},
  author={Oh, Suho and Postnikov, Alexander and Speyer, David E},
  journal={Proceedings of the London Mathematical Society},
  volume={110},
  number={3},
  pages={721--754},
  year={2015},
  publisher={Oxford University Press}
}

@article{Pan17Toruslink,
  title={Exact {L}agrangian fillings of {L}egendrian {$(2, n)$}-torus links},
  author={Pan, Yu},
  journal={Pacific Journal of Mathematics},
  volume={289},
  number={2},
  pages={417--441},
  year={2017},
  publisher={Mathematical Sciences Publishers}
}

@article{PanRuther19Funct,
  title={Functorial {LCH} for immersed {L}agrangian cobordisms},
  author={Pan, Yu and Rutherford, Dan},
  journal={arXiv preprint arXiv:1905.08730},
  year={2019}
}

@article{Pol91Surgery,
  title={The surgery of {L}agrange submanifolds},
  author={Polterovich, Leonid},
  journal={Geometric \& Functional Analysis GAFA},
  volume={1},
  number={2},
  pages={198--210},
  year={1991},
  publisher={Springer}
}

@article{Pos06AltGraph,
  title={Total positivity, {G}rassmannians, and networks},
  author={Postnikov, Alexander},
  journal={arXiv preprint math/0609764},
  year={2006}
}

@article{MarcoSchapira18MicInfty,
  title={A lemma for microlocal sheaf theory in the {$\infty$}-categorical setting},
  author={Robalo, Marco and Schapira, Pierre},
  journal={Publications of the Research Institute for Mathematical Sciences},
  volume={54},
  number={2},
  pages={379--391},
  year={2018}
}

@inproceedings{ShenWeng21,
  title={Cluster structures on double {B}ott--{S}amelson cells},
  author={Shen, Linhui and Weng, Daping},
  booktitle={Forum of Mathematics, Sigma},
  volume={9},
  year={2021},
  organization={Cambridge University Press}
}

@article{STWZ19,
  title={Cluster varieties from {L}egendrian knots},
  author={Shende, Vivek and Treumann, David and Williams, Harold and Zaslow, Eric},
  journal={Duke Mathematical Journal},
  volume={168},
  number={15},
  pages={2801--2871},
  year={2019},
  publisher={Duke University Press}
}

@article{STZ17,
  title={Legendrian knots and constructible sheaves},
  author={Vivek Shende and David Treumann and Eric Zaslow},
  journal={Inventiones mathematicae},
  year={2017},
  volume={207},
  pages={1031-1133}
}

@inproceedings{Thurs17,
  title={From dominoes to hexagons},
  author={Thurston, Dylan P},
  booktitle={Proceedings of the 2014 {M}aui and 2015 {Q}inhuangdao Conferences in Honour of {V}aughan {FR} {J}ones' 60th Birthday},
  pages={399--414},
  year={2017},
  organization={Australian National University, Mathematical Sciences Institute}
}

@article{TWZ19Kasteleyn,
  title={Kasteleyn operators from mirror symmetry},
  author={Treumann, David and Williams, Harold and Zaslow, Eric},
  journal={Selecta Mathematica},
  volume={25},
  number={4},
  pages={1--36},
  year={2019},
  publisher={Springer}
}

@article{TreuZas16,
  title={Cubic planar graphs and {L}egendrian surface theory},
  author={Treumann, David and Zaslow, Eric},
  journal={arXiv preprint arXiv:1609.04892},
  year={2016}
}

@article{Yau13Surg,
  title={Surgery and invariants of {L}agrangian surfaces},
  author={Yau, Mei-Lin},
  journal={arXiv preprint arXiv:1306.5304},
  year={2013}
}
\bibliographystyle{amsplain}

\end{document}